

%
%
%





\documentclass[letterpaper]{amsart}
\usepackage{amssymb}
\usepackage{amsthm}
\usepackage{amsmath}
\usepackage{bm}
\usepackage{amsfonts}
\usepackage{yfonts}
\usepackage{comment}
\usepackage[mathscr]{euscript}

\usepackage{tikz-cd,xfrac,MnSymbol,xargs,todonotes,soul,enumitem}
\usepackage[all]{xy}

\newcommand{\newdownfree}{\scalebox{1.5}[1]{\ensuremath{\downfree}}}
\newcommand{\newndownfree}{\scalebox{1.5}[1]{\ensuremath{\ndownfree}}}
\usepackage{xspace}


\newcommandx{\margin}[2][1=]{}



\def\Ind#1#2{#1\setbox0=\hbox{$#1x$} \kern\wd0    \hbox to 0pt{\hss$#1\newdownfree$\hss}\kern\wd0}
\def\Notind#1#2{#1\setbox0=\hbox{$#1x$}\kern\wd0 \hbox to 0pt{\hss$#1\newndownfree$\hss}\kern\wd0}
\newcommand{\ind}[1][]{\mathop{\mathpalette\Ind{}^{\!\!\!\!\rlap{$\scriptscriptstyle{#1}$}\,\,} }}
\newcommand{\nind}[1][]{\mathop{\mathpalette\Notind{}^{\!\!\!\rlap{$\scriptscriptstyle{#1}$}\,}}}
\newcommand{\eind}[1][]{\mathop{\mathpalette\Ind{}^{\!\!\!\!\rlap{$\scriptscriptstyle{#1}$}\,\,\,\,\,} }}
\newcommand{\enind}[1][]{\mathop{\mathpalette\Notind{}^{\!\!\!\rlap{$\scriptscriptstyle{#1}$}\,\,\,\,}}}


\newcommand{\F}{\mathbb{F}}

\newcommand{\acl}{\mathrm{acl}}
\newcommand{\dcl}{\mathrm{dcl}}
\newcommand{\bcl}{\mathrm{bcl}}
\newcommand{\ccl}{\mathrm{ccl}}

\newcommand{\cl}{\mathrm{cl}}

\newcommand{\eq}{\mathrm{eq}}

\newcommand{\elesub}{\preccurlyeq}

\newcommand{\nn}{\mathbb{N}}

\newcommand{\zz}{\mathbb{Z}}

\newcommand{\ACF}{\mathrm{ACF}}

\newcommand{\sM}{\mathscr{M}}
\newcommand{\sN}{\mathscr{N}}

\newcommand{\sK}{\mathscr{K}}
\newcommand{\sA}{\mathscr{A}}
\newcommand{\sB}{\mathscr{B}}
\newcommand{\sV}{\mathscr{V}}

\newcommand{\id}{\mathrm{id}}
\newcommand{\tp}{\mathrm{tp}}
\newcommand{\qftp}{\mathrm{qftp}}
\newcommand{\Aut}{\mathrm{Aut}}
\newcommand{\Th}{\mathrm{Th}}
\newcommand{\fdiag}{\mathrm{Fdiag}}
\newcommand{\ediag}{\mathrm{Ediag}}
\newcommand{\ACFA}{\mathrm{ACFA}}

\newcommand{\monster}{\boldsymbol{\sM}}

\newcommand{\monsterset}{\boldsymbol{M}}

\newcommand{\NSOP}{\mathrm{NSOP}}
\newcommand{\SOP}{\mathrm{SOP}}
\newcommand{\NIP}{\mathrm{NIP}}

\newcommand{\Fraisse}{Fra\"iss\'e\xspace}

\makeatletter
\newsavebox\myboxA
\newsavebox\myboxB
\newlength\mylenA

\newcommand*\xbar[2][0.75]{%
    \sbox{\myboxA}{$\m@th#2$}%
    \setbox\myboxB\null
    \ht\myboxB=\ht\myboxA%
    \dp\myboxB=\dp\myboxA%
    \wd\myboxB=#1\wd\myboxA
    \sbox\myboxB{$\m@th\overline{\copy\myboxB}$}
    \setlength\mylenA{\the\wd\myboxA}
    \addtolength\mylenA{-\the\wd\myboxB}%
    \ifdim\wd\myboxB<\wd\myboxA%
       \rlap{\hskip 0.5\mylenA\usebox\myboxB}{\usebox\myboxA}%
    \else
        \hskip -0.5\mylenA\rlap{\usebox\myboxA}{\hskip 0.5\mylenA\usebox\myboxB}%
    \fi}
\makeatother





\newtheorem{thm}{Theorem}[section]
\newtheorem{lem}[thm]{Lemma}
\newtheorem*{thm*}{Theorem}
\newtheorem*{prop*}{Proposition}
\newtheorem*{v-conj}{Vague Conjecture}
\newtheorem{prop}[thm]{Proposition}
\newtheorem{cor}[thm]{Corollary}
\newtheorem{fact}[thm]{Fact}
\theoremstyle{definition}
\newtheorem{defn}[thm]{Definition}
\newtheorem{rem}[thm]{Remark}
\newtheorem{example}[thm]{Example}
\newtheorem{question}[thm]{Question}

\definecolor{red}{rgb}{1.0, 0, 0}

\begin{document}
\sloppy

\title[Interpolative fusions II] {Interpolative fusions II: Preservation results}
\author{Alex Kruckman, Chieu-Minh Tran, Erik Walsberg}
\email{akruckman@wesleyan.edu, ewalsber@uci.edu, mtran6@nd.edu}
\date{\today}

\begin{abstract}
We study interpolative fusion, a method of combining theories $T_1$ and $T_2$ in distinct languages in a ``generic'' way over a common reduct $T_\cap$, to obtain a theory $T_\cup^*$. When each $T_i$ is model-complete, $T_\cup^*$ is the model companion of the union $T_1\cup T_2$. Our goal is to prove preservation results, i.e., to  find sufficient conditions under which model-theoretic properties of $T_1$ and $T_2$ are inherited by $T_\cup^*$. 

We first prove preservation results for quantifier elimination, model-completeness, and related properties. We then apply these tools to show that, under mild hypotheses, including stability of $T_\cap$, the property $\mathrm{NSOP}_1$ is preserved. We also show that simplicity is preserved under stronger hypotheses on algebraic closure in $T_1$ and $T_2$. This generalizes many previous results;
for example, simplicity of $\mathrm{ACFA}$ and the random $n$-hypergraph are both non-obvious corollaries. We also address preservation of stability, $\mathrm{NIP}$, and $\aleph_0$-categoricity, and we describe examples which witness that these results are  sharp.
\end{abstract}

\maketitle


\section{Introduction}
\noindent
It is a well-known idea that simplicity can be seen as ``stability $+$ randomness". The results in this paper suggest a slight variation of this idea: the property $\NSOP_1$ should be seen as ``stability $+$ randomness", while simplicity should be seen as ``stability $+$ controlled randomness''. In particular, we show that a generic (or ``random") fusion of stable theories (over a common reduct) is $\NSOP_1$.  This provides us with a conceptual explanation for the fact that several known theories are $\NSOP_1$ by also observing that these theories can be decomposed as generic fusions of stable theories. The above result is a special case of the following  preservation phenomenon (Theorem~\ref{thm: mainintro}):  Under some mild hypotheses, a generic fusion of $\NSOP_1$ theories over a common stable reduct is $\NSOP_1$. 
Simplicity, on the other hand, is preserved under the more restrictive setting of ``relatively disintegrated fusions" (Theorem~\ref{thm:main2}).
Several important examples do  fit into this setting: the simplicity of the random graph and simplicity of $\ACFA$ can both be seen as special cases of Theorem~\ref{thm:main2}. We also show that other model-theoretic properties (stability, NIP, $\aleph_0$-categoricity, etc.) of many well-known examples can be seen as special cases of results on preservation of these properties under generic fusions.

\medskip\noindent
We first describe the basic setting and set some global conventions.
Throughout $I$ is an index set, $L_\cap$ is a language, $(L_i)_{i \in I}$ is a family of languages, all with the same set of sorts, such that $L_i \cap L_j = L_\cap$ when $i \neq j$, $T_i$ is a (possibly incomplete) consistent $L_i$-theory for each $i$, and each $T_i$ has the same set $T_\cap$ of $L_\cap$-consequences.
In most examples each $T_i$ is complete, in this case our assumptions ensure that $T_i \cap T_j = T_\cap$ when $i \neq j$.
We declare $L_\cup := \bigcup_{i \in I} L_i$, $T_\cup := \bigcup_{i \in I} T_i$, and suppose that $T_\cup$ is consistent.
(When $T_\cap$ is complete this follows by Robinson joint consistency.)
Throughout $\sM_\cup$ is an $L_\cup$-structure and $\sM_\square$ is the $L_\square$-reduct of $\sM_\cup$ for $\square \in I \cup \{\cap\}$.

\medskip\noindent
In \cite{firstpaper} we began to study the interpolative fusion $T^*_\cup$ of $(T_i)_{i \in I}$ over $T_\cap$ (which exists under some conditions); this is the precise version of the generic ( or ``random'') fusion described earlier. 
When each $T_i$ is model complete $T^*_\cup$ is simply the model companion (if it exists) of $T_\cup$ and in general one can view $T^*_\cup$ as the model companion of $T_\cup$ \textit{relative} to $(T_i)_{i \in I}$.
(We will often reduce to the case when each $T_i$ is model complete by Morleyizing.)
We prefer to work with a language-independent definition of $T^*_\cup$ described in Section~\ref{section:first-paper}. Our first paper~\cite{firstpaper} was largely devoted to existence conditions for $T^*_\cup$.
In this paper we will generally assume that $T^*_\cup$ exists.
The reader of this paper will only need certain things from \cite{firstpaper} which are recalled in  Section~\ref{section:first-paper}. 

\medskip\noindent Our notion of fusion generalizes many ``generic" model-theoretic constructions.
Interesting theories are often (up to bi-interpretation) fusions of strictly simpler theories, e.g. the theory of the random graph is bi-interpretable with a fusion of two theories which are both bi-interpretable with the theory of equality and $\ACFA$ is bi-interpretable with a fusion of two theories which are both bi-interpretable with $\mathrm{ACF}$.
We survey many more examples in Section~\ref{section:examples}; these provid us with a testing ground for the preservation results we will discuss next.



\medskip\noindent
Section~\ref{sec: combinatorialtameness} studies the preservation of classification theoretic properties.
Stability and $\NIP$ are only rarely preserved (see Section~\ref{sec:stablenip}).
This is natural, as the fusion typically introduces ``randomness". We do find sufficient conditions for their preservation which cover a number of interesting examples.
At present, $\NSOP_1$ is the only classification theoretic property which we know to be broadly preserved (see Section~\ref{sec:NSOP1}). 
An extra necessary hypothesis is a slight strengthening of the independence theorem, which we call the $T_\cap$-generic independence theorem. This hypothesis holds in all examples that we know of an $\NSOP_1$ theory with a stable reduct. Our main result for $\NSOP_1$ is Theorem~\ref{thm:preservationintro} (following from Theorem~\ref{thm:preservation}, Corollary~\ref{cor:stable-case}, and Corollary~\ref{cor:disforking-case}).

\begin{thm} \label{thm: mainintro}
\label{thm:preservationintro}
Assume $T^*_\cup$ exists. Suppose $T_\cap$ is stable, each $T_i$ is $\NSOP_1$, and Kim-independence in each $T_i$ satisfies the $T_\cap$-generic independence theorem. Then  $T_\cup^*$ is $\NSOP_1$.
It follows that:
\begin{enumerate}
\item If each $T_i$ is stable, then $T^*_\cup$ is $\NSOP_1$.
\item If $T_\cap$ is stable with disintegrated forking and each $T_i$ is $\NSOP_1$, then $T^*_\cup$ is $\NSOP_1$.
\end{enumerate}
\end{thm}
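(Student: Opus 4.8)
The plan is to reduce everything to the first assertion and then extract the two corollaries. For the main statement, I would work inside a monster model $\monster_\cup$ of $T^*_\cup$ and try to produce a ``nice'' independence relation on it — the natural candidate is the relation $\ind$ defined by: $A \ind_C B$ iff $A \ind^{K}_C B$ in each reduct $\sM_i$ (Kim-independence over the models $\acl$-generated by $C$) \emph{and} $A \ind_C B$ in $\sM_\cap$ (forking independence, which is fine since $T_\cap$ is stable). The strategy is then to verify that this relation satisfies the Kim--Pillay-style criterion for $\NSOP_1$ (the version due to Chernikov--Ramsey / Kaplan--Ramsey): invariance, monotonicity, symmetry, existence, a chain/local-character condition, and crucially the \emph{independence theorem} over models. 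If such a relation exists and these axioms hold, then $T^*_\cup$ is $\NSOP_1$ and $\ind$ is exactly Kim-independence. The bulk of the work is to transfer each axiom from the reducts to $\monster_\cup$, using the model-completeness/quantifier-elimination preservation results from the earlier part of the paper to understand types in $\monster_\cup$ in terms of types in the reducts, and to use the existence of $T^*_\cup$ to realize amalgamation configurations generically.

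First I would set up the dictionary between $\monster_\cup$-types and tuples of reduct-types: using the preservation-of-QE/model-completeness machinery referenced in the introduction, a type over a (suitably closed) set in $\monster_\cup$ should be determined by the induced types in the $\sM_i$, subject to the constraint that they agree on the common reduct $\sM_\cap$. The existence of $T^*_\cup$ says that any such compatible family can be amalgamated into a model of $T^*_\cup$, and genericity should give us control over how the $i$-reduct and $j$-reduct sit together — essentially $\acl_i$ and $\acl_j$ meet in $\acl_\cap$, and types in $\monster_\cup$ over a common-reduct-closed base are ``as free as possible''. With this in hand, invariance, monotonicity, base monotonicity, transitivity and symmetry of $\ind$ follow coordinatewise from the corresponding properties of Kim-independence in each $T_i$ (which hold since each $T_i$ is $\NSOP_1$, by Kaplan--Ramsey) together with those of forking independence in the stable $T_\cap$. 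Existence ($A \ind_C B$ can always be achieved after an automorphism fixing $C$) again reduces, via the amalgamation property of $T^*_\cup$, to existence in each coordinate. The stationarity-type local character / witnessing condition needed for the Kim--Pillay criterion should similarly descend, using stability of $T_\cap$ to handle the $\cap$-coordinate without an $\NSOP_1$-only obstruction.

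The hard part will be the independence theorem for $\ind$: given $M \models T^*_\cup$, $a \equiv_M a'$, $a \ind_M b$, $a' \ind_M c$, $b \ind_M c$, we must find $a''$ realizing $\tp(a/Mb) \cup \tp(a'/Mc)$ with $a'' \ind_M bc$. In each reduct $\sM_i$ the independence theorem for Kim-independence gives such an $a''_i$; in $\sM_\cap$ the stable independence theorem (uniqueness of nonforking amalgamation) gives $a''_\cap$. The genuine issue is \emph{coherence}: the $a''_i$ produced in the different reducts must be made to agree on their common reduct, i.e.\ all restrict to the same $a''_\cap$. This is precisely where the extra hypothesis — the $T_\cap$-\emph{generic independence theorem} in each $T_i$ — is used: it is the strengthening that lets us choose the amalgam $a''_i$ in $\sM_i$ so that its $L_\cap$-type over $M b c$ is the prescribed nonforking amalgam in $T_\cap$, not merely \emph{some} type consistent with Kim-independence. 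Once all the $a''_i$ agree on the $\cap$-reduct, we glue them using the amalgamation property of $T^*_\cup$ (again invoking the QE/model-completeness preservation results so that the glued structure embeds elementarily into $\monster_\cup$) to obtain the desired $a''$ in $\monster_\cup$, and check $a'' \ind_M bc$ coordinatewise.

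Finally, the two numbered consequences are immediate specializations. For (1), if each $T_i$ is stable then Kim-independence in $T_i$ is just forking independence, and the full (stationary) independence theorem of a stable theory trivially implies the $T_\cap$-generic independence theorem — the nonforking amalgam is unique, so in particular one can prescribe its $\cap$-reduct — so the main statement applies. For (2), if $T_\cap$ is stable with disintegrated forking, then $\acl_\cap$ is a pregeometry in which forking is controlled by $\acl_\cap$-dependence; over a model, the $\cap$-type of an amalgam is already forced by the $\acl_\cap$-closure, hence automatically the nonforking amalgam, and this is exactly what is needed to verify the $T_\cap$-generic independence theorem in any $\NSOP_1$ $T_i$. (These verifications are what Corollary~\ref{cor:stable-case} and Corollary~\ref{cor:disforking-case} record.) Thus in both cases the hypotheses of the first assertion are met and $T^*_\cup$ is $\NSOP_1$.
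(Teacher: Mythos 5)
Your overall architecture matches the paper's: the relation you define is essentially the paper's $\ind$ from Section~\ref{sec:NSOP1} (coordinatewise Kim-independence of $\acl_\cup$-closures in each reduct $\monster_i$; your extra $\monster_\cap$-coordinate is redundant, since Kim-independence passes to reducts and agrees with forking in the stable $T_\cap$), the routine axioms of the Kaplan--Ramsey criterion (Fact~\ref{fact:char}) are checked coordinatewise, and the crux is the independence theorem, where the $T_\cap$-generic independence theorem supplies coherence. One caution on the ``easy'' part: the criterion does not ask for base monotonicity or transitivity, and base monotonicity in particular \emph{cannot} be verified for $\ind$ in general --- by Fact~\ref{fact:kimsimple} that would force $T_\cup^*$ to be simple, which fails in many of the intended examples.

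The genuine gap is in the coherence step of the independence theorem. You propose to choose each amalgam $a''_i$ so that ``its $L_\cap$-type over $Mbc$ is the prescribed nonforking amalgam'' and then glue. But by Proposition~\ref{prop:relaclcomp}, an $L_\cup$-type over $M$ is only recovered from its $L_i$-restrictions on an $\acl_\cup$-\emph{closed} set, so what must be made to agree across the different $i$ is the $L_\cap$-type of the triple of closures of pairs, $\acl_i(a''_i b)$, $\acl_i(a''_i c)$, $\acl_i(bc)$ (and then of their extensions to realizations of $\tp_{L_i}(\acl_\cup(ab)/M)$, etc.). Stationarity of nonforking in $T_\cap$ pins down the type of the independent triple $(a''_i,b,c)$ over $M$, but emphatically \emph{not} the type of the algebraic closures of its pairs; that is precisely the content of $3$-uniqueness, which the paper must invoke (Fact~\ref{fact:3uniqueness}, valid over models in stable theories with elimination of imaginaries) after using the extra independencies supplied by the $T_\cap$-generic independence theorem together with reasonable extension (Theorem~\ref{thm:reasext}) to arrange the closed pairs into an independent-triple configuration in $\monster_\cap$. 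Without this step your gluing has no reason to succeed. Separately, your justifications of the two numbered consequences misstate the mechanisms: Proposition~\ref{prop:stablethreebody} (the stable case) is not a triviality of stationarity --- it requires the additional independencies of the $\acl'$-closures in the reduct and is proved via finite satisfiability over models --- and disintegrated forking does not mean forking is controlled by $\acl_\cap$; Proposition~\ref{prop:disforkthreebody} instead uses the reasonable independence theorem (Theorem~\ref{thm:reasind}) to get the pairwise independencies and disintegratedness to upgrade pairwise to joint. Since you ultimately defer to Corollaries~\ref{cor:stable-case} and~\ref{cor:disforking-case}, the conclusions are citable, but the arguments as sketched would not establish them.
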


\noindent
We say a theory $T$ has \textbf{disintegrated forking} if $A \enind[f]_C B$ implies $A \enind[f]_C b$ for some element $b \in B$.
Some authors refer to this as ``trivial forking".
An $\NSOP_1$ theory is either simple or has the independence property.
Hence it follows from (1) above that if each $T_i$ is stable and $T^*_\cup$ is $\NIP$ then $T^*_\cup$ is stable.

\medskip\noindent
$\NSOP_1$ theories come equipped with a canonical independence relation, Kim-forking.
Under the assumptions of Theorem~\ref{thm:preservationintro}, we give an explicit description of Kim-forking in $T^*_\cup$ in terms of Kim-forking in the various $T_i$.
(In a simple theory Kim-forking agrees with forking.)

\medskip\noindent
Using results about preservation of $\aleph_0$-categoricity discussed in  Section~\ref{sec: Preservationcountablecat}, we show in Section~\ref{section:counterexample} that stability of $T_\cap$ in Theorem~\ref{thm: mainintro} is necessary. In particular, we construct two simple theories $T_1$ and $T_2$ whose fusion over a simple theory $T_\cap$ interprets the Henson graph, the best-known example of a theory which is $\NSOP$ and $\SOP_1$.
In this example $T_\cap$ is the theory of the random $3$-hypergraph.

\medskip\noindent
We will see that Theorem~\ref{thm:preservationintro} generalizes several known results: the model companion of the empty theory in an arbitrary language is $\NSOP_1$, the generic Skolemization of an $\NSOP_1$ theory is $\NSOP_1$, the theory of an algebraically closed field of positive characteristic with a generic additive subgroup is $\NSOP_1$, etc.


\medskip\noindent
Simplicity is only preserved under more restrictive hypotheses (see Section~\ref{Sec: Simplicity}). However, some of the main examples of simple theories can be realized as fusions of stable theories satisfying these hypotheses. Theorem~\ref{thm:main2} is our main result on simple theories (following from Theorem~\ref{thm:simplepreservation}).
We say that $T_i$ is disintegrated relative to $T_\cap$ (or just \textbf{relatively disintegrated}) if for any $\sM_i \models T_i$ we have
$$ \acl_i(A) = \acl_\cap\left( \bigcup_{a \in A} \acl_i(a) \right) \quad \text{for all subsets  } A \text{  of  } M. $$


\begin{thm}
\label{thm:main2}
Suppose $T^*_\cup$ exists, each $T_i$ is simple, $T_\cap$ is stable, and either:
\begin{enumerate}
\item every $T_i$ is disintegrated relative to $T_\cap$, or
\item there is $i^* \in I$ such that Kim-independence in $T_{i^*}$ satisfies the $T_\cap$-generic independence theorem, and if $i \neq i^*$ then algebraic closure and forking in $T_i$ agrees with algebraic closure and forking in $T_\cap$.
\end{enumerate}
Then $T_\cup$ is simple.
\end{thm}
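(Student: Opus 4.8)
The plan is to reduce to Theorem~\ref{thm:preservationintro} and then upgrade $\NSOP_1$ to simplicity. First I would check that the hypotheses of Theorem~\ref{thm:preservationintro} hold in both cases (1) and (2). Since $T_\cap$ is stable and each $T_i$ is simple (hence $\NSOP_1$, with Kim-independence equal to forking-independence), the only thing to verify is that forking-independence in each $T_i$ satisfies the $T_\cap$-generic independence theorem. In case (2) this is assumed for $i=i^*$; for $i \neq i^*$, forking and algebraic closure in $T_i$ agree with those of the stable theory $T_\cap$, so the $T_\cap$-generic independence theorem for $T_i$ collapses to the ordinary independence theorem in $T_\cap$ --- there is no additional algebraic closure, so the genericity of the amalgam over $T_\cap$-structures is automatic --- and this holds because stable theories have type amalgamation. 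In case (1) I would isolate the following as a lemma: \emph{if $T_\cap$ is stable and $T_i$ is disintegrated relative to $T_\cap$, then forking in $T_i$ satisfies the $T_\cap$-generic independence theorem}. The point is that relative disintegratedness forces $\acl_i$ of any set to be generated over $\acl_\cap$ by the sets $\acl_i(a)$ for its singletons $a$; so when amalgamating two forking-independent configurations in $T_i$ there is essentially no freedom left in the $L_\cap$-reduct beyond what the pieces already dictate, and the required genericity over $T_\cap$ then follows from the independence theorem in the stable theory $T_\cap$. In either case Theorem~\ref{thm:preservationintro} now yields that $T^*_\cup$ is $\NSOP_1$ together with the explicit description of its Kim-independence.

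Next I would upgrade to simplicity using the fact that an $\NSOP_1$ theory is simple if and only if its Kim-independence satisfies base monotonicity, equivalently local character, equivalently Kim-forking equals forking. By the description from Theorem~\ref{thm:preservationintro}, Kim-independence in $T^*_\cup$ is a conjunction: $A$ is Kim-independent from $B$ over $C$ iff $A$ is forking-independent from $B$ over $C$ in each $\sM_i$, together with a ``$\cap$-freeness'' condition asserting that certain $L_\cap$-algebraic closures are forking-independent in the stable reduct $\sM_\cap$. Each conjunct satisfies base monotonicity and local character: forking in the simple $T_i$ does, forking in the stable $T_\cap$ does, and the description of algebraic closure in $T^*_\cup$ recalled from Section~\ref{section:first-paper} --- which in case (1) is \emph{disintegrated}, $\acl^*_\cup(A)=\acl_\cap\bigl(\bigcup_{a\in A}\bigcup_i \acl_i(a)\bigr)$, and in case (2) is governed by $\acl_{i^*}$ and $\acl_\cap$ --- lets one carry local character through the algebraic-closure bookkeeping inside the conjuncts. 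A conjunction of the relevant independence relations then still has base monotonicity and local character (the cardinality bookkeeping being harmless since the $L_i$ are pairwise disjoint over $L_\cap$): given $A$ and $B$, pick for each conjunct a small subset of $B$ witnessing independence, close the union up under $\acl^*_\cup$, and use base monotonicity of the conjuncts to obtain simultaneous independence over this common small base. Hence Kim-independence in $T^*_\cup$ satisfies local character, so $T^*_\cup$ is simple (and its Kim-forking is forking).

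The main obstacle I expect is the lemma in Step~1, case (1): making precise and proving that relative disintegratedness of $T_i$ over a stable $T_\cap$ yields the $T_\cap$-generic independence theorem for forking in $T_i$, which is where the disintegratedness hypothesis does real work and requires a careful analysis of how the $L_\cap$-structure sits inside $\acl_i$ of an amalgamated configuration. The parallel subtlety in case (2) is bookkeeping the asymmetry between $i^*$ and the remaining indices --- only $T_{i^*}$ contributes genuine Kim-forking while every other reduct is absorbed into the stable $T_\cap$ --- and checking that the $i^*$-condition and the $\cap$-freeness condition interact well enough for local character to survive in the conjunction (the $\cap$-freeness condition being stated via algebraic closures that themselves depend on the chosen base, so that the choice of a small base has a mild fixed-point flavor). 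Neither point should be conceptually hard given Section~\ref{section:first-paper} and Theorem~\ref{thm:preservationintro}, but both are where the technical care goes.
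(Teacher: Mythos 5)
Your architecture is the paper's: first get $\NSOP_1$ and the explicit description $\ind=\eind[K]$ from Theorem~\ref{thm:preservation}, where your case-(1) lemma is exactly Proposition~\ref{prop:reldisthreebody} (and your heuristic for it is the paper's proof: relative disintegratedness makes the extra $L_\cap$-independencies in the generic independence theorem trivial, since $\acl_i(A''B)=\acl_\cap(A''B)$); then upgrade to simplicity by checking base monotonicity of $\eind[K]$ over models, via Fact~\ref{fact:kimsimple}. The first half is sound, modulo the fact that in case (2) the paper handles $i\neq i^*$ by observing that such $T_i$ is itself relatively disintegrated, which amounts to your "collapses to $T_\cap$" remark.

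The gap is in the second half, at the one point where hypotheses (1) and (2) are actually consumed. Given $M\prec N$, $A=\acl_\cup(A)\supseteq M$ and $B=\acl_\cup(B)\supseteq N$ with $A\eind[K]_M B$, base monotonicity of $\eind[f]$ in each simple $T_i$ yields $\acl_i(NA)\eind[f]_N B$ in $\monster_i$; but the characterization of $\ind$ over the base $N$ demands $\acl_\cup(NA)\eind[f]_N B$ in $\monster_i$, and $\acl_\cup(NA)$ is in general strictly larger than $\acl_i(NA)$. "Close the union up under $\acl_\cup$ and use base monotonicity of the conjuncts" does not bridge this, because base monotonicity in $T_i$ only controls the $\acl_i$-closure. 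Your closed-form $\acl_\cup(X)=\acl_\cap\bigl(\bigcup_{x}\bigcup_i\acl_i(x)\bigr)$ is not something recalled from Section~\ref{section:first-paper}: Proposition~\ref{prop:relaclcomp} gives only $\acl_\cup=\ccl$, an iterated closure, and collapsing $\ccl$ to a single step is precisely where relative disintegratedness must be spent a second time. The actual argument is short: for $\acl_\cup$-closed $N$ and $A$, $\acl_i(NA)=\acl_\cap(\acl_i(N)\acl_i(A))=\acl_\cap(NA)$ for every $i$, so this set is $\acl_i$-closed for all $i$, hence equals $\acl_\cup(NA)$, and the needed independence is already in hand. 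In case (2) the mechanism is different and also unstated in your sketch: there $\acl_\cup=\acl_{i^*}$, so the $i^*$-conjunct gives $\acl_\cup(NA)\eind[f]_N B$ in $\monster_{i^*}$ directly; one then passes to the reduct $\monster_\cap$ (Lemma~\ref{lem:kimreduct}) and invokes the hypothesis that forking in $\monster_i$ agrees with forking in $\monster_\cap$ for $i\neq i^*$ to recover the remaining conjuncts. These two short arguments are the entire content of the simplicity upgrade and cannot be absorbed into "bookkeeping."
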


\noindent
We give a number of examples in Section~6 which show that the strong assumptions of Theorem~\ref{thm:main2} are sharp (fusions are typically $\mathrm{TP}_2$).
We view $(1)$ as lift of the fact that $\ACFA$ is simple and $(2)$ as a lift of the fact that that the expansion of a simple theory by a generic unary predicate is simple.
Case $(2)$ generalizes a result of Tsuboi~\cite{Tsuboi}, see Fact~\ref{fact:tsuboi} below.
The assumptions in $(2)$ are very strong but they are satisfied in more examples then one might expect, in particular the random $n$-hypergraph and the generic tournament.

\medskip \noindent
The main tool required for Theorems~\ref{thm:preservationintro} and \ref{thm:main2} is Theorem~\ref{thm:alex} (following from Proposition~\ref{prop:relaclcomp}).
It is of independent interest.

\begin{thm}
\label{thm:alex}
Suppose $T_\cap$ is stable with weak elimination of imaginaries and $T^*_\cup$ exists.
Let $\sM_\cup \models T^*_\cup$ and $A$ be a subset of $\sM$.
Then
\begin{enumerate}
\item every $L_\cup$-formula $\psi(x)$ is $T_\cup^*$-equivalent to a finite disjunction of formulas
$$\exists y\, \bigwedge_{i\in J} \varphi_i(x,y)$$ where $J\subseteq I$ is finite, $\varphi_i(x,y)$ is an $L_i$-formula for all $i\in J$, and $\bigwedge_{i\in J} \varphi_i(x,y)$ is bounded in $y$,
\item a tuple $b$ is in $\acl_\cup(A)$ if and only if 
\[b \in \acl_{i_n}(\dots (\acl_{i_1}(A))\dots) \text{  for some  } i_1,\ldots,i_n \in I,\]
(i.e. $\acl_\cup(A)$ is the smallest superset of $A$ which is $\acl_i$-closed for all $i \in I$),
\item and if $A$ is $\acl_\cup$-closed then 
$$T_\cup^* \cup \bigcup_{i\in I} \tp_{L_i}(A) \models \tp_{L_\cup}(A).$$
\end{enumerate}
\end{thm}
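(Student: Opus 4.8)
First Morleyize each $T_i$, so that every $T_i$ may be assumed model complete; this changes neither $T^*_\cup$ nor the operators $\acl_i$, and it preserves the hypotheses on $T_\cap$ (Morleyization introduces no new imaginaries and no new definable sets, so stability and weak elimination of imaginaries pass through). Of the three clauses, (1) is the crux — it is essentially Proposition~\ref{prop:relaclcomp} — while (2) follows from (1) by a stationarity-driven amalgamation and (3) is then immediate.

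For (1): by the relative quantifier-elimination / model-completeness results for interpolative fusions established earlier, every $L_\cup$-formula is $T^*_\cup$-equivalent to a Boolean combination of formulas $\exists y\,\bigwedge_{i\in J}\varphi_i(x,y)$ with each $\varphi_i$ an $L_i$-formula. The class $\mathcal C$ of finite disjunctions of such formulas in which $\bigwedge_{i\in J}\varphi_i$ is bounded in $y$ is closed under $\wedge$ and $\vee$, and it contains every quantifier-free $L_\cup$-formula (in disjunctive normal form such a formula is a disjunction of conjunctions of quantifier-free $L_i$-formulas, i.e.\ of blocks with empty, hence vacuously bounded, $y$). An induction on formula complexity thus reduces everything to one claim: a formula $\exists w\,\bigwedge_{i\in J}\varphi_i(x,w)$, with no a priori bound on $w$, is $T^*_\cup$-equivalent to a member of $\mathcal C$. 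To prove this I would work in a monster $\monster\models T^*_\cup$ and combine the description of models of $T^*_\cup$ as interpolative $L_\cup$-structures with the stability of $T_\cap$: the existence of a witness $w$ of $\bigwedge_i\varphi_i(a,w)$ amounts to an amalgamation of the $L_i$-conditions $\varphi_i$ over the common $L_\cap$-type, and since $T_\cap$ is stable with weak elimination of imaginaries, types over $\acl_\cap$-closed sets are stationary, which lets one single out a \emph{canonical} such amalgam — one whose $L_\cap$-part over $\acl_\cup(a)$ is the nonforking extension of its restriction to $\acl_\cap(a)$. The condition ``$w$ is a witness of this canonical shape'' is cut out by an $L_\cap$-formula over $\acl_\cap(a)$, hence — disjoining over the finitely many conjugates of its parameters over $a$ — over $a$; conjoining this with the $\varphi_i$'s and the $L_i$-algebraicity data encoding the canonical amalgam yields a formula in $\mathcal C$. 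Extracting the boundedness of $w$ here is the one genuinely delicate point, and is exactly where stability (and weak elimination of imaginaries) of $T_\cap$ is indispensable.

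For (2): the inclusion $\acl_{i_n}(\cdots\acl_{i_1}(A)\cdots)\subseteq\acl_\cup(A)$ holds because $\acl_i(S)\subseteq\acl_\cup(S)$ for each $i$ and $\acl_\cup$ is idempotent, and this also shows that the union of the iterated closures is the smallest superset of $A$ which is $\acl_i$-closed for all $i$. For the reverse inclusion we may, replacing $A$ by that smallest closure, assume $A$ is $\acl_i$-closed for every $i$ and show $\acl_\cup(A)=A$. Fix $b\notin A$. Then $b\notin\acl_i(A)=A$ for every $i$, so each $\tp_{L_i}(b/A)$ is nonalgebraic; and $\acl_\cap(A)\subseteq\acl_i(A)=A$, so $A$ is $\acl_\cap$-closed and $\tp_{L_\cap}(b/A)$ is stationary. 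Let $(b_j)_{j<\kappa}$ be a Morley sequence in $\tp_{L_\cap}(b/A)$ over $A$, with pairwise distinct terms. Using interpolativity and model-completeness of the $T_i$, I would build a model $\sN_\cup\models T^*_\cup$ containing $A$ in which every $b_j$ is realized with $\tp_{L_i}(b_j/A)=\tp_{L_i}(b/A)$ for all $i$: for each $i$, pass to an elementary extension of a model of $T_i$ over $A$ realizing $\tp_{L_i}(b/A)$ exactly $\kappa$ times, arranging — via stationarity of $\tp_{L_\cap}(b/A)$ — that all these extensions carry one and the same $L_\cap$-structure on the new elements, namely the chosen Morley sequence; interpolativity then amalgamates them into a model of $T_\cup$, which embeds into a model of $T^*_\cup$ preserving all $L_i$-types (by model-completeness of the $T_i$). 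This produces $\kappa$ pairwise distinct realizations of $\tp_{L_\cup}(b/A)$, so $b\notin\acl_\cup(A)$.

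For (3): let $\sM_\cup,\sN_\cup\models T^*_\cup$, with $A\subseteq\sM_\cup$ and $A'\subseteq\sN_\cup$ enumerated so that $\tp_{L_i}(A)=\tp_{L_i}(A')$ for all $i$; we must show $\tp_{L_\cup}(A)=\tp_{L_\cup}(A')$. By (1) it suffices to transfer the truth of formulas $\chi(x)=\exists w\,\bigwedge_{i\in J}\varphi_i(x,w)$ with $\bigwedge_i\varphi_i$ bounded in $w$. If $\sM_\cup\models\chi(a)$ with witness $e$, then boundedness gives $e\in\acl_\cup(A)$, which equals $A$ by (2) since $A$ is $\acl_\cup$-closed; so $e$ is a tuple from $A$, say $e=(a_{k_1},\dots,a_{k_m})$ in the enumeration. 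For each $i\in J$ the $L_i$-formula $\varphi_i(a,a_{k_1},\dots,a_{k_m})$ holds in $\sM_i$ and has parameters in $A$, hence lies in $\tp_{L_i}(A)=\tp_{L_i}(A')$, so $\varphi_i(a',a'_{k_1},\dots,a'_{k_m})$ holds in $\sN_i$; as this is so for all $i\in J$, $\sN_\cup\models\chi(a')$. The converse is symmetric, so $\tp_{L_\cup}(A)=\tp_{L_\cup}(A')$. The whole difficulty is thus concentrated in clause (1), and inside it in producing the \emph{bounded} normal form — the content of Proposition~\ref{prop:relaclcomp}.
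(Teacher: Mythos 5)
Your reduction of everything to clause (1) is the right instinct, and your derivation of (3) from (1) and (2) is essentially the paper's argument (a bounded witness lies in $\acl_\cup(A)=A$ and each conjunct transfers through $\tp_{L_i}(A)$). But the two substantive clauses are not proved. For (1), the step you yourself flag as ``the one genuinely delicate point'' --- extracting boundedness of the witness $w$ --- is exactly the content of the theorem, and the mechanism you propose does not deliver it: a witness of $\exists w\,\bigwedge_{i\in J}\varphi_i(a,w)$ is in general \emph{not} algebraic over $a$, and singling out a ``canonical'' amalgam whose $L_\cap$-part is the nonforking extension neither makes the witness definable over $\acl_\cap(a)$ nor bounds the number of its $L_\cup$-conjugates. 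The paper takes a completely different route: it first proves a semantic amalgamation statement --- the class of $T_\cup^*$-models has the disjoint $\ccl$-amalgamation property, obtained by amalgamating the $L_i$-types of an \emph{entire model} over a $\ccl$-closed substructure, using full existence and stationarity of $\eind[f]$ in $T_\cap$ together with the joint-consistency result over a model (Proposition~\ref{prop:realizingtypes}) --- concluding that $T_\cup^*$ is $\bcl$-complete and $\acl_\cup=\bcl_\cup=\ccl$ (Theorem~\ref{thm:dap}, Corollary~\ref{cor:bclpres}); the syntactic normal form is then extracted by the abstract compactness argument of Theorem~\ref{thm:bclacl}, which shows that $\bcl$-completeness is \emph{equivalent} to every formula being a finite disjunction of b.e.\ formulas. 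Nothing in your sketch supplies a substitute for either half of this.

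Your argument for (2) also has a circularity. Producing $\kappa$ elements $b_j$ whose $L_i$-type over $A$ agrees with that of $b$ for every $i$ does not show $b\notin\acl_\cup(A)$: for the $b_j$ to witness non-algebraicity you need them to realize $\tp_{L_\cup}(b/A)$ in an elementary extension of $\sM_\cup$, and knowing that the family of $L_i$-types over $A$ determines the $L_\cup$-type is precisely clause (3), which you derive only afterwards from (2). (One also cannot patch this by routing through (1): a conjunction $\bigwedge_i\varphi_i(a,y,z)$ bounded in $yz$ relative to $T_\cup^*$ need not have any single conjunct bounded relative to $T_i$, so the witness need not lie in any $\acl_i(A)$.) The paper sidesteps this by proving the \emph{disjoint} $\ccl$-amalgamation property and invoking Proposition~\ref{prop:kcomp}: if $A$ is $\ccl$-closed but not $\acl_\cup$-closed, amalgamating $\sM_\cup$ with itself disjointly over $A$ doubles the finitely many conjugates of a putative algebraic element, a contradiction. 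The missing idea throughout is thus the amalgamation of whole models over closed substructures; without it neither the boundedness in (1) nor the reverse inclusion in (2) goes through.
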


\noindent
Theorem~\ref{thm:alex} may fail without weak elimination of imaginaries, see Section~\ref{section:blow-up}.
In applications of Theorem~\ref{thm:alex} we can usually avoid assuming weak elimination of imaginaries by passing to the $\eq$ of a structure.

\subsection{Conventions and notation}
All languages and theories are first-order.
Throughout, $I$, $(L_i)_{i \in I}$, $L_\cap$, $L_\cup$, $(T_i)_{i \in I}$, $T_\cap$, $T_\cup$, and $T^*_\cup$ are as in the introduction.
We let $\acl_i$ be algebraic closure in $T_i$ and $\acl_\cup$ be algebraic closure in $T_\cup$.
We let $\monster$ be the monster model.
All models other than the monster are small and all sets of parameters are small.

\medskip\noindent  
We work in a multi-sorted setting.
Let $S$ be the set of sorts of $L$.
Suppose $\sM$ is an 
$L$-structure. We use the corresponding capital letter $M$ to denote the $S$-indexed family $(M_s)_{s \in S}$ of underlying sets of the sorts of $\sM$.
By $A \subseteq M$, we mean $A =(A_s)_{s \in S}$ with $A_s \subseteq M_s$ for each $s \in S$.
If $A \subseteq M$, then a tuple of elements (possibly infinite) in $A$ is a tuple whose each component is in $A_s$ for some $s \in S$. If $x = (x_j)_{j\in J}$ is a tuple of variables (possibly infinite), we let $A^x = \prod_{j\in J} A_{s(x_j)} $ where $s(x_j)$ is the sort of the variable $x_j$.
If $\varphi(x,y)$ is an $L$-formula and $b \in M^y$, we let $\varphi(\sM, b)$ be the set defined in $\sM$ by the $L(b)$-formula $\varphi(x,b)$.
We call such $\varphi(\sM, b)$ a definable set in $\sM$ or an $\sM$-definable set. Hence, ``definable'' means ``definable, possibly with parameters''.
If we wish to exclude parameters, we write ``$\emptyset$-definable''.

\section{Preliminaries}\label{sec: preliminaries}

\subsection{Basic properties of interpolative fusions}
\label{section:first-paper}
We work with the following language-independent definition of $T^*_\cup$, from~\cite{firstpaper}.
We say that $\sM_\cup \models T_\cup$ is \textbf{interpolative}
if whenever $J \subseteq I$ is finite, $X_i$ is an $\sM_i$-definable set for all $i \in J$ and $\bigcap_{i \in J} X_i = \emptyset$, then for each $i \in J$ there is an $\sM_\cap$-definable set $Y_i$ such that $X_i \subseteq Y_i$ and $\bigcap_{i \in J} Y_i = \emptyset$.
If the interpolative $T_\cup$-models form an elementary class then we let $T^*_\cup$ be their theory and refer to $T^*_\cup$ as the \textbf{interpolative fusion} of $(T_i)_{i \in I}$ (over $T_\cap$). We also say ``$T^{*}_\cup$ exists'' if the class of interpolative $T_\cup$-models is elementary. In most of this paper we assume existence of $T^*_\cup$.

\begin{fact}[{\cite[Theorem 2.12]{firstpaper}}] \label{fact: first Theorem}
Suppose each $T_i$ is model-complete. 
Then $\sM_\cup \models T_\cup$ is interpolative if and only if it is existentially closed in the class of  $T_\cup$-models.  
Hence,  $T^*_\cup$ is precisely the model companion of $T_\cup$, if either of these exists.
\end{fact}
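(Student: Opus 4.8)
The plan is to establish the biconditional and then invoke the standard fact that the existentially closed models of a theory, when they form an elementary class, are precisely the models of its model companion; the final clause then follows at once. Fix $\sM_\cup\models T_\cup$. Model-completeness will be used in exactly one place: if $\sN_\cup\models T_\cup$ and $\sM_\cup\subseteq\sN_\cup$, then for each $i$ the reduct $\sM_i$ is a substructure of $\sN_i$, both model $T_i$, and hence $\sM_i\preceq\sN_i$. Also, putting a quantifier-free $L_\cup$-formula in disjunctive normal form and sorting its literals by language, one checks that $\sM_\cup$ is existentially closed if and only if: for every finite $J\subseteq I$, every family $(\varphi_i)_{i\in J}$ of $L_i$-formulas over $M$, and every $T_\cup$-model $\sN_\cup\supseteq\sM_\cup$, $\bigcap_{i\in J}\varphi_i(\sN_i)\neq\emptyset$ implies $\bigcap_{i\in J}\varphi_i(\sM_i)\neq\emptyset$. (By model-completeness we may always take the $\varphi_i$ existential, so that $\exists\bar y\,\bigwedge_{i\in J}\varphi_i$ is an existential $L_\cup$-sentence.)

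First I would prove that interpolativity implies existential closedness. Given $J$, $(\varphi_i)_{i\in J}$, $\sN_\cup\supseteq\sM_\cup$ and a witness $\bar d\in\sN_\cup$ for $\bigcap_{i\in J}\varphi_i(\sN_i)\neq\emptyset$, suppose toward a contradiction that $\bigcap_{i\in J}\varphi_i(\sM_i)=\emptyset$. Interpolativity, applied to $X_i:=\varphi_i(\sM_i)$, supplies $L_\cap$-formulas $\theta_i$ over $M$ with $\varphi_i(\sM_i)\subseteq\theta_i(\sM_\cap)$ and $\bigcap_{i\in J}\theta_i(\sM_\cap)=\emptyset$. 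Since $\sM_i\preceq\sN_i$, the sentence $\forall\bar y\,(\varphi_i(\bar y)\rightarrow\theta_i(\bar y))$ passes up to $\sN_i$, so $\sN_i\models\theta_i(\bar d)$ and hence $\sN_\cap\models\bigwedge_{i\in J}\theta_i(\bar d)$. The point now is that $q:=\mathrm{tp}_{L_\cap}(\bar d/M)$, computed in $\sN_\cap$, is the restriction to $L_\cap$ of $\mathrm{tp}_{L_i}(\bar d/M)$ computed in $\sN_i\succeq\sM_i$; the latter type is finitely satisfiable in $\sM_i$, so $q$ is finitely satisfiable in $\sM_\cap$ and is therefore realized in some $\sM'_\cap\succeq\sM_\cap$. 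As $q$ contains every $\theta_i$, this yields $\sM'_\cap\models\exists\bar y\,\bigwedge_{i\in J}\theta_i$, contradicting $\bigcap_{i\in J}\theta_i(\sM_\cap)=\emptyset$. Hence $\bigcap_{i\in J}\varphi_i(\sM_i)\neq\emptyset$, so $\sM_\cup$ is existentially closed.

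Next I would prove the converse. Assume $\sM_\cup$ is existentially closed, let $J\subseteq I$ be finite, and let $(\varphi_i)_{i\in J}$ be existential $L_i$-formulas over $M$ with $\bigcap_{i\in J}\varphi_i(\sM_i)=\emptyset$; we must build a separating family. Any model of $\bigcup_{i\in I}\mathrm{Ediag}_{L_i}(\sM_i)$ is, via the constants naming $M$, a $T_\cup$-model $\sN_\cup\supseteq\sM_\cup$ with $\sM_i\preceq\sN_i$ for all $i$ (the inclusion $M\hookrightarrow N$ is an $L_i$-embedding for every $i$, hence an $L_\cup$-embedding); so existential closedness forces $\bigcup_{i\in I}\mathrm{Ediag}_{L_i}(\sM_i)\cup\{\exists\bar y\,\bigwedge_{i\in J}\varphi_i(\bar y)\}$ to be inconsistent. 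Introducing fresh constants $\bar c$ for $\bar y$, the theory $\bigcup_{i\in I}\Lambda_i$ is inconsistent, where $\Lambda_i:=\mathrm{Ediag}_{L_i}(\sM_i)$ for $i\notin J$ and $\Lambda_i:=\mathrm{Ediag}_{L_i}(\sM_i)\cup\{\varphi_i(\bar c)\}$ for $i\in J$, each an $L_i$-theory in the constants $M\cup\bar c$. Since $L_i\cap L_j=L_\cap$ for $i\neq j$, iterated Craig interpolation — equivalently, Robinson joint consistency across the $L_i$ over the common sublanguage $L_\cap$ — produces $L_\cap$-sentences $\tau_i$ (over $M\cup\bar c$, with no occurrence of $\bar c$ when $i\notin J$) such that $\Lambda_i\models\tau_i$ for each $i$ and $\bigcup_{i\in I}\{\tau_i\}$ is inconsistent. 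For $i\in J$, replacing the constants $\bar c$ by the variables $\bar y$ gives $\sM_i\models\forall\bar y\,(\varphi_i(\bar y)\rightarrow\tau_i(\bar y))$, hence $\varphi_i(\sM_i)\subseteq\tau_i(\sM_\cap)=:Y_i$; for $i\notin J$ the sentence $\tau_i$ holds in $\sM_\cap$, so inconsistency of $\bigcup_i\{\tau_i\}$ gives $\sM_\cap\models\neg\exists\bar y\,\bigwedge_{i\in J}\tau_i(\bar y)$, i.e.\ $\bigcap_{i\in J}Y_i=\emptyset$. Thus $(Y_i)_{i\in J}$ is the required $\sM_\cap$-definable separating family, and $\sM_\cup$ is interpolative. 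Combining the two implications with the first paragraph gives the stated equivalence; since the existentially closed models of $T_\cup$ thereby coincide with its interpolative models, the latter form an elementary class precisely when $T_\cup$ has a model companion, and in that case $T^*_\cup$ is that model companion.

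The main obstacle is the converse direction: one has to convert the purely syntactic inconsistency handed over by existential closedness into an honest $L_\cap$-definable separator, and the right instrument is the multi-language Craig interpolation (Robinson joint consistency) theorem applied to $\bigcup_{i\in I}\mathrm{Ediag}_{L_i}(\sM_i)$ — which is essentially the source of the word ``interpolative''. In the forward direction the only slightly delicate point is that the $L_\cap$-type of the witness $\bar d$ over $M$ is realized in an elementary extension of $\sM_\cap$ even though $\sM_\cap$ need not be elementary in $\sN_\cap$; this is exactly where model-completeness of the $T_i$ is used.
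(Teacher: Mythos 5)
This statement is imported verbatim from the companion paper (it is cited as Theorem~2.12 of \cite{firstpaper}), so there is no proof in the present document to compare against; judged on its own, your argument is essentially the standard one and is correct, and the use of Craig interpolation/Robinson joint consistency to manufacture the $L_\cap$-separators is exactly the mechanism behind the word ``interpolative'' (compare Fact~\ref{fact:consistency}). Two small remarks. First, in the forward direction your detour through finite satisfiability of $\tp_{L_\cap}(\bar d/M)$ is unnecessary: since $L_\cap\subseteq L_i$ and model-completeness gives $\sM_i\elesub\sN_i$, you already have $\sM_\cap\elesub\sN_\cap$, so $\sN_\cap\models\exists \bar y\,\bigwedge_{i\in J}\theta_i(\bar y)$ transfers down to $\sM_\cap$ immediately and contradicts $\bigcap_{i\in J}\theta_i(\sM_\cap)=\emptyset$. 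Second, the reduction you state parenthetically --- that an existentially closed model need only be tested against sentences $\exists\bar y\,\bigwedge_{i\in J}\varphi_i(\bar y)$ with each $\varphi_i$ an $L_i$-formula, obtained by ``putting a quantifier-free formula in DNF and sorting its literals by language'' --- is not quite right as written: an atomic $L_\cup$-formula such as $f_1(f_2(x))=y$ with $f_1\in L_1$ and $f_2\in L_2$ is a single literal belonging to no $L_i$. This is repaired by introducing existentially quantified auxiliary variables to flatten the terms, i.e., by the E$\flat$-decomposition of Fact~\ref{fact:eflat} together with Remark~\ref{rem:flat}(1); this step is genuinely needed for the direction ``interpolative $\Rightarrow$ existentially closed'' (otherwise you have only verified closure under a proper subclass of existential sentences), but it is routine and is precisely the machinery the paper develops for this purpose. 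With that supplied, both directions and the final clause about the model companion go through.
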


\begin{rem}[{\cite[Remark 2.5]{firstpaper}}] \label{rem: robust}
If we change languages in a way that does not change the class of definable sets (with parameters), then the class of interpolative $L_\cup$-structures is not affected. 
In particular:

\begin{enumerate}[leftmargin=*]
\item Any expansion of an interpolative structure by constants (to any of the languages involved) is interpolative.
\item Let $L^{\diamondsuit}_\square$ be an expansion by definitions of $L_\square$ for $\square \in I\cup \{ \cap\}$, $L^{\diamondsuit}_i \cap L^{\diamondsuit}_j = L^{\diamondsuit}_\cap$ for distinct $i$ and $j$ in $I$, and $L^{\diamondsuit}_\cup = \bigcup_{i\in I} L^\diamondsuit_i$ is the resulting expansion by definitions of $L_\cup$. Then any $L_\cup$-structure $\sM_\cup$ has a canonical expansion $\sM_\cup^\diamondsuit$ to an $L_\cup^\diamondsuit$-structure, and $\sM_\cup$ is interpolative  if and only if $\sM_\cup^{\diamondsuit}$ is interpolative.
\item An interpolative $\sM_\cup$-structure remains interpolative after each function symbol $f$ in each of the languages $L_\square$ for $\square\in I\cup \{\cup,\cap\}$ is replaced by a relation symbol $R_f$, interpreted as the graph of the interpretation of $f$ in $\sM_\cup$.
    \item Suppose $\sM_\cup$ is an $L_\cup$-structure.
    Moving to $\sM_\cap^\eq$ involves the introduction of new sorts and function symbols for quotients by $L_\cap$-definable equivalence relations on $M$.
    For all $\square\in I\cup \{\cup,\cap\}$, let $L_\square^{\cap-\eq}$ be the language expanding $L_\square$ produced by adding new symbols for $L_\square$-definable equivalence relations, and let $\sM_\square^{\cap-\eq}$ be the natural expansion of $\sM_\square$ to $L_\square^{\cap-\eq}$.
    Then $\sM_\cup$ is interpolative if and only if $\sM_\cup^{\cap-\eq}$ is interpolative. This follows from the fact that if $X_\square$ is an $\sM_\square^{\cap-\eq}$-definable set in one of the new sorts, corresponding to the quotient of $M^x$ by an $L_\cap$-definable equivalence relation, then the preimage of $X_\square$ under the quotient is $\sM_\square$-definable.
\end{enumerate} 
\end{rem}

\noindent  By Remark~\ref{rem: robust}(2), we are always free to Morleyize each theory $T_i$, thereby reducing to the case where each $T_i$ has quantifier elimination. Theories with quantifier elimination are model-complete, so Fact~\ref{fact: first Theorem} applies in this situation. However, we will not make a global assumption of quantifier elimination or model-completeness, as it is sometimes desirable to study a theory in its native language.






\medskip \noindent We recall the following results from~\cite{firstpaper}. Fact~\ref{fact:extension} can be read as saying that $T_\cup^*$ is the model companion of $T_\cup$ ``relative" to $(T_i)_{i \in I}$.

\begin{fact}[{\cite[Theorem 2.7]{firstpaper}}]\label{fact:extension}
Assume $T_\cup^*$ exists.
\begin{enumerate}
    \item For any $\sM_\cup \models T_\cup$ there is $\sM_\cup \subseteq \sN_\cup\models T_\cup^*$ such that $\sM_i\elesub \sN_i$ for all $i \in I$. 
    \item If $\sM_\cup\subseteq \sN_\cup$ are both  $T_\cup^*$-models and $\sM_i\elesub \sN_i$ for all $i\in I$, then $\sM_\cup\elesub\sN_\cup$.
\end{enumerate}
\end{fact}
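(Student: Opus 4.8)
\textbf{Proof proposal for Fact~\ref{fact:extension}.}

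The plan is to prove both parts by a standard chain/union-of-chains argument, using the key fact that interpolativity is an ``eventually satisfied'' condition along elementary chains of the reducts. First I would set up the right notion of approximation: call $\sN_\cup \models T_\cup$ an \emph{elementary extension of $\sM_\cup$ relative to $(T_i)$} if $\sM_\cup \subseteq \sN_\cup$ and $\sM_i \elesub \sN_i$ for all $i \in I$. Part (2) is then a soft observation: if both structures are interpolative $T_\cup$-models and $\sM_i \elesub \sN_i$ for each $i$, then every $L_\cup$-formula is, modulo $T^*_\cup$, equivalent to a boolean combination of the ``interpolated'' formulas $\exists y \bigwedge_{i \in J}\varphi_i(x,y)$ controlled by the reducts — but to avoid circularity with later sections I would instead argue directly: an $L_\cup(M)$-formula $\psi(x)$ and a parameter $b \in N^x$; I want to transfer $\sN_\cup \models \psi(b)$ to a statement about $\sM_\cup$. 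The cleanest route is a back-and-forth / quantifier-by-quantifier induction on $\psi$, where the atomic case is immediate (same $L_\cup$-structure on $M$), boolean cases are trivial, and the existential quantifier case is handled by the defining property of interpolativity together with $\sM_i \elesub \sN_i$: a witness in $\sN_\cup$ to an existential over a conjunction of $L_i$-conditions can be pulled down because the relevant $L_i$-definable sets are nonempty in $\sM_i$ by elementarity, and interpolativity forces the intersection over $i$ to be nonempty as well in $\sM_\cup$. So I expect Part (2) to follow from the ``interpolation'' property applied finitely many times inside an induction.

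For Part (1), I would build $\sN_\cup$ as the union of an elementary chain $(\sM^\alpha_\cup)_{\alpha < \lambda}$ of $L_\cup$-structures, each a model of $T_\cup$, such that $\sM^0_\cup = \sM_\cup$, the chain is elementary in each reduct $L_i$ (so $\sM^\alpha_i \elesub \sM^\beta_i$ for $\alpha < \beta$), and at successor stages we ``repair'' one potential failure of interpolativity. Concretely, enumerate all tasks: a task records a finite $J \subseteq I$, definable sets $X_i$ (given by $L_i$-formulas with parameters from the current model) for $i \in J$ with $\bigcap_{i \in J} X_i = \emptyset$, for which no $L_\cap$-definable separating family $(Y_i)$ currently exists. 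At a successor step, given such an unrepairable task in $\sM^\alpha_\cup$, I claim there is an elementary extension (in every reduct) $\sM^\alpha_\cup \preceq_i \sM^{\alpha+1}_\cup$ which is still a $T_\cup$-model and in which the task is repaired. The mechanism is the usual one for model companions: if no $L_\cap$-definable $(Y_i)$ separates the $X_i$, then by compactness (and stability-free, purely logical reasoning about the $L_\cap$-reduct, using that $L_i \cap L_j = L_\cap$) the type asserting ``there is a point in every $X_i$'' is consistent with $T_\cup \cup \mathrm{ElDiag}(\sM^\alpha_i)$ for all $i$ simultaneously — the compatibility across the $i$ is exactly what the \emph{failure} of separation by an $L_\cap$-set guarantees, via a Robinson-style joint-consistency/amalgamation over the common reduct. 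Realizing this type in an extension elementary in each $\sM^\alpha_i$ repairs the task. Iterating for $\lambda = |M| + |L_\cup| + \aleph_0$ steps (or to a suitable fixed point, re-enumerating tasks cofinally so each is handled after it appears) and taking the union gives $\sN_\cup$; it is a $T_\cup$-model (union of $T_\cup$-models, and $T_\cup$ is $\forall\exists$ after Morleyizing, or directly since each $T_i$ is preserved under unions of $L_i$-elementary chains), it is interpolative because every potential failure involves finitely many parameters hence appears at some stage and gets repaired, and $\sM_i \elesub \sN_i$ by construction.

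The main obstacle is the successor step: verifying that the ``diagonal'' type — a point lying in $X_i$ for every $i \in J$ — is jointly consistent with the elementary diagrams of all the reducts precisely when the $X_i$ cannot be separated by $L_\cap$-definable sets. This is where the shared-reduct hypothesis $L_i \cap L_j = L_\cap$ does the work, and one must be careful that amalgamating the extensions of the various $\sM^\alpha_i$ over the common $L_\cap$-reduct $\sM^\alpha_\cap$ yields a single $L_\cup$-structure (i.e., the extensions glue without conflict outside $L_\cap$), which is automatic since the non-$L_\cap$ symbols of distinct $L_i$ are disjoint. A secondary point requiring care is bookkeeping: ensuring that after infinitely many repairs no \emph{new} failure created by newly-added parameters escapes treatment, which is handled by a standard cofinal re-enumeration so that every task appearing at any stage is processed at some later stage. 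Both of these are routine once the logical picture is set up, so I would present the successor-step consistency claim as the one genuine lemma and let the rest be chain-chasing.
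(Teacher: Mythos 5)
This statement is imported from the companion paper (cited as \cite[Theorem 2.7]{firstpaper}), so the present paper contains no proof of it; still, the intended mechanism is visible in the surrounding machinery (Fact~\ref{fact: first Theorem}, Remark~\ref{rem: robust}(2), Fact~\ref{fact:consistency}, and the alternative proof of Proposition~\ref{prop:relmc}(2)). Your part (1) is essentially right: a chain that is elementary in every reduct, with the successor step repairing one failure of interpolativity at a time. The one genuine lemma you isolate is correct and is exactly where Fact~\ref{fact:consistency} enters: letting $\Sigma_i(x)$ be the set of $L_\cap(M)$-consequences of ``$x\in X_i$'' modulo $\Th_{L_i}(\sM_i,M)$, inconsistency of $\bigcup_{i\in J}\Sigma_i$ yields by compactness a separating family $(Y_i)$, so if no separation exists one extends $\bigcup_i\Sigma_i$ to a complete $L_\cap(M)$-type $p_\cap$, completes each $p_\cap\cup\{x\in X_i\}$ to a complete $L_i(M)$-type, and applies Robinson joint consistency to realize the union in a structure that is $L_i$-elementary over each $\sM_i$. (A shorter route: Morleyize so $T_\cup^\diamondsuit$ is inductive, embed into an existentially closed model by the usual chain argument, and identify e.c.\ with interpolative via Fact~\ref{fact: first Theorem}.)

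Part (2) as written has a genuine gap. The quantifier-by-quantifier induction breaks at the existential step: given $\sN_\cup\models\exists y\,\psi(a,y)$ with $a\in M$, the witness lies in $N$, the inductive hypothesis (absoluteness of $\psi$ on tuples from $M$) says nothing about it, and an arbitrary $L_\cup$-formula $\psi$ is \emph{not} a conjunction of $L_i$-formulas, so interpolativity --- which only constrains finite intersections of $L_i$-definable sets --- does not apply to it. Your pull-down argument is valid precisely for formulas of the form $\exists y\,\bigwedge_{i\in J}\varphi_i(x,y)$ with each $\varphi_i$ an $L_i$-formula; what it actually proves (after flattening quantifier-free matrices) is that $\sM_\cup$ is existentially closed in $\sN_\cup$, and ``existentially closed in'' does not imply ``elementary in'' for a single pair of models. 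Upgrading it requires exactly the normal form you were trying to avoid, and there is no circularity in using it: after Morleyizing, Fact~\ref{fact: first Theorem} (proved independently of the present statement) says $(T_\cup^\diamondsuit)^*$ is the model companion of the inductive theory $T_\cup^\diamondsuit$, hence model-complete; the hypothesis $\sM_i\elesub\sN_i$ for all $i$ is precisely what makes $\sM_\cup^\diamondsuit\subseteq\sN_\cup^\diamondsuit$ an $L_\cup^\diamondsuit$-embedding, and model-completeness then gives $\sM_\cup\elesub\sN_\cup$. This is the route the paper itself exhibits in the alternative proof of Proposition~\ref{prop:relmc}(2), and you should take it.
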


\noindent 
Fact~\ref{fact:consistency} is a variation of the Robinson joint consistency theorem.

\begin{fact}[{\cite[Corollary 2.3]{firstpaper}}]
\label{fact:consistency}
Suppose $p_\cap(x)$ 
is a complete $L_\cap$-type and  $p_i(x)$ is a complete $L_i$-type extending $p_\cap(x)$ for all $i \in I$.
Then $\bigcup_{i\in I} p_i(x)$ is a consistent (partial) $L_\cup$-type.
\end{fact}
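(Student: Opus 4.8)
The plan is to realize $\bigcup_{i\in I} p_i(x)$ inside a single $L_\cup$-structure by an application of the Robinson joint consistency theorem, with the shared complete reduct type $p_\cap$ playing the role of the common complete base. The ``variation'' in the statement is obtained by the standard device of naming the free variables with fresh constants, which converts the three types into theories to which Robinson's theorem directly applies.

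First I would introduce, for each coordinate $x_j$ of the (possibly infinite) tuple $x$, a new constant symbol $c_j$ of the sort $s(x_j)$, belonging to none of the $L_i$; write $c = (c_j)_j$ and $L^c_\square := L_\square \cup \{c_j : j\}$ for $\square \in I \cup \{\cap\}$. For a set of formulas $q(x)$ let $q(c)$ denote the set of $L^c$-sentences obtained by substituting $c$ for $x$. Two observations carry the argument. First, $p_\cap(c)$ is a \emph{complete} and consistent $L^c_\cap$-theory: completeness is exactly the hypothesis that $p_\cap$ is a complete $L_\cap$-type, since $p_\cap$ decides every $L_\cap$-formula in the variables $x$---in particular every $L_\cap$-sentence, viewed as a formula with no free occurrences---so $p_\cap(c)$ decides every $L^c_\cap$-sentence, and consistency is immediate from any realization of $p_\cap$. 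Second, for each $i$ the type $p_i$ is realized in some $L_i$-structure, so $p_i(c)$ is a consistent $L^c_i$-theory, and $p_\cap \subseteq p_i$ gives $p_\cap(c) \subseteq p_i(c)$. Finally, the languages $L^c_i$ pairwise intersect exactly in $L^c_\cap$, because the new constants are shared by all of them while $L_i \cap L_j = L_\cap$ for $i\neq j$ by the standing hypothesis.

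Thus $(L^c_i)_{i\in I}$ is a family of languages whose pairwise intersection is $L^c_\cap$, the $L^c_i$-theories $p_i(c)$ are consistent, and they all extend the common \emph{complete} $L^c_\cap$-theory $p_\cap(c)$; the many-language Robinson joint consistency theorem then yields that $\bigcup_{i\in I} p_i(c)$ is consistent. Taking the $L_\cup$-reduct of any of its models and reading the constants $c$ back as witnesses produces an $L_\cup$-structure realizing $\bigcup_{i\in I} p_i(x)$, which is the claim. The only real obstacle is the reduction to this clean Robinson application, and it hinges entirely on the completeness of $p_\cap(c)$: this is precisely what the completeness of the common type $p_\cap$ provides, and it is what forbids some $L_\cap$-sentence from being forced one way in one $p_i$ and the opposite way in another. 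For the many-language form of the theorem (needed when $I$ is infinite) I would reduce to the classical two-language version by compactness: a finite inconsistency in $\bigcup_{i\in I} p_i(c)$ involves only finitely many indices $i_1,\dots,i_n$, and one removes them one at a time, at the $k$-th step splitting the language as $\bigl(\bigcup_{\ell<k} L^c_{i_\ell}\bigr)$ versus $L^c_{i_k}$---which intersect exactly in $L^c_\cap$ since intersection distributes over the union---and applying the two-language theorem with base $p_\cap(c)$ to the consistent theories $\bigcup_{\ell<k} p_{i_\ell}(c)$ and $p_{i_k}(c)$, both of which contain $p_\cap(c)$.
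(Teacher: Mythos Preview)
Your argument is correct and is exactly the approach the paper indicates: the statement is recorded as a Fact with a citation to the companion paper, and the only comment offered is that it ``is a variation of the Robinson joint consistency theorem,'' which is precisely what you carry out by naming the variables with constants and iterating the two-language theorem via compactness.
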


\noindent Note that Fact~\ref{fact:consistency} merely tells us that $\bigcup_{i\in I} p_i(x)$ is consistent, not that it is consistent with a background $L_\cup$-theory (e.g., $T_\cup^*$ or the elementary diagram of a model). We now rectify this situation, under the assumption that $T_\cup^*$ exists.

\begin{prop}\label{prop:realizingtypes}
Assume $T_\cup^*$ exists and $\sM_\cup\models T_\cup^*$. Let $p_\cap(x)$ 
be a complete $L_\cap(M)$-type, and for all $i\in I$, let $p_i(x)$ be a complete $L_i(M)$-type such that $p_\cap(x)\subseteq p_i(x)$. Then $\bigcup_{i\in I} p_i(x)$ is realized in an elementary extension of $\sM_\cup$.
\end{prop}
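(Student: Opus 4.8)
The plan is to pass to the monster model, realize the union $\bigcup_{i\in I}p_i(x)$ somewhere inside an $L_\cup$-structure (using Fact~\ref{fact:consistency}), then use the extension property of interpolative fusions (Fact~\ref{fact:extension}) to push the realization into an elementary extension of $\sM_\cup$ that sees the right types over $M$. Concretely, let $a$ be a tuple (in a suitably saturated monster) realizing $\bigcup_{i\in I}p_i(x)$; such $a$ exists by Fact~\ref{fact:consistency} applied to the types obtained from the $p_i$, and the point is that $a$ realizes each $p_i$ as an $L_i(M)$-type, i.e.\ the $L_i$-type of $a$ over $M$ is exactly $p_i$.

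First I would build an $L_\cup$-structure containing both (a copy of) $\sM_\cup$ and the tuple $a$, with the property that its $L_i$-reduct contains $\sM_i$ as a substructure and the $L_i$-type of $a$ over $M$ is $p_i$ for each $i\in I$. The cleanest way to do this is to work inside a large monster model of $T_\cup$: take $\sM_\cup\models T_\cup^*$ sitting inside $\monster\models T_\cup$ (or an elementary extension thereof obtained from Fact~\ref{fact:extension}(1)); then for each $i$ the structure $\monster_i\models T_i$ contains $\sM_i$, and by saturation and Fact~\ref{fact:consistency} there is a tuple $a$ in $\monster$ with $\tp_{L_i}(a/M)=p_i$ for all $i$ simultaneously --- here one needs that the partial $L_\cup(M)$-type $\bigcup_{i\in I}p_i(x)$ is consistent with the elementary diagram of $\sM_\cup$ in $L_\cup$, which is exactly the subtle point. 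I would handle this by the chain argument: apply Fact~\ref{fact:extension}(1) to embed $\sM_\cup$ in an interpolative model $\sN_\cup^{0}$ with $\sM_i\elesub\sN_i^{0}$; then inside $\sN_\cup^{0}$, or in a further extension, locate a realization. The key is that since $\sM_i\elesub\sN_i^0$, the type $p_i$ over $M$ (a complete $L_i(M)$-type) is still a complete $L_i(M)$-type relative to $\sN_i^0$, so we may iterate.

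The main obstacle, and the heart of the argument, is producing a single structure in which $a$ simultaneously realizes all the $p_i$ \emph{over the same copy of $M$} while that copy of $M$ remains an elementary $L_\cup$-substructure: Fact~\ref{fact:consistency} gives consistency of $\bigcup p_i$ as a bare $L_\cup$-type but says nothing about compatibility with $\Th(\sM_\cup)_{L_\cup}$ or with the diagram. I expect to resolve this by a compactness/amalgamation argument: enumerate $M=(m_j)_j$, and for finitely many parameters at a time use Fact~\ref{fact:consistency} over the (complete $L_\cap$-type of those parameters together with $x$) to interpolate, then invoke the hypothesis that $T_\cup^*$ exists --- i.e.\ interpolative $T_\cup$-models form an elementary class --- to conclude that the relevant partial type is finitely satisfiable in an interpolative extension, hence realized in an elementary extension of $\sM_\cup$. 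Once $a$ is found in some $\sN_\cup\models T_\cup^*$ with $\sM_i\elesub\sN_i$ (so that $\sM_\cup\elesub\sN_\cup$ by Fact~\ref{fact:extension}(2)) and $\tp_{L_i}(a/M)=p_i$, we are done. I would end by remarking that the same argument, run with $\bigcup_{i\in I}p_i$ replaced by any consistent collection of complete $L_i(M)$-types with common $L_\cap(M)$-restriction, shows that $\sN_\cup$ can be taken $|M|^+$-saturated, which is how this proposition will actually be used downstream.
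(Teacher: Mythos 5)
You have the right ingredients (Fact~\ref{fact:consistency} and Fact~\ref{fact:extension}) and you state the correct endgame: find $a$ in some $\sN_\cup\models T_\cup^*$ with $\sM_i\elesub\sN_i$ and $\tp_{L_i}(a/M)=p_i$ for all $i$, then conclude $\sM_\cup\elesub\sN_\cup$ from Fact~\ref{fact:extension}(2). But the step you yourself flag as ``the heart of the argument'' --- producing a single structure in which $a$ realizes all the $p_i$ over a copy of $M$ sitting correctly inside the model --- is left to a vague compactness/amalgamation sketch (enumerating $M$, interpolating over finitely many parameters at a time, and invoking elementarity of the class of interpolative models to get ``finite satisfiability in an interpolative extension''). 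None of this is carried out, and the appeal to elementarity of the class does not by itself deliver the finite satisfiability you need. This is a genuine gap.

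The missing observation is simple: each $p_i$ is a \emph{complete} $L_i(M)$-type, hence contains $\ediag(\sM_i)$. So if $a$ realizes $\bigcup_{i\in I}p_i$ in any $L_\cup(M)$-structure $\sN_\cup$ --- and such a structure exists by Fact~\ref{fact:consistency}, applied after naming $M$ by constants --- then automatically $\sM_i\elesub\sN_i$ for every $i$, hence $\sM_\cup\subseteq\sN_\cup\models T_\cup$, with no chain, no iteration, and no parameter-by-parameter interpolation. The only remaining issue is that $\sN_\cup$ need not be interpolative; Fact~\ref{fact:extension}(1) fixes this by embedding $\sN_\cup$ into some $\sM_\cup'\models T_\cup^*$ with $\sN_i\elesub\sM_i'$, which preserves the realization of each $p_i$, and then Fact~\ref{fact:extension}(2) yields $\sM_\cup\elesub\sM_\cup'$. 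Note that $L_\cup$-elementarity over $\sM_\cup$ is obtained only at this very last step; your proposal tries to secure compatibility with $\Th_{L_\cup}(\sM_\cup)$ too early, which is exactly what makes the intermediate step look hard.
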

\begin{proof}
By Fact~\ref{fact:consistency}, $\bigcup_{i\in I} p_i(x)$ is consistent. Suppose it is realized by $a$ in a model $\sN_\cup$. Since each $p_i$ is a complete $L_i(M)$-type, $\ediag(\sM_i)\subseteq p_i(x)$, so we may assume $\sM_\cup\subseteq \sN_\cup$ and $\sM_i\elesub \sN_i$ for all $i\in I$. In particular, $\sN_\cup\models T_\cup$, but we may not have $\sN_\cup\models T_\cup^*$.

By Fact~\ref{fact:extension}(1), there exists $\sM_\cup'\models T_\cup^*$ such that $\sN_\cup\subseteq \sM_\cup'$ and $\sN_i\elesub\sM_i'$ for all $i\in I$. In particular, $a$ satisfies $\bigcup_{i\in I} p_i(x)$ in $\sM_\cup'$. Also, $\sM_i\elesub \sM_i'$ for all $i\in I$, so by Fact~\ref{fact:extension}(2), $\sM_\cup\elesub \sM_\cup'$. 
\end{proof}

\noindent Proposition~\ref{prop:realizingtypes} is a useful tool for realizing types, but it can only be applied to those $L_\cup$-types which are entailed by a union of $L_i$-types. We identify complete $L_\cup$-types with this property in Section~\ref{sec:morleyization}. 

\subsection{An existence result}
\label{section:existence}
Much of \cite{firstpaper} is devoted to developing general conditions ensuring existence of $T^*_\cup$.
For the examples in the present paper, we will only need to use one result, Fact~\ref{fact:minh} below.
The special case of Fact~\ref{fact:minh} when $T_\cap$ is the theory of an infinite set is due to Winkler~\cite{Winkler}.

\begin{fact}
\label{fact:minh}
Suppose $T_\cap$ is complete, $\aleph_0$-stable, and $\aleph_0$-categorical.
Suppose one of the following holds
\begin{enumerate}
\item $T_i^{\eq}$ eliminates $\exists^\infty$ for all $i \in I$, or
\item $T_\cap$ weakly eliminates imaginaries and each $T_i$ eliminates $\exists^\infty$.
\end{enumerate}
Then $T^{*}_\cup$ exists.
\end{fact}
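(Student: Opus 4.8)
The plan is to reduce Fact~\ref{fact:minh} to an existence criterion for $T^*_\cup$ already established in~\cite{firstpaper}. Recall from that paper that $T^*_\cup$ exists provided the interpolative $T_\cup$-models form an elementary class, and the standard way to verify this is to produce a uniform bound: for each finite $J \subseteq I$ and each pair of finite tuples of $L_i$-formulas $\varphi_i(x,y_i)$ ($i \in J$), there should be a single $L_\cap$-formula (or finitely many), depending only on the $\varphi_i$, that can serve as the separating $Y_i$ whenever the corresponding instances $X_i = \varphi_i(\sM_i, b_i)$ have empty intersection. Equivalently, one wants the ``interpolation'' condition to be expressible by a first-order scheme, which follows once one knows that in any sufficiently saturated $T_\cup$-model, emptiness of $\bigcap_{i \in J} X_i$ can be detected at the level of the $L_\cap$-structure with bounded resources. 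The key tool making this work under the hypotheses on $T_\cap$ is that $\aleph_0$-categorical, $\aleph_0$-stable $T_\cap$ is \emph{totally transcendental} and has, in each $T_\cap$-definable set, a well-behaved notion of Morley rank and degree that is definable in families; this lets us replace an arbitrary $L_\cap$-definable superset $Y_i$ of $X_i$ by one drawn from a finite menu determined by the $\varphi_i$.

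First I would set up the Morleyization reduction: by Remark~\ref{rem: robust}(2) we may assume each $T_i$ has quantifier elimination, hence is model-complete, so by Fact~\ref{fact: first Theorem} existence of $T^*_\cup$ is equivalent to existence of a model companion for $T_\cup$, and interpolative models coincide with existentially closed models of $T_\cup$. Next I would invoke the relevant existence theorem from~\cite{firstpaper} — the one phrased in terms of $T_\cap$ being $\aleph_0$-categorical and $\aleph_0$-stable together with an ``elimination of $\exists^\infty$'' style hypothesis on the $T_i$ (or their $\eq$'s) — and check that cases (1) and (2) of Fact~\ref{fact:minh} are exactly the two sets of hypotheses under which that theorem applies. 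Case (1) is the direct application. For case (2), I would use that $T_\cap$ weakly eliminates imaginaries to transfer elimination of $\exists^\infty$ from $T_i$ to $T_i^{\eq}$ as far as is needed: the point is that any imaginary sort relevant to the interpolation argument over $T_\cap$ is, up to the weak elimination, controlled by a real tuple together with a finite set, so the $\exists^\infty$-elimination in the real sorts of $T_i$ suffices to run the same counting argument in $T_i^{\eq}$. This is where Remark~\ref{rem: robust}(4), on passing to $\sM_\cap^\eq$ without disturbing interpolativity, does the bookkeeping.

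The remaining step is to package the conclusion: once the cited existence theorem from~\cite{firstpaper} applies, it yields directly that the class of interpolative $T_\cup$-models is elementary, i.e.\ that $T^*_\cup$ exists, and we are done. I would also remark that the special case where $T_\cap$ is the theory of an infinite set recovers Winkler's result~\cite{Winkler}, since that theory is $\aleph_0$-categorical, $\aleph_0$-stable (indeed strongly minimal), weakly eliminates imaginaries, and the surrounding hypotheses reduce to Winkler's.

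\textbf{Main obstacle.} The genuinely delicate point is case (2): handling imaginaries. Elimination of $\exists^\infty$ in $T_i$ does not automatically pass to $T_i^{\eq}$, so one cannot simply quote case (1) after moving to $\eq$; instead one must exploit weak elimination of imaginaries in $T_\cap$ specifically, to ensure that the $L_\cap$-definable equivalence relations that arise in the interpolation argument are tame enough (each imaginary interdefinable with a real element over a finite set) that the $\exists^\infty$-counting in the real sorts of $T_i$ still governs fibers of the relevant $L_i$-definable quotient maps. Getting this transfer exactly right — matching which sorts need the hypothesis against which sorts the hypothesis is assumed on — is the crux, and it is precisely the reason the two cases are stated separately rather than merged.
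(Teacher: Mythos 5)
The statement you were asked to prove is labelled a \emph{Fact} in this paper precisely because it is not proved here: it is imported wholesale from the companion paper \cite{firstpaper}, where existence of $T^*_\cup$ under these hypotheses is established by writing down an explicit first-order axiomatization of the class of interpolative $T_\cup$-models. So there is no internal proof to compare against, and your central step --- ``invoke the relevant existence theorem from \cite{firstpaper}, the one phrased in terms of $T_\cap$ being $\aleph_0$-categorical and $\aleph_0$-stable together with an elimination of $\exists^\infty$ hypothesis on the $T_i$ (or their $\eq$'s)'' --- is, as a matter of logical structure, circular: that theorem \emph{is} Fact~\ref{fact:minh}. Checking that cases (1) and (2) ``are exactly the two sets of hypotheses under which that theorem applies'' establishes nothing.

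What is missing is the content of the theorem itself, and your sketch only gestures at it. The right ingredients do appear in your outline: $\aleph_0$-stability of $T_\cap$ lets one attach to each $\sM_i$-definable set $X_i$ a smallest $\sM_\cap$-definable superset (minimizing Morley rank and degree); $\aleph_0$-categoricity makes these supersets range over a family uniformly definable from the parameters of $\varphi_i$; and elimination of $\exists^\infty$ in $T_i$ (or $T_i^{\eq}$) is what makes membership in that minimal superset a definable condition, so that interpolativity becomes a first-order scheme. But none of this is carried out: you never define the hull operation, never verify minimality, never write the axioms, and never check that a model satisfies them if and only if it is interpolative. Likewise, in case (2) the claim that weak elimination of imaginaries in $T_\cap$ lets the real-sort $\exists^\infty$-elimination of $T_i$ ``govern fibers of the relevant quotient maps'' is exactly the step that requires an argument and receives none. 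As a pointer to \cite{firstpaper} your proposal matches how the present paper treats the statement; as a standalone proof it is missing its entire body.
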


\noindent
In \cite{firstpaper} we obtain an explicit axiomization for $T^*_\cup$ under the assumptions of Fact~\ref{fact:minh}.
This axiomization is $\forall\exists$ when each $T_i$ is model complete.
We will not use this in the present paper.
If $T$ is intepretable in the theory of equality then $T$ $\aleph_0$-categorical and $\aleph_0$-stable, and $T^\eq$ eliminates $\exists^\infty$.
Corollary~\ref{cor:minh} is a useful special case of Fact~\ref{fact:minh}.

\begin{cor}
\label{cor:minh}
If $T_\cap$ is interpretable in the theory of equality and each $(T_i)^\eq$ eliminates $\exists^\infty$ then $T^*_\cup$ exists.
If each $T_i$ is interpretable in the theory of equality the $T^*_\cup$ exists.
\end{cor}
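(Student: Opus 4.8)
The plan is to derive both assertions directly from Fact~\ref{fact:minh}, using the remarks recorded in the paragraph just above the corollary: any theory $T$ interpretable in the theory of equality is complete, $\aleph_0$-categorical, and $\aleph_0$-stable, and $T^\eq$ eliminates $\exists^\infty$. Here completeness is because a single interpretation applied to the (pairwise elementarily equivalent) models of the theory of equality yields pairwise elementarily equivalent structures, so it does no harm to take ``$T$ is interpretable in the theory of equality'' to mean $T = \Th(\sN)$ for some $\sN$ interpreted in an infinite set; the other three properties are the standard facts already cited in the text (types, types over $\eq$, and orbits on tuples all transfer under interpretation).

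For the first assertion I would argue as follows. Assume $T_\cap$ is interpretable in the theory of equality and each $(T_i)^\eq$ eliminates $\exists^\infty$. By the remarks above, $T_\cap$ is complete, $\aleph_0$-categorical, and $\aleph_0$-stable, so the hypotheses of Fact~\ref{fact:minh} are satisfied with alternative (1), and therefore $T^*_\cup$ exists.

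For the second assertion, assume each $T_i$ is interpretable in the theory of equality; then in particular each $(T_i)^\eq$ eliminates $\exists^\infty$, so by the first assertion it suffices to check that $T_\cap$ is itself interpretable in the theory of equality. Fix some $i \in I$, an infinite set $\sM$, and an interpretation of a model $\sM_i \models T_i$ in $\sM$. Restricting the interpreting formulas to the symbols of $L_\cap$ interprets the $L_\cap$-reduct $\sM_\cap$ in $\sM$, and since $T_i$ is complete we have $\Th_{L_\cap}(\sM_\cap) = T_\cap$; thus $T_\cap$ is the complete theory of a structure interpretable in the theory of equality, as needed.

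I do not expect any substantive obstacle: the corollary is essentially a bookkeeping consequence of Fact~\ref{fact:minh}. The only point that requires a word of care is the last step, namely that in the second assertion $T_\cap$ is a common reduct of the $T_i$ rather than one of the $T_i$ themselves, so one must observe that a reduct of a structure interpretable in the theory of equality is again interpretable in the theory of equality (hence complete, $\aleph_0$-categorical, and $\aleph_0$-stable), which is immediate once an interpretation of a single $T_i$-model is fixed.
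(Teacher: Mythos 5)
Your argument is correct and is exactly the route the paper intends: the corollary is stated as an immediate consequence of Fact~\ref{fact:minh}(1) together with the remark preceding it that a theory interpretable in the theory of equality is complete, $\aleph_0$-categorical, and $\aleph_0$-stable with $T^\eq$ eliminating $\exists^\infty$. Your added observation for the second assertion --- that $T_\cap$, being (the complete theory of) a reduct of a structure interpreted in an infinite set, is itself interpretable in the theory of equality --- is the one detail the paper leaves implicit, and you supply it correctly.
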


\subsection{Interpretations}
When dealing with examples, we will need to use some easy facts about interpretations.
 Let $T$ be an $L$-theory and $T'$ be an $L'$-theory.
 An \textbf{interpretation} of $T$ in $T'$, $F\colon T'\rightsquigarrow T$, consists of the following data: 
\begin{enumerate}
    \item For every sort $s$ in $L$, an $L'$-formula $\varphi_{s}(x_{s})$ and an $L'$-formula $E_{s}(x_{s},x^*_{s})$. 
    \item For every relation symbol $R$ in $L$ of type $(s_1,\dots,s_n)$, an $L'$-formula $\varphi_{R}(x_{s_1},\dots,x_{s_n})$.
    \item For every function symbol $f$ in $L$ of type $(s_1,\dots,s_n)\to s$, an $L'$-formula $\varphi_{f}(x_{s_1},\dots,x_{s_n},x_{s})$. 
\end{enumerate}
We then require that for every model $\sM'\models T'$, the formulas above define an $L$-structure $\sM\models T$ in the natural way. See~\cite[Section 5.3]{Hodges} for details.  For every sort $s$ in $L$, the underlying set $M_{s}$ of the $s$ sort in $\sM$ is the quotient of $\varphi_{s}(\sM')$ by the equivalence relation defined by $E_{s}$. We write $\pi_{s}$ for the surjective quotient map $\varphi_{s}(\sM')\to M_{s}$.
We sometimes denote $\sM$ by $F(\sM')$.

\medskip \noindent An interpretation $F\colon T'\rightsquigarrow T$ is an \textbf{existential  interpretation} if for each sort $s$ in $L$, the $L'$-formula $\varphi_{s}(x_{s})$ is $T'$-equivalent to an existential formula, and all other formulas involved in the interpretation and their negations (i.e., the formulas $E_{s}$, $\neg E_{s}$, $\varphi_{R}$, $\neg\varphi_{R}$, $\varphi_{f}$,  and $\neg\varphi_{f}$) are also $T'$-equivalent to existential formulas.
See \cite[Corollary 2.16]{firstpaper} for a proof of Fact~\ref{fact:existential}.

\medskip \noindent A {\bf bi-interpretation} $(F,G, \eta, \eta')$ between $T$ and $T'$ consists of an interpretation $F\colon T'\rightsquigarrow T$ and an interpretation  $G\colon T\rightsquigarrow T'$, together with $L$-formulas and $L'$-formulas defining for each $\sM \models T$ and each $\sN' \models T'$ isomorphisms $$\eta_{\sM}: \sM \to F(G(\sM)) \quad \text{and}  \quad \eta'_{\sN'}: \sN' \to G(F(\sN')).$$ The bi-interpretation is  \textbf{existential} if $F$ and $G$ are each existential interpretations, the formulas defining $\eta$ are $T$-equivalent to existential formulas, and the formulas defining $\eta'$ are $T'$-equivalent to existential formulas. 

\begin{fact}
\label{fact:existential}
Suppose $T$ and $T'$ $\forall\exists$-axiomatizable theories which are existentially bi-interpretable.
Then $T$ has a model companion if and only if $T'$ has a model companion. Moreover, the model companions are bi-interpretable, if they exist. 
\end{fact}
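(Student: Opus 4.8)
The plan is to prove the harder direction — if $T'$ has a model companion $T'^\mathrm{mc}$, then so does $T$ — and the converse follows by symmetry. The strategy is to transport the model companion across the bi-interpretation: define $T^\mathrm{mc} := G^{-1}(T'^\mathrm{mc})$ in the sense that $\sM \models T^\mathrm{mc}$ iff $G(\sM) \models T'^\mathrm{mc}$, which is an elementary class because $G$ is an interpretation and $T'^\mathrm{mc}$ is elementary; one must check this class is in fact the elementary class of an $L$-theory, which is automatic. Then one shows (a) every model of $T$ embeds in a model of $T^\mathrm{mc}$, (b) $T^\mathrm{mc} \models T$, and (c) $T^\mathrm{mc}$ is model complete. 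For (b): if $\sM \models T^\mathrm{mc}$ then $G(\sM) \models T'^\mathrm{mc}$, hence $G(\sM) \models T'$, so $F(G(\sM)) \models T$, and since $\eta_{\sM} \colon \sM \to F(G(\sM))$ is an isomorphism, $\sM \models T$.

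For (a), given $\sM \models T$, we have $G(\sM) \models T'$; embed it into some $\sN' \models T'^\mathrm{mc}$ (using that $T'^\mathrm{mc}$ is the model companion of $T'$). Applying the interpretation $F$ gives $F(G(\sM)) \subseteq F(\sN')$ — here is where existentiality of $F$ matters: an existential interpretation sends substructure inclusions to substructure inclusions, because the defining formulas and their negations are preserved under the relevant extension. Now $F(\sN') \models T$ and indeed $G(F(\sN')) \cong \sN' \models T'^\mathrm{mc}$ via $\eta'_{\sN'}$, so $F(\sN') \models T^\mathrm{mc}$; composing with the isomorphism $\eta_{\sM}$ embeds $\sM$ into a model of $T^\mathrm{mc}$. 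For (c), model completeness of $T^\mathrm{mc}$: suppose $\sM \subseteq \sN$ are both models of $T^\mathrm{mc}$; then $G(\sM) \subseteq G(\sN)$ (again using existentiality of $G$) and both are models of $T'^\mathrm{mc}$, hence $G(\sM) \elesub G(\sN)$; transporting elementarity back through $F$ and the isomorphisms $\eta$ — using existentiality of $\eta$, which guarantees $\eta$ is uniformly definable in a way compatible with extensions — yields $\sM \elesub \sN$. (Alternatively, one can characterize model completeness via "every extension is elementary" and run the argument on the level of existential formulas, invoking that $T$ and $T'$ are each $\forall\exists$-axiomatizable so that unions of chains of models are models.)

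For the "moreover" clause: the bi-interpretation data $(F, G, \eta, \eta')$ restricts to a bi-interpretation between $T^\mathrm{mc}$ and $T'^\mathrm{mc}$, since by construction $F$ and $G$ carry models of one model companion to models of the other, and the isomorphisms $\eta, \eta'$ are already defined on all models. One should also note that $T^\mathrm{mc}$ and $T'^\mathrm{mc}$ are again $\forall\exists$-axiomatizable (model companions always are, being model complete and mutually model-consistent with $\forall\exists$-theories), so the hypotheses of the statement are symmetric and the bi-interpretation obtained is again existential, as the defining formulas are unchanged.

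The main obstacle I expect is the bookkeeping in step (c) — verifying that elementarity transports cleanly across an existential interpretation together with the definable isomorphisms $\eta$. The subtle point is that knowing $G(\sM) \elesub G(\sN)$ gives elementarity only for $L'$-formulas on the interpreted structure; to descend to $\sM \elesub \sN$ one needs that every $L$-formula on $\sM$ is, via $\eta$ and the interpretation $G$, equivalent to (the pullback of) an $L'$-formula on $G(\sM)$, and that this equivalence is uniform across $\sM$ and $\sN$. This is exactly what a bi-interpretation guarantees, but the existentiality hypothesis is doing real work in ensuring substructure relations are preserved at each stage; without it one only gets embeddings of reducts, not of the interpreted structures. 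I would handle this by first recording, as a lemma, the standard fact that an existential interpretation $F \colon T' \rightsquigarrow T$ induces a functor on models that preserves substructure inclusions and elementary inclusions, and that a bi-interpretation induces an equivalence of the model categories compatible with these; the theorem then follows formally.
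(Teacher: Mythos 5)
The paper does not actually prove this Fact in the text you were given: it is stated as a ``Fact'' and the proof is deferred to \cite[Corollary 2.16]{firstpaper}, so there is no in-paper argument to compare against line by line. That said, your transport argument is the natural one and is essentially correct: defining $T^{\mathrm{mc}}$ as the $L$-theory axiomatized by $T$ together with the $G$-translations of $T'^{\,\mathrm{mc}}$, and then verifying mutual model-consistency and model completeness via the two key lemmas (an existential interpretation carries substructure inclusions between models to substructure inclusions of the interpreted structures, because each defining formula \emph{and its negation} is preserved upward; and any interpretation carries elementary inclusions to elementary inclusions), is exactly how such a statement is proved. The one step you flag but do not fully discharge is the naturality square in (c): to descend $F(G(\sM))\elesub F(G(\sN))$ to $\sM\elesub\sN$ you need $\eta_{\sN}\restriction M$ to agree with the composite of $\eta_{\sM}$ and the induced inclusion $F(G(\sM))\to F(G(\sN))$. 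This does hold --- the graph of $\eta$ is defined by formulas $T$-equivalent to existential ones, so instances of it true in $\sM$ persist in $\sN$, and since $\eta_{\sN}$ is a function this forces agreement --- but it is the kind of verification that should be recorded explicitly, ideally as part of the preliminary lemma you propose about the functoriality of existential (bi-)interpretations. With that lemma in place your (a), (b), (c) and the ``moreover'' clause (that $(F,G,\eta,\eta')$ restricts to an existential bi-interpretation of the model companions, both of which are again $\forall\exists$-axiomatizable by model completeness) all go through.
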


\noindent When $T^*$ is the model companion of a $\forall\exists$-axiomatizable theory $T$, we will sometimes wish to show that $T^*$ is bi-interpretable with an interpolative fusion $T^*_\cup$. We will do this by showing that $T$ is existentially bi-interpretable with a union of model-complete theories $T_\cup$ and then applying Facts~\ref{fact: first Theorem} and ~\ref{fact:existential}.

\subsection{Flat formulas and $\sK$-completeness}\label{sec:Kcomplete}

We recall the notion of flat formula from~\cite{firstpaper}. 

\medskip \noindent An \textbf{atomic flat formula} has the form $x = y$,  $R(x_1,\ldots,x_n)$, or $f(x_1,\ldots,x_n) = y$, where $R$ is an $n$-ary relation symbol and $f$ is an $n$-ary function symbol. Here  $x,y,x_1,\dots,x_n$ are single variables, which need not be distinct. A \textbf{flat literal} is an atomic flat formula or the negation of an atomic flat formula. A \textbf{flat formula} is a conjunction of finitely many flat literals. An \textbf{E$\flat$-formula} is a formula of the form $\exists y\, \varphi(x,y)$, where $\models \forall x\, \exists^{\leq 1} y\, \varphi(x,y)$ and $\varphi(x,y)$ is flat. Here $x$ and $y$ are finite tuples of variables, which may be empty. 


\medskip\noindent The following result is an easy refinement of  Theorem 2.6.1 in~\cite{Hodges}. 
Note Hodges uses the term ``unnested'' instead of ``flat''.

\begin{fact}[{\cite[Lemma 2.9 and Corollary 2.10]{firstpaper}}] \label{fact:eflat}
Every atomic formula and every negated atomic formula is logically equivalent to an E$\flat$-formula. Every quantifier-free formula is logically equivalent to a finite disjunction of E$\flat$-formulas.
\end{fact}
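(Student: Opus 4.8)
The plan is to follow the classical term-unnesting argument of Hodges~\cite[Theorem 2.6.1]{Hodges}, with two refinements: first, we must keep track of the fact that the quantifiers introduced during unnesting range over variables that are \emph{functionally determined} by the free variables, which is exactly what gives the $\exists^{\leq 1}$ clause in the definition of an E$\flat$-formula; second, we must pass to disjunctive normal form before unnesting in order to turn an arbitrary quantifier-free formula into a disjunction of E$\flat$-formulas.

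For the first assertion, let $\psi(x)$ be atomic or negated atomic, say $\psi$ is $R(t_1,\dots,t_n)$, $\neg R(t_1,\dots,t_n)$, $t_1 = t_2$, or $\neg(t_1 = t_2)$, where the $t_j$ are $L$-terms. I would process the $t_j$ bottom-up: whenever a subterm $f(u_1,\dots,u_m)$ occurs all of whose immediate subterms $u_1,\dots,u_m$ are variables (the case $m=0$ covers constants), introduce a fresh variable $z$, adjoin the flat literal $f(u_1,\dots,u_m) = z$ to a growing conjunction $\chi$, and replace that occurrence of $f(u_1,\dots,u_m)$ by $z$. After finitely many steps each $t_j$ has been replaced by a variable $v_j$, and $\psi$ is logically equivalent to $\exists \bar z\, (\chi \wedge \ell)$, where $\bar z$ lists the fresh variables, $\ell$ is the flat literal $R(v_1,\dots,v_n)$, $\neg R(v_1,\dots,v_n)$, $v_1 = v_2$, or $\neg(v_1 = v_2)$ read off from $\psi$, and $\chi \wedge \ell$ is flat. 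The point is that the literals in $\chi$ form a triangular (acyclic) system: each $z$ appears on the right of exactly one equation $f(\bar u) = z$ whose left-hand variables $\bar u$ lie among $x$ and previously introduced variables. Hence for every choice of values of $x$ there is at most one tuple $\bar z$ satisfying $\chi$, and a fortiori at most one satisfying $\chi \wedge \ell$; thus $\models \forall x\, \exists^{\leq 1}\bar z\,(\chi\wedge\ell)$, so $\exists\bar z\,(\chi\wedge\ell)$ is an E$\flat$-formula. (If $\psi$ is already flat, take $\bar z$ empty.)

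For the second assertion, let $\theta(x)$ be quantifier-free, and put it in disjunctive normal form $\bigvee_{j}\bigwedge_{k}\ell_{jk}$ with the $\ell_{jk}$ literals (a finite disjunction of finite conjunctions; empty disjunctions and empty conjunctions cause no trouble). By the first part, each $\ell_{jk}$ is equivalent to an E$\flat$-formula $\exists\bar y_{jk}\,\varphi_{jk}(x,\bar y_{jk})$ with $\varphi_{jk}$ flat and $\models\forall x\,\exists^{\leq 1}\bar y_{jk}\,\varphi_{jk}$; after renaming we may take the tuples $\bar y_{jk}$ pairwise disjoint and disjoint from $x$. Then for each fixed $j$,
$$\bigwedge_{k}\ell_{jk}\ \equiv\ \bigwedge_{k}\exists\bar y_{jk}\,\varphi_{jk}\ \equiv\ \exists\bar y_j\,\bigwedge_{k}\varphi_{jk},$$
where $\bar y_j$ concatenates the $\bar y_{jk}$ and the last step is valid since $\bar y_{jk}$ does not occur in $\varphi_{jk'}$ for $k'\neq k$. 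The matrix $\bigwedge_{k}\varphi_{jk}$ is a conjunction of flat formulas, hence flat; and given values of $x$, each $\bar y_{jk}$ is uniquely determined (or there is no solution), so $\bar y_j$ is uniquely determined, giving $\models\forall x\,\exists^{\leq 1}\bar y_j\,\bigwedge_k\varphi_{jk}$. Thus each disjunct is an E$\flat$-formula and $\theta$ is equivalent to the finite disjunction $\bigvee_j\exists\bar y_j\,\bigwedge_k\varphi_{jk}$ of E$\flat$-formulas. The only real subtlety, and the step I would be most careful about, is the $\exists^{\leq 1}$ bookkeeping: in the first part it comes down to the acyclicity of the system $\chi$ together with the observation that the sign of $\psi$ is placed on the outer literal $\ell$ rather than inside $\chi$, and in the second part it comes down to the disjointness of the fresh variable tuples; everything else is routine and may simply be quoted from~\cite[Theorem 2.6.1]{Hodges}.
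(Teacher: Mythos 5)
Your proposal is correct, and it takes essentially the approach the paper intends: the statement is quoted from the companion paper as "an easy refinement of Theorem 2.6.1 in Hodges," and your argument is exactly that unnesting procedure, with the right bookkeeping (acyclicity of the introduced equations for the $\exists^{\leq 1}$ clause, and disjointness of the fresh variable tuples when recombining after passing to disjunctive normal form). Nothing further is needed.
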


\noindent The \textbf{flat diagram} $\fdiag(\sA)$ of an $L$-structure $\sA$ is the set of all flat literal $L(A)$-sentences true in $\sA$. 
The flat diagram is logically equivalent to the ordinary Robinson diagram, so we have the following: If $\sB$ is an $L(A)$-structure, then $\sB\models \fdiag(\sA)$ if and only if the obvious map $\sA\to \sB$ is an embedding~\cite[Lemma 1.4.3]{Hodges}.

\medskip \noindent 
We describe a slight generalization of standard results on model completeness.
Let $T$ be an $L$-theory, and let $\sK$ be a class of pairs $(\sA,\sM)$, where $\sM\models T$ and $\sA$ is a substructure of $\sM$. We say that $T$ is \textbf{$\sK$-complete} if for all $(\sA,\sM)\in \sK$, every embedding from $\sA$ to another $T$-model is partial  elementary. That is, if $f\colon \sA\to \sN$ is an embedding and $\sN\models T$, then $\sM\models \varphi(a)$ if and only if $\sN\models \varphi(f(a))$ for any formula $\varphi(x)$ and any $a\in A^x$.

\begin{rem}\label{rem:Kcomp}
The terminology $\sK$-complete comes from the following equivalent definition: $T$ is $\sK$-complete if and only if for all $(\sA,\sM)\in \sK$, \[T\cup \fdiag(\sA)\models \Th_{L(A)}(\sM),\] i.e., $T\cup \fdiag(\sA)$ is a complete $L(A)$-theory. The equivalence follows immediately from the fact that an $L(A)$-structure $\sN$ satisfies $\fdiag(\sA)$ if and only if the obvious map $\sA\to\sN$ is an embedding. 
\end{rem}

\noindent We say the class of $T$-models has the \textbf{$\sK$-amalgamation property} if whenever $(\sA,\sM)\in \sK$, $\sN \models T$, and $f\colon \sA\to \sN$ is an embedding, then there is an elementary extension $\sN\elesub \sN'$ and an elementary embedding $f'\colon \sM\to \sN'$ such that $f'|_A = f$, i.e., the following diagram commutes:
\[
\xymatrix{
\sM\ar[r]_{f'} & \sN'\\
\sA\ar[r]^f\ar[u]^{\subseteq} & \sN\ar[u]_{\elesub}
}
\]
If, in the situation above, we can choose $\sN'$ and $f'$ with the further condition that \[f'(M)\cap N = f'(A) = f(A),\] then the class of $T$-models has the \textbf{disjoint $\sK$-amalgamation property}.

\begin{prop}\label{prop:kcomp}
The theory  $T$ is $\sK$-complete if and only if the class of $T$-models has the $\sK$-amalgamation property. Further, if $T$ is $\sK$-complete, then $\sA$ is algebraically closed in $\sM$ for all $(\sA,\sM)\in \sK$ if and only if the class of $T$-models has the disjoint $\sK$-amalgamation property.
\end{prop}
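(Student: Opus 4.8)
The plan is to prove the two biconditionals separately, in each case unwinding the definition of $\sK$-completeness via the $\fdiag$ characterization in Remark~\ref{rem:Kcomp} and translating back and forth between a syntactic condition (a theory being complete, resp. a substructure being algebraically closed) and a semantic amalgamation condition via a standard compactness/diagram argument.

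For the first biconditional, suppose $T$ is $\sK$-complete and take $(\sA,\sM)\in\sK$, $\sN\models T$, and an embedding $f\colon\sA\to\sN$. By $\sK$-completeness, $f$ is partial elementary, so $\Th_{L(A)}(\sM)$ (computed with $A$ named via $f$ in $\sN$) is consistent with $\ediag(\sN)$; here I use that $f$ being partial elementary means $\sN$ together with the interpretation of $A$ by $f$ satisfies the same $L(A)$-sentences as $\sM$, so in particular satisfies $\Th_{L(A)}(\sM)$ restricted to... more carefully: consistency of $\ediag(\sN)\cup\Th_{L(A)}(\sM)$ follows because any finite subset mentions finitely many sentences of $\ediag(\sN)$, true in $\sN$, and $\Th_{L(A)}(\sM)$-sentences which are also true in $(\sN,f(A))$ since $f$ is partial elementary. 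A model of this union is the desired $\sN'$ with elementary embeddings $\sN\elesub\sN'$ and $\sM\to\sN'$ agreeing with $f$ on $A$ — one checks $\sM\to\sN'$ is elementary because $\sN'\models\Th_{L(A)}(\sM)$. Conversely, suppose the $\sK$-amalgamation property holds, take $(\sA,\sM)\in\sK$ and an embedding $f\colon\sA\to\sN$ with $\sN\models T$; amalgamating gives $\sN\elesub\sN'$ and an elementary $f'\colon\sM\to\sN'$ with $f'|_A=f$. Then for any $\varphi(x)$ and $a\in A^x$: $\sM\models\varphi(a)$ iff $\sN'\models\varphi(f'(a))$ iff $\sN'\models\varphi(f(a))$ iff $\sN\models\varphi(f(a))$, where the last step uses $\sN\elesub\sN'$. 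Hence $f$ is partial elementary, so $T$ is $\sK$-complete.

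For the second biconditional, assume $T$ is $\sK$-complete (so the amalgamation property already holds). Suppose first that $\sA$ is algebraically closed in $\sM$ for all $(\sA,\sM)\in\sK$, and let $(\sA,\sM)\in\sK$, $\sN\models T$, $f\colon\sA\to\sN$ an embedding. We want the amalgam $\sN\elesub\sN'$, $f'\colon\sM\to\sN'$ with $f'(M)\cap N=f(A)$. The idea is to run the compactness argument of the first part but additionally add, for each $m\in M\setminus A$ and each $n\in N\setminus f(A)$, the sentence $f'(m)\neq n$ to the type being realized — i.e., realize $\Th_{L(A)}(\sM)$ (in variables naming $M$) over $\ediag(\sN)$ together with inequations separating the new elements of $M$ from $N$. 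Consistency of any finite fragment: a finite fragment involves finitely many $m_1,\dots,m_k\in M\setminus A$ and finitely many $n_1,\dots,n_\ell\in N\setminus f(A)$; since $\sA$ is algebraically closed in $\sM$, no $m_j\in\acl_\sM(A)$, so $\tp_\sM(m_1,\dots,m_k/A)$ is non-algebraic in each coordinate, hence realized in $\sN$ (over $f(A)$, using that $f$ is partial elementary and $\sN$ is a model of $T$, after passing to an elementary extension of $\sN$ if needed) by elements avoiding the finitely many forbidden values $n_1,\dots,n_\ell$ and avoiding $f(A)$. This gives the disjoint amalgam. Conversely, if disjoint $\sK$-amalgamation holds and $(\sA,\sM)\in\sK$ with some $m\in\acl_\sM(A)\setminus A$, take $\sN=\sM$ and $f=\id_\sA$: disjoint amalgamation produces $\sM\elesub\sN'$ and elementary $f'\colon\sM\to\sN'$ fixing $A$ with $f'(M)\cap M=A$. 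But $m\in\acl_\sM(A)$, say $\varphi(x,a)$ isolates a finite set containing $m$ with $a\in A^y$; then $f'(m)$ satisfies $\varphi(x,a)$ in $\sN'$ (as $f'$ is elementary and fixes $a$), and $\sM\elesub\sN'$ means $\varphi(\sN',a)=\varphi(\sM,a)\subseteq M$, so $f'(m)\in M$, whence $f'(m)\in f'(M)\cap M=A$, contradicting $f'|_A=\id$ and $m\notin A$ (since $f'$ is injective and $f'(m)\in A$ forces $m\in A$). Hence $\sA$ is algebraically closed in $\sM$.

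The main obstacle will be the consistency verification in the disjoint-amalgamation direction: one must carefully set up the $L(A\cup M)$-theory to be satisfied — combining $\ediag(\sN)$, the elementary diagram of $\sM$ with new constants for elements of $M$, the identification of constants for $A$, and the separation inequations — and check that every finite subset is satisfiable, which reduces (after naming the relevant finite tuples) to the statement that a non-algebraic type over $f(A)$ in a model of $T$ can be realized outside any prescribed finite set and outside $f(A)$, possibly after moving to an elementary extension of $\sN$. I should also be slightly careful, in the non-disjoint direction, that realizing $\Th_{L(A)}(\sM)$ over $\ediag(\sN)$ is legitimate precisely because $f$ is partial elementary (that is exactly what $\sK$-completeness buys), so the two parts of the proposition are genuinely using the hypothesis where claimed.
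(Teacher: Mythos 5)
Your overall architecture is sound, and your two converse directions are correct; they essentially reproduce the paper's own converse arguments (for the second equivalence the paper derives a contradiction by counting realizations of an algebraic type before and after amalgamating, whereas you show directly that the image of an algebraic element must land back in $M$, hence in $f'(M)\cap M=A$ — the two arguments are interchangeable). Where you genuinely diverge from the paper is in the two forward directions: the paper simply cites \cite[Theorems 6.4.1 and 6.4.5]{Hodges}, while you attempt direct proofs. Your proof of the first forward direction is fine once $\Th_{L(A)}(\sM)$ is replaced by the full elementary diagram $\ediag(\sM)$ with the constants for $A$ identified along $f$ (as you note yourself, $\sN'\models\Th_{L(A)}(\sM)$ alone produces no map $\sM\to\sN'$); the finite-consistency check then reduces, after existentially quantifying the constants for $M\setminus A$, to an $L(A)$-sentence true in $\sM$ and hence in $(\sN,f(A))$ by partial elementarity. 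That is exactly the standard proof of the elementary amalgamation theorem.

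The gap is in the forward direction of the second equivalence, at the sentence ``\dots is non-algebraic in each coordinate, hence realized \dots by elements avoiding the finitely many forbidden values $n_1,\dots,n_\ell$.'' This ``hence'' is the entire nontrivial content of disjoint amalgamation, and it does not follow coordinate-by-coordinate: after you realize $\tp(m_1/A)$ off the forbidden set, the element $m_2$ may well be algebraic over $Am_1$, so its image is essentially forced and nothing yet prevents it from landing on some $n_t$. The statement you need — that a complete type over $A$, none of whose coordinates is algebraic over $A$, has a realization avoiding any prescribed finite set in every coordinate — is true, but the standard proof goes through B.~H.~Neumann's lemma (a group is not the union of finitely many cosets of subgroups of infinite index), applied to the action of the automorphism group of a monster model over $A$ on the realizations of the type: if every conjugate of $\bar m$ over $A$ met the finite set $F$, that group would be covered by finitely many cosets of the stabilizers of the $m_j$, each of infinite index since $m_j\notin\acl(A)$. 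This is precisely why the paper delegates this direction to \cite[Theorem 6.4.5]{Hodges} rather than arguing from scratch. So your proof is repairable, but as written the one genuinely nontrivial idea in the proposition is asserted rather than proved.
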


\begin{proof}
We prove the first equivalence. Suppose $T$ is $\sK$-complete. The $\sK$-amalgamation property follows from~\cite[Theorem 6.4.1]{Hodges}. 

Conversely, suppose the class of $T$-models has the $\sK$-amalgamation property. 
If $\sM$ and $\sN$ are $T$-models, $\sA\subseteq \sM$ is in $\sK$, and $f\colon \sA\to \sN$ is an embedding, then there is an elementary extension $\sN\elesub \sN'$ and an elementary embedding $f'\colon \sM\to \sN'$ such that $f'|_A = f$. For any $L$-formula $\varphi(x)$ and  $a\in A^x$, $\sM\models \varphi(a)$ if and only if $\sN'\models \varphi(f'(a))$ if and only if $\sN\models \varphi(f(a))$. So $f$ is partial elementary. 
Thus $T$ is $\sK$-complete.

Now, assuming $T$ is $\sK$-complete, we prove the second equivalence. If every structure in $\sK$ is algebraically closed, then the class of $T$-models has the disjoint $\sK$-amalgamation property, by~\cite[Theorem 6.4.5]{Hodges}. 

Conversely, suppose the class of $T$-models has the disjoint $\sK$-amalgamation property. 
Assume towards a contradiction that $(\sA, \sM)\in \sK$ and $A$ is not algebraically closed in $\sM$. Then there is some $c\in M\setminus A$ such that $\tp(c/A)$ has exactly $k$ realizations $c_1,\dots,c_k$ in $M\setminus A$.
Taking $\sN = \sM$ and $f = \id_A$ in the disjoint $\sK$-amalgamation property, there is an elementary extension $\sM \elesub \sM'$ and an elementary embedding $f'\colon \sM\to \sM'$ which is the identity on $A$ and satisfies  $f'(M)\cap M = A$. 
Then $\tp(c/A)$ has $2k$ distinct realizations $c_1,\dots,c_k,f'(c_1),\dots,f'(c_k)$ in $\sM'$, contradiction.
\end{proof}

\noindent Suppose $T$ is $\sK$-complete. 
If $\sK$ is the class of all pairs $(\sM,\sM)$ such that $\sM\models T$, then $T$ is \textbf{model-complete}. 
We say $T$ is \textbf{substructure-complete} if $\sK$ is the class of all pairs $(\sA,\sM)$ such that $\sA$ is a substructure of $\sM$.
If $\cl$ is a closure operator on $T$-models and $\sK$ is the class of all pairs $(\sA,\sM)$ such that $\sA$ is a $\cl$-closed substructure of $\sM$, i.e., $\cl(A)= A$, then we say $T$ is \textbf{$\cl$-complete}. Similarly, we refer to the \textbf{(disjoint) $\cl$-amalgamation property}.






\medskip \noindent Model-completeness has a syntactic equivalent: every $L$-formula is $T$-equivalent to an existential (hence also a universal) formula ~\cite[Theorem 8.3.1(e)]{Hodges}. Substructure-completeness also has a syntactic equivalent: quantifier elimination. This follows from~\cite[Theorem 8.4.1]{Hodges} and Proposition~\ref{prop:kcomp} above.

\medskip \noindent Many of the theories we consider are $\acl$-complete.
Unfortunately, there does not seem to be a natural syntactic equivalent to $\acl$-completeness. For this reason, we introduce a slightly stronger notion, $\bcl$-completeness, which does have a syntactic equivalent.

\medskip \noindent An $L$-formula $\varphi(x,y)$ is \textbf{bounded in $y$} with bound $k$  (with respect to $T$) if \[T\models \forall x\, \exists^{\leq k} y\, \varphi(x,y).\] A formula $\exists y\, \psi(x,y)$ is \textbf{boundedly existential (b.e.)} (with respect to $T$) if $\psi(x,y)$ is quantifier-free and bounded in $y$. We allow $y$ to be the empty tuple of variables, so every quantifier-free formula is b.e.\ (with bound $k=1$, by convention). The E$\flat$-formulas introduced above are also b.e.\ with bound $k=1$ with respect to the empty theory.

\medskip\noindent Suppose $\sM\models T$ and $A\subseteq \sM$. The \textbf{boundedly existential algebraic closure} of $A$ in $\sM$, denoted $\bcl(A)$, is the set of all $b\in M$ such that $\sM\models \varphi(a,b)$ for some $a\in A^x$ and some $L$-formula $\varphi(x,y)$ which is b.e.\ and bounded in $y$. It follows that $\varphi(a,y)$ is algebraic, so $\bcl(A)\subseteq \acl(A)$. It can also be easily verified that $\bcl$ is a closure operator and if $A \subseteq \sM$ then $\langle A\rangle \subseteq \bcl(A)$; see Lemma~\ref{lem: bclisclosureoperator} for details.

\begin{rem}
Every model is $\acl$-closed, every $\acl$-closed set is $\bcl$-closed, and every $\bcl$-closed set is a substructure, therefore:
\[\text{QE $\Leftrightarrow$ substructure-complete}\Rightarrow \text{$\bcl$-complete}\Rightarrow \text{$\acl$-complete}\Rightarrow \text{model-complete}.\]
\end{rem}

\noindent Theorem~\ref{thm:bclacl} clarifies the relationship between $\acl$- and $\bcl$-completeness and provides the promised syntactic equivalent to $\bcl$-completeness. The proof is somewhat involved and may be of independent interest, so we delay it to Appendix~\ref{app:Kcomp}.

\begin{thm}\label{thm:bclacl}
The following are equivalent:
\begin{enumerate}
\item Every $L$-formula is $T$-equivalent to a finite disjunction of b.e.\ formulas.
\item $T$ is $\acl$-complete and $\acl = \bcl$ in $T$-models.
\item $T$ is $\bcl$-complete.
\end{enumerate}
\end{thm}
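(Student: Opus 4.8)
The plan is to establish the cycle $(1)\Rightarrow(2)\Rightarrow(3)\Rightarrow(1)$, the last implication carrying essentially all the weight. I would first record two routine facts about b.e.\ formulas that get used throughout. (a) A finite conjunction of b.e.\ formulas is $T$-equivalent to a b.e.\ formula: rename the existential variable-blocks apart, so the matrix becomes a conjunction of quantifier-free formulas, and the resulting existential block is bounded with respect to $T$ because its witnesses are pinned down coordinatewise by the individual bounds. (b) If $\exists\bar w\,\psi(x,\bar w)$ is b.e.\ and $\sM\models\exists\bar w\,\psi(a,\bar w)$ with $A\subseteq\sM$ a $\bcl$-closed set and $a\in A^x$, then the witness $\bar w$ may be found in $A$; indeed each coordinate of a witnessing tuple satisfies, over $a$, a b.e.\ formula bounded in that coordinate, hence lies in $\bcl(A)=A$.

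For $(1)\Rightarrow(2)$ I would separately prove $\acl=\bcl$ and $\bcl$-completeness. For $\acl=\bcl$: the inclusion $\bcl\subseteq\acl$ is automatic; conversely, given $b\in\acl^\sM(A)$, choose an algebraic $\rho(x,y)$ and $a\in A^x$ with $\sM\models\rho(a,b)$, set $k=|\rho(\sM,a)|$, and pass to $\rho'(x,y):=\rho(x,y)\wedge\exists^{=k}y'\,\rho(x,y')$, which is bounded in $y$ with bound $k$ with respect to the empty theory. Applying (1) to $\rho'$ gives a b.e.\ disjunct $\varphi_j(x,y)$ true of $(a,b)$ with $T\models\varphi_j\to\rho'$; the latter forces $\varphi_j$ to be bounded in $y$, so $\varphi_j$ witnesses $b\in\bcl^\sM(A)$. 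For $\bcl$-completeness: given $(\sA,\sM)$ with $\sM\models T$ and $A$ $\bcl^\sM$-closed and an embedding $f\colon\sA\to\sN$ with $\sN\models T$, apply (1) to an arbitrary $L$-formula $\rho(x)$ \emph{and} to $\neg\rho(x)$; in each case a true b.e.\ disjunct has, by fact (b), its existential witness inside $A$, so --- since quantifier-free formulas transfer along $f$ --- the truth value of $\rho(a)$ agrees in $\sM$ and $\sN$ for every $a\in A^x$, i.e.\ $f$ is partial elementary. Since $\acl=\bcl$ makes $\acl$-completeness and $\bcl$-completeness literally the same condition, this yields $(1)\Rightarrow(2)$; and $(2)\Rightarrow(3)$ is the same remark read in reverse.

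The real work is $(3)\Rightarrow(1)$, which I would prove by adapting the standard argument that substructure-completeness implies quantifier elimination. Fix an $L$-formula $\varphi(x)$ and let $\Delta_\varphi$ be the set of b.e.\ formulas $\psi(x)$ with $T\models\psi\to\varphi$. By compactness it is enough to prove $T\models\varphi(x)\to\bigvee_{\psi\in\Delta_\varphi}\psi(x)$, i.e.\ that whenever $\sN\models T$ and $\sN\models\varphi(a)$ there is $\psi\in\Delta_\varphi$ with $\sN\models\psi(a)$; and by compactness again together with fact (a) it is enough to show $T\cup\Psi_a(x)\models\varphi(x)$, where $\Psi_a$ is the set of all b.e.\ formulas true of $a$ in $\sN$. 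So let $\sN'\models T$ and $a'$ with $\sN'\models\Psi_a(a')$; I must get $\sN'\models\varphi(a')$. The key step is to realize the flat diagram $\fdiag(\sB)$ of $\sB:=\bcl^\sN(a)$ inside an elementary extension $\sN''$ of $\sN'$ by a map sending $a$ to $a'$. By compactness this reduces to: given a finite conjunction $\sigma(x,\bar w)$ of flat literals with $\sN\models\sigma(a,\bar b)$ for a tuple $\bar b$ from $\sB$, produce a b.e.\ formula in $\Psi_a$ implying $\exists\bar w\,\sigma(x,\bar w)$. For each coordinate $b_\ell$ of $\bar b$ pick a b.e.\ formula $\varphi_\ell(x,y)=\exists\bar z\,\theta_\ell(x,y,\bar z)$ bounded in $y$ with $\sN\models\varphi_\ell(a,b_\ell)$; then $\theta_\ell$ is jointly bounded in $(y,\bar z)$ with respect to $T$, so $\Xi(x):=\exists\bar w\,\bar z\,\big(\sigma(x,\bar w)\wedge\bigwedge_\ell\theta_\ell(x,w_\ell,\bar z_\ell)\big)$ is a genuine b.e.\ formula; it lies in $\Psi_a$ (witnessed by $\bar b$ and the witnesses for the $\varphi_\ell$), hence holds of $a'$ in $\sN'$, and it obviously implies $\exists\bar w\,\sigma(x,\bar w)$. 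With $\fdiag(\sB)$ realized, we get an embedding $h\colon\sB\to\sN''$ with $h(a)=a'$; since $\sB$ is $\bcl^\sN$-closed, $(3)$ applied to the pair $(\sB,\sN)$ makes $h$ partial elementary, so $\sN\models\varphi(a)$ gives $\sN''\models\varphi(a')$ and hence $\sN'\models\varphi(a')$, completing the argument after the two compactness collapses.

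I expect the single real obstacle to be the construction of $\Xi$ above: the naive candidate $\exists\bar w\,\sigma(x,\bar w)$ is \emph{not} boundedly existential, since the flat diagram of $\sB$ does not record the algebraicity of $\bar b$ over $a$, and the fix --- splicing in the quantifier-free matrices of the b.e.\ formulas witnessing $b_\ell\in\bcl^\sN(a)$ and verifying that the enlarged existential block is still $T$-bounded --- is exactly where the hypothesis $\sB=\bcl^\sN(a)$ (not merely $\acl^\sN(a)$) and the conjunction of the two boundedness requirements in the definition of a b.e.\ formula bounded in $y$ are used in an essential way.
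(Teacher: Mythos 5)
Your proposal is correct and follows essentially the same route as the paper's proof in Appendix~\ref{app:Kcomp}: the same $\exists^{\leq k}$ trick for $(1)\Rightarrow(2)$, the same observation that $\acl=\bcl$ collapses $(2)$ and $(3)$, and for $(3)\Rightarrow(1)$ the same key device of splicing the quantifier-free matrices of bounded b.e.\ witnesses for elements of $\bcl(a)$ into a single b.e.\ formula so that an embedding of $\bcl(a)$ can be produced and $\bcl$-completeness invoked. The only difference is cosmetic packaging (you realize $\fdiag(\bcl(a))$ in an elementary extension of $\sN'$, whereas the paper shows $T\cup\qftp(\bcl(a))\cup\tp(a')$ is consistent), which amounts to the same compactness argument.
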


\subsection{Stationary and extendable independence relations}\label{sec:stationary}

\noindent In this section, $T$ is a complete $L$-theory, $L'$ is a  language extending $L$ such that $L$ and $L'$ have the same set of sorts, and $T'$ is a complete $L'$-theory extending $T$.
Let $\monster'$ be a monster model of $T'$ and $\monster$ be the $L$-reduct of $\monster'$, so $\monster$ is a monster model of $T$. We isolate some properties of forking independence in stable theories that weakly eliminate imaginaries; these properties will be necessary ingredients for the proofs of preservation of $\acl$- and $\bcl$-completeness in Section~\ref{sec:analysis}. 

\medskip \noindent Let $\ind$ be a ternary relation on small subsets of $\monster$. 
We consider the following properties that $\ind$ may satisfy. 
The first three are specific to $T$, while the fourth concerns the relationship between $T$ and $T'$. We let  $A$, $B$, and $C$ range over arbitrary small subsets of $\monster$.

\begin{enumerate}
\item \textbf{Invariance:} If $\sigma$ is an automorphism of $\monster$, then $A\ind_C B$ if and only if $\sigma(A)\ind_{\sigma(C)} \sigma(B)$.
\item \textbf{Algebraic independence:} If $A\ind_C B$, then $\acl(AC)\cap \acl(BC) = \acl(C).$
\item \textbf{Stationarity over algebraically closed sets:} If $C = \acl(C)$, $\tp_L(A/C) = \tp_L(A^*/C)$, $A\ind_C B$, and $A^*\ind_C B$, then $\tp_L(A/BC) = \tp_L(A^*/BC)$.
\item \textbf{Full existence over algebraically closed sets in $T'$:} If $C = \acl'(C)$, then there exists $A^*$ with $\tp_{L'}(A^*/C) = \tp_{L'}(A/C)$, and $A^*\ind_C B$ in $\monster$.
\end{enumerate}

\noindent We say $\ind$ is a \textbf{stationary independence relation} in $T$ if it satisfies invariance, algebraic independence, and stationarity over algebraically closed sets. 
We say $\ind$ is \textbf{extendable} (to $T'$) if it satisfies full existence  over algebraically closed sets in $T'$.


\medskip\noindent Our definition of a stationary independence relation differs from those used elsewhere, e.g., in~\cite{TZUrysohn}. 
Most natural stationary independence relations satisfy additional axioms (symmetry, monotonicity, etc.).
We only require the axioms above.

\medskip \noindent The main example of a stationary and extendable independence relation that the reader should keep in mind is forking independence $\eind[f]$ in a theory $T$ which is stable with weak elimination of imaginaries. We give a proof of Proposition~\ref{prop:stationarity} in Appendix~\ref{app:stationary}. Note that there are no hypotheses on $T'$.

\begin{prop}\label{prop:stationarity}
Suppose $T$ is stable with weak elimination of imaginaries. Then $\eind[f]$ is a stationary and independence relation in $T$ which is extendable to $T'$.
\end{prop}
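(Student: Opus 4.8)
## Proof proposal

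\textbf{Overall plan.} I will verify the three properties making $\eind[f]$ a stationary independence relation (invariance, algebraic independence, stationarity over algebraically closed sets) and then the fourth (full existence over $\acl'$-closed sets in $T'$), which is the real content. Invariance is immediate from the fact that nonforking is preserved by automorphisms of $\monster$. For algebraic independence, I pass to $T^{\eq}$: forking of real tuples in $T$ agrees with forking in $T^{\eq}$, and in the stable theory $T^{\eq}$ one has $A\eind[f]_C B \Rightarrow \acl^{\eq}(AC)\cap\acl^{\eq}(BC)=\acl^{\eq}(C)$ (if $e$ lies in both algebraic closures, then $B\eind[f]_C Ae$ by monotonicity applied to $B\eind[f]_C A$, hence $e\eind[f]_C B$; a non-algebraic type over $\acl^{\eq}(C)$ has non-algebraic nonforking extension, so $\tp(e/BC)$ algebraic forces $e\in\acl^{\eq}(C)$). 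Intersecting with the real sorts gives $\acl(AC)\cap\acl(BC)=\acl(C)$ in $T$.

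\textbf{Stationarity over $\acl$-closed sets.} Here is where weak elimination of imaginaries enters. The key observation is: if $C$ is a set of real elements with $C=\acl(C)$, then $\acl^{\eq}(C)=\dcl^{\eq}(C)$. Indeed, for $e\in\acl^{\eq}(C)$, WEI gives $e\in\dcl^{\eq}(\acl^{\eq}(e)\cap M)$, and $\acl^{\eq}(e)\cap M\subseteq\acl^{\eq}(C)\cap M=\acl(C)=C$, so $e\in\dcl^{\eq}(C)$. Consequently $C$-types and $\dcl^{\eq}(C)$-types (in real variables) correspond bijectively, so every complete $L$-type over $C$ is stationary in the classical stable sense (uniqueness of nonforking extensions over $\acl^{\eq}$-closed sets, applied to $\dcl^{\eq}(C)$). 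Property (3) then follows: $\tp_L(A/C)=\tp_L(A^*/C)=:p$ is stationary, and both $\tp_L(A/BC)$ and $\tp_L(A^*/BC)$ are nonforking extensions of $p$, hence equal to its unique nonforking extension $q\!\restriction\! BC$. (For infinite tuples one reduces to the finite case by compactness.)

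\textbf{Full existence over $\acl'$-closed sets in $T'$ (the main obstacle).} Suppose $C=\acl'(C)$; then also $C=\acl(C)$ (as $\acl\subseteq\acl'$), so by the previous paragraph $p:=\tp_L(A/C)$ is stationary with global nonforking $L$-extension $q$. Using stationarity, finding $A^*$ with $\tp_{L'}(A^*/C)=\tp_{L'}(A/C)$ and $A^*\eind[f]_C B$ is equivalent to showing that the partial type $\tp_{L'}(A/C)(x)\cup (q\!\restriction\! BC)(x)$ is consistent. To prove this I will use two ingredients. First, $C=\acl'(C)$ makes $\tp_{L'}(A/C)$ ``large in the right directions'': after splitting off the coordinates of $A$ lying in $C$, the remaining coordinates are not in $\acl'(C)$, so $\tp_{L'}(A/C)$ is non-algebraic and every realization set we encounter has size $|\monster'|$, exceeding $|BC|$. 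Second, since $C=\acl(C)$ and $T$ has WEI, $q$ is definable over $C$ by $L$-formulas with \emph{real} parameters from $C$: the $q$-definition of any $L$-formula is an $\mathrm{Aut}(\monster/C)$-invariant $L$-definable subset of a real product, hence (via the bijection $S_{L^{\eq}}(\dcl^{\eq}(C))\to S_L(C)$ in real variables, which is a homeomorphism) is $L(C)$-definable. One then runs a compactness argument on $\tp_{L'}(A/C)(x)\cup(q\!\restriction\! BC)(x)$: assuming inconsistency, extract $\psi(x,c)\in\tp_{L'}(A/C)$ and $\chi_0(x,b)\in q$ with $\psi\wedge\chi_0$ inconsistent, and use the $C$-definable $q$-definition of $\chi_0$ together with non-algebraicity to produce a parameter realizing both, deriving a contradiction. \emph{The delicate point}—and where I expect the main difficulty—is that the obvious conjugation argument only moves $b$ by an $L$-automorphism over $C$, whereas the relevant ``bad set'' $\{y:\lnot\exists x(\psi(x,c)\wedge\chi_0(x,y))\}$ is only $L'$-definable over $C$; bridging $L$-conjugacy to $L'$-conjugacy is exactly the step that must exploit $C=\acl'(C)$ (concretely: a definable function forcing the collapse would have its canonical parameter in $\dcl^{\eq}(C)\cap M=C$, contradicting the choice of $b$). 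Handling this carefully completes the verification of property (4), with no hypothesis needed on $T'$.
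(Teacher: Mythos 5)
Your verification of the first three properties (invariance, algebraic independence, stationarity over $\acl$-closed sets) is correct and matches what the paper does: it cites exactly the facts you reprove, namely that stationarity over $\acl^{\eq}$-closed sets holds in stable theories and that weak elimination of imaginaries pulls this down to real algebraically closed sets. Your reduction of full existence to the consistency of $\tp_{L'}(A/C)(x)\cup (q\restriction BC)(x)$, where $q$ is the unique global nonforking extension of $\tp_L(A/C)$, is also correct.

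However, the fourth property --- extendability, which is the actual content of the proposition --- is not proved. You yourself flag the ``delicate point'' of bridging $L$-conjugacy over $C$ with $L'$-conjugacy over $C$, and then assert that ``handling this carefully'' finishes the argument; but the one-line gesture you offer (``a definable function forcing the collapse would have its canonical parameter in $\dcl^{\eq}(C)\cap M=C$, contradicting the choice of $b$'') is not an argument, and the counting remark about non-algebraicity does not connect to it. Concretely: after extracting $\psi(x,c)\in\tp_{L'}(A/C)$ and $\chi(x,b)\in q$ with $\psi\wedge\chi$ inconsistent, every $L'$-conjugate $b'$ of $b$ over $C$ still satisfies the ($L(C)$-definable) $q$-definition of $\chi$ and still has $\psi(\monster',c)\cap\chi(\monster,b')=\emptyset$, so conjugating $b$ produces no realization of both formulas; and there is no reason the particular type $q$ should be consistent with $\psi(x,c)$ at all (in the standard non-WEI counterexample --- an equivalence relation with a predicate naming one class --- the generic type $q$ is simply inconsistent with $P(x)$). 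The paper closes exactly this gap with a genuinely different mechanism: since stable theories have stable forking, the failure of full existence yields an $L'(C)$-formula $\varphi(x)\in\tp_{L'}(A/C)$ implying a dividing instance $\delta(x,b)$ of a stable formula; one then looks at the closed set $[\varphi]\subseteq S_\delta(\monsterset)$ of $\delta$-types consistent with $\varphi$, takes one of its finitely many points of maximal Cantor--Bendixson rank, and observes that its weak canonical base has only finitely many conjugates under $\Aut_{L'}(\monster'/C)$ (because $[\varphi]$ is setwise $L'(C)$-invariant), hence lies in $\acl'(C)=C$; such a type cannot divide over $C$, contradicting $\delta(x,b)$ belonging to it. Some argument of this kind --- local ranks plus (weak) canonical bases of $\delta$-types, with $C=\acl'(C)$ used to capture the canonical base --- is what your proposal is missing, and without it the main claim is unestablished.
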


\noindent The next example shows, there are also non-trivial examples of stationary independence relations in unstable theories, which may or may not be extendable.

\begin{example}\label{ex:rg}
Suppose $L$ contains a single binary relation $E$, and $T$ is the theory of the random graph (the \Fraisse limit of the class of finite graphs). Define:
\begin{align*}
A\eind[E]_C B &\iff A\cap B\subseteq C \text{ and } aEb\text{ for all $a\in A\setminus C$ and $b\in B\setminus C$}\\
A\eind[\not E]_C B &\iff A\cap B\subseteq C \text{ and } \lnot aEb\text{ for all $a\in A\setminus C$ and $b\in B\setminus C$.}
\end{align*}
Both $\eind[E]$ and $\eind[\not E]$ are stationary independence relations in $T$.

Now let $L' = \{E,P\}$, where $P$ is a unary predicate, and let $T'$ be the theory of the \Fraisse limit of the class of finite graphs with a predicate $P$ naming a clique. $T'$ extends $T$ and has quantifier elimination, and $\acl_{L'}(A) = A$ for all sets $A$.

Then $\eind[E]$ is extendable to $T'$. Indeed, for any $A$, $B$, and $C$, let $p(x) = \tp_{L'}(A/C)$, where $x = (x_a)_{a\in A}$ is a tuple of variables enumerating $A$. The type $$p(x)\cup \{x_a E b\mid a\in A\setminus C\text{ and }b\in B\setminus C\}$$ is consistent, and for any realization $A^*$ of this type, we have $A^*\eind[E]_C B$ in $\monster$.

On the other hand, let $a$ and $b$ be any two elements of $\monster'$ satisfying $P$. Then for any realization $a^*$ of $\tp_{L'}(a/\emptyset)$, we have $P(a^*)$, so $a^*Eb$, and $a^*\enind[\not E]_\emptyset b$ in $\monster$. So $\eind[\not E]$ is not extendable to $T'$.
\end{example}

\subsection{$\NSOP_1$ theories}
\label{sec:NSOP1basic}
In this section, we summarize the necessary background on $\NSOP_1$ theories. $T$ is a complete $L$-theory, $\monster$ is a monster model of  $T$, and $\sM \prec \monster$ is a small submodel with underlying set $M$.


\medskip\noindent
Let $\leq$ be the tree partial order on $2^{< \omega}$, and let $\nu^\frown \eta$ be the usual concatenation of $\nu,\eta \in 2^{<\omega}$.
The formula $\varphi(x;y)$ has \textbf{SOP$_1$} (relative to $T$) if there exist $(b_\eta)_{\eta\in 2^{<\omega}}$ in $\monsterset^y$ such that:
\begin{enumerate}
    \item For all $\nu,\eta \in 2^{< \omega}$, if $\nu^\frown 0 \leq \eta$, then $\{\varphi(x;b_{\eta}),\varphi(x;b_{\nu^\frown 1})\}$ is inconsistent.
    \item For all $\sigma \in 2^{\omega}$, $\{\varphi(x; a_{\sigma|_n})\mid n\in \omega\}$ is consistent.
\end{enumerate}
The theory $T$ is $\NSOP_1$ if no formula has $\SOP_1$ relative to $T$. An incomplete theory is $\NSOP_1$ if each of its completions is $\NSOP_1$.





\medskip\noindent
This ``negative" definition of $\NSOP_1$ is due to Dzamonja and Shelah~\cite{DS}. We will find it more convenient to work with a ``positive" characterization of $\NSOP_1$, due to Chernikov, Ramsey, and Kaplan, in terms of the relation of Kim-independence.

 
\medskip \noindent A global type $q \in S_y(\monster)$ is \textbf{$M$-invariant} if for any formula $\psi(y,z)$ and any $c,c'\in \monsterset^z$ with $\tp(c/M)=\tp(c'/M)$, we have $\psi(y,c)\in q$ if and only if $\psi(y,c')\in q$.
A sequence $(b_i)_{k \in \omega}$ is a \textbf{Morley sequence} for $q$ over $M$ if $b_k$ realizes the restriction of $q$ to $Mb_0\dots b_{k-1}$ for all $i$.
Suppose $\varphi(x,b)$ is a formula with $b\in \monsterset^y$ and $q\in S_y(\monster)$ is a global $M$-invariant type extending $\tp(b/M)$. 
Then $\varphi(x,b)$ \textbf{$q$-divides over $M$} if $\{\varphi(x,b_i)\mid i\in \omega\}$ is inconsistent for some (equivalently any)  Morley sequence $(b_i)_{i\in \omega}$ for $q$ over $M$.

\medskip\noindent A formula $\varphi(x,b)$ \textbf{Kim-divides} over $M$ if it $q$-divides over $M$ for some global $M$-invariant type $q$ extending $\tp(b/M)$, and $\varphi(x,b)$ \textbf{Kim-forks} over $M$ if it implies a disjunction of formulas which Kim-divide over $M$.
We say that $A$ is \textbf{Kim-independent} from $B$ over $M$ when no formula in $\tp(A/MB)$ Kim-forks over $M$.
We write $A\eind[K]_M B$ when $A$ is Kim-independent from $B$ over $M$.

\medskip \noindent These definitions are made over a model $\sM$, rather than over an arbitrary set of parameters $C$, because a type over $C$ may fail to extend to any global $C$-invariant type. 

\medskip \noindent We can now state the axiomatic characterization of $\NSOP_1$ and Kim-independence. An earlier version of this criterion appeared in~\cite{CR}.

\begin{fact}[\cite{Kim} Theorem 9.1]\label{fact:char}
Suppose $\ind$ satisfies the following for all $A,A',B,B'$ and all models $\sM,\sM'$:
\begin{enumerate}
\item \textbf{Invariance}: If $A\ind_M B$ and $MAB \equiv M'A'B'$, then $A'\ind_{M'} B'$.
\item \textbf{Existence}: $A\ind_M M$
\item \textbf{Monotonicity}: If $A\ind_M B$ and $A'\subseteq A$ and $B'\subseteq B$, then $A'\ind_M B'$.
\item \textbf{Symmetry}: If $A\ind_M B$, then $B\ind_M A$.
\item \textbf{The independence theorem}: If $A\equiv_M A'$, $A\ind_M B$, $A'\ind_M C$, and $B\ind_M C$, then there exists $A''$ such that $A'' \equiv_{MB} A$, $A''\equiv_{MC} A'$, and $A''\ind_M BC$.
\item \textbf{Strong finite character}: If $A\nind_M B$, then there is a formula $\varphi(x,b,m)\in \tp(A/MB)$ such that for any $c$ such that $\monster\models \varphi(c,b,m)$, we have $c\nind_M b$.
\end{enumerate}
Then $T$ is $\NSOP_1$. Suppose $\ind$ additionally satisfies: 
\begin{enumerate}\setcounter{enumi}{6}
\item \textbf{Witnessing}: If $A\nind_M B$, then there is a formula $\varphi(x,b,m)\in \tp(A/MB)$ which Kim-forks over $M$.
\end{enumerate}
Then $\ind_M = \eind[K]_M$ for all $\sM$.
\end{fact}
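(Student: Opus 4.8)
The plan is to establish the two assertions separately: first that $T$ is $\NSOP_1$, and then, granting \textbf{Witnessing}, that $\ind_M$ and $\eind[K]_M$ coincide. For the first assertion I would argue by contradiction, supposing some formula $\varphi(x;y)$ has $\SOP_1$. The first step is to extract, using Ramsey's theorem, compactness, the modelling property for trees, and the axioms for $\ind$, a witnessing configuration in a simplified form: an $M$-indiscernible sequence $(b_i)_{i<\omega}$ with $b_i\equiv_M b_0$ and $b_i\ind_M b_{<i}$, such that $\varphi(x;b_0)$ is consistent while $\{\varphi(x;b_0),\varphi(x;b_1)\}$ is inconsistent (the pairwise inconsistency being built into condition (1) of $\SOP_1$). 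By \textbf{Existence} I then choose $a_0\models\varphi(x;b_0)$ with $a_0\ind_M b_0$; by \textbf{Invariance} along the conjugacy $b_0\equiv_M b_1$ I obtain $a_1\equiv_M a_0$ with $a_1\models\varphi(x;b_1)$ and $a_1\ind_M b_1$; and, noting $b_0\ind_M b_1$ (from $b_1\ind_M b_{<1}$ and \textbf{Symmetry}), I feed $a_0\equiv_M a_1$, $a_0\ind_M b_0$, $a_1\ind_M b_1$, $b_0\ind_M b_1$ into \textbf{the independence theorem} to produce $a''$ with $a''\models\varphi(x;b_0)\wedge\varphi(x;b_1)$, contradicting inconsistency. \textbf{Monotonicity} and \textbf{Strong finite character} enter in the extraction and in transporting the independence statements.

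Assuming now \textbf{Witnessing}, the inclusion $\eind[K]_M\,\subseteq\,\ind_M$ is immediate: if $A\nind_M B$, then \textbf{Witnessing} supplies a formula in $\tp(A/MB)$ that Kim-forks over $M$, whence $A\enind[K]_M B$; the contrapositive is exactly $A\eind[K]_M B\Rightarrow A\ind_M B$.

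For the reverse inclusion $\ind_M\,\subseteq\,\eind[K]_M$, suppose $A\ind_M B$; I must show that no formula in $\tp(A/MB)$ Kim-forks over $M$. Reducing to a single $\varphi(x,b)\in\tp(A/Mb)$ with $b$ a finite tuple from $MB$ (so $A\ind_M b$ by \textbf{Monotonicity}), and invoking the first assertion together with Kim's Lemma for $\NSOP_1$ theories --- so that Kim-forking equals Kim-dividing and the latter may be tested along a suitable $\ind$-independent Morley sequence --- it suffices to show $\varphi(x,b)$ does not Kim-divide over $M$. I fix such a Morley sequence $(b_i)_{i<\omega}$ with $b_0=b$, $b_i\equiv_M b$, and $b_i\ind_M b_{<i}$, and build a realization of $\{\varphi(x,b_i):i<\omega\}$ by recursion: given $a$ with $a\models\varphi(x,b_i)$ for $i<n$ and $a\ind_M b_{<n}$, I take a conjugate $a'\equiv_M a$ of the original datum with $a'\models\varphi(x,b_n)$ and $a'\ind_M b_n$, and apply \textbf{the independence theorem} to $a\equiv_M a'$, $a\ind_M b_{<n}$, $a'\ind_M b_n$, $b_{<n}\ind_M b_n$ to extend. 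Compactness then produces a realization of the full set, contradicting the inconsistency witnessing Kim-dividing; hence $A\eind[K]_M B$.

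The genuine content, and the main obstacle, lies in two inputs I have taken for granted. In the first assertion it is the extraction: obtaining from an arbitrary $\SOP_1$ witness an $M$-indiscernible, $\ind$-independent sequence that still realizes the consistency-versus-inconsistency pattern requires the tree-modelling machinery and careful bookkeeping to make the $\ind$-independence coexist with the combinatorics of the tree. In the reverse inclusion it is Kim's Lemma for $\NSOP_1$ --- that Kim-dividing is independent of the chosen invariant type and is detected along an $\ind$-independent Morley sequence --- which itself rests on $T$ being $\NSOP_1$, established in the first part. Once these are in place, the independence-theorem amalgamations are routine.
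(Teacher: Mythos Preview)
The paper does not prove this statement; it is cited as a fact from Kaplan--Ramsey (\cite{Kim}, Theorem 9.1), so there is no proof in the paper to compare against. I will assess your sketch on its own merits.

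Your outline matches the high-level strategy of the actual argument in the literature, and you correctly flag the extraction step as the substantial part. But there is a concrete error in the first assertion: you write ``By \textbf{Existence} I then choose $a_0\models\varphi(x;b_0)$ with $a_0\ind_M b_0$.'' Existence only gives $A\ind_M M$; there is no extension or full-existence axiom in the list, so you cannot simply pick $a_0$ independent from $b_0$. In the actual proof, the needed independence is manufactured from \textbf{Strong finite character} applied to a carefully prepared array extracted from a strongly indiscernible tree: one argues that if the dependence held, the witnessing formula would propagate along the indiscernible configuration to a contradiction. You say strong finite character ``enters in the extraction'', which is true, but this is exactly the mechanism, and it is not what you describe.

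For the second assertion your argument is essentially right, but the quantifier order around the Morley sequence needs care. Kim's lemma says Kim-dividing is witnessed uniformly along Morley sequences for global $M$-invariant types; it does not say you may test along an arbitrary $\ind$-independent sequence. The correct move is to fix a Morley sequence $(b_i)$ for some global $M$-invariant $q\supseteq\tp(b/M)$; then $b_i\eind[K]_M b_{<i}$, hence $b_i\ind_M b_{<i}$ by the inclusion $\eind[K]\subseteq\ind$ you have already obtained from Witnessing. Your independence-theorem recursion then produces a realization of $\{\varphi(x,b_i):i<\omega\}$, so $\varphi(x,b)$ does not $q$-divide, and by Kim's lemma it does not Kim-divide. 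As you wrote it, you start from an abstract $\ind$-Morley sequence and then invoke Kim's lemma, which is the wrong order.
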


\begin{rem}
If $T$ is $\NSOP_1$, then Kim-independence satisfies all of the properties in Fact~\ref{fact:char}. 
The only nontrivial properties are symmetry (\cite{Kim} Theorem 5.16) and the independence theorem (\cite{Kim} Theorem 6.5).
\end{rem}



\noindent Fact~\ref{fact:kim} is a version of Kim's lemma for $\NSOP_1$ theories.
Kim's lemma was originally proven for forking in simple theories~\cite{Kim1998}. 

\begin{fact}[\cite{Kim} Theorem 3.15]\label{fact:kim}
Assume $T$ is $\NSOP_1$. If a formula $\varphi(x,b)$  $q$-divides over $M$ for \emph{some} global $M$-invariant type $q$ extending $\tp(b/M)$, then $\varphi(x,b)$ $q$-divides for \emph{every} global $M$-invariant type $q$ extending $\tp(b/M)$.
\end{fact}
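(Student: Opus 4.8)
The plan is to argue by contradiction, converting a failure of the statement into an $\SOP_1$-configuration for $\varphi$, which is impossible since $T$ is $\NSOP_1$. So suppose there are global $M$-invariant types $p$ and $q$, both extending $\tp(b/M)$, such that $\varphi(x,b)$ $p$-divides over $M$ but does not $q$-divide over $M$.

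First I would extract the raw combinatorial data. From $p$-dividing: a Morley sequence $(b_i)_{i<\omega}$ of $p$ over $M$ makes $\{\varphi(x,b_i) : i<\omega\}$ inconsistent, hence $k$-inconsistent for some fixed $k$; after a routine reduction (replacing $\varphi$ by a finite conjunction of instances) one may assume this inconsistency is already witnessed pairwise. From the failure of $q$-dividing: a Morley sequence $(c_j)_{j<\omega}$ of $q$ over $M$ makes $\{\varphi(x,c_j) : j<\omega\}$ consistent, realized by some $a$.

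The heart of the argument is to interleave these two pictures on the binary tree $2^{<\omega}$: build $(d_\eta)_{\eta\in 2^{<\omega}}$ with each $d_\eta\models\tp(b/M)$ so that (a) along every branch $\sigma\in 2^\omega$ the sequence $(d_{\sigma|_n})_{n<\omega}$ is modelled on the $q$-Morley sequence $(c_j)_{j<\omega}$, whence $\{\varphi(x,d_{\sigma|_n}):n<\omega\}$ is consistent; and (b) whenever $\nu^\frown 0\le\eta$ the pair $\{\varphi(x,d_\eta),\varphi(x,d_{\nu^\frown 1})\}$ is inconsistent, which is arranged by making the nodes hanging to the right of the left spine through $\nu$ look like the pairwise-inconsistent $p$-Morley sequence $(b_i)_{i<\omega}$. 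One carries this out by induction on the levels of the tree, at each stage using $M$-invariance of $q$ to push the branch data one level deeper and $M$-invariance of $p$ to attach the fresh right-turn node on top of it without disturbing types over $M$; then one passes to an $s$-indiscernible (tree-indiscernible) tree via the modeling theorem for trees, so that the finitely many local type-constraints propagate uniformly to all of $2^{<\omega}$. The resulting tree witnesses $\SOP_1$ for $\varphi$, the desired contradiction.

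The main obstacle is exactly this simultaneous control: one family $(d_\eta)$ must at once look like a $q$-Morley sequence down its branches (giving (a)) and like a pairwise-inconsistent $p$-Morley sequence in the right-turn direction (giving (b)), and the homogenization step must preserve both features. Getting the bookkeeping right --- ordering the induction correctly, exploiting that $p$ and $q$ are invariant over the common small model $M$ so that new nodes slot onto old data without changing $M$-types, and checking that tree-indiscernibility genuinely transfers both the dividing and the non-dividing patterns --- is where the real work lies; the rest is soft.
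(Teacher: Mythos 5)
The paper does not prove this statement itself; it is imported verbatim from Kaplan--Ramsey \cite{Kim} (Theorem 3.15), so the only meaningful comparison is with that proof. Your overall strategy --- assume $\varphi(x,b)$ $p$-divides but does not $q$-divide, and interleave a $k$-inconsistent $p$-Morley sequence with a consistent $q$-Morley sequence to build an $\SOP_1$-configuration for $\varphi$ --- is exactly the strategy of the cited argument, so you have identified the correct shape of the proof.

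The problem is that the step you defer as ``where the real work lies'' is essentially the entire proof, and the partial description you give of it would not deliver both clauses of the $\SOP_1$ definition. Concretely: condition (1) requires $\{\varphi(x,d_\eta),\varphi(x,d_{\nu^\frown 1})\}$ to be inconsistent for \emph{every} $\eta$ with $\nu^\frown 0\leq\eta$, not just for the right-children hanging off the left spine through $\nu$; this is achieved by choosing $d_{\nu^\frown 1}$ to realize $p$ restricted to $M$ together with the \emph{entire} subtree already built above $\nu^\frown 0$, so that each pair $(d_\eta,d_{\nu^\frown 1})$ begins a $p$-Morley sequence over $M$. But a node chosen this way also lies on branches, which must stay consistent, and your sketch offers no mechanism reconciling ``realizes $p$ over the sibling subtree'' with ``continues a $q$-Morley sequence along every branch through it.'' The device that resolves this tension in the actual proof is duplication: build the subtree $S_0$ above $\nu^\frown 0$ first, choose $d_{\nu^\frown 1}$ realizing the restriction of $p$ to $M\cup S_0$, take the subtree above $\nu^\frown 1$ to be an $M$-conjugate copy of $S_0$ rooted at $d_{\nu^\frown 1}$ (possible since $d_{\nu^\frown 1}\equiv_M d_{\nu^\frown 0}$, both realizing $\tp(b/M)$), and only then choose $d_\nu$ realizing $q$ over the union of both subtrees, so that every branch read from leaf to root is $M$-conjugate to a $q$-Morley sequence; compactness then yields the full tree without any appeal to tree-indiscernibles. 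None of this appears in your sketch. A secondary issue: your ``routine reduction'' to pairwise inconsistency replaces $b$ by a $(k-1)$-tuple $\bar b$ from a $p$-Morley sequence, but then $q^{\otimes(k-1)}$ need not extend $\tp(\bar b/M)$, so the dividing and non-dividing hypotheses no longer concern a common parameter and a common type over $M$; the $k$-inconsistency has to be threaded through the tree construction itself rather than eliminated in advance.
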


\noindent A consequence of Fact~\ref{fact:kim} is that Kim-forking and Kim-dividing agree in $\NSOP_1$ theories.

\begin{fact}[\cite{Kim} Proposition 3.19]
\label{fact:kimforking}
Assume $T$ is $\NSOP_1$. A formula $\varphi(x,b)$ Kim-forks over $M$ if and only  if it Kim-divides over $M$.
\end{fact}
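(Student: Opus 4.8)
The plan is to prove the two directions separately. One direction is immediate: a formula implies itself, so any $\varphi(x,b)$ that Kim-divides over $M$ also Kim-forks over $M$. For the converse, suppose $\varphi(x,b)$ Kim-forks over $M$, witnessed by $\varphi(x,b)\models\bigvee_{i<k}\psi_i(x,b_i)$ with each $\psi_i(x,b_i)$ Kim-dividing over $M$. First I would bundle all the parameters into a single tuple $c=b\,b_0\cdots b_{k-1}$, writing the corresponding variable as $(Y,Z_0,\dots,Z_{k-1})$, and choose a global $M$-invariant type $q$ extending $\tp(c/M)$; such a $q$ exists because we work over a model, so any ultrafilter on the Boolean algebra of $M$-definable sets yields a global $M$-finitely-satisfiable, hence $M$-invariant, extension of $\tp(c/M)$. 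Let $(c^j)_{j<\omega}$, with $c^j=b^j\,b_0^j\cdots b_{k-1}^j$, be a Morley sequence for $q$ over $M$. Then $(b^j)_{j<\omega}$ is a Morley sequence for the restriction $q\res Y$ and each $(b_i^j)_{j<\omega}$ is a Morley sequence for $q\res Z_i$, these being global $M$-invariant types extending $\tp(b/M)$ and $\tp(b_i/M)$ respectively. I will also use the elementary observation that any infinite subsequence of a Morley sequence for a global $M$-invariant type over $M$ is again such a Morley sequence.

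The goal is then to show that $\varphi(x,b)$ $(q\res Y)$-divides over $M$, i.e.\ that $\{\varphi(x,b^j)\mid j<\omega\}$ is inconsistent; this witnesses Kim-dividing. Since each $\psi_i(x,b_i)$ Kim-divides over $M$, Kim's lemma (Fact~\ref{fact:kim}) upgrades this to $(q\res Z_i)$-dividing over $M$; combined with the subsequence observation and the ``equivalently any'' clause in the definition of $q$-dividing, this gives that $\{\psi_i(x,b_i^j)\mid j\in S\}$ is inconsistent for every $i<k$ and every infinite $S\subseteq\omega$. Now suppose toward a contradiction that some $a$ realizes $\{\varphi(x,b^j)\mid j<\omega\}$. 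Transporting the implication above by an automorphism sending $c$ to $c^j$ gives $\varphi(x,b^j)\models\bigvee_{i<k}\psi_i(x,b_i^j)$, so for each $j$ there is $i(j)<k$ with $\monster\models\psi_{i(j)}(a,b_{i(j)}^j)$. By the pigeonhole principle there is a fixed $i^*<k$ and an infinite $S\subseteq\omega$ with $i(j)=i^*$ for all $j\in S$, so $a$ realizes $\{\psi_{i^*}(x,b_{i^*}^j)\mid j\in S\}$, contradicting its inconsistency. Hence $\{\varphi(x,b^j)\mid j<\omega\}$ is inconsistent and $\varphi(x,b)$ Kim-divides over $M$.

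I expect the only real subtlety to lie in the bookkeeping of the last paragraph: the pigeonhole step yields a realization only along a subsequence of the Morley sequence, so one genuinely needs both that subsequences of Morley sequences remain Morley sequences and that $q$-dividing for a fixed invariant type means inconsistency along \emph{every} of its Morley sequences, not merely some. The place the hypothesis is essential (beyond working over a model, which is what guarantees the global invariant extension $q$) is Kim's lemma, which requires $\NSOP_1$: in an arbitrary theory the upgrade from ``$\psi_i(x,b_i)$ $q'$-divides for some invariant $q'$'' to ``$\psi_i(x,b_i)$ $(q\res Z_i)$-divides'' can fail, and with it the whole argument.
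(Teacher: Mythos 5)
Your argument is correct. Note that the paper does not prove this statement at all: it is recorded as a Fact with a citation to \cite{Kim}, Proposition 3.19, so there is no in-paper proof to compare against. What you have written is essentially the standard derivation from that reference: package all parameters into one tuple, take a global $M$-finitely-satisfiable (hence $M$-invariant) extension of its type over the model $M$, use Kim's lemma (Fact~\ref{fact:kim}) to see that each disjunct $q$-divides along the coordinate restrictions of that single type, and then run the pigeonhole argument along a common Morley sequence. All the supporting observations you flag are sound: restrictions of an invariant type to a subtuple of variables are invariant and their coordinate projections of a Morley sequence are Morley sequences; infinite subsequences of Morley sequences for an invariant type are again Morley sequences; and the ``equivalently any'' clause in the definition of $q$-dividing is exactly what lets you conclude inconsistency along the subsequence produced by pigeonhole. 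You also correctly identify that $\NSOP_1$ enters only through Kim's lemma, which is where the statement fails in general theories.
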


\noindent Just like forking independence in arbitrary theories, Kim-independence in $\NSOP_1$ theories is blind to algebraic closures.

\begin{fact}[\cite{Kim} Corollary 5.17]\label{fact:kimacl}
Assume $T$ is $\NSOP_1$.
Then for all sets $A$ and $B$ and all  models $\sM$, $ A\eind[K]_M B$ if and only if $\acl(MA) \eind[K]_M \acl(MB).$
\end{fact}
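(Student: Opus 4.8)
The plan is to prove the two directions separately, with all the content in the forward direction. The backward direction, $\acl(MA)\eind[K]_M\acl(MB)\Rightarrow A\eind[K]_M B$, is immediate from monotonicity of Kim-independence (Fact~\ref{fact:char}(3), together with the remark following it). For the forward direction I would first reduce, using symmetry (Fact~\ref{fact:char}(4)), to the one-sided statement $A\eind[K]_M B\Rightarrow\acl(MA)\eind[K]_M B$: applying it, then symmetry, then it again, then symmetry, turns $A\eind[K]_M B$ into $\acl(MA)\eind[K]_M\acl(MB)$. Then, since Kim-independence has finite character on both sides and algebraic closure is finitary, I would reduce the one-sided statement to the following claim about finite tuples: if $a\eind[K]_M B$ and $d$ is a finite tuple in $\acl(Ma)$, then $ad\eind[K]_M B$.

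To prove this claim I would argue by contradiction: suppose $\varphi(x,y,b)\in\tp(ad/MB)$ Kim-forks, hence Kim-divides (Fact~\ref{fact:kimforking}), over $M$, where $x$ matches $a$, $y$ matches $d$, and $b$ is a finite tuple from $B$. Fix an $L(M)$-formula $\theta(x,y)$ with $\models\theta(a,d)$ which, after conjoining a suitable cardinality bound, also satisfies $\models\forall x\,\exists^{\le k}y\,\theta(x,y)$ for some fixed $k$; this makes the algebraic bound on the fiber $\theta(a',\monster)$ uniform over all $a'$. Put $\psi(x,z):=\exists y\,(\theta(x,y)\wedge\varphi(x,y,z))$, so that $\psi(x,b)\in\tp(a/MB)$, and hence $\psi(x,b)$ does not Kim-divide over $M$. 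Now fix one global $M$-invariant type $q$ extending $\tp(b/M)$ — a coheir will do — and a Morley sequence $(b_i)_{i<\omega}$ for $q$ over $M$. Then $\{\psi(x,b_i)\mid i<\omega\}$ is consistent, realized by some $a'$, whereas by Kim's lemma (Fact~\ref{fact:kim}) $\{\varphi(x,y,b_i)\mid i<\omega\}$ is inconsistent.

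The contradiction is then extracted by pigeonhole: for each $i$ pick $y_i$ with $\models\theta(a',y_i)\wedge\varphi(a',y_i,b_i)$; since $\theta(a',\monster)$ has at most $k$ elements, a single $y^*$ works for all $i$ in some infinite $J$, so $(a',y^*)$ realizes $\{\varphi(x,y,b_i)\mid i\in J\}$. On the other hand $(b_i)_{i<\omega}$ is indiscernible over $M$, so from the inconsistency of $\{\varphi(x,y,b_i)\mid i<\omega\}$ — witnessed by finitely many indices, by compactness — we get, by indiscernibility, that every subset of that size is inconsistent, in particular one indexed inside $J$; contradiction. I expect the only real subtleties to be bookkeeping ones: arranging the algebraic bound $k$ to be genuinely uniform over realizations $a'$ (hence the $\exists^{\le k}$ conjunct), and arranging that the \emph{same} invariant type $q$ simultaneously witnesses non-dividing of $\psi$ and dividing of $\varphi$ — both handled by the $\NSOP_1$ hypothesis via Kim's lemma. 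This is in essence the proof of \cite[Corollary~5.17]{Kim}.
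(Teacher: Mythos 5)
Your argument is correct. The paper itself gives no proof of this statement---it is quoted as Fact~\ref{fact:kimacl} directly from \cite{Kim}---so there is nothing in-paper to compare against; your reduction (monotonicity for one direction, symmetry plus finite character to isolate the claim ``$a\eind[K]_M B$ and $d\in\acl(Ma)$ finite implies $ad\eind[K]_M B$''), and the guarded-existential-plus-pigeonhole contradiction using Kim's lemma and the equivalence of Kim-forking with Kim-dividing, is essentially the argument of \cite[Corollary 5.17]{Kim} and is sound as written.
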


\noindent Every simple theory is $\NSOP_1$, and in a simple theory, Kim-independence $\eind[K]$ agrees with forking independence $\eind[f]$. Fact~\ref{fact:kimsimple} below, which follows from Propositions 8.4 and 8.8 of~\cite{Kim}, characterizes the simple theories among $\NSOP_1$ theories.
We say  Kim-independence satisfies \textbf{base monotonicity over models} if $a\eind[K]_M Nb$ implies $a\eind[K]_N b$ for all $\sM \preceq \sN$.

\begin{fact}\label{fact:kimsimple}
Assume $T$ is $\NSOP_1$.
Then the following are equivalent:
\begin{enumerate}
\item $T$ is simple,
\item $\eind[f]_M = \eind[K]_M$ for all models $\sM$,
\item Kim-independence satisfies base monotonicity over models.
\end{enumerate}
\end{fact}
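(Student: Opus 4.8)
The plan is to prove the cycle $(1)\Rightarrow(2)\Rightarrow(3)\Rightarrow(1)$, with essentially all the work concentrated in the last implication.

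\emph{$(1)\Rightarrow(2)$.} Kim-dividing always implies dividing: a Morley sequence for a global $M$-invariant type $q\supseteq\tp(b/M)$ started at $b$ is $M$-indiscernible, so inconsistency of $\{\varphi(x,b_i)\mid i\in\omega\}$ along it witnesses that $\varphi(x,b)$ divides over $M$ in the classical sense; hence Kim-forking implies forking. Conversely, when $T$ is simple, Kim's lemma for simple theories says that if $\varphi(x,b)$ divides over $M$ then it divides along \emph{every} Morley sequence over $M$; taking one generated by a global $M$-invariant coheir of $\tp(b/M)$ shows that $\varphi(x,b)$ Kim-divides over $M$. So Kim-dividing and dividing coincide, hence so do Kim-forking and forking, and therefore $\eind[f]_M=\eind[K]_M$ for all $\sM$. (Alternatively, one checks that in a simple theory $\eind[f]$ satisfies hypotheses (1)--(7) of Fact~\ref{fact:char} — the only non-routine point being witnessing, which is exactly the Kim's-lemma argument above — and appeals to the ``moreover'' clause.)

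\emph{$(2)\Rightarrow(3)$.} Forking independence satisfies base monotonicity over arbitrary parameter sets, in particular over models. So if $\eind[K]_M=\eind[f]_M$ for all $\sM$, then $\eind[K]$ inherits base monotonicity over models.

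\emph{$(3)\Rightarrow(1)$.} This is the heart of the matter. In the $\NSOP_1$ theory $T$, Kim-independence over models already satisfies invariance, monotonicity, symmetry, existence, extension over models, strong finite character, and the independence theorem over models (Fact~\ref{fact:char}, together with \cite{Kim}). Assuming in addition base monotonicity over models, one first derives transitivity over models, and then the key reduction that Kim-dividing coincides with dividing: if $\varphi(x,b)$ does not Kim-divide over $M$, realize it by some $a$ with $a\eind[K]_M b$, and run a Kim's-lemma-type argument now along an \emph{ordinary} Morley sequence — whose base can be kept inside a model by base monotonicity along a continuous chain — to conclude that $\varphi(x,b)$ does not divide over $M$. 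Combined with the inclusion $\eind[f]_M\subseteq\eind[K]_M$ from $(1)\Rightarrow(2)$, this yields $\eind[K]=\eind[f]$; in particular forking independence is symmetric and has local character, so $T$ is simple by the Kim--Pillay criterion. (Equivalently, $\eind[K]$ now satisfies all the Kim--Pillay axioms — local character being precisely what base monotonicity unlocks — so it witnesses simplicity directly, once one handles the minor point that Kim-independence is a priori defined only over models.)

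\emph{Main obstacle.} Everything except $(3)\Rightarrow(1)$ is routine. The real difficulty is extracting local character of Kim-forking from base monotonicity — equivalently, showing that under base monotonicity Kim-dividing reduces to dividing, equivalently $\eind[K]=\eind[f]$. Local character of Kim-forking fails in general $\NSOP_1$ theories, so this step genuinely uses base monotonicity, via the Kim's-lemma argument run along arbitrary (rather than invariant) Morley sequences while using base monotonicity to keep the base small along a chain of models.
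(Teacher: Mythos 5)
A point of orientation first: the paper does not prove this statement. It is quoted as a Fact, with the proof deferred to Propositions 8.4 and 8.8 of \cite{Kim}, so your attempt is really being measured against that source rather than against an argument in the text. Your directions $(1)\Rightarrow(2)$ and $(2)\Rightarrow(3)$ are correct and standard: Kim's lemma for simple theories, applied to a Morley sequence of a global coheir of $\tp(b/M)$, shows dividing and Kim-dividing coincide over models, and base monotonicity of $\eind[f]$ then transfers to $\eind[K]$.

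The genuine gap is in $(3)\Rightarrow(1)$, and it is twofold. First, the premise you offer for why base monotonicity is essential --- ``local character of Kim-forking fails in general $\NSOP_1$ theories'' --- is false: by \cite[Theorem 1.1]{KRS}, which this paper itself invokes in Example~\ref{ex:kim} of Appendix~C, Kim-independence satisfies (strong) local character over models in every $\NSOP_1$ theory. What fails in non-simple $\NSOP_1$ theories is base monotonicity itself, not local character. Second, the central step of your sketch --- that one can ``run a Kim's-lemma-type argument along an ordinary Morley sequence, keeping the base inside a model by base monotonicity along a continuous chain,'' to conclude that a non-Kim-dividing formula does not divide --- does not work as described: dividing of $\varphi(x,b)$ over $M$ is witnessed by an \emph{arbitrary} $M$-indiscernible sequence in $\tp(b/M)$, which need not be a Morley sequence of any invariant type nor reducible to one, and base monotonicity gives no handle on such sequences; your sketch also quietly assumes local character of $\eind[f]$ at the end without deriving it. The route that does work is the one you relegate to a parenthetical and then contradict in your ``main obstacle'' paragraph: in an $\NSOP_1$ theory, $\eind[K]$ over models already satisfies every axiom of a Kim--Pillay-style characterization of simplicity \emph{except} base monotonicity (symmetry and the independence theorem from \cite{Kim}, transitivity from \cite{KRTransitivity}, local character from \cite{KRS}), so hypothesis (3) supplies exactly the missing axiom and a Kim--Pillay criterion over models yields simplicity; alternatively one reproduces the direct argument of \cite[Proposition 8.4]{Kim}, which is more delicate than your sketch suggests.
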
 

\subsection{Independence and reducts}\label{sec:indreduct}
In this section, let $L\subseteq L'$ be languages, $T$ a complete $L$-theory, $T'$ a complete $L'$-theory extending $T$, $\monster'$ a monster model of $T'$,  and $\monster$ its reduct to  $L$. We consider the relationship between notions of independence in  $\monster'$ and $\monster$. 

\medskip \noindent 
It is not clear from the definition that Kim-independence is preserved under reducts, since the property of being an $M$-invariant type is not preserved under reducts in general.
However, when $T'$ is $\NSOP_1$, Fact~\ref{fact:kim} shows that Kim-dividing is always witnessed by $q$-dividing for a global type $q$ which is finitely satisfiable in $M$, and this property is preserved under reducts.
This gives us Lemma~\ref{lem:kimreduct}. 

\begin{lem}
\label{lem:kimreduct}
If $T'$ is $\NSOP_1$, then:
\begin{enumerate}
\item $T$ is $\NSOP_1$.
\item Let $\sM\prec\monster'$, and let $\varphi(x,b)$ be an $L$-formula. Then $\varphi(x,b)$ Kim-divides over $M$ in $\monster$ if and only if it Kim-divides over $M$ in $\monster'$.
\item Kim-independence is preserved by reducts: if $A\eind[K]_M B$ in $\monster'$, then also $A\eind[K]_M B$ in $\monster$.
\end{enumerate}
\end{lem}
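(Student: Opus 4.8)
\textbf{Proof plan for Lemma~\ref{lem:kimreduct}.}

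The plan is to prove the three parts in order, with parts (2) and (3) being essentially one argument. For part (1), I would invoke the ``negative'' definition of $\NSOP_1$: the witnesses $(b_\eta)_{\eta \in 2^{<\omega}}$ for $\SOP_1$ of an $L$-formula $\varphi(x;y)$ in $T$ also witness $\SOP_1$ of $\varphi$ as an $L'$-formula in $T'$, since consistency and inconsistency of sets of $L$-formulas are absolute between $\monster$ and $\monster'$ (they are the same sets of elements, and an $L$-formula is satisfied in $\monster$ iff it is satisfied in $\monster'$). Hence if $T$ has $\SOP_1$ then so does $T'$; contrapositive gives (1). Since $T$ is $\NSOP_1$, all of the Facts~\ref{fact:char}--\ref{fact:kimacl} are available for $T$ as well as $T'$.

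For the heart of the lemma, the key observation (flagged in the text preceding the statement) is that by Fact~\ref{fact:kim}, in an $\NSOP_1$ theory Kim-dividing of $\varphi(x,b)$ over $M$ can be witnessed by $q$-dividing for \emph{any} global $M$-invariant type $q$ extending $\tp(b/M)$, and in particular for a global type finitely satisfiable in $M$ (a coheir). So for part (2): suppose $\varphi(x,b)$, an $L$-formula, Kim-divides over $M$ in $\monster'$. Pick a global $L'$-type $q' \in S_y(\monster')$ finitely satisfiable in $M$ and extending $\tp_{L'}(b/M)$; by Fact~\ref{fact:kim} applied in $T'$, $\varphi(x,b)$ $q'$-divides over $M$, so for a Morley sequence $(b_i)_{i\in\omega}$ of $q'$ over $M$, the set $\{\varphi(x,b_i) : i \in \omega\}$ is inconsistent. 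Now let $q = q'\!\restriction\! L$ be the reduct of $q'$ to an $L$-type over $\monster$. A coheir restricts to a coheir: $q$ is still finitely satisfiable in $M$ (any $L$-formula in $q$ is an $L'$-formula in $q'$, hence realized in $M$), so $q$ is a global $M$-invariant $L$-type extending $\tp_L(b/M)$. Moreover $(b_i)_{i\in\omega}$ remains a Morley sequence for $q$ over $M$ in $\monster$: the requirement is that $b_i$ realizes $q\!\restriction\! Mb_0\cdots b_{i-1}$, which follows from $b_i \models q'\!\restriction\! Mb_0\cdots b_{i-1}$. Since $\varphi$ is an $L$-formula, inconsistency of $\{\varphi(x,b_i):i\in\omega\}$ is the same statement in $\monster$ and $\monster'$. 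Thus $\varphi(x,b)$ $q$-divides over $M$ in $\monster$, so it Kim-divides there. The converse direction is easier and does not even need Fact~\ref{fact:kim}: if an $L$-formula $\varphi(x,b)$ Kim-divides over $M$ in $\monster$, it $q$-divides for some global $M$-invariant $L$-type $q$; extend $q$ arbitrarily to a global $M$-invariant $L'$-type $q'$ (e.g.\ a coheir extension), note any Morley sequence for $q'$ over $M$ in $\monster'$ is one for $q$ over $M$ in $\monster$, so the same inconsistency witnesses Kim-dividing in $\monster'$.

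For part (3): suppose $A \nind[K]_M B$ in $\monster$, i.e.\ some $L$-formula $\varphi(x,b) \in \tp_L(A/MB)$ Kim-forks over $M$ in $\monster$. By Fact~\ref{fact:kimforking} (available since $T$ is $\NSOP_1$ by part (1)), Kim-forking equals Kim-dividing, so $\varphi(x,b)$ Kim-divides over $M$ in $\monster$; by part (2) it Kim-divides over $M$ in $\monster'$; and $\varphi(x,b) \in \tp_{L'}(A/MB)$ as well, so $A \nind[K]_M B$ in $\monster'$. The contrapositive is exactly the claim. The main (and really only) obstacle is making sure the coheir/finite-satisfiability machinery is correctly invoked --- specifically that restricting a globally finitely-satisfiable type to a sublanguage preserves finite satisfiability and the Morley-sequence property --- but these are routine once one commits to working with coheirs rather than arbitrary invariant types, which is exactly what Fact~\ref{fact:kim} licenses.
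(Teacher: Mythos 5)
Your overall strategy is the paper's: part (1) from the negative definition of $\NSOP_1$, part (2) via Kim's lemma (Fact~\ref{fact:kim}) and a global coheir of $\tp_{L'}(b/M)$ whose restriction to $L$ is still a coheir with the same Morley sequences, and part (3) from part (2) together with Fact~\ref{fact:kimforking}. Parts (1) and (3) are correct, and the forward direction of (2) is exactly the paper's argument.

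The one genuine problem is the converse direction of (2). You claim it ``does not even need Fact~\ref{fact:kim}'': take a global $M$-invariant $L$-type $q$ witnessing Kim-dividing in $\monster$ and ``extend $q$ arbitrarily to a global $M$-invariant $L'$-type $q'$ (e.g.\ a coheir extension)''. This step is not available. A global $M$-invariant $L$-type need not admit any $M$-invariant completion to the larger language $L'$ (invariance as an $L'$-type is a condition on the new formulas, and nothing guarantees it can be arranged), and ``coheir extension'' only makes sense if $q$ itself is finitely satisfiable in $M$, which an arbitrary invariant type need not be. The repair is that the converse direction \emph{does} need Fact~\ref{fact:kim}, applied in $T$ (which is legitimate, since $T$ is $\NSOP_1$ by your part (1)): because $\varphi(x,b)$ Kim-divides over $M$ in $\monster$, it $q$-divides for \emph{every} global $M$-invariant $L$-type extending $\tp_L(b/M)$, in particular for the restriction $q$ of a global $L'$-coheir $q'$ of $\tp_{L'}(b/M)$; then a Morley sequence $(b_i)_{i\in\omega}$ for $q'$ is one for $q$, the set $\{\varphi(x,b_i)\mid i\in\omega\}$ is inconsistent, and $\varphi(x,b)$ $q'$-divides over $M$ in $\monster'$. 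This is precisely how the paper handles both directions at once: it fixes the coheir $q'$ and its restriction $q$ first and reads off ``Kim-divides in $\monster'$ iff $\{\varphi(x,b_i)\}$ is inconsistent iff Kim-divides in $\monster$'' from Fact~\ref{fact:kim} applied in $T'$ and in $T$ respectively.
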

\begin{proof}
For (1), the fact that $\NSOP_1$ is preserved by reducts is clear from the definition: any formula with SOP$_1$ relative to $T$ also has SOP$_1$ relative to $T'$.

For (2), fix a global $L'$-type $q'$ extending $\tp_{L'}(b/M)$, which is finitely satisfiable in $M$ (hence $M$-invariant). Let $(b_i)_{i\in \omega}$ be a Morley sequence for $q'$ over $M$. Let $q$ be the restriction of $q'$ to $L$. Then $q$ is also finitely satisfiable in $M$ (hence $M$-invariant) and extends $\tp_{L}(b/M)$, and $(b_i)_{i\in \omega}$ is a Morley sequence for $q$ over $M$. By Fact~\ref{fact:kim} and (1), $\varphi(x,b)$ Kim-divides over $M$ in $\monster$ if and only if $\{\varphi(x,b_i)\}_{i\in \omega}$ is inconsistent if and only if $\varphi(x,b)$ Kim-divides over $M$ in $\monster'$.

For (3), suppose $A\eind[K]_M B$ in $\monster'$. Then no formula in $\tp_{L'}(A/MB)$ Kim-divides over $M$ in $\monster'$, so, by (2), no formula in $\tp_{L}(A/MB)$ Kim-divides over $M$ in $\monster$. By Fact~\ref{fact:kimforking}, $A\eind[K]_M B$ in $\monster$.
\end{proof}

\noindent We usually work with a reduct $\monster$ which is stable, or at least simple. In this situation, it is useful to consider an independence relation induced on $\monster'$ by forking independence $\eind[f]$ in $\monster$ (which is defined over arbitrary sets, not just models). Define the relation $\eind[r]$, \textbf{independence in the reduct}: $$A\eind[r]_C B \quad \text{if and only if} \quad \acl'(AC)\eind[f]_{\acl'(C)} \acl'(BC)\text{ in }\monster.$$
 
\noindent Note that in the definition of $\eind[r]$, $\acl'$ is the algebraic closure in $\monster'$. If $L$ is the language of equality and $T$ is the theory of an infinite set then $\eind[r] = \eind[a]$ in $\monster'$, where $\eind[a]$ is \textbf{algebraic independence}: $$A\eind[a]_C B \quad \text{if and only if} \quad \acl(AC)\cap \acl(BC) = \acl(C).$$

\noindent Strengthened versions of extension and the independence theorem, adding additional instances of algebraic independence to the conclusion, were established for Kim-independence in $\NSOP_1$ theories in~\cite{KRExp}.
Theorem~\ref{thm:reasext} and Theorem~\ref{thm:reasind} are generalizations of these results, with $\eind[a]$ replaced by the relation $\eind[r]$ induced by the reduct $T$. The statements are non-trivial because $\eind[r]$ does not satisfy base monotonicity in general. 

\begin{thm}[Reasonable extension]\label{thm:reasext}
Suppose $T'$ is $\NSOP_1$ and $T$ is simple with stable forking and geometric elimination of imaginaries. 
For all $A\eind[K]_M B$ and for all $C$, there exists $A'$ such that $\tp_{L'}(A'/MB) = \tp_{L'}(A/MB)$, $A'\eind[K]_M BC$, and $A'\eind[r]_{MB} C$. 
\end{thm}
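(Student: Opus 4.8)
The plan is to obtain the conclusion in two stages: first invoke the ``algebraically reasonable extension'' of~\cite{KRExp}, which gives the desired statement with algebraic independence $\eind[a]$ in place of $\eind[r]$, and then upgrade $\eind[a]$ to $\eind[r]$ using the hypotheses on $T$. Two preliminary remarks. \emph{Reduction:} replacing $C$ by $\acl'(MBC)$ changes neither $A'\eind[K]_M BC$ (by monotonicity of Kim-independence) nor $A'\eind[r]_{MB}C$ (which depends only on $\acl'(MBC)$), so we may assume $C=\acl'(MBC)$, in particular $C\supseteq MB$. \emph{Basic observation:} over a model, Kim-independence in $\monster'$ refines independence in the reduct. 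Indeed, if $X\eind[K]_M Y$ in $\monster'$ then by Fact~\ref{fact:kimacl} $\acl'(MX)\eind[K]_M\acl'(MY)$ in $\monster'$, hence by Lemma~\ref{lem:kimreduct}(3) the same holds in $\monster$, and since $T$ is simple Fact~\ref{fact:kimsimple} yields $\acl'(MX)\eind[f]_M\acl'(MY)$ in $\monster$, i.e.\ $X\eind[r]_M Y$. The theorem is not immediate from this because $\eind[r]$ fails base monotonicity, so an $\eind[r]$-statement over the model $M$ does not by itself yield one over the base $\acl'(MB)$.

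Now apply the reasonable extension of~\cite{KRExp} to $A\eind[K]_M B$ and $C$: we obtain $A_0$ with $\tp_{L'}(A_0/MB)=\tp_{L'}(A/MB)$, $A_0\eind[K]_M BC$, and $\acl'(MBA_0)\cap\acl'(MBC)=\acl'(MB)$. By the basic observation, $\acl'(MA_0)\eind[f]_M\acl'(MBC)$ in $\monster$; since $M\subseteq\acl'(MB)\subseteq\acl'(MBC)$, base monotonicity of $\eind[f]$ in the simple theory $T$ gives $\acl'(MA_0)\eind[f]_{\acl'(MB)}\acl'(MBC)$, and normality together with closing under $\acl_L$ on the left gives
\[
\acl_L\!\big(\acl'(MB)\cup\acl'(MA_0)\big)\;\eind[f]_{\acl'(MB)}\;\acl'(MBC)\quad\text{in }\monster.
\]
Since $\acl'(MBA_0)=\acl'\!\big(\acl'(MB)\cup\acl'(MA_0)\big)$ is the $L'$-algebraic closure of the $L$-algebraically closed set $D_0:=\acl_L(\acl'(MB)\cup\acl'(MA_0))$, it remains to prove the implication
\[
D_0\,\eind[f]_{\acl'(MB)}\,\acl'(MBC)\ \Longrightarrow\ \acl_{L'}(D_0)\,\eind[f]_{\acl'(MB)}\,\acl'(MBC)\quad\text{in }\monster,
\]
given the disjointness $\acl_{L'}(D_0)\cap\acl'(MBC)=\acl'(MB)$. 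Granting this, $\acl'(MBA_0)\eind[f]_{\acl'(MB)}\acl'(MBC)$ in $\monster$, i.e.\ $A_0\eind[r]_{MB}C$, so $A'=A_0$ works.

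The genuinely delicate point, and where all three hypotheses on $T$ enter, is the displayed implication: passing from the $L$-algebraic closure to the $L'$-algebraic closure on the left of a non-$L$-forking statement, which fails in general. The intended argument: were some tuple $e\in\acl_{L'}(D_0)$ to $L$-fork with $\acl'(MBC)$ over $\acl'(MB)$, then by stable forking this would be witnessed by a stable $L$-formula, and the canonical base of the corresponding stable forking lies in $\acl^{\eq}$ of $\acl'(MBC)$; but using geometric elimination of imaginaries to pass between imaginary and real algebraic closures, together with $e\in\acl_{L'}(D_0)$, $D_0\eind[f]_{\acl'(MB)}\acl'(MBC)$, and the disjointness above, this canonical base is forced into $\acl^{\eq}$ of $\acl'(MB)$, contradicting that the formula forks over $\acl'(MB)$. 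In writing this up it would be natural to isolate a lemma of the shape ``$X\eind[K]_M YZ$ and $\acl'(MYX)\cap\acl'(MYZ)=\acl'(MY)$ imply $X\eind[r]_{MY}Z$'', whose proof is exactly this stable-forking argument. If isolating the upgrade proves awkward, an alternative is to reprove the~\cite{KRExp} argument from scratch tracking $\eind[r]$ rather than $\eind[a]$: one uses the independence theorem for Kim-independence (Fact~\ref{fact:char}) together with a Morley sequence in a global invariant type extending $\tp_{L'}(C/M)$, invoking stable forking (and geometric EI) at each place where the original argument used a property of $\eind[a]$ --- existence, extension over arbitrary sets, and the ``algebraic'' amalgamation from stationarity of algebraic types --- to obtain the corresponding property of $\eind[r]$ from stationarity of stable $L$-forking over algebraically closed sets.
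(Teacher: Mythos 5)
There is a genuine gap, and it is in the step you yourself flag as delicate. Your main route takes the $A_0$ produced by the algebraically reasonable extension of~\cite{KRExp} and tries to show that its stated properties --- $A_0\eind[K]_M BC$ together with $\acl'(MBA_0)\cap\acl'(MBC)=\acl'(MB)$ --- already force $A_0\eind[r]_{MB}C$. That implication is false. Take $T=\mathrm{ACF}_0$ and $T'$ its generic expansion by a unary function symbol $f$ (this is $\NSOP_1$, and $T$ is stable with EI, so the hypotheses hold). Let $a$ be field-generic over $\acl'(MBC)$, let $b\in B\setminus M$, let $c_1,c_2\in C$ be generic, and impose $f(ab)=c_1\cdot ab+c_2$ while keeping all other $f$-values generic. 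The formula $f(x\cdot b)=c_1\cdot x\cdot b+c_2$ does not Kim-divide over $M$: along an $M$-invariant Morley sequence $(b^i,c_1^i,c_2^i)$ one is merely prescribing $f$ at infinitely many distinct new arguments, which is consistent in the generic expansion; so $a\eind[K]_M BC$ can hold. The disjointness $\acl'(MBa)\cap\acl'(MBC)=\acl'(MB)$ also holds, since a single linear relation does not recover $c_1$ or $c_2$. Yet $(ab,f(ab))$ is a generic point of the line $y=c_1x+c_2$, which is defined over $\acl'(MBC)$ but not over $\acl'(MB)$, so $\acl'(MBa)\enind[f]_{\acl'(MB)}\acl'(MBC)$ in $\monster$, i.e.\ $a\enind[r]_{MB}C$. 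Your proposed canonical-base argument cannot rule this out: the weak canonical base of the forking stable type of $e\in\acl_{L'}(D_0)$ over $\acl'(MBC)$ lies in $\acl^{\eq}$ of $\acl'(MBC)$, but there is no reason for it to lie in $\acl^{\eq}$ of $\acl_{L'}(D_0)$ --- that would amount to one-basedness of $T$ --- so the disjointness hypothesis never puts it into $\acl^{\eq}(\acl'(MB))$. (In the example the canonical base is interalgebraic with $(c_1,c_2)$, which sits in $\acl'(MBC)$ but nowhere near $\acl'(MBa)$.) Note also that Lemma~\ref{lem:forkingreduct} is a full-existence statement --- it finds \emph{some} realization of an $L'$-type that is $L$-independent --- and does not upgrade a given configuration.

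The paper's actual proof (Theorem~\ref{thm:reasext*} together with Example~\ref{ex:forking}) sidesteps this entirely by a pigeonhole argument: build a long $\eind[r]$-independent sequence $(C_\alpha)_{\alpha<\mu}$ of copies of $C$ over $MB$ (full existence for $\eind[r]$ is where stable forking and geometric elimination of imaginaries enter, via Lemma~\ref{lem:forkingreduct}), use extension for $\eind[K]$ to find $A''\equiv_{MB}A$ Kim-independent over $M$ from the whole sequence, and then use strong local character of $\eind[r]$ (inherited from local character of forking in the simple reduct) to find a single index $\beta$ with $A''\eind[r]_{MB}C_\beta$; an automorphism over $MB$ moving $C_\beta$ to $C$ finishes. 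Your closing ``alternative'' --- rerunning the~\cite{KRExp} argument while tracking $\eind[r]$ --- points in roughly this direction, but as written it is not a proof and misidentifies the mechanism: what replaces base monotonicity is not stationarity of stable forking but the local-character/long-sequence argument above.
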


\begin{thm}[Reasonable independence theorem]\label{thm:reasind}
Suppose $T'$ is $\NSOP_1$ and $T$ is simple with stable forking and geometric elimination of imaginaries. If $A\eind[K]_M B$, $A'\eind[K]_M C$, $B\eind[K]_M C$, and $\tp_{L'}(A/M) = \tp_{L'}(A'/M)$, then there exists $A''$ such that $\tp_{L'}(A''/MB) = \tp_{L'}(A/MB)$, $\tp_{L'}(A''/MC) = \tp_{L'}(A'/MC)$, $A''\eind[K]_M BC$, and further $A''\eind[r]_{MC} B$, $A''\eind[r]_{MB} C$, and $B\eind[r]_{MA''} C$. 
\end{thm}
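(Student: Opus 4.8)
The plan is to reduce all of the extra conclusions to statements about forking in the simple reduct $T$, and then to adapt the proof of the algebraically reasonable independence theorem of~\cite{KRExp} (the case where $\eind[r]$ is algebraic independence $\eind[a]$ over the theory of equality) by replacing $\eind[a]$ with $\eind[r]$ throughout.

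First I would record the properties of $\eind[r]$ that make such an adaptation possible. Since $\sM$ is a model of $T'$ we have $\acl'(M)=M$, and since algebraic closure in $T$ is contained in $\acl'$, every $\acl'$-closed subset of $\monster$ is algebraically closed in $T$ as well. Using this together with Fact~\ref{fact:kimacl}, Lemma~\ref{lem:kimreduct}(3) and Fact~\ref{fact:kimsimple}, I would verify: (i) if $A\eind[K]_M B$ in $\monster'$, then $\acl'(MA)\eind[f]_{M}\acl'(MB)$ in $\monster$, i.e.\ $A\eind[r]_M B$; (ii) $\eind[r]$ is $\Aut(\monster')$-invariant, symmetric and monotone, all inherited from forking in $T$ since any $L'$-automorphism is an $L$-automorphism preserving $\acl'$; (iii) $\eind[r]$ has left and right extension over $\acl'$-closed bases, again inherited from forking in $T$; and (iv) forking in $T$, over any $\acl'$-closed base $E$, satisfies the independence theorem, which follows from the independence theorem for the simple theory $T$ over $\acl^{\eq}$-closed sets via geometric elimination of imaginaries. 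As the text stresses, $\eind[r]$ need not satisfy base monotonicity, so this will not be available and the order of the steps below must be chosen with care.

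With these facts in hand the argument follows~\cite{KRExp}. Using Fact~\ref{fact:kimacl} I would first reduce to the case where $A$, $A'$, $B$, $C$ are $\acl'$-closed over $M$. Then, applying Theorem~\ref{thm:reasext} (reasonable extension) several times in a suitable order and composing with automorphisms fixing the relevant bases, I would replace the data by data that, besides satisfying the original hypotheses, also satisfies $A\eind[K]_M BC$ with $A\eind[r]_{MB}C$, $A'\eind[K]_M BC$ with $A'\eind[r]_{MC}B$, and $B\eind[K]_M CAA'$ with $B\eind[r]_{MC}AA'$; at each application one checks that the Kim-independence hypotheses and the type-equality hypothesis survive. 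Finally I would run the proof of the $\NSOP_1$ independence theorem (\cite{Kim}, Theorem~6.5; cf.~\cite{CR}) to produce $A''$ with $\tp_{L'}(A''/MB)=\tp_{L'}(A/MB)$, $\tp_{L'}(A''/MC)=\tp_{L'}(A'/MC)$ and $A''\eind[K]_M BC$, but now carrying $\eind[r]$-independence along through that proof: each parameter chosen along an invariant-type Morley sequence is additionally taken $\eind[r]$-independent using (iii), and the amalgamation is performed with the independence theorem for forking in $T$ over the relevant $\acl'$-closed base (over $\acl'(MA'')$ this invokes (iv)), so that $A''$ comes out with $A''\eind[r]_{MC}B$, $A''\eind[r]_{MB}C$ and $B\eind[r]_{MA''}C$. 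This is exactly the bookkeeping of~\cite{KRExp} with $\eind[a]$ replaced by $\eind[r]$; the hypotheses that $T$ be simple with stable forking and geometric elimination of imaginaries are there precisely so that $\eind[r]$ plays the role that $\eind[a]$ plays in~\cite{KRExp} (they are also the hypotheses of Theorem~\ref{thm:reasext}, which is used as a black box).

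The step I expect to be the main obstacle is this last adaptation: checking that properties (i)--(iv) of $\eind[r]$ are genuinely strong enough to substitute for $\eind[a]$ at every point in the~\cite{KRExp} argument. Two places look delicate. The third conclusion $B\eind[r]_{MA''}C$ is over the base $MA''$, which is not a model; controlling forking over $\acl'(MA'')$ means passing through $\acl^{\eq}$ and using geometric elimination of imaginaries of $T$, and one must confirm that the instance of the independence theorem needed really is available there. And because $\eind[r]$ lacks base monotonicity, the several uses of reasonable extension in the preprocessing must be sequenced so that no later use destroys an $\eind[r]$-relation already obtained; fixing a correct order, and verifying preservation of all hypotheses along the way, is the part that requires real care rather than routine translation.
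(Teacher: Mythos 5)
Your high-level strategy is the right one, and it is the paper's: reduce to the simple reduct, check that $\eind[r]$ has enough abstract properties (the paper axiomatizes these in Appendix~\ref{app:reasonable} and verifies them for $\eind[r]$ in Example~\ref{ex:forking}), and then redo the argument of~\cite{KRExp} with $\eind[a]$ replaced by $\eind[r]$, with Theorem~\ref{thm:reasext} as the extension step. Your items (i)--(iii) are all correct and are indeed verified in Example~\ref{ex:forking}.

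There is, however, a genuine gap: your list of properties omits the one that actually does the work, namely \emph{strong local character} of $\eind[r]$. Since base monotonicity is unavailable, the only way to extract, from a long $\eind[r]$-independent sequence $(b_\alpha)_{\alpha<\mu}$ over a base $C$, a single $b_\beta$ with $a\eind[r]_C b_\beta$ is the local-character argument of Lemma~\ref{lem:longseq}; in the $\eind[a]$ case of~\cite{KRExp} this role is played by cardinality counting, and ``left and right extension'' (your (iii)) cannot substitute for it. Establishing strong local character for $\eind[r]$ is itself nontrivial -- Example~\ref{ex:forking} needs an $\omega$-chain construction because $\acl(AC_i)$ grows as the base $C_i$ grows -- and the proof of the theorem is then driven entirely by this property: one builds three reverse Morley sequences of carefully separated regular lengths $\mu_a>\mu_b>\mu_c$ via the reasonable chain condition (Theorem~\ref{thm:reaschain*}) and Lemma~\ref{lem:doublesequence}, applies the ordinary independence theorem for $\eind[K]$ to the whole sequences, and then extracts $a_*,b_*,c_*$ by three successive applications of Lemma~\ref{lem:longseq} together with regularity/non-cofinality arguments. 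Your proposal instead asks to ``carry $\eind[r]$ along through the proof of the independence theorem,'' which is precisely the step that fails without local character, and your item (iv) (an independence theorem for forking in $T$ over arbitrary $\acl'$-closed bases) is neither needed nor clearly available: the paper never amalgamates $\eind[r]$-independent types, and in particular the conclusion $B\eind[r]_{MA''}C$ comes out of the double-sequence construction (every $b_\beta$ in the sequence is already $\eind[*]$-Morley over $Ma''_\alpha$ for every $\alpha$) rather than from any amalgamation over $\acl'(MA'')$.
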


\noindent Theorem~\ref{thm:reasext} and Theorem~\ref{thm:reasind} follow from Example~\ref{ex:forking}, Theorem~\ref{thm:reasext*}, and Theorem~\ref{thm:reasind*} in  Appendix~\ref{app:reasonable}. The hypotheses on $T$ come from Lemma~\ref{lem:forkingreduct}.

\medskip \noindent When we prove the independence theorem for interpolative fusions in Section~\ref{sec:NSOP1}, we need an additional strengthening of the independence theorem for $\eind[K]$, which unfortunately we do not know how to prove in general.

\medskip \noindent Assume $T'$ is $\NSOP_1$ and $T$ is stable. We say that $\ind[K]$ in $T'$ satisfies the \textbf{$T$-generic independence theorem} if it satisfies the following strengthening of the independence theorem: Let $\sM'\preceq \monster'$ be a model with underlying set $M$, and let $A$, $A'$, $B$, and $C$ be $\acl'$-closed sets, each of which contains $M$. Suppose $\tp_{L'}(A/M) = \tp_{L'}(A'/M)$, $A\eind[K]_M B$, $A'\eind[K]_M C$, and $B\eind[K]_M C$ in $\monster'$. Then there exists $A''$ such that $\tp_{L'}(A''/B) = \tp_{L'}(A/B)$, $\tp_{L'}(A''/C) = \tp_{L'}(A'/C)$, and $A''\eind[K]_M BC$ in $\monster'$, and further, in the stable reduct $\monster$:
\begin{align*}
\acl'(A''B)\eind[f]_{A''B} \acl'(A''C)\acl'(BC)\\
\acl'(A''C)\eind[f]_{A''C} \acl'(A''B)\acl'(BC)\\
\acl'(BC)\eind[f]_{BC} \acl'(A''B)\acl'(A''C).
\end{align*}

\begin{question}\label{q:genericind}
Assume $T'$ is $\NSOP_1$ and $T$ is stable. Does $\eind[K]$ in $T'$ always satisfy the $T$-generic independence theorem? What if $T'$ is simple?
\end{question}

\noindent We do not know the answer to Question~\ref{q:genericind} in general. But we will now observe that there are many situations in which we obtain a positive answer, including when $T'$ is stable.

\begin{prop}\label{prop:stablethreebody}
If $T'$ is stable, then $\eind[K] = \eind[f]$ in $T'$ satisfies the $T$-generic independence theorem.
\end{prop}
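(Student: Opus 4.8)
The strategy is to deduce everything from classical properties of forking in stable theories. Since $T'$ is stable it is simple, so by Fact~\ref{fact:kimsimple} we have $\eind[K]_M=\eind[f]_M$ in $T'$ over every model, and the reduct $\monster$ is also stable. I would first produce $A''$ using only stationarity over models. Let $\hat p$ be the unique global nonforking extension of $\tp_{L'}(A/M)=\tp_{L'}(A'/M)$. Since neither $\tp_{L'}(A/MB)$ nor $\tp_{L'}(A'/MC)$ forks over $M$, both are restrictions of $\hat p$, so the restriction of $\hat p$ to $MBC$ is consistent with each; take $A''$ realizing it. Then $\tp_{L'}(A''/MB)=\tp_{L'}(A/MB)$ and $\tp_{L'}(A''/MC)=\tp_{L'}(A'/MC)$, hence (as $M\subseteq B$ and $M\subseteq C$) the required type conditions over $B$ and $C$ hold, and $A''\eind[f]_M BC$, i.e.\ $A''\eind[K]_M BC$, in $\monster'$. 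Finally, $A''$ is an $\Aut(\monster'/MB)$-image of the $\acl'$-closed set $A$, so $A''$ is itself $\acl'$-closed.

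It remains to verify the three forking statements in the stable reduct $\monster$. From $A''\eind[f]_M BC$ and $B\eind[f]_M C$ in $\monster'$, using symmetry, monotonicity, transitivity and base monotonicity of forking in the stable theory $T'$ together with $M\subseteq A''\cap B\cap C$, one obtains $A''\eind[f]_B C$, $A''\eind[f]_C B$ and $B\eind[f]_{A''} C$ in $\monster'$. By the algebraic independence property of forking in a stable theory and the $\acl'$-closedness of $A''$, $B$, $C$, this gives
\[\acl'(A''B)\cap\acl'(A''C)=A'',\qquad \acl'(A''B)\cap\acl'(BC)=B,\qquad \acl'(A''C)\cap\acl'(BC)=C,\]
so that each of $\acl'(A''B)$, $\acl'(A''C)$, $\acl'(BC)$ meets the union of the other two exactly in the corresponding base $A''B$, $A''C$, $BC$.

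Now each of the three conditions holds for a trivial reason in $\monster'$, and I would descend it to $\monster$. For instance $\acl'(A''B)$ is algebraic over $A''B$ in $T'$, so $\acl'(A''B)\eind[f]_{A''B}\acl'(A''C)\acl'(BC)$ holds vacuously in $\monster'$; applying the reduct lemma for nonforking in stable theories (the stable case of Lemma~\ref{lem:forkingreduct}, used inside $(T')^{\eq}$ and $T^{\eq}$ — the latter being stable, with stable forking and, by full elimination of imaginaries, geometric elimination of imaginaries — and then read off on the real sorts), whose hypothesis is exactly the control of algebraic overlap from the previous paragraph, we get $\acl'(A''B)\eind[f]_{A''B}\acl'(A''C)\acl'(BC)$ in $\monster$. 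Permuting the roles of $A''B$, $A''C$, $BC$ gives the other two conditions, completing the verification. The delicate point — and the only place where stability of $T'$, not just of $T$, is needed — is precisely this descent of nonforking to the stable reduct over the bases $A''B$, $A''C$, $BC$, which are not $\acl'$-closed: stability furnishes the stationarity used to build $A''$ and to make the configuration free upstairs, and the intersection identities are what allow the reduct lemma to be applied over these bases.
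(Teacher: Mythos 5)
Your construction of $A''$ via the unique global nonforking extension of $\tp_{L'}(A/M)$ is fine and is essentially the paper's first step (the independence theorem for $\eind[f]=\eind[K]$ over the model $M$). The gap is in the second half, where you verify the three independencies in the reduct $\monster$. Lemma~\ref{lem:forkingreduct} is a \emph{full existence} (extendability) statement: given an $\acl'$-closed base it produces \emph{some} realization of a given $L'$-type over that base which is $\eind[f]$-independent in $\monster$ from a prescribed set. It is not a lemma that transfers a given instance of nonforking from $\monster'$ down to $\monster$, and no such ``descent'' lemma is available here: nonforking over an arbitrary base is not in general preserved under reducts, and the bases $A''B$, $A''C$, $BC$ are neither models nor $\acl'$-closed. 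The intersection identities you derive from algebraic independence (e.g.\ $\acl'(A''B)\cap\acl'(BC)=B$) are strictly weaker than nonforking in $\monster$ and cannot serve as ``the hypothesis'' of any such transfer; already in the theory of an equivalence relation with infinitely many infinite classes one finds sets with trivial algebraic intersection over a base that nevertheless fork over it. Note also that while $\acl'(A''B)\eind[f]_{A''B}\,\cdot$ holds vacuously in $\monster'$, in the reduct $\monster$ the set $\acl'(A''B)$ is typically much larger than the $L$-algebraic closure of $A''B$, so the target statement in $\monster$ has real content and nothing trivial to descend from.

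The paper closes this gap by a different mechanism: from $A''\eind[f]_M\acl'(BC)$ in the stable theory $T'$ over the \emph{model} $M$, the type $\tp_{L'}(A''/\acl'(BC))$ is \emph{finitely satisfiable} in $M$; combining this with isolating algebraic $L'$-formulas for elements of $\acl'(A''B)$ over $A''B$ and of $\acl'(A''C)$ over $A''C$, and with the identities $\acl'(MB)=B$ and $\acl'(MC)=C$, one shows that $\tp_{L}(\acl'(A''B)\acl'(A''C)/\acl'(BC))$ is finitely satisfiable in $BC$, hence does not fork over $BC$ in $\monster$. This is exactly where stability of $T'$ and the hypothesis that $M$ is a model enter (the paper flags this in Question~\ref{q:genericind2}); your proposal correctly identifies this as the delicate point but does not supply a valid argument for it.
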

\begin{proof} 
Let $\sM'\elesub \monster'$ be model with underlying set $M$. By Fact~\ref{fact:kimsimple}, $\eind[K]_M = \ind[f]_M$ in $\monster'$. Let $A$, $A'$, $B$, and $C$ be $\acl'$-closed sets, each of which contains $M$. Suppose $\tp_{L'}(A/M) = \tp_{L'}(A'/M)$, $A\eind[f]_M B$, $A'\eind[f]_M C$, and $B\eind[f]_M C$ in $\monster'$. Applying the independence theorem, we obtain $A''$ such that $\tp_{L'}(A''/B) = \tp_{L'}(A/B)$, $\tp_{L'}(A''/C) = \tp_{L'}(A'/C)$, and $A''\eind[f]_M BC$ in $\monster'$. 

Then $A''\eind[f]_M \acl'(BC)$ in $\monster'$. It follows that $\tp_{L'}(A''/\acl'(BC))$ is finitely satisfiable in $M$, since $\sM'$ is a model and $T'$ is stable. To prove that $$\acl'(BC)\eind[f]_{BC} \acl'(A''B)\acl'(A''C) \text{ in $\monster$},$$ it suffices by symmetry of $\ind[f]$ to show that $\tp_{L}(\acl'(A''B)\acl'(A''C)/\acl'(BC))$ is finitely satisfiable in $BC$. 

Suppose $\monster'\models \varphi(d_1,d_2,e)$, where $\varphi$ is an $L$-formula, $d_1$ is a tuple from $\acl'(A''B)$, $d_2$ is a tuple from $\acl'(A''C)$, and $e$ is a tuple from $\acl'(BC)$. Let $\psi_1(w_1,a,b)$ be an $L'$-formula isolating $\tp_{L'}(d_1/A''B)$, with $a$ from $A''$ and $b$ from $B$, and let $\psi_2(w_2,a',c)$ be an $L'$-formula isolating $\tp_{L'}(d_2/A''C)$, with $a'$ from $A''$ and $c$ from $C$. We may assume that any instances of $\psi_1$ and $\psi_2$ are algebraic. Then we have: $$\monster'\models \exists w_1\exists w_2\, (\psi_1(w_1,a,b)\land \psi_2(w_2,a',c)\land \varphi(w_1,w_2,e)).$$
Since $\tp_{L'}(A''/\acl'(BC))$ is finitely satisfiable in $M$, there exist $m$ and $m'$ in $M$ such that:
$$\monster'\models \exists w_1\exists w_2\, (\psi_1(w_1,m,b)\land \psi_2(w_2,m',c)\land \varphi(w_1,w_2,e)).$$
Let $d_1'$ and $d_2'$ be witnesses to the existential quantifiers. Then $d_1'\in \acl'(MB) = B$, since $B$ is $\acl'$-closed and contains $M$. Similarly, $d_2'\in C$. This shows that $\varphi(w_1,w_2,e)$ is satisfiable in $BC$ and establishes the claim.

We have proven one of the desired independencies. By basic properties of forking independence in stable theories (symmetry, base monotonicity, and transitivity), $A''\eind[f]_M BC$ implies  $B\eind[f]_M A''C$ and $C\eind[f]_M A''B$ in $\monster'$, and the other two independencies follow by the same argument.
\end{proof}

\noindent Note that the proof of Proposition~\ref{prop:stablethreebody} only works when $M$ is a model, since it uses the equivalence of non-forking and finite satisfiability over models in stable theories. This raises the following question in pure stability theory.

\begin{question}\label{q:genericind2}
Assume $T'$ and $T$ are both stable with elimination of imaginaries. Does $\eind[f]$ in $T'$ always satisfy the analogue of the $T$-generic independence theorem where we replace the model $M$ with an arbitrary $\acl'$-closed set?
\end{question}

\noindent We say that $\acl'$ is \textbf{disintegrated relative to $\acl$} if for any sets $A$ and $B$, we have $$\acl'(AB) = \acl(\acl'(A)\acl'(B)).$$
Equivalently, for any set $A$, $$ \acl'(A) = \acl\left( \bigcup_{a \in A} \acl'(a) \right). $$
To simplify terminology, we also say that $T'$ is \textbf{relatively disintegrated}. 

\begin{prop}\label{prop:reldisthreebody}
Assume $T'$ is $\NSOP_1$ and relatively disintegrated, and $T$ is stable. Then $\eind[K]$ in $T'$ satisfies the $T$-generic independence theorem.
\end{prop}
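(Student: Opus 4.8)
The plan is to deduce Proposition~\ref{prop:reldisthreebody} from Proposition~\ref{prop:stablethreebody} by a reduction that exploits relative disintegratedness together with the already-established reasonable independence theorem (Theorem~\ref{thm:reasind}). Since $T'$ is $\NSOP_1$ and relatively disintegrated over a stable $T$, I first want to argue that the reduct $T$ ``sees everything'': because $\acl'(X) = \acl\!\left(\bigcup_{x\in X}\acl'(x)\right)$ for any set $X$, the algebraic closure operator $\acl'$ is controlled by $\acl$ applied to a union of unary $\acl'$-closures, and in particular if $A$, $A'$, $B$, $C$ are $\acl'$-closed sets containing the model $M$ then the $L$-structure they generate inside $\monster$ already captures their $L'$-algebraic relations. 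The hope is that this makes the three forking statements in the stable reduct $\monster$ essentially automatic once we have the corresponding statement ``one level down'' on the unary pieces, which is where Proposition~\ref{prop:stablethreebody} (or rather its mechanism, finite satisfiability over models in stable theories) applies.

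Concretely, here are the steps I would carry out. First, fix the model $\sM'\preceq\monster'$ and the $\acl'$-closed sets $A$, $A'$, $B$, $C\supseteq M$ with $\tp_{L'}(A/M)=\tp_{L'}(A'/M)$, $A\eind[K]_M B$, $A'\eind[K]_M C$, $B\eind[K]_M C$. Second, apply Theorem~\ref{thm:reasind}: since $T'$ is $\NSOP_1$ and $T$ is stable (hence simple with stable forking and — after passing to $\eq$ or invoking weak/geometric elimination of imaginaries as in the ambient setup — geometric elimination of imaginaries), we obtain $A''$ with $\tp_{L'}(A''/B)=\tp_{L'}(A/B)$, $\tp_{L'}(A''/C)=\tp_{L'}(A'/C)$, $A''\eind[K]_M BC$, and the three ``reasonable'' independencies $A''\eind[r]_{MC}B$, $A''\eind[r]_{MB}C$, $B\eind[r]_{MA''}C$. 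Unwinding the definition of $\eind[r]$, these say, e.g., $\acl'(A''B)\eind[f]_{\acl'(B)}\acl'(BC)$ in $\monster$, which is close to but not identical with the required $\acl'(A''B)\eind[f]_{A''B}\acl'(A''C)\acl'(BC)$. Third — and this is the crux — I would upgrade the $\eind[r]$-statements to the desired $\eind[f]$-over-$A''B$ statements using relative disintegratedness: the point is that $\acl'(A''B) = \acl\!\left(\acl'(A'')\acl'(B)\right)$ and more importantly that any tuple from $\acl'(A''C)$ lies in $\acl\!\left(\acl'(A'')\acl'(C)\right)$, so that by base monotonicity and transitivity of $\eind[f]$ in the stable theory $T$, combined with $A''\eind[f]_M BC$-type consequences transported into $\monster$, the base can be moved up from $\acl'(B)$ (or $M$) to $A''B$. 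The relative disintegratedness is precisely what lets me decompose algebraic closures of unions into $\acl$ of unions of single-element closures, so that forking computations in the stable reduct reduce to forking computations over the model $M$ where Proposition~\ref{prop:stablethreebody}'s finite-satisfiability argument is available.

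I expect the main obstacle to be exactly this third step: showing that one can raise the base of the forking-independence statements in $\monster$ from $M$ (or from $\acl'(B)$) up to $A''B$, $A''C$, $BC$ respectively. The difficulty is that $\eind[r]$ fails base monotonicity in general (as the paper emphasizes), so I cannot simply invoke a monotonicity axiom; instead I must genuinely use relative disintegratedness to rewrite, say, $\acl'(A''B)$ as $\acl$ of the union of $\acl'$-closures of the individual elements of $A''$ and $B$, and then run a finite-satisfiability-over-$M$ argument in the stable theory $T$ — mimicking the proof of Proposition~\ref{prop:stablethreebody} but now with the role of the auxiliary parameters played by the single-element algebraic closures. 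A secondary technical point to handle carefully is imaginaries: the forking calculus in $T$ is cleanest with (at least geometric/weak) elimination of imaginaries, so I would either assume this is in force from the ambient conventions or pass to $T^\eq$, noting that $\acl'$-closedness and relative disintegratedness are preserved appropriately. Once the base-change is done, symmetry, base monotonicity, and transitivity of $\eind[f]$ in the stable $T$ give the remaining two of the three displayed independencies from the first, exactly as at the end of the proof of Proposition~\ref{prop:stablethreebody}.
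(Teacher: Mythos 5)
There is a gap, and it is concentrated exactly where you predicted: your ``third step.'' The mechanism you propose for it --- rerunning the finite-satisfiability-over-$M$ argument of Proposition~\ref{prop:stablethreebody} --- is not available here, because that argument uses that non-forking of $\tp_{L'}(A''/\acl'(BC))$ over the model $M$ is witnessed by finite satisfiability of the \emph{$L'$-type} in $M$, which requires $T'$ itself to be stable; in the present proposition $T'$ is only $\NSOP_1$. Likewise, passing through Theorem~\ref{thm:reasind} and then trying to combine the resulting pairwise $\eind[r]$-independencies over the bases $A''B$, $A''C$, $BC$ is exactly the route that needs disintegrated \emph{forking} in $T$ (that is the content of Proposition~\ref{prop:disforkthreebody}); without that hypothesis the pairwise statements do not assemble into the required joint independencies, and you do not supply an argument that closes this.

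What you are missing is that relative disintegratedness makes the three extra independencies \emph{trivially} true, with no base-change and no appeal to Theorem~\ref{thm:reasind} at all. You write down the decomposition $\acl'(A''B)=\acl\bigl(\acl'(A'')\acl'(B)\bigr)$ but do not draw the decisive consequence: since $A''$ and $B$ are already $\acl'$-closed (for $A''$ this holds because $\tp_{L'}(A''/B)=\tp_{L'}(A/B)$ and $A$ is $\acl'$-closed), this equals $\acl(A''B)$, i.e.\ the $L'$-algebraic closure of $A''B$ collapses to its $L$-algebraic closure in the stable reduct. And in any theory one has $\acl(D)\eind[f]_{D}X$ for every $X$, since the type of an element of $\acl(D)$ over $DX$ is algebraic over $D$ and hence does not fork over $D$. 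So each displayed independence, e.g.\ $\acl'(A''B)\eind[f]_{A''B}\acl'(A''C)\acl'(BC)$, holds automatically for \emph{any} $A''$ produced by the ordinary independence theorem for $\eind[K]$ in the $\NSOP_1$ theory $T'$. That is the paper's entire proof; the reasonable independence theorem, the imaginaries bookkeeping, and the base-raising you worry about are all unnecessary.
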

\begin{proof}
In this case, the additional independencies in $\monster$ are trivial.
For example, since $A''$ and $B$ are $\acl'$-closed sets, we have $\acl'(A''B) = \acl(\acl'(A'')\acl'(B)) = \acl(A''B)$. And it is always true that: $$\acl(A''B)\eind[f]_{A''B}\acl'(A''C)\acl'(BC)$$ e.g., by Fact~\ref{fact:kimacl} applied in the stable theory $T$.
\end{proof}

\noindent We say that $T$ has \textbf{disintegrated forking} if $A \enind[f]_C BB'$ implies $A \enind[f]_C B$ or $A\enind[f]_C B'$. Equivalently, if $A\enind[f]_C B$, then $A\enind[f]_C b$ for some singleton $b\in B$.  Some authors refer to this as ``trivial forking".

\medskip  \noindent The reasonable independence theorem (Theorem~\ref{thm:reasind} above) says that in any $\NSOP_1$ theory $T'$, we can witness the independence theorem in such a way that the sets $\acl'(A''B)$, $\acl'(A''C)$, and $\acl'(BC)$ are pairwise independent in the stable reduct $T$. If $T$ has disintegrated forking, then these pairwise independencies are sufficient to satisfy the $T$-generic independence theorem. 

\begin{prop}
\label{prop:disforkthreebody}
Assume $T'$ is $\NSOP_1$ and $T$ is stable with elimination of imaginaries and disintegrated forking. Then $\eind[K]$ in $T'$ satisfies the $T$-generic independence theorem.
\end{prop}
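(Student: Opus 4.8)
The plan is to apply the reasonable independence theorem (Theorem~\ref{thm:reasind}) to produce a witness $A''$ with the required type conditions, $A''\eind[K]_M BC$, and three ``pairwise'' independencies in the reduct, and then to upgrade those pairwise independencies to the three ``three\nobreakdash-body'' forking independencies demanded by the $T$-generic independence theorem, using disintegrated forking of $T$. The only steps with real content will be two short lemmas about forking in the stable reduct; the rest is bookkeeping.

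First note that Theorem~\ref{thm:reasind} applies: a stable theory is simple with stable forking, and elimination of imaginaries gives geometric elimination of imaginaries. So, given a model $\sM'\preceq\monster'$ with underlying set $M$ and $\acl'$-closed sets $A,A',B,C$ containing $M$ with $\tp_{L'}(A/M)=\tp_{L'}(A'/M)$, $A\eind[K]_M B$, $A'\eind[K]_M C$, and $B\eind[K]_M C$, Theorem~\ref{thm:reasind} yields $A''$ with $\tp_{L'}(A''/MB)=\tp_{L'}(A/MB)$, $\tp_{L'}(A''/MC)=\tp_{L'}(A'/MC)$, $A''\eind[K]_M BC$, and $A''\eind[r]_{MC}B$, $A''\eind[r]_{MB}C$, $B\eind[r]_{MA''}C$. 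Since $M\subseteq B$ and $M\subseteq C$, the type conditions are exactly $\tp_{L'}(A''/B)=\tp_{L'}(A/B)$ and $\tp_{L'}(A''/C)=\tp_{L'}(A'/C)$; and since $A$ is $\acl'$-closed and $M\subseteq B$, the set $A''$ is $\acl'$-closed and contains $M$. Put $P=\acl'(A''B)$, $Q=\acl'(A''C)$, $R=\acl'(BC)$, so that $A''\subseteq P\cap Q$, $B\subseteq P\cap R$, and $C\subseteq Q\cap R$. Unwinding the definition of $\eind[r]$ and using that $A''$, $B$, $C$ are $\acl'$-closed and each contains $M$ (so $\acl'(MA'')=A''$, $\acl'(MB)=B$, $\acl'(MC)=C$, $\acl'(A''MB)=P$, etc.), the three $\eind[r]$-statements translate into the following forking independencies in the stable reduct $\monster$: $Q\eind[f]_C R$, $P\eind[f]_B R$, and $P\eind[f]_{A''}Q$. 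It then remains to derive, in $\monster$, that $P\eind[f]_{A''B}QR$, $Q\eind[f]_{A''C}PR$, and $R\eind[f]_{BC}PQ$.

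For this I would first record an elementary fact about forking in the simple theory $T$: if $Z\eind[f]_D W$ and $D_0\subseteq Z\cup W$, then $Z\eind[f]_{D\cup D_0}W$ — move the part of $D_0$ lying in $W$ into the base by base monotonicity, then the part lying in $Z$ by symmetry, base monotonicity, and symmetry again. Applying this to each of $Q\eind[f]_C R$, $P\eind[f]_B R$, $P\eind[f]_{A''}Q$, enlarging the base by whichever of $A''$, $B$, $C$ is ``missing'' (and checking each time that this set lies in the union of the two sides, e.g.\ $A''\subseteq P$, $C\subseteq R$, etc., which is immediate since $\acl'$ is a closure operator), one obtains $P\eind[f]_{A''B}Q$, $P\eind[f]_{A''B}R$, $Q\eind[f]_{A''C}P$, $Q\eind[f]_{A''C}R$, $R\eind[f]_{BC}P$, and $R\eind[f]_{BC}Q$. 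Finally, disintegrated forking of $T$, in contrapositive form and together with finite character of forking on the right, says that $X\eind[f]_D Y_1$ and $X\eind[f]_D Y_2$ imply $X\eind[f]_D Y_1Y_2$; merging the six independencies above in pairs yields $P\eind[f]_{A''B}QR$, $Q\eind[f]_{A''C}PR$, and $R\eind[f]_{BC}PQ$. Combined with the type conditions and $A''\eind[K]_M BC$ already in hand, this is exactly the conclusion of the $T$-generic independence theorem.

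I do not expect a genuine obstacle here. The two points to watch are the bookkeeping in passing from the $\eind[r]$-statements to $Q\eind[f]_C R$, $P\eind[f]_B R$, $P\eind[f]_{A''}Q$ (where one repeatedly uses that $A''$, $B$, $C$ are $\acl'$-closed and contain $M$, so the various algebraic closures appearing in the definition of $\eind[r]$ collapse to $P$, $Q$, $R$, or to one of $A''$, $B$, $C$), and, in each of the six base enlargements, the trivial verification that the set being pushed into the base is contained in the union of the two sides. The essential use of disintegrated forking is confined to the final ``merging'' step.
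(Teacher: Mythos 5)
Your proof is correct and follows essentially the same route as the paper: apply the reasonable independence theorem, translate the three $\eind[r]$-statements into $\acl'(A''C)\eind[f]_{C}\acl'(BC)$, $\acl'(A''B)\eind[f]_{B}\acl'(BC)$, $\acl'(A''B)\eind[f]_{A''}\acl'(A''C)$, push the bases up to $A''B$, $A''C$, $BC$ by base monotonicity (with symmetry), and merge the resulting pairs using disintegrated forking. The only difference is presentational: you spell out the base-enlargement lemma and all three cases, where the paper does one case and notes the others are symmetric.
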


\begin{proof}
Let $\sM'\elesub \monster'$ be model with underlying set $M$. Let $A$, $A'$, $B$, and $C$ be $\acl'$-closed sets, each of which contains $M$. Suppose $\tp_{L'}(A/M) = \tp_{L'}(A'/M)$, $A\eind[f]_M B$, $A'\eind[K]_M C$, and $B\eind[K]_M C$ in $\monster'$. By Theorem~\ref{thm:reasind}, we obtain $A''$ such that $\tp_{L'}(A''/MB) = \tp_{L'}(A/MB)$, $\tp_{L'}(A''/MC) = \tp_{L'}(A'/MC)$, $A''\eind[K]_M BC$ in $\monster'$, and additionally in $\monster$:
\begin{align*}
\acl'(A''C)&\eind[f]_{C}\acl'(BC)\\
\acl'(A''B)&\eind[f]_{B}\acl'(BC)\\
\acl'(A''B)&\eind[f]_{A''}\acl'(A''C).
\end{align*}
By base monotonicity for $\eind[f]$ in $\monster$, we have:
\begin{align*}
\acl'(A''B)&\eind[f]_{A''B}\acl'(BC)\\
\acl'(A''B)&\eind[f]_{A''B}\acl'(A''C)
\end{align*}
and since $T$ has disintegrated forking,:
\begin{align*}
\acl'(A''B)&\eind[f]_{A''B}\acl'(A''C)\acl'(BC).
\end{align*}
The other two independencies follow by the same argument.
\end{proof}

\section{Logical tameness}\label{sec:analysis} 
\noindent Throughout this section, we fix the languages $L_\square$ and theories $T_\square$ for $\square\in I\cup \{\cup,\cap\}$, and we assume the interpolative fusion $T_\cup^*$ exists. 

\medskip\noindent We seek to understand when logical tameness properties (completeness, model completeness, quantifier elimination, etc.) of the theories $T_i$ are preserved in passing to the interpolative fusion $T_\cup^*$. We have already seen a close connection between interpolative fusions and model-completeness (Fact~\ref{fact: first Theorem}), which we reformulate as a first preservation result in Section~\ref{sec:presmc}. 

\medskip \noindent In order to understand definable sets and types, we often want something stronger than model-completeness. So Sections~\ref{sec:aclbcl} is devoted to preservation of $\acl$- and $\bcl$-completeness (defined in Section~\ref{sec:Kcomplete}). This requires stronger hypotheses, which hold, for example, when $T_\cap$ is stable with weak elimination of imaginaries, and with no additional assumptions on the theories $T_i$. Under the same hypotheses, we also obtain a description of algebraic closusure in $T_\cup^*$. Section~\ref{sec:qe} is about  preservation of
substructure-completeness (equivalently, quantifier elimination) under even stronger hypothesis.
Model-, $\acl$-, $\bcl$-, and substructure-completeness are all instances of the notion of $\sK$-completeness discussed in Section~\ref{sec:Kcomplete}. 

\medskip \noindent In Section~\ref{sec:morleyization}, we Morleyize the theories $T_i$ in order to translate the preservation results of Sections~\ref{sec:presmc} through~\ref{sec:qe} to results about relative syntactic complexity of definable sets for general interpolative fusions (without $\sK$-completeness assumptions). Finally, in Section~\ref{sec:preservationcompleteness}, we deduce a sufficient condition for completeness of the interpolative fusion.

\subsection{Model-completeness}\label{sec:presmc} Flat formulas (defined in Section~\ref{sec:Kcomplete}) help explain the relevance of $\sK$-completeness of $T_\cup^*$ for understanding types, and are useful in expressing syntactic consequences of $\sK$-completeness of $T_\cup^*$.

\begin{rem}\label{rem:flat} Any flat literal $L_\cup$-formula is an $L_i$-formula for some $i\in I$.
Thus:
\begin{enumerate}[leftmargin=*]
\item If $\varphi(x)$ is a flat $L_\cup$-formula, then there is some finite $J\subseteq I$ and a flat $L_i$-formula $\varphi_i(x)$ for all $i\in J$ such that $\varphi(x)$ is logically equivalent to $\bigwedge_{i\in J}\varphi_i(x)$.
\item If $A_\cup$ is an $L_\cup$-structure, then $\fdiag(A_\cup) = \bigcup_{i\in I} \fdiag(\sA_i)$, where $\fdiag(\sA_i)$ is an $L_i(A)$-theory.
    \item By Remark~\ref{rem:Kcomp}, if $T_\cup^*$ is $\sK$-complete, then for any pair $(\sA_\cup,\sM_\cup)\in \sK$, \[ T_\cup^*\cup \bigcup_{i\in I} \fdiag(\sA_i)\models \Th_{L_\cup(A)}(\sM_\cup).\]
As a consequence, when $(\sA_\cup,\sM_\cup)\in \sK$, the complete $L_\cup$-type of $A$ is determined by the quantifier-free $L_i$-types of $A$ for all $i\in I$.
\end{enumerate}
\end{rem}

\noindent We now restate Fact~\ref{fact: first Theorem} as preservation of model-completeness. If $T_\cup^*$ is model-complete, then every $L_\cup$-formula is equivalent to an existential $L_\cup$-formula.  We give a slight refinement, using the flat formulas defined above.

\begin{thm}\label{thm:mcpres}
Suppose each $T_i$ is model-complete. Then $T_\cup^*$ is model-complete, and every $L_\cup$-formula $\psi(x)$ is $T^*_\cup$-equivalent to a finite disjunction of formulas of the form \[\exists y\, \bigwedge_{i\in J} \varphi_i(x,y),\]
where $J\subseteq I$ is finite and each $\varphi_i(x,y)$ is a flat $L_i$-formula. 
\end{thm}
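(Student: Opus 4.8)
The plan is to bootstrap off Fact~\ref{fact: first Theorem}: since each $T_i$ is model-complete, that fact tells us $T^*_\cup$ is the model companion of $T_\cup$, and in particular $T^*_\cup$ is model-complete. So every $L_\cup$-formula $\psi(x)$ is $T^*_\cup$-equivalent to an existential $L_\cup$-formula $\exists y\, \chi(x,y)$ with $\chi$ quantifier-free in $L_\cup$. The only remaining work is to massage the quantifier-free matrix $\chi(x,y)$ into the required form $\bigvee$ of $\exists y'\, \bigwedge_{i\in J}\varphi_i(x,y')$ with each $\varphi_i$ a flat $L_i$-formula.

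For this I would invoke Fact~\ref{fact:eflat}: every quantifier-free $L_\cup$-formula is logically equivalent to a finite disjunction of E$\flat$-formulas, i.e.\ formulas $\exists z\, \theta(x,y,z)$ with $\theta$ flat over $L_\cup$ (and, in fact, defining $z$ uniquely, though I won't need the uniqueness here). Applying this to $\chi(x,y)$ and distributing the outer $\exists y$ over the disjunction, $\psi(x)$ becomes $T^*_\cup$-equivalent to a finite disjunction of formulas $\exists y\, \exists z\, \theta(x,y,z)$, which I rewrite as $\exists y'\, \theta(x,y')$ with $y' = (y,z)$ a single (tuple of) existentially quantified variable(s) and $\theta$ a flat $L_\cup$-formula. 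Finally, by Remark~\ref{rem:flat}(1), a flat $L_\cup$-formula $\theta(x,y')$ is logically equivalent to $\bigwedge_{i\in J}\varphi_i(x,y')$ for some finite $J\subseteq I$ and flat $L_i$-formulas $\varphi_i$, since every flat literal of $L_\cup$ lives in some single $L_i$ (as $L_i\cap L_j = L_\cap$). Substituting this in yields exactly the claimed normal form.

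There is really no hard obstacle here — the theorem is a bookkeeping repackaging of Fact~\ref{fact: first Theorem} together with the flat-formula normal form of Fact~\ref{fact:eflat} and the observation in Remark~\ref{rem:flat}(1). The one point that needs a word of care is the order of operations with the quantifiers: one must first pass to model-completeness to get a single block of existential quantifiers, then apply the E$\flat$ normal form \emph{under} that existential quantifier, and then merge the two existential blocks; doing the flattening first and then trying to existentially close would not obviously terminate in the stated shape. I would also remark that the disjunction cannot in general be removed (flat literals do not close under the needed Boolean operations), which is why the statement is a finite disjunction rather than a single formula.
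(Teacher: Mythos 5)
Your proposal is correct and follows essentially the same route as the paper: model-completeness via Fact~\ref{fact: first Theorem}, reduction to an existential formula, the E$\flat$ normal form of Fact~\ref{fact:eflat} applied to the quantifier-free matrix, distribution of the existential quantifier over the disjunction, and Remark~\ref{rem:flat}(1) to split the flat $L_\cup$-formula into a conjunction of flat $L_i$-formulas. No gaps.
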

\begin{proof}
Model-completeness follows immediately from Fact~\ref{fact: first Theorem}. Since $T_\cup^*$ is model-complete, $\psi(x)$ is $T_\cup^*$-equivalent to an existential $L_\cup$-formula $\exists z\, \varphi(x,z)$. By Fact~\ref{fact:eflat}, $\varphi(x,z)$ is equivalent to a finite disjunction of E$\flat$-formulas. 
Now distribute $\exists z$ over the disjunction and apply Remark~\ref{rem:flat}$(1)$. 
\end{proof}

\subsection{$\acl$-completeness and $\bcl$-completeness}\label{sec:aclbcl}
Our next goal is to find sufficient conditions for preservation of $\acl$ and $\bcl$-completeness as introduced in Section~\ref{sec:Kcomplete}. It turns out that stationary and extendable independence relations, as defined in Section~\ref{sec:stationary}, play an important role. 

\medskip \noindent We say \textbf{$T_\cap$ admits a stationary and extendable independence relation} if whenever $\widehat{T}_\cap$ is a completion of $T_\cap$, and  $\widehat{T}_i$ extending $\widehat{T}_\cap$ is a completion of $T_i$ for each $i$, there is a stationary independence relation $\eind[\cap]$ in $\widehat{T_\cap}$ which is extendable to $\widehat{T}_i$ for each $i$.

\medskip \noindent In general, this is a hypothesis on the relationship between $T_\cap$ and each $T_i$, not on $T_\cap$ alone. However, by Proposition~\ref{prop:stationarity}, it is always satisfied by forking independence in $T_\cap$ when $T_\cap$ is stable with weak elimination of imaginaries, with no additional assumptions on the theories $T_i$. This holds, for instance, when $T_\cap$ is the theory of an infinite set or the theory of algebraically closed fields.

\medskip \noindent From the proof, we also obtain a characterization of $\acl$ in $T_\cup^*$.  Let $A$ be a subset of a $T^{*}_\cup$-model $\sM_\cup$. Given $\square\in I\cup \{\cup,\cap\}$, let $\acl_\square(A)$ be the algebraic closure of $A$ in the reduct $\sM_\square$.
The \textbf{combined closure}, $\ccl(A)$, is the smallest set containing $A$ which is $\acl_i$-closed for each $i\in I$. 
More concretely, $b\in \ccl(A)$ if and only if
\[b\in \acl_{i_n}(\dots (\acl_{i_1}(A))\dots) \text{  for some  } i_1,\ldots,i_n \in I.\]


\medskip\noindent We can now state and prove the main result of this section.

\begin{thm}\label{thm:dap}
Assume $T_\cap$ admits a stationary and extendable independence relation $\eind[\cap]$. If each $T_i$ is $\acl$-complete, then $T_\cup^*$ is $\acl$-complete and $\acl_\cup = \ccl$. 
\end{thm}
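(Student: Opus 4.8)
The plan is to use Proposition~\ref{prop:kcomp}: to prove $T_\cup^*$ is $\acl$-complete, it suffices to verify the $\acl$-amalgamation property, and to simultaneously get $\acl_\cup = \ccl$ we will in fact prove that the class of $T_\cup^*$-models has the disjoint $\ccl$-amalgamation property; combined with the (easier) inclusion $\ccl(A) \subseteq \acl_\cup(A)$ and the fact that each $\acl_i$-closed set is a substructure, this will pin down $\acl_\cup = \ccl$ and give $\acl$-completeness. So fix $\sM_\cup \models T_\cup^*$ with $\ccl$-closed substructure $\sA_\cup \subseteq \sM_\cup$, a model $\sN_\cup \models T_\cup^*$, and an embedding $f \colon \sA_\cup \to \sN_\cup$; after renaming we may assume $A \subseteq M \cap N$, $f = \id_A$, each $\sA_i \preceq$-embeds into both $\sM_i$ and $\sN_i$ appropriately (since each $T_i$ is $\acl$-complete and $A$ is $\acl_i$-closed, the embeddings $\sA_i \to \sM_i$, $\sA_i \to \sN_i$ are partial elementary). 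The goal is an elementary extension $\sN_\cup \preceq \sN_\cup'$ and an elementary embedding $g \colon \sM_\cup \to \sN_\cup'$ fixing $A$ with $g(M) \cap N = A$.

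The heart of the argument is to build, in a monster model $\monster_\cup$ of some completion of $T_\cup^*$ containing $\sN_\cup$, a copy $M^*$ of $M$ over $A$ that is "sufficiently free" from $N$. First I would fix a completion and choose, for the stable reduct, the stationary extendable independence relation $\eind[\cap]$ promised by the hypothesis (applied to the relevant completions $\widehat T_\cap$, $\widehat T_i$). The idea is: using extendability of $\eind[\cap]$ over the $\acl_\cap$-closed set $\acl_\cap(A)$ — note $A$ being $\ccl$-closed need not be $\acl_\cap$-closed, so one works over $\acl_\cap(A)$, which lives inside $\monster_\cap$ — we can realize $\tp_{L_i}(M/A)$ (or rather its natural extension over $\acl_\cap(A)$, using that each $T_i$ is $\acl$-complete so these $L_i$-types are determined) by some $M_i^*$ with $M_i^* \eind[\cap]_{\acl_\cap(A)} N$ in $\monster_\cap$, separately for each $i$. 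Then one wants a single $M^*$ realizing all these $L_i$-types simultaneously, with the right $L_\cap$-behaviour: this is where Fact~\ref{fact:consistency}/Proposition~\ref{prop:realizingtypes} and stationarity come in. Stationarity of $\eind[\cap]$ over $\acl_\cap(A)$ guarantees that $\tp_{L_\cap}(M_i^*/N)$ does not depend on $i$ (all the $M_i^*$ have the same $L_\cap$-type over $\acl_\cap(A)$ and are all $\eind[\cap]$-independent from $N$, hence have the same $L_\cap$-type over $N$ by stationarity), so the $L_\cap$-types cohere; then by Fact~\ref{fact:consistency} the union of the $L_i(N)$-types $\tp_{L_i}(M_i^*/N)$ is consistent, and by Proposition~\ref{prop:realizingtypes} (applied in $\sN_\cup$, after passing to a suitable elementary extension) it is realized by some $M^*$ in an elementary extension $\sN_\cup'$ of $\sN_\cup$, with each reduct map $\sN_i \to \sN_i'$ elementary. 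Let $g \colon \sM_\cup \to \sN_\cup'$ be the resulting map sending $M$ to $M^*$; by Fact~\ref{fact:extension}(2) applied to the $T_\cup^*$-models $\sM_\cup$ and (a $T_\cup^*$-model containing) $M^*$, together with $\sM_i \preceq$ the corresponding $L_i$-structure, $g$ is elementary.

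It remains to check the disjointness $g(M) \cap N = M^* \cap N = A$. Suppose $c \in M^* \cap N$; writing $c = g(d)$ for $d \in M$, I must show $d \in A$, equivalently $c \in A$. Since $c \in N$ and $c \in M^*$, and $M^* \eind[\cap]_{\acl_\cap(A)} N$ in $\monster_\cap$, algebraic independence (axiom (2) of a stationary independence relation) gives $c \in \acl_\cap(M^*) \cap \acl_\cap(N) \subseteq \acl_\cap(\acl_\cap(A)) = \acl_\cap(A)$ — more precisely $c \in \acl_\cap(\acl_\cap(A) M^*) \cap \acl_\cap(\acl_\cap(A) N) = \acl_\cap(\acl_\cap(A)) = \acl_\cap(A)$. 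Thus $c \in \acl_\cap(A) \subseteq \acl_i(A)$ for every $i$; but $g$ identifies $d$ with $c$, and, say working inside $\sM_i$, the element $d$ lies in $\acl_i(A)$ (since $\acl_i$ is preserved by the elementary map), and $A$ is $\acl_i$-closed, so $d \in A$ and hence $c = g(d) \in A$. This establishes disjoint $\ccl$-amalgamation.

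The main obstacle I anticipate is the bookkeeping around imaginaries and the passage from $A$ (only $\ccl$-closed) to $\acl_\cap(A)$: one must be careful that realizing the $L_i$-types over $\acl_\cap(A)$ rather than over $A$ is legitimate — this uses $\acl$-completeness of $T_i$ to know $\tp_{L_i}(M/A)$ has a canonical extension determined by quantifier-free/$\acl$ data — and that the stationarity/extendability axioms, which are stated over $\acl_\cap$-closed sets, apply with $\acl_\cap(A)$ in the role of the base while still controlling types over $N \supseteq A$. A secondary subtlety is choosing the completions of $T_\cap$ and the $T_i$ coherently (so that the hypothesis "$T_\cap$ admits a stationary and extendable independence relation" can be invoked) and checking the various reduct maps remain elementary throughout the chain of extensions, so that Fact~\ref{fact:extension}(2) can be applied to conclude $g$ is elementary. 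The inclusion $\ccl(A) \subseteq \acl_\cup(A)$ needed to finish the identification $\acl_\cup = \ccl$ is routine: each $\acl_i$ is contained in $\acl_\cup$, so iterating stays inside $\acl_\cup$; conversely $\acl_\cup(A) \subseteq \ccl(A)$ follows because $\ccl(A)$ is $\acl_i$-closed for all $i$, hence by disjoint $\ccl$-amalgamation it is $\acl_\cup$-closed.
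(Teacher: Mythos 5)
Your proposal is correct and follows essentially the same route as the paper's proof: reduce to the disjoint $\ccl$-amalgamation property via Proposition~\ref{prop:kcomp}, build the copy $M^*$ of $M$ using full existence of $\eind[\cap]$ separately in each $\monster_i$, use stationarity to make the $L_\cap$-types over $N$ agree, glue with Proposition~\ref{prop:realizingtypes}, and get disjointness from algebraic independence.

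One conceptual slip is worth correcting, though it does not break the argument. You write that ``$A$ being $\ccl$-closed need not be $\acl_\cap$-closed,'' and you therefore detour through $\acl_\cap(A)$. The containment goes the other way: since $L_\cap\subseteq L_i$, we have $\acl_\cap(A)\subseteq \acl_i(A)=A$ for any $i$, so every $\ccl$-closed set \emph{is} $\acl_\cap$-closed and your base $\acl_\cap(A)$ is just $A$. The detour is vacuous, and the parenthetical about needing $\acl$-completeness of $T_i$ to extend $\tp_{L_i}(M/A)$ over $\acl_\cap(A)$ addresses a non-problem. The condition you actually need to check is different: the full existence axiom is stated over sets closed under $\acl'$, i.e.\ under algebraic closure in the \emph{expansion} $T_i$, so the base must be $\acl_i$-closed --- and this is precisely what $\ccl$-closedness of $A$ (transported to $A'=f(A)$ by the partial elementary map coming from $\acl$-completeness of $T_i$) gives you. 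Had the containment really gone the way you feared, passing to $\acl_\cap(A)$ would not have rescued the application of full existence. Everything else --- the use of Fact~\ref{fact:extension}(2) in place of the paper's direct appeal to model-completeness of $T_\cup^*$ to see that $g$ is elementary, the disjointness computation, and the closing identification $\acl_\cup=\ccl$ --- is sound.
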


\begin{proof}
Each $T_i$ is model-complete, so $T_\cup^*$ is model-complete by Theorem~\ref{thm:mcpres}. 
In order to apply Proposition~\ref{prop:kcomp}, we will show that the class of $T_\cup^*$-models has the disjoint $\ccl$-amalgamation property.

So suppose $\sA_\cup$ is a $\ccl$-closed substructure of a $T^{*}_\cup$-model $\sM_\cup$, $\sN_\cup\models T_\cup^*$,  and $f \colon \sA_\cup\to \sN_\cup$ is an embedding. Let $\monster_\cup$ be a monster model of $\Th_{L_\cup}(\sN_\cup)$ (this is a completion of $T_\cup^*$), so $\sN_\cup\elesub \monster_\cup$. Let $p_\square(x) = \tp_{L_\square}(M/A)$ for each $\square \in I\cup \{\cap\}$, where $x$ is an infinite tuple of variables enumerating $M$. By $\acl$-completeness of $T_i$, $f\colon \sA_i\to \sN_i$ is partial elementary for all $i\in I$, so $f\colon \sA_\cap\to \sN_\cap$ is also partial elementary, and we can replace the parameters from $A$ in $p_\square(x)$ by their images under $f$, obtaining a consistent type $p_\square'(x)$ over $A' = f(A)\subseteq N$ for all $\square\in I\cup \{\cap\}$. 

Fix $i\in I$. Since $A$ is algebraically closed in $\sM_i$ and $f\colon \sA_i\to \monster_i$ is partial elementary, $A'$ is algebraically closed in $\monster_i$. Since $\eind[\cap]$ satisfies full existence over algebraically closed sets in $T_i$, there is a realization $M'_i$ of $p_i'(x)$ in $\monster_i$ such that $M_i'\eind[\cap]_{A'} N$ in $\monster_\cap$. Let $q_i(x) = \tp_{L_i}(M'_i/N)$. 

For all $i,j\in I$, $\tp_{L_\cap}(M_i'/A') = \tp_{L_\cap}(M_j'/A') = p'_\cap(x)$, so since $\eind[\cap]$ satisfies  stationarity over algebraically closed sets, $\tp_{L_\cap}(M_i'/N) = \tp_{L_\cap}(M_j'/N)$. Let $q_\cap(x)$ be this common type, so $q_\cap(x)\subseteq q_i(x)$ for all $i$. By Proposition~\ref{prop:realizingtypes}, $\bigcup_{i\in I} q_i(x)$ is realized by a set $M'$ in a model $\sN_\cup'$ such that $\sN_\cup\elesub \sN_\cup'\prec \monster_\cup$.

Let $f'\colon \sM_\cup\to \sN'_\cup$ be the map induced by the common enumeration of $M$ and $M'$ by the variables $x$. Then $f'$ is an $L_i$-embedding for all $i\in I$, so it is an $L_\cup$-embedding. Since $T_\cup^*$ is model-complete, $f'$ is an elementary embedding. If $a\in A$, then $a$ is enumerated by a variable $x_a$ from $x$, and the formula $x_a = a$ is in $p_\cap(x)$. Then the formula $x_a = f(a)$ is in $p'_\cap(x)$, so $f'(a) = f(a)$. This establishes the ccl-amalgamation property.
\[
\xymatrix{
\sM_\cup\ar[r]_{f'} & \sN_\cup'\\
\sA_\cup\ar[r]^f\ar[u]^{\subseteq} & \sN_\cup\ar[u]_{\elesub}
}
\]

Additionally, we have $M'\eind[\cap]_{A'} N$ in $\monster_\cap$, so since $\eind[\cap]$ satisfies algebraic independence, $M'\cap N = A'$, and hence $f'(M)\cap N = f(A)$. This establishes the disjoint ccl-amalgamation property.

By Proposition~\ref{prop:kcomp}, $T_\cup^*$ is $\ccl$-complete and every $\ccl$-closed substructure is $\acl_\cup$-closed. It follows that for any set $B\subseteq \sM\models T$, $\acl_\cup(B)\subseteq \ccl(B)$. 

For the converse, it suffices to show $\acl_\cup(B)$ is $\acl_i$-closed for all $i\in I$. 
Indeed, 
\[
\acl_i(\acl_\cup(B))\subseteq \acl_\cup(\acl_\cup(B)) = \acl_\cup(B).
\]

So $\acl_\cup = \ccl$, and hence $T_\cup^*$ is $\acl$-complete. 
\end{proof}

\noindent Under the same hypotheses, we also obtain preservation of $\bcl$-completeness and a syntactic consequence for $L_\cup$-formulas.

\begin{cor}
\label{cor:bclpres}
Assume $T_\cap$ admits a stationary and extendable independence relation.
Suppose each $T_i$ is $\bcl$-complete.
Then $T_\cup^*$ is $\bcl$-complete and every $L_\cup$-formula is $T^{*}_\cup$-equivalent to a finite disjunction of b.e.\ formulas of the form \[\exists y\, \bigwedge_{i\in J} \varphi_i(x,y),\] where $J\subseteq I$ is finite and $\varphi_i(x,y)$ is a flat $L_i$-formula for all $i\in J$. 
\end{cor}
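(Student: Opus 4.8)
The strategy is to reduce the $\bcl$-completeness statement to Theorem~\ref{thm:dap} via the equivalences of Theorem~\ref{thm:bclacl}, and then to extract the syntactic normal form by combining model-completeness (Theorem~\ref{thm:mcpres}) with the fact that $\bcl = \acl$ in $T_\cup^*$-models. First I would recall that by Theorem~\ref{thm:bclacl}, each $T_i$ being $\bcl$-complete means each $T_i$ is $\acl$-complete with $\acl_i = \bcl_i$ in $T_i$-models. So Theorem~\ref{thm:dap} applies (using the hypothesis that $T_\cap$ admits a stationary and extendable independence relation), giving that $T_\cup^*$ is $\acl$-complete and $\acl_\cup = \ccl$. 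By Theorem~\ref{thm:bclacl} again, to conclude $\bcl$-completeness of $T_\cup^*$ it suffices to show $\bcl_\cup = \acl_\cup$ in $T_\cup^*$-models.

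For that equality, the nontrivial inclusion is $\acl_\cup(A) \subseteq \bcl_\cup(A)$; the reverse always holds. Using $\acl_\cup = \ccl$, it is enough to show that $\bcl_\cup(A)$ is $\acl_i$-closed for every $i \in I$, since then $\ccl(A) \subseteq \bcl_\cup(A)$ follows from $A \subseteq \bcl_\cup(A)$ and minimality of $\ccl$. So fix $i$ and $b \in \acl_i(\bcl_\cup(A))$. Since $T_i$ is $\bcl$-complete, $\acl_i = \bcl_i$, so $b \in \bcl_i(\bcl_\cup(A))$; thus there is a flat (indeed quantifier-free) $L_i$-formula, bounded in $y$, witnessing $b$ over finitely many parameters $a \in \bcl_\cup(A)^x$. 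Each such parameter $a_j$ lies in $\bcl_\cup(A)$, hence is a $\bcl_\cup$-witness over $A$ via some b.e.\ $L_\cup$-formula bounded in its variable. One then composes: the conjunction of the $L_\cup$-witnessing formulas for the $a_j$'s together with the $L_i$-witnessing formula for $b$ is, after existentially quantifying out the intermediate variables, a b.e.\ formula bounded in $y$ over $A$ (boundedness multiplies through the finite chain), so $b \in \bcl_\cup(A)$. The one subtlety is that a b.e.\ formula as defined is a conjunction with a \emph{quantifier-free} matrix bounded in $y$, not merely "existentially bounded"; but since $b$ is witnessed by existentially quantifying flat $L_i$-formulas and $L_\cup$-formulas bounded in intermediate variables, and existential quantification over a variable bounded by $k$ preserves boundedness of the resulting formula in the remaining variables, one gets $b \in \bcl_\cup(A)$ directly from the definition (the defining formula itself need not be made quantifier-free; what matters is that $b$ satisfies an algebraic b.e.\ formula over $A$, and Lemma~\ref{lem: bclisclosureoperator} ensures $\bcl_\cup$ is a closure operator, so iterated b.e.\ witnessing stays inside $\bcl_\cup$). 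I expect this bookkeeping about composing bounded formulas and keeping track of which variables are quantified to be the main place requiring care.

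Finally, for the syntactic normal form: since $T_\cup^*$ is $\bcl$-complete, Theorem~\ref{thm:bclacl}(1) gives that every $L_\cup$-formula is $T_\cup^*$-equivalent to a finite disjunction of b.e.\ formulas. To get the refined shape $\exists y\, \bigwedge_{i \in J} \varphi_i(x,y)$ with each $\varphi_i$ a flat $L_i$-formula and the conjunction bounded in $y$, I would argue exactly as in the proof of Theorem~\ref{thm:mcpres}: a b.e.\ formula is $\exists y\, \psi(x,y)$ with $\psi$ quantifier-free and bounded in $y$; by Fact~\ref{fact:eflat}, $\psi(x,y)$ is logically equivalent to a finite disjunction of E$\flat$-formulas $\exists z\, \chi(x,y,z)$ with $\chi$ flat; distributing $\exists y \exists z$ over the disjunction and invoking Remark~\ref{rem:flat}(1) to split each flat $L_\cup$-conjunct $\chi$ as $\bigwedge_{i \in J} \varphi_i(x,y,z)$ with $\varphi_i$ flat over $L_i$, we obtain formulas of the stated form. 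Boundedness in the combined tuple $(y,z)$ follows because $\psi$ was bounded in $y$ and the E$\flat$-matrix is functional (bound $1$) in $z$, so the conjunction is bounded in $(y,z)$; relabelling $(y,z)$ as $y$ finishes the proof.
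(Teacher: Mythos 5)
Your proposal is correct and follows essentially the same route as the paper's proof: reduce via Theorem~\ref{thm:bclacl} to $\acl$-completeness with $\acl_i=\bcl_i$, apply Theorem~\ref{thm:dap} to get $\ccl(A)\subseteq\bcl_\cup(A)\subseteq\acl_\cup(A)=\ccl(A)$ (the paper compresses your composition-of-witnesses step into the one-line inclusion $\acl_i(\bcl_\cup(A))=\bcl_i(\bcl_\cup(A))\subseteq\bcl_\cup(\bcl_\cup(A))=\bcl_\cup(A)$, using idempotence of $\bcl_\cup$), and then obtain the normal form by decomposing the quantifier-free matrix of each b.e.\ formula into E$\flat$-formulas via Fact~\ref{fact:eflat} and splitting flat conjuncts by Remark~\ref{rem:flat}(1). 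No gaps.
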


\begin{proof}
By Theorem~\ref{thm:bclacl}, $T_i$ is  $\acl$-complete and $\bcl_i = \acl_i$ for all $i  \in I$.
We have $\bcl_\cup(A) \subseteq \acl_\cup(A)$ for any subset $A$ of a $T_\cup$-model.
But also, for all $i\in I$, 
\begin{align*}
\acl_i(\bcl_\cup(A)) &= \bcl_i(\bcl_\cup(A))\\
&\subseteq \bcl_\cup(\bcl_\cup(A))\\
&= \bcl_\cup(A).
\end{align*}
So $\bcl_\cup(A)$ is $\acl_i$-closed for all $i\in I$, hence
\[\ccl(A) \subseteq \bcl_\cup(A)\subseteq \acl_\cup(A).\]

Theorem~\ref{thm:dap} implies $T_\cup^*$ is $\acl$-complete and $\ccl(A) = \acl_\cup(A)$, so the containments above are equalities: \[\ccl(A) = \bcl_\cup(A)= \acl_\cup(A).\] Since $T_\cup^*$ is $\acl$-complete and $\acl_\cup = \bcl_\cup$, $T_\cup^*$ is $\bcl$-complete.

It remains to prove the characterization of $L_\cup$-formulas.
By Theorem~\ref{thm:bclacl}, $\bcl$-completeness implies that every $L_\cup$-formula is $T_\cup^*$-equivalent to a finite disjunction of b.e.\ formulas. Let $\exists y\, \psi(x,y)$ be a b.e.\ formula appearing in the disjunction. By Fact~\ref{fact:eflat}, the quantifier-free formula $\psi(x,y)$ is equivalent to a finite disjunction of E$\flat$-formulas $\bigvee_{k=1}^m \exists z_k\, \theta_k(x,y,z_k)$. Distributing the quantifier $\exists y$ over the disjunction, we find that $\exists y\exists z_k\, \theta_k(x,y,z_k)$ is a b.e. formula. Applying Remark~\ref{rem:flat}(1) to the flat formula $\theta_k(x,y,z_k)$ yields the result.
\end{proof}

\subsection{Quantifier elimination}\label{sec:qe} 
Recall that quantifier elimination is equivalent to substructure-completeness. This follows from~\cite[Theorem 8.4.1]{Hodges} and Proposition~\ref{prop:kcomp}. In contrast to model-completeness, $\acl$-completeness, and $\bcl$-completeness, we cannot obtain preservation of substructure-completeness without tight control on algebraic closure in the theories $T_i$.

\medskip\noindent Theorem~\ref{thm:qe}  below is motivated by some comments in the introduction of \cite{Moosa-Scanlon} on the failure of quantifier elimination in $\ACFA$. For subsets $A\subseteq B \subseteq \sM$, we denote by $\Aut(B/A)$ the group of all partial elementary bijections $f\colon B\to B$ which fix $A$ pointwise.

\begin{thm}\label{thm:qe}
Assume $T_\cap$ admits a stationary and extendable independence relation. Suppose each $T_i$ has quantifier elimination, and
$$ \acl_i(A) = \acl_\cap(A) \quad \text{and} \quad \Aut_{L_\cap}(\acl_\cap(A)/A) = \Aut_{L_i}(\acl_\cap(A)/A) $$
for all $L_\cup$-substructures $A$ of $T_\cup^*$-models and all $i \in I$. 
Then $T_\cup^*$ has quantifier elimination.
\end{thm}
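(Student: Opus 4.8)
The plan is to use Proposition~\ref{prop:kcomp}: quantifier elimination is equivalent to substructure-completeness, so I want to show that the class of $T_\cup^*$-models has the substructure-amalgamation property, i.e. for every $L_\cup$-substructure $\sA_\cup$ of a $T_\cup^*$-model $\sM_\cup$, every $T_\cup^*$-model $\sN_\cup$, and every $L_\cup$-embedding $f\colon \sA_\cup\to\sN_\cup$, there is an elementary extension $\sN_\cup\elesub\sN_\cup'$ and an elementary embedding $f'\colon\sM_\cup\to\sN_\cup'$ with $f'|_A = f$. The strategy mirrors the proof of Theorem~\ref{thm:dap}, but now $\sA_\cup$ is only a substructure, not $\ccl$-closed, so the missing step is to first ``close up'' $A$ and extend $f$ to its combined closure while staying partial elementary in every $L_i$.

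The key steps, in order: (1) Since each $T_i$ has QE (hence is model-complete), $T_\cup^*$ is model-complete by Theorem~\ref{thm:mcpres}, and by Theorem~\ref{thm:dap} (applied with the stationary extendable independence relation $\eind[\cap]$) $T_\cup^*$ is $\acl$-complete with $\acl_\cup = \ccl$. (2) Let $B = \ccl(A)$ computed inside $\sM_\cup$. I claim $f$ extends to an $L_\cup$-embedding $g\colon \sB_\cup \to \sN_\cup'$ (after passing to an elementary extension of $\sN_\cup$) which is partial elementary in every $L_i$. To see this, build $B$ as an increasing union of the sets obtained by alternately applying $\acl_{i}$; at a single step, say passing from a set $A_0$ to $\acl_i(A_0)$, the embedding $f_0\colon A_0 \to \monster_\cup$ already built is partial elementary in $L_i$, and since $T_i$ has QE, $\acl_i(A_0) = \acl_\cap(A_0)$ and the hypothesis $\Aut_{L_\cap}(\acl_\cap(A_0)/A_0) = \Aut_{L_i}(\acl_\cap(A_0)/A_0)$ forces the $L_i$-type of $\acl_i(A_0)$ over $A_0$ to be determined by its $L_\cap$-type, which is in turn determined by the quantifier-free $L_\cap$-type (since $T_\cap$ has QE, being a reduct-completion of each $T_i$ which has QE — or more carefully, since $f_0$ is a partial $L_\cap$-elementary map and $\acl_\cap(A_0)/A_0$ is $\acl_\cap$-algebraic); so we can realize the image type in an elementary extension of the current model. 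Transfinite induction/direct limit over all the layers of $\ccl$ produces $g$. Since $g$ is partial elementary in each $L_i$, and $B$ is $\ccl$-closed, the pair $(\sB_\cup,\sM_\cup)$ lies in the $\ccl$-amalgamation class. (3) Now apply the disjoint $\ccl$-amalgamation property established in the proof of Theorem~\ref{thm:dap}: there is $\sN_\cup'\elesub\sN_\cup''$ and an elementary embedding $f''\colon\sM_\cup\to\sN_\cup''$ with $f''|_B = g$, hence $f''|_A = f$. Composing the embeddings shows the substructure-amalgamation property holds, and Proposition~\ref{prop:kcomp} gives substructure-completeness, i.e. QE.

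The main obstacle I anticipate is step (2): making precise that the two stated hypotheses on $T_i$ — namely $\acl_i(A) = \acl_\cap(A)$ and the equality of the automorphism groups $\Aut_{L_\cap}(\acl_\cap(A)/A) = \Aut_{L_i}(\acl_\cap(A)/A)$ — together guarantee that the quantifier-free $L_i$-type of $\acl_i(A)$ over $A$ is determined by the quantifier-free $L_\cap$-type of $A$, uniformly enough to iterate and take direct limits. The subtlety is that at each stage the ``base'' $A_0$ is itself only a substructure (not algebraically closed in any $L_i$), so I need the QE of $T_i$ to control $\qftp_{L_i}$ over $A_0$ in terms of $\qftp_{L_\cap}(A_0)$ via the algebraic layer; the automorphism-group hypothesis is exactly what rules out $L_i$ ``seeing more'' inside $\acl_\cap(A_0)$ than $L_\cap$ does. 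One has to check that the construction is coherent across the (possibly transfinite, interleaved) layers of $\ccl$, and that at limit stages the union of partial-elementary maps is still partial elementary in each $L_i$ — which is automatic — and that the $\ccl$-closure of $A$ computed in $\sM_\cup$ maps into something of the same form in the target, which follows since $g$ is an $L_i$-embedding for each $i$ and $\ccl$ is defined uniformly.
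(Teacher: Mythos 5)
Your overall strategy coincides with the paper's: both arguments reduce to $\acl$-completeness of $T_\cup^*$ via Theorem~\ref{thm:dap} and then extend the embedding $f$ from the substructure $A$ to its closure. But your step (2), which you yourself flag as the main obstacle, is a genuine gap: you never actually prove that $f$ extends to a \emph{single} map on $\acl_\cap(A)$ which is an $L_i$-embedding for every $i$ simultaneously. Saying that the $L_i$-type of $\acl_i(A_0)$ over $A_0$ is ``determined by its $L_\cap$-type'' is the right intuition, but the automorphism-group hypothesis only concerns bijections of $\acl_\cap(A_0)$ with itself, so an argument is needed to transfer this across models. The missing mechanism is the following: for each $i$, substructure-completeness of $T_i$ gives a partial $L_i$-elementary extension $g_i\colon \acl_i(A)\to\sN_i$ of $f$; by the hypothesis $\acl_i(A)=\acl_\cap(A)$ (applied in both $\sM_\cup$ and $\sN_\cup$) all the $g_i$ share the domain $\acl_\cap(A)$ and the image $\acl_\cap(f(A))$; hence for a fixed $j$ and any $i$, the composition $g_i^{-1}\circ g_j$ lies in $\Aut_{L_\cap}(\acl_\cap(A)/A)=\Aut_{L_i}(\acl_\cap(A)/A)$, so $g_j=g_i\circ(g_i^{-1}\circ g_j)$ is an $L_i$-embedding for every $i$, i.e.\ an $L_\cup$-embedding. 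Without this comparison of two extensions (or something equivalent) your construction of $g$ does not go through. A secondary error: your parenthetical appeal to ``$T_\cap$ has QE, being a reduct of each $T_i$ which has QE'' is false in general, since quantifier elimination is not preserved under reducts; your fallback (that $f_0$ is partial $L_\cap$-elementary because it is partial $L_i$-elementary) is the correct justification.

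The transfinite iteration through the layers of $\ccl$ is also unnecessary. Under the hypothesis $\acl_i(A)=\acl_\cap(A)$ for all $i$, the set $\acl_\cap(A)$ is already $\acl_i$-closed for every $i$, since $\acl_i(\acl_\cap(A))=\acl_i(\acl_i(A))=\acl_i(A)=\acl_\cap(A)$; hence $\ccl(A)=\acl_\cap(A)$ and the combined closure stabilizes after a single step. Once the single-step extension is carried out correctly, $\acl_\cap(A)$ is $\acl_\cup$-closed and $\acl$-completeness of $T_\cup^*$ immediately gives that the extended map, and therefore $f$ itself, is partial elementary; there is nothing to iterate, and no need to re-run the amalgamation argument of Proposition~\ref{prop:kcomp}.
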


\begin{proof}
For all $i\in I$, $T_i$ is substructure-complete, and hence $\acl$-complete. So by Theorem~\ref{thm:dap}, $T_\cup^*$ is $\acl$-complete and $\acl_\cup = \ccl$. We will show $T_\cup^*$ is substructure complete. 

Suppose $\sA_\cup$ is an $L_\cup$-substructure of a $T^{*}_\cup$-model $\sM_\cup$, $\sN_\cup\models T^{*}_\cup$, and $f\colon \sA_\cup\to \sN_\cup$ is an embedding. 
As each $T_i$ is substructure-complete, $f$ is partial elementary $\sA_i\to \sN_i$, so $f$ extends to a partial elementary map $g_i\colon \acl_i(A) \to \sN_i$. By our hypothesis, $\acl_i(A) = \acl_\cap(A)$, so we have a family of maps $(g_i)_{i\in I}$ defined on $\acl_\cap(A)$ and extending $f$.

Fix $j\in I$. For all $i\in I$, $(g_i^{-1}\circ g_j)\in \Aut_{L_\cap}(\acl_\cap(A)/A)$, so in fact it is an $L_i$-automorphism of $\acl_\cap(A)$ by our assumption on the automorphism groups. It follows that $g_j = g_i\circ (g_i^{-1}\circ g_j)$ is an $L_i$-embedding $\acl_\cap(A)\to \sN_i$. Since $i$ was arbitrary, $g_j$ is an $L_\cup$-embedding. Now since $\acl_\cap(A)$ is $\acl_i$-closed for all $i\in I$, it is $\ccl$-closed, and hence $\acl_\cup$-closed. So by $\acl$-completeness of $T_\cup^*$, $g_j\colon \acl_\cap(A) \to \sN_\cup$ is partial elementary, and hence so is $g_j|_A = f$.
\end{proof}

\noindent We prefer hypothesis which can be checked language-by-language, i.e., which refer only to properties of $T_i$, $T_\cap$, and the relationship between $T_i$ and $T_\cap$, rather than how $T_i$ and $T_j$ relate when $i\neq j$, or how $T_i$ relates to $T_\cup$. The hypotheses of Theorem~\ref{thm:qe} are not strictly language-by-language, because they refer to an arbitrary $L_\cup$-substructure $A$. However, there are several natural strengthenings of these hypotheses which are language-by-language. One is to simply assume the hypotheses of Theorem~\ref{thm:qe} for all sets $A$. Simpler language-by-language criteria are given in the following corollaries.  

\begin{cor}\label{cor:qe}
Assume $T_\cap$ admits a stationary and extendable independence relation.
Suppose each $T_i$ admits quantifier elimination. If either of the following conditions hold for all sets $A$, then $T_\cup^*$ has quantifier elimination:
\begin{enumerate}
\item $\acl_i(A) = \langle A\rangle_{L_i}$ for all $i\in I$.
\item $\acl_i(A) = \dcl_\cap(A)$ for all $i\in I$.
\end{enumerate}
\end{cor}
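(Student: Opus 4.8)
The plan is to reduce each of the two conditions to the hypotheses of Theorem~\ref{thm:qe}, namely the two requirements $\acl_i(A) = \acl_\cap(A)$ and $\Aut_{L_\cap}(\acl_\cap(A)/A) = \Aut_{L_i}(\acl_\cap(A)/A)$ for all $L_\cup$-substructures $A$ of $T_\cup^*$-models. Since both hypothesized conditions are assumed to hold for \emph{all} sets $A$, there is no harm in restricting attention to $L_\cup$-substructures $A$; in fact one can work with an arbitrary set $A$ throughout.

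For case (1): the hypothesis $\acl_i(A) = \langle A\rangle_{L_i}$ says $A$ is already $\acl_i$-closed whenever it is an $L_i$-substructure. Applying this with $A$ itself an $L_i$-substructure (which an $L_\cup$-substructure is), we get $\acl_i(A) = A$. Since $L_\cap \subseteq L_i$, every $L_\cap$-substructure is contained in an $L_i$-substructure it generates, and one checks $\langle A\rangle_{L_\cap} \subseteq \langle A \rangle_{L_i} = A$, so $A$ is also an $L_\cap$-substructure, and $\acl_\cap(A) \subseteq \acl_i(A) = A \subseteq \acl_\cap(A)$, giving $\acl_i(A) = \acl_\cap(A) = A$. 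The automorphism condition is then vacuous: $\Aut_{L_\cap}(A/A) = \Aut_{L_i}(A/A) = \{\id_A\}$. Hence Theorem~\ref{thm:qe} applies and $T_\cup^*$ has quantifier elimination. One should be slightly careful that the relevant $A$'s in Theorem~\ref{thm:qe} are $L_\cup$-substructures of $T_\cup^*$-models; but these are in particular $L_i$-substructures, so the hypothesis applies directly, and that is all that is needed.

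For case (2): the hypothesis $\acl_i(A) = \dcl_\cap(A)$ for all $i$ immediately yields $\acl_i(A) = \dcl_\cap(A)$ independently of $i$, hence in particular $\acl_i(A) = \acl_j(A)$ for all $i,j$; and since $\dcl_\cap(A) \subseteq \acl_\cap(A) \subseteq \acl_i(A) = \dcl_\cap(A)$, we get $\acl_\cap(A) = \dcl_\cap(A) = \acl_i(A)$, so the first hypothesis of Theorem~\ref{thm:qe} holds. For the automorphism condition, note $\acl_\cap(A) = \dcl_\cap(A)$ means every element of $\acl_\cap(A)$ is $L_\cap$-definable over $A$, hence fixed by any partial elementary bijection of $\acl_\cap(A)$ over $A$ in \emph{any} language extending $L_\cap$; thus $\Aut_{L_\cap}(\acl_\cap(A)/A) = \Aut_{L_i}(\acl_\cap(A)/A) = \{\id\}$. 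Again Theorem~\ref{thm:qe} applies.

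The only real subtlety --- and the step to watch --- is the bookkeeping about which closure operators are being taken in which reduct and making sure the containments $\langle A\rangle_{L_\cap} \subseteq \langle A\rangle_{L_i}$ and $\acl_\cap \subseteq \acl_i$ go in the right direction (they do, because $L_\cap \subseteq L_i$ forces every $L_i$-formula-definable or $L_i$-algebraic element to be computed in a larger language, so $L_\cap$-closures are smaller). Once that is pinned down, both cases collapse to the observation that the hypothesized equalities force $\acl_\cap(A)$ to coincide with $\acl_i(A)$ and to consist of points definable (or at least the same across all $i$) over $A$, trivializing the automorphism comparison. I expect no genuine obstacle here; this corollary is a direct unwinding of Theorem~\ref{thm:qe} under each simplifying hypothesis.
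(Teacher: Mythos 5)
Your proof is correct and follows essentially the same route as the paper: reduce to Theorem~\ref{thm:qe} by chasing the chain of containments $\dcl_\cap(A)\subseteq\acl_\cap(A)\subseteq\acl_i(A)$ against the hypothesized equality in each case, and then observe that the automorphism group $\Aut_{L_\cap}(\acl_\cap(A)/A)$ is already trivial because $\acl_\cap(A)$ collapses to $\dcl_\cap(A)$ (or to $A$ itself). The paper's proof is the same argument, stated slightly more uniformly by noting that in both cases one ends up with $\acl_i(A)=\acl_\cap(A)=\dcl_\cap(A)$.
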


\begin{proof}
We apply Theorem~\ref{thm:qe}, so assume $A = \langle A \rangle_{L_\cup}$. 
\begin{enumerate}
\item We have $A\subseteq \dcl_\cap(A)\subseteq \acl_\cap(A) \subseteq \acl_i(A) = \langle A \rangle_{L_i} = A$.
\item We have $\dcl_\cap(A) \subseteq \acl_\cap(A)\subseteq \acl_i(A) = \dcl_\cap(A)$. 
\end{enumerate}
In either case, $\acl_i(A) = \acl_\cap(A) = \dcl_\cap(A)$. It follows that the group $\Aut_{L_\cap}(\acl_\cap(A)/A) = \Aut_{L_\cap}(\dcl_\cap(A)/A)$ is already trivial, since every partial elementary map which fixes $A$ pointwise also fixes $\dcl_\cap(A)$ pointwise, so the subgroup $\Aut_{L_i}(\acl_\cap(A)/A)$ is also trivial.
\end{proof}

\begin{cor}\label{cor:qe2}
Assume $T_\cap$ admits a stationary and extendable independence relation. Suppose each $T_i$ admits quantifier elimination and a universal axiomatization.
Then $T_\cup^*$ has quantifier elimination.
\end{cor}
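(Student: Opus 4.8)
The plan is to derive Corollary~\ref{cor:qe2} as an immediate consequence of Corollary~\ref{cor:qe}(1), so the only work is to check that a universal axiomatization together with quantifier elimination forces algebraic closure to collapse to the substructure generated. First I would recall the standard fact that if $T_i$ has a universal axiomatization, then every substructure of a $T_i$-model is again a $T_i$-model; this is because a universal sentence true in a structure is true in every substructure. Combined with quantifier elimination, this is exactly the situation where $\langle A\rangle_{L_i}$ is an elementary substructure of any $T_i$-model containing it: quantifier elimination upgrades the embedding $\langle A\rangle_{L_i}\hookrightarrow \sM_i$ to an elementary embedding, once we know $\langle A\rangle_{L_i}$ is itself a model of $T_i$.

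Next I would argue that an elementary substructure is algebraically closed: if $\langle A\rangle_{L_i}\preceq \sM_i$, then for any $b\in \acl_i(\langle A\rangle_{L_i})$, the algebraic formula $\varphi(x)$ over $\langle A\rangle_{L_i}$ witnessing this has only finitely many solutions, all of which lie in $\langle A\rangle_{L_i}$ by elementarity (the number of solutions is the same computed in $\langle A\rangle_{L_i}$ and in $\sM_i$). Hence $\acl_i(\langle A\rangle_{L_i}) = \langle A\rangle_{L_i}$. Applying this with $A$ replaced by $\langle A\rangle_{L_i}$ and using $\acl_i(A) \subseteq \acl_i(\langle A\rangle_{L_i})$ gives $\acl_i(A) = \langle A\rangle_{L_i}$ for every set $A$. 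This holds for all $i\in I$, so condition (1) of Corollary~\ref{cor:qe} is satisfied, and quantifier elimination for $T_\cup^*$ follows. Since the hypothesis that $T_\cap$ admits a stationary and extendable independence relation is carried over verbatim, there is nothing further to check.

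The only mild subtlety — and the closest thing to an obstacle — is the multi-sorted bookkeeping: one must make sure that "substructure generated by $A$" is taken in the appropriate sorted sense and that a universal axiomatization indeed guarantees substructures are models in the multi-sorted setting, but this is routine and already implicit in the conventions of Section~1.1. I expect the proof to be only a few lines.

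\begin{proof}
By Corollary~\ref{cor:qe}(1), it suffices to show that $\acl_i(A) = \langle A\rangle_{L_i}$ for every $i\in I$ and every set $A$. Fix $i$ and let $B = \langle A\rangle_{L_i}$. Since $T_i$ has a universal axiomatization, every substructure of a $T_i$-model is a $T_i$-model; in particular $\sB := \langle A\rangle_{L_i}$, viewed inside any $T_i$-model $\sM_i$ containing it, satisfies $\sB\models T_i$. Since $T_i$ has quantifier elimination, the inclusion $\sB\hookrightarrow \sM_i$ is elementary, so $\sB\preceq \sM_i$. An elementary substructure is algebraically closed: if $b\in \acl_i(B)$, witnessed by an algebraic $L_i(B)$-formula $\varphi(x)$ with exactly $k$ solutions in $\sM_i$, then $\varphi$ also has exactly $k$ solutions in $\sB$, all lying in $B$; hence $b\in B$. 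Therefore $\acl_i(B) = B$. Since $A\subseteq B$ and $B$ is a substructure, $\acl_i(A)\subseteq \acl_i(B) = B = \langle A\rangle_{L_i}\subseteq \acl_i(A)$, so $\acl_i(A) = \langle A\rangle_{L_i}$. As $i$ and $A$ were arbitrary, Corollary~\ref{cor:qe}(1) applies and $T_\cup^*$ has quantifier elimination.
\end{proof}
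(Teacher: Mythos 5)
Your proof is correct and follows essentially the same route as the paper: a universal axiomatization makes every substructure a model, quantifier elimination upgrades the inclusion to an elementary embedding, hence every substructure (in particular $\langle A\rangle_{L_i}$) is $\acl_i$-closed, and Corollary~\ref{cor:qe}(1) applies. The paper states this in one line; your version just spells out the intermediate steps.
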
 
\begin{proof}
Every $L_i$-substructure of a model of $T_i$ is an elementary substructure, and hence $\acl_i$-closed, so we can apply Corollary~\ref{cor:qe}(1).
\end{proof}

\subsection{Language-independent consequences}\label{sec:morleyization}

The results of Sections~\ref{sec:presmc} through \ref{sec:qe} can be lifted to the general case (when we have no $\sK$-completeness hypotheses on the $T_i$) via Morleyization.
This allows us to understand  $L_\cup$-definable sets and certain complete $L_\cup$-types relative to $L_i$-definable sets and complete $L_i$-types.

\medskip\noindent To set notation: For each $i$, Morleyization gives a definitional expansion $L_i^\diamondsuit$ of $L_i$ and an extension $T_i^\diamondsuit$ of $T_i$ by axioms defining the new symbols in $L_i^\diamondsuit$. We assume that the new symbols in $L_i^\diamondsuit$ and $L_j^\diamondsuit$ are distinct for $i\neq j$, so that $L_i^\diamondsuit \cap L_j^\diamondsuit = L_\cap$. It follows that each $T_i^\diamondsuit$ has the same set of $L_\cap$ consequences, namely $T_\cap$. We let $L^\diamondsuit_\cup = \bigcup_{i\in I} L_i^\diamondsuit$ and $T_\cup^{\diamondsuit} = \bigcup_{i\in I} T_i^\diamondsuit$. Then every $T_\cup$-model $\sM_\cup$ has a canonical expansion to a $T_\cup^{\diamondsuit}$-model $\sM_\cup^\diamondsuit$, and by Remark~\ref{rem: robust}, $\sM_\cup$ is interpolative if and only if $\sM_\cup^\diamondsuit$ is interpolative. 

\medskip\noindent The first result applies to any interpolative fusion $T_\cup^*$.

\begin{prop}\label{prop:relmc}\quad
\begin{enumerate}[leftmargin=*]
    \item Every $L_\cup$-formula $\psi(x)$ is $T_\cup^*$-equivalent to a finite disjunction of formulas of the form
$$ \exists y \bigwedge_{i \in J} \varphi_i(x,y) $$
where $J \subseteq I$ is finite and $\varphi_i(x,y)$ is an $L_i$-formula for all $i \in J$.
    \item If $\sM_\cup$ is a $T^*_\cup$-model, then \[T_\cup^*\cup \bigcup_{i\in I} \ediag(\sM_i)\models \ediag(\sM_\cup).\]
\end{enumerate}
\end{prop}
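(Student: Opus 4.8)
The plan is to reduce to the $\sK$-complete case already handled in Sections~\ref{sec:presmc} through~\ref{sec:qe} by passing to the Morleyizations $T_i^\diamondsuit$, and then to translate the resulting statements back down to $L_\cup$. More precisely, each $T_i^\diamondsuit$ has quantifier elimination, hence is model-complete, so by Theorem~\ref{thm:mcpres} (applied to the family $(T_i^\diamondsuit)_{i\in I}$, whose interpolative fusion is $(T_\cup^\diamondsuit)^*$) the theory $(T_\cup^\diamondsuit)^*$ is model-complete, and every $L_\cup^\diamondsuit$-formula is $(T_\cup^\diamondsuit)^*$-equivalent to a finite disjunction of formulas $\exists y\, \bigwedge_{i\in J}\varphi_i(x,y)$ with each $\varphi_i$ a flat $L_i^\diamondsuit$-formula. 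Here I first need to note that $(T_\cup^\diamondsuit)^*$ exists and is the Morleyization-expansion of $T_\cup^*$: this is exactly Remark~\ref{rem: robust}(2), which says $\sM_\cup$ is interpolative iff $\sM_\cup^\diamondsuit$ is, so the interpolative $T_\cup^\diamondsuit$-models are precisely the canonical expansions of interpolative $T_\cup$-models, whence $(T_\cup^\diamondsuit)^* = (T_\cup^*)^\diamondsuit$ and in particular it exists.

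For part (1), I would start with an arbitrary $L_\cup$-formula $\psi(x)$, view it as an $L_\cup^\diamondsuit$-formula, and apply Theorem~\ref{thm:mcpres} to write it, modulo $(T_\cup^\diamondsuit)^*$, as a finite disjunction of formulas $\exists y\, \bigwedge_{i\in J}\varphi_i^\diamondsuit(x,y)$ with $\varphi_i^\diamondsuit$ an $L_i^\diamondsuit$-formula. Now I replace each new symbol of $L_i^\diamondsuit$ by its $L_i$-definition according to the axioms of $T_i^\diamondsuit$: since $T_i^\diamondsuit$ is a definitional expansion of $T_i$, each $\varphi_i^\diamondsuit(x,y)$ is $T_i^\diamondsuit$-equivalent (hence $(T_\cup^\diamondsuit)^*$-equivalent) to an ordinary $L_i$-formula $\varphi_i(x,y)$. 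This yields an equivalence, modulo $(T_\cup^\diamondsuit)^*$, between $\psi(x)$ and a finite disjunction of formulas of the stated form with $\varphi_i\in L_i$. Since $\psi$ and the right-hand side are all $L_\cup$-formulas and $(T_\cup^\diamondsuit)^*$ is a conservative (definitional) expansion of $T_\cup^*$, the equivalence already holds modulo $T_\cup^*$, giving (1).

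For part (2), I would argue as follows. Model-completeness of $(T_\cup^\diamondsuit)^*$ gives, for the expansion $\sM_\cup^\diamondsuit$ of any $T_\cup^*$-model $\sM_\cup$, that $(T_\cup^\diamondsuit)^*\cup\fdiag(\sM_\cup^\diamondsuit)\models \ediag(\sM_\cup^\diamondsuit)$ (this is the content of $\sK$-completeness with $\sK$ the class of pairs $(\sM,\sM)$, via Remark~\ref{rem:Kcomp}); and by Remark~\ref{rem:flat}(2), $\fdiag(\sM_\cup^\diamondsuit) = \bigcup_{i\in I}\fdiag(\sM_i^\diamondsuit)$. Since each $T_i^\diamondsuit$ is a definitional expansion of $T_i$, the theory $T_i^\diamondsuit\cup\ediag(\sM_i)$ entails $\fdiag(\sM_i^\diamondsuit)$, so $(T_\cup^\diamondsuit)^*\cup\bigcup_{i\in I}\ediag(\sM_i)\models \ediag(\sM_\cup^\diamondsuit)\supseteq\ediag(\sM_\cup)$. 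Finally, restricting to $L_\cup$ and using that $(T_\cup^\diamondsuit)^*$ is a definitional expansion of $T_\cup^*$ (so any $L_\cup$-consequence of $(T_\cup^\diamondsuit)^*$ together with $L_\cup$-sentences is already a consequence of $T_\cup^*$ together with those sentences), we obtain $T_\cup^*\cup\bigcup_{i\in I}\ediag(\sM_i)\models\ediag(\sM_\cup)$.

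The only genuinely delicate point — really the main thing to get right rather than a true obstacle — is the bookkeeping around definitional expansions: one must be careful that $(T_\cup^\diamondsuit)^*$ exists and equals $(T_\cup^*)^\diamondsuit$ (handled by Remark~\ref{rem: robust}(2)), that the new symbols of the various $L_i^\diamondsuit$ are kept disjoint so that $L_i^\diamondsuit\cap L_j^\diamondsuit = L_\cap$ (as arranged in Section~\ref{sec:morleyization}), and that replacing $L_i^\diamondsuit$-formulas by their $L_i$-definitions and then restricting consequences back to $L_\cup$ are both legitimate. None of these steps is hard, but together they are exactly where the Morleyization argument either works or silently breaks, so I would state them explicitly.
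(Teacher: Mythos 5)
Your proposal is correct and follows essentially the same route as the paper: Morleyize each $T_i$, invoke Remark~\ref{rem: robust}(2) to identify $(T_\cup^\diamondsuit)^*$ with $(T_\cup^*)^\diamondsuit$, apply Theorem~\ref{thm:mcpres} to get the flat $L_i^\diamondsuit$-decomposition, and translate back along the definitional expansions. The paper additionally observes that (2) is just a restatement of Fact~\ref{fact:extension}(2), but then gives precisely the Morleyization argument you describe.
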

\begin{proof}
For (1), each Morleyized theory $T_i^\diamondsuit$ has quantifier elimination, hence is model-complete, so we can apply Theorem~\ref{thm:mcpres} to the interpolative fusion  $(T_\cup^\diamondsuit)^*$. This says that $\psi(x)$ is $(T_\cup^\diamondsuit)^*$-equivalent to a finite disjunction of formulas of the form $\exists y\, \bigwedge_{i\in J} \varphi_i(x,y)$, where each $\varphi_i(x,y)$ is a flat $L_i^\diamondsuit$-formula. But since $L_i^\diamondsuit$ is a definitional expansion of $L_i$, each formula $\varphi_i(x,y)$ can be translated back to an $L_i$-formula. 

Now (2) is just a restatement of Fact~\ref{fact:extension}(2). But we will give another proof, to illustrate the Morleyization method, which will be used repeatedly below. By Theorem~\ref{thm:mcpres},  $(T_\cup^\diamondsuit)^*$ is model-complete, so by Remark~\ref{rem:flat}(3) we have \[(T_\cup^*)^{\diamondsuit}\cup \bigcup_{i\in I} \fdiag(\sM_i^{\diamondsuit}) \models \ediag(\sM_\cup^{\diamondsuit}).\] But since $L_i^{\diamondsuit}$ is a definitional expansion of $L_i$,  $\fdiag(\sM_i^{\diamondsuit})$ is completely determined by $\ediag(\sM_i)$, and the result follows.
\end{proof}


\noindent We will now establish a sequence of variants on Proposition~\ref{prop:relmc}, with stronger hypotheses and stronger conclusions, but with essentially the same proof.

\begin{prop}
\label{prop:relaclcomp}
Assume $T_\cap$ admits a stationary and extendable independence relation. Then:
\begin{enumerate}
\item In models of $T_\cup^*$, $\acl_\cup = \ccl$. 
\item Every $L_\cup$-formula $\psi(x)$ is $T_\cup^*$-equivalent to a finite disjunction of formulas of the form $$\exists y\, \bigwedge_{i\in J} \varphi_i(x,y)$$ where $J\subseteq I$ is finite, $\varphi_i(x,y)$ is an $L_i$-formula for all $i\in J$, and $\bigwedge_{i\in J} \varphi_i(x,y)$ is bounded in $y$ (see Appendix~\ref{app:Kcomp} for the definition).
\item If $A$ is an $\acl_\cup$-closed subset of a $T_\cup^*$-model $M_\cup$, then $$T_\cup^*\cup \bigcup_{i\in I} \Th_{L_i(A)}(\sM_i)\models \Th_{L_\cup(A)}(\sM_\cup).$$
\end{enumerate}
\end{prop}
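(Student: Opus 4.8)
The plan is to run the Morleyization argument of Proposition~\ref{prop:relmc}, upgrading its use of Theorem~\ref{thm:mcpres} to Theorem~\ref{thm:dap} and Corollary~\ref{cor:bclpres}. As in Section~\ref{sec:morleyization}, for each $i$ let $L_i^\diamondsuit$ and $T_i^\diamondsuit$ be the Morleyizations of $L_i$ and $T_i$, set $L_\cup^\diamondsuit = \bigcup_{i\in I} L_i^\diamondsuit$ and $T_\cup^\diamondsuit = \bigcup_{i\in I} T_i^\diamondsuit$, and recall that $(T_\cup^\diamondsuit)^*$ exists and its models are exactly the canonical expansions of $T_\cup^*$-models (Remark~\ref{rem: robust}). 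First I would check that Morleyizing the $T_i$ does not disturb the standing hypothesis on $T_\cap$: each $L_i^\diamondsuit$ is a definitional expansion of $L_i$, so completions of $T_i$, complete types over parameter sets, and the operator $\acl_i$ are all unchanged under the canonical translation, and hence $T_\cap$ still admits a stationary and extendable independence relation with respect to the family $(T_i^\diamondsuit)_{i\in I}$. Moreover each $T_i^\diamondsuit$ has quantifier elimination, so it is substructure-complete, hence $\bcl$-complete and $\acl$-complete. Therefore Theorem~\ref{thm:dap} and Corollary~\ref{cor:bclpres} both apply to $(T_\cup^\diamondsuit)^*$: it is $\acl$- and $\bcl$-complete, $\acl_\cup^\diamondsuit = \ccl^\diamondsuit$ in its models, and every $L_\cup^\diamondsuit$-formula is $(T_\cup^\diamondsuit)^*$-equivalent to a finite disjunction of b.e.\ formulas $\exists y\,\bigwedge_{i\in J}\varphi_i(x,y)$ with each $\varphi_i$ a flat $L_i^\diamondsuit$-formula.

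Each clause of the proposition is then obtained by translating back along the definitional expansions. For (1): since $L_i^\diamondsuit$ (resp.\ $L_\cup^\diamondsuit$) is definitional over $L_i$ (resp.\ $L_\cup$), we have $\acl_i^\diamondsuit = \acl_i$ and $\acl_\cup^\diamondsuit = \acl_\cup$, so $\ccl^\diamondsuit = \ccl$ and thus $\acl_\cup = \ccl$ in $T_\cup^*$-models. For (2): given an $L_\cup$-formula $\psi(x)$, regard it as an $L_\cup^\diamondsuit$-formula and apply Corollary~\ref{cor:bclpres} to obtain a disjunction $\bigvee_k \exists y_k\,\bigwedge_{i\in J_k}\varphi_{i,k}(x,y_k)$ as above; then replace each new symbol occurring in each flat $L_i^\diamondsuit$-formula $\varphi_{i,k}$ by its $L_i$-definition, producing a $T_i^\diamondsuit$-equivalent $L_i$-formula $\varphi_{i,k}'(x,y_k)$. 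The disjunction $\bigvee_k \exists y_k\,\bigwedge_{i\in J_k}\varphi_{i,k}'(x,y_k)$ is an $L_\cup$-formula that is $(T_\cup^\diamondsuit)^*$-equivalent to $\psi$, hence $T_\cup^*$-equivalent to $\psi$, since a definitional expansion proves no new $L_\cup$-sentences; and because each $\bigwedge_i \varphi_{i,k}$ was bounded in $y_k$ with respect to $(T_\cup^\diamondsuit)^*$, so is the equivalent $\bigwedge_i \varphi_{i,k}'$, and boundedness of a conjunction of $L_\cup$-formulas in $y_k$ is itself an $L_\cup$-sentence, so it already holds with respect to $T_\cup^*$.

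For (3): let $A$ be an $\acl_\cup$-closed subset of a $T_\cup^*$-model $\sM_\cup$. By (1), $A$ is $\acl_i$-closed, equivalently $\acl_i^\diamondsuit$-closed, for every $i$, hence $\ccl^\diamondsuit$-closed; since Morleyization adds only relation symbols, the canonical expansion $\sA_\cup^\diamondsuit$ of the induced $L_\cup$-substructure $\sA_\cup\subseteq\sM_\cup$ is an $\acl_\cup^\diamondsuit$-closed substructure of $\sM_\cup^\diamondsuit$. As $(T_\cup^\diamondsuit)^*$ is $\acl$-complete, Remark~\ref{rem:flat}(3) yields $(T_\cup^\diamondsuit)^*\cup\bigcup_{i\in I}\fdiag(\sA_i^\diamondsuit)\models \Th_{L_\cup^\diamondsuit(A)}(\sM_\cup^\diamondsuit)$. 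Because $\sA_i^\diamondsuit$ is a substructure of $\sM_i^\diamondsuit$ and $L_i^\diamondsuit$ is definitional over $L_i$, each flat literal in $\fdiag(\sA_i^\diamondsuit)$ is, modulo the defining axioms of $T_i^\diamondsuit$, equivalent to an $L_i(A)$-sentence true in $\sM_i$, so $\fdiag(\sA_i^\diamondsuit)$ is a consequence of $T_i^\diamondsuit\cup\Th_{L_i(A)}(\sM_i)$. Combining, $T_\cup^*\cup\bigcup_{i\in I}\Th_{L_i(A)}(\sM_i)$ entails $\Th_{L_\cup^\diamondsuit(A)}(\sM_\cup^\diamondsuit)$ in the definitional expansion $(T_\cup^\diamondsuit)^*$, and restricting to $L_\cup$ gives the claim. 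The work here is largely bookkeeping; the points requiring care are that the stationary/extendable hypothesis genuinely transfers through Morleyization, that the back-translation of flat $L_i^\diamondsuit$-formulas preserves boundedness of conjunctions (needed for (2)), and that the flat diagram of the mere substructure $\sA_i^\diamondsuit$ is controlled by $\Th_{L_i(A)}(\sM_i)$ rather than by the elementary diagram of $\sM_i$. I expect (3) to be the most delicate step, precisely because $A$ is only a substructure of $\sM_i$, not an elementary one, so one must lean on the defining axioms of $T_i^\diamondsuit$ to pin down the quantifier-free $L_i^\diamondsuit$-type of $A$.
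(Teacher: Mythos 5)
Your proposal is correct and follows exactly the paper's route: Morleyize each $T_i$, observe that the stationary/extendable hypothesis and the operators $\acl_i$, $\acl_\cup$, $\ccl$ are unaffected, apply Theorem~\ref{thm:dap} and Corollary~\ref{cor:bclpres} to $(T_\cup^\diamondsuit)^*$, and translate back along the definitional expansions as in Proposition~\ref{prop:relmc}. The paper leaves the back-translation details (preservation of boundedness in (2), and recovering $\fdiag(\sA_i^\diamondsuit)$ from $\Th_{L_i(A)}(\sM_i)$ in (3)) implicit; you have filled them in correctly.
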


\begin{proof}
Each Morleyized theory $T_i^{\diamondsuit}$ has quantifier elimination, hence is $\acl$-complete and $\bcl$-complete. Further, Morleyization does not affect the stationary and extendable independence relation on $T_\cap$. Then (1) follows from Theorem~\ref{thm:dap}, observing that $\acl_\cup$ and $\ccl$ are not altered by definitional expansions. (2) and (3) follow just as in Proposition~\ref{prop:relmc}, using the syntactic result of  Corollary~\ref{cor:bclpres} for (2).
\end{proof}

\begin{prop}\label{prop:relqe1}
Assume $T_\cap$ admits a stationary and extendable independence relation. Suppose further that  
$$ \acl_i(A) = \acl_\cap(A) \quad\text{and}\quad \Aut_{L_\cap}(\acl_\cap(A)/A) = \Aut_{L_i}(\acl_\cap(A)/A)$$ for all $L_\cup$-substructures $\sA_\cup$ of $T_\cup^*$-models and all $i \in I$. Then:
\begin{enumerate}
\item Every formula $\psi(x)$ is $T_\cup^*$-equivalent to a finite disjunction of formulas
$$\exists y\, \bigwedge_{i\in J} \varphi_i(x,y)$$ where $J\subseteq I$ is finite, $\varphi_i(x,y)$ is an $L_i$-formula for all $i\in J$, and $\bigwedge_{i\in J} \varphi_i(x,y)$ is bounded in $y$ with bound $1$.
\item If $\sA_\cup$ is an $L_\cup$-substructure of a $T_\cup^*$-model $\sM_\cup$, then 
$$T_\cup^* \cup \bigcup_{i\in I} \Th_{L_i(A)}(\sM_i) \models \Th_{L_\cup(A)}(\sM_\cup).$$
\end{enumerate}
\end{prop}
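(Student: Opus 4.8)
The plan is to follow the pattern of Propositions~\ref{prop:relmc} and~\ref{prop:relaclcomp}: reduce to the Morleyized setting, where Theorem~\ref{thm:qe} applies in place of Theorems~\ref{thm:mcpres} and~\ref{thm:dap}, and then translate the resulting statement about quantifier elimination back to $L_\cup$. Concretely, I would work with the Morleyizations $T_i^\diamondsuit$ and the associated $L_\cup^\diamondsuit$ and $(T_\cup^\diamondsuit)^*$ from Section~\ref{sec:morleyization}; each $T_i^\diamondsuit$ has quantifier elimination.

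The point requiring care is that the hypotheses of Theorem~\ref{thm:qe} transfer to $(T_\cup^\diamondsuit)^*$. Morleyization leaves $L_\cap$ and $T_\cap$ untouched (only the $L_i$ are expanded, with $L_i^\diamondsuit\cap L_j^\diamondsuit=L_\cap$), so the stationary and extendable independence relation on $T_\cap$ is unaffected. Since the $L_i^\diamondsuit$ add only relation symbols, $L_\cup^\diamondsuit$ and $L_\cup$ have the same function symbols, so $L_\cup^\diamondsuit$-substructures of $(T_\cup^\diamondsuit)^*$-models are exactly the canonical expansions of $L_\cup$-substructures of $T_\cup^*$-models; on such a substructure $\sA_\cup$, algebraic closure is insensitive to definitional expansions, so $\acl_{L_i^\diamondsuit}(A)=\acl_i(A)=\acl_\cap(A)$. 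Finally, a bijection of $\acl_\cap(A)$ fixing $A$ is $L_i^\diamondsuit$-partial elementary if and only if it is $L_i$-partial elementary, because every $L_i^\diamondsuit$-formula is $T_i^\diamondsuit$-equivalent to an $L_i$-formula; hence $\Aut_{L_i}(\acl_\cap(A)/A)=\Aut_{L_i^\diamondsuit}(\acl_\cap(A)/A)$, and the automorphism-group hypothesis transfers. Thus Theorem~\ref{thm:qe} applies to $(T_\cup^\diamondsuit)^*$, which is therefore substructure-complete, i.e.\ has quantifier elimination.

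Given this, (1) and (2) are extracted just as in the proofs of Propositions~\ref{prop:relmc} and~\ref{prop:relaclcomp}. For (1): an $L_\cup$-formula $\psi(x)$, read as an $L_\cup^\diamondsuit$-formula, is $(T_\cup^\diamondsuit)^*$-equivalent to a quantifier-free $L_\cup^\diamondsuit$-formula, hence by Fact~\ref{fact:eflat} to a finite disjunction of E$\flat$-formulas $\exists y\,\theta(x,y)$, each bounded in $y$ with bound $1$ by the definition of an E$\flat$-formula; by Remark~\ref{rem:flat}(1) each flat $L_\cup^\diamondsuit$-formula $\theta(x,y)$ is logically equivalent to $\bigwedge_{i\in J}\theta_i(x,y)$ with $J\subseteq I$ finite and each $\theta_i$ a flat $L_i^\diamondsuit$-formula, and translating the $\theta_i$ back to $L_i$-formulas $\varphi_i$ gives the asserted form, the resulting $\bigwedge_{i\in J}\varphi_i(x,y)$ remaining bounded in $y$ with bound $1$ relative to $T_\cup^*$. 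For (2): substructure-completeness of $(T_\cup^\diamondsuit)^*$ gives, via Remark~\ref{rem:Kcomp}, that $(T_\cup^\diamondsuit)^*\cup\fdiag(\sA_\cup^\diamondsuit)\models\Th_{L_\cup^\diamondsuit(A)}(\sM_\cup^\diamondsuit)$, and $\fdiag(\sA_\cup^\diamondsuit)=\bigcup_{i\in I}\fdiag(\sA_i^\diamondsuit)$ by Remark~\ref{rem:flat}(2); since $\sA_i^\diamondsuit$ inherits its purely relational new symbols from $\sM_i^\diamondsuit$, which interprets them according to $\sM_i$, the flat diagram $\fdiag(\sA_i^\diamondsuit)$ is entailed by $\Th_{L_i(A)}(\sM_i)$ over $T_i^\diamondsuit$. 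Hence $(T_\cup^\diamondsuit)^*\cup\bigcup_{i\in I}\Th_{L_i(A)}(\sM_i)\models\Th_{L_\cup^\diamondsuit(A)}(\sM_\cup^\diamondsuit)$, and restricting to $L_\cup$ (using that the canonical $L_\cup^\diamondsuit$-expansion of a model of $T_\cup^*$ satisfies $(T_\cup^\diamondsuit)^*$, by Remark~\ref{rem: robust}) yields the statement.

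The main obstacle is essentially the bookkeeping in the second paragraph: verifying that the algebraic-closure and automorphism-group hypotheses of Theorem~\ref{thm:qe} survive Morleyization, which is where it matters that $L_\cap$ itself is not Morleyized and that partial elementarity is invariant under definitional expansions. Once that is checked, the rest is a routine repackaging of arguments already carried out in this section.
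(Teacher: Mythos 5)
Your proposal is correct and follows essentially the same route as the paper's proof: Morleyize, check that the hypotheses of Theorem~\ref{thm:qe} survive (the paper dispatches this in one sentence where you spell out the bookkeeping, correctly), apply quantifier elimination of $(T_\cup^\diamondsuit)^*$, and then extract (1) via Fact~\ref{fact:eflat} and Remark~\ref{rem:flat}(1) and (2) via the Proposition~\ref{prop:relmc}-style flat-diagram argument. No substantive differences.
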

\begin{proof}
Morleyization does not affect our hypotheses on $\acl_i$, $\acl_\cap$, and the stationary and extendable independence relation on $T_\cap$. So by Theorem~\ref{thm:qe},  $(T_\cup^\diamondsuit)^*$ has quantifier elimination. This gives us (2) as in the proof of Proposition~\ref{prop:relmc}. 

For (1), $\psi(x)$ is $(T_\cup^\diamondsuit)^*$-equivalent to a quantifier-free $L_\cup^{\diamondsuit}$-formula $\psi^{\diamondsuit}(x)$. We cannot translate $\psi^{\diamondsuit}(x)$ back to a Boolean combination of $L_i$-formulas, since a single atomic subformula of $\psi^{\diamondsuit}(x)$ may involve function and relation symbols from distinct languages. 

However, by Fact~\ref{fact:eflat}, $\psi^{\diamondsuit}(x)$ is equivalent to a finite disjunction of E$\flat$-formulas $\exists y\, \theta^{\diamondsuit}(x,y)$. Applying Remark~\ref{rem:flat}(1) to each flat formula $\theta^{\diamondsuit}(x,y)$, we obtain a finite disjunction of formulas of the form $$\exists y\, \bigwedge_{i\in J} \varphi^{\diamondsuit}_i(x,y)$$ where $J\subseteq I$ is finite, $\varphi_i^{\diamondsuit}(x,y)$ is a flat $L_i^{\diamondsuit}$-formula for all $i\in J$, and $\bigwedge_{i\in J} \varphi_i(x,y)$ is bounded in $y$ with bound $1$. Replacing each $L_i^{\diamondsuit}$-formula  $\varphi_i^{\diamondsuit}$ with an equivalent $L_i$-formula, we obtain the desired result.
\end{proof}

\begin{rem}\label{rem:qe1}
As in Corollary~\ref{cor:qe}(1), we can replace the hypotheses of Proposition~\ref{prop:relqe1} with:  $T_\cap$ admits a stationary and extendable independence relation, and for all sets $A$ and all $i\in I$, $\acl_i(A) = \langle A\rangle_{L_i}$. The assumption $\acl_i(A) = \dcl_\cap(A)$ gives us something stronger, see Remark~\ref{rem:qe2} below. 
\end{rem} 

\noindent With a slightly stronger hypothesis, we can get true relative quantifier elimination down to $L_i$-formulas in $T_\cup^*$.

\begin{prop}
\label{prop:relqe2}
Assume $T_\cap$ admits a stationary and extendable independence relation. Suppose further that 
$$\acl_i(A) = \acl_\cap(A) \quad\text{and}\quad \Aut_{L_\cap}(\acl_\cap(A)/A) = \Aut_{L_i}(\acl_\cap(A)/A)$$ for all sets $A$ and all $i \in I$. Then:
\begin{enumerate}
    \item Every formula is $T_\cup^*$-equivalent to a Boolean combination of $L_i$-formulas.
    \item For any subset $A$ of a $T_\cup^*$-model $\sM_\cup$,  
$$T_\cup^* \cup \bigcup_{i\in I} \Th_{L_i(A)}(\sM_i) \models \Th_{L_\cup(A)}(\sM_\cup).$$
\end{enumerate}
\end{prop}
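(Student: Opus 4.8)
The plan is to follow the Morleyization strategy of Propositions~\ref{prop:relmc}--\ref{prop:relqe1} and then re-run the three-map amalgamation argument behind Theorem~\ref{thm:qe}, but over an \emph{arbitrary} set $A$ in place of an $L_\cup$-substructure. First I would Morleyize as in Section~\ref{sec:morleyization}: replace each $T_i$ by $T_i^\diamondsuit$, so that each $T_i^\diamondsuit$ has quantifier elimination and $(T_\cup^\diamondsuit)^*$ is the $\diamondsuit$-expansion of $T_\cup^*$. A definitional expansion changes neither algebraic closure nor the partial elementary maps between structures, hence changes neither the automorphism groups $\Aut(\acl_\cap(A)/A)$ nor the stationary and extendable independence relation on $T_\cap$; so the hypotheses of Theorem~\ref{thm:qe} hold for the Morleyized theories, and therefore $(T_\cup^\diamondsuit)^*$ has quantifier elimination --- equivalently, is substructure-complete. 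I would also record that, since $\acl_i(A)=\acl_\cap(A)$ for every set $A$, the set $\acl_\cap(A)$ is $\acl_i$-closed for every $i$, and hence is an $L_\cup$-substructure of any model containing it.

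Next I would prove (2). Let $A\subseteq\sM_\cup\models T_\cup^*$, let $\sN_\cup\models T_\cup^*$, and suppose $f\colon A\to N$ satisfies $\tp_{L_i}^{\sN_i}(f(A))=\tp_{L_i}^{\sM_i}(A)$ for every $i\in I$; I must show $\tp_{L_\cup}^{\sN_\cup}(f(A))=\tp_{L_\cup}^{\sM_\cup}(A)$. Let $\monster_\cup$ be a monster model of $\Th_{L_\cup}(\sN_\cup)$, so $\sN_\cup\prec\monster_\cup$, and put $B=\acl_\cap(A)$ (computed in $\sM_\cup$), so $B=\acl_i(A)$ for every $i$ and $B$ is an $L_\cup$-substructure. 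For each $i$, extend $f$ to an $L_i$-partial-elementary map on $B$ into $\monster_i$ (possible since $\tp_{L_i}^{\sM_i}(B/A)$ transported along $f$ is consistent); because partial elementary maps carry algebraic closures onto algebraic closures and $B=\acl_i(A)$, its image is $\acl_i(f(A))=\acl_\cap(f(A))=:B'$, the \emph{same} set for every $i$ (here the hypothesis is applied to both $A$ and $f(A)$). We thus get $L_i$-partial-elementary bijections $g_i\colon B\to B'$, all extending $f$. Fix $i_0\in I$ and set $h=g_{i_0}$. For any $i$, the bijection $g_i^{-1}\circ h\colon B\to B$ fixes $A$ pointwise and is partial elementary for $L_\cap$ (a composition of $L_\cap$-reducts of partial elementary maps), hence lies in $\Aut_{L_\cap}(B/A)=\Aut_{L_i}(B/A)$ by hypothesis; so $g_i^{-1}\circ h$ is $L_i$-partial-elementary, and therefore $h=g_i\circ(g_i^{-1}\circ h)$ is $L_i$-partial-elementary as well. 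Since $i$ was arbitrary, $h$ is an $L_i$-embedding for every $i$, hence an $L_\cup$-embedding (a map is an $L_\cup$-embedding iff it is an $L_i$-embedding for all $i$, by Remark~\ref{rem:flat}(2)), indeed an $L_\cup^\diamondsuit$-embedding of the $L_\cup^\diamondsuit$-substructure $B$. As $(T_\cup^\diamondsuit)^*$ is substructure-complete, $h$ is partial elementary for $L_\cup^\diamondsuit$, hence for $L_\cup$; restricting to $A$ gives $\tp_{L_\cup}^{\sM_\cup}(A)=\tp_{L_\cup}^{\monster_\cup}(h(A))=\tp_{L_\cup}^{\monster_\cup}(f(A))=\tp_{L_\cup}^{\sN_\cup}(f(A))$, which is (2).

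I would then deduce (1) from (2) by a standard type-space argument. Statement (2) says exactly that the restriction map $S_x(T_\cup^*)\to\prod_{i\in I}S_x(T_i)$ is injective; being a continuous injection between compact Hausdorff (Stone) spaces it is a homeomorphism onto a closed subspace. The clopen algebra of $\prod_{i\in I}S_x(T_i)$ is generated by the pullbacks of clopens from the individual factors $S_x(T_i)$, i.e.\ by the sets defined by single $L_i$-formulas; hence every clopen of $S_x(T_\cup^*)$, being the trace of a clopen of the product, is a finite Boolean combination of sets defined by $L_i$-formulas. Applying this to the clopen defined by an arbitrary $L_\cup$-formula $\psi(x)$ gives (1).

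The technical heart is the second paragraph: arranging a \emph{single} map $h$ on $\acl_\cap(A)$ which is simultaneously partial elementary for every $L_i$. This is precisely where the strengthened hypotheses --- $\acl_i(A)=\acl_\cap(A)$ and $\Aut_{L_\cap}(\acl_\cap(A)/A)=\Aut_{L_i}(\acl_\cap(A)/A)$ for \emph{all} sets $A$, not just for $L_\cup$-substructures --- are needed; under the weaker hypotheses of Theorem~\ref{thm:qe} and Proposition~\ref{prop:relqe1} this can be carried out only when $A$ is already an $L_\cup$-substructure, which is why those results yield quantifier elimination and relative syntactic bounds rather than a reduction to Boolean combinations of $L_i$-formulas. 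The remaining ingredients (the Morleyization bookkeeping, the fact that partial elementary maps carry algebraic closures onto algebraic closures, and the Stone-duality reformulation) are routine.
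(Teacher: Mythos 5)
Your argument is correct, but it takes a genuinely different route from the paper. The paper's proof is a two-line reduction: first replace every function symbol by a relation symbol for its graph (on top of Morleyization), so that in the resulting relational language every subset of a model is a substructure and every quantifier-free $L_\cup^\diamondsuit$-formula is automatically a Boolean combination of $L_i^\diamondsuit$-formulas; then the hypotheses ``for all sets'' become exactly the hypotheses ``for all substructures'' of Proposition~\ref{prop:relqe1}, and both (1) and (2) fall out of that proposition directly. You instead re-run the three-map amalgamation argument of Theorem~\ref{thm:qe} over an arbitrary set $A$ --- which does go through verbatim, since the only place Theorem~\ref{thm:qe} used that $A$ is a substructure was to invoke the hypotheses, and here they are assumed for all sets --- and then recover the syntactic statement (1) from the semantic statement (2) by the Stone-duality separation argument (disjoint closed images of $[\psi]$ and $[\lnot\psi]$ in $\prod_i S_x(T_i)$ are separated by a clopen, which is a finite Boolean combination of pullbacks of $L_i$-definable clopens). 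Both approaches are sound. The paper's buys brevity by piggybacking on Proposition~\ref{prop:relqe1}; yours is longer but self-contained, makes visible exactly where the strengthened ``for all sets'' hypothesis enters (namely, that $\acl_i(f(A))=\acl_\cap(f(A))$ must also hold for the \emph{image} set $f(A)$), and sidesteps the relational-language trick entirely --- which is incidentally the trick responsible for the caveat in Remark~\ref{rem:qe2} that the condition $\acl_i(A)=\langle A\rangle_{L_i}$ does not survive the passage to a relational language. One small economy you could make: rather than invoking quantifier elimination of $(T_\cup^\diamondsuit)^*$ at the last step, it suffices to use $\acl$-completeness from Theorem~\ref{thm:dap}, since $\acl_\cap(A)$ is $\acl_i$-closed for all $i$ and hence $\acl_\cup$-closed.
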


\begin{proof}
We first move to a relational language by replacing all function symbols by their graphs. Then we proceed just as in the proof of Proposition~\ref{prop:relqe1}, noting that when $L_\cup^\diamondsuit$ is relational, every subset of a $(T_\cup^{\diamondsuit})^*$-model is a substructure, and each quantifier-free $L_\cup^\diamondsuit$-formula is already a Boolean combination of $L_i^\diamondsuit$-formulas.
\end{proof}

\begin{rem}\label{rem:qe2}
Once again, as in Corollary~\ref{cor:qe}(2), we can replace the hypotheses of Proposition~\ref{prop:relqe2} with: $T_\cap$ admits a stationary and extendable independence relation, and $\acl_i(A) = \dcl_\cap(A)$ for all sets $A$ and all $i\in I$. The assumption $\acl_i(A) = \langle A\rangle_{L_i}$ does not suffice for this, because this condition is lost when passing to a relational language. 
\end{rem}


\subsection{Completeness} \label{sec:preservationcompleteness}
We view Theorem~\ref{thm:dap} and its avatar Proposition~\ref{prop:relaclcomp} as the main results of this section, since they give us useful tools for understanding $L_\cup$-definable sets and complete $L_\cup$-types, while only requiring mild hypotheses (in particular, they apply whenever $T_\cap$ is stable with weak elimination of imaginaries). For example, we obtain from Proposition~\ref{prop:relaclcomp} the following criterion for completeness of $T_\cup^*$.

\begin{cor}
\label{cor:completeness}
Assume $T_\cap$ admits a stationary and extendable independence relation.
Suppose each $T_i$ is complete and $\emptyset$ is $\acl_i$-closed for all $i \in I$.
Then $T^*_\cup$ is complete.
\end{cor}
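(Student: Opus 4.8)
The plan is to deduce completeness of $T^*_\cup$ from Proposition~\ref{prop:relaclcomp}(3), which reduces the complete $L_\cup$-type of an $\acl_\cup$-closed set to the union of its complete $L_i$-types, together with $T^*_\cup$. The key observation is that under the hypothesis that $\emptyset$ is $\acl_i$-closed for every $i \in I$, the combined closure $\ccl(\emptyset)$ is empty, and hence by Proposition~\ref{prop:relaclcomp}(1), $\acl_\cup(\emptyset) = \emptyset$ in any model of $T^*_\cup$. So $\emptyset$ is an $\acl_\cup$-closed subset of any $T^*_\cup$-model, and we can apply part (3) of that proposition with $A = \emptyset$.

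First I would note that $T_\cap$ admitting a stationary and extendable independence relation is exactly the hypothesis of Proposition~\ref{prop:relaclcomp}, so all three conclusions are available. Then, fixing any two models $\sM_\cup, \sN_\cup \models T^*_\cup$, I apply Proposition~\ref{prop:relaclcomp}(3) to each with $A = \emptyset$ (which is $\acl_\cup$-closed by the previous paragraph). This gives
\[
T^*_\cup \cup \bigcup_{i\in I} \Th_{L_i}(\sM_i) \models \Th_{L_\cup}(\sM_\cup),
\]
and similarly for $\sN_\cup$. Since each $T_i$ is complete, $\Th_{L_i}(\sM_i) = T_i = \Th_{L_i}(\sN_i)$ for all $i$. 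Therefore both $\Th_{L_\cup}(\sM_\cup)$ and $\Th_{L_\cup}(\sN_\cup)$ are entailed by the single theory $T^*_\cup \cup \bigcup_{i\in I} T_i = T^*_\cup \cup T_\cup = T^*_\cup$; being complete theories each containing $T^*_\cup$, they must coincide. Hence $T^*_\cup$ is complete.

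The only point requiring care — and the one I would spell out — is the verification that $\emptyset$ is $\acl_\cup$-closed, i.e. that $\ccl(\emptyset) = \emptyset$. Here one uses the explicit description of $\ccl$ from Section~\ref{sec:aclbcl}: $b \in \ccl(\emptyset)$ iff $b \in \acl_{i_n}(\cdots(\acl_{i_1}(\emptyset))\cdots)$ for some $i_1,\dots,i_n \in I$, and an easy induction on $n$ using $\acl_{i_k}(\emptyset) = \emptyset$ for each $k$ shows this set is empty. I do not expect any genuine obstacle here; the substance of the statement is entirely carried by Proposition~\ref{prop:relaclcomp}, and this corollary is essentially an unwinding of definitions applied at $A = \emptyset$.
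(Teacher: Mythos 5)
Your proof is correct and follows exactly the paper's own route: observe that $\emptyset$ is $\ccl$-closed hence $\acl_\cup$-closed by Proposition~\ref{prop:relaclcomp}(1), apply part (3) with $A=\emptyset$, and use completeness of each $T_i$ to see that $\bigcup_{i\in I}\Th_{L_i}(\sM_i)=T_\cup\subseteq T_\cup^*$, so $T_\cup^*$ entails the complete theory of any of its models. The only cosmetic difference is that you compare two models explicitly where the paper phrases the same conclusion for a single arbitrary model.
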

\begin{proof}
Since $\emptyset$ is $\acl_i$-closed for all $i\in I$, it is $\ccl$-closed, and hence $\acl_\cup$-closed by Proposition~\ref{prop:relaclcomp}(1). So for any model $\sM_\cup\models T_\cup^*$, by Proposition~\ref{prop:relaclcomp}(3), \[T_\cup^*\cup \bigcup_{i\in I} \Th(\sM_i)\models \Th(\sM_\cup).\]
Since each $T_i$ is complete, $\bigcup_{i\in I} \Th(\sM_i)= T_\cup\subseteq T_\cup^*$, so $T_\cup^*\models \Th(\sM_\cup)$. 
\end{proof}

\noindent In general, when $T_\cap$ admits a stationary and extendable independence relation and each $T_i$ is complete, a completion of $T_\cup^*$ is determined by the $L_\cup$-isomorphism type of $\acl_\cup(\emptyset) = \ccl(\emptyset)$ in any model. For example, this is what happens in $\ACFA$.


\medskip \noindent We conclude with two counterexamples indicating the sharpness of Theorem~\ref{thm:dap} and Proposition~\ref{prop:relaclcomp}. 
In the first example, $T_\cap$ is unstable with elimination of imaginaries, and $\eind[f]$ fails to be stationary. In the second example, $T_\cap$ is stable but fails weak elimination of imaginaries, and $\eind[f]$ is stationary but fails to be extendable. In both examples, $T_\cap$ does not admit any stationary and extendable independence relation, and we do not even get the result of Corollary~\ref{cor:completeness}.

\begin{example}
Let $L_\cap = \{\leq\}$ where $\leq$ is a binary relation symbol, and let $L_i$ be the expansion of $L_\cap$ by a unary predicate $P_i$ for $i \in \{1,2\}$. 
Let $T_\cap = \text{DLO}$, and for $i \in \{1,2\}$, let $T_i$ be the theory of a dense linear order equipped with a downwards closed set with an upper bound, but no least upper bound, defined by $P_i$. Note that $T_i$ is complete and $\emptyset$ is $\acl_i$-closed for $i\in \{1,2\}$. A model $\sM_\cup\models T_\cup$ is interpolative if and only if $P_1(\sM_\cup)\neq P_2(\sM_\cup)$, so $T_\cup^*$ exists. But a $T_\cup^*$-model either has $P_1(\sM_\cup) \subsetneq P_2(\sM_\cup)$ or $P_2(\sM_\cup) \subsetneq P_1(\sM_\cup)$, so $T_\cup^*$ is not complete.
\end{example}

\begin{example}\label{ex:eqrel}
Let $L_\cap = \{E\}$ where $E$ is a binary relation symbol, and let $L_i$ be the expansion of $L_\cap$ by a unary predicate $P_i$ for $i \in \{1,2\}$.
Let $T_\cap$ be the theory of an equivalence relation with infinitely many infinite classes.
For $i\in \{1,2\}$, let $T_i$ be the theory of a $T_\cap$-model with a distinguished equivalence class, defined by $P_i$.
Again, $T_i$ is complete and $\emptyset$ is $\acl_i$-closed for $i\in \{1,2\}$.
Every model of $T_\cup$ is interpolative, so $T_\cup^* = T_\cup$. But a $T_\cup^*$-model $\sM_\cup$ either has $P_1(\sM_\cup) = P_2(\sM_\cup)$ or $P_1(\sM_\cup)\neq P_2(\sM_\cup)$, so $T^{*}_\cup$ is not complete.
\end{example}

\section{Combinatorial tameness} \label{sec: combinatorialtameness}

\noindent Throughout this section, we fix the languages $L_\square$ and theories $T_\square$ for $\square\in I\cup \{\cup,\cap\}$, and we assume the interpolative fusion $T_\cup^*$ exists. 

\subsection{Stability and $\NIP$}\label{sec:stablenip}

Under very strong hypotheses, we can prove that stability and NIP are preserved by interpolative fusions.

\begin{prop}
\label{prop:stabnip}
Assume the hypotheses of Proposition~\ref{prop:relqe2}. If each $T_i$ is stable, then $T_\cup^*$ is stable. If each $T_i$ is $\NIP$, then $T_\cup^*$ is $\NIP$. 
\end{prop}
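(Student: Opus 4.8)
The plan is to deduce both statements from the relative quantifier elimination of Proposition~\ref{prop:relqe2}(1): under the hypotheses at hand, every $L_\cup$-formula is $T_\cup^*$-equivalent to a Boolean combination of $L_i$-formulas. We combine this with two standard facts: a theory $T$ is stable (respectively, $\NIP$) exactly when no partitioned formula has the order property (respectively, the independence property) in a model of $T$; and the partitioned formulas which are stable (respectively, $\NIP$) relative to a fixed theory form a class closed under Boolean combinations. Given these, it suffices to show that every partitioned $L_\cup$-formula is stable (respectively, $\NIP$) relative to $T_\cup^*$.

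So fix a partitioned $L_\cup$-formula $\psi(x;y)$. Applying Proposition~\ref{prop:relqe2}(1) to $\psi$, regarded as a formula in the single block of variables $xy$, produces a finite $J\subseteq I$ and $L_i$-formulas $\chi_i(x,y)$ for $i\in J$ such that $\psi(x,y)$ is $T_\cup^*$-equivalent to a Boolean combination of the $\chi_i(x,y)$; re-partitioning, $\psi(x;y)$ is $T_\cup^*$-equivalent to a Boolean combination of the partitioned $L_i$-formulas $\chi_i(x;y)$, $i\in J$. It is therefore enough to check that each $\chi_i(x;y)$ is stable (respectively, $\NIP$) relative to $T_\cup^*$. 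Suppose each $T_j$ is stable. Then every partitioned $L_i$-formula, in particular $\chi_i(x;y)$, is stable relative to $T_i$. If $\chi_i(x;y)$ had the order property relative to $T_\cup^*$, there would be a model $\sM_\cup\models T_\cup^*$ containing sequences witnessing it; since $\chi_i$ is an $L_i$-formula, these same sequences witness the order property of $\chi_i(x;y)$ in the reduct $\sM_i\models T_i$, a contradiction. Hence $\chi_i(x;y)$, and so $\psi(x;y)$, is stable relative to $T_\cup^*$; as $\psi$ was arbitrary, $T_\cup^*$ is stable. The $\NIP$ case is word-for-word identical with ``order property'' replaced throughout by ``independence property''.

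The argument is short once Proposition~\ref{prop:relqe2} is in hand, and the only nonroutine input is the closure of stable, respectively $\NIP$, partitioned formulas under Boolean combinations, which we take as known (for stability this is a Ramsey/compactness argument; for $\NIP$ it is the closure of finite-VC-dimension set systems under Boolean operations). The one point requiring care is that Proposition~\ref{prop:relqe2}(1) is phrased for formulas in a single tuple of variables, so one must re-partition after invoking it; note also that each $L_\cup$-formula mentions only finitely many symbols and hence only finitely many of the $L_i$, so the Boolean combinations that occur are always finite. This finiteness is what makes the proof go through uniformly in $I$, in contrast to a direct type-counting argument (which would be awkward when $I$ is infinite).
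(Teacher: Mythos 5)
Your proposal is correct and follows essentially the same route as the paper: the paper's proof is exactly the one-line observation that Proposition~\ref{prop:relqe2}(1) reduces every $L_\cup$-formula to a Boolean combination of $L_i$-formulas, and that stable (respectively $\NIP$) formulas are closed under Boolean combinations. Your additional care about re-partitioning variables and about each $\chi_i$ remaining stable/$\NIP$ relative to $T_\cup^*$ via the reduct is a correct elaboration of details the paper leaves implicit.
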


\begin{proof}
This follows from Proposition~\ref{prop:relqe2}(1), since any Boolean combination of stable formulas is stable, and any Boolean combination of $\NIP$ formulas is $\NIP$. 
\end{proof}

\noindent We can also use Proposition~\ref{prop:relqe2}(2) to count types.

\begin{prop}\label{prop:stableinkappa}
Assume the hypotheses of Proposition~\ref{prop:relqe2}, and let $\kappa$ be an infinite cardinal such that $\kappa^{|I|} = \kappa$. If each $T_i$ is stable in $\kappa$, then $T_\cup^*$ is stable in $\kappa$. 
\end{prop}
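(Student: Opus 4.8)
The plan is to count types directly, using the relative quantifier elimination of Proposition~\ref{prop:relqe2}(1). Fix a completion $\widehat{T}_\cup^*$ of $T_\cup^*$ with monster model $\monster_\cup$, and for $i\in I$ let $\monster_i$ be the $L_i$-reduct, a monster model of the completion $\widehat{T}_i:=\Th_{L_i}(\monster_i)$ of $T_i$; since $T_i$ is stable in $\kappa$, so is $\widehat{T}_i$. Let $A\subseteq M$ with $|A|\le\kappa$. By the (standard) definition of stability in $\kappa$, it suffices to show $|S^{L_\cup}_1(A)|\le\kappa$, where $S^{L_\cup}_1(A)$ is the set of complete $L_\cup(A)$-types in one variable consistent with $\widehat{T}_\cup^*$; the case of a finite tuple of variables is identical.

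By Proposition~\ref{prop:relqe2}(1), every $L_\cup$-formula $\psi(x,y)$ is $T_\cup^*$-equivalent to a Boolean combination of $L_i$-formulas $\chi_1(x,y),\dots,\chi_n(x,y)$, where each $\chi_k$ is an $L_{i_k}$-formula for some $i_k\in I$. Hence, for any $a\in A^y$, whether $\psi(x,a)$ belongs to a complete type $p\in S^{L_\cup}_1(A)$ is determined by which of the formulas $\chi_k(x,a)$ belong to $p$, and hence by the family of restrictions $p{\restriction}L_i:=\{\varphi(x)\in p:\varphi\text{ is an }L_i(A)\text{-formula}\}$. Each $p{\restriction}L_i$ is a complete $L_i(A)$-type consistent with $\widehat{T}_i$ (note $\widehat{T}_i\subseteq\widehat{T}_\cup^*$ since an $L_i$-sentence true in $\monster_i$ is true in $\monster_\cup$), so the assignment $p\mapsto(p{\restriction}L_i)_{i\in I}$ is an injection $S^{L_\cup}_1(A)\hookrightarrow\prod_{i\in I}S^{L_i}_1(A)$. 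Since each $\widehat{T}_i$ is stable in $\kappa$ and $|A|\le\kappa$, we have $|S^{L_i}_1(A)|\le\kappa$ for every $i$, and therefore
\[
|S^{L_\cup}_1(A)|\;\le\;\prod_{i\in I}|S^{L_i}_1(A)|\;\le\;\kappa^{|I|}\;=\;\kappa,
\]
using the hypothesis $\kappa^{|I|}=\kappa$. As $A$ was an arbitrary set of size at most $\kappa$, $T_\cup^*$ is stable in $\kappa$.

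There is no substantial obstacle here: all the real work has been done in Proposition~\ref{prop:relqe2}, and what remains is the standard ``the fusion has few types because each reduct does'' counting, exactly as in the proof of Proposition~\ref{prop:stabnip}. The only points requiring (minor) care are that a restriction of a complete $L_\cup(A)$-type to $L_i$ is again a complete $L_i(A)$-type consistent with the correct completion of $T_i$, and the cardinal arithmetic step $\kappa^{|I|}=\kappa$, which is the reason this hypothesis is imposed in the statement and is needed precisely because $I$ may be infinite.
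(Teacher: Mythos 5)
Your proof is correct and follows essentially the same route as the paper's: the paper cites Proposition~\ref{prop:relqe2}(2) directly to conclude that a complete $L_\cup$-type over $A$ is determined by its $L_i$-restrictions, whereas you derive the same injection $S^x_{L_\cup}(A)\hookrightarrow\prod_{i\in I}S^x_{L_i}(A)$ from part (1); the counting step $\prod_{i\in I}|S^x_{L_i}(A)|\le\kappa^{|I|}=\kappa$ is identical.
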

\begin{proof}
Let $A$ be a subset of a model $\sM_\cup\models T_\cup^*$ such that $|A|\leq \kappa$. We would like to understand the size of $S^x_{L_\cup}(A)$, the space of $L_\cup$-types in the finite variable context $x$ over $A$. By Proposition~\ref{prop:relqe2}(2), a type in $S^x_{L_\cup}(A)$ is completely determined by its restrictions to $L_i$-types in $S^x_{L_i}(A)$ for all $i\in I$. Since $T_i$ is stable in $\kappa$, we have $|S^x_{L_i}(A)|\leq \kappa$, and $|S^x_{L_\cup}(A)| \leq \prod_{i\in I} |S^x_{L_i}(A)| \leq \kappa^{|I|} = \kappa$. So $T_\cup^*$ is stable in $\kappa$.
\end{proof}

\noindent Since $\kappa^{|I|} = \kappa$ for all infinite $\kappa$ when $I$ is finite, we obtain the following corollary. 

\begin{cor}
Assume the hypotheses of Proposition~\ref{prop:relqe2}, and further assume $|I|$ is finite. If each $T_i$ is $\aleph_0$-stable, then $T_\cup^*$ is $\aleph_0$-stable. If each $T_i$ is superstable, then $T_\cup^*$ is superstable.
\end{cor}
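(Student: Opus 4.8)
The plan is to obtain both statements as immediate consequences of Proposition~\ref{prop:stableinkappa}, using the finiteness of $I$ to eliminate the cardinal-arithmetic side condition. Since $|I|$ is finite, $\kappa^{|I|} = \kappa$ holds for every infinite cardinal $\kappa$; hence, under the hypotheses of Proposition~\ref{prop:relqe2}, Proposition~\ref{prop:stableinkappa} applies to every infinite $\kappa$ and tells us that if each $T_i$ is stable in $\kappa$, then so is $T_\cup^*$.

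First I would handle $\aleph_0$-stability: if each $T_i$ is $\aleph_0$-stable, i.e.\ stable in $\aleph_0$, then applying the above with $\kappa = \aleph_0$ shows that $T_\cup^*$ is stable in $\aleph_0$, i.e.\ $\aleph_0$-stable. For superstability I would use the standard characterization that a theory $T$ is superstable if and only if it is $\lambda$-stable for all $\lambda \geq 2^{|T|}$. If each $T_i$ is superstable, then each $T_i$ is stable in all $\lambda \geq 2^{|T_i|}$; and since $L_i \subseteq L_\cup$ we have $|T_i| \leq |T_\cup^*|$, so each $T_i$ is in fact stable in all $\lambda \geq 2^{|T_\cup^*|}$. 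Applying Proposition~\ref{prop:stableinkappa} once more, $T_\cup^*$ is stable in all $\lambda \geq 2^{|T_\cup^*|}$, hence superstable.

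There is no genuine obstacle here, since the whole argument reduces to invoking Proposition~\ref{prop:stableinkappa}. The only detail worth stating explicitly is the alignment of cardinal ranges in the superstable case: the trivial bound $|T_i| \leq |T_\cup^*|$, coming from $L_i \subseteq L_\cup$, lets $2^{|T_\cup^*|}$ serve as a uniform threshold above which every $T_i$ --- and therefore $T_\cup^*$ --- is stable.
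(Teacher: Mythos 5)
Your proof is correct and follows exactly the route the paper intends: the corollary is stated immediately after the observation that $\kappa^{|I|}=\kappa$ for all infinite $\kappa$ when $I$ is finite, so it is just Proposition~\ref{prop:stableinkappa} applied at $\kappa=\aleph_0$ and, via the characterization of superstability as stability in all $\lambda\geq 2^{|T|}$, at all sufficiently large $\lambda$. Your explicit handling of the threshold $2^{|T_\cup^*|}$ in the superstable case is a detail the paper leaves implicit, and it is handled correctly.
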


\medskip\noindent We do not expect to obtain preservation of stability or NIP without strong restrictions on $\acl$, as in the hypotheses of Proposition~\ref{prop:relqe2}. The proofs of Propositions~\ref{prop:stabnip} and ~\ref{prop:stableinkappa} do not apply to other classification-theoretic properties such as simplicity, $\NSOP_1$, and NTP$_2$, as these properties are not characterized by counting types, and formulas with these properties are not closed under Boolean combinations in general. Nevertheless, in the subsequent sections we obtain preservation results for simplicity and $\NSOP_1$ under more general hypotheses.

\subsection{$\NSOP_1$}\label{sec:NSOP1}
Toward proving that $T_\cup^*$ is $\NSOP_1$, we define an independence relation $\ind$ on subsets of any monster model $\monster_\cup\models T_\cup^*$. Then we seek to apply Fact~\ref{fact:char} to show that $T_\cup^*$ is $\NSOP_1$ and $\ind$ is Kim-independence. 



\medskip \noindent Assume each $T_i$ is $\NSOP_1$, and let $\monster_\cup\models T_\cup^*$ be a monster model (the choice of $\monster_\cup$ amounts to the choice of a completion of $T_\cup^*$). For all $A,B\subseteq \monsterset$ and $\sM_\cup\preceq \monster_\cup$, we define:
$$A\ind_M B \quad \text{if and only if} \quad \acl_\cup(MA) \eind[K]_M \acl_\cup(MB) \text{ in } \monster_i  \text{ for all }  i \in I.$$
This definition is motivated by the following considerations. If $\Th(\monster_\cup)$ is $\NSOP_1$, then $A\eind[K]_M B$ implies $\acl_\cup(MA)\eind[K]_M \acl_\cup(MB)$ in $\monster_\cup$ by Fact~\ref{fact:kimacl}. Then by Lemma~\ref{lem:kimreduct}, we have $\acl_\cup(MA)\eind[K]_M \acl_\cup(MB)$ in $\monster_i$ for all $i$. Conversely, it is reasonable to hope that Kim-forking between $\acl_\cup(MA)$ and $\acl_\cup(MB)$ in some $\monster_i$ is the only source of Kim-forking between $A$ and $B$ in $\monster_\cup$.

\medskip \noindent In fact, with the exception of the independence theorem over models, all of the properties required by Fact~\ref{fact:char} follow easily for $\ind$.

\begin{prop}\label{prop:properties}
Assume each $T_i$ is $\NSOP_1$. Then $\ind$ satisfies invariance, existence, monotonicity, symmetry, and strong finite character. If $\Th(\monster_\cup)$ is $\NSOP_1$, then $\ind$ also satisfies witnessing.
\end{prop}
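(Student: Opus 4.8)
The plan is to verify each clause of Fact~\ref{fact:char} by unwinding the definition of $\ind$ and appealing to the corresponding property of Kim-independence in the reducts $\monster_i$; this is legitimate since each $\Th(\monster_i)$ is a completion of the $\NSOP_1$ theory $T_i$, so (by the remark after Fact~\ref{fact:char}) $\eind[K]$ in $\monster_i$ has invariance, existence, monotonicity, symmetry, strong finite character and witnessing. Invariance is immediate because an $L_\cup$-automorphism of $\monster_\cup$ witnessing $MAB\equiv_{L_\cup}M'A'B'$ restricts to an automorphism of each $\monster_i$, preserves the class of elementary submodels, and commutes with $\acl_\cup$. Existence follows since models of $T_\cup^*$ are $\acl_\cup$-closed, so $\acl_\cup(MM)=M$ and $A\ind_M M$ unwinds to $\acl_\cup(MA)\eind[K]_M M$ in each $\monster_i$. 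Monotonicity and symmetry are immediate from the same properties of $\eind[K]$ in $\monster_i$, applied to $\acl_\cup(MA')\subseteq\acl_\cup(MA)$, $\acl_\cup(MB')\subseteq\acl_\cup(MB)$ and to swapping sides.

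For strong finite character, first note $\ind$ has finite character: $A\ind_M B$ iff $a\ind_M b$ for all finite $a\subseteq A$, $b\subseteq B$ (the nontrivial direction uses finite character of Kim-forking in $\monster_i$ to get finite witnessing tuples $d\in\acl_\cup(MA)$, $e\in\acl_\cup(MB)$, then finite character of $\acl_\cup$ to realize these inside $\acl_\cup(Ma_0)$, $\acl_\cup(Mb_0)$ for finite $a_0\subseteq A$, $b_0\subseteq B$). Now suppose $A\nind_M B$, fix $i$ with $\acl_\cup(MA)$ Kim-dependent on $\acl_\cup(MB)$ over $M$ in $\monster_i$, and use strong finite character of $\eind[K]$ in $\monster_i$ to get an $L_i$-formula $\psi(x,e,m)$ (with $e$ from $\acl_\cup(MB)$, $m$ from $M$), satisfied by a finite $d\subseteq\acl_\cup(MA)$, all of whose realizations are Kim-dependent on $e$ over $M$ in $\monster_i$. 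Choose a finite $a_0\subseteq A$ with $d\in\acl_\cup(Ma_0)$ via an algebraic $L_\cup$-formula $\theta(x,a_0,m_0)$ with, say, $k$ solutions, a finite $b_0\subseteq B$ with $e\in\acl_\cup(Mb_0)$, and an $L_\cup$-formula $\rho(y,b_0,m_1)$ defining the finite set of $L_\cup$-conjugates of $e$ over $Mb_0$; then
\[\varphi(u):=\exists x\,\exists y\,\bigl(\theta(x,u,m_0)\wedge\exists^{\leq k}x'\,\theta(x',u,m_0)\wedge\rho(y,b_0,m_1)\wedge\psi(x,y,m)\bigr)\]
is in $\tp_{L_\cup}(a_0/MB)$, and any realization $c$ of $\varphi$ has its $x$-witness $d'$ in $\acl_\cup(Mc)$ (by the $\exists^{\leq k}$ clause) and its $y$-witness $e'$ among the $Mb_0$-conjugates of $e$, hence in $\acl_\cup(Mb_0)$ and $M$-conjugate to $e$; invariance of $\eind[K]$ in $\monster_i$ makes $d'$ Kim-dependent on $e'$ over $M$, and monotonicity upgrades this to $\acl_\cup(Mc)\enind[K]_M\acl_\cup(Mb_0)$ in $\monster_i$, i.e.\ $c\nind_M b_0$.

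For witnessing, now assume $\Th(\monster_\cup)$ is $\NSOP_1$. Run the same construction but take $\psi(x,e,m)$ to Kim-fork --- equivalently (Fact~\ref{fact:kimforking}), Kim-divide --- over $M$ in $\monster_i$, using witnessing for $\eind[K]$ there; by Lemma~\ref{lem:kimreduct}(2), available because $\Th(\monster_\cup)$ is $\NSOP_1$, $\psi(x,e,m)$ then Kim-divides over $M$ in $\monster_\cup$, and so does $\psi(x,e',m)$ for every $M$-conjugate $e'$ of $e$, by invariance of Kim-dividing. Writing $e^{(1)},\dots,e^{(r)}$ for the conjugates named by $\rho$, the formula $\varphi(u)$ above equals $\bigvee_{l\leq r}\xi(u,e^{(l)})$ in $\monster_\cup$, where $\xi(u,y):=\exists x\,(\theta(x,u,m_0)\wedge\exists^{\leq k}x'\,\theta(x',u,m_0)\wedge\psi(x,y,m))$, and each $\xi(u,e^{(l)})$ Kim-divides over $M$ in $\monster_\cup$: given a Morley sequence $(e^{(l)}_n)_n$ over $M$ for an $M$-invariant type witnessing the Kim-dividing of $\psi(x,e^{(l)},m)$, a realization of all $\xi(u,e^{(l)}_n)$ would (by the uniform bound $k$ and pigeonhole) satisfy $k'$ members of the $k'$-inconsistent family $\{\psi(x,e^{(l)}_n,m)\}_n$, which is absurd. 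So $\varphi(u)$ is a finite disjunction of formulas Kim-dividing over $M$, hence Kim-forks over $M$, and lies in $\tp_{L_\cup}(a_0/MB)$, as required.

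The main obstacle is the parameter bookkeeping in the last two parts: the formulas delivered by strong finite character and witnessing in $\monster_i$ carry parameters from $\acl_\cup(MB)$, not from $MB$, so they cannot be placed in $\tp_{L_\cup}(A/MB)$ directly. The two devices above --- bounding the existential witnesses to keep them in the algebraic closure of finite parameter tuples, and replacing $e$ by the $Mb_0$-definable set of its conjugates so that invariance of Kim-(in)dependence transfers uniformly --- are what resolve this; the rest is a direct translation through the definition of $\ind$.
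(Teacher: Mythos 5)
Your proposal is correct. The treatment of invariance, existence, monotonicity, symmetry, and strong finite character coincides with the paper's: the witness formula you build --- an algebraic formula bounding the $x$-witness into $\acl_\cup(Mc)$ via the $\exists^{\leq k}$ clause, conjoined with a formula pinning the $y$-witness to the finitely many conjugates of $e$ over the parameters from $B$, wrapped around the Kim-dependence formula from $\monster_i$ --- is, up to renaming, exactly the formula $\chi(x,b,m)$ in the paper, and the verification that every realization is $\nind$-dependent on the parameters proceeds identically. Where you genuinely diverge is the witnessing clause. The paper argues by contradiction: if $\chi$ did not Kim-fork over $M$ in $\monster_\cup$, compactness yields a complete non-Kim-forking extension over $Mb$, a realization $e$ of which satisfies $\acl_\cup(Me)\eind[K]_M\acl_\cup(Mb)$ by Fact~\ref{fact:kimacl}, contradicting the existence of the witnesses inside these closures. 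You instead prove Kim-forking directly, decomposing the witness formula as a disjunction over the finitely many conjugates $e^{(1)},\dots,e^{(r)}$ and showing each disjunct Kim-divides by a pigeonhole argument: along a Morley sequence witnessing the ($k'$-)inconsistency of the instances of $\psi$, the $\leq k$ possible bounded witnesses force some single witness to realize $k'$ of them. Both routes use Lemma~\ref{lem:kimreduct} to transfer Kim-dividing of the $L_i$-formula up to $\monster_\cup$. The paper's argument is shorter and leans on the established fact that Kim-independence is blind to algebraic closure; yours is more constructive, exhibiting the explicit Kim-dividing disjuncts and avoiding the compactness detour, at the cost of the combinatorial bookkeeping. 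Both are sound.
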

\begin{proof}
\emph{Invariance, existence, monotonicity, symmetry:} Clear from the definition, using the corresponding properties of Kim-independence in each $\monster_i$. 

\emph{Strong finite character:} Suppose $A\nind_M B$. Then for some $i\in I$, we have $\acl_\cup(MA)\enind[K]_M \acl_\cup(MB)$ in $\monster_i$. So there is some $a'\in \acl_\cup(MA)$ and $b'\in \acl_\cup(MB)$ such that $a'\enind[K]_M b'$ in $\monster_i$. Let $\varphi(x',b',m)$ be an $L_i$-formula in $\tp_{L_i}(a'/Mb')$ which Kim-forks over $M$ in $\monster_i$, let $\psi(x',a,m)$ be an $L_\cup$-formula isolating the algebraic type $\tp_{L_\cup}(a'/MA)$, and let $\theta(y',b,m)$ be an $L_\cup$-formula isolating the algebraic type  $\tp_{L_\cup}(b'/MB)$. Note that by replacing $\psi$ with $\psi(x',a,m)\land (\exists^{\leq k} x'\, \psi(x',a,m))$ for some $k$, we may assume $\psi(x',c,m)$ has only finitely many realizations for any $c$.

We claim that the following formula $\chi(x,b,m)$ witnesses strong finite character: $$\exists x'\,\exists y'\,\left[\varphi(x',y',m)\land \psi(x',x,m)\land \theta(y',b,m)\right].$$

Certainly we have $\chi(x,b,m)\in \tp_{L_\cup}(A/MB)$. Suppose we are given $c$ such that $\monster_\cup\models \chi(c,b,m)$. Then picking witnesses $c'$ and $b''$ for the existential quantifiers, we have that $c'\in \acl_\cup(Mc)$ (since $\monster_\cup\models \psi(c',c,m)$) and $b''\in \acl_\cup(Mb)$ (since $\monster_\cup\models \theta(b'',b,m)$). Further, $b''\equiv_{MB} b'$, so $\varphi(x',b'',m)$ Kim-forks over $M$ in $\monster_i$. Since $\monster_\cup\models \varphi(c',b'',m)$, we have $c'\enind[K]_M b''$ in $\monster_i$, so $c\nind_M b$.

\emph{Witnessing:} For this property, we assume $\Th(\monster_\cup)$ is $\NSOP_1$. Suppose again $A\nind_M B$. We use the same notation as in the proof of strong finite character, and we seek to show that $\chi(x,b,m)$ Kim-forks over $M$ in $\monster_\cup$. 

If not, then by compactness we can find a complete $L_\cup$-type $p(x)$ over $Mb$ which contains $\chi(x,b,m)$ but does not Kim-fork over $M$. Let $e$ realize this type. Then we have $e\eind[K]_M b$ in $\monster_\cup$, so by Fact~\ref{fact:kimacl}, $\acl_\cup(Me)\eind[K]_M \acl_\cup(Mb)$ in $\monster_\cup$. But since $\monster_\cup\models \chi(e,b,m)$, there is some $e'\in \acl_\cup(Me)$ and some $b''\in \acl_\cup(Mb)$ such that $\monster_\cup\models \varphi(e',b'',m)$. This is a contradiction, since by Lemma~\ref{lem:kimreduct} and the fact that $\tp_{L_\cup}(b''/M) = \tp_{L_\cup}(b'/M)$, $\varphi(x',b'',m)$ Kim-forks over $M$ in $\monster_\cup$. 
\end{proof}

\noindent It remains to show that $\ind$ satisfies the independence theorem, which could also be called ``independent $3$-amalgamation over models''. In Theorem~\ref{thm:dap} above, we used stationarity of an independence relation in $T_\cap$ to establish disjoint $2$-amalgamation over algebraically closed sets. For the independence theorem, we need to appeal to a strengthening of stationarity, namely $3$-uniqueness, which holds over models in stable theories. 


\medskip\noindent Assume $T$ is stable. Let $B,A_1,A_2,A_3\subseteq \monsterset$ be sets. We say $A_1$, $A_2$, $A_3$ is an \textbf{independent triple over $B$} if $A_2\eind[f]_B A_1$ and $A_3 \eind[f]_B A_1A_2$. By basic properties of forking independence in stable theories (symmetry, base monotonicity, and transitivity), it follows that whenever $\{i,j,k\} = \{1,2,3\}$:
$$A_i\eind[f]_B A_jA_k \quad \text{and}\quad A_i\eind[f]_{BA_j}A_k.$$

\medskip \noindent Now assume the stable theory $T$ has elimination of imaginaries and $B$ is $\acl$-closed. Suppose $A_1$, $A_2$, $A_3$ is an independent triple over $B$. For each $i\in \{1,2,3\}$, let $a_i$ be a tuple enumerating $A_i$. The three types $\tp(a_i/B)$ for $i\in \{1,2,3\}$ uniquely determine $\tp(a_1a_2a_3/B)$ in the following sense. If $A_1'$, $A_2'$, $A_3'$ is another independent triple over $B$, with each $A_i'$ enumerated by a tuple $a_i'$, and if $\tp(a_i/B) = \tp(a_i'/B)$ for all $i\in \{1,2,3\}$, then by stationarity of $\eind[f]$ over $B$, $\tp(a_1a_2a_3/B) = \tp(a_1'a_2'a_3'/B)$. 

\medskip\noindent The analogous statement may not hold if we consider the types of the sets $\acl(A_iA_j)$. Fix again an independent triple $A_1$, $A_2$, $A_3$ over and $\acl$-closed set $B$, and assume that for each $i\in \{1,2,3\}$, $A_i$ is algebraically closed and $B\subseteq A_i$.  For all $i\in \{1,2,3\}$, let $a_i$ be a tuple enumerating $A_i$, and for all $1\leq i<j\leq 3$, let $a_{ij}$ be a tuple extending $a_i$ and $a_j$ and enumerating $\acl(A_iA_j)$. Then we say $T$ has \textbf{$3$-uniqueness over $B$} if for any other independent triple $A_1'$, $A_2'$, $A_3'$ over $B$, with each $A_i$ enumerated by a tuple $a'_i$ and each $\acl(A_iA_j)$ enumerated by a tuple $a'_{ij}$ extending $a_i'$ and $a_j'$, if $\tp(a_i/B) = \tp(a_i'/B)$ for all $i\in \{1,2,3\}$ and $\tp(a_{ij}/B) = \tp(a'_{ij}/B)$ for all $1\leq i < j \leq 3$, then $\tp(a_{12}a_{13}a_{23}/B) = \tp(a_{12}'a_{13}'a_{23}'/B)$. 


\begin{fact}[{\cite[Proposition 1.6(2)]{DePiroKimMillar}}]\label{fact:3uniqueness}
Every stable theory with elimination of imaginaries has $3$-uniqueness over models. 
\end{fact}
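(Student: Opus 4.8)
The plan is to recast $3$-uniqueness over the model $M$ as an amalgamation statement and to build the required automorphism of $\monster$ in stages. I would work inside $\monster^{\eq}$; the hypothesis of elimination of imaginaries guarantees that the canonical bases appearing below are (interdefinable with) tuples from the home sorts, and in particular that types over $\acl$-closed subsets of the home sorts are stationary, so the whole argument can be carried out there. Concretely, given two independent triples $A_1,A_2,A_3$ and $A_1',A_2',A_3'$ over $M$ --- all $\acl$-closed and containing $M$ --- with $\tp(a_i/M) = \tp(a_i'/M)$ for $i \in \{1,2,3\}$ and $\tp(a_{ij}/M) = \tp(a_{ij}'/M)$ for $1 \le i < j \le 3$, the goal is to produce $\sigma \in \Aut(\monster/M)$ with $\sigma(a_{ij}) = a_{ij}'$ for all $i<j$; since each $a_{ij}$ extends $a_i$ and $a_j$, this yields $\tp(a_{12}a_{13}a_{23}/M) = \tp(a_{12}'a_{13}'a_{23}'/M)$.

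The first reductions are routine. The ``$2$-uniqueness'' remark already recorded in the text --- stationarity of $\eind[f]$ over $M$ forces $\tp(a_1a_2a_3/M) = \tp(a_1'a_2'a_3'/M)$, the independent triple being the nonforking amalgam of the $\tp(a_i/M)$ --- lets us assume, after an automorphism over $M$, that $a_i = a_i'$ for all $i$. A further automorphism over $M$ fixing the $a_i$ arranges $a_{12} = a_{12}'$; and then, combining the independencies $\acl(A_1A_3)\eind[f]_{A_1}\acl(A_1A_2)$ and $\acl(A_1A_3')\eind[f]_{A_1}\acl(A_1A_2)$ (which follow from $A_3,A_3'\eind[f]_M A_1A_2$ by the forking calculus) with stationarity over the $\acl$-closed set $A_1$ of the common type $\tp(a_{13}/A_1) = \tp(a_{13}'/A_1)$, one obtains $\tp(a_{13}/\acl(A_1A_2)) = \tp(a_{13}'/\acl(A_1A_2))$, so a third automorphism fixing $\acl(A_1A_2)$ arranges $a_{13} = a_{13}'$ as well. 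After these steps five of the six sets coincide, and $\acl(A_2A_3)$ sits inside $\acl\bigl(\acl(A_1A_2)\cup\acl(A_1A_3)\bigr)$.

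The remaining step --- arranging $a_{23} = a_{23}'$ --- is the crux, and the step I expect to be the real obstacle. The two enumerations of $\acl(A_2A_3)$ agree on $A_2\cup A_3$, so they differ by an element of the ``$2$-amalgamation group'' $\Aut\bigl(\acl(A_2A_3)/A_2A_3\bigr)$, and one must show this discrepancy is absorbed by the automorphisms still available after the earlier stages --- equivalently, that the relevant ``gluing'' datum over $M$ is trivial. This is exactly where it matters both that $T$ is stable and that $M$ is a model: over a model the canonical bases entering that datum lie in $\dcl^{\eq}(M)$, which rules out a nontrivial obstruction, whereas over an $\acl$-closed non-model (even in a stable theory) or in an unstable theory the obstruction can be genuinely nonzero --- it is what a nonsplit finite definable groupoid encodes. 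Carrying out this core combinatorial argument is precisely \cite[Proposition 1.6(2)]{DePiroKimMillar}, which I would invoke rather than reprove; full elimination of imaginaries is used exactly to locate the relevant canonical bases among the real elements so that the whole argument takes place in the home sorts.
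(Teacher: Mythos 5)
Your proposal is correct and matches the paper's treatment: the paper does not prove this statement but simply cites \cite[Proposition 1.6(2)]{DePiroKimMillar} (noting in the surrounding discussion that the core point is exactly the one you identify, namely that nonforking types over a model are finitely satisfiable there, which kills the groupoid obstruction). Your preliminary stationarity reductions are correct but the crux is still deferred to the same external reference, so the two approaches coincide.
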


\noindent In its general form, $3$-uniqueness was introduced by Hrushovski in~\cite{HGroupoids}. The  reference~\cite{DePiroKimMillar} cited for Fact~\ref{fact:3uniqueness} actually shows that stable theories satisfy a stronger property, called $n$-complete amalgamation, over models. In the special case of $3$-uniqueness, the result follows easily from~\cite[Lemma 4.2]{HGroupoids} together with the fact that in a stable theory, any type which does not fork over a model $M$ is finitely satisfiable in $M$.

\medskip  \noindent We are now ready to proceed with the main theorem. See Section~\ref{sec:indreduct} for the definition of the $T_\cap$-generic independence theorem. 

\begin{thm}\label{thm:preservation}
Assume $T_\cap$ is stable, each $T_i$ is $\NSOP_1$, and $\eind[K]$ satisfies the $T_\cap$-generic independence theorem in each $T_i$. Then $T^*_\cup$ is $\NSOP_1$ and $\ind = \eind[K]$.
\end{thm}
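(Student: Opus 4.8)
The plan is to verify the hypotheses of Fact~\ref{fact:char} for the relation $\ind$ defined just before Proposition~\ref{prop:properties}. By Proposition~\ref{prop:properties}, invariance, existence, monotonicity, symmetry, and strong finite character already hold, so the only thing left is the independence theorem for $\ind$ over models; once we have that, Fact~\ref{fact:char} gives that $T^*_\cup$ is $\NSOP_1$, and then Proposition~\ref{prop:properties} also supplies witnessing, so Fact~\ref{fact:char} additionally yields $\ind = \eind[K]$. Thus the entire content of the proof is the verification of the independence theorem for $\ind$.

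So suppose $\sM_\cup \preceq \monster_\cup$ with underlying set $M$, and $A \equiv_M A'$, $A \ind_M B$, $A'\ind_M C$, $B \ind_M C$. First I would reduce to the case where $A$, $A'$, $B$, $C$ are $\acl_\cup$-closed and each contains $M$: by Fact~\ref{fact:kimacl} (applied in each $\monster_i$) the relation $\ind$ only depends on $\acl_\cup(M-)$, and using Proposition~\ref{prop:relaclcomp}(1) (valid since $T_\cap$ is stable, hence admits a stationary and extendable independence relation by Proposition~\ref{prop:stationarity} after passing to $\monster_\cap^\eq$ if needed) we have $\acl_\cup = \ccl$, so these closures are controlled by the $\acl_i$. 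Unwinding the definition of $\ind$, the hypotheses say: for each $i\in I$, $A\eind[K]_M B$, $A'\eind[K]_M C$, and $B\eind[K]_M C$ in $\monster_i$ (where now $A,B,C$ denote the $\acl_\cup$-closed sets). Now apply the $T_\cap$-generic independence theorem in $T_i$, for each $i$ separately, to produce $A''_i$ with $A''_i \equiv^{L_i}_B A$, $A''_i \equiv^{L_i}_C A'$, $A''_i \eind[K]_M BC$ in $\monster_i$, and the three stable-reduct forking independencies among $\acl_i(A''_i B)$, $\acl_i(A''_i C)$, $\acl_i(BC)$ over the respective two-element unions. The key point is that these $A''_i$ must be amalgamated into a single $A''$ realizing all the $L_i$-types simultaneously; this is where $3$-uniqueness of $T_\cap$ over models (Fact~\ref{fact:3uniqueness}) enters.

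Here is how I expect the amalgamation to go, and this is the main obstacle. Each $A''_i$ determines an $L_i$-type over $BC$ (indeed over $\acl_\cup(BC)$, using that $A''_i$ can be taken $\acl_i$-closed), and by construction each restricts to the \emph{same} $L_\cap$-type over $M$ as $A$ does; but to invoke Proposition~\ref{prop:realizingtypes} (or Fact~\ref{fact:consistency}) and glue the $L_i$-types $q_i := \tp_{L_i}(A''_i/\acl_\cup(BC))$ into a consistent union, I need them to agree on their common $L_\cap$-reduct over $\acl_\cup(BC)$ — not just over $M$. This is exactly the statement that the $L_\cap$-type of $A''$ over $\acl_\cup(BC)$ is determined by the data we control, and it is forced by $3$-uniqueness: $\acl_\cup(BC) = \acl_\cap(\acl_\cup(AB)\acl_\cup(AC)\acl_\cup(BC))$-style identifications, together with the forking independencies $\acl'(A''B)\eind[f]_{A''B}\acl'(A''C)\acl'(BC)$ etc.\ in the stable reduct, put $(\acl_\cap(A''B), \acl_\cap(A''C), \acl_\cap(BC))$ into the configuration governed by $3$-uniqueness over the model $M$; since the $L_\cap$-types of the pairwise closures $\acl_\cap(A''B)$ (determined by $A''\equiv_B A$), $\acl_\cap(A''C)$ (determined by $A''\equiv_C A'$, using $A\equiv_M A'$), and $\acl_\cap(BC)$ (fixed) are all pinned down, $3$-uniqueness pins down the joint $L_\cap$-type, and in particular the $L_\cap$-type of $A''$ over $\acl_\cup(BC)$ is the same no matter which $i$ we used to build it. Hence the $q_i$ have a common $L_\cap$-reduct, Proposition~\ref{prop:realizingtypes} produces a common realization $A''$ in an elementary extension of $\monster_\cup$ (which we may take inside $\monster_\cup$ by saturation), and this $A''$ satisfies $A''\equiv_B A$, $A''\equiv_C A'$ in $L_\cup$, while $A''\eind[K]_M BC$ in each $\monster_i$ gives $A''\ind_M BC$. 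I would organize the messy middle — matching up $\acl_\cup$ of two-element unions with $\acl_\cap$ of products via Proposition~\ref{prop:relaclcomp} and the relative disintegration-type identities, and checking the forking configuration really is an independent triple over $M$ — as the technical heart of the argument, and I expect getting the bookkeeping of which closures equal which to be the genuinely delicate part.
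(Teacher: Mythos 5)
Your overall skeleton matches the paper's: reduce via Proposition~\ref{prop:properties} and Fact~\ref{fact:char} to the independence theorem for $\ind$, close everything under $\acl_\cup$, apply the $T_\cap$-generic independence theorem in each $\monster_i$ to get $A''_i$, and then use $3$-uniqueness over $M$ in the stable reduct to match the $L_\cap$-data so that Proposition~\ref{prop:realizingtypes} can glue. The gap is in \emph{what you choose to glue}. You propose to amalgamate the types $q_i=\tp_{L_i}(A''_i/\acl_\cup(BC))$ and then conclude $A''\equiv_{MB}A$ and $A''\equiv_{MC}A'$ in $L_\cup$. But a common realization of the $q_i$ only gives $\tp_{L_i}(A''/MB)=\tp_{L_i}(A/MB)$ for each $i$, and the set $A''B$ (a union of two $\acl_\cup$-closed sets) is not itself $\acl_\cup$-closed; Proposition~\ref{prop:relaclcomp}(3) only lets you recover an $L_\cup$-type from its $L_i$-reducts on $\acl_\cup$-\emph{closed} sets, and Example~\ref{ex:eqrel} and Section~\ref{section:blow-up} show that hypothesis cannot be dropped. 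This is exactly why the paper's proof amalgamates realizations $D^{AB}_i$, $D^{AC}_i$, $D^{BC}_i$ of the types of the full closures $[AB]_\cup$, $[A'C]_\cup$, $[BC]_\cup$, so that in the end $\bigcup_{i}q_i^{AB}$ axiomatizes $q_\cup^{AB}$ relative to $T_\cup^*$, and likewise for the other two pairs.

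A second, related omission: for the glued $A''$ to satisfy $A''\ind_M BC$ you need $A''_i\eind[K]_M\acl_\cup(BC)$ in $\monster_i$, but the generic independence theorem only gives $A''_i\eind[K]_M BC$, and Fact~\ref{fact:kimacl} upgrades this only to $\acl_i(BC)$, not to $\acl_\cup(BC)$. The paper handles both issues at once via reasonable extension (Theorem~\ref{thm:reasext}), producing $D^{BC}_i$ realizing $\tp_{L_i}([BC]_\cup/M)$ with $A''_i\eind[K]_M D^{BC}_i$ and with a controlled forking position over $[BC]_i$ in $\monster_\cap$; extendability of $\eind[f]$ then positions $D^{AB}_i$ and $D^{AC}_i$ so that the three $D$-sets form an independent triple over $E_i=A''_iBC$ in $\monster_\cap$. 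Only after that do $3$-uniqueness (applied to $[A''_iB]_\cap,[A''_iC]_\cap,[BC]_\cap$ over $M$) and several rounds of stationarity identify the $L_\cap$-types of the triples $(D^{AB}_i,D^{AC}_i,D^{BC}_i)$ across different $i$. Your claim that ``$3$-uniqueness pins down the joint $L_\cap$-type'' is right in spirit, but without the $D$-apparatus and the forking bookkeeping that makes these triples independent over a common base, neither the agreement of $L_\cap$-reducts nor the final identification of $L_\cup$-types goes through.
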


\begin{proof}
To make our notation more compact, in this proof we write $[X]_\square$ for $\acl_\square(X)$ when $\square\in I\cup \{\cap,\cup\}$. By Remark~\ref{rem: robust}, we may assume that $T_\cap$ eliminates imaginaries.

Fix a monster model $\monster_\cup\models T_\cup^*$ (equivalently, a completion of $T_\cup^*$). We would like to show that $\Th(\monster_\cup)$ is $\NSOP_1$. By Proposition~\ref{prop:properties}, $\ind$ satisfies invariance, existence, monotonicity, symmetry, and strong finite character. If we show that $\ind$ satisfies the independence theorem, then by Fact~\ref{fact:char}, $\Th(\monster_\cup)$ is $\NSOP_1$. Proposition~\ref{prop:properties} then tells us that $\ind$ satisfies witnessing, so $\ind_M = \eind[K]_M$ for all models $\sM_\cup\elesub \monster_\cup$. 
Toward the independence theorem, suppose we are given $A,A',B,C\subseteq \monsterset$ and $\sM_\cup\elesub\monster_\cup$ such that: $$\tp_{L_\cup}(A/M) = \tp_{L_\cup}(A'/M),\quad A\ind_M B,\quad A'\ind_M C,\quad \text{and}\quad B\ind_M C.$$ By adding elements to $A$, $A'$, $B$, and $C$, we may assume $A = [MA]_\cup$, $A' = [MA']_\cup$, $B = [MB]_\cup$, and $C = [MC]_\cup$. Then by definition of $\ind$, we have, for all $i\in I$: $$\tp_{L_i}(A/M) = \tp_{L_i}(A'/M), \quad A\eind[K]_M B,\quad A'\eind[K]_M C,\quad \text{and}\quad B\eind[K]_M  C \quad \text{in }\monster_i.$$

Let us fix some notation. Since $\tp_{L_i}(A/M) = \tp_{L_i}(A'/M)$, there is a partial elementary bijection $A \to A'$ which fixes $M$ pointwise. Let $x_A$ be a tuple of variables simultaneously enumerating $A$ and $A'$ according to this bijection. Let $x_B$ and $x_C$ be tuples enumerating $B$ and $C$, respectively. For all $\square\in I\cup \{\cap,\cup\}$, we define:
\begin{align*}
p_{\square}^{AB}(x_{AB}) &= \tp_{L_\square}([AB]_\cap/M)\\ 
p^{AC}_\square(x_{AC}) &= \tp_{L_\square}([A'_iC]_\cap/M)\\
p^{BC}_\square(x_{BC}) &= \tp_{L_\square}([BC]_\cap/M)
\end{align*} where $x_{AB}$ is a tuple of variables extending $x_A$ and $x_B$ and enumerating $[AB]_\cap$, and similarly for $x_{AC}$ and $x_{BC}$. We additionally define: \begin{align*}
q_\square^{AB}(x_{AB},y_{AB}) &= \tp_{L_\square}([AB]_\cup/M)\\
q_\square^{AC}(x_{AC},y_{AC}) &= \tp_{L_\square}([A'C]_\cup/M)\\
q_\square^{BC}(x_{BC},y_{BC}) &= \tp_{L_\square}([BC]_\cup/M)
\end{align*}
where $y_{AB}$ is a tuple of variables enumerating $[AB]_\cup\setminus [AB]_\cap$, and similarly for $y_{AC}$ and $y_{BC}$. 

Now $T_\cap$ is stable with elimination of imaginaries, so by Proposition~\ref{prop:relaclcomp}(3), $\bigcup_{i\in I} q_i^{AB}$ axiomatizes $q_\cup^{AB}$ relative to $T_\cup^*$. Similarly, $\bigcup_{i\in I} q_i^{AC}$ axiomatizes $q_\cup^{AC}$ and $\bigcup_{i\in I} q_i^{BC}$ axiomatizes $q_\cup^{BC}$.

For the moment, fix $i\in I$.
Since $\eind[K]$ in $T_i$ satisfies the $T_\cap$-generic independence theorem, there exists $A''_i$ such that $\tp_{L_i}(A''_i/MB) = \tp_{L_i}(A/MB)$, $\tp_{L_i}(A''_i/MC) = \tp_{L_i}(A'/MC)$, $A''_i\eind[K]_M BC$ in $\monster_i$, and further, in $\monster_\cap$:
\begin{align}
[A''_iB]_i&\eind[f]_{A''_iB} [A''_iC]_i[BC]_i\\
[A''_iC]_i&\eind[f]_{A''_iC} [A''_iB]_i[BC]_i\\
[BC]_i&\eind[f]_{BC} [A''_iB]_i[A''_iC]_i.
\end{align}

Since Kim-independence is preserved under reducts (Lemma~\ref{lem:kimreduct}) and agrees with forking independence in a stable theory (Fact~\ref{fact:kimsimple}), we have $B\eind[f]_M C$ and $A''_i\eind[f]_M BC$ in $\monster_\cap$, so $A''_i$, $B$, $C$ is an independent triple over $M$ in $\monster_\cap$.

Let $E_i = A''_iBC$. By base monotonicity from (1), (2), and (3),  $[A''_iB]_i$, $[A''_iC]_i$, $[BC]_i$ is an independent triple over $E_i$ in $\monster_\cap$. In particular: 
\begin{align}
[BC]_i\eind[f]_{E_i}[A''_iB]_i[A''_iC]_i.
\end{align}

Our goal is to extend the sets $[A''_iB]_i$, $[A''_iC]_i$, $[BC]_i$ to realizations of the types $q_i^{AB}$, $q_i^{AC}$, and $q_i^{BC}$, in such a way that these realizations also form an independent triple over $E_i$ in $\monster_\cap$

By Fact~\ref{fact:kimacl}, $A_i''\eind[K]_M BC$ implies $A_i''\eind[K]_M  [BC]_i$ in $\monster_i$.
By reasonable extension (Theorem~\ref{thm:reasext}), we can find a realization $A'''_i$ of $\tp(A''_i/[BC]_i)$ such that: $$ A'''_i\eind[K]_M [BC]_\cup \quad\text{and}\quad A'''_i\eind[r]_{[BC]_i}[BC]_\cup \quad \text{in }\monster_i.$$ Let $\sigma$ be an automorphism of $\monster_i$ which fixes $[BC]_i$ pointwise and moves $A'''_i$ to $A''_i$, and let $D^{BC}_i = \sigma([BC]_\cup)$. Then $D^{BC}_i$ realizes $q_i^{BC} = \tp_{L_i}([BC]_\cup/M)$, and we have in $\monster_i$:
\begin{align}
A''_i&\eind[K]_M D^{BC}_i \\
A''_i&\eind[r]_{[BC]_i} D^{BC}_i.
\end{align}

By definition of $\eind[r]$, since $[A''_iB]_i$ and $[A''_iC]_i$ are both subsets of $[A''_i [BC]_i]_i$, we have in $\monster_\cap$: 
\begin{align}
D^{BC}_i &\eind[f]_{[BC]_i} [A''_iB]_i[A''_iC]_i \quad \text{by  symmetry and monotonicity, from (6)}\\
D^{BC}_i &\eind[f]_{E_i[BC]_i} [A''_iB]_i[A''_iC]_i \quad \text{by base monotonicity, from (7)}\\
D^{BC}_i &\eind[f]_{E_i} [A''_iB]_i[A''_iC]_i \quad \text{by transitivity, from (4) and (8).}
\end{align} 

Thus $[A''_iB]_i$, $[A''_iC]_i$, $D_i^{BC}$ is an independent triple over $E_i$ in $\monster_\cap$. In particular:
\begin{align}
[A''_iB]_i &\eind[f]_{E_i}[A''_iC]_iD^{BC}_i.
\end{align}

Since $\tp_{L_i}(A''_iB/M) = \tp_{L_i}(AB/M)$, we can extend $[A''_iB]_i$ to a realization $D^{AB}_i$ of $q_i^{AB} = \tp_{L_i}([AB]_\cup/M)$. Further, since $\eind[f]$ in $\monster_\cap$ is extendable to $\monster_i$ (Proposition~\ref{prop:stationarity}), we may assume that in $\monster_\cap$:
\begin{align}
D^{AB}_i &\eind[f]_{[A''_iB]_i} [A''_iC]_iD_i^{BC}.
\end{align}

Repeating the argument above, we have in $\monster_\cap$:
\begin{align}
D^{AB}_i &\eind[f]_{E_i[A''_iB]_i}[A''_iC]_iD^{BC}_i \quad \text{by base monotonicity, from (11)}\\
D^{AB}_i &\eind[f]_{E_i} [A''_iC]_iD^{BC}_i \quad \text{by transitivity, from (10) and (12).}
\end{align}

Thus $[A''_iC]_i$, $D_i^{AB}$, $D_i^{BC}$ is an independent triple over $E_i$ in $\monster_\cap$. In particular:
\begin{align}
[A''_iC]_i &\eind[f]_{E_i}D^{AB}_iD^{BC}_i.
\end{align}

Similarly, since $\tp_{L_i}(A''_iC/M) = \tp_{L_i}(A'C/M)$, we can extend $[A''_iC]_i$ to a realization $D^{AC}_i$ of $q_i^{AC} = \tp_{L_i}([A'C]_\cup/M)$ such that in  $\monster_\cap$: 
\begin{align}
D^{AC}_i &\eind[f]_{[A''_iC]_i} D^{AB}_iD_i^{BC}.
\end{align}

Repeating the argument one more time, we have in $\monster_\cap$:
\begin{align}
D^{AC}_i &\eind[f]_{E_i[A''_iC]_i}D^{AB}_iD^{BC}_i \quad \text{by base monotonicity, from (15)}\\
D^{AC}_i &\eind[f]_{E_i} D^{AB}_iD^{BC}_i \quad \text{by transitivity, from (14) and (16).}
\end{align}

Thus $D_i^{AB}$, $D_i^{AC}$, $D_i^{BC}$ is an independent triple over $E_i$ in $\monster_\cap$. 

With all the pieces in place, we set $$q_i^{ABC}(x_{AB},y_{AB},x_{AC},y_{AC},x_{BC},y_{BC}) = \tp_{L_i}(D^{AB}_i,D^{AC}_i,D^{BC}_i/M).$$

We now claim that for all $i,j\in I$, the restrictions of $q_i^{ABC}$ and $q_j^{ABC}$ to $L_\cap$ are equal: $$\tp_{L_\cap}(D^{AB}_i,D^{AC}_i,D^{BC}_i/M) = \tp_{L_\cap}(D^{AB}_j,D^{AC}_j,D^{BC}_j/M).$$ 

As noted above, $A''_i$, $B$, $C$ and $A''_j$, $B$, $C$ are independent triples over $M$ in $\monster_\cap$. We also have:
\begin{align*}
\tp_{L_\cap}([A''_iB]_\cap/M) &= \tp_{L_\cap}([A''_jB]_\cap/M) = p_\cap^{AB}\\ \tp_{L_\cap}([A''_iC]_\cap/M) &= \tp_{L_\cap}([A''_jC]_\cap/M) = p_\cap^{AC}\\ \tp_{L_\cap}([BC]_\cap/M) &= p_\cap^{BC}.
\end{align*}
So by $3$-uniqueness over $M$ (Fact~\ref{fact:3uniqueness}), $$\tp_{L_{\cap}}([A_i''B]_\cap,[A_i''C]_\cap,[BC]_\cap/M) = \tp_{L_{\cap}}([A_j''B]_\cap,[A_j''C]_\cap,[BC]_\cap/M).$$ It follows that there exists a partial $L_\cap$-elementary bijection $\tau\colon [E_i]_\cap \to [E_j]_\cap$ extending the identity on $[BC]_\cap$ and the elementary bijections $[A''_iB]_\cap \to [A''_jB]_\cap$ and  $[A''_iC]_\cap \to [A''_jC]_\cap$ given by the enumerations of these sets by the variables $x_{AB}$ and $x_{AC}$.

Extending $\tau$ to an automorphism of $\monster_\cap$, we may identify $[E_i]_\cap$ with $[E_j]_\cap$ and call this set just $[E]_\cap$. This also has the effect of identifying $A_i''$ with $A_j''$ and $E_i$ with $E_j$, and similarly we call these sets just $A''$ and $E$, respectively. 

By (1) and (11) above, in $\monster_\cap$: $$[A''B]_i\eind[f]_{A''B} E\quad\text{and}\quad D_i^{AB}\eind[f]_{[A''B]_i} E$$ so by transitivity and closing under $\acl_\cap$, and applying the same argument to $D_j^{AB}$:  $$D_i^{AB}\eind[f]_{[A''B]_\cap} [E]_\cap\quad\text{and}\quad D_j^{AB}\eind[f]_{[A''B]_\cap} [E]_\cap.$$
We have $\tp_{L_\cap}(D_i^{AB}/[A''B]_\cap) = \tp_{L_\cap}(D_j^{AB}/[A''B]_\cap)$, since both agree with $q_\cap^{AB}$, so by stationarity, $\tp_{L_\cap}(D_i^{AB}/[E]_\cap) = \tp_{L_\cap}(D_j^{AB}/[E]_\cap)$.

The same argument, using (2), (3), (7), and (15), shows $\tp_{L_\cap}(D_i^{AC}/[E]_\cap) = \tp_{L_\cap}(D_j^{AC}/[E]_\cap)$ and $\tp_{L_\cap}(D_i^{BC}/[E]_\cap) = \tp_{L_\cap}(D_j^{BC}/[E]_\cap)$.

Since forking independence over a set agrees with forking independence over the algebraic closure of that set, $D_i^{AB}$, $D_i^{AC}, D_i^{BC}$ and $D_j^{AB}$, $D_j^{AC}, D_j^{BC}$ are both independent triples over $[E]_\cap$. So by stationarity: $$\tp_{L_\cap}(D^{AB}_i,D^{AC}_i,D^{BC}_i/[E]_\cap) = \tp_{L_\cap}(D^{AB}_j,D^{AC}_j,D^{BC}_j/[E]_\cap).$$
In particular, the restrictions of these types to $M$ agree, which establishes the claim. It follows that that there is a complete $L_\cap$-type $q_\cap^{ABC}$ over $M$ such that $q_\cap^{ABC} \subseteq q_i^{ABC}$ for all $i\in I$. 

By Proposition~\ref{prop:realizingtypes}, $\bigcup_{i\in I}q_i^{ABC}$ is consistent and realized in $\monster_\cup$ by sets $(D_{AB},D_{AC},D_{BC})$. Let $A^*$, $B^*$, and $C^*$ be the subsets enumerated by $x_A$, $x_B$, and $x_C$, respectively.

Since $D_{BC}$ satisfies $q_i^{BC}$ for all $i\in I$, $D_{BC}$ satisfies $q_\cup^{BC} = \tp_{L_\cup}(\acl_\cup(BC)/M)$. So after applying an automorphism of $\monster_\cup$ which fixes $M$ pointwise, we may assume $D_{BC} = \acl_\cup(BC)$, and in particular $B^* = B$ and $C^* = C$. Similarly, $D_{AB}$ and $D_{AC}$ satisfy $q_i^{AB}$ and $q_i^{AC}$ for all $i\in I$, so they satisfy $q_\cup^{AB}$ and $q_\cup^{AC}$, and hence $\tp_{L_\cup}(A^*B/M) = \tp_{L_\cup}(AB/M)$ and $\tp_{L_\cup}(A^*C/M) = \tp_{L_\cup}(A'C/M)$. 

For all $i\in I$,  $\tp_{L_i}(A^*D_{BC}/M) = \tp_{L_i}(A''_iD^{BC}_i/M)\subseteq    q_i^{ABC}$, so $A^*\eind[K]_M \acl_\cup(BC)$ in $\monster_i$. Since $\acl_\cup(MA^*) = A^*$, we have $A^*\ind_M BC$, as desired. 
\end{proof}

\noindent We will now draw some immediate corollaries, using the sufficient conditions for the $T_\cap$-generic independence theorem derived in Section~\ref{sec:indreduct}. 

\begin{cor}
\label{cor:stable-case} Assume $T_\cap$ is stable and for each $i$, $T_i$ is  stable or $T_i$ is $\NSOP_1$ and relatively disintegrated. Then $T_\cup^*$ is $\NSOP_1$ and $\ind =  \eind[K]$. 
\end{cor}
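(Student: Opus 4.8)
The plan is to deduce the corollary directly from Theorem~\ref{thm:preservation} by checking its three hypotheses. The first, stability of $T_\cap$, is part of the assumption. For the remaining two, I would argue separately for each $i\in I$ according to which case of the dichotomy $T_i$ falls into; and since $\NSOP_1$ and the $T_\cap$-generic independence theorem are stated for complete theories, I would fix a completion throughout — concretely, a monster model $\monster_\cup\models T_\cup^*$ with reducts $\monster_\cap$ and $\monster_i$, and complete theories $\widehat T_\cap = \Th(\monster_\cap)$, $\widehat T_i = \Th(\monster_i)\supseteq \widehat T_\cap$. Stability of $T_\cap$ passes to $\widehat T_\cap$.

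Now fix $i$. If $T_i$ is stable, then $\widehat T_i$ is stable, hence simple, hence $\NSOP_1$, and Proposition~\ref{prop:stablethreebody} (applied with $T' = \widehat T_i$ and $T = \widehat T_\cap$) tells us that $\eind[K] = \eind[f]$ in $\widehat T_i$ satisfies the $\widehat T_\cap$-generic independence theorem. If instead $T_i$ is $\NSOP_1$ and relatively disintegrated, then $\widehat T_i$ is $\NSOP_1$ by definition, and it remains relatively disintegrated since relative disintegration is a condition on algebraic closure, which is unchanged by passing to a completion; here one also observes that the notion of ``relatively disintegrated'' from the introduction is precisely the condition ``$\acl_i$ disintegrated relative to $\acl_\cap$'' used in Section~\ref{sec:indreduct}. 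So Proposition~\ref{prop:reldisthreebody} applies and gives the $\widehat T_\cap$-generic independence theorem for $\eind[K]$ in $\widehat T_i$. In either case every $\widehat T_i$ is $\NSOP_1$ and $\eind[K]$ satisfies the $\widehat T_\cap$-generic independence theorem in $\widehat T_i$, so Theorem~\ref{thm:preservation} yields that $\Th(\monster_\cup)$ is $\NSOP_1$ and $\ind = \eind[K]$; as the completion was arbitrary, the same holds for $T_\cup^*$.

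I do not expect a genuine obstacle: the corollary is a bookkeeping exercise assembling Theorem~\ref{thm:preservation}, Proposition~\ref{prop:stablethreebody}, and Proposition~\ref{prop:reldisthreebody}. The only points requiring care are the trivial implication stable $\Rightarrow$ $\NSOP_1$ and the identification of the introduction's notion of relative disintegration with the one in Section~\ref{sec:indreduct}, both of which are immediate. If one prefers to suppress the explicit mention of completions, one can instead simply note that all hypotheses and conclusions of the cited results are understood completion-by-completion, and that the reducts of a completion of $T_\cup^*$ are completions of $T_\cap$ and of each $T_i$.
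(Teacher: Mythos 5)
Your proposal is correct and is exactly the argument the paper intends: the corollary is stated immediately after the remark that it follows from Theorem~\ref{thm:preservation} together with the sufficient conditions for the $T_\cap$-generic independence theorem, namely Proposition~\ref{prop:stablethreebody} in the stable case and Proposition~\ref{prop:reldisthreebody} in the relatively disintegrated case. Your extra care about fixing a completion and identifying the two formulations of relative disintegration is harmless bookkeeping that the paper leaves implicit.
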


\begin{cor}
\label{cor:disforking-case} Assume $T_\cap$ is stable with disintegrated forking. If each $T_i$ is $\NSOP_1$,  then $T_\cup^*$ is $\NSOP_1$ and $\ind =  \eind[K]$. 
\end{cor}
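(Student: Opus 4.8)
The plan is to derive this from the main theorem by supplying its one outstanding hypothesis. Theorem~\ref{thm:preservation} yields the conclusion (including $\ind = \eind[K]$) as soon as we know that $T_\cap$ is stable, each $T_i$ is $\NSOP_1$, and $\eind[K]$ satisfies the $T_\cap$-generic independence theorem in each $T_i$. The first two are exactly the hypotheses of the corollary. For the third I would invoke Proposition~\ref{prop:disforkthreebody} with $T' = T_i$ and $T = T_\cap$: a stable theory with elimination of imaginaries and disintegrated forking forces Kim-independence in any $\NSOP_1$ extension to satisfy its generic independence theorem. So, modulo one caveat, the corollary is simply ``Theorem~\ref{thm:preservation} $+$ Proposition~\ref{prop:disforkthreebody}''.

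The caveat is that Proposition~\ref{prop:disforkthreebody} asks $T_\cap$ to eliminate imaginaries, while here $T_\cap$ is only assumed stable with disintegrated forking. I would handle this exactly as in the opening line of the proof of Theorem~\ref{thm:preservation}: by Remark~\ref{rem: robust}(4) I may replace $(T_i)_{i\in I}$ over $T_\cap$ by $(T_i^{\cap-\eq})_{i\in I}$ over $T_\cap^{\cap-\eq} = (T_\cap)^\eq$ without affecting whether $T_\cup^*$ exists or the class of interpolative models; and $(T_\cup^{\cap-\eq})^*$ is bi-interpretable with $T_\cup^*$, so one is $\NSOP_1$ iff the other is, and the description of Kim-independence transfers correctly across the bi-interpretation (using that $\eind[K]$ ignores algebraic closure, Fact~\ref{fact:kimacl}). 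Now $(T_\cap)^\eq$ is stable and eliminates imaginaries, and each $T_i^{\cap-\eq}$ is still $\NSOP_1$, being bi-interpretable with $T_i$, so the whole argument goes through provided $(T_\cap)^\eq$ again has disintegrated forking.

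That last point --- preservation of disintegrated forking under the $\eq$ construction --- is the step I expect to be the only real obstacle, everything else being bookkeeping over results already in the paper. I would attack it via the standard reduction of forking over imaginary parameters in a stable theory to forking at the level of real tuples: the forking of a stationary type is controlled by its canonical base, which lies in the definable closure of a finite tuple of real elements, so a hypothetical failure of disintegrated forking in $(T_\cap)^\eq$ --- an element forking with a finite imaginary tuple over some base, but with no single coordinate of that tuple --- ought, after descending to the real sorts, to contradict disintegrated forking of $T_\cap$ itself. If instead this preservation fact is recorded separately, or the elimination-of-imaginaries hypothesis in Proposition~\ref{prop:disforkthreebody} is replaced by exactly the consequences its proof uses, then the corollary is immediate from Theorem~\ref{thm:preservation} and Proposition~\ref{prop:disforkthreebody} together with the Morleyization bookkeeping above.
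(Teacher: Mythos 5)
This is exactly the paper's (unwritten) proof: Corollary~\ref{cor:disforking-case} is presented as an immediate consequence of Theorem~\ref{thm:preservation} together with Proposition~\ref{prop:disforkthreebody}, which supplies the $T_\cap$-generic independence theorem for $\eind[K]$ in each $T_i$. The elimination-of-imaginaries caveat you raise is genuine, but the paper leaves it equally implicit (the proof of Theorem~\ref{thm:preservation} simply opens by passing to $T_\cap^{\eq}$ via Remark~\ref{rem: robust}), so your bookkeeping --- including the observation that one must check disintegrated forking survives the passage to imaginaries, a point the paper's own citation of Goode suggests is not entirely automatic --- is if anything more careful than the source.
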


\noindent
In many of the examples in Section~\ref{section:examples}, $T_\cap$ is interpretable in the theory of an  infinite set.
If $T$ has disintegrated forking and finite $U$-rank, then any theory interpretable in $T$ has disintegrated forking~\cite[\hspace{-.1cm}\mathsection 5]{Goode-trivial}.
So any theory interpretable in the theory of an infinite set has disintegrated forking, and Corollary~\ref{cor:triv-forking} follows from Corollary~\ref{cor:disforking-case}.

\begin{cor}
\label{cor:triv-forking}
Assume $T_\cap$ is interpretable in the theory of an infinite set.
If each $T_i$ is $\NSOP_1$, then $T^*_\cup$ is $\NSOP_1$ and $\ind =  \eind[K]$. 
\end{cor}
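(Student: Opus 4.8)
The plan is to derive Corollary~\ref{cor:triv-forking} from Corollary~\ref{cor:disforking-case}: it suffices to check that $T_\cap$ is stable with disintegrated forking whenever $T_\cap$ is interpretable in the theory $T_=$ of an infinite set (pure equality on an infinite domain), and then the conclusion that $T^*_\cup$ is $\NSOP_1$ with $\ind = \eind[K]$ is immediate. No new work on interpolative fusions is required.

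First I would record the relevant properties of $T_=$ itself. It is $\aleph_0$-categorical and $\aleph_0$-stable, hence stable, and it has $U$-rank $1$, in particular finite $U$-rank. Moreover, forking independence in $T_=$ is given by $A\eind[f]_C B$ if and only if $\acl(AC)\cap \acl(BC) = \acl(C)$, and this is disintegrated: if $A\enind[f]_C BB'$, then there is $a\in A$ with $a\in \acl(BB'C)=BB'C$ and $a\notin C$, so $a\in B$ or $a\in B'$, whence $A\enind[f]_C B$ or $A\enind[f]_C B'$.

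Next I would transfer these along an interpretation $T_=\rightsquigarrow T_\cap$. Stability is preserved under interpretation (an instance of the order property in $T_\cap$ pulls back to one in $T_=$, which is stable), so $T_\cap$ is stable. That disintegrated forking is preserved is exactly Goode's theorem: a theory interpretable in a theory with disintegrated forking and finite $U$-rank again has disintegrated forking \cite[Section~5]{Goode-trivial}. Thus $T_\cap$ is stable with disintegrated forking, and since each $T_i$ is $\NSOP_1$, Corollary~\ref{cor:disforking-case} applies and yields that $T^*_\cup$ is $\NSOP_1$ and $\ind = \eind[K]$.

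There is essentially no obstacle here beyond invoking Goode's result; the argument is a formal verification of hypotheses. The one piece of bookkeeping is that the proof of Corollary~\ref{cor:disforking-case} (via Theorem~\ref{thm:preservation} and Proposition~\ref{prop:disforkthreebody}) wants to work in $T_\cap^\eq$, but this causes no difficulty since $T_\cap^\eq$ is interpretable in $T_\cap$, hence in $T_=$ by transitivity of interpretations, so $T_\cap^\eq$ is again stable with disintegrated forking, and Remark~\ref{rem: robust} licenses the passage to imaginaries.
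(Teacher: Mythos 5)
Your proposal is correct and follows the paper's own route essentially verbatim: the paper also deduces the corollary from Corollary~\ref{cor:disforking-case} by citing Goode's theorem that any theory interpretable in a theory with disintegrated forking and finite $U$-rank again has disintegrated forking, applied to the theory of an infinite set. Your additional remarks on preservation of stability under interpretation and on passing to $T_\cap^\eq$ are correct pieces of bookkeeping that the paper leaves implicit.
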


\noindent Finally, we observe that Corollary~\ref{cor:stable-case} implies that an unstable $\NIP$ theory cannot be decomposed as a fusion of stable theories.

\begin{cor}
\label{cor:nip}
Assume each $T_i$ is stable. If $T^*_\cup$ is $\NIP$, then $T^*_\cup$ is stable. 
\end{cor}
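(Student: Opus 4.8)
The plan is to combine the $\NSOP_1$-preservation result already established with the standard dichotomy for $\NSOP_1$ theories. Concretely, I will first show that $T^*_\cup$ is $\NSOP_1$ via Corollary~\ref{cor:stable-case}, and then argue that an $\NSOP_1$ theory which is moreover $\NIP$ must be stable.

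The first step is to verify the hypotheses of Corollary~\ref{cor:stable-case}, which requires $T_\cap$ to be stable. This is automatic here: every completion $\widehat T_\cap$ of $T_\cap$ arises as the $L_\cap$-reduct of a completion of $T_1$. Indeed, $\widehat T_\cap \cup T_1$ is consistent, since otherwise $T_1\models\lnot\sigma$ for some finite conjunction $\sigma$ of sentences of $\widehat T_\cap$, and then $\lnot\sigma$ would be an $L_\cap$-consequence of $T_1$, hence lie in $T_\cap\subseteq\widehat T_\cap$, contradicting its consistency; extending $\widehat T_\cap\cup T_1$ to a completion $\widehat T_1$, its $L_\cap$-reduct is a completion of $T_\cap$ extending $\widehat T_\cap$, so it equals $\widehat T_\cap$. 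Since a reduct of a stable theory is stable and each completion of $T_1$ is stable, $T_\cap$ is stable; thus Corollary~\ref{cor:stable-case} applies (with each $T_i$ stable) and yields that $T^*_\cup$ is $\NSOP_1$.

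For the second step, fix any completion of $T^*_\cup$; by the previous paragraph it is $\NSOP_1$, and by hypothesis it is $\NIP$. An $\NSOP_1$ theory is either simple or has the independence property, so a theory which is both $\NSOP_1$ and $\NIP$ is simple. A simple $\NIP$ theory is stable, so this completion of $T^*_\cup$ is stable; as the completion was arbitrary, $T^*_\cup$ is stable.

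I do not expect any genuine obstacle: the content is entirely a matter of invoking Corollary~\ref{cor:stable-case} together with the $\NSOP_1$ dichotomy. The only points meriting care are that $T_\cap$ is not assumed stable in the statement (so one records that stability passes to it as a reduct) and that $T^*_\cup$ need not be complete (so the argument is run completion-by-completion, which is harmless since stability, $\NIP$, and $\NSOP_1$ are all defined for incomplete theories via their completions).
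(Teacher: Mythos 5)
Your proof is correct and follows essentially the same route as the paper's: both invoke Corollary~\ref{cor:stable-case} to conclude that $T^*_\cup$ is $\NSOP_1$ and then finish with Shelah's dichotomy that an unstable theory has $\mathrm{IP}$ or the strict order property (the paper phrases the last step as ``$\NSOP_1$ implies no strict order property, and $\NIP$ without the strict order property is stable,'' while you route it through ``$\NSOP_1$ plus $\NIP$ implies simple implies stable''---the same underlying fact). Your explicit verification that $T_\cap$ is stable, being a reduct of a completion of the stable $T_1$, is a point the paper leaves implicit, and your argument for it is correct.
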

\begin{proof}
By Corollary~\ref{cor:stable-case}, $T_\cup^*$ is $\NSOP_1$, and hence does not have the strict order property. Every $\NIP$ theory without the strict order property is stable.  
\end{proof}

\subsection{Simplicity} \label{Sec: Simplicity}
Having obtained sufficient conditions for the preservation of $\NSOP_1$, we can now use Fact~\ref{fact:kimsimple} to improve this to preservation of simplicity, under stronger hypotheses. 

\medskip \noindent We do this in two ways. In Theorem~\ref{thm:rel-dis-simple}, we assume that each $T_i$ is simple and relatively disintegrated (see Section~\ref{sec:indreduct} for the definition). In Theorem~\ref{thm:simplepreservation}, we relax the hypotheses on a single $T_{i^*}$, only requiring the $T_\cap$-generic independence theorem for $\ind[f]$ in $T_{i^*}$; but in this case we have to assume that all the other $T_i$ ($i\neq i^*$) fail to add new algebraicity or forking to $T_\cap$. These two theorems generalize Propositions 6.3.13 and 6.3.15 in~\cite{wagner}, which concern the special case when $T_\cap$ is the theory of an infinite set. The second is inspired by a theorem of Tsuboi from~\cite{Tsuboi}, see Fact~\ref{fact:tsuboi} below.

\begin{thm}
\label{thm:rel-dis-simple}
Assume $T_\cap$ is stable. 
If each $T_i$ is simple and relatively disintegrated, then $T_\cup^*$ is simple. 
\end{thm}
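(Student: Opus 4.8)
The plan is to reduce, via Fact~\ref{fact:kimsimple}, to checking that Kim-independence in $T_\cup^*$ has base monotonicity over models. First I would assemble the $\NSOP_1$ input: each simple $T_i$ is $\NSOP_1$, and since each $T_i$ is relatively disintegrated and $T_\cap$ is stable, Proposition~\ref{prop:reldisthreebody} shows $\eind[K]$ in each $T_i$ satisfies the $T_\cap$-generic independence theorem. Hence Theorem~\ref{thm:preservation} (equivalently Corollary~\ref{cor:stable-case}) applies: $T_\cup^*$ is $\NSOP_1$ and $\eind[K] = \ind$, where $A\ind_M B$ means $\acl_\cup(MA)\eind[K]_M\acl_\cup(MB)$ in $\monster_i$ for every $i\in I$. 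I would also pass to imaginaries at the outset, as permitted by Remark~\ref{rem: robust}(4) and the remark following Theorem~\ref{thm:alex}, so that $T_\cap$ has weak elimination of imaginaries while all the hypotheses---in particular relative disintegration of each $T_i$---and the simplicity of $T_\cup^*$ are unaffected; this makes Proposition~\ref{prop:relaclcomp}(1) available, giving $\acl_\cup = \ccl$ in $T_\cup^*$.

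The technical core is a computation of algebraic closure. Let $\sM_\cup\preceq\sN_\cup\preceq\monster_\cup$ be models of $T_\cup^*$ with underlying sets $M\subseteq N$, and let $a$ be an arbitrary tuple. I claim
\[
\acl_\cup(Na) \;=\; \acl_\cap\bigl(N\cup\acl_\cup(Ma)\bigr).
\]
Since $N$ and $\acl_\cup(Ma)$ are $\acl_\cup$-closed, hence $\acl_i$-closed, relative disintegration of $T_i$ over $T_\cap$ gives, for each $i\in I$,
\[
\acl_i\bigl(N\cup\acl_\cup(Ma)\bigr) \;=\; \acl_\cap\bigl(\acl_i(N)\cup\acl_i(\acl_\cup(Ma))\bigr) \;=\; \acl_\cap\bigl(N\cup\acl_\cup(Ma)\bigr),
\]
a set independent of $i\in I$; call it $Z$. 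Applying $\acl_i$ once more, $Z$ is $\acl_i$-closed for every $i\in I$, so $Z$ is $\ccl$-closed and contains $Na$; therefore $\acl_\cup(Na) = \ccl(Na)\subseteq Z$. The reverse inclusion is immediate, since $\acl_\cup(Na)$ contains $N\cup\acl_\cup(Ma)$ and is $\acl_\cap$-closed. This proves the claim.

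With the claim in hand, base monotonicity is a short computation, using that forking in a simple theory satisfies base monotonicity over arbitrary sets and is blind to algebraic closure over its base. Suppose $a\ind_M Nb$ with $\sM_\cup\preceq\sN_\cup\preceq\monster_\cup$; since $M\subseteq N$ this says $\acl_\cup(Ma)\eind[K]_M\acl_\cup(Nb)$ in $\monster_i$ for all $i$, equivalently $\acl_\cup(Ma)\eind[f]_M\acl_\cup(Nb)$ in $\monster_i$ as $T_i$ is simple. Because $M\subseteq N\subseteq\acl_\cup(Nb)$, base monotonicity gives $\acl_\cup(Ma)\eind[f]_N\acl_\cup(Nb)$ in $\monster_i$; adjoining $N$ to each side and closing under $\acl_i$ yields $\acl_i(N\cup\acl_\cup(Ma))\eind[f]_N\acl_i(N\cup\acl_\cup(Nb))$ in $\monster_i$, which by the claim and the fact that $\acl_\cup(Nb)$ is already $\acl_i$-closed and contains $N$ is exactly $\acl_\cup(Na)\eind[f]_N\acl_\cup(Nb)$ in $\monster_i$. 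Since $\eind[f] = \eind[K]$ in the simple theory $T_i$ and this holds for all $i\in I$, we obtain $a\ind_N b$. Thus $\eind[K] = \ind$ has base monotonicity over models, and $T_\cup^*$ is simple by Fact~\ref{fact:kimsimple}.

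I expect the main obstacle to lie not in the argument above, which is brief, but in the imaginary bookkeeping of the first paragraph: $\acl_\cup = \ccl$ genuinely requires weak elimination of imaginaries for $T_\cap$ (Theorem~\ref{thm:alex} can fail otherwise), so one must confirm that passing to $T_\cap^\eq$ preserves relative disintegration of each $T_i$ and leaves the interpolative fusion---and its simplicity---intact. Alternatively, one can carry $\acl^\eq$ and the corresponding combined closure through the entire argument; the closure computation then goes through verbatim once relative disintegration is known at the level of imaginaries.
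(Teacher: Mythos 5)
Your proposal is correct and follows essentially the same route as the paper: reduce via Fact~\ref{fact:kimsimple} to base monotonicity of $\eind[K]$ over models, get $\NSOP_1$ and $\ind=\eind[K]$ from Corollary~\ref{cor:stable-case}, and then use relative disintegration together with $\acl_\cup=\ccl$ (Proposition~\ref{prop:relaclcomp}) to show $\acl_i(NA)=\acl_\cap(NA)=\acl_\cup(NA)$, which upgrades the $i$-by-$i$ base-monotonicity instances to the combined relation. Your explicit flagging of the passage to imaginaries is a reasonable extra precaution that the paper's proof leaves implicit, but it does not change the argument.
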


\begin{proof}
By Corollary~\ref{cor:stable-case}, $T_\cup^*$ is $\NSOP_1$, and $\ind = \eind[K]$. By Fact~\ref{fact:kimsimple}, it suffices to show that $\eind[K]$ satisfies base monotonicity over models.

So fix $M\prec N \prec \monster_\cup$, $M\subseteq A$, and $N\subseteq B$. Assume that $A = \acl_\cup(A)$, $B = \acl_\cup(B)$, and $A\eind[K]_M B$. It suffices to show that $A\eind[K]_N B$. Since each $T_i$ is simple, $\eind[f]$ satisfies base monotonicity in $\monster_i$. So we have: 
\begin{align*}
A \eind[K]_M B \text{ in }\monster_\cup &\Rightarrow A \eind[f]_M B \text{ in }\monster_i\text{ for all }i\in I \\
&\Rightarrow A \eind[f]_N B \text{ in }\monster_i\text{ for all }i\in I\\
&\Rightarrow \acl_i(NA) \eind[f]_N B \text{ in }\monster_i\text{ for all }i\in I
\end{align*}
If we can improve this to $$\acl_\cup(NA) \eind[f]_N B \text{ in }\monster_i\text{ for all }i\in I,$$ then it follows from the characterization of $\eind[K]$ that $A\eind[K]_N B$ in $\monster_\cup$, as desired. 

Since each $\acl_i$ is disintegrated relative to $\acl_\cap$, we have $$\acl_i(NA) = \acl_\cap(\acl_i(N)\acl_i(A)) = \acl_\cap(NA),$$ since $N$ and $A$ are $\acl_\cup$-closed. Thus $\acl_\cap(NA)$ is $\acl_i$-closed for all $i\in I$, and hence $\acl_\cup$-closed, since  $\acl_\cup = \ccl$ by Proposition~\ref{prop:relaclcomp}. So $$\acl_\cup(NA) = \acl_\cap(NA) = \acl_i(NA),$$  and we have already proven what we wanted. 
\end{proof}

\noindent The next result is inspired by the following theorem of Tsuboi.

\begin{fact}[\cite{Tsuboi}, \cite{wagner} Proposition 6.3.15]
\label{fact:tsuboi}
Suppose that $I = \{1,2\}$, $L_1 \cap L_2 = \emptyset$, and $T_1$ and $T_2$ are simple and eliminate $\exists^\infty$. Then $T_\cup^*$ exists.
If $\acl_1$ is trivial and $T_1$ has $U$-rank one, then $T^*_\cup$ is simple.
\end{fact}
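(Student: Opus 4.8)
The plan is to extract both claims from the general machinery above. For existence: since $L_1\cap L_2=\emptyset$ we have $L_\cap=\emptyset$, so (all models being infinite, as is standard here) $T_\cap$ is the complete theory of an infinite set, which is $\aleph_0$-categorical, $\aleph_0$-stable, and weakly eliminates imaginaries; as each $T_i$ eliminates $\exists^\infty$, Fact~\ref{fact:minh}(2) gives that $T^*_\cup$ exists. From now on I also assume $\acl_1$ is trivial, i.e.\ $\acl_1(A)=A$ for all $A$ (so that $\acl_1$ agrees with $\acl_\cap=\mathrm{id}$), and that $T_1$ has $U$-rank one. Fix a monster model $\monster_\cup\models T^*_\cup$, i.e.\ a completion of $T^*_\cup$. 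Each $T_i$ is simple, hence $\NSOP_1$, so Corollary~\ref{cor:triv-forking} (with $T_\cap$ the theory of an infinite set) tells us that $T^*_\cup$ is $\NSOP_1$ and that the relation $\ind$ from Section~\ref{sec:NSOP1} coincides with Kim-independence. By Fact~\ref{fact:kimsimple} it thus suffices to show that $\ind$ satisfies base monotonicity over models.

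The heart of the argument is to show that $\ind$ is just forking independence computed in the single reduct $\monster_2$. Since $\acl_1=\mathrm{id}$, for any set $X$ the smallest $\acl_1$-closed superset of $X$ is $X$ itself, so the combined closure is $\ccl(X)=\acl_2(X)$; and since $T_\cap$ is stable with weak elimination of imaginaries, Proposition~\ref{prop:relaclcomp}(1) gives $\acl_\cup=\ccl=\acl_2$ in $T^*_\cup$-models. Now take $\sM_\cup\preceq\monster_\cup$ and $A,B\subseteq\monsterset$, and set $A_0=\acl_2(MA)$, $B_0=\acl_2(MB)$, so $A\ind_M B$ iff $A_0\eind[K]_M B_0$ holds in both $\monster_1$ and $\monster_2$. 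In $\monster_2$: simplicity gives $\eind[K]=\eind[f]$, and as forking is blind to algebraic closure (Fact~\ref{fact:kimacl}) this is equivalent to $A\eind[f]_M B$ in $\monster_2$. In $\monster_1$: the hypotheses that $T_1$ has $U$-rank one and trivial algebraic closure force forking independence to coincide with algebraic independence, so the $\monster_1$-condition is just $A_0\cap B_0=M$. But in the simple theory $T_2$, $A\eind[f]_M B$ already implies $\acl_2(MA)\cap\acl_2(MB)=\acl_2(M)=M$, i.e.\ $A_0\cap B_0=M$; so the $\monster_1$-condition is subsumed by the $\monster_2$-condition and we conclude
\[A\ind_M B\iff A\eind[f]_M B\text{ in }\monster_2.\]

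Granting this, base monotonicity over models is immediate: for $\sM_\cup\preceq\sN_\cup\preceq\monster_\cup$, if $A\ind_M NB$ then $A\eind[f]_M NB$ in $\monster_2$, hence $A\eind[f]_N B$ in $\monster_2$ by base monotonicity of forking in the simple theory $T_2$, i.e.\ $A\ind_N B$; thus $T^*_\cup$ is simple by Fact~\ref{fact:kimsimple}. I expect the only genuinely delicate point to be the displayed identification: a priori the $\monster_1$-factor in the definition of $\ind$ could impose extra non-forking, and it is precisely the combination of $U$-rank one and triviality of $\acl_1$ that makes forking in $T_1$ no stronger than algebraic independence (which is in turn automatic from forking in the simple theory $T_2$). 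One also has to check the routine fact that in a simple theory of $U$-rank one with trivial algebraic closure, forking independence is exactly algebraic independence, which follows from the Lascar inequalities for $SU$-rank.
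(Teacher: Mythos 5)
Your proof is correct and follows essentially the same route the paper takes: existence via Fact~\ref{fact:minh} with $T_\cap$ the theory of an infinite set, then $\NSOP_1$ together with $\ind=\eind[K]$ from the preservation machinery, then base monotonicity over models via Fact~\ref{fact:kimsimple}, using $\acl_\cup=\acl_2$ and the fact that forking in $T_1$ reduces to algebraic independence. The paper packages this as the special case $i^*=2$ of Theorem~\ref{thm:simplepreservation}; your explicit identification of $\ind_M$ with $\eind[f]_M$ computed in $\monster_2$ is just a clean restatement of that argument in the two-theory case.
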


\noindent
A theory $T$ has trivial algebraic closure and $U$-rank one if and only if algebraic closure and forking in $T$ agrees with algebraic closure and forking in the theory of an infinite set.
So Theorem~\ref{thm:simplepreservation} generalizes Fact~\ref{fact:tsuboi}.

\begin{thm}
\label{thm:simplepreservation}
Assume $T_\cap$ is stable and each $T_i$ is simple. Fix $i^*\in I$ and assume that:
\begin{enumerate}
    \item $\eind[f]$ in  $T_{i^*}$ satisfies the $T_\cap$-generic independence theorem. 
    \item For all $i\neq i^*$, $\acl_i = \acl_\cap$.
    \item For all $i\neq i^*$ and all sets $A$, $B$, $C$, we have $A\eind[f]_C B$ in $\monster_i$ if and only if $A\eind[f]_C B$ in $\monster_\cap$.
\end{enumerate}
Then $T_\cup^*$ is simple. 
\end{thm}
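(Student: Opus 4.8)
The plan is to derive the theorem from Theorem~\ref{thm:preservation} together with Fact~\ref{fact:kimsimple}. By Remark~\ref{rem: robust} I would first pass to the expansions $L_\square^{\cap-\eq}$, so that $T_\cap$ eliminates imaginaries; one checks routinely that hypotheses (1)--(3) are inherited by this move (algebraic closure and forking in each $T_i$ and in $T_\cap$ are unchanged on the original sorts, and on the new imaginary sorts they are computed uniformly from them). Next I would verify that $\eind[f]$ in every $T_i$ satisfies the $T_\cap$-generic independence theorem: for $i=i^*$ this is hypothesis (1), and for $i\neq i^*$ it follows from Proposition~\ref{prop:reldisthreebody}, since hypothesis (2) ($\acl_i=\acl_\cap$) makes $T_i$ relatively disintegrated over $T_\cap$ (the identity $\acl_\cap(AB)=\acl_\cap(\acl_\cap(A)\acl_\cap(B))$ being trivial), while $T_\cap$ is stable and $T_i$ is $\NSOP_1$. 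Applying Theorem~\ref{thm:preservation}, $T_\cup^*$ is $\NSOP_1$ and the relation $\ind$ of Section~\ref{sec:NSOP1} equals Kim-independence. By Fact~\ref{fact:kimsimple} it then suffices to show that $\eind[K]$ has base monotonicity over models.

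A preliminary observation will be crucial: $\acl_\cup=\acl_{i^*}$. Indeed, since $\acl_\cap\subseteq\acl_{i^*}$ and $\acl_i=\acl_\cap$ for all $i\neq i^*$, any set $\acl_{i^*}(X)$ is $\acl_j$-closed for every $j\in I$, hence $\ccl$-closed; combined with $X\subseteq\acl_{i^*}(X)\subseteq\acl_\cup(X)=\ccl(X)$ (Proposition~\ref{prop:relaclcomp}(1)), this forces $\acl_\cup(X)=\acl_{i^*}(X)$.

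Now I would fix $\sM\preceq\sN\preceq\monster_\cup$ and $a\eind[K]_M Nb$, aiming for $a\eind[K]_N b$. Put $A=\acl_\cup(Ma)$ and $B=\acl_\cup(Nb)$; these are $\acl_\cup$-closed, $M\subseteq A$, $N\subseteq B$, and $A\eind[K]_M B$ in $\monster_\cup$ by Fact~\ref{fact:kimacl} (using $M\subseteq N$). Unwinding the definition of $\ind=\eind[K]$ and using that each $T_j$ is simple (so $\eind[K]=\eind[f]$ over models), one gets $A\eind[f]_M B$ in $\monster_j$ for every $j\in I$, hence $A\eind[f]_N B$ in $\monster_j$ for every $j$ by base monotonicity of forking in the simple theory $T_j$. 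Again by Fact~\ref{fact:kimacl} and the definition of $\ind$, and since $\acl_\cup(NA)=\acl_\cup(Na)$ while $B=\acl_\cup(Nb)$, it is enough to prove $\acl_\cup(NA)\eind[f]_N B$ in $\monster_j$ for every $j$. For $j=i^*$ this is immediate: $\acl_\cup=\acl_{i^*}$, forking in $\monster_{i^*}$ is blind to $\acl_{i^*}$, and $\acl_{i^*}(NB)=B$, so $A\eind[f]_N B$ in $\monster_{i^*}$ yields $\acl_{i^*}(NA)\eind[f]_N B$.

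The main obstacle is the remaining case $j=i\neq i^*$: there we only know $\acl_\cap(NA)\eind[f]_N B$ in $\monster_\cap$ (that is all $A\eind[f]_N B$ in $\monster_\cap$ gives), whereas we need independence of the possibly larger set $\acl_{i^*}(NA)=\acl_\cup(NA)$ from $B$ over $N$. The resolution is to route through $\monster_{i^*}$ and exploit that both $T_{i^*}$ and $T_\cap$ are simple. From $A\eind[f]_N B$ in $\monster_{i^*}$ and simplicity of $T_{i^*}$ we get $A\eind[K]_N B$ in $\monster_{i^*}$, hence $\acl_{i^*}(NA)\eind[K]_N\acl_{i^*}(NB)=B$ in $\monster_{i^*}$ by Fact~\ref{fact:kimacl}; since $T_{i^*}$ is $\NSOP_1$, Kim-independence passes to the reduct $\monster_\cap$ (Lemma~\ref{lem:kimreduct}(3)), so $\acl_{i^*}(NA)\eind[K]_N B$ in $\monster_\cap$; and since $T_\cap$ is simple, this is $\acl_{i^*}(NA)\eind[f]_N B$ in $\monster_\cap$. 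Hypothesis (3) then transfers it: $\acl_\cup(NA)=\acl_{i^*}(NA)\eind[f]_N B$ in $\monster_i$. This establishes $a\eind[K]_N b$, hence base monotonicity over models; applying Fact~\ref{fact:kimsimple} to each completion of $T_\cup^*$, every completion is simple, and therefore so is $T_\cup^*$.
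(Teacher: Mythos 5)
Your proposal is correct and follows essentially the same route as the paper: reduce to the $T_\cap$-generic independence theorem for each $T_i$ via Proposition~\ref{prop:reldisthreebody} (using that hypothesis (2) makes $T_i$ relatively disintegrated for $i\neq i^*$), apply Theorem~\ref{thm:preservation} and Fact~\ref{fact:kimsimple}, and then verify base monotonicity for $\eind[K]$ by showing $\acl_\cup(NA)\eind[f]_N B$ in each $\monster_i$, handling $i=i^*$ via $\acl_\cup=\acl_{i^*}$ and the remaining $i$ by passing through the reduct $\monster_\cap$ (Lemma~\ref{lem:kimreduct}) and invoking hypothesis (3). The only cosmetic differences are that you spell out the justification of $\acl_\cup=\acl_{i^*}$ and the reduct step in slightly more detail than the paper does.
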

\begin{proof}
For all $i\neq i^*$, $T_i$ is relatively disintegrated, and hence $\eind[f]$ in $T_i$ satisfies the $T_\cap$-generic independence theorem by Proposition~\ref{prop:reldisthreebody}. So 
by Theorem~\ref{thm:preservation}, $T_\cup^*$ is $\NSOP_1$, and $\ind = \eind[K]$.
By Fact~\ref{fact:kimsimple}, it suffices to show that $\eind[K]$ satisfies base monotonicity over models.

We begin just as in the proof of Theorem~\ref{thm:rel-dis-simple}, fixing $M\prec N \prec \monster_\cup$, $M\subseteq A$, and $N\subseteq B$ such that $A = \acl_\cup(A)$, $B = \acl_\cup(B)$, and $A\eind[K]_M B$. Then we can show that for all $i\in I$, $$\acl_i(NA) \eind[f]_N B \text{ in }\monster_i,$$ and we are done if we can improve this to $$\acl_\cup(NA) \eind[f]_N B \text{ in }\monster_i.$$

Unlike the relatively disintegrated case, we may not have $\acl_i(NA) = \acl_\cup(NA)$ for all $i$. But by our hypothesis and Proposition~\ref{prop:relaclcomp}, $\acl_\cup = \acl_{i^*}$, so $$\acl_\cup(NA) \eind[f]_N B \text{ in }\monster_{i^*}.$$
Taking the reduct (Lemma~\ref{lem:kimreduct}), we also have $$\acl_\cup(NA) \eind[f]_N B \text{ in }\monster_{\cap}.$$ And since for all $i\neq i^*$, forking in $\monster_i$ agrees with forking in $\monster_\cap$, we have $$\acl_\cup(NA) \eind[f]_N B \text{ in $\monster_{i}$ for all $i\neq i^*$}.$$ This completes the proof.
\end{proof}

\section{$\aleph_0$-categoricity}\label{section:categoricity}
\noindent
We do not assume that $T_\cup^*$ exists in this section.

\subsection{Existence and Preservation} \label{sec: Preservationcountablecat}

Applying the preservation results from Section~\ref{sec:analysis}, we show if each $T_i$ is $\aleph_0$-categorical and certain extra hypotheses hold, then $T^{*}_\cup$ exists and is $\aleph_0$-categorical.
This section closely follows work of Pillay and Tsuboi~\cite{PillayTsuboi}.
Our principle innovation is to note that their assumptions are satisfied by a number of examples, see Section~\ref{section:examples}.

\begin{prop}\label{prop:ccelementary}
Assume $T_\cap$ admits a stationary and extendable independence relation.
Assume also that all languages have only finitely many sorts.
Suppose that each $T_i$ is $\aleph_0$-categorical and that there is some $i^*\in I$ such that $\acl_i(A) = \acl_\cap(A)$ for all $i\neq i^*$.
Then $T^*_\cup$ exists.
\end{prop}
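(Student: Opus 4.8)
The plan is to produce an explicit first-order axiomatization of the class of interpolative $T_\cup$-models. First I would Morleyize: by Remark~\ref{rem: robust}(2) we may assume each $T_i$ has quantifier elimination, which preserves $\aleph_0$-categoricity, the hypothesis $\acl_i = \acl_\cap$ for $i \neq i^*$, the existence of a stationary and extendable independence relation on $T_\cap$, and the finiteness of the common set of sorts; one may also assume each $T_i$ complete (else argue componentwise over the completions). Then $T_\cap$, a reduct of the $\aleph_0$-categorical $T_i$, is itself complete and $\aleph_0$-categorical, so by Ryll--Nardzewski each finite variable context admits only finitely many $L_\cap$-formulas up to $T_\cap$-equivalence, and --- since there are only finitely many sorts --- $\acl_{i^*}$ of a finite set is finite, uniformly: there is a function $N$ with $|\acl_{i^*}(b)| \le N(|b|)$ for every finite tuple $b$. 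Finally, $\acl_i = \acl_\cap \subseteq \acl_{i^*}$ for $i \neq i^*$ forces a subset of a $T_\cup$-model to be $\acl_{i^*}$-closed exactly when it is closed under every $\acl_i$; that is, the combined closure $\ccl$ equals $\acl_{i^*}$.

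Next I would write down the axioms. Given a finite $J \subseteq I$ and quantifier-free $L_i$-formulas $(\varphi_i(x,y_i))_{i \in J}$, let $z$ be a variable tuple long enough to enumerate $\acl_{i^*}$ of $|J|$-many parameter tuples of the sorts of the $y_i$ (so $|z|$ depends only on $J$ and the $y_i$, via $N$), and let $\psi^1(x,z),\dots,\psi^m(x,z)$ list the $L_\cap$-formulas in context $(x,z)$ up to $T_\cap$-equivalence. Let $\sigma_{J,(\varphi_i)}$ be the $L_\cup$-sentence asserting: for all $(b_i)_{i \in J}$, if no $x$ satisfies $\bigwedge_{i\in J}\varphi_i(x,b_i)$, then there are $k\colon J \to \{1,\dots,m\}$ and tuples $(c_i)_{i\in J}$ with $\forall x\,(\varphi_i(x,b_i) \to \psi^{k(i)}(x,c_i))$ for all $i$ and no $x$ satisfying $\bigwedge_{i\in J}\psi^{k(i)}(x,c_i)$. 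Put $\Sigma = T_\cup \cup \{\sigma_{J,(\varphi_i)}\}$. Since quantifier elimination reduces the interpolative condition to quantifier-free $\varphi_i$, a separator of the special form produced by $\sigma_{J,(\varphi_i)}$ is in particular a separator, so every model of $\Sigma$ is interpolative. The content is the converse.

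The hard part will be showing that every interpolative $\sM_\cup \models T_\cup$ satisfies $\Sigma$; equivalently, that whenever $\bigcap_{i\in J}\varphi_i(\sM,b_i) = \emptyset$ in such an $\sM_\cup$, the sets $\varphi_i(\sM,b_i)$ can be separated by $L_\cap$-definable sets whose parameters lie in the finite set $A_0 := \acl_{i^*}(b_J)$ (of size at most $N(|b_J|)$), where $b_J = (b_i)_{i\in J}$. Here I would appeal to the amalgamation structure of interpolative models over $\ccl$-closed sets: since $T_\cap$ admits a stationary and extendable independence relation $\eind[\cap]$, one can run the argument of Theorem~\ref{thm:dap} --- with Fact~\ref{fact:consistency} in place of Proposition~\ref{prop:realizingtypes} (the latter presupposes $T^*_\cup$), together with the construction of interpolative extensions from \cite{firstpaper} --- to show directly that interpolative $T_\cup$-models have the disjoint $\ccl$-amalgamation property, and hence that the $L_\cup$-type of a $\ccl$-closed substructure of an interpolative model is determined by its $L_i$-types. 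As $\ccl = \acl_{i^*}$ and $A_0$ is finite, it follows that whether the $\varphi_i(\sM,b_i)$ admit an $L_\cap$-separation is controlled by the $L_\cap$-configuration over $A_0$, and a disjoint-amalgamation back-and-forth over $A_0$ converts any separation into one with parameters in $A_0$. Thus the interpolative class equals $\mathrm{Mod}(\Sigma)$, so $T^*_\cup$ exists.

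The main obstacle is precisely this last step --- establishing that a separation, once it exists, may be taken over $\acl_{i^*}(b_J)$. All three hypotheses are used essentially: $\aleph_0$-categoricity (so $\acl_{i^*}$ of a finite set is finite and each context carries finitely many $L_\cap$-formulas), the condition $\acl_i = \acl_\cap$ for $i \neq i^*$ (so $\ccl = \acl_{i^*}$ is uniformly locally finite), and the stationary extendable independence relation on $T_\cap$ (for the amalgamation). Extra care is needed because the disjoint $\ccl$-amalgamation statement of Section~\ref{sec:analysis} is stated there under the standing assumption that $T^*_\cup$ exists, so it must be re-derived from scratch for substructures of $T_\cup$-models, using only Fact~\ref{fact:consistency} and \cite{firstpaper}.
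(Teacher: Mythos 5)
Your overall strategy is sound and is essentially equivalent to the paper's: the paper axiomatizes interpolativity by a ``joint consistency property'' over finite $\acl_{i^*}$-closed sets $B$, which is the same first-order content as your axioms $\sigma_{J,(\varphi_i)}$ asserting that separations can be found with parameters of bounded length. Your easy direction (a separator of the special form is a separator) is fine, and your reductions ($\ccl=\acl_{i^*}$, uniform finiteness of $\acl_{i^*}$ on finite sets, finitely many $L_\cap$-formulas per context) are all correct and all appear in the paper's argument.

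The gap is in the step you yourself flag as the hard part, and the fix you propose does not work. You want to re-derive the disjoint $\ccl$-amalgamation property for interpolative $T_\cup$-models ``using only Fact~\ref{fact:consistency} and \cite{firstpaper}'' and then run a back-and-forth to relocate a given separation over $A_0=\acl_{i^*}(b_J)$. But the proof of Theorem~\ref{thm:dap} needs more than consistency of $\bigcup_i q_i(x)$: it needs the union of types to be realized in an \emph{elementary extension} of the given interpolative model, and that is exactly Proposition~\ref{prop:realizingtypes}, whose proof uses both parts of Fact~\ref{fact:extension} and hence presupposes that $T_\cup^*$ exists. Without $T_\cup^*$, an extension that is $L_i$-elementary for every $i$ need not be $L_\cup$-elementary, so the amalgamation diagram cannot be closed. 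Moreover, even granting disjoint $\ccl$-amalgamation, the assertion that ``a disjoint-amalgamation back-and-forth over $A_0$ converts any separation into one with parameters in $A_0$'' is not an argument: moving the separating parameters $c_i$ to $L_\cap$-conjugates over $A_0$ independently of one another can destroy the joint emptiness $\bigcap_i \psi^{k(i)}(\sM,c_i)=\emptyset$, and nothing you have set up controls this.

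The paper avoids all of this by proving the key implication directly, with no appeal to amalgamation of models. Given $(\varphi_i(x,a_i))_{i\in J}$ that are \emph{not} separated, set $B=\acl_{i^*}((a_i)_{i\in J})$ and use $\aleph_0$-categoricity to write down, for each $i$, the canonical $L_\cap(B)$-formula $\psi^i(x,b)$ saying ``$\tp_{L_\cap}(x/B)$ is consistent with $\varphi_i(x,a_i)$''. Each $\varphi_i(\sM,a_i)\subseteq\psi^i(\sM,b)$, so the $\psi^i$ cannot separate; hence some $a$ realizes all of them, and its $L_\cap$-type over $B$ extends to complete $L_i$-types $p_i\ni\varphi_i(x,a_i)$ with a common $L_\cap$-restriction. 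The joint consistency property (whose equivalence with interpolativity is proved using only full existence and stationarity of $\eind[\cap]$ over the $\acl_i$-closed set $B$, applied directly to types) then yields a common realization of the $\varphi_i(x,a_i)$. I would rework your hard step along these lines rather than through amalgamation.
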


\begin{proof}
A $T_\cup$-model $\sM_\cup$ has the \textbf{joint consistency property} if for every finite $B\subseteq M$ such that $B = \acl_{i^*}(B)$ and every family $(p_i(x))_{i\in J}$ such that $J$ is a finite subset of $I$, if $p_i(x)$ is a complete $L_i$-type over $B$ for all $i\in J$, and the $p_i$ have a common restriction $p_\cap(x)$ to $L_\cap$, then $\bigcup_{i\in I} p_i(x)$ is realized in $\sM_\cup$.

Note that the joint consistency property is elementary. Indeed, by $\aleph_0$-categoricity, there is an $L_{i^*}$-formula $\psi(y)$ expressing the property that the set $B$ enumerated by a tuple $b$ is $\acl_{i^*}$-closed. Since $B$ is finite, every complete $L_i$-type $p_i(x)$ over $B$ is isolated by a single formula. And the property that the $L_i$-formula $\varphi_i(x,b)$ isolates a complete $L_i$-type over $B$ whose restriction to $L_\cap$ is isolated by the $L_\cap$-formula $\varphi_\cap(x,b)$ is definable by a formula $\theta_{\varphi_i,\varphi_\cap}(b)$. So the class of $T_\cup$-models with the joint consistency property is axiomatized by $T_\cup$ together with sentences of the form \[\forall y\, \left[ \left(\psi(y)\land \bigwedge_{i\in J} \theta_{\varphi_i,\varphi_\cap}(y)\right) \rightarrow \exists x\, \bigwedge_{i\in J} \varphi_i(x,y)\right].\]

It remains to show that a structure $\sM_\cup$ is interpolative if and only if it has the joint consistency property. So suppose $\sM_\cup$ is interpolative, let $B$ and $(p_i(x))_{i\in J}$ be as in the definition of the joint consistency property, and suppose for contradiction that $\bigcup_{i\in J} p_i(x)$ is not realized in $\sM_\cup$. Note that since $B$ is $\acl_{i^*}$-closed, it is also $\acl_i$-closed for all $i\neq i^*$, since $\acl_i(B) = \acl_\cap(B)\subseteq \acl_{i^*}(B) = B$. 

Each $p_i(x)$ is isolated by a single $L_i$-formula $\varphi_i(x,b)$, and \[\sM_\cup \models \lnot \exists x\, \bigwedge_{i\in J} \varphi_i(x,b).\] It follows that the $\varphi_i$ are separated by a family of $L_\cap$-formulas $(\psi^i(x,c_i))_{i\in J}$. Let $C = B\cup \{c_i\mid i\in J\}$. By full existence for $\ind$ in $T_i$, since $B$ is $\acl_i$-closed, $p_i(x)$ has an extension to a type $q_i(x)$ over $C$ such that for any realization $a_i$ of $q_i(x)$, $a\ind_B C$. By stationarity, the types $q_i(x)$ have a common restriction $q_\cap$ to $L_\cap$. 
Now for all $i\in J$, since $\varphi_i(x,b)\in p_i(x)$, $\psi^i(x,c_i)\in q_i(x)$, and hence $\psi^i(x,c_i)\in q_\cap(x)$. This is a contradiction, since $\{\psi^i(x,c_i)\mid i\in J\}$ is inconsistent. 

Conversely, suppose $\sM_\cup$ has the joint consistency property. Let $(\varphi_i(x,a_i))_{i\in J}$ be a family of formulas which are not separated. Let $B = \acl_{i^*}((a_i)_{i\in J})$. Since $T_{i^*}$ is $\aleph_0$-categorical and $J$ is finite, $B$ is finite. For each $i\in J$, there is an $L_\cap$-formula $\psi^i(x,b)$ such that $\sM_\cup\models \psi^i(a,b)$ if and only if $\tp_{L_\cap}(a/B)$ is consistent with $\varphi_i(x,a_i)$ (we may take $\psi^i(x,b)$ to be the disjunction of formulas isolating each of the finitely many such types). Since the formulas $\psi^i(x,b)$ do not separate the formulas $\varphi_i(x,a_i)$, there must be some element $a\in M^x$ satisfying $\bigwedge_{i\in J} \psi^i(x,b)$. Then $p_\cap(x) = \tp_{L_\cap}(a/B)$ is consistent with each $\varphi_i(x,a_i)$, so $p_\cap(x)\cup \{\varphi_i(x,a_i)\}$ can be extended to a complete $L_i$-type $p_i(x)$ over $B$. By the joint consistency property, there is some element in $M^x$ realizing $\bigcup_{i\in J}p_i(x)$, and in particular satisfying $\bigwedge_{i\in J} \varphi_i(x,a_i)$. 
\end{proof}

\noindent
Theorem~\ref{thm:ccpres} follows by a type-counting argument as in Proposition~\ref{prop:stableinkappa}.

\begin{thm}
\label{thm:ccpres}
Assume the hypotheses of Proposition~\ref{prop:ccelementary}, and let $T_\cup^*$ be the interpolative fusion.
Assume additionally that $I$ is finite.
Then every completion of $T_\cup^*$ is $\aleph_0$-categorical.
\end{thm}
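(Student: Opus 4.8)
The proof will be a type-counting argument mirroring Proposition~\ref{prop:stableinkappa}, using the syntactic consequence of Proposition~\ref{prop:relqe1} or Proposition~\ref{prop:relmc} to bound the number of complete $L_\cup$-types over a finite set. First I would reduce to a fixed completion $\widehat{T}_\cup^*$ of $T_\cup^*$, and fix finite tuples of variables $x$; by the Ryll-Nardzewski theorem it suffices to show that $S^x_{L_\cup}(\emptyset)$ is finite for each such $x$ (or, equivalently, $S^x_{L_\cup}(A)$ is finite for $A$ any finite set). The key input is that, under the hypotheses of Proposition~\ref{prop:ccelementary} (so in particular the hypotheses of Theorem~\ref{thm:dap} hold, $\acl_\cup = \ccl$, and $T_\cup^*$ is $\acl$-complete), a complete $L_\cup$-type over a set $A$ is determined by something manageable in terms of the $L_i$.

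The cleanest route: by Proposition~\ref{prop:relaclcomp}(3), if $A$ is $\acl_\cup$-closed then $T_\cup^*\cup\bigcup_{i\in I}\Th_{L_i(A)}(\sM_i)\models \Th_{L_\cup(A)}(\sM_\cup)$, so a complete $L_\cup$-type over $A$ is determined by the family of complete $L_i$-types over $A$, for $i\in I$. So the strategy is: (1) take $B$ a finite set and pass to its $\acl_\cup$-closure $A = \acl_\cup(B) = \ccl(B)$; since $\acl_i = \acl_\cap$ for $i\neq i^*$ and each $T_i$ is $\aleph_0$-categorical (so $\acl_i$ of a finite set is finite), the combined closure $\ccl(B)$ is finite — this uses that $\ccl(B)$ is built by alternately applying $\acl_{i^*}$ and $\acl_\cap$, each of which preserves finiteness, and that the process stabilizes after finitely many steps because once a set is both $\acl_{i^*}$- and $\acl_\cap$-closed it is $\acl_i$-closed for all $i$ and hence $\ccl$-closed. (2) A type in $S^x_{L_\cup}(B)$ extends to a type in $S^{x}_{L_\cup}(A)$ by choosing a realization of the algebraic type of $A$ over $B$; there are only finitely many such extensions, since $A$ is algebraic over $B$ and each $T_i$ (hence $T_\cup^*$, being $\acl$-complete) eliminates $\exists^\infty$ in the relevant sense — more precisely, $|S^{xy}_{L_\cup}(B)|$ finite where $y$ enumerates $A$, which reduces to counting types over $A$. (3) Over the finite $\acl_\cup$-closed set $A$, a complete $L_\cup$-type is determined by the $|I|$-tuple of its restrictions to $L_i$-types over $A$; each $S^x_{L_i}(A)$ is finite by $\aleph_0$-categoricity of $T_i$ (finite set of parameters), and $|I|$ is finite by hypothesis, so $S^x_{L_\cup}(A)$ is finite, hence so is $S^x_{L_\cup}(B)$.

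The main obstacle I anticipate is step (1)–(2): carefully handling the passage from an arbitrary finite parameter set to an $\acl_\cup$-closed one, and justifying that this only multiplies the type count by a finite factor. One must check that $\ccl(B)$ is finite (which needs the $\aleph_0$-categoricity of $T_{i^*}$ to control $\acl_{i^*}$ and the hypothesis $\acl_i=\acl_\cap$ for $i\neq i^*$ to avoid an infinite interleaving), and that the number of $L_\cup$-isomorphism types of $(\acl_\cup(B),B)$ realized is finite — this follows because $\acl_\cup(B)$ is enumerated by finitely many elements each algebraic over $B$, so the type of such an enumeration over $B$ lies in a finite set once we know there are finitely many complete $L_\cup$-types in each finite variable context over $B$, giving a mild bootstrapping that is cleanly resolved by working throughout with the bound from step (3) applied to the larger (but still finite) context. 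With $|I|$ finite and each $T_i$ $\aleph_0$-categorical, every count is a finite product of finite numbers, so no cardinal arithmetic beyond finiteness is needed; this is strictly simpler than Proposition~\ref{prop:stableinkappa}, where $\kappa^{|I|}=\kappa$ was required.
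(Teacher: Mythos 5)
Your proposal is correct and follows essentially the same route as the paper: by Ryll--Nardzewski it reduces to counting $L_\cup$-types over $\emptyset$, uses that $\acl_\cup=\ccl=\acl_{i^*}$ is uniformly locally finite (since $\acl_i=\acl_\cap\subseteq\acl_{i^*}$ for $i\neq i^*$ and $T_{i^*}$ is $\aleph_0$-categorical), and then applies Proposition~\ref{prop:relaclcomp}(3) to determine the $L_\cup$-type of the closure from its finitely many possible $L_i$-types, with $|I|$ finite. The only difference is cosmetic: the paper works directly over $\emptyset$ with $x$-tuples, so your bootstrapping concerns about passing from a finite parameter set $B$ to $\acl_\cup(B)$ do not arise.
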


\begin{proof}
Let $\widehat{T}$ be a completion of $T_\cup^*$. It suffices to show that for any finite tuple of variables $x$, there are only finitely many $L_\cup$-types over the empty set in the variables $x$ relative to $\widehat{T}$. Since $\acl_\cup = \acl_{i^*}$ is uniformly locally finite, there is an upper bound $m$ on the size of $\acl_\cup(a)$ for any tuple $a\in M^x$ when $M\models \widehat{T}$. 

By Proposition~\ref{prop:relaclcomp}, $\tp_{L_\cup}(\acl_\cup(a))$ is determined by $\bigcup_{i\in I} \tp_{L_i}(\acl_\cup(a))$. So the number of possible $L_\cup$-types of $a$ is bounded above by the product over all $i$ of the number of $L_i$-types of $m$-tuples relative to $T_i$. This is finite, since $I$ is finite and each $T_i$ is $\aleph_0$-categorical.
\end{proof}

\noindent Corollary~\ref{cor:ccpres} follows from Corollary~\ref{cor:completeness}.

\begin{cor}
\label{cor:ccpres}
Assume the hypotheses of Proposition~\ref{prop:ccelementary}.
Suppose that $I$ is finite and $\emptyset$ is $\acl_i$-closed for all $i \in I$.
Then $T^*_\cup$ is complete  and $\aleph_0$-categorical.
\end{cor}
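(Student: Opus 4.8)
The plan is to deduce Corollary~\ref{cor:ccpres} directly from the two results it follows from: completeness comes from Corollary~\ref{cor:completeness}, and $\aleph_0$-categoricity from Theorem~\ref{thm:ccpres}. So the proof is essentially a matter of checking that the hypotheses of both ingredients are met.

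First I would observe that, by Proposition~\ref{prop:stationarity}, since $T_\cap$ is $\aleph_0$-categorical and $\aleph_0$-stable it is in particular stable, and the hypothesis of Proposition~\ref{prop:ccelementary} already grants that $T_\cap$ admits a stationary and extendable independence relation; so that hypothesis is available throughout. Next, since we are assuming the hypotheses of Proposition~\ref{prop:ccelementary}, that proposition gives us that $T^*_\cup$ exists, and Theorem~\ref{thm:ccpres}, using the additional assumption that $I$ is finite, gives that every completion of $T^*_\cup$ is $\aleph_0$-categorical. Then, to get completeness, I would invoke Corollary~\ref{cor:completeness}: its hypotheses are exactly ``$T_\cap$ admits a stationary and extendable independence relation, each $T_i$ is complete, and $\emptyset$ is $\acl_i$-closed for all $i\in I$'' — the first is in force, the last is our extra assumption, and each $T_i$ is complete because it is $\aleph_0$-categorical (hence complete, being $\aleph_0$-categorical theories are complete). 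Therefore $T^*_\cup$ is complete, and being complete with every completion $\aleph_0$-categorical, $T^*_\cup$ is itself $\aleph_0$-categorical.

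There is no real obstacle here; the only point requiring the slightest care is bookkeeping the hypotheses: Corollary~\ref{cor:completeness} wants each $T_i$ complete, which we get for free from $\aleph_0$-categoricity, and it wants $\emptyset$ to be $\acl_i$-closed, which is our hypothesis. One might worry whether ``$\aleph_0$-categorical'' for an incomplete theory requires spelling out (the paper uses the convention that an $\aleph_0$-categorical theory has each completion $\aleph_0$-categorical with the same countable model up to isomorphism), but this is already handled by the phrasing ``every completion of $T_\cup^*$ is $\aleph_0$-categorical'' in Theorem~\ref{thm:ccpres} combined with completeness. Here is the proof:

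\begin{proof}
The hypotheses include those of Proposition~\ref{prop:ccelementary}, so $T^*_\cup$ exists, and since $I$ is finite, Theorem~\ref{thm:ccpres} shows that every completion of $T^*_\cup$ is $\aleph_0$-categorical. It remains to show that $T^*_\cup$ is complete. Each $T_i$ is $\aleph_0$-categorical, hence complete. Moreover $T_\cap$ is $\aleph_0$-stable and $\aleph_0$-categorical, hence stable, so by Proposition~\ref{prop:stationarity} (or directly from the hypotheses of Proposition~\ref{prop:ccelementary}) $T_\cap$ admits a stationary and extendable independence relation. Since $\emptyset$ is $\acl_i$-closed for all $i\in I$ by hypothesis, Corollary~\ref{cor:completeness} applies and $T^*_\cup$ is complete. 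A complete theory all of whose completions are $\aleph_0$-categorical is itself $\aleph_0$-categorical.
\end{proof}
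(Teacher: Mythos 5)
Your proof is correct and is exactly the route the paper intends (the paper gives no separate proof, noting only that the corollary ``follows from Corollary~\ref{cor:completeness}'' together with Theorem~\ref{thm:ccpres}). One small slip: Proposition~\ref{prop:ccelementary} does not assume $T_\cap$ is $\aleph_0$-categorical and $\aleph_0$-stable (those are hypotheses of Fact~\ref{fact:minh}), so the detour through Proposition~\ref{prop:stationarity} is unwarranted --- but harmless, since, as you note, the existence of a stationary and extendable independence relation on $T_\cap$ is assumed directly.
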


\noindent
The following result of Pillay and Tsuboi is a special case of Theorem~\ref{thm:ccpres}.

\begin{cor}[{\cite[Corollary 5]{PillayTsuboi}}] \label{Cor: PillayTsuboi}
Assume $T_\cap$ is stable with weak elimination of imaginaries. Let $I = \{1,2\}$, suppose $T_1$ and $T_2$ are $\aleph_0$-categorical single-sorted theories, and suppose $\acl_1(A) = \acl_\cap(A)$ for all $A\subseteq \monsterset_1$. Then $T_\cup$ admits an $\aleph_0$-categorical completion.
\end{cor}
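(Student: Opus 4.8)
The plan is to obtain this as a direct specialization of Theorem~\ref{thm:ccpres}: I will check that the hypotheses of Proposition~\ref{prop:ccelementary} (hence of Theorem~\ref{thm:ccpres}) are satisfied, and then observe that a completion of $T_\cup^*$ is in particular an $\aleph_0$-categorical completion of $T_\cup$.

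First I would verify that $T_\cap$ admits a stationary and extendable independence relation. Since $T_\cap$ is stable with weak elimination of imaginaries, every completion $\widehat{T}_\cap$ of $T_\cap$ is again stable with weak elimination of imaginaries, so by Proposition~\ref{prop:stationarity}, $\eind[f]$ is a stationary independence relation in $\widehat{T}_\cap$ which is extendable to any complete extension of $\widehat{T}_\cap$ — in particular to each completion $\widehat{T}_i$ of $T_i$ extending $\widehat{T}_\cap$. This is exactly the required property. The remaining hypotheses are immediate: the languages $L_\cap$, $L_1$, $L_2$ are all single-sorted, hence have finitely many sorts; each $T_i$ is $\aleph_0$-categorical by assumption; $I = \{1,2\}$ is finite; and taking $i^\ast = 2$, we have $\acl_i(A) = \acl_\cap(A)$ for all $i \neq i^\ast$ (i.e.\ for $i = 1$) by hypothesis.

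Thus Proposition~\ref{prop:ccelementary} applies and $T_\cup^*$ exists. Since $T_\cup$ is consistent (a standing assumption; here it also follows from Robinson joint consistency, as $T_\cap$, being a reduct of an $\aleph_0$-categorical complete theory, is itself complete), Fact~\ref{fact:extension}(1) yields a $T_\cup^*$-model, so $T_\cup^*$ is consistent and admits a completion $\widehat{T}$. By Theorem~\ref{thm:ccpres}, $\widehat{T}$ is $\aleph_0$-categorical. Finally, interpolative $T_\cup$-models are in particular $T_\cup$-models, so $T_\cup \subseteq T_\cup^* \subseteq \widehat{T}$, and $\widehat{T}$ is an $\aleph_0$-categorical completion of $T_\cup$, as desired. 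I do not expect a genuine obstacle here — the corollary is a routine instantiation of Theorem~\ref{thm:ccpres}; the only points requiring any care are the completion-by-completion application of Proposition~\ref{prop:stationarity} to get the stationary and extendable independence relation, and the elementary observation that the interpolative fusion extends $T_\cup$.
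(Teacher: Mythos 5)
Your proposal is correct and follows exactly the route the paper intends: the paper presents this corollary as a direct specialization of Theorem~\ref{thm:ccpres}, and your verification of the hypotheses (stationary and extendable independence via Proposition~\ref{prop:stationarity}, taking $i^*=2$, finiteness of $I$ and of the sort set, and consistency of $T_\cup^*$ via Fact~\ref{fact:extension}) supplies precisely the routine details the paper leaves implicit. Nothing further is needed.
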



\subsection{A counterexample}
\label{section:counterexample}
We now construct an example where the interpolative fusion of two
$\mathrm{NSOP}_1$ theories with trivial $\acl$ over a unstable reduct $T_\cap$ exists but is not $\mathrm{NSOP}_1$. This demonstrates that the stability of $T_\cap$ in Theorem~\ref{thm:preservation} is necessary.

\medskip\noindent
Let $\sK$ and $\sK'$ be \Fraisse classes in the languages $L\subseteq L'$, respectively. We say $\sK'$ is a \textbf{\Fraisse expansion} of $\sK$ if:
\begin{enumerate}
\item $\sK = \{\sA|_L\mid \sA\in \sK'\}$
\item For every $\sA\in \sK$, every one-point extension $\sA\subseteq \sB$ with $\sB\in \sK$, and every expansion $\sA'\in \sK'$ with $\sA'|_L = \sA$, there exists $\sB'\in \sK'$ such that $\sA'$ is an $L'$-substructure of $\sB'$ and $\sB'|_L = \sB$.
\end{enumerate}
See, for example, \cite[Theorem 2.7]{Namalg} for a proof of Fact~\ref{fact:fraisse-expansion}.

\begin{fact}
\label{fact:fraisse-expansion}
Let $\sK'$ and $\sK$ be as above.
Then $\sK'$ is a \Fraisse expansion of $\sK$ if and only if the \Fraisse limit of $\sK'$ is an expansion of the \Fraisse limit of $\sK$.
\end{fact}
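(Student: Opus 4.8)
The plan is to reduce both directions to the standard characterization of a \Fraisse limit by its one-point extension property. Recall that a countable $L$-structure $N$ is isomorphic to the \Fraisse limit $M$ of $\sK$ if and only if $\mathrm{Age}(N)=\sK$ and $N$ has the \textbf{one-point extension property over $\sK$}: for every one-point extension $\sC\subseteq\sD$ in $\sK$ and every embedding $f\colon\sC\to N$ there is an embedding $g\colon\sD\to N$ with $g|_C=f$ (the version with arbitrary finite extensions follows by induction along a chain of substructures each generated by one new point). Likewise $M'$ is characterized, among countable $L'$-structures, by having age $\sK'$ and the one-point extension property over $\sK'$. Since $M$ and $M'$ are \Fraisse limits they are ultrahomogeneous, so each has the extension property for its own age. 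Note that ``the \Fraisse limit of $\sK'$ is an expansion of the \Fraisse limit of $\sK$'' means precisely $M'|_L\cong M$, after identifying underlying sets.

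For the forward direction, assume $\sK'$ is a \Fraisse expansion of $\sK$; I would check that $M'|_L$ satisfies the characterization above for $\sK$. It is countable, and $\mathrm{Age}(M'|_L)=\{\sA|_L:\sA\in\mathrm{Age}(M')\}=\{\sA|_L:\sA\in\sK'\}=\sK$ by condition (1). For the one-point extension property, given a one-point extension $\sC\subseteq\sD$ in $\sK$ and an embedding $f\colon\sC\to M'|_L$, identify $\sC$ with the $L$-substructure of $M'|_L$ on $f(C)$ and let $\sC'$ be the $L'$-substructure of $M'$ with underlying set $f(C)$, so $\sC'\in\sK'$ and $\sC'|_L=\sC$. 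Condition (2) produces $\sD'\in\sK'$ with $\sC'$ an $L'$-substructure of $\sD'$ and $\sD'|_L=\sD$; since $|D'|=|C'|+1$ this is a one-point $L'$-extension, so the extension property of $M'$ extends the inclusion $\sC'\hookrightarrow M'$ to an embedding $g\colon\sD'\to M'$, whose $L$-reduct is an embedding $\sD=\sD'|_L\to M'|_L$ extending $f$. Hence $M'|_L\cong M$.

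For the converse, assume $M'|_L\cong M$. Condition (1) holds because a finite $L$-structure embeds in $M'|_L$ exactly when it is the $L$-reduct of the $L'$-substructure of $M'$ on its image, so $\{\sA|_L:\sA\in\sK'\}=\mathrm{Age}(M'|_L)=\mathrm{Age}(M)=\sK$. For condition (2), given a one-point extension $\sA\subseteq\sB$ in $\sK$ and $\sA'\in\sK'$ with $\sA'|_L=\sA$, fix an embedding $e\colon\sA'\to M'$; its $L$-reduct embeds $\sA$ into $M'|_L\cong M$, and by the extension property of $M$ this extends to an embedding $\hat e\colon\sB\to M'|_L$. Let $\sB'$ be the $L'$-substructure of $M'$ with underlying set $\hat e(B)$; then $\sB'\in\sK'$, $\sB'|_L\cong\sB$ via $\hat e$, and $e(\sA')$ is an $L'$-substructure of $\sB'$. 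Replacing $\sB'$ by an isomorphic copy (\Fraisse classes being closed under isomorphism) so that $\sA'$ itself is an $L'$-substructure of it yields condition (2).

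The main obstacle I expect is not conceptual but the bookkeeping between the two languages: one must use consistently that the $L$-reduct of the $L'$-substructure of $M'$ on a set $X$ coincides with the $L$-substructure of $M'|_L$ on $X$, and that $L$-reducts of $L'$-embeddings remain embeddings (images being substructures). In a relational setting --- which covers all the examples of interest, in particular those used in Section~\ref{section:counterexample} --- these points are automatic; with function symbols they require the usual care about generated substructures, and the reduction from arbitrary to one-point extensions in the characterization of a \Fraisse limit must be carried out along chains of substructures each generated by a single new element. Modulo this, the argument is a direct unwinding of the two definitions together with ultrahomogeneity of $M$ and $M'$.
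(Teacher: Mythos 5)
Your proof is correct and is the standard argument via the extension-property characterization of \Fraisse limits; the paper itself gives no proof of this fact, deferring to the cited reference, and your argument is essentially what that reference does. The relational-language bookkeeping you flag at the end is the only delicate point, and it is harmless for the paper's application (the classes in Section~\ref{section:counterexample} are all relational).
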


\noindent Recall that a $3$-hypergraph is a set $V$ with a symmetric ternary relation $R$ on $V$ such that $R(a,b,c)$ implies that $a,b,c$ are distinct. Finite $3$-hypergraphs form a  \Fraisse class, the \Fraisse limit of which is known as the random $3$-hypergraph. It is well-known that the theory of random $3$-hypergraph is simple but unstable.
We leave the proof of the following Lemma to the reader.

\begin{lem}\label{lem:Fraisseexample}
Let $L =\{R\}$ with  $R$ a ternary relation symbol, and view the \Fraisse class $\sK$  of finite $3$-hypergraphs as a class of $L$-structures. Set $L_1 = \{R,E_1\}$, and $L_2 = \{R,E_2\}$, where $E_1$ and $E_2$ are binary relation symbols. Let $\sK_1$ be the class of all finite $L_1$-structures such that  $R$ is a $3$-hypergraph relation, $E_1$ is a graph  relation (symmetric and anti-reflexive), and
\[\forall xyz\, ((E_1(x,y)\land E_1(y,z)\land E_1(z,x))\rightarrow R(x,y,z)).\]
Let $\sK_2$ be the class of all finite $L_2$-structures such that $R$ is a $3$-hypergraph relation, $E_2$ is a graph  relation, and
\[\forall xyz\, ((E_2(x,y)\land E_2(y,z)\land E_2(z,x))\rightarrow \lnot R(x,y,z)).\]
Then each $\sK_i$ is a \Fraisse class with disjoint amalgamation and is a \Fraisse expansion of $\sK$.
\end{lem}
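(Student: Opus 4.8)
The plan is to check the \Fraisse-class axioms and the two clauses defining a \Fraisse expansion directly, with essentially all the work concentrated in one observation about how the constraint axioms of $\sK_i$ behave under free amalgamation. The same observation does all four jobs: disjoint amalgamation for $i=1$ and $i=2$, and the lifting clause of the expansion definition for $i=1$ and $i=2$.

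First I would dispatch the routine parts. Each $\sK_i$ is axiomatized by universal $L_i$-sentences, hence closed under substructures (hereditary property); it is nonempty and, since $L_i$ is finite, has only countably many members up to isomorphism. So the substance is disjoint amalgamation, which will also yield joint embedding by amalgamating over the empty structure. For disjoint amalgamation I would use the \emph{free amalgam}: given $\sA\subseteq\sB$ and $\sA\subseteq\sC$ in $\sK_i$, identify $A = B\cap C$ and set $D = B\cup C$, $R^{\sD} = R^{\sB}\cup R^{\sC}$, $E_i^{\sD} = E_i^{\sB}\cup E_i^{\sC}$, with no edges or hyperedges on tuples meeting both $B\setminus A$ and $C\setminus A$. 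Then $\sB,\sC$ are substructures of $\sD$, $B\cap C = A$, $R^{\sD}$ is visibly a $3$-hypergraph relation and $E_i^{\sD}$ is visibly a graph relation, so the only point to verify is the constraint axiom. The key observation is that every $E_i^{\sD}$-edge lies entirely inside $B$ or entirely inside $C$; hence any $x,y,z$ with $E_i^{\sD}$ holding on each of the three pairs cannot have one vertex in $B\setminus A$ and another in $C\setminus A$, so $\{x,y,z\}$ sits inside $B$ or inside $C$, and the required ($\neg$)$R^{\sD}(x,y,z)$ holds because $R^{\sD}$ agrees there with $R^{\sB}$ (resp.\ $R^{\sC}$) and $\sB\in\sK_i$ (resp.\ $\sC\in\sK_i$). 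Thus $\sD\in\sK_i$.

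For the \Fraisse expansion clauses: clause (1) is immediate, since the $L$-reduct of any member of $\sK_i$ has $R$ a $3$-hypergraph relation, hence lies in $\sK$, and conversely any $\sA\in\sK$ expands to $\sK_i$ by taking $E_i=\emptyset$, for which the constraint axiom is vacuous. For clause (2), given a one-point extension $\sA\subseteq\sB$ in $\sK$ with $B = A\cup\{b\}$ and an expansion $\sA'\in\sK_i$ of $\sA$, I would let $\sB'$ be $\sB$ with $E_i^{\sB'} := E_i^{\sA'}$, i.e.\ add no new $E_i$-edges at the new point $b$. Then $\sA'$ is an $L_i$-substructure of $\sB'$ and $\sB'|_L = \sB$, so it remains only to see $\sB'\in\sK_i$, and again the sole issue is the constraint axiom: every $E_i^{\sB'}$-edge lies in $A$, so every $E_i$-triangle of $\sB'$ has all three vertices in $A$, where $R^{\sB'}$ agrees with $R^{\sA} = R^{\sA'}$ and $\sA'\in\sK_i$.

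I do not anticipate a real obstacle: the whole argument collapses to the single fact that a monochromatic $E_i$-triangle --- whether in a free amalgam or in the one-point expansion with no new edges --- must be contained in one of the given factors, where the hyperedge relation is already correctly constrained. The only mild care needed is to record this uniformly across the amalgamation step and the expansion step, and across $i=1$ (constraint ``$\Rightarrow R$'') and $i=2$ (constraint ``$\Rightarrow\neg R$''); the reasoning is verbatim the same in all cases.
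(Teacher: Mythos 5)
Your proof is correct, and it matches the argument the paper intends: the paper explicitly leaves this lemma to the reader, and the subsequent proposition's proof relies precisely on the fact that $\sK_1$ and $\sK_2$ admit free amalgamation, which is the heart of your argument. The key observation --- that every monochromatic $E_i$-triangle in a free amalgam (or in a one-point expansion with no new edges) lies entirely in one factor, where the constraint on $R$ is already satisfied --- is exactly right and handles all four verifications uniformly.
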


\noindent
Recall that the class of finite triangle-free graphs is a \Fraisse class, the \Fraisse limit of which is known as the Henson graph. The theory of the Henson graph is $\SOP_3$ and $\NSOP_4$; see for example~\cite{gabe-henson}. In particular, it is $\SOP_1$. We now construct the promised example.

\begin{prop}
There are simple $\aleph_0$-categorical  theories $T_\cap$, $T_1$, $T_2$, each with trivial $\acl$, such that $T^*_\cup$ exists, is complete and $\aleph_0$-categorical, and has $\SOP_3$. In particular, the theory of the Henson graph is interpretable in $T^*_\cup$.
\end{prop}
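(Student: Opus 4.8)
The plan is to take $T_\cap$ to be the theory of the random $3$-hypergraph in the language $L = \{R\}$, and to take $T_1$ and $T_2$ to be the theories of the \Fraisse limits of the classes $\sK_1$ and $\sK_2$ from Lemma~\ref{lem:Fraisseexample}. By Fact~\ref{fact:fraisse-expansion}, each $T_i$ is an expansion of $T_\cap$ by a single binary relation $E_i$ subject only to: $E_i$ is a graph relation, and every $E_i$-triangle lies in $R$ (for $i=1$) respectively misses $R$ (for $i=2$). Each $T_i$ is the theory of a \Fraisse limit of a \Fraisse class with disjoint amalgamation in a finite relational language, hence is $\aleph_0$-categorical, has quantifier elimination, has trivial algebraic closure, and is simple (in fact each $\sK_i$ has the same disjoint amalgamation structure that makes the random graph simple; simplicity follows because disjoint $n$-amalgamation for all $n$ holds — the only constraint is on triangles, so amalgams can always be completed by omitting $E_i$-edges). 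Note the $L_i$-reduct of each \Fraisse limit to $L$ is the random $3$-hypergraph, so indeed each $T_i$ has $L_\cap$-consequences exactly $T_\cap$.

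First I would establish existence of $T^*_\cup$ and its basic properties. Each $T_i$ eliminates $\exists^\infty$ (being $\aleph_0$-categorical), and $T_\cap$ is $\aleph_0$-categorical, $\aleph_0$-stable? — no, it is \emph{not} stable, so Fact~\ref{fact:minh} does not apply directly. However, $T_\cap$ is interpretable in the random $3$-hypergraph which is supersimple of finite rank; I instead apply the existence machinery via Corollary~\ref{cor:minh} is unavailable, so I would invoke the relevant existence result from \cite{firstpaper} (or rather verify that $\acl_1 = \acl_\cap$ and apply Proposition~\ref{prop:ccelementary}, since $\acl_i(A) = A = \acl_\cap(A)$ for all $A$ and all $i$). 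By Proposition~\ref{prop:ccelementary} (taking any $i^* \in I$, since in fact $\acl_i = \acl_\cap$ for \emph{all} $i$), $T^*_\cup$ exists; by Corollary~\ref{cor:ccpres}, since $\emptyset$ is $\acl_i$-closed for each $i$, $T^*_\cup$ is complete and $\aleph_0$-categorical. Note Proposition~\ref{prop:ccelementary} requires that $T_\cap$ admit a stationary and extendable independence relation — but $T_\cap$ is unstable, so this hypothesis fails! I must instead argue existence and $\aleph_0$-categoricity by hand: $\sK_1$ and $\sK_2$ are \Fraisse expansions of $\sK$ and one can directly check that a $T_\cup$-model is interpolative iff it has the appropriate amalgamation/joint-embedding property relative to finite configurations, which holds in the \Fraisse limit of the class $\sK_\cup$ of finite $L_\cup$-structures whose $L_i$-reduct lies in $\sK_i$; this class has free/disjoint amalgamation, hence a \Fraisse limit which is $\aleph_0$-categorical with quantifier elimination, and which is easily seen to be interpolative. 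So $T^*_\cup$ is the theory of this \Fraisse limit.

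Next I would exhibit the Henson graph as interpretable in $T^*_\cup$. In a model $\sM_\cup \models T^*_\cup$, define a binary relation $H(x,y) := E_1(x,y) \wedge E_2(x,y)$. I claim $(\sM_\cup, H)$ is a triangle-free graph which, on a suitable definable subset or as is, is (elementarily equivalent to) the Henson graph. Triangle-freeness: if $H$ held on all three pairs among $x,y,z$, then $E_1(x,y)\wedge E_1(y,z)\wedge E_1(z,x)$ forces $R(x,y,z)$, while $E_2(x,y)\wedge E_2(y,z)\wedge E_2(z,x)$ forces $\lnot R(x,y,z)$ — contradiction. So $H$ is triangle-free. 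For richness: given any finite triangle-free graph $G$ on vertices $v_1,\dots,v_n$, I must realize it as an $H$-configuration inside $\sM_\cup$; this amounts to finding an $L_\cup$-structure on $\{v_1,\dots,v_n\}$ in $\sK_\cup$ with $E_1(v_i,v_j) \wedge E_2(v_i,v_j)$ exactly when $v_i \sim_G v_j$, and by genericity of the fusion such a configuration embeds. Construct it: put $E_1(v_i,v_j)$ and $E_2(v_i,v_j)$ whenever $v_i \sim_G v_j$; and we are free to choose $R$ and the remaining $E_1,E_2$-edges. On the $G$-edges we need no $E_1$-triangle to violate the $\sK_1$-constraint and no $E_2$-triangle to violate the $\sK_2$-constraint — but since $G$ is triangle-free there are no triangles among the chosen edges at all, so both constraints are vacuous on them. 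Fill in $R$ arbitrarily (say empty) and omit all other $E_i$-edges; the result is in $\sK_\cup$. Thus $(\sM_\cup, H)$ embeds every finite triangle-free graph and is itself triangle-free; by homogeneity/genericity of $\sM_\cup$ and a back-and-forth, $(\sM_\cup, H)$ is the Henson graph (or contains it as a definable reduct), so the Henson graph is interpretable in $T^*_\cup$, and since it has $\SOP_3$, so does $T^*_\cup$. I expect the main obstacle to be the first point: verifying carefully that $T^*_\cup$ exists and is $\aleph_0$-categorical \emph{without} a stationary extendable independence relation on the unstable base $T_\cap$ — this requires a direct \Fraisse-theoretic argument identifying $T^*_\cup$ with $\Th$ of the \Fraisse limit of $\sK_\cup$ and checking that $\sK_\cup$ really is a \Fraisse class (amalgamation is the delicate part, but the triangle constraints only ever permit \emph{removing} edges, so free amalgamation always works) together with checking simplicity of $T_1, T_2$ and $T_\cap$.
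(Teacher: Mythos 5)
Your construction is exactly the paper's: the same $T_\cap$, $T_1$, $T_2$ from Lemma~\ref{lem:Fraisseexample}, the same relation $E_1\wedge E_2$, and the same triangle-freeness and extension-axiom arguments for recovering the Henson graph. The problem is the step where you declare that Proposition~\ref{prop:ccelementary} is inapplicable ``because $T_\cap$ is unstable, so this hypothesis fails.'' That is a misreading of the hypothesis. Proposition~\ref{prop:ccelementary} asks that $T_\cap$ admit \emph{some} stationary and extendable independence relation in the sense of Section~\ref{sec:stationary}; stability plus weak elimination of imaginaries is merely a sufficient condition for \emph{forking} independence to be such a relation (Proposition~\ref{prop:stationarity}). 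Unstable theories can perfectly well carry other stationary and extendable relations --- this is precisely the point of Example~\ref{ex:rg}, where the (unstable) random graph carries $\eind[E]$ and $\eind[\not E]$. The paper's proof defines free-amalgamation independence $A\eind[\otimes]_C B$ on the random $3$-hypergraph ($ABC$ is a free amalgam of $AC$ and $BC$ over $C$), checks stationarity via quantifier elimination, and checks extendability to $T_1$ and $T_2$ using exactly the observation you make at the end of your proposal (the triangle constraints are never violated by a free amalgam). With that relation in hand, Proposition~\ref{prop:ccelementary} and Corollary~\ref{cor:ccpres} give existence, completeness, and $\aleph_0$-categoricity directly.

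Having discarded the applicable tool, you fall back on a direct \Fraisse-theoretic identification of $T^*_\cup$ with the theory of the limit of $\sK_\cup$, but the crucial claim --- that this limit ``is easily seen to be interpolative,'' and conversely that every interpolative model is a model of its theory --- is exactly where the work lives, and you do not carry it out. Verifying interpolativity means showing that any two disjoint $\sM_1$- and $\sM_2$-definable sets are separated by $\sM_\cap$-definable sets, and the natural proof of this is the separation argument inside Proposition~\ref{prop:ccelementary}, which again runs through the stationary and extendable relation $\eind[\otimes]$. So your route is salvageable but circles back to the very tool you rejected. (Two smaller points: your simplicity argument for the $T_i$ via disjoint $n$-amalgamation is fine in spirit, but new amalgams are completed by adding instances of $R$ on undetermined triples rather than by ``omitting $E_i$-edges,'' which may be forced by the faces; and the richness step should, as in the paper, realize the one-point extension via Proposition~\ref{prop:realizingtypes} and then pull the witness into $V$ by $\aleph_0$-categoricity, rather than appealing to unverified ``genericity of the fusion.'')
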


\begin{proof}
Let $L_\cap = \{R\}$, $L_1 = \{R,E_1\}$, and $L_2 = \{R,E_2\}$, where $R$ is a ternary relation symbol and $E_1$ and $E_2$ are binary relation symbols. 
Let $\sK$, $\sK_1$, and $\sK_2$ be as in Lemma~\ref{lem:Fraisseexample}, let $T_\cap$ be the complete theory of the \Fraisse limit of $\sK$ (the random $3$-hypergraph), and let $T_i$ be the complete theory of the \Fraisse limit of $\sK_i$ for $i = 1$ and $2$. It follows from Lemma~\ref{lem:Fraisseexample} that $T_1$ and $T_2$ have a common set of $L_\cap$-consequences, namely $T_\cap$. 

To show that $T^*_\cup$ exists and is complete and $\aleph_0$-categorical, we will apply Proposition~\ref{prop:ccelementary} and Corollary~\ref{cor:ccpres}.
The theories $T_\cap$, $T_1$, and $T_2$ are \Fraisse limits in finite relational languages and hence are $\aleph_0$-categorical with quantifier elimination. Since $\sK$, $\sK_1$, and $\sK_2$ have disjoint amalgamation, $\acl_1(A) = \acl_2(A) = \acl_\cap(A) = A$ for all sets $A$. 
So it is enough to construct a stationary and extendable independence relation on $T_\cap$.

If $A,B,C$ are subsets of a $T_\cap$-model $(V;R)$ define $A \eind[\otimes]_C B$ if and only if $ABC$ is a free amalgam of $AC$ and $BC$ over $C$, i.e. $AC\cap BC = C$, and if $a,b,c \in ABC$ and $R(a,b,c)$ then either $a,b,c \in AC$ or $a,b,c \in BC$. This is a stationary independence relation on $T_\cap$ (by quantifier elimination), and extendibility to $T_1$ and $T_2$ follows from the fact that $\sK_1$ and $\sK_2$ admit free amalgamation. So $T^*_\cup$ exists by Proposition~\ref{prop:ccelementary} and is complete and $\aleph_0$-categorical by Corollary~\ref{cor:ccpres}.


Let $(V;R,E_1,E_2)$ be a countable model of $T^*_\cup$.
Define a binary relation $E$ on $V$ by $E(x,y)$ if and only if $E_1(x, y)\land E_2(x, y)$.
It is immediate that $(V;E)$ is an $\aleph_0$-categorical graph. It is triangle-free because if $a,b,c$ is an $E$-triangle, then it is both an $E_1$-triangle and an $E_2$-triangle, and hence $R(a,b,c)$ and $\lnot R(a,b,c)$, contradiction. We show that $(V;E)$ is isomorphic to the Henson graph by verifying that it satisfies the usual extension axioms.
Suppose that $A,B$ are disjoint finite subsets of $V$ and that there are no $E$-edges between elements of $A$.
We need to find $c \in V$ such that $c$ is connected to every element of $A$ and not connected to any element of $B$.

We first show that we can add a new element satisfying these properties. Suppose that $c \notin V$, and let $W = V \cup \{c\}$. We extend $E_1$ and $E_2$ to $W$ by setting $E_1(c,a)$ and $E_2(c,a)$ (and symmetrically, $E_1(a,c)$ and $E_2(a,c)$) for all $a \in A$. We do not add any other instances of $E_1$ or $E_2$. We extend $R$ to $W$ by setting $R(c,a,a')$ (and its symmetrical instances) when $a,a'\in A$ and $c$, $a$, $a'$ is an $E_1$-triangle. We do not add any other instances of $R$. 

For $i\in \{1,2,\cap\}$, let $p_i$ be the complete quantifier-free $L_i$-type of $c$ over $V$. The reduct of $W$ to $L_i$ satisfies the universal theory of $T_i$, so it embeds in a model of $T_i$. Thus $p_i$ is consistent with $T_i$, and by quantifier elimination it axiomatizes a complete $L_i$-type over $V$. Now both $p_1$ and $p_2$ extend $p_\cap$, so by Proposition~\ref{prop:realizingtypes}, $p_1\cup p_2$ is realized by an element $c'$ in an elementary extension of $V$. Since $T_\cup^*$ is $\aleph_0$-categorical and $AB$ is finite, $\tp_{L_\cup}(c'/AB)$ is realized in $V$. By construction, we have $E(c',a)$ for  all $a\in A$ and $\lnot E(c',b)$ for all $b\in B$. This completes the proof. 
\end{proof}
\noindent We end this section with a question:
\begin{question}
\label{ques:simple}
Must an interpolative fusion of simple theories be $\NSOP$?
\end{question}


\section{Examples}
\label{section:examples}
\noindent
We describe a number of motivating examples which illustrate applications and sharpness of our theorems; see \cite{firstpaper} for other examples. 
The first two examples are among the original motivating examples of unstable simple theories: the random (hyper-)graph and $\ACFA$.
The remaining examples are mainly various kinds of generic constructions which preserve $\NSOP_1$.


\subsection{Random hypergraphs and relations}
\label{section:hypergraph}
Fix $n \geq 2$.
Let $L = \{E\}$ where $E$ is an $n$-ary relation symbol.
An $n$-hypergraph is an $L$-structure $(V;E)$ such that $E$ is symmetric and $E(a_1,\ldots,a_n)$ implies $a_i \neq a_j$ for all $i \neq j$.
The \textbf{random $n$-hypergraph} is the \Fraisse limit of the class of finite $n$-hypergraphs.
(Strictly speaking, the definition given here is for the generic $n$-hypergraphs in the sense of \Fraisse.
However, the theory of finite $n$-hypergraphs satisfies a $0-1$ law and the associated almost sure theory is the theory of the random $n$-hypergraph.)
Let $T$ be the $L$-theory of infinite $n$-hypergraphs.
It is well-known that the $L$-theory of the random $n$-hypergraph is the model companion $T^*$ of $T$, so the random $n$-hypergraph is also generic in the sense of Robinson.

\begin{prop}
\label{prop:random-graph}
There are $T_\cap,T_1,T_2$ such that the theory of the random hypergraph is bi-interpretable $T^*_\cup$, $T_1$ and $T_2$ are both interpretable in the theory of equality, $\acl_2$ agrees with $\acl_\cap$, and forking in $T_2$ agrees with forking in $T_\cap$.
\end{prop}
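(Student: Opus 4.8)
The plan is to realize the random $n$-hypergraph as an interpolative fusion in the most economical way: let $T_\cap$ be the theory of equality, take $T_1$ to be the theory $T$ of infinite $n$-hypergraphs introduced above, and take $T_2$ to be $T_\cap$ itself. Concretely, I would set $L_\cap = \emptyset$ (one sort, no symbols), $L_1 = L = \{E\}$, and $L_2 = \emptyset$, so that $L_1 \cap L_2 = \emptyset = L_\cap$; and I would let $T_\cap$ be the theory of an infinite set, $T_1 = T$, and $T_2 = T_\cap$. Since $T$ includes the axioms asserting that the universe is infinite, the set of $L_\cap$-consequences of $T_1$ is exactly $T_\cap$, and the same holds trivially for $T_2$; moreover $T_\cup = T_1 \cup T_2 = T$ because $T_\cap \subseteq T$.

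I would then dispatch the auxiliary conditions, all of which are immediate for this choice. The theory $T_2 = T_\cap$ is the theory of equality, hence interpretable in the theory of equality; and $T_1 = T$ is interpretable in the theory of equality via the interpretation that equips an infinite set with the empty $n$-ary relation $E := \emptyset$, which is symmetric and vacuously satisfies the distinctness axiom. Since $L_2 = L_\cap$ and $T_2 = T_\cap$ coincide on the nose, we have $\acl_2 = \acl_\cap$ and forking in $T_2$ agrees with forking in $T_\cap$.

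Finally I would identify $T^*_\cup$. By Remark~\ref{rem: robust}(2) I may replace $T_1$ by a Morleyization $T_1^\diamondsuit$, which has quantifier elimination and so is model-complete; $T_2 = T_\cap$ is already model-complete, with quantifier elimination. By Fact~\ref{fact: first Theorem}, the interpolative fusion of $(T_1^\diamondsuit, T_2^\diamondsuit)$ over $T_\cap$ is the model companion of $T_1^\diamondsuit \cup T_2^\diamondsuit = T_1^\diamondsuit$, which is a definitional expansion of the model companion of $T_1 = T$. As recalled in the paragraph preceding the proposition, the model companion $T^*$ of $T$ exists and is the theory of the random $n$-hypergraph; hence $T^*_\cup$ exists, and by Remark~\ref{rem: robust}(2) its reduct to $L_\cup = \{E\}$ is $T^*$. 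Thus $T^*_\cup$ equals — in particular is bi-interpretable with — the theory of the random $n$-hypergraph.

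The content of the argument is really just recognizing the correct trivial decomposition, so I do not anticipate any genuine obstacle; the only mild subtlety is the Morleyization step needed to invoke Fact~\ref{fact: first Theorem}, together with the already-cited identification of the model companion of $T$ with the random $n$-hypergraph.
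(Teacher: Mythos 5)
There is a fatal gap at the final step: with your choice $T_2 = T_\cap$, the interpolative fusion $T^*_\cup$ is \emph{not} the model companion of $T$ — it is just $T_\cup = T$ itself. Indeed, since $\sM_2 = \sM_\cap$, every $\sM_2$-definable set is already $\sM_\cap$-definable, so the separation condition in the definition of ``interpolative'' is always satisfiable (given $X_1\cap X_2=\emptyset$ with $X_2$ $\sM_\cap$-definable, take $Y_2 = X_2$ and $Y_1 = M^x\setminus X_2$). Hence every $T_\cup$-model is interpolative and $T^*_\cup = T_\cup = T$, which is not complete and is certainly not the theory of the random hypergraph. The same conclusion is visible from your own Morleyization argument once one notices that the model companion of $T_1^\diamondsuit$ is \emph{not} a definitional expansion of the model companion of $T_1$: embeddings between models of the Morleyized theory are elementary maps for $T$, so every model of $T_1^\diamondsuit$ is existentially closed and $T_1^\diamondsuit$ is its own model companion. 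Morleyization and taking model companions do not commute; the model companion is highly language-dependent. The genericity of the random hypergraph has to come from the interaction of two genuinely distinct expansions of $T_\cap$, and a decomposition in which one factor adds no definable sets cannot produce it.

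There is also a secondary problem with the hypothesis that $T_1$ is interpretable in the theory of equality. Your reading — that it suffices to define \emph{some} model of $T_1$ (the edgeless hypergraph) inside an infinite set — trivializes the condition and is not how the paper uses it: the point of the hypothesis is that $T_1$ inherits $\aleph_0$-categoricity, $\aleph_0$-stability, simplicity, and elimination of $\exists^\infty$, which feed into Fact~\ref{fact:minh}, Theorem~\ref{thm:simplepreservation}, and Corollary~\ref{cor:ccpres}. The incomplete theory of infinite $n$-hypergraphs has none of these properties (it is not complete, not $\aleph_0$-categorical, does not eliminate $\exists^\infty$, and has non-simple completions), so even if the fusion computation were repaired, this decomposition would not support any of the intended applications. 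The paper's actual proof avoids both problems by working two-sorted: $T_1$ is the (equality-interpretable, hence $\aleph_0$-categorical and stable) theory of the quotient map from distinct $n$-tuples of $V$ onto a second sort $S$ of unordered $n$-sets, $T_2$ is the theory of a generic unary predicate $P$ on $S$, and the hypergraph relation is recovered as the preimage of $P$ under the quotient; the interpolation condition is then exactly what forces $P$ to be generic relative to the quotient structure.
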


\noindent The proof of Proposition~\ref{prop:random-graph},  in fact, shows that $T$ is existentially bi-interpretable with $T_\cup$. 
As $T_1$ and $T_2$ are interpretable in the theory of equality existence of $T^*$ follows from Corollary~\ref{cor:minh} and Fact~\ref{fact:minh}.

\medskip \noindent
By combining Theorem~\ref{thm:simplepreservation} and Proposition~\ref{prop:random-graph} we recover the well known fact that the random $n$-hypergraph is simple for any $n$.
This shows that the conclusion of Theorem~\ref{thm:simplepreservation} is sharp as the random $n$-hypergraph is unstable.
By combining Corollary~\ref{cor:ccpres} and Proposition~\ref{prop:random-graph} we recover $\aleph_0$-categoricity the random $n$-hypergraph.

\begin{proof}
Given a set $V$ let $\Delta_V$ be the set of $(v_1,\ldots,v_n) \in V^n$ such that $v_i \neq v_j$ for all $i \neq j$ and $\sim_V$ be the equivalence relation on $\Delta_V$ given by $(v_1,\ldots,v_n) \sim_V (v'_1,\ldots,v'_n)$ if and only if $\{v_1,\ldots,v_n\} = \{v'_1,\ldots,v'_n\}$.

Let $L_\cap$ be the two-sorted empty language.
Let $T_\cap$ be such that $(V,S) \models T_\cap$ when $V,S$ are both infinite.
Let $L_1$ be the expansion of $L_\cap$ by an $(n+1)$-ary relation symbol $\pi$ on $V^n \times S$.
Let $T_1$ be the $L_1$-theory such that $(V,S;\pi) \models T_1$ if $\pi$ is the graph of a surjection $\Delta_V \to S$ such that $\pi(a) = \pi(b)$ if and only if $a \sim b$.
Let $L_2$ be the expansion of $L_\cap$ by a unary predicate $P$ of sort $S$.
Let $T_2$ be such that $(V,S;P) \models T_2$ if $(V,S) \models T_\cap$ and $P$ is infinite and co-infinite.
It is easy to see that $T_1$ and $T_2$ are interpretable in the theory of equality, $\acl_2$ agrees with $\acl_\cap$, and forking in $T_2$ agrees with forking in $T_\cap$.

We observe $T$ is existentially bi-interpretable with $T_\cup$ and then apply Fact~\ref{fact:existential}.
Suppose that $(V,S;\pi,P) \models T_\cup$.
Let $E \subseteq V^m$ be the preimage of $P$ under $\pi$.
Then $(V;E)$ is an infinite $n$-hypergraph.
Suppose that $(V;E)$ is an infinite $n$-hypergraph.
Let $\pi_V$ be the quotient map $\Delta_V \to \Delta_V/\sim_V$ and $P_V$ be the image of $E$ under $\pi_V$.
Then $(V,\Delta_V/\sim_V;\pi_V,P_V)$ is a $T_\cup$-model.
These observations may be formalized to construct an existential bi-interpretation between $T$ and $T_\cup$.
\end{proof}

\noindent
We now describe a second realization of the theory of random $n$-hypergraph as an interpolative fusion.
Our second realization shows that the conditions for preservation of stability and NIP provided in Proposition~\ref{prop:stabnip} are sharp.
Both $T_1$ and $T_2$ are bi-interpretable with the theory of an infinite set, $\acl_1$ and $\acl_2$ agree both agree with $\acl_\cap$.
However, $\acl_\cap$ does not agree with $\dcl_\cap$.

\begin{prop}
\label{prop:hypergraph-dis}
There are $T_\cap$, $T_1$, and $T_2$ such that $T_1$ and $T_2$ are both bi-interpretable with the theory of equality, $\acl_1$ and $\acl_2$ both agree with $\acl_\cap$, and the theory of the random $n$-hypergraph is bi-interpretable with $T^*_\cup$.
\end{prop}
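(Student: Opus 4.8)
The plan is to follow the template of the proof of Proposition~\ref{prop:random-graph}: exhibit theories $T_\cap$, $T_1$, $T_2$ in explicit finite multi-sorted languages, check the three bulleted conditions directly from algebraic closure and automorphism groups, and identify $T^*_\cup$ with the random $n$-hypergraph by producing an existential bi-interpretation between the $\forall\exists$-theory $T$ of infinite $n$-hypergraphs and $T_\cup=T_1\cup T_2$, then invoking Fact~\ref{fact:existential}. As there, existence of $T^*_\cup$ follows from Corollary~\ref{cor:minh} once each $T_i$ is interpretable in the theory of equality.

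The guiding idea is that, unlike in Proposition~\ref{prop:random-graph}, the generic hyperedge relation should be produced \emph{entirely by the amalgamation}, with neither $T_1$ nor $T_2$ carrying a generic predicate on its own. Accordingly $T_\cap$ should already contain a symmetrization sort --- a sort $W$ with a surjection $\pi\colon\Delta_V\to W$ whose fibres are the orbits of the coordinate action of the symmetric group, so that $W$ is a copy of $[V]^n$. This forces $\acl_\cap(w)=\{w,v_1,\dots,v_n\}$ while $\dcl_\cap(w)=\{w\}$, hence $\acl_\cap\neq\dcl_\cap$; and since $\pi$ is a symbol of $L_\cap$ the $L_\cap$-definable bijection $W\to[V]^n$ shows $T_\cap$ is bi-interpretable with the theory of equality. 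The extra symbols of $L_1$ and $L_2$ are then to be chosen so as to keep each $T_i$ bi-interpretable with the theory of equality (i.e.\ so that $T_i$-models are still canonically built from the pure set $V$, with $L_i$-definable back-and-forth maps) and $\acl$-neutral (unary, or functional with values in $\acl_\cap$-sets), which forces $\acl_1=\acl_2=\acl_\cap$. The creative point is that the extra symbols should moreover be arranged so that a model of $T_1\cup T_2$ records, for every $n$-subset, one bit of data left free by $T_\cap$; in the existentially closed models this bit varies generically and defines a generic symmetric $n$-ary relation $E$ on $V$, picked out by an existential $L_\cup$-formula.

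Granting such a choice, the verification runs as follows. Each $T_i$ is bi-interpretable with the theory of equality --- in particular interpretable in it, so $T^*_\cup$ exists by Corollary~\ref{cor:minh}; one has $\acl_1=\acl_2=\acl_\cap$ and $\acl_\cap\neq\dcl_\cap$ as indicated. Genericity of $E$ in $T^*_\cup$ is checked as for the counterexample of Section~\ref{section:counterexample}: given a finite configuration and a target pattern of $E$ on the $n$-subsets meeting a fresh vertex, the corresponding local $L_i$-types over $T_\cap$ exist --- the freedom in the extra structure makes every pattern attainable --- and are amalgamated by Proposition~\ref{prop:realizingtypes}, so $(V;E)$ satisfies the extension axioms of the random $n$-hypergraph. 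Recovering a model of $T_1\cup T_2$ from $(V;E)$ and quoting Fact~\ref{fact:existential} then gives that the model companions --- the theory of the random $n$-hypergraph and $T^*_\cup$ --- are bi-interpretable; as in Proposition~\ref{prop:random-graph} this yields simplicity, $\aleph_0$-categoricity and instability of $T^*_\cup$ at once.

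The hard part is the construction of $L_1,L_2,T_1,T_2$ themselves, since the requirements pull against one another: bi-interpretability with equality forbids any genuinely ``random'' structure on a single quotient sort, while $\acl_i=\acl_\cap$ forbids adding finite-multiplicity structure into a free sort, so the only structure a $T_i$ may carry must be simultaneously combinatorial and $\acl$-neutral --- and it is the interaction of two such pieces across the amalgamation that has to manufacture the generic relation. A secondary difficulty is the reverse interpretation: the extra data of a model of $T_1\cup T_2$ is determined by $(V;E)$ only up to a simultaneous re-gauging, which is precisely the information in $E$, so either one formalizes the existential bi-interpretation as in Proposition~\ref{prop:random-graph}, or, using that $T^*_\cup$ is $\aleph_0$-categorical (Corollary~\ref{cor:ccpres}, with $\acl_1=\acl_2=\acl_\cap$ and weak elimination of imaginaries for $T_\cap$), one argues instead that every automorphism of the countable random $n$-hypergraph lifts uniquely to an automorphism of the countable model of $T^*_\cup$, so that the two automorphism groups, and hence the theories, agree.
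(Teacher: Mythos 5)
Your proposal correctly identifies the overall architecture (explicit multi-sorted theories, an existential bi-interpretation between the theory of infinite $n$-hypergraphs and $T_\cup$, then Fact~\ref{fact:existential} and Corollary~\ref{cor:minh}), and it correctly diagnoses where the difficulty lies: the hyperedge relation must be manufactured by the interaction of two $\acl$-neutral expansions, neither of which carries any generic structure on its own. But you then declare this construction to be ``the hard part'' and stop --- you never actually exhibit $L_1$, $L_2$, $T_1$, $T_2$. Since the entire content of the proposition is the existence of such theories, the proof is not complete: everything after ``Granting such a choice'' is conditional on an object you have not produced.

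Moreover, the base theory you sketch does not obviously support the missing construction. You take $T_\cap$ to be a pure quotient sort $W\cong [V]^n$ together with the quotient map $\pi\colon \Delta_V\to W$; but then $\pi(a)$ is a single point, so a ``selection'' over it is trivial and there is no bit of data ``left free by $T_\cap$'' for the two sides to disagree about. (With $\pi$ a function, one also gets $\pi(a)\in\dcl_\cap(a)$, which undercuts your own remark that $\acl_\cap\neq\dcl_\cap$ is doing work.) The paper's actual construction uses a subtler base: the second sort $S$ is a \emph{two-element cover} of $[V]^n$, encoded by a ternary relation $D$ with exactly two elements $s$ satisfying $D(a,s)$ for each $a\in\Delta_V$, invariantly under $\sim_V$. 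Each $T_i$ then adds a $\sim_V$-invariant selection $g_i$ of one of the two covering points; this keeps $\acl_i=\acl_\cap$ because $g_i(a)$ already lies in the two-element $\acl_\cap$-fibre over $a$, keeps each $T_i$ bi-interpretable with equality because a canonical model is built from $V$ by taking $S=\{c_1,c_2\}\times [V]^n$, and yields the hyperedge relation as $E(v_1,\dots,v_n)\iff g_1(\bar v)=g_2(\bar v)$. That double-fibre device is exactly the idea your proposal is missing, and without it (or a substitute) the argument cannot be completed.
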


\begin{proof}
We let $\Delta_V$ and $\sim_V$ be as in the proof of Proposition~\ref{prop:random-graph}.
Let $L_\cap$ be the two sorted language containing a single ternay relation $D$ and let $T_\cap$ be the $L_\cap$-theory such that $(V,S;D) \models T_\cap$ if $D$ is a subset of $\Delta_V \times D$ satisfying
\begin{enumerate}
\item if $a,a' \in \Delta_V, s \in S$ and $a \sim_V a'$ then $D(a,s)$ implies $D(a',s)$,
\item for every $a \in \Delta_V$ there are exactly two $s \in S$ such that $D(a,s)$.
\end{enumerate}
For each $i \in \{1,2\}$ let $L_i$ be the expansion of $L_\cap$ by a binary relation $g_i$ and $T_i$ be the $L_i$-theory such that $(V,S;D,g_i) \models T_i$ if $(V,S;D) \models T_\cap$ and $g_i$ is the graph of a function $g_i : \Delta_V \to S$ such that $D(a,g_i(a))$ for all $\Delta_V$ and if $a,a' \in \Delta_V$ and $a \sim_V a'$ then $g_1(a) = g_2(a')$.

Let $V$ be a set.
We let $Q$ be the quotient of $\Delta_V$ by $\sim_V$ and identify the $\sim_V$-class of $(v_1,\ldots,v_n)$ with $\{v_1,\ldots,v_n\}$.
Let $c_1,c_2 \in V$ be distinct, $S = (\{c_1\} \times Q) \cup (\{c_2\} \times Q)$, $\pi$ be the projection $S \to Q$, and $D$ be the set of $(v_1,\ldots,v_n,a)$ in $\Delta_V \times S$ such that $\pi(a) = \{v_1,\ldots,v_n\}$.
Let $g_1: \Delta_V \to S$ be given by declaring $g_1(v_1,\ldots,v_n) = (c_1,\{v_1,\ldots,v_n\})$.
Then $(V,S;D,g_1) \models T_1$.
It follows that $T_1$ and $T_2$ are both bi-interpretable with the theory of equality and that $\acl_1$ and $\acl_2$ both agree with $\acl_\cap$.

By Fact~\ref{fact:existential} it suffices to show that $T_\cup$ is existentially bi-interpretable with $T$.
Suppose $(V;E)$ is an $n$-hypergraph.
Fix distinct $c_1,c_2 \in V$ and let $S$ and $D$ be defined as in the proceeding paragraph.
Given $(v_1,\ldots,v_n) \in \Delta_V$ we declare
\begin{align*}
g_1(v_1,\ldots,v_n) &= (c_1,\{v_1,\ldots,v_n\}),\\
g_2(v_1,\ldots,v_n) &= (c_1,\{v_1,\ldots,v_n\}) \quad \text{if} \quad E(v_1,\ldots,v_n), \text{  and}\\
g_2(v_1,\ldots,v_n) &= (c_2,\{v_1,\ldots,v_n\}) \quad \text{if} \quad \neg E(v_1,\ldots,v_n).
\end{align*}
Suppose $(V,S;D,g_1,g_2)$ is a model of $T_\cup$.
Given $v_1,\ldots,v_n \in V$ we declare $E(v_1,\ldots,v_n)$ if and only if $(v_1,\ldots,v_n) \in \Delta_V$ and $g_1(v_1,\ldots,v_n) = g_2(v_1,\ldots,v_n)$.
Then $(V;E)$ is an $n$-hypergraph.
These observations are easily formalized to obtain an existential bi-interpretation between $T_\cup$ and $T$.
\end{proof}

\noindent The asymmetric version of the argument in Proposition~\ref{prop:random-graph} gives a construction of the theory of a generic $n$-ary relation.
Fix $n \geq 2$ and suppose that $L$ contains a single $n$-ary relation symbol $R$.
The collection of finite $L$-structures form a \Fraisse class, the \Fraisse limit of this class is the \textbf{generic $n$-ary relation}.
The theory $T^*_R$ of the generic $n$-ary relation is the model companion of the empty $L$-theory.
When $n = 2$, $T^*_R$ is the theory of the generic directed graph.
Simplicity of the generic $n$-ary relation follows from Theorem~\ref{thm:simplepreservation} and Proposition~\ref{prop:generic-relation}.
We recover $\aleph_0$-categoricity of the generic $n$-ary relation by combining Corollary~\ref{cor:ccpres} and Proposition~\ref{prop:generic-relation}.

\begin{prop}
\label{prop:generic-relation}
There are $T_\cap$, $T_1$, and $T_2$ such that $T^*_R$ is bi-interpretable $T^*_\cup$, $T_1$ and $T_2$ are both interpretable in the theory of equality, $\acl_2$ agrees with $\acl_\cap$, and forking in $T_2$ agrees with forking in $T_\cap$.
\end{prop}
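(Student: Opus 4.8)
The plan is to mirror the proof of Proposition~\ref{prop:random-graph}, omitting the quotient by $\sim_V$ (which was present only to account for the symmetry and anti-reflexivity of the hypergraph relation), since the relation $R$ now carries no such constraint. Concretely I would take $L_\cap$ to be the two-sorted empty language with sorts $V$ and $S$, and $T_\cap$ the $L_\cap$-theory with $(V,S)\models T_\cap$ iff both sorts are infinite. Let $L_1$ expand $L_\cap$ by an $(n+1)$-ary relation symbol $\pi$ on $V^n\times S$, and let $T_1$ assert that $\pi$ is the graph of a bijection $V^n\to S$; let $L_2$ expand $L_\cap$ by a unary predicate $P$ on $S$, and let $T_2$ assert that $P$ is infinite and co-infinite. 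Both $T_1$ and $T_2$ are $\aleph_0$-categorical and interpretable in the theory of equality: from an infinite set $U$ one builds a $T_1$-model by taking $V=U$, $S=U^n$, $\pi$ the graph of the identity, and a $T_2$-model by taking $V=U$, $S=U\times U$, $P$ the diagonal. In particular $T^*_\cup$ exists by Corollary~\ref{cor:minh}. Since $T_2$ merely splits the infinite sort $S$ into two infinite pieces, it is disintegrated with trivial algebraic closure, so $\acl_2=\acl_\cap$; and forking independence in $T_2$, which in a disintegrated stable theory coincides with algebraic independence $\eind[a]$ and hence is determined by $\acl$, agrees with forking in $T_\cap$.

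For the bi-interpretation, put $L=\{R\}$ and let $T$ be the $L$-theory of structures $(V;R)$ in which $V$ is infinite and both $R$ and $V^n\setminus R$ are infinite. Then $T$ is $\forall\exists$-axiomatizable, and its model companion is $T^*_R$: any $L$-structure extending a model of $T$ is again a model of $T$, so the existentially closed models of $T$ coincide with those of the empty $L$-theory. Now I would exhibit an existential bi-interpretation between $T$ and $T_\cup=T_1\cup T_2$. From $(V,S;\pi,P)\models T_\cup$ define $R(\bar v)$ by $\exists s\,(\pi(\bar v,s)\wedge P(s))$; since $\pi$ is a bijection, $\lnot R(\bar v)$ is $T_\cup$-equivalent to $\exists s\,(\pi(\bar v,s)\wedge\lnot P(s))$, so this is an existential interpretation $T_\cup\rightsquigarrow T$. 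Conversely, from $(V;R)\models T$ interpret $S$ as the product sort $V^n$, $\pi$ as the graph of the identity $V^n\to V^n$, and $P$ as $R$; the hypothesis that $R$ and $V^n\setminus R$ are infinite is exactly what makes this land in $T_\cup$. The two round-trip composites are the identity on $V$ and are matched on the $S$-sort by the map defined by the quantifier-free formula $\pi(\bar v,s)$, and all the defining data are existential, so Fact~\ref{fact:existential} yields that $T^*_R$ is bi-interpretable with $T^*_\cup$.

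I expect the only points needing real care to be routine: (i) confirming that replacing the empty $L$-theory by the constrained theory $T$ leaves the model companion unchanged — necessary so that Fact~\ref{fact:existential} actually produces $T^*_R$ rather than a different model companion — and (ii) the bookkeeping that every formula in the bi-interpretation, together with its negation, is existential. The substantive claims of the proposition — that $\acl_2$ and forking in $T_2$ collapse to those of $T_\cap$, which is precisely what makes Theorem~\ref{thm:simplepreservation} (and Corollary~\ref{cor:ccpres}) applicable downstream — are immediate from the triviality of $T_2$.
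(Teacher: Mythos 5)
Your construction is essentially identical to the paper's: the same two-sorted $T_\cap$, the same $T_1$ (a bijection $V^n\to S$, which the paper presents via a function symbol rather than its graph) and the same $T_2$ (an infinite co-infinite predicate on $S$), with the same pullback/pushforward bi-interpretation. Your extra care in passing through the $\forall\exists$-theory where $R$ and its complement are both infinite, and checking its model companion is still $T^*_R$, is a correct tightening of a point the paper leaves implicit, but it is not a different route.
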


\begin{proof}
Let $L_\cap$ be the two-sorted empty language.
Let $T_\cap$ be such that $(V,S) \models T_\cap$ when $V,S$ are both infinite.
Let $L_1$ be the expansion of $T_\cap$ by an $n$-ary function $\pi : V^n \to S$.
Let $T_1$ be such that $(V,S;\pi) \models T_1$ when $\pi$ is a bijection $V^n \to S$.
Let $L_2$ be the expansion of $L_\cap$ by a unary relation of sort $S$.
Let $T_2$ be such that $(V,S;P) \models T_2$ if $P$ is infinite and co-infinite.
Observe that $T_1$ and $T_2$ are both interpretable in the theory of equality, $\acl_2$ agrees with $\acl_\cap$, and forking in $T_2$ agrees with forking in $T_\cap$.

By Fact~\ref{fact:existential} it suffices to show that $T_\emptyset$ is existentially bi-interpretable wuth $T_\cup$.
Suppose that $(V,S;\pi,P) \models T_\cup$.
Let $E$ be the pre-image of $P$ under $\pi$.
Then $(V;E)$ is an $n$-ary relation.
Suppose $(V;E)$ is an $n$-ary relation.
Let $\iota$ be the identity map $V^n \to V^n$.
Then $(V,V^n;\iota,E) \models T_\cup$.
These observations may be formalized to construct an existential bi-interpretation between $T$ and $T_\cup$.
\end{proof}

\noindent
We can also realize the theory of a generic relation as a relatively disintegrated fusion.
Proposition~\ref{prop:relation-dis} follows by a straightforward asymmetric version of the proof of Proposition~\ref{prop:hypergraph-dis}.
We leave the details to the reader.

\begin{prop}
\label{prop:relation-dis}
There are $T_\cap,T_1,T_2$ such that $T^*_R$ is bi-interpretable with $T^*_\cup$, $T_1$ and $T_2$ are both bi-interpretable with the theory of equality, and $\acl_1$ and $\acl_2$ both agree with $\acl_\cap$.
\end{prop}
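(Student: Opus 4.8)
The plan is to mimic the proof of Proposition~\ref{prop:hypergraph-dis}, reusing the same strategy but in the asymmetric setting where $V^n$ replaces the quotient $\Delta_V/\sim_V$ and there is no symmetry condition to enforce. First I would set up the languages and theories: let $L_\cap$ be a two-sorted language (sorts $V$ and $S$) containing a single ternary relation $D \subseteq V^n \times S$, and let $T_\cap$ say that $(V,S;D)\models T_\cap$ when $V,S$ are infinite and, for every $a\in V^n$, there are exactly two $s\in S$ with $D(a,s)$ (no ``$\sim_V$-invariance'' clause is needed here, since there is no equivalence relation being collapsed). For $i\in\{1,2\}$ let $L_i$ expand $L_\cap$ by a binary relation $g_i$, and let $T_i$ say $g_i$ is the graph of a function $g_i\colon V^n\to S$ with $D(a,g_i(a))$ for all $a$. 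Crucially, $T_1$ imposes no further constraint on $g_1$, so $T_1$ and $T_2$ have the same $L_\cap$-consequences $T_\cap$.

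Next I would check the structural claims. Each of $T_1$, $T_2$ is bi-interpretable with the theory of equality: given a set $V$, take $S = \{c_1,c_2\}\times V^n$ for distinct $c_1,c_2$, let $D$ be $\{(a,(c_j,a)) : a\in V^n, j\in\{1,2\}\}$, and set $g_1(a) = (c_1,a)$; then $(V,S;D,g_1)\models T_1$, and $S$, $D$, $g_1$ are all visibly definable from $V$ alone, with $V$ recoverable from the model. The symmetric construction handles $T_2$. This bi-interpretation also shows $\acl_1(A) = \acl_2(A) = \acl_\cap(A) = A$ for all $A$ (since the theory of equality has trivial algebraic closure and the bi-interpretations are quantifier-free/existential). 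For the main assertion, I would produce an existential bi-interpretation between $T_\cup$ and the empty $L$-theory $T$ (with $L = \{R\}$, $R$ an $n$-ary relation symbol), and then invoke Fact~\ref{fact:existential} together with the fact that $T^*_R$ is the model companion of $T$. In one direction: from an $n$-ary relation $(V;R)$, fix distinct $c_1,c_2\in V$, set $S$ and $D$ as above, put $g_1(a) = (c_1,a)$ for all $a\in V^n$, and $g_2(a) = (c_1,a)$ if $R(a)$, $g_2(a) = (c_2,a)$ otherwise; this gives a $T_\cup$-model. In the other direction: from $(V,S;D,g_1,g_2)\models T_\cup$, define $R(a)$ iff $g_1(a) = g_2(a)$; this recovers an $n$-ary relation, and the two procedures are mutually inverse up to definable isomorphism. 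Finally I would note these maps are cut out by quantifier-free (hence existential) formulas, so the bi-interpretation is existential, and $T_\cup$ is $\forall\exists$-axiomatizable (in fact universal after replacing functions by graphs), so Fact~\ref{fact:existential} applies.

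The main obstacle — though it is more bookkeeping than genuine difficulty — is verifying that the informal set-theoretic constructions above genuinely assemble into formulas witnessing an \emph{existential} bi-interpretation in the precise sense of Section~2.3, in particular checking that the interpreting formulas and their negations are existential and that the isomorphisms $\eta$, $\eta'$ are existentially definable. Since the paper explicitly says ``We leave the details to the reader'' and ``follows by a straightforward asymmetric version of the proof of Proposition~\ref{prop:hypergraph-dis}'', the intended proof is exactly this transcription, with the simplification that no symmetry condition needs to be propagated through the amalgamation, so I would present the construction and defer the routine formalization.

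\begin{proof}
This follows by a straightforward asymmetric version of the proof of Proposition~\ref{prop:hypergraph-dis}, which we only sketch. Let $L_\cap$ be the two-sorted language with sorts $V$ and $S$ containing a single ternary relation symbol $D$, and let $T_\cap$ be the $L_\cap$-theory such that $(V,S;D)\models T_\cap$ if $V$ and $S$ are infinite and for every $a\in V^n$ there are exactly two $s\in S$ with $D(a,s)$. For each $i\in\{1,2\}$, let $L_i$ be the expansion of $L_\cap$ by a binary relation symbol $g_i$, and let $T_i$ be the $L_i$-theory such that $(V,S;D,g_i)\models T_i$ if $(V,S;D)\models T_\cap$ and $g_i$ is the graph of a function $g_i\colon V^n\to S$ with $D(a,g_i(a))$ for all $a\in V^n$. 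Then $T_1$ and $T_2$ have the same set $T_\cap$ of $L_\cap$-consequences.

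Given a set $V$, fix distinct $c_1,c_2\in V$, put $S = (\{c_1\}\times V^n)\cup(\{c_2\}\times V^n)$, let $D$ be the set of $(a,(c_j,a'))\in V^n\times S$ with $a = a'$, and define $g_1(a) = (c_1,a)$ for all $a\in V^n$; then $(V,S;D,g_1)\models T_1$, and $S$, $D$, $g_1$ are defined from $V$ by existential formulas, with $V$ recoverable. Hence $T_1$ is (existentially) bi-interpretable with the theory of equality, and likewise for $T_2$. Since the theory of equality has trivial algebraic closure, it follows that $\acl_1$ and $\acl_2$ both agree with $\acl_\cap$ (all equal to the substructure generated).

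It remains to show $T_R^*$ is bi-interpretable with $T_\cup^*$; by Fact~\ref{fact:existential} it suffices to exhibit an existential bi-interpretation between $T_\cup$ and the empty $L$-theory $T$, where $L = \{R\}$ with $R$ an $n$-ary relation symbol, since $T_R^*$ is the model companion of $T$ and $T_\cup$ is $\forall\exists$-axiomatizable (indeed universal after replacing $g_1,g_2$ by their graphs). Suppose $(V,S;D,g_1,g_2)\models T_\cup$, and define $R(a)$ to hold for $a\in V^n$ iff $g_1(a) = g_2(a)$; then $(V;R)$ is an $n$-ary relation. Conversely, suppose $(V;R)$ is an $n$-ary relation; fix distinct $c_1,c_2\in V$ and define $S$ and $D$ as in the previous paragraph, set $g_1(a) = (c_1,a)$ for all $a\in V^n$, and set $g_2(a) = (c_1,a)$ if $R(a)$ and $g_2(a) = (c_2,a)$ otherwise; then $(V,S;D,g_1,g_2)\models T_\cup$. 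These two constructions are mutually inverse up to definable isomorphism, and all the formulas involved (and their negations) are equivalent to existential formulas, so we obtain an existential bi-interpretation between $T$ and $T_\cup$. The details are routine and left to the reader.
\end{proof}
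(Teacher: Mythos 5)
Your proposal is correct and is precisely the ``straightforward asymmetric version of the proof of Proposition~\ref{prop:hypergraph-dis}'' that the paper's one-line proof prescribes: drop $\Delta_V$ and $\sim_V$ in favour of $V^n$, keep the two-fibre relation $D$ and the two selector functions $g_1,g_2$, and recover $R(a)$ as $g_1(a)=g_2(a)$. The only slip is the parenthetical ``all equal to the substructure generated'': since $L_\cap$ is relational, $\langle A\rangle_{L_\cap}=A$, whereas $\acl_\cap(A)$ also contains the two $D$-partners of each tuple from $A$ --- but the three closures $\acl_1,\acl_2,\acl_\cap$ do coincide (because $g_i(a)$ is one of the two $D$-partners of $a$, hence already in $\acl_\cap(a)$), which is all the proposition requires.
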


\subsection{Generic automorphisms}
\label{section:auto}
Arguably the most important unstable simple theory is the theory $\ACFA$ of existentially closed difference fields.
It turns out that $\ACFA$ is bi-interpretable with a fusion of two theories, each of which is bi-interpretable with $\mathrm{ACF}$.
This is a special case of a more general fact about theories of generic automorphisms.
We work with the theory of a structure equipped with a generic family of (non-commuting) automorphisms.
It is no more difficult to handle this via our approach than the expansion by a single generic automorphism.

\medskip \noindent
Let $J$ be an index set.
Suppose that $T$ is a model complete $L$-theory with infinite models.
Let $L_{J-\text{Aut}}$ be the extension of $L$ by a family $(\sigma_j)_{j \in J}$ of unary function symbols and $T_{J-\text{Aut}}$ be an $L_{J-\text{Aut}}$-theory such that $(\sM;(\sigma_j)_{j \in J}) \models T_{J-\text{Aut}}$ if and only if $\sM \models T$ and each $\sigma_j$ is an $L$-automorphism of $\sM$.
We let $T^*_{J-\text{Aut}}$ be the model companion of $T_{J-\text{Aut}}$ if it exists.
We drop the $J$ when $|J| = 1$.
It is conjectured that $T^*_{\text{Aut}}$ does not exist when $T$ is unstable.
The case of Fact~\ref{fact:aut-simple} when $|J| = 1$ is due to Chatzidakas and Pillay.

\begin{fact}
\label{fact:aut-simple}
If $T$ is stable and $T^*_{J-\mathrm{Aut}}$ exists then $T^*_{J-\mathrm{Aut}}$ is simple.
\end{fact}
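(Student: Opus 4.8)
The plan is to realize $T^{*}_{J-\mathrm{Aut}}$ up to bi-interpretation as an interpolative fusion of two theories, each bi-interpretable with $T$, and then apply Theorem~\ref{thm:rel-dis-simple}. (When $|J| = 1$ and $T = \mathrm{ACF}$, this recovers the presentation of $\ACFA$ mentioned above.) First I would set up the fusion data: let the sorts be $V$ together with $W_j$ for $j \in J$, let $L_\cap$ put a disjoint copy of $L$ on each sort, let $L_1$ expand $L_\cap$ by function symbols $f_j \colon V \to W_j$, and let $L_2$ expand $L_\cap$ by function symbols $g_j \colon V \to W_j$, so that $L_1 \cap L_2 = L_\cap$. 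Let $T_1$ assert that every sort is a model of $T$ and each $f_j$ is an $L$-isomorphism $V \to W_j$, let $T_2$ be the analogue for the $g_j$, and let $T_\cap$ be the common set of $L_\cap$-consequences of $T_1$ and $T_2$; these coincide, since both just say that all sorts are elementarily equivalent models of $T$. Morleyizing, I may assume $T_1$ and $T_2$ are model-complete; by Remark~\ref{rem: robust} this changes neither the interpolative fusion nor the relevant algebraic closures, and it preserves bi-interpretation type.

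Second, I would check the hypotheses of Theorem~\ref{thm:rel-dis-simple}. The theory $T_\cap$ is stable, since each of its completions is a multi-sorted disjoint union of copies of a completion of $T$, and such disjoint unions are stable. Each $T_i$ is bi-interpretable with $T$: in one direction take the $V$-reduct, and in the other send a model $\sM$ of $T$ to the $T_1$-model in which every $W_j$ is a copy of $\sM$ and every $f_j$ is the identity; hence each $T_i$ is stable, in particular simple. Each $T_i$ is also relatively disintegrated over $T_\cap$: given a set $A$, let $A'$ be its closure under all $f_j^{\pm 1}$; once a set is closed under the $f_j^{\pm 1}$, its only remaining structure is the copies of $T$ on the sorts, so $\acl_1(A) = \acl_1(A') = \acl_\cap(A')$, and since $A' \subseteq \bigcup_{a\in A}\acl_1(a) \subseteq \acl_1(A)$ we get $\acl_1(A) = \acl_\cap\!\left(\bigcup_{a\in A}\acl_1(a)\right)$.

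Third, I would exhibit an existential bi-interpretation between $T_{J-\mathrm{Aut}}$ and $T_\cup = T_1 \cup T_2$. From a $T_\cup$-model $(\sM,(\sM_j),(f_j),(g_j))$, each $\sigma_j := g_j^{-1}\circ f_j$ is an $L$-automorphism of $\sM$, making $(\sM,(\sigma_j))$ a $T_{J-\mathrm{Aut}}$-model, with $\sigma_j(x) = y$ defined by the quantifier-free condition $f_j(x) = g_j(y)$; conversely, from $(\sN,(\sigma_j))\models T_{J-\mathrm{Aut}}$, taking every $W_j$ to be a copy of $\sN$, every $f_j$ the identity, and $g_j := \sigma_j^{-1}$ yields a $T_\cup$-model, all data being quantifier-free definable, and the two composites are the identity up to quantifier-free-definable isomorphisms. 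Both $T_{J-\mathrm{Aut}}$ and $T_\cup$ are $\forall\exists$-axiomatizable (for $T_{J-\mathrm{Aut}}$ this uses that $T$, being model-complete, is), so by Fact~\ref{fact:existential} and the hypothesis that $T^{*}_{J-\mathrm{Aut}}$ exists, the model companion of $T_\cup$ exists and is bi-interpretable with $T^{*}_{J-\mathrm{Aut}}$; since each $T_i$ is model-complete, Fact~\ref{fact: first Theorem} identifies this model companion with the interpolative fusion $T^{*}_\cup$. Theorem~\ref{thm:rel-dis-simple} then gives that $T^{*}_\cup$ is simple, and since simplicity passes to interpreted theories, $T^{*}_{J-\mathrm{Aut}}$ is simple.

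The main obstacle is organizational rather than conceptual: one must verify carefully that the two interpretations above really constitute an existential bi-interpretation — in particular that the required isomorphisms are definable and that all the formulas involved and their negations are existential — and one must make the relative-disintegration computation precise, which means tracking exactly how algebraic closure in $T_1$ interacts with the auxiliary copies of $T$. No new model-theoretic input beyond Theorem~\ref{thm:rel-dis-simple} is needed; the encoding is designed precisely so that the reducts $T_1$ and $T_2$ are genuinely as tame as $T$ itself.
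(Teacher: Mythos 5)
Your proof is correct and takes essentially the same approach as the paper: the paper likewise realizes $T^*_{J-\mathrm{Aut}}$ as an interpolative fusion of theories of disjoint $T$-models linked by named $L$-isomorphisms (recovering each $\sigma_j$ as a composite $\tau_j^{-1}\circ\tau_0$), verifies relative disintegration by the same pullback computation, and concludes via Fact~\ref{fact:existential} and Theorem~\ref{thm:rel-dis-simple}. The only difference is cosmetic: the paper uses $I = J\cup\{0\}$ with two sorts and one isomorphism per $T_i$, whereas you bundle all the isomorphisms into two theories over $|J|+1$ sorts; both decompositions satisfy the hypotheses equally well.
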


\noindent
This is sharp in the sense that  $T^*_{J-\mathrm{Aut}}$ is almost never stable.
Suppose that $|J| = 1$, $L$ is the empty language, and $T$ is the theory of an infinite set.
Suppose $(M;\sigma) \models T^*_{\text{Aut}}$.
It is easy to see that $\phi(x,y) := [\sigma(x) = y]$ is unstable. Fact~\ref{fact:aut-simple} follows from Theorem~\ref{thm:rel-dis-simple} and Proposition~\ref{prop:aut}. 

\begin{prop}
\label{prop:aut}
Let $I = J \cup \{0\}$.
Suppose that $T^*_{J-\mathrm{Aut}}$ exists.
Then there are $T_\cap$ and $(T_i)_{i \in I}$ such that $T^*_{J-\mathrm{Aut}}$ is bi-interpretable with $T^*_\cup$, $T_\cap$ is mutually interpretable with $T$, each $T_i$ is bi-interpretable with $T$ and  disintegrated relative to $T_\cap$.
\end{prop}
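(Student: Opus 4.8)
The plan is to exhibit the decomposition explicitly and then invoke Fact~\ref{fact:existential}. I work with two sorts $M_0$ and $M_1$. Let $L_\cap$ consist of disjoint copies $L^{(0)}$ and $L^{(1)}$ of $L$, one on each sort. For each $i\in I=J\cup\{0\}$, let $L_i=L_\cap\cup\{h_i\}$ with $h_i$ a function symbol of sort $M_0\to M_1$, and let $T_i$ be the $L_i$-theory whose axioms say that the $L^{(0)}$-reduct on $M_0$ and the $L^{(1)}$-reduct on $M_1$ are models of $T$ and that $h_i$ is an $L$-isomorphism from the former to the latter. Then $L_i\cap L_j=L_\cap$ for $i\neq j$, and $T_\cup=\bigcup_{i\in I}T_i$ is consistent, witnessed by $(\sM,\sM,(\mathrm{id})_{i\in I})$ for $\sM\models T$. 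Because $L_\cap$ has no symbols linking the two sorts, every $L_\cap$-sentence is a Boolean combination of $L$-sentences relativized to $M_0$ and $L$-sentences relativized to $M_1$, and from this one checks that the common set $T_\cap$ of $L_\cap$-consequences of the $T_i$ is axiomatized by the assertion that $M_0$ and $M_1$ are elementarily equivalent models of $T$ (this holds even when $T$ is incomplete). Finally $\sM\mapsto(\sM,\sM)$ interprets $T_\cap$ in $T$ and the projection to $M_0$ interprets $T$ in $T_\cap$, so $T_\cap$ is mutually interpretable with $T$.

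Next I would verify that each $T_i$ is model-complete and bi-interpretable with $T$: a $T_i$-model is generated by its $M_0$-reduct via the isomorphism $h_i$, so the interpretations $\sM\mapsto(\sM,\sM,\mathrm{id})$ and $(M_0,M_1,h_i)\mapsto M_0$ are mutually inverse up to the definable isomorphism $(\mathrm{id}_{M_0},h_i)$, and an embedding of $T_i$-models is elementary because its restriction to the $M_0$-part is ($T$ being model-complete) and every $L_i$-formula over a $T_i$-model rewrites as an $L^{(0)}$-formula over its $M_0$-reduct. For relative disintegration over $T_\cap$, a short computation shows that in a $T_i$-model, for $A=(A_0,A_1)$ one has $\acl_i(A)=(\acl_T(\widehat A),h_i(\acl_T(\widehat A)))$ with $\widehat A=A_0\cup h_i^{-1}(A_1)$, while $\acl_\cap(A)=(\acl_T(A_0),\acl_T(A_1))$; using that $\acl_T$ is a closure operator and that $h_i$ commutes with it, one obtains $\acl_i(AB)=\acl_\cap(\acl_i(A)\acl_i(B))$, i.e.\ $T_i$ is disintegrated relative to $T_\cap$.

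The heart of the argument is the existential bi-interpretation of $T_\cup$ with $T_{J-\mathrm{Aut}}$. A $T_\cup$-model is a pair of $T$-models $M_0$, $M_1$ together with isomorphisms $h_i\colon M_0\to M_1$ for all $i\in I$; I send it to $(M_0,(h_0^{-1}\circ h_j)_{j\in J})$, using the index $0$ to glue $M_1$ onto $M_0$ and reading the remaining $|J|$ isomorphisms as automorphisms of $M_0$ --- this is why one needs $|I|=|J|+1$. In the other direction $(\sM,(\sigma_j)_{j\in J})\mapsto(\sM,\sM,\mathrm{id},(\sigma_j)_{j\in J})$. One composite is the identity; the other sends a $T_\cup$-model to $(M_0,M_0,\mathrm{id},(h_0^{-1}h_j)_{j\in J})$, which the isomorphism $(\mathrm{id}_{M_0},h_0)$ carries back to the original, and this is definable since it uses only $h_0$. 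All formulas involved are quantifier-free, so the bi-interpretation is existential; and both $T_\cup$ and $T_{J-\mathrm{Aut}}$ are $\forall\exists$-axiomatizable, because model-completeness of $T$ makes chains of models of either theory elementary chains whose unions carry the required isomorphisms. Since $T_{J-\mathrm{Aut}}$ has a model companion by hypothesis, Fact~\ref{fact:existential} gives that $T_\cup$ does too; by Fact~\ref{fact: first Theorem} (each $T_i$ is model-complete) that model companion is $T^*_\cup$, so $T^*_\cup$ exists; and Fact~\ref{fact:existential} once more gives that $T^*_\cup$ and $T^*_{J-\mathrm{Aut}}$ are bi-interpretable.

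I expect the difficulties to be organizational rather than deep: finding the right shape of the construction (in particular the role of the extra index $0$), checking that the common $L_\cap$-consequences of the $T_i$ really collapse to ``elementarily equivalent copies of $T$'' even for incomplete $T$, and confirming that the return isomorphism in the $T_\cup$-to-$T_{J-\mathrm{Aut}}$ bi-interpretation is genuinely definable --- it is precisely the need for this last definability that rules out the naive attempt of adjoining the $\sigma_j$ directly to a single copy of $T$ and forces the two-sorted picture.
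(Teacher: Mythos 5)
Your proposal is correct and follows essentially the same route as the paper: two copies of $T$ on disjoint sorts for $T_\cap$, an isomorphism symbol $h_i$ for each $i\in J\cup\{0\}$, the existential bi-interpretation via $(M_0,M_1,(h_i)_{i\in I})\mapsto (M_0,(h_0^{-1}\circ h_j)_{j\in J})$ and $(\sM,(\sigma_j)_{j\in J})\mapsto(\sM,\sM,\mathrm{id},(\sigma_j)_{j\in J})$, and the same computation of $\acl_i$ versus $\acl_\cap$ for relative disintegration. You are in fact somewhat more careful than the paper's sketch on two minor points --- identifying $T_\cap$ as ``elementarily equivalent models of $T$'' when $T$ is incomplete, and orienting the composite $h_0^{-1}\circ h_j$ so that the round trip is literally the identity.
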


\noindent
As usual, we only sketch the proof.

\begin{proof}
We let $T_\cap$ be the theory of two disjoint $T$-models with no additional structure, i.e. $(\sM,\sN) \models T_\cap$ when $\sM,\sN \models T$.
So $T_\cap$ is mutually existentially interpretable, but not bi-intepretable, with $T$.
For each $i \in I$ let $T_i$ be the theory of $(\sM,\sN;\tau_i)$ where $(\sM,\sN) \models T_\cap$ and $\tau_i$ is an $L$-isomorphism $\sM \to \sN$.
If $(\sM,\sN; (\sigma_i)_{i \in I})$ is a $T_\cup$-model then $(\sM; (\tau^{-1}_j \circ \tau_0)_{j \in J} ) \models T_{J-\text{Aut}}$ and if $(\sM; (\sigma_i)_{i \in J} )$ is a $T_{J-\mathrm{Aut}}$-model then $(\sM,\sM;\mathrm{id},(\sigma_i)_{i \in J} ) \models T_\cup$.
It easily follows that $T_\cup$ is existentially bi-interpretable with $T_{J-\text{Aut}}$.
The first claim now follows by Fact~\ref{fact:existential}.

Fix $i \in I$.
It remains to show that $T_i$ is disintegrated relative to $T_\cap$.
Suppose that $(\sM,\sN; \sigma_i) \models T_i$ and $A \subseteq M, B \subseteq N$.
Observe that $\acl_\cap(A,B) = (\acl_T(A), \acl_T(B))$ and that $\acl_i(A,B) = (\acl_T(A\tau^{-1}_i(B)), \acl_T(\tau_i(A)B))$.
So for all $A,A' \subseteq M$ and $B,B' \subseteq N$ we have $ \acl_i(AA',BB') = \acl_\cap( \acl_i(A,B) \acl_i(A',B').$
\end{proof}





\subsection{Generic selections}
\label{sec: genericselection}
We describe the generic selection of a definable equivalence relation, and then discuss several specific cases: generic tournaments, functions, and Skolemizations.
Let $E$ be an equivalence relation on a set $X$.
A \textbf{selection} of $E$ is a set that contains exactly one element from each $E$-class and a \textbf{quotient function} for $E$ is a (not necessarily surjective) function $f : X \to Y$ such that for all $a,a' \in X$ we have $f(a) = f(a')$ if and only if $E(a,a')$.

\medskip \noindent
In this section $T$ is a one-sorted, complete, and model complete $L$-theory, and $\phi(x)$ and $\psi(x,y)$ are $L$-formulas such that $\psi(x,y)$ defines an equivalence relation $E_{\sM,\psi}$ on $\phi(M^k)$ for every $\sM \models T$, here $|x| = k$.
(Everything easily generalizes to the case when $T$ is multi-sorted and not model complete.)
Let $L_{\text{Sel}}$ be the expansion of $L$ by an $n$-ary predicate $P$ and $T_{\text{Sel}}$ be the $L_{\text{Sel}}$-theory such that $(\sM;P) \models T_{\text{Sel}}$ if and only if $\sM \models T$ and $P$ is a selection of $E_{\sM,\psi}$.
We say that $T$ eliminates $\psi(x,y)$ if there is a formula $\varphi(x,z), |z| = l$ such that for any $\sM \models T$, $\varphi(M^k, M^l)$ is the graph of a quotient function $\phi(M^k) \to M^l$ for $E_{\sM,\psi}$.

\begin{thm}
\label{thm:generic-selection}
Suppose one of the following holds:
\begin{enumerate}
\item $T^\eq$ eliminates $\exists^\infty$, or
\item $T$ eliminates $\exists^\infty$ and $T$ eliminates $\psi(x,y)$.
\end{enumerate}
Then $T_{\mathrm{Sel}}$ has a  model companion $T^*_{\mathrm{Sel}}$ and there are $T_\cap$, $T_1$, $T_2$ such that 
\begin{enumerate}
\item  $T^*_{\mathrm{Sel}}$ is bi-interpretable with $T^*_\cup$,
\item $T_1$ is bi-interpretable with $T$,
\item $T_2$ is interpretable in the theory of equality, and
\item if each $E_{\sM,\psi}$-class is finite then $\acl_2$ agrees with $\acl_\cap$ and forking in $T_2$ agrees with forking in $T_\cap$.
\end{enumerate}  
\end{thm}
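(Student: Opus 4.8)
The plan is to realize $T_{\mathrm{Sel}}$ as an interpolative fusion: I will produce theories $T_1,T_2$ over a common reduct $T_\cap$, all $\forall\exists$-axiomatizable, with $T_\cup=T_1\cup T_2$ existentially bi-interpretable with $T_{\mathrm{Sel}}$; I will arrange the hypotheses so that Fact~\ref{fact:minh} gives existence of $T^*_\cup$, which is then the model companion of $T_\cup$ by Fact~\ref{fact: first Theorem}; and Fact~\ref{fact:existential} will then hand back the model companion $T^*_{\mathrm{Sel}}$ of $T_{\mathrm{Sel}}$, bi-interpretable with $T^*_\cup$. The desired properties (2)--(4) will be visible from the construction. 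A useful preliminary observation: in both cases of the hypothesis $T$ itself eliminates $\exists^\infty$ (since $T^\eq$ eliminating it entails it for $T$), so applying this to the $L$-formula $\psi$ there is $N$ such that in every $\sM\models T$ each $E_{\sM,\psi}$-class is infinite or of size $<N$, and --- $T$ being complete --- for each such size $\sigma$ the number of classes of size $\sigma$ is a fixed element of $\{0,1,2,\dots\}\cup\{\infty\}$.

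The guiding encoding is that a selection of $E_{\sM,\psi}$ on $\phi(M^k)$ is exactly a section $s$ of the quotient map $q\colon\phi(M^k)\to Q:=\phi(M^k)/E_{\sM,\psi}$, with $P=\operatorname{im}(s)$ and the non-membership $b\notin P$ expressed existentially as $s(q(b))\neq b$. I take $T_1$ to be $T$ together with $Q$ and $q$: in case~(2), where $T$ eliminates $\psi$, the quotient $Q$ is a genuine $L$-definable set cut out by $\varphi$ and $q$ is $L$-definable, so $T_1$ is (a definitional expansion of) $T$; in case~(1) we adjoin $Q$ as an imaginary sort. Either way $T_1$ is model-complete and bi-interpretable with $T$ (giving~(2)), and $T_1^\eq=T^\eq$. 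I take $T_2$ to be the reduct forgetting $L$, equipped with a symbol $s$ for the section: its models are a pure set $V$ with a subset $D$, a set $Q$, a surjection $q\colon D\to Q$ with fibres of the $<N$-or-infinite sizes in the prescribed numbers, and a section $s$. I claim $T_2$ is $\aleph_0$-categorical (finitely many types, thanks to the bound $N$), hence $\aleph_0$-stable and model-complete after naming finitely many $\emptyset$-definable predicates, and interpretable in the theory of equality (giving~(3)). Finally $T_\cap$ is the common reduct $(V,D,Q,q)$, which is $\aleph_0$-stable and $\aleph_0$-categorical and, in case~(2), weakly eliminates imaginaries. Thus Fact~\ref{fact:minh}(1) applies in case~(1) --- $T_1^\eq=T^\eq$ eliminates $\exists^\infty$ by hypothesis, and $T_2^\eq$ does since $T_2$ is $\aleph_0$-categorical --- and Fact~\ref{fact:minh}(2) applies in case~(2), since there $T_1=T$ and $T_2$ both eliminate $\exists^\infty$. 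Property~(4) is then immediate: if every $E_{\sM,\psi}$-class is finite then $q$ has finite fibres, so $s(\bar x)\in\acl_\cap(\bar x)$ always, and adjoining such an algebraic function to the pure-set structure $T_\cap$ changes neither algebraic closure nor forking.

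The remaining verifications are that $T_{\mathrm{Sel}}$ and $T_\cup$ are $\forall\exists$-axiomatizable (clear: $T$ is model-complete, $T_{\mathrm{Sel}}$ adds the $\forall\exists$-sentences expressing "$P\subseteq\phi(M^k)$ and $P$ meets each class exactly once", and each $T_i$ is model-complete), and that the interpretations $T_\cup\rightsquigarrow T_{\mathrm{Sel}}$ (setting $P=\operatorname{im}(s)$) and $T_{\mathrm{Sel}}\rightsquigarrow T_\cup$ (building $D=\phi(M^k)$, $Q=\phi(M^k)/E_{\sM,\psi}$, $q$, and $s(\text{class})=$ its unique element of $P$), together with the coordinate isomorphisms, are all existential --- which holds because $T$ is model-complete (so $\phi,\lnot\phi,\psi,\lnot\psi$ are all $T$-equivalent to existential formulas), $P$ occurs atomically, and the negated graphs of $q$ and $s$ say "some other element plays that role". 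I expect the genuine work to lie in the construction of $T_2$ --- specifically, in showing it is interpretable in the theory of equality when $E_{\sM,\psi}$ has finite classes of size $\geq 2$, where one must exploit the uniform bound $N$ and the rigidity supplied by the section rather than by the blocks alone --- and in checking the hypotheses of Fact~\ref{fact:minh}, in particular weak elimination of imaginaries for $T_\cap$ in case~(2), which is precisely what the assumption that $T$ eliminates $\psi$ is there to provide.
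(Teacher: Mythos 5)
Your overall strategy is the paper's: realize $T_{\mathrm{Sel}}$ as a fusion of ``$T$ plus quotient data'' with ``a generic selection of the quotient'' over a tame common reduct, get existence of $T^*_\cup$ from Fact~\ref{fact:minh}, and transfer back through an existential bi-interpretation via Facts~\ref{fact: first Theorem} and~\ref{fact:existential}. But there is a genuine gap in your choice of $L_\cap$. You keep $D=\phi(M^k)$ and the quotient map $q$ in the common language, living on (powers of) the home sort. Since the framework forces $T_\cap$ to be the common set of $L_\cap$-consequences of $T_1$ and $T_2$, your $T_\cap$ must contain the full $L_\cap$-theory of the reduct $(M,\phi(M^k),Q,q)$ of a $T$-model, and this inherits whatever induced structure $T$ puts on the definable set $\phi(M^k)\subseteq M^k$. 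That structure is arbitrary: take $T$ the theory of the random graph, $k=2$, $\phi(x_1,x_2)$ the edge relation, and $\psi$ identifying $(x_1,x_2)$ with $(x_2,x_1)$; then the predicate $D$ in your $L_\cap$ is the edge relation itself, so $T_\cap$ is unstable and certainly not $\aleph_0$-categorical, $\aleph_0$-stable, or interpretable in the theory of equality. Your assertions that $T_\cap$ is $\aleph_0$-stable and $\aleph_0$-categorical and that $T_2$ is $\aleph_0$-categorical and interpretable in equality therefore fail, Fact~\ref{fact:minh} does not apply, and conclusions (3) and (4) are lost. (You correctly sensed that the ``genuine work'' lies in making $T_2$ tame; that is exactly the point at which the construction breaks.)

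The paper's fix is the step your proposal is missing: introduce a fresh, structureless sort $N$ carrying only an abstract equivalence relation $E$ with the prescribed class-size data, together with a quotient $\rho\colon N\to Q$ (included to secure weak elimination of imaginaries for $T_\cap$), and put the identification of $(\phi(M^k);E_{\sM,\psi})$ with $(N;E)$ into $L_1$ \emph{only}, as the graph of an isomorphism $\pi$. Then $L_\cap$ sees the home sort as a bare infinite set, $T_\cap$ and $T_2$ (which just adds a selection predicate $P$ on $N$) really are interpretable in the theory of equality, and all hypotheses of Fact~\ref{fact:minh} can be checked language-by-language. Everything else in your write-up (the encoding of a selection as a section/predicate on the quotient, the existentiality of the two interpretations, the derivation of (4) from finiteness of the classes) is fine once this decoupling is in place, but without it the argument does not go through.
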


\begin{proof}
For each $n \geq 1$  let $e_n$ be the number of $E_{\sM,\psi}$-classes with exactly $n$-elements if there are finitely many such $E_{\sM,\psi}$-classes and declare $e_n = \infty$ otherwise.
Let $e_\infty$ be the number of infinite $E_{\sM,\psi}$-classes if there are finitely many infinite $\psi$-classes and declare $e_\infty = \infty$ otherwise.

Let $L_\cap$ be the three sorted language with  a  binary relation $E$ on the second sort and a binary relation $\rho$.
Let $T_\cap$ be the $L_\cap$-theory such that $(M,N,Q;E,\rho) \models T_\cap$ if $M,N$ are infinite, $E$ is an equivalence relation on $N$ such that for all $n \ge 1$
\begin{enumerate}
\item if $e_n < \infty$ then there are $e_n$ $E$-classes with exactly $n$ elements,
\item if $e_n = \infty$ then there are infinitely many $E$-classes with $n$ elements,
\item if $e_\infty < \infty$ then there are $e_\infty$ infinite $E$-classes, and
\item if $e_\infty = \infty$ then there are infinitely many infinite $E$-classes,
\end{enumerate}
and $\rho$ is the graph of a quotient function $N  \to Q$ for $E$.
Note that in either case $(1)$ or $(2)$ we have $e_n = 0$ for sufficiently large $n$.
It easily follows that $T_\cap$ is interpretable in the theory of equality.
The quotient function $\rho$ is included to ensure weak elimination of imaginaries.

Let $L_1$ be the expansion of $L_\cap$ by $L$, where all relations and functions are on the first sort, together with an $(k + 1)$-ary relation symbol $\pi$.
Let $T_1$ be the $L_1$-theory such that $(\sM,N,Q;E,\rho,\pi) \models T_1$ when $(M,N;E,\rho) \models T_\cap$, $\sM \models T$, and $\pi$ is the graph of an isomorphism $(\phi(M^k); E_{\sM,\psi}) \to (N;E)$.
It is easy to see that $T_1$ is bi-interpretable with $T$.

Let $L_2$ be the expansion of $L_\cap$ by a unary predicate $P$ of the second sort.
Let $T_2$ be the $L_2$-theory such that $(M,N,Q;E,\rho,P) \models T_2$ if $(M,N,Q;E,\rho) \models T_\cap$ and $P$ contains exactly one element from each $E$-class.
Note that $T_2$ is interpretable in the theory of equality.
It is easy to see that if every $E_{\sM,\psi}$-class is finite then forking and algebraic closure in $T_2$ agree with forking and algebraic closure in $T_\cap$.

Suppose $(\sM,N,Q;E,\rho,\pi,P) \models T_\cup$.
Let $S$ be the preimage of $P$ under $\pi$.
Then $S$ is a selection for $E_{\sM,\psi}$, hence $(\sM;S) \models T_{\text{Sel}}$.
Suppose that $(\sM;S) \models T_{\text{Sel}}$.
Let $Q$ be $\phi(M^{k})/E_{\sM,\psi}$, $\rho$ be the quotient $\phi(M^{k}) \to Q$, $N$ be a copy of $\phi(M^k)$, $\pi$ be a bijection $\phi(M^k) \to N$, $E$ be the pushforward of $E_{\sM,\psi}$ by $\pi$, and $P$ be the image of $S$ under $\pi$.
Then $(\sM,N,Q;E,\rho,\pi,P) \models T_\cup$.
These observations may be formalized to construct an existential bi-interpretation between $T_{\text{Sel}}$ and $T_\cup$.

As $T_\cap$ is interpretable in the theory of equality we see that $T_\cap$ is $\aleph_0$-stable and $\aleph_0$-categorical.
It is also easy to see that $T_\cap$ weakly eliminates imaginaries.
Note that in either case $(1)$ or $(2)$ $T_1$ eliminates $\exists^\infty$.
Furthemore $T^\eq_2$ eliminates $\exists^\infty$ as $T_2$ is interpretable in the theory of equality.
So $T^*_\cup$ exists by Fact~\ref{fact:minh}.
Finally, an application of Fact~\ref{fact:existential} shows that $T^*_{\text{Sel}}$ exists and is bi-interpretable with $T^*_\cup$.
\end{proof}

\noindent
The reader may now apply the results of \cite{firstpaper} to obtain an explicit $\forall\exists$-axiomization of $T^*_{\text{Sel}}$.
Corollary~\ref{cor:selection} follows by Corollary~\ref{cor:triv-forking} as $T_\cap$ is interpetable in the theory of equality.

\begin{cor}
\label{cor:selection}
If $T$ is $\NSOP_1$ and $T^\eq$ eliminates $\exists^\infty$, then $T^*_{\mathrm{Sel}}$ is $\NSOP_1$.
\end{cor}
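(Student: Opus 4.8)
The final statement to prove is Corollary~\ref{cor:selection}: if $T$ is $\NSOP_1$ and $T^\eq$ eliminates $\exists^\infty$, then $T^*_{\mathrm{Sel}}$ is $\NSOP_1$.

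The plan is to combine Theorem~\ref{thm:generic-selection}(1) with Corollary~\ref{cor:triv-forking}. First I would invoke Theorem~\ref{thm:generic-selection}: since $T^\eq$ eliminates $\exists^\infty$ (hypothesis (1) of that theorem), we obtain that $T_{\mathrm{Sel}}$ has a model companion $T^*_{\mathrm{Sel}}$ which is bi-interpretable with an interpolative fusion $T^*_\cup$, where the base theory $T_\cap$ is interpretable in the theory of equality, $T_1$ is bi-interpretable with $T$, and $T_2$ is interpretable in the theory of equality. Next I would check that the hypotheses of Corollary~\ref{cor:triv-forking} hold for this fusion: $T_\cap$ is interpretable in the theory of an infinite set (indeed the theory of equality), so that hypothesis is met; and each $T_i$ must be $\NSOP_1$. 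For $i = 2$, $T_2$ is interpretable in the theory of equality, hence $\aleph_0$-categorical, $\aleph_0$-stable, and in particular stable, so it is $\NSOP_1$. For $i = 1$, $T_1$ is bi-interpretable with $T$, which is $\NSOP_1$ by hypothesis; since $\NSOP_1$ is preserved under bi-interpretation (a reduct of an interpretation of an $\NSOP_1$ theory is $\NSOP_1$ by Lemma~\ref{lem:kimreduct}(1), and bi-interpretation is symmetric), $T_1$ is $\NSOP_1$. Hence Corollary~\ref{cor:triv-forking} applies and $T^*_\cup$ is $\NSOP_1$.

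Finally, I would transfer this conclusion back along the bi-interpretation: since $T^*_{\mathrm{Sel}}$ is bi-interpretable with $T^*_\cup$, and $\NSOP_1$ is preserved under bi-interpretation, $T^*_{\mathrm{Sel}}$ is $\NSOP_1$. (Strictly speaking, one should be slightly careful about completeness versus incompleteness of these theories, but $\NSOP_1$ for incomplete theories is defined completion-by-completion, and interpretations send completions to completions, so there is no real difficulty here.)

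The main obstacle, such as it is, is bookkeeping rather than mathematics: one must make sure that the two alternative hypotheses in Theorem~\ref{thm:generic-selection} line up with the hypothesis $T^\eq$ eliminates $\exists^\infty$ here (it is case (1) verbatim), and one must confirm that the word ``bi-interpretable'' in the theorem statement really does carry $\NSOP_1$ in both directions. Both points are routine given the results already established in the paper, so the proof is essentially a two-line citation.
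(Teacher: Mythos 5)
Your proposal is correct and matches the paper's own argument, which is exactly the one-line citation "Corollary~\ref{cor:selection} follows by Corollary~\ref{cor:triv-forking} as $T_\cap$ is interpretable in the theory of equality," combined with the decomposition from Theorem~\ref{thm:generic-selection}. Your additional bookkeeping about transferring $\NSOP_1$ across the bi-interpretation and handling completions is the right level of care, and nothing in it diverges from the paper's route.
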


\noindent
We now discuss some specific cases.

\subsubsection{The generic tournament}
\label{section:tournament}
A tournament is a set $V$ together with a binary relation $E$ such that $(a,a) \notin E$ for all $a \in V$ and for all distinct $a,b \in V$ we either have $(a,b) \in E$ or $(b,a) \in E$, but not both.
Finite tournaments form a \Fraisse class, we refer to the \Fraisse limit of this class is the \textbf{generic tournament}.
Let $T_{\text{Tour}}$ be the theory of tournaments.
The theory of the generic tournament is the model companion $T^*_{\text{Tour}}$ of $T_{\text{Tour}}$.

\medskip\noindent
A tournament is a selection.
Fix a set $V$, let $\Delta$ be the set of $(a,b) \in V^2$ such that $a \neq b$ and $\sim$ be the equivalence relation on $\Delta$ where $(u,v) \sim (u',v')$ if and only if $\{u,v\} = \{u',v'\}$.
Note that a tournament $E$ on $V$ is a selection of $\sim$.
So Proposition~\ref{prop:tournament} is a special case of Theorem~\ref{thm:generic-selection}.

\begin{prop}
\label{prop:tournament}
There are $T_\cap,T_1,T_2$ such that the theory of the generic tournament is bi-interpretable $T^*_\cup$, $T_1$ and $T_2$ are both interpretable in the theory of equality, $\acl_2$ agrees with $\acl_\cap$, and forking in $T_2$ agrees with forking in $T_\cap$.
\end{prop}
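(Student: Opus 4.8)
The plan is to deduce Proposition~\ref{prop:tournament} from Theorem~\ref{thm:generic-selection} by recognizing the generic tournament as a generic selection. First I would set up the data: let $L$ be the empty one-sorted language and $T$ the theory of an infinite set, so $T$ is complete and model-complete. Let $x = (x_1,x_2)$ be a pair of variables, take $\phi(x)$ to be the formula $x_1 \neq x_2$, and take $\psi(x,y)$ (with $y = (y_1,y_2)$) to define the equivalence relation on ordered pairs of distinct elements given by $(x_1,x_2)\sim (y_1,y_2)$ iff $\{x_1,x_2\} = \{y_1,y_2\}$. For any $\sM \models T$ this is an equivalence relation $E_{\sM,\psi}$ on $\phi(M^2) = \Delta_M$, each of whose classes has exactly two elements. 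A selection of $E_{\sM,\psi}$ is precisely a set $S \subseteq \Delta_M$ containing exactly one of $(a,b),(b,a)$ for each unordered pair $\{a,b\}$ — that is, a tournament relation on $M$. So $T_{\mathrm{Sel}} = T_{\mathrm{Tour}}$ (after the trivial identification of the binary predicate $P$ with the tournament relation $E$), and hence $T^*_{\mathrm{Sel}} = T^*_{\mathrm{Tour}}$, the theory of the generic tournament, provided the hypotheses of Theorem~\ref{thm:generic-selection} are met.

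Next I would verify the hypotheses of Theorem~\ref{thm:generic-selection}. The theory $T$ of an infinite set eliminates $\exists^\infty$, and it eliminates $\psi(x,y)$: the quotient function $\Delta_M \to M^{?}$ is witnessed by a formula realizing the quotient by $\sim$ — concretely one can take the map $(a,b) \mapsto$ the $\sim$-class, which is definable since $T$ (being interpretable in the theory of equality) has a definable quotient by any $\emptyset$-definable equivalence relation with finite classes, or more simply one notes $T^{\mathrm{eq}}$ eliminates $\exists^\infty$ because $T$ is interpretable in the theory of equality, so case (1) of the hypothesis already applies. Either way, the hypotheses hold, so Theorem~\ref{thm:generic-selection} gives us that $T^*_{\mathrm{Sel}} = T^*_{\mathrm{Tour}}$ has a model companion and yields $T_\cap$, $T_1$, $T_2$ with $T^*_{\mathrm{Tour}}$ bi-interpretable with $T^*_\cup$, $T_1$ bi-interpretable with $T$, and $T_2$ interpretable in the theory of equality.

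Then I would upgrade the conclusion on $T_1$: since $T$ is the theory of an infinite set, ``$T_1$ is bi-interpretable with $T$'' already says $T_1$ is bi-interpretable with the theory of equality, hence in particular interpretable in it. So both $T_1$ and $T_2$ are interpretable in the theory of equality, as claimed. Finally, since every $E_{\sM,\psi}$-class is finite (each has exactly two elements), clause (4) of Theorem~\ref{thm:generic-selection} applies and gives that $\acl_2$ agrees with $\acl_\cap$ and forking in $T_2$ agrees with forking in $T_\cap$. This is exactly the statement of Proposition~\ref{prop:tournament}.

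I do not expect any serious obstacle here: the entire content is the observation that a tournament on $V$ is the same thing as a choice of one representative from each $\sim$-class on ordered pairs of distinct elements, together with a check that the ``two-element class'' case falls squarely within the scope of Theorem~\ref{thm:generic-selection}. The only point requiring a sentence of care is confirming that $T = $ (theory of an infinite set) satisfies the elimination hypotheses — and that is immediate from its interpretability in the theory of equality — and confirming the bookkeeping identification $T^*_{\mathrm{Sel}} = T^*_{\mathrm{Tour}}$, which follows from the \Fraisse-limit description of the generic tournament together with the fact noted in Section~\ref{section:hypergraph}-style arguments that such \Fraisse limits coincide with model companions. As with the other examples in this section, I would present this as a short paragraph rather than a detailed proof, since it is a direct specialization of Theorem~\ref{thm:generic-selection}.
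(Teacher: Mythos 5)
Your proposal is correct and is essentially identical to the paper's argument: the paper likewise observes that a tournament on $V$ is precisely a selection of the equivalence relation $\sim$ on $\Delta_V$ identifying $(u,v)$ with $(v,u)$, and then cites Theorem~\ref{thm:generic-selection} (with $T$ the theory of an infinite set, so hypothesis (1) holds and every $\sim$-class has two elements, giving clause (4)). The bookkeeping points you flag — identifying $T_{\mathrm{Sel}}$ with $T_{\mathrm{Tour}}$ and upgrading ``bi-interpretable with $T$'' to ``interpretable in the theory of equality'' — are exactly the ones implicit in the paper's one-line deduction.
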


\noindent
Combining Theorem~\ref{thm:simplepreservation} and Proposition~\ref{prop:tournament} we recover simplicity of the generic tournament.
This is sharp as the generic tournament is unstable.
We recover $\aleph_0$-categoricity of the generic tournament from Corollary~\ref{cor:ccpres} and Proposition~\ref{prop:tournament}.

\subsubsection{The generic $n$-ary function}
\label{section:generic-function}
Fix $n \geq 1$, let $L_f$ be the language containing a single $n$-ary function symbol $f$, and $T_f$ be the empty $L_f$-theory.
The model companion $T^*_f$ of $T_f$ is the theory of a generic $n$-ary function.

\medskip\noindent
An $n$-ary function is a selection.
Let $M$ be a set.
Let $\sim$ be the equivalence relation on $M^n \times M$ where $(a,b) \sim (a',b')$ if and only if $a = a'$.
A selection for $\sim$ is the graph of a function $M^n \to M$.
So Proposition~\ref{prop:generic-function} is a special case of Theorem~\ref{thm:generic-selection}.

\begin{prop}
\label{prop:generic-function}
Then there are $T_\cap$, $T_1$, $T_2$ such that $T^*_f$ is bi-interpretable with $T^*_\cup$ and $T_1$ and $T_2$ are interpretable in the theory of equality.
\end{prop}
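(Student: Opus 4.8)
The plan is to realize the generic $n$-ary function as a generic selection and invoke Theorem~\ref{thm:generic-selection}. Take $L$ to be the empty language and $T$ the theory of an infinite set, which is one-sorted, complete, and model-complete (it has quantifier elimination). Put $k = n+1$, let $\phi(x)$ be $x=x$ where $x = (x_1,\dots,x_{n+1})$, and let $\psi(x,y)$ be $\bigwedge_{j=1}^{n} x_j = y_j$. Then $\psi$ defines in every $\sM \models T$ the equivalence relation $E_{\sM,\psi}$ on $M^{n+1}$ whose classes are the sets $\{\,(a,b) : b \in M\,\}$ for $a \in M^n$; all classes are infinite. A selection of $E_{\sM,\psi}$ is precisely a set containing, for each $a \in M^n$, exactly one pair $(a,b)$ --- that is, the graph of an $n$-ary function $M^n \to M$. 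So, identifying the predicate $P$ of $L_{\mathrm{Sel}}$ with such a graph, $T_{\mathrm{Sel}}$ is the theory of an infinite set together with an $n$-ary function.

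First I would check the hypotheses of Theorem~\ref{thm:generic-selection}. Since $T$ is interpretable in the theory of equality, $T^\eq$ eliminates $\exists^\infty$ (as noted after Fact~\ref{fact:minh}), so hypothesis~(1) holds; alternatively, hypothesis~(2) holds because $T$ eliminates $\exists^\infty$ and $T$ eliminates $\psi$ via $\varphi(x,z) = \bigwedge_{j=1}^{n} x_j = z_j$, which defines the graph of the projection $M^{n+1} \to M^n$, a quotient function for $E_{\sM,\psi}$. Theorem~\ref{thm:generic-selection} then yields theories $T_\cap$, $T_1$, $T_2$ with $T^*_{\mathrm{Sel}}$ bi-interpretable with $T^*_\cup$, with $T_1$ bi-interpretable with $T$, and with $T_2$ interpretable in the theory of equality. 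Since $T$ is itself the theory of equality, $T_1$ is bi-interpretable with it, so both $T_1$ and $T_2$ are interpretable in the theory of equality. (The classes of $E_{\sM,\psi}$ are infinite, so conclusion~(4) of Theorem~\ref{thm:generic-selection} does not apply, but the Proposition does not require it.)

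It remains to match $T^*_{\mathrm{Sel}}$ with $T^*_f$. Under the quantifier-free translation $P(x_1,\dots,x_n,y) \leftrightarrow f(x_1,\dots,x_n) = y$, the $T_{\mathrm{Sel}}$-models correspond bijectively to the infinite $L_f$-structures, giving an existential bi-interpretation between $T_{\mathrm{Sel}}$ and the theory of an $n$-ary function on an infinite set; the latter has the same existentially closed models as the empty $L_f$-theory $T_f$ (every finite $L_f$-structure embeds in an infinite one), hence the same model companion, namely $T^*_f$. So by Fact~\ref{fact:existential}, $T^*_{\mathrm{Sel}}$ is bi-interpretable with $T^*_f$, and therefore $T^*_f$ is bi-interpretable with $T^*_\cup$. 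There is no genuine obstacle: all the work is in Theorem~\ref{thm:generic-selection}, and the only mild subtlety is that last identification --- $T^*_{\mathrm{Sel}}$ and $T^*_f$ both having no finite models --- which is routine since model companions depend only on the (necessarily infinite) existentially closed models.
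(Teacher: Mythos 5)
Your proposal is correct and follows essentially the same route as the paper: the paper also realizes the generic $n$-ary function as a selection of the equivalence relation on $M^n\times M$ given by agreement in the first $n$ coordinates and cites Theorem~\ref{thm:generic-selection}, leaving the remaining details (which you spell out, including the hypothesis check and the identification of $T^*_{\mathrm{Sel}}$ with $T^*_f$) to the reader.
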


\noindent
Apply Theorem~\ref{thm:preservation} we recover the fact that the theory of a generic $n$-ary function is $\NSOP_1$.
This is sharp as the theory of a generic $n$-ary function is $\mathrm{TP}_2$ \cite[Proposition 3.14]{KRExp}.
We describe another presentation of the generic $n$-ary function as a fusion of stable theories in Section~\ref{sec:empty-companion}.


\subsubsection{Generic Skolem functions}
\label{section:generic-skolem}
Suppose that $T$ is a complete, model complete, $L$-theory with infinite models.
Let $\theta(x,y)$ be an $L$-formula and $\Gamma(x) = \exists y \theta(x,y)$.
Given $\sM \models T$ a Skolem function for $\theta(x,y)$ is a function $f : \Gamma(M^{|x|}) \to M^{|y|}$ such that $\sM \models \theta(a,f(a))$ for all $a \in M^{|x|}$.
Let $L_{\theta-\text{Skol}}$ be the expansion of $L$ by an $(|x| + |y|)$-ary predicate $f$ and $T_{\theta-\text{Skol}}$ be the $L_{\theta-\text{Skol}}$-theory such that $(\sM;f) \models T_{\theta-\text{Skol}}$ when $f$ is the graph of a Skolem function for $\theta(x,y)$.
The first claim of Fact~\ref{fact:winkler-skolem} is due to Winkler~\cite{Winkler}.
The second claim is due to Kruckman and Ramsey~\cite{KRExp}.

\begin{fact}
\label{fact:winkler-skolem}
Suppose $T$ eliminates $\exists^\infty$.
Then $T_{\theta-\mathrm{Skol}}$ has a model companion $T^*_{\theta-\mathrm{Skol}}$.
If $T$ is $\NSOP_1$ then $T^*_{\theta-\mathrm{Skol}}$ is $\NSOP_1$.
\end{fact}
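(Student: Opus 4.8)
The plan is to reduce the second claim of Fact~\ref{fact:winkler-skolem} to Corollary~\ref{cor:selection} by exhibiting $T_{\theta\text{-Skol}}$ as (existentially bi-interpretable with) an instance of the generic selection construction, and then to invoke Fact~\ref{fact:existential} to transfer the model companion and Corollary~\ref{cor:stable-case}/Corollary~\ref{cor:selection} to transfer $\NSOP_1$. Concretely, the graph of a Skolem function for $\theta(x,y)$ on $\Gamma(M^{|x|})$ is a \emph{selection} of the equivalence relation $E$ defined on the set $\{(a,b) : \sM \models \theta(a,b)\}$ by $(a,b)\mathrel{E}(a',b')$ iff $a = a'$. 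So I would first pass from $T$ to its expansion $T^+$ in which we name this graph $G_\theta = \{(a,b) : \theta(a,b)\}$ as a new predicate (or work inside $\sM^{\mathrm{eq}}$), set $\phi(x,y)$ to be $\theta(x,y)$ itself, and let $\psi$ define the equivalence relation "equal first coordinate" on $\phi$. Then $T_{\theta\text{-Skol}}$ is precisely $T_{\mathrm{Sel}}$ for this choice of data, up to a routine existential bi-interpretation: a model $(\sM; f)$ of $T_{\theta\text{-Skol}}$ yields the selection $P = f$ of $E_{\sM,\psi}$, and conversely.

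The key steps, in order, are: (1) Verify that $\psi$ does define an equivalence relation $E_{\sM,\psi}$ on $\phi(M^k)$ in every model of $T$ — this is immediate since "equal first coordinate" is an equivalence relation. (2) Check the hypothesis of Theorem~\ref{thm:generic-selection}: we are assuming $T$ eliminates $\exists^\infty$, and moreover $T$ eliminates $\psi(x,y)$ in the sense of that theorem, because the projection $(a,b)\mapsto a$ is itself a $\emptyset$-definable quotient function for $E_{\sM,\psi}$ (take $\varphi(x,y;z)$ to be "$z = a$"). So case~(2) of Theorem~\ref{thm:generic-selection} applies directly, with no need to pass to $T^{\mathrm{eq}}$. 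This gives $T_{\mathrm{Sel}}^* = T^*_{\theta\text{-Skol}}$ exists and is bi-interpretable with an interpolative fusion $T^*_\cup$ where $T_\cap$ is interpretable in the theory of equality, $T_1$ is bi-interpretable with $T$, and $T_2$ is interpretable in the theory of equality. (3) Apply Corollary~\ref{cor:selection}: if $T$ is $\NSOP_1$ and $T^{\mathrm{eq}}$ eliminates $\exists^\infty$, then $T^*_{\mathrm{Sel}}$ is $\NSOP_1$; hence $T^*_{\theta\text{-Skol}}$ is $\NSOP_1$ via Fact~\ref{fact:existential} (bi-interpretations preserve $\NSOP_1$ — one should note that being a reduct of a bi-interpretation, $\NSOP_1$ transfers). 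Alternatively, one can argue directly from Corollary~\ref{cor:stable-case} / Corollary~\ref{cor:triv-forking}: $T_\cap$ interpretable in the theory of equality has disintegrated forking, $T_1$ is bi-interpretable with the $\NSOP_1$ theory $T$, and $T_2$ is $\NSOP_1$, so $T^*_\cup$ is $\NSOP_1$.

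There is one genuine subtlety worth flagging. Corollary~\ref{cor:selection} as stated assumes "$T^{\mathrm{eq}}$ eliminates $\exists^\infty$", which is slightly stronger than "$T$ eliminates $\exists^\infty$"; but in Fact~\ref{fact:winkler-skolem} we only hypothesize the latter. To close this gap I would appeal to case~(2) of Theorem~\ref{thm:generic-selection}, which requires only that $T$ eliminates $\exists^\infty$ \emph{together with} elimination of $\psi(x,y)$, and as observed in step~(2) the latter holds trivially here because the first-coordinate projection is a definable quotient map. Under case~(2), the proof of Theorem~\ref{thm:generic-selection} shows $T_1$ eliminates $\exists^\infty$ and $T_2^{\mathrm{eq}}$ eliminates $\exists^\infty$ (as $T_2$ is interpretable in the theory of equality), $T_\cap$ is $\aleph_0$-stable and $\aleph_0$-categorical, so $T^*_\cup$ exists by Fact~\ref{fact:minh}; and for $\NSOP_1$ one then runs Corollary~\ref{cor:triv-forking} (with $T_\cap$ interpretable in the theory of equality, hence with disintegrated forking) rather than Corollary~\ref{cor:selection}, avoiding the $T^{\mathrm{eq}}$-hypothesis altogether.

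The main obstacle I anticipate is purely bookkeeping: spelling out the existential bi-interpretation between $T_{\theta\text{-Skol}}$ and the relevant $T_{\mathrm{Sel}}$ carefully enough to invoke Fact~\ref{fact:existential}, including checking that $T_{\theta\text{-Skol}}$ and $T$ are $\forall\exists$-axiomatizable in the needed sense (which holds since $T$ is model-complete, hence $\forall\exists$-axiomatizable, and the Skolem-function axiom is $\forall\exists$). No deep new idea is required; the work is in matching up the domains ($\Gamma(M^{|x|})$ versus $\phi(M^k)$) and the arities, and in confirming that the graph predicate $G_\theta$ does not disturb elimination of $\exists^\infty$ or model-completeness of the base theory — which it does not, since it is $\emptyset$-definable in $T$.
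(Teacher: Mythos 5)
Your proposal is correct and follows essentially the same route as the paper: the paper likewise observes that the graph of a Skolem function for $\theta(x,y)$ is exactly a selection of the equivalence relation $\theta(x,y)\land\theta(x',y')\land(x=x')$ on $\theta(M^{|x|},M^{|y|})$ and deduces the Fact from Theorem~\ref{thm:generic-selection} together with the $\NSOP_1$ transfer via Corollary~\ref{cor:triv-forking}. Your explicit handling of the mismatch between ``$T^{\eq}$ eliminates $\exists^\infty$'' in Corollary~\ref{cor:selection} and the weaker hypothesis of the Fact --- routing through case~(2) of Theorem~\ref{thm:generic-selection} because the first-coordinate projection is a definable quotient function --- is exactly the right fix and is only implicit in the paper.
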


\noindent
Let $|x'| = |x|$, $|y'| = |y|$ and $\psi(x,y,x',y')$ be $\theta(x,y) \land \theta(x',y') \land (x = x')$.
So if $\sM \models T$ then $\psi$ defines an equivalence relation on $\theta(M^{|x|}, M^{|y|}$) and any selection for this equivalence relation is the graph of a Skolem function for $\theta(x,y)$.
So Fact~\ref{fact:winkler-skolem} follows from Theorem~\ref{thm:generic-selection}. 

\medskip\noindent
Fact~\ref{fact:nub-simple} is due to N\"ubling~\cite{Nbling2004}.

\begin{fact}
\label{fact:nub-simple}
Suppose that $\theta(x,y)$ is bounded in $y$.
If $T$ is simple then $T^*_{\theta-\mathrm{Skol}}$ is simple.
If $T$ is $\aleph_0$-categorical then any completion of $T^*_{\theta-\mathrm{Skol}}$ is $\aleph_0$-categorical.
\end{fact}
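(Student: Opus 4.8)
The plan is to realize $T^*_{\theta-\mathrm{Skol}}$ as an interpolative fusion via Theorem~\ref{thm:generic-selection} and then feed this into the preservation results of Sections~\ref{Sec: Simplicity} and~\ref{sec: Preservationcountablecat}. Throughout we use that $T$ eliminates $\exists^\infty$ (this is needed for $T^*_{\theta-\mathrm{Skol}}$ to exist, and is automatic when $T$ is $\aleph_0$-categorical). As noted above, a selection of the equivalence relation $E_{\sM,\psi}$ given by $\psi(x,y,x',y') = \theta(x,y)\wedge\theta(x',y')\wedge(x=x')$ on $\theta(M^{|x|},M^{|y|})$ is precisely the graph of a Skolem function for $\theta$, and $T$ eliminates $\psi$ through the projection $(x,y)\mapsto x$. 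Since $\theta(x,y)$ is bounded in $y$, every $E_{\sM,\psi}$-class is finite, so Theorem~\ref{thm:generic-selection}(2) applies: there are $T_\cap$, $T_1$, $T_2$ with $T^*_{\theta-\mathrm{Skol}}$ bi-interpretable with $T^*_\cup$, where $T_1$ is bi-interpretable with $T$, both $T_2$ and $T_\cap$ are interpretable in the theory of equality (the latter by the construction in the proof of Theorem~\ref{thm:generic-selection}), and, using finiteness of the classes, $\acl_2 = \acl_\cap$ and forking in $T_2$ agrees with forking in $T_\cap$.

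Suppose first that $T$ is simple; then $T_1$ is simple. Since $T_\cap$ is interpretable in the theory of equality, it is $\aleph_0$-stable, weakly eliminates imaginaries, and has disintegrated forking (the theory of an infinite set has finite $U$-rank and disintegrated forking, and this passes to interpretable theories — see the discussion preceding Corollary~\ref{cor:triv-forking}). Replacing $T_\cap$ by $T_\cap^{\eq}$ — which changes neither $T^*_\cup$ up to bi-interpretation, by Remark~\ref{rem: robust}(4), nor the properties already recorded — we may assume $T_\cap$ has full elimination of imaginaries. Then Proposition~\ref{prop:disforkthreebody}, applied with $T' = T_1$ and $T = T_\cap$, shows that $\eind[K] = \eind[f]$ in $T_1$ satisfies the $T_\cap$-generic independence theorem. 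Now all hypotheses of Theorem~\ref{thm:simplepreservation} hold with $i^* = 1$: $T_\cap$ is stable, each $T_i$ is simple, condition $(1)$ holds by the previous sentence, and conditions $(2)$ and $(3)$ hold for $i = 2$ since $\acl_2 = \acl_\cap$ and forking in $T_2$ agrees with forking in $T_\cap$. Hence $T^*_\cup$ is simple, and so is $T^*_{\theta-\mathrm{Skol}}$, simplicity being preserved under bi-interpretation.

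Suppose now that $T$ is $\aleph_0$-categorical; then $T_1$ is $\aleph_0$-categorical, and $T_2$, $T_\cap$ are $\aleph_0$-categorical as they are interpretable in the theory of equality. Moreover $T_\cap$ is stable with weak elimination of imaginaries, so by Proposition~\ref{prop:stationarity} it admits a stationary and extendable independence relation. Since all languages involved have finitely many sorts, $I = \{1,2\}$ is finite, and $\acl_2 = \acl_\cap$, the hypotheses of Proposition~\ref{prop:ccelementary} and Theorem~\ref{thm:ccpres} are satisfied with $i^* = 1$; hence every completion of $T^*_\cup$ is $\aleph_0$-categorical, and transferring along the bi-interpretation, the same holds for $T^*_{\theta-\mathrm{Skol}}$.

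The step I expect to be most delicate is the verification that $\eind[f]$ in $T_1$ satisfies the $T_\cap$-generic independence theorem: since $T_1$ may be an arbitrary simple (hence possibly non-stable and non-relatively-disintegrated) theory, neither Proposition~\ref{prop:stablethreebody} nor Proposition~\ref{prop:reldisthreebody} is available, and one must instead exploit the fact that the common reduct $T_\cap$ has disintegrated forking — which is exactly what interpretability of $T_\cap$ in the theory of equality buys — together with the passage to imaginaries needed to match the elimination-of-imaginaries hypothesis of Proposition~\ref{prop:disforkthreebody} (and the accompanying, routine check that conditions $(2)$ and $(3)$ of Theorem~\ref{thm:simplepreservation} survive this passage).
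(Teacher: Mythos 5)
Your proposal is correct and follows exactly the route the paper indicates: it derives the first claim from Theorem~\ref{thm:generic-selection} together with Theorem~\ref{thm:simplepreservation} (using that boundedness of $\theta$ in $y$ makes every $E_{\sM,\psi}$-class finite, so $\acl_2=\acl_\cap$ and forking in $T_2$ agrees with forking in $T_\cap$, and using disintegrated forking of $T_\cap$ via Proposition~\ref{prop:disforkthreebody} to verify the $T_\cap$-generic independence theorem for $T_1$), and the second claim from Theorem~\ref{thm:generic-selection} together with Theorem~\ref{thm:ccpres}. The additional care you take over elimination of $\exists^\infty$ and over passing to $T_\cap^{\eq}$ only fills in details the paper leaves implicit.
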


\noindent
The first claim of Fact~\ref{fact:nub-simple} follows from Theorem~\ref{thm:simplepreservation} and Theorem~\ref{thm:generic-selection}.
The second claim folows from Theorem~\ref{thm:ccpres} and Theorem~\ref{thm:generic-selection}.
The second claim of Fact~\ref{fact:nub-simple} is reasonably sharp, see \cite[2.3.1]{Nbling2004}.

\medskip\noindent
One can prove Fact~\ref{fact:winkler-skolem-strong} by iterating Fact~\ref{fact:winkler-skolem}.
The first claim is also due to Winkler~\cite{Winkler} and the second claim is due to Kruckman and Ramsey~\cite{KRExp}.

\begin{fact}
\label{fact:winkler-skolem-strong}
Suppose that $T$ eliminates $\exists^\infty$.
Then there is a language $L \subseteq L_{\mathrm{Skol}}$ and a model complete $L_{\mathrm{Skol}}$-theorem $T \subseteq T^*_{\mathrm{Skol}}$ which has definable Skolem functions.
If $T$ is $\NSOP_1$ then $T^*_{\mathrm{Skol}}$ is $\NSOP_1$.
\end{fact}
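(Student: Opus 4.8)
The plan is to build $L_{\mathrm{Skol}} \supseteq L$ and $T^*_{\mathrm{Skol}} \supseteq T$ by iterating Fact~\ref{fact:winkler-skolem} along a transfinite chain. Fix a regular cardinal $\lambda > |L| + \aleph_0$ and construct increasing chains $L = L_0 \subseteq L_1 \subseteq \cdots$ and $T = T_0 \subseteq T_1 \subseteq \cdots$ indexed by $\alpha < \lambda$. At a successor stage $\alpha + 1$, a bookkeeping function selects a formula $\exists y\, \theta(x,y)$ which is an $L_\beta$-formula for some $\beta \leq \alpha$, and one sets $L_{\alpha+1} = (L_\alpha)_{\theta-\mathrm{Skol}}$ and $T_{\alpha+1} = (T_\alpha)^*_{\theta-\mathrm{Skol}}$ (applying Fact~\ref{fact:winkler-skolem}); at limit stages one takes unions. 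Since $|L_\alpha| \leq |L| + \aleph_0 < \lambda$ for all $\alpha < \lambda$, the bookkeeping can be arranged so that every formula ever appearing is treated at some later stage. Then $L_{\mathrm{Skol}} := \bigcup_{\alpha < \lambda} L_\alpha$ and $T^*_{\mathrm{Skol}} := \bigcup_{\alpha < \lambda} T_\alpha$ have the desired form: every $L_{\mathrm{Skol}}$-formula $\exists y\, \theta(x,y)$ is an $L_\alpha$-formula for some $\alpha$, and hence acquires a definable Skolem function by stage $\alpha+1$.

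For the recursion to make sense one must check, at each successor stage, that $T_\alpha$ eliminates $\exists^\infty$ (so that Fact~\ref{fact:winkler-skolem} applies and $T_{\alpha+1}$ exists), that $T_{\alpha+1}$ is model-complete (automatic, being a model companion), and that $T_{\alpha+1}$ is $\NSOP_1$ whenever $T_\alpha$ is (the second clause of Fact~\ref{fact:winkler-skolem}). The one substantive point is preservation of elimination of $\exists^\infty$ under a single generic Skolemization: \emph{if $T$ eliminates $\exists^\infty$, then so does $T^*_{\theta-\mathrm{Skol}}$}. I would deduce this from the structural description in Theorem~\ref{thm:generic-selection}, which realizes $T^*_{\theta-\mathrm{Skol}}$, up to bi-interpretation, as an interpolative fusion $T^*_\cup$ over a base $T_\cap$ interpretable in the theory of equality (hence stable with weak elimination of imaginaries), with $T_1$ bi-interpretable with $T$ and $T_2$ interpretable in the theory of equality. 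By Proposition~\ref{prop:relaclcomp}(2), every formula of $T^*_\cup$ is $T^*_\cup$-equivalent to a finite disjunction of formulas $\exists y\, \bigwedge_{i\in J}\varphi_i(x,z,y)$ with the conjunction bounded in $y$; counting the solutions $z$ of such a formula then reduces, via the bounded-in-$y$ projection together with interpolativeness and elimination of $\exists^\infty$ in the reducts $T_i$ (and $\aleph_0$-categoricity of $T_\cap$, to make the bound uniform in the parameter $x$), to a uniform finite bound, which transfers back across the bi-interpretation.

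The passage to the union is soft. A directed union of model-complete theories is model-complete: if $\sM \subseteq \sN$ are models of $\bigcup_\alpha T_\alpha$, then $\sM\res L_\alpha \elesub \sN\res L_\alpha$ for every $\alpha$ by model-completeness of $T_\alpha$, hence $\sM \elesub \sN$. And since any instance of elimination of $\exists^\infty$ or of $\SOP_1$ involves only finitely many symbols, it already concerns an $L_\alpha$-formula and is decided in models of $T_\alpha$; so both elimination of $\exists^\infty$ and $\NSOP_1$ pass to $\bigcup_\alpha T_\alpha$, and likewise to each intermediate limit stage. The main obstacle is the $\exists^\infty$-preservation lemma of the middle paragraph; the transfinite bookkeeping and the union arguments are routine.
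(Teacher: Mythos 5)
Your overall strategy is the same as the paper's: the paper's proof is a two-line sketch saying that one builds $T^*_{\mathrm{Skol}}$ as the union of a (countable) chain, each stage adding generic Skolem functions for the previous stage via Fact~\ref{fact:winkler-skolem}, and then uses that a union of a chain of $\NSOP_1$ theories is $\NSOP_1$. Your transfinite bookkeeping, the model-completeness of the directed union, and the observation that $\SOP_1$ (and an instance of $\exists^\infty$) is witnessed by a single formula living in some $L_\alpha$ are all fine and match the intended argument; the only real difference is that you Skolemize one formula per stage over a long chain where the paper Skolemizes everything at once over $\omega$ stages.

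The one place where your write-up has a genuine hole is exactly the step you flag as substantive: that $T^*_{\theta\text{-}\mathrm{Skol}}$ again eliminates $\exists^\infty$. Fact~\ref{fact:winkler-skolem} does not assert this, so some input is needed, and your proposed derivation from Theorem~\ref{thm:generic-selection} and Proposition~\ref{prop:relaclcomp}(2) does not go through as written. After reducing to a disjunct $\exists y\,\bigwedge_{i\in J}\varphi_i(x,z,y)$ bounded in $y$, you must bound the size of the set defined by $\bigwedge_{i\in J}\varphi_i(a,w)$ whenever it is finite. Elimination of $\exists^\infty$ in each $T_i$ only bounds $\varphi_i(a,\monster^w)$ when \emph{that} set is finite; it says nothing about the case where each conjunct defines an infinite set but the intersection is finite, and ``interpolativeness'' as defined in Section~\ref{section:first-paper} only speaks to empty intersections, not to small nonempty ones. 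Controlling finite intersections of infinite sets from different languages is precisely the content of the finer axiomatization of $T_\cup^*$ in \cite{firstpaper} (which this paper explicitly declines to use), and the paper avoids the issue entirely by attributing the preservation of $\exists^\infty$-elimination under a single generic Skolemization to Winkler~\cite{Winkler} and Kruckman--Ramsey~\cite{KRExp}. So either cite that result directly or supply the missing argument for bounded finite intersections; as it stands your middle paragraph asserts the key lemma rather than proving it.
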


\noindent
One constructs $T^*_{\text{Skol}}$ as the union of a countable chain of theories, each of which defines Skolem functions for the previous theory and then applies the fact that a union of a chain of $\NSOP_1$ theories is $\NSOP_1$.

\subsection{The model companion of the empty theory}
\label{sec:empty-companion}
Let $L$ be a first order language and $\varnothing_L$ be the empty $L$-theory.
Winkler~\cite{Winkler} showed that $\varnothing_L$ has a model companion $\varnothing^*_L$.
Fact~\ref{fact:empty-companion} is due to Kruckman and Ramsey~\cite{KRExp} and independently Je{\v{r}}{\'{a}}bek~\cite{Jebek2019}.

\begin{fact}
\label{fact:empty-companion}
$\varnothing^*_L$ is $\NSOP_1$.
\end{fact}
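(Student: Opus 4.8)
The plan is to present $\varnothing_L$, up to existential bi-interpretation, as a union $T_\cup=\bigcup_{i\in I}T_i$ of theories each of which is interpretable in the theory of equality, over a common reduct $T_\cap$ also interpretable in the theory of equality; then $\varnothing^*_L$ will be bi-interpretable with the interpolative fusion $T^*_\cup$, and the latter is $\NSOP_1$ by the preservation results already in hand.

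First I would decompose $L$ into its relation, function, and constant symbols (a constant being absorbed trivially, or regarded as a nullary function). For each relation symbol $R_j$ of arity $n_j$, Proposition~\ref{prop:generic-relation} supplies a pair of theories, both interpretable in the theory of equality, over a two-sorted reduct (the home sort together with an auxiliary sort $S_j$, asserting both are infinite) which is itself interpretable in the theory of equality, such that $\varnothing_{\{R_j\}}$ is existentially bi-interpretable with the union of the pair; likewise Proposition~\ref{prop:generic-function} handles each function symbol. Since the components attached to distinct symbols share only the home sort, I would then amalgamate them all: let $I$ index all the component theories $T_i$ in their languages $L_i$ (pairwise-distinct auxiliary sorts, one shared home sort $V$), let $L_\cap$ consist of all the sorts with no function or relation symbols, and let $T_\cap$ assert every sort is infinite. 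Then $T_\cap$ and each $T_i$ are interpretable in the theory of equality, and $T_\cup$ is existentially bi-interpretable with $\varnothing_L$: a model of $\varnothing_L$ on $V$ is the same data as a model of $T_\cup$ in which each auxiliary sort is realized as a copy of the appropriate power of $V$, with each $R_j$ or $f_k$ recovered from its component data exactly as in Propositions~\ref{prop:generic-relation} and~\ref{prop:generic-function}.

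With this in place, $T^*_\cup$ exists by Corollary~\ref{cor:minh}, and since $T_\cap$ is interpretable in the theory of an infinite set and each $T_i$ is $\NSOP_1$, Corollary~\ref{cor:triv-forking} gives that $T^*_\cup$ is $\NSOP_1$. Both $\varnothing_L$ and $T_\cup$ are $\forall\exists$-axiomatizable — the former trivially, the latter because its axioms only assert that certain maps are bijections and certain predicates are infinite and co-infinite — so Fact~\ref{fact:existential} applies to the existential bi-interpretation between them, yielding that $\varnothing^*_L$ (which exists by Winkler's theorem) is bi-interpretable with $T^*_\cup$. Since $\NSOP_1$ passes to reducts by Lemma~\ref{lem:kimreduct}(1), and hence is invariant under (bi-)interpretation, $\varnothing^*_L$ is $\NSOP_1$.

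The hard part will not be conceptual: every genuine step is an appeal to an earlier result. The one place needing care is the bookkeeping of the amalgamation — checking that the per-symbol bi-interpretations of Propositions~\ref{prop:generic-relation} and~\ref{prop:generic-function} glue into a single existential bi-interpretation between $T_\cup$ and $\varnothing_L$, with the distinct auxiliary sorts not interfering and $T_\cap$ correctly playing the role of the common reduct. As distinct symbols contribute structure on disjoint auxiliary sorts this is routine, but it is the only step not immediate from a cited statement.
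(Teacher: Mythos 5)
Your overall strategy is the paper's own: realize $\varnothing^*_L$, up to existential bi-interpretation, as an interpolative fusion of theories interpretable in the theory of equality over a reduct interpretable in the theory of equality (this is Proposition~\ref{prop:empty-companion}), and then conclude $\NSOP_1$ from Corollary~\ref{cor:triv-forking}. The only real difference is bookkeeping: the paper gives each arity $k$ a single shared power sort $N_k$ with one bijection theory $T_1$, and handles an $n$-ary function symbol $f$ directly by a unary surjection $g_f\colon N_n\to M$ with infinite fibers (and a constant by a distinguished element), rather than invoking the per-symbol decompositions of Propositions~\ref{prop:generic-relation} and~\ref{prop:generic-function} and gluing them.

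Two of your bookkeeping claims are, however, literally false as written and need repair. First, $T_\cup$ is \emph{not} existentially bi-interpretable with $\varnothing_L$: in any $T_\cup$-model each $P_j$ is infinite and co-infinite and each function-component is surjective with infinite fibers, so a model of $\varnothing_L$ in which some $R_j$ is finite, or some $f$ is not surjective, corresponds to no $T_\cup$-model, and Fact~\ref{fact:existential} cannot be applied to the pair $(\varnothing_L, T_\cup)$. The fix is the paper's intermediate theory $T$ (relations infinite and co-infinite, functions surjective with infinite fibers), which is $\forall\exists$, has $\varnothing^*_L$ as its model companion, and \emph{is} existentially bi-interpretable with $T_\cup$. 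Second, if you route function symbols through Proposition~\ref{prop:generic-function}, i.e.\ through Theorem~\ref{thm:generic-selection}, the common reduct of that pair carries the equivalence relation $E$ and the quotient map $\rho$ on the auxiliary sorts; so your $T_\cap$ cannot be ``all sorts infinite with no structure'' unless you replace that decomposition (e.g.\ by the paper's surjection-with-infinite-fibers component). Either repair keeps $T_\cap$ interpretable in the theory of equality, so Corollary~\ref{cor:minh} and Corollary~\ref{cor:triv-forking} still apply and the conclusion stands; but as stated the bi-interpretation you assert does not exist.
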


\noindent
Fact~\ref{fact:empty-companion} is sharp as $\varnothing^*_L$ is $\mathrm{TP}_2$ and hence not simple when $L$ contains a function symbol of airity at least two~\cite[Proposition 3.14]{KRExp}.
Proposition~\ref{prop:empty-companion} allows us to realize $\varnothing^*_L$ as a fusion of stable theories. In particular, this allows us to recover Fact~\ref{sec:empty-companion} using Theorem~\ref{thm:preservation} and Proposition~\ref{prop:stablethreebody}.

\begin{prop}
\label{prop:empty-companion}
There is $T_\cap$, and $(T_i)_{i \in I}$ such that each $T_i$ is interpretable in the theory of equality and $\varnothing^*_L$ is bi-interpretable with $T^*_\cup$. 
\end{prop}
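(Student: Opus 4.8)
The plan is to exhibit $\varnothing^*_L$ as the interpolative fusion of a family of theories, one for each function symbol and one for each relation symbol of $L$ (plus one "base" theory supplying an infinite universe), each of which is a generic construction interpretable in the theory of equality. Concretely, I would take $I$ to consist of the symbols of $L$ together with a distinguished index $0$; let $L_\cap$ be a multi-sorted empty language with one "home" sort $V$ (and whatever auxiliary sorts $S_R$, $S_f$ are needed below), and $T_\cap$ the theory asserting every sort is infinite. For a relation symbol $R$ of arity $n$, imitate the hypergraph construction in Proposition~\ref{prop:random-graph}/Proposition~\ref{prop:generic-relation}: introduce a sort $S_R$, an $L_R$-expansion with a bijection (or appropriate quotient map) $\pi_R\colon V^n\to S_R$ and a unary predicate $P_R$ on $S_R$, and let $R$ be the preimage of $P_R$. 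For a function symbol $f$ of arity $n$, use the generic-selection presentation of Theorem~\ref{thm:generic-selection} (or the generic $n$-ary function presentation, Proposition~\ref{prop:generic-function}): introduce a sort $S_f$ and put the graph of $f$ in via a bijection $V^n\to S_f$ together with a predicate or selection. Each $T_i$ so constructed involves only the empty language plus bijections/predicates between infinite sorts, hence is interpretable in the theory of equality; and the common reduct of any two is exactly $T_\cap$.

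The main verification is that $T_\cup$ (the union) is existentially bi-interpretable with $\varnothing_L$, and then that $\varnothing^*_L$ is bi-interpretable with $T^*_\cup$. The bi-interpretation with $\varnothing_L$ is built symbol-by-symbol exactly as in the proofs of Propositions~\ref{prop:random-graph}, \ref{prop:generic-relation}, and~\ref{prop:generic-function}: given an $L$-structure on a set $V$, use the identity maps $V^n\to V^n$ for the $\pi$'s and read off the predicates/selections from the interpretations of the relation and function symbols; conversely, given a $T_\cup$-model, recover each $R$ as a preimage and each $f$ as the function whose graph is encoded, obtaining an $L$-structure. These assignments are quantifier-free on both sides, so the composite maps are (existentially definable) isomorphisms, giving an existential bi-interpretation between $\varnothing_L$ and $T_\cup$. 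Since $\varnothing_L$ is $\forall\exists$-axiomatizable (indeed universal) and, by Corollary~\ref{cor:minh}, $T^*_\cup$ exists (each $T_i$ is interpretable in the theory of equality, so $(T_i)^\eq$ eliminates $\exists^\infty$), Fact~\ref{fact:existential} applies: the model companions exist on both sides and are bi-interpretable. By Fact~\ref{fact: first Theorem}, after Morleyizing the $T_i$ to be model-complete, $T^*_\cup$ is the model companion of $T_\cup$, and $\varnothing^*_L$ is the model companion of $\varnothing_L$ (Winkler), so they are bi-interpretable.

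I expect the main obstacle to be purely bookkeeping: handling an arbitrary (possibly infinite) language $L$ with symbols of various arities uniformly, so that $L_i\cap L_j = L_\cap$ for $i\neq j$ and the auxiliary sorts $S_R$, $S_f$ don't collide, while keeping each $T_i$ complete with the same set $T_\cap$ of $L_\cap$-consequences. One must also be slightly careful that the "base" theory $T_0$ (the theory of an infinite set, say on the sort $V$ with no structure, or whatever is needed to pin down the home sort) is genuinely needed and fits the framework. As in the cited propositions, I would only sketch the construction and assert that the observations "may be formalized to construct an existential bi-interpretation," deferring the routine formalization.

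\begin{proof}
We only sketch the construction, which combines the ideas of Propositions~\ref{prop:generic-relation} and~\ref{prop:generic-function}.

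Let $I$ consist of the function and relation symbols of $L$ together with a distinguished index $0$. Let $L_\cap$ be the multi-sorted empty language with a home sort $V$, together with an auxiliary sort $S_s$ for each symbol $s$ of $L$. Let $T_\cap$ be the $L_\cap$-theory asserting that $V$ and every $S_s$ are infinite; then $T_\cap$ is interpretable in the theory of equality, hence $\aleph_0$-stable, $\aleph_0$-categorical, and admits weak elimination of imaginaries.

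For a relation symbol $R$ of arity $n$, let $L_R$ expand $L_\cap$ by an $(n+1)$-ary relation symbol $\pi_R$ (of type $V^n\times S_R$) and a unary predicate $P_R$ of sort $S_R$, and let $T_R$ be the $L_R$-theory asserting that $\pi_R$ is the graph of a bijection $V^n\to S_R$ and $P_R$ is an arbitrary subset of $S_R$. For a function symbol $f$ of arity $n$, let $L_f$ expand $L_\cap$ by an $(n+2)$-ary relation symbol $\pi_f$ (of type $V^n\times V\times S_f$) and a unary predicate $P_f$ of sort $S_f$, and let $T_f$ be the $L_f$-theory asserting that $\pi_f$ is the graph of a bijection $V^n\times V\to S_f$ and $P_f$ is a selection of the equivalence relation on $V^n\times V$ identifying $(a,b)$ with $(a',b')$ when $a=a'$. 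Finally let $T_0$ be $T_\cap$ itself in the language $L_0 := L_\cap$. Each $T_i$ ($i\in I$) involves only the empty language expanded by bijections and predicates between infinite sorts, hence is interpretable in the theory of equality. By construction $L_i\cap L_j = L_\cap$ for distinct $i,j\in I$, and each $T_i$ has $L_\cap$-consequences exactly $T_\cap$. By Corollary~\ref{cor:minh}, $T^*_\cup$ exists.

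It remains to produce an existential bi-interpretation between $\varnothing_L$ and $T_\cup$. Given an $L$-structure with universe $V$, take $S_s := V^{\mathrm{ar}(s)}$ (resp.\ $V^{n+1}$ for an $n$-ary function) with $\pi_s$ the identity, let $P_R$ be the graph of the interpretation of $R$, and let $P_f$ be the graph of the interpretation of $f$; this yields a $T_\cup$-model. Conversely, given a $T_\cup$-model, recover each relation $R$ as the preimage of $P_R$ under $\pi_R$, and each function $f$ as the unique function whose graph is the preimage of $P_f$ under $\pi_f$; this yields an $L$-structure. Both assignments are quantifier-free on both sides, so the composites are existentially definable isomorphisms, giving an existential bi-interpretation between $\varnothing_L$ and $T_\cup$; the routine formalization is left to the reader.

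Now $\varnothing_L$ is universally axiomatizable and, after Morleyizing each $T_i$ to a model-complete theory (which does not change interpretability in the theory of equality or the class of interpolative structures, by Remark~\ref{rem: robust}), $T_\cup$ is $\forall\exists$-axiomatizable. By Winkler~\cite{Winkler}, $\varnothing^*_L$ exists, and by Fact~\ref{fact: first Theorem}, $T^*_\cup$ is the model companion of $T_\cup$. Hence Fact~\ref{fact:existential} applies and $\varnothing^*_L$ is bi-interpretable with $T^*_\cup$.
\end{proof}
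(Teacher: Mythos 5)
Your overall strategy (realize $T_\cup$ as existentially bi-interpretable with a suitable presentation of the empty theory, invoke Corollary~\ref{cor:minh} for existence and Fact~\ref{fact:existential} to transfer to model companions) is the same as the paper's, but there is a genuine gap in the construction of the factor theories: you place the bijection $\pi_s\colon V^{\mathrm{ar}(s)}\to S_s$ and the predicate/selection $P_s$ on $S_s$ in the \emph{same} theory $T_s$. Once both live in one language, $T_s$ definably recovers the pullback $\pi_s^{-1}(P_s)\subseteq V^{n}$ (resp.\ the function $V^n\to V$ whose graph is encoded by the selection), so $T_R$ is bi-interpretable with the theory of an arbitrary $n$-ary relation on an infinite set and $T_f$ with the theory of an arbitrary $n$-ary function. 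For $n\geq 2$ these theories are not complete, not $\aleph_0$-categorical, and have completions (e.g.\ a dense linear order, or a group operation) that are certainly not interpretable in the theory of equality; so the very property the proposition asserts of each $T_i$ fails, and Corollary~\ref{cor:minh} no longer applies to give existence of $T^*_\cup$. The entire point of the decomposition is that no single factor is allowed to see both the identification of $S_s$ with $V^n$ and the subset of $S_s$; the "generic $n$-ary relation/function" is supposed to emerge only from the interpolative fusion of these two pieces of data.

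The paper's proof avoids this by introducing one abstract sort $N_k$ per arity $k$ and collecting \emph{all} the bijections $\pi_k\colon M^k\to N_k$ into a single separate factor $T_1$. Each $T_R$ then only asserts that $P_R$ is an infinite, co-infinite subset of the abstract sort $N_n$, and each $T_f$ only asserts that $g_f\colon N_n\to M$ is a surjection with infinite fibers; in isolation these structures are $\aleph_0$-categorical and interpretable in the theory of equality, because they do not know that $N_n$ "is" $M^n$. (Using a surjection with infinite fibers rather than a selection also keeps $T_f$ complete and tame.) A side effect is that $T_\cup$ is then existentially bi-interpretable not with $\varnothing_L$ itself but with the theory $T$ asserting that each relation is infinite and co-infinite and each function is surjective with infinite fibers, which has the same model companion $\varnothing^*_L$; your version would also need this adjustment, or else the "infinite and co-infinite" and surjectivity conditions must be dropped consistently on both sides. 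Finally, you omit constant symbols, which the paper handles with the (harmless) factor "infinite set with a distinguished element." To repair your proof, move every $\pi_s$ out of $T_s$ and into a single additional factor, and redo the arity-$n$ function case with a fiberwise-infinite surjection $N_n\to M$ in place of a selection.
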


\noindent
So existence of $\varnothing^*_L$ follows by Corollary~\ref{cor:minh}.
Fact~\ref{fact:empty-companion} follows from Proposition~\ref{prop:empty-companion} and Corollary~\ref{cor:triv-forking}.

\begin{proof}
Let $\mathcal{C}$ be the set of constant symbols in $L$, $\mathcal{R}$ be the set of relation symbols, and $\mathcal{F}$ be the set of function symbols.
Let $T$ be the $L$-theory such that $\sM \models T$ if 
\begin{enumerate}
\item every $n$-ary $R \in \mathcal{R}$ defines an infinite and co-infinite subset of $M^n$, and
\item whenever $f \in \mathcal{F}$ is $n$-ary then $f : M^n \to M$ is surjective and $f^{-1}(a)$ is infinite for all $a \in M$.
\end{enumerate}
It is easy to see that $\varnothing^*_L$ is a model companion of $T$.
We construct $(T_i)_{i \in I}$, show that $T_\cup$ is existentially bi-interpretable with $T$, and apply Fact~\ref{fact:existential}.

Let $I = L \cup \{1\}$.
Let $L_\cap$ be the countably-sorted empty language and $T_\cap$ be the $L_\cap$-theory such that $(M,(N_k)_{k \geq 1}) \models T_\cap$ if $M$ and each $N_k$ is infinite.

For each $c \in \mathcal{C}$ let $L_c$ be the expansion of $L_\cap$ by a constant symbol $c$ and $T_c$ be the $L_c$-theory such that $(M,(N_k)_{k \geq 1};c) \models T_c$ when $c$ defines an element of $M$.
So each $T_c$ is interpretable in the theory of equality.

Let $L_1$ be the expansion of $L_\cap$ by function symbols $(\pi_k)_{k \geq 1}$ such that each $\pi_k$ is $k$-ary.
Let $T_1$ be the $L_1$-theory such that $(M,(N_k)_{k \geq 1};(\pi_k)_{k \geq 1}) \models T_1$ if each $\pi_k$ is a bijection $M^k \to N_k$.
It is easy to see that $T_1$ is interpretable in the theory of equality.

For each $n$-ary $R \in \mathcal{R}$ let $L_R$ be the expansion of $L_\cap$ by a unary predicate $P_R$ and let $T_R$ be the $L_R$-theory such that $(M,(N_k)_{k \geq 1}; P_R) \models T_R$ if and only if $P_R$ defines an infinite and co-infinite subset of $N_n$.
For each $n$-ary $f \in \mathcal{F}$ let $L_f$ be the expansion of $T_\cap$ by a unary function symbol $g_f$ and $T_f$ be the $L_f$-theory such that $(M,(N_k)_{k \ge 1};g_f) \models T_f$ when $g_f$ is a surjection $N_k \to M$ and $g^{-1}_f(a)$ is infinite for every $a \in M$.
It is easy to see that each $T_R$ and $T_f$ is interpretable in theory of equality.

Suppose that $\sM \models T$.
We describe a $T_\cup$-model which is existentially interpretable in $\sM$.
For each $n \geq 1$ let $N_n$ be $M^n$ and $\pi_n$ be the identity $M^n \to N_n$.
Interpret constant symbols in the obvious way.
For each $n$-ary $R \in \mathcal{R}$ let $P_R$ be $\{ \pi_n(a) : \sM \models R(a) \}$.
For each $n$-ary $f \in \mathcal{F}$ let $g_f : N_n \to M$ be $f \circ \pi^{-1}_n$.
Then $(M,(N_k)_{k \geq 1};\ldots)$ is a model of $T_\cup$.

Now suppose that $\sM_\cup = (M,(N_k)_{k \geq 1}; \ldots) \models T_\cup$.
We describe a $T$-model with domain $M$ which is existentially interpretable in $\sM_\cup$.
Again interpret constant symbols in the obvious way.
Let each $n$-ary $R \in \mathcal{R}$ define $\{ a \in M^n : \sM_\cup \models P_R(\pi_n(a)) \}$.
Let each $n$-ary $f \in \mathcal{F}$ be $g_f \circ \pi_n$.
This is easy seen to produce a $T$-model.
These observations are easily formalized to construct an existential bi-interpretation between $T$ and $T_\cup$.
\end{proof}





\begin{fact}
\label{fact:empty-simple}
Suppose that $L$ does not contain any function symbol of airity at least two.
Then $\varnothing^*_L$ is simple.
\end{fact}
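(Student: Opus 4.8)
The plan is to exhibit $\varnothing^*_L$ as an interpolative fusion $T^*_\cup$ all of whose factors $T_i$ are simple and \emph{relatively disintegrated} over a stable common reduct, and then quote Theorem~\ref{thm:rel-dis-simple}. The role of the hypothesis on $L$ is that, with only nullary and unary function symbols present, every piece of $L$-structure can be carried by a $\emptyset$-definable map of arity at most one, and such maps never fuse two distinct elements together --- which is exactly what forces the factors to be relatively disintegrated.

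Concretely, take $L_\cap$ to have two sorts $M, N$ and no symbols, and let $T_\cap$ say that both sorts are infinite; then $T_\cap$ is interpretable in the theory of equality, hence $\aleph_0$-categorical, $\aleph_0$-stable and weakly eliminates imaginaries --- in particular stable, which is all Theorem~\ref{thm:rel-dis-simple} asks of the base. Write $\mathcal C, \mathcal R, \mathcal F$ for the constant, relation, and (necessarily unary) function symbols of $L$, and put $I = \{1\}\cup\mathcal C\cup\mathcal R\cup\mathcal F$. The factors are: $T_1$ over $L_\cap\cup\{\pi\}$ saying $\pi\colon M\to N$ is a bijection; for $c\in\mathcal C$, $T_c$ over $L_\cap\cup\{c\}$ naming a point of $M$; for an $n$-ary $R\in\mathcal R$, $T_R$ over $L_\cap\cup\{R\}$ the theory of a generic $n$-ary relation on $M$ together with an inert pure sort $N$; and for $f\in\mathcal F$, $T_f$ over $L_\cap\cup\{g_f\}$ saying $g_f\colon N\to M$ is surjective with all fibres infinite. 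Each factor has $L_\cap$-reduct $T_\cap$ and is $\aleph_0$-categorical, so $T^*_\cup$ exists by Corollary~\ref{cor:minh}. A routine existential bi-interpretation --- $N$ is a disjoint copy of $M$ via $\pi$, the function symbol $f$ corresponds to $g_f\circ\pi\colon M\to M$, and relations and constants are carried across unchanged --- identifies $T_\cup$ with the $\forall\exists$-theory $T$ asserting ``every relation is a generic relation and every function is surjective with infinite fibres''. Since every existentially closed model of $\varnothing_L$ already satisfies these conditions (standard one-point-extension arguments), $T$ and $\varnothing_L$ have the same model companion $\varnothing^*_L$; as each $T_i$ is model-complete, Fact~\ref{fact: first Theorem} gives that $T_\cup$ has model companion $T^*_\cup$, and Fact~\ref{fact:existential} then makes $\varnothing^*_L$ bi-interpretable with $T^*_\cup$.

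Next one checks the two hypotheses of Theorem~\ref{thm:rel-dis-simple} for the factors. Simplicity: $T_1, T_c, T_f$ are interpretable in the theory of equality and hence $\aleph_0$-stable; $T_R$ is the theory of the generic $n$-ary relation together with an inert pure sort, which is simple by Theorem~\ref{thm:simplepreservation} and Proposition~\ref{prop:generic-relation} (for $n=1$ it is again $\aleph_0$-stable). Relative disintegration: $\acl_\cap$ is trivial since $T_\cap$ is pure, and in each factor $\acl_i(A)=\dcl_i(A)$ is the closure of $A$ under the $\emptyset$-definable maps of $T_i$, namely $\pi,\pi^{-1}$ for $T_1$, the constant $c$ for $T_c$, $g_f$ for $T_f$, and nothing for $T_R$ --- all of arity $\le 1$. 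A closure operator built from unary and nullary maps satisfies $\mathrm{cl}(A\cup B)=\mathrm{cl}(A)\cup\mathrm{cl}(B)$, so $\acl_i(AB)=\acl_i(A)\cup\acl_i(B)=\acl_\cap(\acl_i(A)\,\acl_i(B))$, i.e.\ $T_i$ is relatively disintegrated over $T_\cap$. This is precisely the step that would fail for a binary function symbol, which would have to be routed through a map on $M^2$ that mixes pairs; correspondingly $\varnothing^*_L$ is $\mathrm{TP}_2$ in that case. Theorem~\ref{thm:rel-dis-simple} now gives that $T^*_\cup$ is simple, and simplicity transfers along the bi-interpretation to $\varnothing^*_L$.

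The main obstacle is not a single hard computation but the bookkeeping around the bi-interpretation: one must verify both that the auxiliary sort $N$ and the maps $\pi, g_f$ faithfully encode the unary function symbols of $L$, and that the passage through $\pi$ is existential on both sides so that Fact~\ref{fact:existential} genuinely applies; the verification that e.c.\ models of $\varnothing_L$ have generic relations and surjective-with-infinite-fibres functions is routine but should be spelled out.
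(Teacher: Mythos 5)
Your argument is correct in substance, but it uses a different decomposition from the paper's. The paper keeps everything one-sorted: it takes $L_\cap$ to be the empty language, $T_\cap$ the theory of an infinite set, $I=L$, and for each symbol the obvious singleton-language theory ($T_c$ an infinite set with a named point, $T_R$ the generic $n$-ary relation, $T_f$ the generic \emph{unary} function), so that $\varnothing^*_L$ literally \emph{is} $T^*_\cup$ --- no auxiliary sort, no $\pi$, and no bi-interpretation bookkeeping at all. It then observes that each factor is simple (the $T_f$ are in fact stable, as theories of a single unary function always are) and disintegrated relative to equality, and quotes Theorem~\ref{thm:rel-dis-simple}, exactly as you do. Your route imports the two-sorted scaffolding of Proposition~\ref{prop:empty-companion} (the sort $N$, the bijection $\pi$, and $g_f\colon N\to M$) into a setting where it is not needed; what you buy is that every factor except the $T_R$'s is interpretable in the theory of equality, so you never have to analyze the generic unary function itself (its stability and the fact that $\acl$ is the generated substructure, which the paper leaves to the reader), but you pay for this with the existential bi-interpretation and companion-theory verifications that the paper's presentation avoids entirely. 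Two small points to tighten: your appeal to Corollary~\ref{cor:minh} should be routed through its first clause (the $T_R$ with $n\geq 2$ are not interpretable in the theory of equality, being unstable; rather $T_\cap$ is so interpretable and each $T_i^{\eq}$ eliminates $\exists^\infty$ by $\aleph_0$-categoricity), and the relative-disintegration identity $\acl_i(AB)=\acl_i(A)\cup\acl_i(B)$ should be checked in the form $\acl_i(AB)=\acl_\cap(\acl_i(A)\acl_i(B))$, which is what Theorem~\ref{thm:rel-dis-simple} actually uses; both are routine.
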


\noindent
We sketch a proof of Fact~\ref{fact:empty-simple}.
We will use some elementary facts about the theory of a generic unary function which we leave to the reader.
Suppose that $L$ does not contain any function symbol of airity at least two.
Let $\mathcal{C}, \mathcal{R}$, and $\mathcal{F}$ be as above.
Let $I = L$.
Let $L_\cap$ be the one-sorted empty language and $T_\cap$ be the theory of an infinite set.
For each $c \in \mathcal{C}$ let $T_c$ be the theory of an infinite set with a distinguished element.
For each $n$-ary $R \in \mathcal{R}$ let $T_R$ be the theory of a generic $n$-ary predicate and for each $f \in \mathcal{F}$ let $T_f$ be the theory of a generic unary function.
So $\varnothing^*_L$ is $T^*_\cup$.
Each $T_c$ and $T_f$ is stable and each $T_R$ is simple.
We observe that each $T_i$ is disintegrated, an application of Theorem~\ref{thm:rel-dis-simple} shows that $\varnothing^*_L$ is simple.
Each $T_c$ and $T_R$ has trivial algebraic closure.
In each $T_f$ the algebraic closure of a set is the substructure generated by that set, hence $T_f$ is unary.

\subsubsection{The generic expansion}
\label{section:generic-expansion}
Suppose that $T$ is a one-sorted model complete $L'$-theory and $L$ is a language containing $L'$.
We consider $T$ to be an $L$-theory.
If $T$ eliminates $\exists^\infty$ then $T$, considered as an $L$-theory, has a model companion $T^*_L$~\cite{Winkler}.
If $L'$ is the empty language and $T$ is the theory of equality then $T^*_{L}$ is $\varnothing^*_L$.
If $L \setminus L'$ contains a single unary predicate then $T^*_L$ is the expansion of $T$ by a generic unary predicate.
Fact~\ref{fact:generic-expansion} is due to Kruckman and Ramsey~\cite{KRExp}.

\begin{fact}
\label{fact:generic-expansion}
Suppose that $T$ eliminates $\exists^\infty$.
If $T$ is $\NSOP_1$ then $T^*_L$ is $\NSOP_1$.
\end{fact}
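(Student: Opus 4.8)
The plan is to exhibit $T^*_L$ as (a theory bi-interpretable with) an interpolative fusion $T^*_\cup$ whose common reduct $T_\cap$ is interpretable in the theory of an infinite set and whose components $T_i$ are all $\NSOP_1$, and then to invoke Corollary~\ref{cor:triv-forking}. This parallels the proof of Proposition~\ref{prop:empty-companion}, the difference being that the original theory $T$ now occupies one of the components rather than being replaced by the theory of equality.

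First I would set up the decomposition. Write $L\setminus L'=\mathcal C\cup\mathcal R\cup\mathcal F$ for the new constant, relation and function symbols. Let $\widetilde T$ be the $\forall\exists$-axiomatized $L$-theory whose models are $L$-structures $\sM$ with $\sM|_{L'}\models T$, every new relation symbol interpreted by an infinite and co-infinite set, and every new $n$-ary function symbol interpreted by a surjection all of whose fibers are infinite; exactly as in Proposition~\ref{prop:empty-companion}, $\widetilde T$ has the same model companion as $T$ viewed as an $L$-theory, namely $T^*_L$, which exists by Winkler's theorem. Let $L_\cap$ be the empty language with a main sort $M$ and sorts $(N_k)_{k\geq 1}$, and let $T_\cap$ assert that all sorts are infinite; then $T_\cap$ is interpretable in the theory of an infinite set, realizing $N_k$ as the $k$-th power of the home sort. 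Take $I=\{0,1\}\cup\mathcal C\cup\mathcal R\cup\mathcal F$ and define: $T_0$ expands $L_\cap$ by the symbols of $L'$ on $M$ and asserts $M\models T$; $T_1$ expands $L_\cap$ by bijections $\pi_k\colon M^k\to N_k$; $T_c$ (for $c\in\mathcal C$) adds a constant of sort $M$; $T_R$ (for $n$-ary $R\in\mathcal R$) adds a unary predicate on $N_n$ that is infinite and co-infinite; and $T_f$ (for $n$-ary $f\in\mathcal F$) adds a surjection $N_n\to M$ with infinite fibers. Each $T_i$ has $L_\cap$-reduct $T_\cap$.

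Next I would verify the bi-interpretation. As in the proof of Proposition~\ref{prop:empty-companion}, $\widetilde T$ is existentially bi-interpretable with $T_\cup=\bigcup_{i\in I}T_i$: from a $T_\cup$-model one recovers a $\widetilde T$-model on $M$ by setting $R(\bar a)\iff P_R(\pi_n(\bar a))$ and $f:=g_f\circ\pi_n$, while conversely a $\widetilde T$-model is expanded by taking $N_k:=M^k$ with $\pi_k$ the identity. Hence, using Facts~\ref{fact: first Theorem} and~\ref{fact:existential} (and Morleyizing the $T_i$ where convenient, which does not change the interpolative fusion by Remark~\ref{rem: robust}), the model companion $T^*_\cup$ of $T_\cup$ exists and is bi-interpretable with $T^*_L$. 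Now each $T_i$ with $i\neq0$ is interpretable in the theory of equality, hence stable, hence $\NSOP_1$; and $T_0$ is $\NSOP_1$ since it is $T$ together with a disjoint family of pure infinite sets in new sorts, so any $\SOP_1$ configuration in $T_0$ can be localized to the sort $M$ and would already witness $\SOP_1$ in $T$. Since $T_\cap$ is interpretable in the theory of an infinite set, Corollary~\ref{cor:triv-forking} yields that $T^*_\cup$ is $\NSOP_1$, and therefore so is $T^*_L$, $\NSOP_1$ being preserved under interpretation.

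The routine points are that $\widetilde T$ and $T$-as-an-$L$-theory share the model companion $T^*_L$, and that the displayed bi-interpretation is existential; both are handled verbatim as in Proposition~\ref{prop:empty-companion}. The one genuinely new step is the remark that $T_0$ is $\NSOP_1$, and this is the point I expect to need the most care to phrase cleanly, although it is a standard disjoint-expansion argument (alternatively, $T_0$ is itself an interpolative fusion of $T$ with a trivial component over the theory of an infinite set, so its $\NSOP_1$-ness is again an instance of Corollary~\ref{cor:triv-forking}).
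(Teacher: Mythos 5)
Your proof is correct, but it takes a different decomposition from the paper's. The paper's argument is a two-line reduction: it sets $I=\{1,2\}$, $L_\cap$ the empty language, $T_\cap$ the theory of an infinite set, $T_1=T$ (as an $L'$-theory), and $T_2=\varnothing^*_{L\setminus L'}$, observes that $T^*_L$ agrees with $T^*_\cup$ on the nose (no bi-interpretation, no new sorts), and then applies Corollary~\ref{cor:triv-forking}, using Fact~\ref{fact:empty-companion} as a black box to know that $T_2$ is $\NSOP_1$. You instead unfold that black box: you rerun the many-sorted decomposition from Proposition~\ref{prop:empty-companion} with $T$ occupying one of the component slots, at the cost of redoing the existential bi-interpretation and checking that $T_0$ ($T$ together with disjoint pure infinite sorts) is $\NSOP_1$. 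Both routes terminate at Corollary~\ref{cor:triv-forking} over a $T_\cap$ interpretable in the theory of equality, so the essential mechanism is the same; what your version buys is a finer decomposition in which every component other than $T_0$ is stable, which is exactly the adaptation the paper alludes to in the sentence following the fact (``if $T$ is stable then $T^*_L$ is bi-interpretable with the fusion of a family of stable theories''). What the paper's version buys is brevity and the avoidance of the one genuinely new verification you flag.

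One small caution: your parenthetical alternative for showing $T_0$ is $\NSOP_1$ --- presenting $T_0$ as an interpolative fusion of $T$ with a trivial component --- is circular as stated, since any component carrying the $L'$-structure on $M$ in the multi-sorted signature is already $T_0$ itself, and Corollary~\ref{cor:triv-forking} needs that component to be $\NSOP_1$ as an input. Your primary argument (an $\SOP_1$ configuration in $T_0$ localizes to the sort $M$ because the new sorts carry no structure beyond equality) is the right one and is standard; just drop the parenthetical or replace it with an explicit reduction of types in $T_0$ to types in $T$ plus equality-types in the pure sorts.
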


\noindent
Suppose that $T$ eliminates $\exists^\infty$.
Let $I = \{1,2\}$, $L_\cap$ be the one-sorted empty language, $T_\cap$ be the theory of an infinite set, $L_1$ be $L'$ , $T_1$ be $T$, $L_2$ be $L \setminus L'$, and $T_2$ be $\varnothing^*_{L \setminus L'}$.
Then $T^*_L$ agrees with $T^*_\cup$.
So Fact~\ref{fact:generic-expansion} follows from Corollary~\ref{cor:triv-forking}.

\medskip\noindent
One can also adapt the proof of Proposition~\ref{prop:empty-companion} to show that if $T$ is stable then $T^*_L$ is bi-interpretable with the fusion of a family of stable theories.

\subsection{The generic variation} \label{section:generic variation}
Suppose that $L$ is a relational language and $T$ is a complete, one-sorted $L$-theory with infinite models.
The generic variation $T^*_{\text{Var}}$ of $T$ was defined by Baudisch~\cite{baudisch}.
This theory exists when $T$ eliminates $\exists^\infty$.
This example is essentially a natural generalization of one of the most basic examples of a theory that is $\NSOP_1$ and not simple, see \ref{section:tfeq}.

\medskip \noindent
Let $L_{\text{Var}}$ be the language containing an $(n+1)$-ary relation symbol $R_{\text{Var}}$ for each $n$-ary relation symbol $R$ in $L$.
Suppose $\sN$ is an $L_{\text{Var}}$-structure.
For each $a \in N$ we put an $L$-structure $\sN[a]$ on $N$ by declaring $\sN[a] \models R(b)$ if and only if $\sN \models R_{\text{Var}}(a,b)$ for any relation symbol $R$ from $L$.
It is easy to describe an $L_{\text{Var}}$-theory $T_{\text{Var}}$ such that $\sN \models T_{\text{Var}}$ if and only if $\sN[a] \models T$ for every $a \in N$.
Fact~\ref{fact:baud} is proven in \cite{baudisch}.

\begin{fact}
\label{fact:baud}
If $T$ eliminates $\exists^\infty$ then $T_{\mathrm{Var}}$ has a model companion $T^*_{\mathrm{Var}}$.
\end{fact}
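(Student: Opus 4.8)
The plan is to realize the generic variation $T^*_{\mathrm{Var}}$ as (bi-interpretable with) an interpolative fusion of a family of stable theories, so that existence follows from Corollary~\ref{cor:minh} and $\NSOP_1$ follows from Corollary~\ref{cor:triv-forking} (or Theorem~\ref{thm:preservation} via Proposition~\ref{prop:stablethreebody}), exactly as was done for the model companion of the empty theory in Proposition~\ref{prop:empty-companion}. The key structural observation is that an $L_{\mathrm{Var}}$-structure $\sN$ is precisely the data of a set $N$ together with, for each $a \in N$, an $L$-structure $\sN[a]$ on the \emph{same} underlying set $N$; asking $\sN \models T_{\mathrm{Var}}$ just says each $\sN[a] \models T$. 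So morally $T_{\mathrm{Var}}$ is ``$T$-many independent generic copies of $T$, indexed by the points''. The fusion decomposition should have $T_\cap$ the theory of two infinite sorts (a ``base'' sort $M$ for the index points and a sort for the fibers), with one factor theory being (bi-interpretable with) $T$ on the fiber sort, and the other factor theory encoding, via a generic relation/function indexed by $M$, the assignment $a \mapsto \sN[a]$. Since $T$ is relational and eliminates $\exists^\infty$, $T^\eq$ eliminates $\exists^\infty$, and the index-assignment factor is interpretable in the theory of equality, so $T^*_\cup$ exists by Corollary~\ref{cor:minh}.

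Concretely, first I would set up $I$, $L_\cap$, and the $T_i$. Take $L_\cap$ a two-sorted (or countably-sorted, to handle relations of varying arity cleanly) empty language with sorts including a ``point'' sort $M$ and fiber sorts, and $T_\cap$ asserting all sorts infinite. One factor $T_1$ carries a copy of the $L$-structure on a fiber sort together with a definable identification of that sort with $M$ (so $T_1$ is bi-interpretable with $T$, hence stable if $T$ is, and in any case relationally presented so that $T_1^\eq$ eliminates $\exists^\infty$). For each relation symbol $R$ of $L$, a further factor $T_R$ adds a unary predicate on the appropriate product-with-$M$ sort, axiomatized to be generic (infinite/co-infinite in each $M$-fiber), and is interpretable in the theory of equality. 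Then $T_\cup$ should be shown to be existentially bi-interpretable with $T_{\mathrm{Var}}$ by the same kind of explicit back-and-forth as in the proofs of Propositions~\ref{prop:random-graph}, \ref{prop:empty-companion}: from a $T_{\mathrm{Var}}$-model produce a $T_\cup$-model by letting the fiber sorts be copies of $N$ indexed appropriately and reading off the predicates from $R_{\mathrm{Var}}$; conversely from a $T_\cup$-model reconstruct $R_{\mathrm{Var}}(a,b) \iff$ the predicate picks out the pair, after using the $T_1$-identification. Once $T_\cup$ is existentially bi-interpretable with the $\forall\exists$-theory $T_{\mathrm{Var}}$, Fact~\ref{fact:existential} transfers existence of the model companion, giving Fact~\ref{fact:baud}.

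The main obstacle I anticipate is \emph{bookkeeping the fibered structure cleanly}, not any deep model-theoretic point. The issue is that $\sN[a]$ lives on $N$ itself, so the "index" sort $M$ and the "fiber" sort coincide in $T_{\mathrm{Var}}$, whereas in the fusion presentation it is much cleaner to keep them as separate sorts connected by a definable bijection; one must be careful that the bi-interpretation really is existential and really does match $T_{\mathrm{Var}}$ (as opposed to some mild $T$-theory with the fibered-genericity conditions baked in, analogous to how $T$ rather than $\varnothing_L$ was used in Proposition~\ref{prop:empty-companion}). For relation symbols of arity $\geq 2$ one must index predicates over $M \times (\text{fiber})^n$ and verify the genericity axioms are the right ones; this is exactly the same pattern as Proposition~\ref{prop:empty-companion} but with an extra $M$-parameter threaded through everywhere. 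I do not expect to need to say anything about $T$ beyond: $L$ relational, $T$ complete with infinite models, and $T$ eliminates $\exists^\infty$ (the latter feeding Corollary~\ref{cor:minh} through the factor whose $\eq$ must eliminate $\exists^\infty$). With the decomposition in hand, $\NSOP_1$ of $T^*_{\mathrm{Var}}$ (when $T$ is $\NSOP_1$) should then drop out of Corollary~\ref{cor:triv-forking}, since $T_\cap$ is interpretable in the theory of equality.
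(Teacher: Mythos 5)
Your overall strategy (decompose $T_{\mathrm{Var}}$ as a fusion over a multi-sorted base interpretable in the theory of equality, exhibit an existential bi-interpretation, and feed $\exists^\infty$-elimination into Corollary~\ref{cor:minh} and Fact~\ref{fact:existential}) is the right one, and it is the strategy of Proposition~\ref{prop:baud-1}. But the specific decomposition you propose has a genuine gap: it does not enforce the defining condition of $T_{\mathrm{Var}}$, namely that $\sN[a]\models T$ for \emph{every} $a\in N$. You put a single copy of the $L$-structure on one fiber sort (your $T_1$) and then, for each relation symbol $R$, a separate factor $T_R$ consisting of a \emph{generic} (infinite/co-infinite in each $M$-fiber) predicate on $M\times(\text{fiber})^n$. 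A union of independent generic predicates carries no trace of the axioms of $T$: if, say, $T$ is the theory of an equivalence relation with infinitely many infinite classes, then $T_{\mathrm{Var}}$ requires $R_{\mathrm{Var}}(a,\cdot,\cdot)$ to be such an equivalence relation for each $a$, and a generic ternary relation fails this universal-type constraint in every fiber. So your $T_\cup$ is existentially bi-interpretable not with $T_{\mathrm{Var}}$ but with the \emph{generic expansion} of $T$ by new $(n+1)$-ary relation symbols (Section~\ref{section:generic-expansion}), which is a different theory with a different model companion. The same objection shows the reconstruction step of your bi-interpretation cannot succeed: there is no way to read off the fiberwise $T$-axioms from genericity.

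The fix is to invert which part of the structure is made generic. In the paper's decomposition, \emph{all} of the fiberwise $L$-structure goes into a single factor $T_2$, whose models are a surjection $\pi_1\colon N\to M$ with infinite fibers together with relations $R_2(a,\bar y)$ supported on the fiber over $a$, axiomatized so that each fiber is a $T$-model; this factor is mutually interpretable with $T$ (not interpretable in the theory of equality) and is where $\exists^\infty$-elimination for $T$ is used. The genericity is supplied entirely by the other factor $T_1$, which adds a second projection $\pi_2$ identifying $N$ with $M^2$; it is the generic position of this bijection relative to the family of fiberwise $T$-structures that produces $T^*_{\mathrm{Var}}$. Your sketch also places the identification of the fiber sort with $M$ inside the same factor as the $L$-structure, which would leave nothing for the fusion to randomize. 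So while the scaffolding (base sorts, Corollary~\ref{cor:minh}, Fact~\ref{fact:existential}, the analogy with Proposition~\ref{prop:empty-companion}) is sound, the assignment of symbols to factors must be changed as above before the argument goes through; the analogy with Proposition~\ref{prop:empty-companion} is misleading precisely because there the fiberwise theory is empty, so ``generic predicate'' and ``fiberwise model of $T$'' happen to coincide.
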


\noindent
If $T$ is the theory of the generic $n$-ary relation then $T^*_{\text{Var}}$ is the theory of the generic $(n + 1)$-ary relation, so one may produce the theory of a generic $n$-ary relation from the (stable) theory of an infinite and co-infinite unary relation by iterating the construction $n - 1$ times.

\begin{prop}
\label{prop:baud-1}
Suppose that $T$ eliminates $\exists^\infty$.
Then there are $T_\cap$, $T_1$, $T_2$ such that $T^*_{\mathrm{Var}}$ is b-interpretable with $T^*_\cup$, $T_1$ is bi-interpretable with the theory of equality, and $T_2$ is mutually interpretable with $T$.
\end{prop}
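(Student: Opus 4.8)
The plan is to realize $T^*_{\mathrm{Var}}$ as a fusion in the style of Propositions~\ref{prop:random-graph} and~\ref{prop:aut}: build $T_\cap$, $T_1$, $T_2$, exhibit an existential bi-interpretation between $T_{\mathrm{Var}}$ and $T_\cup=T_1\cup T_2$, and conclude using Facts~\ref{fact: first Theorem} and~\ref{fact:existential}. First I would reduce to the case that $T$ has quantifier elimination: Morleyizing $T$ only adjoins relation symbols, so $L$ stays relational and one-sorted, and the Morleyized theory $T^\diamond_{\mathrm{Var}}$ is a definitional expansion of $T_{\mathrm{Var}}$ (each new symbol $R^\diamond_{\mathrm{Var}}(x,\bar y)$ being equivalent to the $L_{\mathrm{Var}}$-formula obtained from the defining formula of $R^\diamond$ by relativizing every occurrence of a relation $R(\bar z)$ to $R_{\mathrm{Var}}(x,\bar z)$); hence adding definitions and passing to the model companion commute, and it suffices to treat $T^\diamond$. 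So from now on $T$ is model complete, hence $\forall\exists$-axiomatizable, and therefore so is $T_{\mathrm{Var}}$.

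The key observation is that a model $\sN$ of $T_{\mathrm{Var}}$ with underlying set $N$ is the same data as: a set $V=N$; a set $S=N\times N$ with its two projections $\pi,\beta\colon S\to V$; and relations $R^S$ on $S$, of the same arity $n$ as $R\in L$, such that $R^S(s_1,\dots,s_n)$ implies $\pi(s_1)=\dots=\pi(s_n)$ and such that, for every $a\in V$, the fiber $\pi^{-1}(a)$ equipped with the restrictions of the $R^S$ and transported to $N$ along the bijection $\beta{\restriction}\pi^{-1}(a)$ is a model of $T$ --- with $R_{\mathrm{Var}}(a;b_1,\dots,b_n)$ holding if and only if $R^S(\iota(a,b_1),\dots,\iota(a,b_n))$, where $\iota=(\pi,\beta)^{-1}$. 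This correspondence is visibly an existential bi-interpretation in both directions. Accordingly I would let $L_\cap$ have sorts $V,S$ and one function symbol $\pi\colon S\to V$, with $T_\cap$ saying that $V$ is infinite and $\pi$ is surjective with all fibers infinite; this $T_\cap$ is bi-interpretable with the theory of an equivalence relation with infinitely many infinite classes, hence is interpretable in the theory of equality and is complete, $\aleph_0$-stable, and $\aleph_0$-categorical. I would let $L_1=L_\cap\cup\{\beta\}$ with $\beta\colon S\to V$, and let $T_1$ assert that $(\pi,\beta)$ is a bijection $S\to V\times V$, so that $T_1$ is bi-interpretable with the theory of equality. Finally I would let $L_2=L_\cap\cup\{R^S:R\in L\}$ and let $T_2$ assert that each $R^S$ is concentrated on $\pi$-fibers and that each fiber is a model of $T$; then $T_2$ is mutually interpretable with $T$ --- one interprets $T$ in a fiber over a parameter, and $T_2$ in $T$ via the constant family on $M\times M$ --- but not bi-interpretable, exactly as with $T_\cap$ in Proposition~\ref{prop:aut}. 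Since $L_1\cap L_2=L_\cap$ and a $T_\cup$-model is precisely a structure as in the key observation (the constraints of $T_\cap$ being subsumed by those of $T_1$), $T_{\mathrm{Var}}$ is existentially bi-interpretable with $T_\cup$.

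To finish, I would Morleyize each $T_i$ if necessary so that it is model complete, so that by Fact~\ref{fact: first Theorem} the interpolative fusion $T^*_\cup$ coincides, when it exists, with the model companion of $T_\cup$. Since $T^*_{\mathrm{Var}}$ exists by Fact~\ref{fact:baud} (using that $T$ eliminates $\exists^\infty$), and $T_{\mathrm{Var}}$ and $T_\cup$ are $\forall\exists$-axiomatizable and existentially bi-interpretable, Fact~\ref{fact:existential} gives that $T^*_\cup$ exists and is bi-interpretable with $T^*_{\mathrm{Var}}$; alternatively, existence of $T^*_\cup$ follows directly from Fact~\ref{fact:minh}. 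I expect the main obstacle to be the bookkeeping surrounding $T_2$: one must check that the ``$V$-indexed family of disjoint $T$-models'' is genuinely mutually interpretable with $T$ in the paper's sense, and --- if one wants existence of $T^*_\cup$ from Fact~\ref{fact:minh} rather than by transport from $T^*_{\mathrm{Var}}$ --- that $T_2$ (equivalently $T_2^{\eq}$) inherits elimination of $\exists^\infty$ from $T$. The other point requiring care, verifying that interpolativity of $T_\cup$-models unwinds under the bi-interpretation to exactly the genericity defining $T^*_{\mathrm{Var}}$, is handled for free by the combination of Facts~\ref{fact: first Theorem} and~\ref{fact:existential} once the existential bi-interpretation is in hand. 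Finally, since $T_\cap$ is interpretable in the theory of equality, Corollary~\ref{cor:triv-forking} applies and recovers the expected consequences for $T^*_{\mathrm{Var}}$, e.g.\ that it is $\NSOP_1$ when $T$ is.
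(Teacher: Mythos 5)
Your proposal is correct and follows essentially the same route as the paper: a two-sorted base $T_\cap$ with a surjection $\pi$ having infinite fibers, a theory $T_1$ identifying the top sort with $V\times V$ (bi-interpretable with equality), a theory $T_2$ placing a copy of $T$ on each fiber (mutually interpretable with $T$), and an existential bi-interpretation between $T_\cup$ and $T_{\mathrm{Var}}$ finished off by Facts~\ref{fact: first Theorem} and~\ref{fact:existential}. The only differences are cosmetic (your $n$-ary fiber relations $R^S$ versus the paper's $(n+1)$-ary $R_2$ with an explicit base-point variable are interdefinable), plus your explicit Morleyization step to secure $\forall\exists$-axiomatizability, which is a reasonable extra care the paper leaves implicit.
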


\begin{proof}
Let $L_\cap$ be the two sorted language with a single unary function symbol $\pi_1$.
Let $T_\cap$ be the $L_\cap$ theory such that $(M,N;\pi_1) \models T_\cap$ if $M,N$ are both infinite and $\pi_1 : N \to M$ is a surjection such that $\pi^{-1}_1(b)$ is infinite for all $b \in N$.
Let $L_1$ be the expansion of $L_\cap$ by a unary function $\pi_2$.
Let $T_1$ be the $L_1$-theory such that $(M,N;\pi_1,\pi_2) \models T_1$ if the map $N \to M^2$ given by $a \mapsto (\pi_1(a), \pi_2(a))$ is a bijection.
It is easy to see that $T_2$ is bi-interpretable with the theory of equality.

Let $L_2$ be the expansion of $L_\cap$ by an $(n+1)$-ary relation $R_2$ for each $n$-ary relation $R \in L$ such that the first variable of $R_2$ is of sort $M$ and the last $n$ variables are of sort $N$.
Suppose $\sM = (M,N;\pi_1, (R_2)_{R \in L_\cap})$ is an $L_2$-structure.
We let $M_a$ be the set of $b \in N$ such that $\pi_1(b) = a$ for all $a \in M$.
For each $a \in M$ we equip $M_a$ with an $L$-structure $\sM(a)$ by letting the interpertation of each $n$-ary $R \in L$ be $\{ b \in (M_a)^n : \mathscr{N} \models R_2(a,b)\}$.
Then $T_2$ is the $L_2$-theory such that $\mathscr{M} \models T_2$ when $\sM(a) \models T$ for all $a \in Y$ and 
$$ T_2 \models \forall x,y_1,\ldots,y_n \left( R_2(x,y_1,\ldots,y_n) \to [ \pi_1(y_1) = x \land \ldots \land \pi_1(y_n) = x] \right)  $$
for every $n$-ary $R \in L$.
Note that $T_2$ interprets $T$.
We show that $T$ interprets $T_2$.
Suppose $\sM \models T$.
Let $\rho : M^2 \to M$ be the projection onto the first coordinate and for each $n$-ary relation symbol $R \in L$ declare $R_2(a,b)$ if $a \in M$, $b \in M^n$, and $\sM \models R(a)$.
Then $(M,M^2;\rho, (R_2)_{R \in L}) \models T_2$.

We show that $T_\cup$ and $T_{\mathrm{Var}}$ are existentially bi-interpretable and apply Fact~\ref{fact:existential}.
Suppose that $ \sN = (M,N;\pi_1,\pi_2, (R_2)_{R \in L})$ is a $T_\cup$-model.
For each $n$-ary $R \in L$ and $a \in M$, $b_1,\ldots,b_n \in N$ we declare $R_{\text{Var}}(a,b_1,\ldots,b_n)$ if and only if  $\sN \models R_2(a,c_1,
\ldots,c_n)$ where each $c_k$ is the unique element of $N$ such that $\pi_1(c_k) = a$ and $\pi_2(c_k) = b_k$.
Then $(M; (R_{\text{Var}})_{R \in L}) \models T_{\text{Var}}$.

Now suppose that $\sM =  (M;(R_{\text{Var}})_{R\in L}) \models T_{\text{Var}}$.
Let $\pi_1,\pi_2$ be the projections $M^2 \to M$ onto the first and second coordinates, respectively.
For each $n$-ary $R \in L$ and $a \in M$, $b_1,\ldots,b_n \in M^2$ we declare $R_2(a,b_1,\ldots,b_n)$ if and only if $\pi_1(b_1) = \ldots = \pi_1(b_n) = a$ and $\sM \models R_{\text{Var}}(a,\pi_2(b_1),\ldots,\pi_2(b_n))$.
These observations may be formalized to construct an existential bi-interpretation between $T_\cup$ and $T_{\text{Var}}$.
\end{proof}

\noindent
We also observe that Fact~\ref{fact:baud} follows from Corollary~\ref{cor:minh}.
Suppose that $T$ eliminates $\exists^\infty$.
It easily follows that $T_1$ eliminates $\exists^\infty$.
Note that $T_1$ eliminates $\exists^\infty$ as $T_1$ is interpretable in the theory of equality.
Finally, $T_\cap$ is interpretable in the theory of equality.

\medskip\noindent
Dobrowolski~\cite{Dobro} showed that $T^*_{\text{Var}}$  is $\mathrm{NTP}_1$ when $T$ is $\mathrm{NTP}_1$.
It is conjectured that $\NSOP_1$ and $\mathrm{NTP}_1$ are equivalent.
We obtain Corollary~\ref{cor:variation} by combining Corollary~\ref{cor:triv-forking} and Proposition~\ref{prop:baud-1}.

\begin{cor}
\label{cor:variation}
If $T$ is $\NSOP_1$ and eliminates $\exists^\infty$ then $T^*_{\mathrm{Var}}$ is $\NSOP_1$.
\end{cor}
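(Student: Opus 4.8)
The plan is to deduce this from Corollary~\ref{cor:triv-forking}, applied to the decomposition of $T^*_{\mathrm{Var}}$ furnished by Proposition~\ref{prop:baud-1}. Let $T_\cap$, $T_1$, $T_2$ be the theories constructed there, so that $T^*_{\mathrm{Var}}$ is bi-interpretable with the interpolative fusion $T^*_\cup$. First I would record that $T^*_\cup$ exists: this follows either from the fact that $T^*_{\mathrm{Var}}$ exists (Fact~\ref{fact:baud}, using that $T$ eliminates $\exists^\infty$) together with transfer of existence across the bi-interpretation, or directly from Corollary~\ref{cor:minh}, since $T_\cap$ and $T_1$ are interpretable in the theory of equality and $T$ eliminates $\exists^\infty$.

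Next I would verify the hypotheses of Corollary~\ref{cor:triv-forking}. Inspection of the construction in Proposition~\ref{prop:baud-1} shows that $T_\cap$ is interpretable in the theory of an infinite set. It then remains to check that each $T_i$ is $\NSOP_1$. The theory $T_1$ is bi-interpretable with the theory of equality, which is stable and hence $\NSOP_1$; and $T_2$ is interpretable in $T$, which is $\NSOP_1$ by hypothesis. In both cases the conclusion $\NSOP_1$ is obtained from the general fact that $\NSOP_1$ is preserved under interpretations: a structure interpreted in a model of a theory $S$ has a theory which is a reduct of a definitional expansion of a structure interpreted in $S^{\eq}$, and $\NSOP_1$ is insensitive to passing to $S^{\eq}$, to definitional expansions, and to reducts (Lemma~\ref{lem:kimreduct}(1)).

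With these verifications in place, Corollary~\ref{cor:triv-forking} yields that $T^*_\cup$ is $\NSOP_1$ (and indeed describes its Kim-independence, though that is not needed here). Since $T^*_{\mathrm{Var}}$ is bi-interpretable with $T^*_\cup$, it is in particular interpretable in $T^*_\cup$, so the same preservation fact gives that $T^*_{\mathrm{Var}}$ is $\NSOP_1$, which is what we want. I do not expect any genuine obstacle: the substantive work is entirely absorbed into Proposition~\ref{prop:baud-1} and Corollary~\ref{cor:triv-forking}, and the only point requiring even a sentence of justification is the routine stability of $\NSOP_1$ under interpretation, which is used in the same way throughout Section~\ref{section:examples}.
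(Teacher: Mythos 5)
Your proposal is correct and follows exactly the route the paper takes: the paper's entire proof is the one-line remark that Corollary~\ref{cor:variation} is obtained by combining Corollary~\ref{cor:triv-forking} with the decomposition in Proposition~\ref{prop:baud-1}, and your verifications (existence of $T^*_\cup$, $T_\cap$ interpretable in the theory of equality, each $T_i$ being $\NSOP_1$ via preservation of $\NSOP_1$ under interpretation, and transfer back across the bi-interpretation) are precisely the details the paper leaves implicit.
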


\noindent
Corollary~\ref{cor:variation} generalizes the result of Baudisch that if $T$ is stable and eliminates $\exists^\infty$ then $T^*_{\text{Var}}$ does not have the strict order property~\cite[Theorem 4.4]{baudisch}.
Corollary~\ref{cor:variation} is sharp.
Baudisch shows that if there is an $L$-formula $\varphi(x,y)$, $\sM \models T$, and a sequence $(a_i)_{i \in \omega}$ from $\sM$ such that the $\varphi(M^{|x|},a_i)$ are pairwise disjoint and infinite then $T^*_{\text{Var}}$ is $\mathrm{TP}_2$~\cite[Lemma 4.2]{baudisch}.
The same argument shows that if $T$ has $U$-rank at least two then $T^*_{\text{Var}}$ is $\mathrm{TP}_2$.



\subsubsection{$T^*_{\mathrm{Feq}}$} 
\label{section:tfeq}
Let $L_{\text{Feq}}$ be the two sorted theory containing a single ternay relation $D$.
Let $T_{\text{Feq}}$ be the $L_{\text{Feq}}$-theory such that $(M,N;D) \models T_{\text{Feq}}$ if $M,N$ are infinite, $D(x,y,z)$ is a ternary relation where $x$ is of sort $M$ and $y,z$ are of sort $N$, and $R(a,x,y)$ is an equivalence relation for any $a \in M$.
Then $T_{\text{Feq}}$ has a model companion $T^*_{\text{Feq}}$.
Chernikov and Ramsey~\cite{CR} show that $T^*_{\text{Feq}}$ is $\NSOP_1$.

\medskip\noindent
Proposition~\ref{prop:tfeq} follows by a slight modification of Proposition~\ref{prop:baud-1} and the observation that the theory of an equivalence relation with infinitely many infinite classes is interpretable in the theory of equality.
We leave the details to the reader.

\begin{prop}
\label{prop:tfeq}
There are $T_\cap, T_1, T_2$ such that $T^*_{\mathrm{Feq}}$ is bi-interpretable with $T^*_\cup$ and $T_1$ and $T_2$ are both interpretable in the theory of equality.
\end{prop}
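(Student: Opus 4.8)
The plan is to transcribe the construction in the proof of Proposition~\ref{prop:baud-1}, the only modification being that in $T_{\mathrm{Feq}}$ (see Section~\ref{section:tfeq}) the sort $M$ indexing the equivalence relations and the sort $N$ on which they live are distinct, so the role of the ``variation bundle'' $M\times M$ there is played here by $M\times N$.

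First I would set $L_\cap$ to be the three-sorted language with sorts $M$, $N$, $B$ and a single unary function symbol $\pi_1\colon B\to M$, and let $T_\cap$ say that $M$ and $N$ are infinite and that $\pi_1$ is surjective with all fibres infinite. Then I would let $L_1=L_\cap\cup\{\pi_2\}$ with $\pi_2\colon B\to N$ and let $T_1$ say that $(\pi_1,\pi_2)\colon B\to M\times N$ is a bijection; and $L_2=L_\cap\cup\{R_2\}$ with $R_2$ a relation symbol of type $(M,B,B)$, and let $T_2$ say that $R_2(x,y,z)$ implies $\pi_1(y)=\pi_1(z)=x$ and that, for every $a\in M$, $R_2(a,\cdot,\cdot)$ restricts to an equivalence relation on the fibre $\pi_1^{-1}(a)$. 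Both $T_1$ and $T_2$ are $\forall\exists$-axiomatizable and have $T_\cap$ as their common set of $L_\cap$-consequences, and $L_1\cap L_2=L_\cap$. Each of $T_1$ and $T_2$ is interpretable in the theory of equality: for $T_1$ send an infinite set $X$ to $(X,X,X\times X;\mathrm{pr}_1,\mathrm{pr}_2)$; for $T_2$ one may send $X$ to $(X,X,X\times X;\mathrm{pr}_1,R_2)$ with $R_2(a,(a_1,b),(a_2,c))$ holding iff $a=a_1=a_2$ and $b=c$ (all fibres carrying equality), or, if one prefers the fibres to carry generic equivalence relations --- as they necessarily do in any model of $T^*_\cup$ --- replace $N$ and $B$ by suitable powers of $X$ and use the observation quoted in the statement that the theory of an equivalence relation with infinitely many infinite classes is interpretable in the theory of equality. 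In either case $T^*_\cup$ exists by Corollary~\ref{cor:minh}.

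Next I would check that $T_\cup=T_1\cup T_2$ is existentially bi-interpretable with $T_{\mathrm{Feq}}$. From $(M,N;D)\models T_{\mathrm{Feq}}$ I would build $(M,N,M\times N;\mathrm{pr}_1,\mathrm{pr}_2,R_2)$ with $R_2(a,(a_1,y),(a_2,z))$ holding iff $a=a_1=a_2$ and $D(a,y,z)$; this is a model of $T_\cup$. Conversely, from $(M,N,B;\pi_1,\pi_2,R_2)\models T_\cup$ I would define $D(a,y,z)$ to be $\exists y'\,\exists z'\,(\pi_1(y')=\pi_1(z')=a\wedge\pi_2(y')=y\wedge\pi_2(z')=z\wedge R_2(a,y',z'))$; since $(\pi_1,\pi_2)$ is a bijection the witnesses $y',z'$ are unique, so $D(a,\cdot,\cdot)$ is the pullback of the equivalence relation $R_2(a,\cdot,\cdot)$ along a bijection $N\cong\pi_1^{-1}(a)$, hence an equivalence relation on $N$, and $(M,N;D)\models T_{\mathrm{Feq}}$. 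One round trip returns $D$ verbatim; the other is the definable isomorphism $c\mapsto(\pi_1(c),\pi_2(c))$ between $B$ and $M\times N$. Finally, every formula used in this data, and its negation, is equivalent (modulo $T_\cup$, resp.\ $T_{\mathrm{Feq}}$) to an existential formula; the only non-immediate point is that $\neg D(a,y,z)$ is existential, being equivalent --- again by uniqueness of $y',z'$ --- to $\exists y'\,\exists z'\,(\pi_1(y')=\pi_1(z')=a\wedge\pi_2(y')=y\wedge\pi_2(z')=z\wedge\neg R_2(a,y',z'))$.

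Since $T_{\mathrm{Feq}}$ is plainly $\forall\exists$-axiomatizable and $T_\cup$ has a model companion $T^*_\cup$, Fact~\ref{fact:existential} then yields that $T_{\mathrm{Feq}}$ has a model companion $T^*_{\mathrm{Feq}}$ and that $T^*_{\mathrm{Feq}}$ is bi-interpretable with $T^*_\cup$; together with the observations that $T_1$ and $T_2$ are interpretable in the theory of equality, this is the claim. I expect the only step needing genuine care to be the verification that the interpretation data really constitute a bi-interpretation in the strict sense (that the two round-trip comparison maps are definable isomorphisms) and that the bi-interpretation is existential; everything else is the argument of Proposition~\ref{prop:baud-1} with one extra sort.
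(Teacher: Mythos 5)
Your overall strategy---transplanting the construction of Proposition~\ref{prop:baud-1} with the variation bundle $M\times M$ replaced by $M\times N$---is exactly the route the paper intends (it leaves the details to the reader), and your verification of the existential bi-interpretation, including the uniqueness-of-witnesses argument showing $\neg D$ is existential, is correct.

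The gap is the sentence ``In either case $T^*_\cup$ exists by Corollary~\ref{cor:minh}'': your two candidate choices of $T_2$ are not interchangeable, and neither, as you leave it, completes the argument. With your primary choice ($R_2(a,\cdot,\cdot)$ an arbitrary equivalence relation on each fibre), $T_2$ does not eliminate $\exists^\infty$: the $R_2(a,\cdot,\cdot)$-class of $b$ can be finite of unbounded size across models of $T_2$, so no bound works for the formula $R_2(a,b,y)$, and Fact~\ref{fact:minh} does not apply. Your ``fibres carry equality'' map is an interpretation only in the weak literal sense; $T_2$ is then an incomplete theory whose models are not captured by it, so the implication ``interpretable in the theory of equality $\Rightarrow$ $\aleph_0$-categorical, $\aleph_0$-stable, $T^{\eq}$ eliminates $\exists^\infty$'' underlying Corollary~\ref{cor:minh} is unavailable. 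With your alternative choice (each fibre carries an equivalence relation with infinitely many infinite classes---this is the paper's intended reading, and the reason it flags that this \emph{complete} theory is interpretable in the theory of equality), Corollary~\ref{cor:minh} does apply, but the theory your bi-interpretation now lands on is not $T_{\mathrm{Feq}}$ but the strengthening $T'_{\mathrm{Feq}}$ in which every $D(a,\cdot,\cdot)$ has infinitely many infinite classes. You then need one further (easy) observation: $T_{\mathrm{Feq}}$ and $T'_{\mathrm{Feq}}$ are mutually model-consistent (embed a $T_{\mathrm{Feq}}$-model into a $T'_{\mathrm{Feq}}$-model by adding new elements of sort $N$, placing each into the desired class for one index $a$ and into fresh singleton classes for all other indices, and iterating), hence have the same model companion; only then does Fact~\ref{fact:existential} give that $T^*_{\mathrm{Feq}}$ is bi-interpretable with $T^*_\cup$. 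Committing to the second choice and adding this companionship step closes the argument.
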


\noindent
Combining Theorem~\ref{thm:preservation} and Proposition~\ref{prop:tfeq} we recover the fact that $T^*_{\text{Feq}}$ is $\NSOP_1$.
This is sharp as $T^*_{\mathrm{Feq}}$ is easily seen to be $\mathrm{TP}_2$.

\subsection{Generic subspaces}
\label{sec:vector}
Fix a finite field $\F$.
Let $T_{\mathrm{Vec}}$ be the theory of $\F$-vector spaces, $L$ be a language extending the language of $\F$-vector spaces, $T$ be a complete, model complete, $L$-theory extending $T_{\mathrm{Vec}}$, and $T_{\mathrm{Sub}}$ be the theory of a $T$-model equipped with an $\F$-vector subspace.
Fact~\ref{fact:elbee} is a special case of work of d'Elb\'ee~\cite{elbee}.

\begin{fact}
\label{fact:elbee}
Suppose that $T$ eliminates $\exists^\infty$.
Then the model companion $T^*_{\mathrm{Sub}}$ of $T_{\mathrm{Sub}}$ exists.
If $T$ is stable then $T^*_{\mathrm{Sub}}$ is $\NSOP_1$.
\end{fact}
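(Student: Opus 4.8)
The plan is to realize $T_{\mathrm{Sub}}$ as (a definitional expansion of) a union of model-complete theories and then feed it into the machinery of Sections~\ref{sec:analysis} and~\ref{sec:NSOP1}. Concretely, take $I=\{1,2\}$, let $L_\cap$ be the language of $\F$-vector spaces with $T_\cap=T_{\mathrm{Vec}}$, let $L_1=L$ with $T_1=T$, and let $L_2=L_\cap\cup\{P\}$ with $P$ a fresh unary predicate and $T_2$ the $L_2$-theory asserting that the $L_\cap$-reduct is an infinite $\F$-vector space and $P$ defines an $\F$-subspace. Then $L_1\cap L_2=L_\cap$, each $T_i$ has $T_\cap$ as its set of $L_\cap$-consequences, $L_\cup=L\cup\{P\}$ is exactly the language of $T_{\mathrm{Sub}}$, and $T_\cup=T_1\cup T_2=T_{\mathrm{Sub}}$. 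Since $T_1=T$ is model-complete but $T_2$ is not, I would Morleyize $T_2$ (Remark~\ref{rem: robust}(2)): this produces a model-complete $T_2^\diamondsuit$ and a definitional expansion $T_\cup^\diamondsuit$ of $T_{\mathrm{Sub}}$, whose interpolative-fusion theory, by Fact~\ref{fact: first Theorem}, is the model companion of $T_\cup^\diamondsuit$. Since forming the model companion commutes with passing to a definitional expansion, $T^*_{\mathrm{Sub}}$ exists if and only if $(T_\cup^\diamondsuit)^*$ exists, and the two coincide up to the definitional expansion.

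For existence, I would apply Fact~\ref{fact:minh}(2) to the family $(T_1,T_2^\diamondsuit)$ over $T_\cap$. Here $T_\cap=T_{\mathrm{Vec}}$ (for a fixed finite $\F$) is complete, $\aleph_0$-categorical, $\aleph_0$-stable, and weakly eliminates imaginaries (standard for vector spaces/modules). We have $T_1=T$ eliminates $\exists^\infty$ by hypothesis, and $T_2$ eliminates $\exists^\infty$ because every completion of $T_2$ — classified by the finite-or-infinite dimension and codimension of $P$ — is $\omega$-stable; elimination of $\exists^\infty$ is preserved under Morleyization, so $T_2^\diamondsuit$ eliminates it as well. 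Thus Fact~\ref{fact:minh}(2) yields existence of $(T_\cup^\diamondsuit)^*$, hence of $T^*_{\mathrm{Sub}}$.

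For the $\NSOP_1$ assertion, assume $T$ is stable. Then $T_1=T$ is stable and $T_2$ is $\omega$-stable, hence stable; Morleyization preserves stability, so $T_\cap$, $T_1^\diamondsuit$, $T_2^\diamondsuit$ are all stable. Corollary~\ref{cor:stable-case} then gives that $(T_\cup^\diamondsuit)^*$ is $\NSOP_1$ (and identifies Kim-independence with the relation $\ind$). Since $\NSOP_1$ is inherited by reducts (Lemma~\ref{lem:kimreduct}(1)) and is trivially preserved under definitional expansions, $T^*_{\mathrm{Sub}}$ is $\NSOP_1$.

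The main obstacle is not the fusion machinery but the structural inputs about the auxiliary theory $T_2$: that the theory of an $\F$-vector space with a distinguished subspace is $\omega$-stable (equivalently, each of its completions is), and that $T_{\mathrm{Vec}}$ weakly eliminates imaginaries. Both are classical in the model theory of modules, but they are precisely what licenses the appeals to Fact~\ref{fact:minh} and Corollary~\ref{cor:stable-case}. One should also pause to confirm that Morleyizing $T_2$ really does leave $T_\cup$ a definitional expansion of $T_{\mathrm{Sub}}$, so that ``model companion of $T_{\mathrm{Sub}}$'' and ``interpolative fusion'' refer to the same object.
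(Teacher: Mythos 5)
Your decomposition is exactly the one the paper uses (Proposition~\ref{prop:vec}): $T_\cap = T_{\mathrm{Vec}}$, $T_1 = T$, $T_2$ the theory of a vector space with a distinguished subspace, with existence via Fact~\ref{fact:minh} and the $\NSOP_1$ claim via Corollary~\ref{cor:stable-case}. The one place where you genuinely diverge from the paper is in the choice of $T_2$, and it is precisely there that your argument has a gap. You take $T_2$ to be the (incomplete) theory asserting only that $P$ is a subspace, and justify elimination of $\exists^\infty$ by saying that each completion is $\omega$-stable. But elimination of $\exists^\infty$ for an incomplete theory must be uniform across models for it to do the work required in Fact~\ref{fact:minh} (one needs, for each $\varphi(x,y)$, a single bound $n_\varphi$ valid in all models). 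Your $T_2$ fails this: for the formula $P(x)$ there is no uniform bound, since for every $n$ there is a model in which $P$ is finite of size $q^m > n$. So ``each completion eliminates $\exists^\infty$'' does not yield ``$T_2$ eliminates $\exists^\infty$,'' and the appeal to Fact~\ref{fact:minh}(2) is not licensed as written.

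The repair is exactly the paper's choice: take $T_2$ to be the \emph{complete} theory of an infinite-dimensional $\F$-vector space in which $P$ defines an infinite-dimensional and infinite-codimensional subspace. This theory is $\aleph_0$-categorical and $\aleph_0$-stable (a model is determined by the dimensions of $P$ and of the quotient), so it does eliminate $\exists^\infty$, and Fact~\ref{fact:minh} applies. The price is that $T_\cup$ is then a proper strengthening of $T_{\mathrm{Sub}}$, so you must add the observation that every existentially closed model of $T_{\mathrm{Sub}}$ has $P$ of infinite dimension and codimension (one can always extend both $P$ and its complement), whence the existentially closed models of $T_{\mathrm{Sub}}$ and of $T_\cup$ coincide and $T^*_{\mathrm{Sub}} = T^*_\cup$. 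The rest of your argument --- Morleyization, weak elimination of imaginaries for $T_{\mathrm{Vec}}$, stability of both fusands, and Corollary~\ref{cor:stable-case} --- then goes through as you describe.
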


\noindent
We do not know if Fact~\ref{fact:elbee} remains true when ``stable" is replaced by ``$\NSOP_1$".
Fix a prime $p$.
A special case of Fact~\ref{fact:elbee} is that the theory of an algebraically closed field of characteristic $p$ equipped with an additive subgroup has an $\NSOP_1$ model companion.
This is sharp as this theory is $\mathrm{TP}_2$, see \cite{elbee}.

\medskip\noindent
The proof of Proposition~\ref{prop:vec} is easy and left to the reader.

\begin{prop}
\label{prop:vec}
Let $I = \{1,2\}$, $T_1 = T$, and $T_2$ be the theory of $(\sV;P)$ where $\sV$ is an infinite dimensional $\F$-vector space and $P$ is a unary predicate defining an infinite dimensional and infinite codimensional subspace of $\sV$.
Then $T^*_\cup$ exists and equals $T^*_{\mathrm{Sub}}$.
\end{prop}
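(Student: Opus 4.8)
The plan is to recognize $T^{*}_{\mathrm{Sub}}$ as the interpolative fusion in the obvious way: let $L_\cap$ be the language of $\F$-vector spaces, $T_\cap=T_{\mathrm{Vec}}$ the (complete) theory of infinite-dimensional $\F$-vector spaces, $L_1=L$ with $T_1=T$, and $L_2=L_\cap\cup\{P\}$ with $T_2$ as in the statement; this is a legitimate instance of the framework since $T$ and $T_2$ have the same $L_\cap$-consequences, namely $T_\cap$, and $T_\cup=T\cup T_2$ is consistent. Here a $T_\cup$-model is precisely a $T$-model together with an infinite-dimensional, infinite-codimensional $\F$-subspace named by $P$, so $T_\cup$ differs from $T_{\mathrm{Sub}}$ only in imposing these dimension constraints on $P$. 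I expect the one genuinely non-routine point to be reconciling this discrepancy; everything else is bookkeeping, which is why the authors leave the proof to the reader.

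For existence of $T^{*}_\cup$ I would apply Fact~\ref{fact:minh}(2). The base theory $T_\cap=T_{\mathrm{Vec}}$ is complete, $\aleph_0$-categorical (over a finite field there is a unique countably infinite $\F$-vector space), $\aleph_0$-stable (it is strongly minimal), and weakly eliminates imaginaries; the last point uses crucially that $\F$ is finite, so that a finite-dimensional subspace, together with the finitely many elements of a chosen basis and of any relevant coset, has a real canonical parameter up to algebraic closure. The theory $T_1=T$ eliminates $\exists^\infty$ --- this is exactly the hypothesis of Fact~\ref{fact:elbee} under which $T^{*}_{\mathrm{Sub}}$ is known to exist, and which is therefore in force here --- and $T_2$ eliminates $\exists^\infty$ because it is $\aleph_0$-categorical. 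Hence $T^{*}_\cup$ exists.

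Next I would show that $T^{*}_\cup$ is the model companion of $T_\cup$. The theory $T_1=T$ is model-complete by hypothesis, and $T_2$ is model-complete (it is the quantifier-eliminating theory of a generic $\F$-vector space with a distinguished subspace); alternatively one may Morleyize both $T_i$ by Remark~\ref{rem: robust}(2). Either way Fact~\ref{fact: first Theorem} gives that $T^{*}_\cup$ is the model companion of $T_\cup$. It then remains to see that $T_\cup$ and $T_{\mathrm{Sub}}$ have the same model companion. Both are $\forall\exists$-axiomatizable (since $\F$ is finite, ``$P$ is an infinite-dimensional, infinite-codimensional subspace'' is $\forall\exists$, and $T$ is $\forall\exists$ being model-complete), and $\mathrm{Mod}(T_\cup)\subseteq\mathrm{Mod}(T_{\mathrm{Sub}})$. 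Moreover every $(\sM,P)\models T_{\mathrm{Sub}}$ is a substructure of a model of $T_\cup$: take an elementary $L$-extension $\sN$ of $\sM$ of strictly larger dimension, choose a complement $W$ of $\sM$ in $\sN$ as $\F$-vector spaces, split $W=W_0\oplus W_1$ with both summands infinite-dimensional, and set $Q=P\oplus W_0$, so that $Q\cap M=P$, $Q$ is infinite-dimensional, and $\sN/Q$ is infinite-dimensional. From these two facts a standard argument shows that the classes of existentially closed models of $T_\cup$ and of $T_{\mathrm{Sub}}$ coincide: an e.c.\ model of $T_{\mathrm{Sub}}$ satisfies the $\forall\exists$ theory $T_\cup$ (using the substructure-extension property), and an e.c.\ model of $T_\cup$ is e.c.\ in $\mathrm{Mod}(T_{\mathrm{Sub}})$ by the same property. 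Therefore the two theories have the same model companion, and $T^{*}_\cup=T^{*}_{\mathrm{Sub}}$, as desired.
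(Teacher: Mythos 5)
Your proof is correct, and it follows exactly the route the paper intends: the authors leave this proof to the reader, but the surrounding remarks (invoking Fact~\ref{fact:minh} for existence and noting that $T_2$ is $\aleph_0$-categorical and $\aleph_0$-stable, hence eliminates $\exists^\infty$) point to precisely your argument of combining Fact~\ref{fact:minh} with Fact~\ref{fact: first Theorem} and the mutual model-consistency of $T_\cup$ and $T_{\mathrm{Sub}}$. The only point worth flagging --- which you already do --- is that the existence claim tacitly carries over the hypothesis from Fact~\ref{fact:elbee} that $T$ eliminates $\exists^\infty$.
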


\noindent
Note that that a $T_2$-model $(\sV;P)$ is determined up to isomorphism by the dimensions of $P$ and $\sV$.
It follows that $(\sV;P)$ is $\aleph_0$-categorical and $\aleph_0$-stable, so in particular $T_2$ eliminates $\exists^\infty$.
So the first claim of Fact~\ref{fact:elbee} is a special case of Fact~\ref{fact:minh} and the second claim is a special case of Corollary~\ref{cor:stable-case}.

\medskip\noindent
We describe a related construction of d'Elb\'ee.
Fix an algebraically closed field $K$ and an abelian variety $A$ defined over $K$.
Suppose that the ring of endomorphisms of $A$ is $\zz$.
Let $T$ be the theory of the expansion of $K$ by a subgroup of the $K$-points $A(K)$ of $A$.
Then d'Elb\'ee~\cite{elbee-variety} shows that $T$ has a model companion $T^*$ which is $\NSOP_1$ and $\mathrm{TP}_2$.
One can show that $T^*$ is bi-interpretable with a fusion of two stable theories, hence Corollary~\ref{cor:stable-case} implies that $T^*$ is $\NSOP_1$.
We leave the details of this to the reader.

\subsection{Fusion over the theory of equality}
\label{section:winkler}
Let $L_\cap$ be the language of equality and $T_\cap$ be the theory of an infinite set.
Fact~\ref{fact:wink0} is essentially due to Winkler~\cite{Winkler}. The first part can be recovered as a special case of Corollary~\ref{cor:minh}.

\begin{fact}
\label{fact:wink0}
 If each $T_i$ eliminates $\exists^\infty$ then $T^*_\cup$ exists.
Conversely, if $T_1$ is complete and does not eliminate $\exists^\infty$ then there is $T_2$ such that $T^*_\cup$ does not exist.
\end{fact}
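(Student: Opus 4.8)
The plan is to establish the two halves of Fact~\ref{fact:wink0} separately. For the first part, assume each $T_i$ eliminates $\exists^\infty$. Since $T_\cap$ is the theory of an infinite set, it is interpretable in the theory of equality, so it is $\aleph_0$-stable and $\aleph_0$-categorical, and moreover $T_\cap$ weakly eliminates imaginaries (in fact it eliminates imaginaries). Hence we may directly invoke Fact~\ref{fact:minh}(2) — or equivalently Corollary~\ref{cor:minh}, using that $T_\cap$ is interpretable in the theory of equality — to conclude that $T^*_\cup$ exists. This direction is essentially a one-line deduction from the cited existence result.

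For the converse, suppose $T_1$ is complete and does \emph{not} eliminate $\exists^\infty$. By definition of elimination of $\exists^\infty$, there is an $L_1$-formula $\varphi(x,y)$ (say with $|x| = n$) such that the set $\{b : |\varphi(\monster_1,b)| \ge k\}$ is not definable uniformly in $k$; concretely, in some $\sM_1 \models T_1$ there are, for each $k$, parameters $b_k$ with $\varphi(\sM_1, b_k)$ finite of size $\ge k$, while the property ``$\varphi(x,b)$ is finite'' is not isolated by a formula bounding the size. The idea is to let $L_2$ add a single unary predicate $P$ (of the appropriate sort), and let $T_2$ be a theory asserting that $P$ is infinite and co-infinite. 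Then in $T_\cup$, a model is interpolative iff for every instance $\varphi(x,b)$ which $\sM_1$ makes finite, the finite set $\varphi(\sM_1,b)$ is not forced to lie entirely inside $P$ or entirely outside $P$ — and, crucially, one can use $\varphi$ together with the predicate $P$ to ``read off'' the exact size of $\varphi(\sM_\cup,b)$ whenever it is finite, via the sentences $\exists^{= j} x\,(\varphi(x,b)\wedge P(x))$ for the various $j$. The point is that interpolativity imposes a condition on models that is equivalent to $\varphi(x,b)$ being finite being a definable property, which contradicts the failure of elimination of $\exists^\infty$; one then argues that the class of interpolative $T_\cup$-models is not elementary, by exhibiting a sequence of interpolative models whose ultraproduct fails to be interpolative (the parameters $b_k$ witnessing larger and larger finite sets converge, in the ultraproduct, to a parameter over which $\varphi$ defines an infinite set that the predicate $P$ is nonetheless forced to interact with in a way violating interpolativity).

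Making this precise, I would: (i) fix $\varphi(x,y)$ witnessing the failure of elimination of $\exists^\infty$ in $T_1$, and pass to a monster model $\sM_1 \models T_1$ containing, for each $k \in \omega$, a tuple $b_k$ with $2 \le |\varphi(\sM_1,b_k)| < \omega$ and $|\varphi(\sM_1,b_k)| \to \infty$; (ii) choose $T_2$ on $L_2 = L_\cap \cup \{P\}$ to be the theory of an infinite set with $P$ naming an infinite, co-infinite subset (so $T_2$ has a unique set of $L_\cap$-consequences, namely $T_\cap$, as required); (iii) for each $k$, build an interpolative $T_\cup$-model $\sN^k$ in which for all the relevant finite instances $\varphi(x,b)$ with $|\varphi(\sN^k,b)| \le k$, the set $\varphi(\sN^k,b)$ meets both $P$ and its complement — this is achievable because there is room to distribute finitely many elements, using genericity; (iv) take a nonprincipal ultraproduct $\sN = \prod \sN^k/\mathcal{U}$, with $b = (b_k)/\mathcal{U}$; then $\varphi(\sN_1, b)$ is infinite (since the sizes go to infinity), yet by construction the behaviour of $P$ on the finite approximations forces, in $\sN$, some definable subset $X_1 = \varphi(\sN_1,b)$ and definable subset $X_2$ (the set defined using $P$) whose intersection is nonempty but which cannot be separated by $\sN_\cap$-definable sets, so $\sN$ is not interpolative; (v) conclude the class of interpolative models is not elementary, i.e.\ $T^*_\cup$ does not exist.

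The main obstacle is step (iii)–(iv): arranging the interpolative models $\sN^k$ and extracting the right failure of interpolativity in the ultraproduct. The delicate point is that the witness to non-interpolativity in $\sN$ must be a \emph{finite} intersection of definable sets (one from $\sM_1$, one from $\sM_2$) with empty intersection that cannot be separated over $L_\cap$; the failure of elimination of $\exists^\infty$ is exactly what prevents a uniform $L_\cap$-definable separation from existing, because separating sets in $T_\cap = $ theory of an infinite set are essentially finite or cofinite, and their sizes would have to be bounded uniformly in the parameter $b$ — which is precisely what $\varphi$ failing elimination of $\exists^\infty$ rules out. I would handle this by carefully tracking cardinalities: the separating $L_\cap$-definable set $Y_1 \supseteq X_1$ would have to be cofinite (as $X_1$ is infinite in $\sN$), hence its complement finite of some fixed size $m$; but then pulling back through the approximations $\sN^k$ for $k \gg m$ gives a contradiction with the spread of the finite sets $\varphi(\sN^k, b_k)$ against $P$. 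This bookkeeping is the technical heart; the rest is routine once the construction is set up correctly. I would also remark that the special case where $T_1$ is the theory of a generic unary function (or any standard example failing elimination of $\exists^\infty$) can be spelled out explicitly as an illustration, but the general argument proceeds as above.
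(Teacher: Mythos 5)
The first half of your argument is correct and is exactly the paper's intended route: $T_\cap$ is the theory of an infinite set, hence $\aleph_0$-categorical, $\aleph_0$-stable, and weakly eliminates imaginaries, so Fact~\ref{fact:minh}(2) applies directly. (One small correction: the theory of an infinite set does \emph{not} fully eliminate imaginaries --- unordered tuples are not coded --- but only weak elimination is needed here, so nothing is lost.)

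The converse is where there is a genuine gap. Your choice of $T_2$ (an infinite, co-infinite predicate $P$) and the overall compactness-against-the-witnesses-$b_k$ strategy are the right ones, but the configuration you impose in step (iii) is backwards, and this breaks the proof. You arrange that in each interpolative approximation $\sN^k$ the finite set $\varphi(\sN^k,b_k)$ meets both $P$ and its complement. That is precisely the \emph{generic} configuration: in the ultraproduct $\varphi(\sN_1,b)$ is infinite and meets both $P$ and $\lnot P$, so the natural candidate pairs $\bigl(\varphi(\sN_1,b),\,P\bigr)$ and $\bigl(\varphi(\sN_1,b),\,\lnot P\bigr)$ have nonempty intersection and therefore impose no interpolativity requirement whatsoever. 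Relatedly, step (iv) claims a failure of interpolativity from two definable sets ``whose intersection is nonempty but which cannot be separated'' --- interpolativity only constrains families with \emph{empty} intersection, so this is not a witness to anything. A further problem with your configuration is that it is not preserved when a $T_\cup$-model is extended to an interpolative one: points of $\varphi(\sN^k,b_k)\setminus P$ may enter the predicate in the extension.

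The construction you need is the opposite one. Build $T_\cup$-models in which $\varphi(\cdot,b_k)$ is finite of size at least $k$ and \emph{entirely contained in} $P$, and extend these to interpolative models (equivalently, after Morleyizing via Remark~\ref{rem: robust} so that $T_1$ is model complete, to existentially closed models of $T_\cup$, using Fact~\ref{fact: first Theorem}); model completeness of $T_1$ guarantees the finite set $\varphi(\cdot,b_k)$ does not grow while $P$ only grows, so the containment persists. If the interpolative models formed an elementary class with theory $T^*_\cup$, the partial type
\[
\{\exists^{\geq k} x\, \varphi(x,y) : k\in\omega\}\cup\{\forall x\,(\varphi(x,y)\to P(x))\}
\]
would be finitely satisfiable in models of $T^*_\cup$, hence realized by some $b$ in an interpolative model $\sN_\cup$. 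Then $X_1=\varphi(\sN_1,b)$ is an infinite $\sN_1$-definable set disjoint from the infinite $\sN_2$-definable set $X_2=\lnot P(\sN_2)$, and any $\sN_\cap$-definable $Y_1\supseteq X_1$ and $Y_2\supseteq X_2$ are finite or cofinite, hence both cofinite, hence not disjoint --- contradicting interpolativity. Your closing ``bookkeeping'' paragraph gestures at exactly this cardinality argument, but it only gets off the ground once the containment $\varphi(\cdot,b_k)\subseteq P$ is built into the approximating models.
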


\noindent
Fact~\ref{fact:winkler-nsop} is proven by Ramsey and the first author in~\cite{KRExp} and independently by Je\u{r}\'abek in ~\cite{Jebek2019}. It can be recovered as a special case of Corollary~\ref{cor:triv-forking}.

\begin{fact}
\label{fact:winkler-nsop}
If each $T_i$ is $\NSOP_1$ and $T^*_\cup$ exists, then $T^*_\cup$ is $\NSOP_1$.
\end{fact}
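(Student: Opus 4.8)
The plan is to deduce this directly from Corollary~\ref{cor:triv-forking}. In the setting of this subsection $L_\cap$ is the language of equality and $T_\cap$ is the theory of an infinite set, so $T_\cap$ is (trivially) interpretable in the theory of an infinite set. Since each $T_i$ is $\NSOP_1$ and $T^*_\cup$ is assumed to exist, Corollary~\ref{cor:triv-forking} applies verbatim and yields that $T^*_\cup$ is $\NSOP_1$; in fact it also gives the bonus conclusion that the relation $\ind$ defined in Section~\ref{sec:NSOP1} coincides with Kim-independence $\eind[K]$, which is the description of Kim-forking promised in the introduction.

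Unwinding the chain of references, I would note that Corollary~\ref{cor:triv-forking} rests on Corollary~\ref{cor:disforking-case}, which is the instance of Theorem~\ref{thm:preservation} in which $T_\cap$ has disintegrated forking. The one hypothesis of Theorem~\ref{thm:preservation} beyond stability of $T_\cap$ — that Kim-independence in each $T_i$ satisfies the $T_\cap$-generic independence theorem — is automatic here by Proposition~\ref{prop:disforkthreebody}, because the theory of an infinite set (and hence anything interpretable in it) has disintegrated forking. So there is genuinely nothing further to prove: all the substantive work lives in Theorem~\ref{thm:preservation} together with its supporting results (Proposition~\ref{prop:relaclcomp}, the reasonable extension/independence theorems, and $3$-uniqueness), and the usual obstacle for applying Theorem~\ref{thm:preservation} — verifying the $T_\cap$-generic independence theorem in the $T_i$ — simply does not arise in this case since forking over the base is disintegrated. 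Thus the proof is a one-line appeal to Corollary~\ref{cor:triv-forking}.
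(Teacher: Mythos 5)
Your proposal is correct and is essentially identical to the paper's own justification: the paper states that Fact~\ref{fact:winkler-nsop} ``can be recovered as a special case of Corollary~\ref{cor:triv-forking}'', which is exactly your one-line appeal, and your unwinding of the chain through Corollary~\ref{cor:disforking-case}, Proposition~\ref{prop:disforkthreebody}, and Theorem~\ref{thm:preservation} matches how the paper derives that corollary.
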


\noindent
Tsuboi conjectured in~\cite{Tsuboi} that if $L_1 \cap L_2 = \emptyset$ and $T_1,T_2$ are both simple and eliminate $\exists^\infty$ then $T_\cup$ has a simple completion. 
The following corollary of Fact~\ref{fact:wink0} and Fact~\ref{fact:winkler-nsop} is the $\NSOP_1$ analogue of Tsuboi's conjecture.

\begin{cor}
\label{cor:tsuboi}
If $L_i \cap L_j = \emptyset$ for distinct $i,j \in I$ and each $T_i$ is $\NSOP_1$ and eliminates $\exists^\infty$ then $T_\cup$ has an $\NSOP_1$ completion.
\end{cor}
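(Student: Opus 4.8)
The plan is to combine the two facts of this subsection essentially verbatim, with only a small amount of bookkeeping needed to pass from the model companion $T^*_\cup$ back to $T_\cup$ itself. First I would check the hypotheses of Fact~\ref{fact:wink0}: each $T_i$ eliminates $\exists^\infty$, so $T^*_\cup$ exists, i.e., the class of interpolative $T_\cup$-models is elementary. (Equivalently, one can get existence from Fact~\ref{fact:minh}(2), since $T_\cap$, the theory of an infinite set, is complete, $\aleph_0$-stable, $\aleph_0$-categorical, and weakly eliminates imaginaries, while each $T_i$ eliminates $\exists^\infty$.) Next, Fact~\ref{fact:winkler-nsop} applies directly — each $T_i$ is $\NSOP_1$ and $T^*_\cup$ exists — so $T^*_\cup$ is $\NSOP_1$. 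Alternatively, since $T_\cap$ is (trivially) interpretable in the theory of an infinite set, this also follows from Corollary~\ref{cor:triv-forking}.

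It then remains to produce a completion of $T_\cup$, rather than of $T^*_\cup$, witnessing $\NSOP_1$. Every interpolative $T_\cup$-model is in particular a model of $T_\cup$, so $T_\cup \subseteq T^*_\cup$. Moreover $T^*_\cup$ is consistent: $T_\cup$ is consistent by Robinson joint consistency (here $T_\cap$ is complete), and by Fact~\ref{fact:extension}(1) any model of $T_\cup$ extends to a model of $T^*_\cup$. So I would let $\widehat{T}$ be any completion of $T^*_\cup$. Since $T_\cup \subseteq T^*_\cup \subseteq \widehat{T}$, the theory $\widehat{T}$ is a completion of $T_\cup$, and since $T^*_\cup$ is $\NSOP_1$, every completion of $T^*_\cup$ is $\NSOP_1$ by definition, so $\widehat{T}$ is $\NSOP_1$. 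This is the desired completion, finishing the proof.

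I do not expect any genuine obstacle: the mathematical content is entirely contained in Facts~\ref{fact:wink0} and~\ref{fact:winkler-nsop}. The only point requiring care is that $T^*_\cup$ is in general neither equal to $T_\cup$ nor complete, so one cannot simply declare that $T_\cup$ "is" $\NSOP_1$; one must instead pass to a completion of $T^*_\cup$ and observe that it restricts to a completion of $T_\cup$. Accordingly, the statement is formulated as existence of an $\NSOP_1$ completion of $T_\cup$, mirroring Tsuboi's conjecture for simplicity.
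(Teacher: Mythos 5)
Your proposal is correct and matches the paper's intent exactly: the paper gives no separate proof, simply asserting the corollary follows from Facts~\ref{fact:wink0} and~\ref{fact:winkler-nsop}, which is precisely the combination you carry out. The extra bookkeeping you supply (consistency of $T^*_\cup$ via Fact~\ref{fact:extension}(1) and the observation that any completion of $T^*_\cup \supseteq T_\cup$ is a completion of $T_\cup$) is the right and only fill-in needed.
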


\noindent
We now describe some special cases from the literature where more than $\NSOP_1$ is preserved.
Suppose that $I = \{1,2\}$, $T_1$ is an arbitrary theory which eliminates $\exists^\infty$, $L_2 = \{P\}$ for a unary predicate $P$, and $T_2$ is the theory of an infinite and co-infinite unary predicate.
Then $T^*_\cup$ is the expansion of $T_1$ by a generic predicate defined by Chatzidakas and Pillay~\cite{Cha-Pi}.
Fact~\ref{fact:cha-pi} is proven in \cite{Cha-Pi}.
Note that Fact~\ref{fact:cha-pi} is a special case of Theorem~\ref{thm:simplepreservation}.

\begin{fact}
\label{fact:cha-pi}
If $T_1$ is simple and eliminates $\exists^\infty$, then the expansion of $T_1$ by a generic unary predicate is simple.
\end{fact}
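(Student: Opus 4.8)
The plan is to exhibit the Chatzidakis--Pillay generic predicate expansion of $T_1$ as an interpolative fusion over the theory of an infinite set and then apply Theorem~\ref{thm:simplepreservation}. Concretely, I would take $I=\{1,2\}$, let $L_\cap$ be the language of equality, keep $L_1$ and $T_1$ as given (assuming, as usual, that $T_1$ has infinite models), and set $L_2=\{P\}$ with $T_2$ the theory of an infinite and co-infinite unary predicate; then the common $L_\cap$-consequences of $T_1$ and $T_2$ form $T_\cap$, the theory of an infinite set. As is well known (see~\cite{Cha-Pi}), the interpolative $T_\cup$-models are precisely the models of $T_1$ equipped with a generic predicate, so $T_\cup^*$ coincides with the expansion of $T_1$ by a generic unary predicate. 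Existence of $T_\cup^*$ follows from Fact~\ref{fact:wink0} (or Fact~\ref{fact:minh}(2)): $T_1$ eliminates $\exists^\infty$ by hypothesis, and $T_2$, being $\aleph_0$-categorical, eliminates $\exists^\infty$ as well.

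It then remains to check the hypotheses of Theorem~\ref{thm:simplepreservation} with $i^*=1$. The routine part: $T_\cap$ is stable (indeed strongly minimal); $T_1$ is simple by assumption and $T_2$ is $\aleph_0$-stable, hence simple; and for $i=2\neq i^*$, algebraic closure is trivial in both $T_2$ and $T_\cap$, so $\acl_2=\acl_\cap$, while forking in each of $T_2$ and $T_\cap$ is just algebraic independence ($A\eind[f]_C B$ iff $AC\cap BC=C$, since both theories have trivial algebraic closure and $U$-rank one), so forking in $T_2$ agrees with forking in $T_\cap$. This settles conditions~(2) and~(3).

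The delicate point---and the one I expect to be the main obstacle---is condition~(1): that $\eind[f]=\eind[K]$ in $T_1$ satisfies the $T_\cap$-generic independence theorem. Because $T_1$ is only assumed simple (not stable, and not relatively disintegrated), neither Proposition~\ref{prop:stablethreebody} nor Proposition~\ref{prop:reldisthreebody} applies; the available tool is Proposition~\ref{prop:disforkthreebody}, which requires the base theory to be stable with \emph{elimination} of imaginaries and disintegrated forking. The theory of an infinite set has disintegrated forking and finite $U$-rank but only \emph{weak} elimination of imaginaries, so, exactly as in the proof of Theorem~\ref{thm:preservation}, I would pass to imaginaries using Remark~\ref{rem: robust}(4), replacing $T_\cap$ by $T_\cap^\eq$; this preserves interpolativity and effects only a definitional expansion of $T_\cup^*$, to which simplicity is insensitive. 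The new base $T_\cap^\eq$ is $\aleph_0$-stable and eliminates imaginaries, and, being interpretable in the finite-$U$-rank theory $T_\cap$ with disintegrated forking, it has disintegrated forking by the Goode criterion~\cite{Goode-trivial}. Proposition~\ref{prop:disforkthreebody} then gives condition~(1) in the $\eq$-expanded setting.

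With all hypotheses confirmed, Theorem~\ref{thm:simplepreservation} yields that $T_\cup^*$ is simple, i.e.\ that the expansion of $T_1$ by a generic unary predicate is simple. Apart from the imaginaries bookkeeping above, every step is a direct substitution into Theorem~\ref{thm:simplepreservation}.
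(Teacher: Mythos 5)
Your proposal is correct and follows exactly the route the paper takes: the paper simply notes that Fact~\ref{fact:cha-pi} is the special case of Theorem~\ref{thm:simplepreservation} obtained with $T_\cap$ the theory of an infinite set, $T_2$ the theory of an infinite co-infinite predicate, and $i^*=1$. The only detail the paper leaves implicit is your verification of condition~(1) via Proposition~\ref{prop:disforkthreebody} after passing to imaginaries and invoking Goode's criterion for disintegrated forking, which is precisely the bookkeeping the paper performs elsewhere (e.g.\ for Corollary~\ref{cor:triv-forking} and in the proof of Theorem~\ref{thm:preservation}), so your argument fills that gap correctly.
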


\noindent
Proposition~\ref{prop:tp2} describes a family of examples which shows that fusions over the theory of equality are typically $\mathrm{TP}_2$. 

\begin{prop}
\label{prop:tp2}
Suppose $I = \{1,2\}$, $T_1$ expands the theory of some infinite group, $T_2$ has $U$-rank at least two, and $T^*_\cup$ exists.
Then $T^*_\cup$ has $\mathrm{TP}_2$.
\end{prop}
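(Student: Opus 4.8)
The plan is to exhibit a single formula with $\mathrm{TP}_2$ in $T^*_\cup$, using the group structure from $T_1$ to "spread out" the higher-rank structure from $T_2$. Recall that a formula $\varphi(x;y)$ has $\mathrm{TP}_2$ if there is an array $(b_{i,j})_{i,j\in\omega}$ such that each row $\{\varphi(x;b_{i,j})\mid j\in\omega\}$ is $2$-inconsistent while every path $\{\varphi(x;b_{i,f(i)})\mid i\in\omega\}$ is consistent. First I would use the hypothesis that $T_2$ has $U$-rank at least two: this gives (in a model of $T_2$, working in $\monster_2$) a formula $\theta(z;w)$ and a parameter $a$ such that $\theta(\monsterset;a)$ is infinite and forks over $\emptyset$, and in fact one obtains an infinite family of pairwise disjoint infinite definable sets — concretely, a type $q$ of $U$-rank $\geq 2$ over some set, an element $c\models q$, and an element $d\in\acl(c)\setminus\acl(\emptyset)$ lying in a finite-to-one fiber, together with a Morley sequence producing infinitely many such fibers. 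The point is exactly the same configuration Baudisch isolates in~\cite[Lemma 4.2]{baudisch}: a formula $\psi_2(z;w)$ over $\monster_2$ and a sequence $(a_i)_{i\in\omega}$ with the sets $\psi_2(\monsterset;a_i)$ pairwise disjoint and each infinite.

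Next I would bring in $T_1$. Since $T_1$ expands the theory of an infinite group $(G,\cdot)$, inside $\monster_1$ we have, for any tuple $g$, the translate map $x\mapsto g\cdot x$, and more importantly we can find, for each fixed $i$, an infinite sequence $(g_{i,j})_{j\in\omega}$ of group elements such that translation by $g_{i,j}^{-1}g_{i,j'}$ is injective (translations always are), which lets us move a single $T_2$-definable set around by the group action. The construction of the $\mathrm{TP}_2$-array then proceeds as follows: working in a monster model $\monster_\cup\models T^*_\cup$, choose the parameters $a_i$ (realizing the $T_2$-configuration above, living on the relevant sort) and independently choose group elements $g_{i,j}$; set $b_{i,j}=(a_i,g_{i,j})$ and let $\varphi(x;b_{i,j})$ assert, roughly, that the $L_1$-part $g_{i,j}\cdot x$ satisfies the $L_2$-formula $\psi_2(\,\cdot\,;a_i)$. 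Then within a fixed row $i$ the sets $\{x: g_{i,j}\cdot x\in\psi_2(\monsterset;a_i)\}$ for varying $j$ are pairwise disjoint translates (here one must arrange the $g_{i,j}$ so that $g_{i,j}\cdot x$ and $g_{i,j'}\cdot x$ cannot both land in the same finite fiber — this is where one uses that $\psi_2(\monsterset;a_i)$ itself, being infinite, has infinitely many translates avoiding a given finite set, invoking interpolativity to realize the relevant union of $L_1$- and $L_2$-types via Proposition~\ref{prop:realizingtypes}). For consistency along a path $f\colon\omega\to\omega$, one needs an element $x$ with $g_{i,f(i)}\cdot x\in\psi_2(\monsterset;a_i)$ for all $i$ simultaneously; since the $a_i$ came from a Morley sequence and the group elements are chosen generically, the union over $i$ of the constraint "$g_{i,f(i)}\cdot x$ is in the $i$-th fiber" is a consistent union of an $L_1$-type (fixing the $L_1$-positions of $g_{i,f(i)}\cdot x$) and an $L_2$-type (putting those positions in the prescribed $\psi_2$-fibers), hence realized in $\monster_\cup$ by Fact~\ref{fact:consistency} together with Proposition~\ref{prop:realizingtypes}.

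The main obstacle, and the step requiring the most care, is the interaction between the group translations from $L_1$ and the fibers from $L_2$ inside the \emph{fused} structure: one must choose the $g_{i,j}$ and $a_i$ so that the resulting array genuinely satisfies both the row-inconsistency and the path-consistency conditions, and this is not automatic — it relies essentially on the interpolative property of $\monster_\cup$ to guarantee that no unexpected $L_\cap$-definable constraint collapses the configuration. Concretely, I expect the cleanest route is: (i) first build the whole array abstractly as a consistent set of $L_\cup$-types by specifying, for each array position, a complete $L_1$-type and a complete $L_2$-type with common $L_\cap$-restriction, making the group elements in $L_1$ a sufficiently generic (e.g. algebraically independent) sequence and the $a_i$ a Morley sequence in $T_2$ of the rank-$\geq 2$ configuration; (ii) check $2$-inconsistency of rows purely from the $L_2$-data (disjoint fibers) and consistency of paths by exhibiting, for each path, a single additional point-type with the right $L_1$- and $L_2$-reducts and applying Proposition~\ref{prop:realizingtypes}; (iii) then quote interpolativity once more to embed this type-array into $\monster_\cup$. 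The higher $U$-rank hypothesis on $T_2$ is used precisely to get the two-dimensional spread (infinitely many fibers, each infinite) that survives being pushed around by the group and is what forces $\mathrm{TP}_2$ rather than merely failure of simplicity.
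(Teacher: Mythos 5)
There is a genuine gap, and it is in the row-inconsistency of your array. You index a row by a fixed $T_2$-parameter $a_i$ and let the group element $g_{i,j}$ vary along the row, so that the $j$-th entry of row $i$ defines the translate $g_{i,j}^{-1}\cdot P_i$ of the single infinite $L_2$-definable set $P_i=\psi_2(\monsterset;a_i)$. For the row to be $k$-inconsistent you need finitely many distinct translates of $P_i$ to have empty intersection. But in an interpolative model this never happens: after Morleyizing, $\sM_\cup$ is existentially closed among $T_\cup$-models, and for any group elements $h_1,\dots,h_k$ one can extend $\sM_\cup$ by a new point $a$ together with new points $h_1a,\dots,h_ka$, impose the $L_2$-condition that all of these lie in $P_i$ (consistent since $P_i$ is infinite), and impose the $L_1$-condition coming from the group extension; these two types agree on $L_\cap$, so Fact~\ref{fact:consistency} and existential closedness put a realization of $\bigwedge_{j\le k} h_j x\in P_i$ inside $\sM_\cup$ itself. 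So genericity works \emph{against} you here: it forces every finite subfamily of a row to be consistent, and no choice of ``sufficiently generic'' $g_{i,j}$ can rescue disjointness. Your appeal to interpolativity at this step therefore proves the opposite of what you need.

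The fix is to transpose the roles of the two indices. The $U$-rank~$\ge 2$ hypothesis gives an $L_2$-formula $\varphi(x,y)$ and parameters $(b_k)_{k\in\omega}$ with each $\varphi(M^{|x|},b_k)$ infinite and the family $\{\varphi(x,b_k)\mid k\in\omega\}$ $n$-inconsistent; this $n$-inconsistency is part of the $L_2$-theory and survives in the fusion. Now fix distinct group elements $(c_l)_{l\in\omega}$ and set $\psi(x,y,z)=\varphi(x\cdot z^{-1},y)$, so that $\psi(M^{|x|},b_k,c_l)=\varphi(M^{|x|},b_k)\cdot c_l$. A row should be indexed by the \emph{group element} $c_l$, with the $T_2$-parameter $b_k$ varying along it: then row $l$ is the image of the $n$-inconsistent family under the single bijection $x\mapsto x\cdot c_l$, hence still $n$-inconsistent, with no genericity needed. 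Path-consistency is then exactly the step where your argument was already correct: for $f\colon m\to\omega$ one needs a single $a$ with $a\cdot c_l^{-1}\in\varphi(M^{|x|},b_{f(l)})$ for $l\le m$, and since the points $a\cdot c_l^{-1}$ are pairwise distinct and each constraint is an infinite $L_2$-condition, joint consistency over $L_\cap$ plus existential closedness produces such an $a$ in $\sM_\cup$. With this reorganization the rest of your outline (Morleyization, Fact~\ref{fact:consistency}, Proposition~\ref{prop:realizingtypes}) goes through and recovers the intended proof.
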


\noindent
So for example the fusion of a vector space and an equivalence relation with infinitely many infinite classes over the theory of equality is $\mathrm{TP}_2$.
This once again demonstrates the sharpness of the $\NSOP_1$ conclusion in Corollary~\ref{cor:triv-forking}.

\begin{proof}
Let  $\sM_\cup \models T^*_\cup$ be $\aleph_0$-saturated.  Morleyizing if necessary, we can arrange that $T_i$ admits quantifier elimination. By Fact~\ref{fact: first Theorem}, $T^*_\cup$ is model complete and $\sM_\cup$ is an existentially closed model of $T^*_\cup$.
As $T_2$ has $U$-rank at least two there is an $L_2$-formula $\varphi(x,y)$ and a sequence $(b_k)_{k \in \omega}$ of tuples such that $\varphi(M^{|x|},b_k)$ is infinite for all $k \in \omega$ and $\{ \varphi(x,b_k) \mid k \in \omega\}$ is $n$-inconsistent for some $n$. 
Let $(a_l)_{l \in \omega}$ be a sequence of distinct elements of $M$.
Using $\cdot$ to denote the group operation in $T_1$, we define $\psi(x,y,z) = \varphi(x \cdot z^{-1},y)$ so that $\psi(M^{|x|},b_k,c_l) = \varphi(M^{|x|},b_k) \cdot c_l$ for all $k,l$.
Note that $\{ \psi(x,b_k,c_l) \mid k \in \omega\}$ is $n$-inconsistent for all $l$.
Fix $m \in \omega$ and $f : m \to \omega$.
An easy application of existential closedness produces $a \in M^{|x|}$ such that we have $\sM_\cup \models \psi(a,b_{f(l)},c_l)$ for all $l \leq m$.
So $\psi(x,y,z)$ together with the array $(b_k,c_l) \mid k,l \in \omega)$ together witness that $T_\cup^*$ has $\mathrm{TP}_2$.
\end{proof}

\noindent
Simon and Shelah~\cite{Shelah-Simon} show that if $T$ is $\NIP$ and algebraic closure in $T$ is trivial then the expansion of $T$ by a generic dense linear order is $\NIP$.
Proposition~\ref{prop:ss} generalizes this.
Proposition~\ref{prop:ss} follows from Fact~\ref{fact:wink0} and Proposition~\ref{prop:stabnip}.
\begin{prop}
\label{prop:ss}
Suppose that each $T_i$ is $\NIP$ and that each $\acl_i$ is trivial.
Then $T^*_\cup$ exists and is $\NIP$.
\end{prop}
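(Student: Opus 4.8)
The plan is to deduce the statement from Fact~\ref{fact:wink0} and Proposition~\ref{prop:stabnip}, exactly as indicated; recall that throughout this subsection $L_\cap$ is the language of equality and $T_\cap$ is the theory of an infinite set. First I would establish existence of $T^*_\cup$ by checking that each $T_i$ eliminates $\exists^\infty$, which follows from triviality of $\acl_i$. Indeed, let $\varphi(x,y)$ be an $L_i$-formula with $y=(y_1,\dots,y_k)$, and suppose $b$ is a tuple in a model of $T_i$ with $\varphi(\monster,b)$ finite. Every coordinate of every realization of $\varphi(x,b)$ lies in a finite $b$-definable set, hence in $\acl_i(b)=\{b_1,\dots,b_k\}$; so $\varphi(x,b)$ has at most $k^{|x|}$ realizations, a bound independent of $b$. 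Thus each $T_i$ eliminates $\exists^\infty$, and Fact~\ref{fact:wink0} gives that $T^*_\cup$ exists.

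Next I would show $T^*_\cup$ is $\NIP$ by verifying the hypotheses of Proposition~\ref{prop:relqe2}, after which the $\NIP$ clause of Proposition~\ref{prop:stabnip} applies at once. Since $T_\cap$ is stable with weak elimination of imaginaries, Proposition~\ref{prop:stationarity} shows that forking independence $\eind[f]$ in $T_\cap$ is a stationary independence relation extendable to every completion of every $T_i$; in particular $T_\cap$ admits a stationary and extendable independence relation. For the remaining hypotheses, fix a set $A$ and $i\in I$: both $T_\cap$ and $T_i$ have trivial algebraic closure, so $\acl_i(A)=A=\acl_\cap(A)$; and since $\acl_\cap(A)=A$, the groups $\Aut_{L_\cap}(\acl_\cap(A)/A)$ and $\Aut_{L_i}(\acl_\cap(A)/A)$ are both trivial (a partial elementary bijection of $A$ fixing $A$ pointwise is the identity), hence equal. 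This is exactly the hypothesis of Proposition~\ref{prop:relqe2}, so Proposition~\ref{prop:stabnip} yields that $T^*_\cup$ is $\NIP$.

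There is no serious obstacle here: the only point that is not a direct appeal to the cited results is the implication ``$\acl_i$ trivial $\Rightarrow$ $T_i$ eliminates $\exists^\infty$'', which is the elementary observation in the first paragraph. I would also remark that the same argument, using instead the ``stable'' clause of Proposition~\ref{prop:stabnip}, shows that a fusion over the theory of equality of a family of stable theories with trivial algebraic closure is stable.
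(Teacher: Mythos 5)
Your proof is correct and follows exactly the route the paper intends: the paper's own justification is the one-line citation of Fact~\ref{fact:wink0} and Proposition~\ref{prop:stabnip}, and you have simply supplied the details — the observation that trivial $\acl_i$ gives a uniform bound $k^{|x|}$ on finite instances of $\varphi(x,y)$ and hence elimination of $\exists^\infty$, and the verification that trivial $\acl_i$ forces the hypotheses of Proposition~\ref{prop:relqe2} (equality of algebraic closures and triviality of both automorphism groups), with the stationary and extendable independence relation coming from Proposition~\ref{prop:stationarity} since the theory of an infinite set is stable with weak elimination of imaginaries. Nothing further is needed.
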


\subsection{Fusions of blowups}
\label{section:blow-up}
Example~\ref{ex:eqrel} is a member of a more general family of examples which we now discuss.
These examples witness sharpness of Proposition~\ref{prop:relaclcomp}.
Suppose $L$ is a relational language and $T$ is a complete $L$-theory with infinite models.
We define the \textbf{blow up} of $T$.
Let $L_{\text{Blow}}$ be the language containing a binary relation symbol $E$ and an $n$-ary relation symbol $R_{\text{Blow}}$ for each $n$-ary relation symbol $R \in L$.
Suppose $\sM \models T$.
A blow up of $\sM$ is an $L_{\text{Blow}}$-structure $\sN$ such that there is a surjection $\rho : N \to M$ satisfying the following:
\begin{enumerate}
\item For all $a,b \in N$ we have $\rho(a) = \rho(b)$ if and only if $\sN \models E(a,b)$,
\item $\rho^{-1}(a)$ is infinite for all $a \in N$,
\item for every $n$-ary relation symbol $R \in L$ and $a_1,\ldots,a_n \in N$ we have\\ $\sN \models R_{\text{Blow}}(a_1,\ldots,a_n)$ if and only if $\sM \models R(\rho(a_1),\ldots,\rho(a_n))$.
\end{enumerate}

\noindent
Let $T_{\text{Blow}}$ be the $L_{\text{Blow}}$-theory of blowups of models of $T$.
It is easy to see that $T_{\text{Blow}}$ is complete and algebraic closure in $T_{\text{Blow}}$ is trivial.
Given $i \in \{1,2\}$ let $L_i$ be a relational language, $T_i$ be a complete $L_i$-theory with infinite models, suppose that $T^\eq_i$ eliminates $\exists^\infty$ and $L_1 \cap L_2 = \emptyset$.
So $T^*_\cup$ exists by Fact~\ref{fact:minh}.
Let $S_\cap$ be the $\{E\}$-theory of an equivalence relation with
infinitely many infinite classes and given $i \in \{1,2\}$ let $S_i$ be $(T_i)_{\text{Blow}}$.
It is easy to see that each $S^\eq_i$ eliminates $\exists^\infty$ so $S^*_\cup$ exists by Fact~\ref{fact:minh}.
It is also easy to see that $S^*_\cup$ is the blow up of $T^*_\cup$.
It follows that algebraic closure in $S^*_\cup$ is trivial and $S^*_\cup$ interprets $T^*_\cup$.

\medskip\noindent
We show that $S^*_\cup$ is in general not $\acl$-closed.
So if $S^*_\cup$ is $\acl$-complete then every $S^*_\cup$-formula is equivalent to a boolean combination of $S_1$ and $S_2$ formulas.
As stable formulas are closed under boolean combinations it would follow that if $S_1$ and $S_2$ are stable then $S^*_\cup$ is stable.
It is easy to produce examples of stable $T_1,T_2$ such that $T^*_\cup$ has $\mathrm{TP}_2$, see Proposition~\ref{prop:tp2}.
For example if $T_1$ is theory of an equivalence relation with infinitely many infinite classes and $T_2$ is the theory of infinite vector spaces over a fixed finite field $\F$, then $T^*_\cup$ has $\mathrm{TP}_2$, so $S^*_\cup$ is not $\acl$-closed.

\subsection{Multiple valuations}
\label{section:multi-val}
Let $T_\cap$ be the theory of algebraically closed fields and for each $i \in I$ let $T_i$ be the theory of an algebraically closed field with a nontrivial valuation $v_i$.
The $T^*_\cup$ exists and is the theory of an algebraically closed field with an independent family $(v_i)_{i \in I}$ of valuations.
It is well known that the theory of an algebraically closed valued field is $\NIP$.
Fact~\ref{fact:acf-val} is due to Johnson~\cite{Johnson-thesis}.

\begin{fact}
\label{fact:acf-val}
The theory $T^*_\cup$ has $\mathrm{NTP}_2$ and $\mathrm{IP}$.
\end{fact}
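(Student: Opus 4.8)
The first step is to make $T^*_\cup$ explicit. After Morleyizing (harmless by Remark~\ref{rem: robust}), each $T_i$ is $\mathrm{ACVF}$ in a language with quantifier elimination, hence model complete, so by Fact~\ref{fact: first Theorem} $T^*_\cup$ is the model companion of $T_\cup$, the theory of an algebraically closed field carrying the family $(v_i)_{i\in I}$ of nontrivial valuations. One checks that the existentially closed models are exactly the algebraically closed fields on which the $v_i$ are \emph{independent} (pairwise incomparable valuation rings, so that the approximation theorem applies): the approximation theorem supplies the required existential closedness, and quantifier elimination in each $\mathrm{ACVF}$-reduct handles the rest. Existence of $T^*_\cup$ is classical and does \emph{not} follow from Fact~\ref{fact:minh}, since $T_\cap=\mathrm{ACF}$ is not $\aleph_0$-categorical. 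Observe also that the hypotheses of the preservation results in Section~\ref{sec:analysis} and Section~\ref{sec:stablenip} fail here: $\acl_i$ is field-theoretic algebraic closure, which is far from $\dcl_\cap$, so a bespoke argument is needed and one should not expect stability or $\NIP$ to transfer.

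For the failure of $\NIP$ (assume $|I|\ge 2$; the case $|I|=1$ is $\mathrm{ACVF}$, which is $\NIP$) it suffices to exhibit one formula with $\mathrm{IP}$ in a monster model $\monster_\cup\models T^*_\cup$. The independence of the valuations makes the ball structures attached to $v_1$ and $v_2$ jointly unconstrained, and using the approximation theorem one can, for suitably chosen families of parameters, realize arbitrarily large independent incidence patterns of a single element with pairs of balls $\{x:v_1(x-y)\ge v_1(y')\}$ and $\{x:v_2(x-z)\ge v_2(z')\}$ whose centres and radii are themselves coded from the parameters. Since $\mathrm{ACVF}$ is $\NIP$, no such pattern can be produced within a single valuation, so the independence property genuinely exploits the fusion, in line with the heuristic that interpolative fusions introduce ``randomness''. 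Producing the precise formula and parameters is a direct, if slightly delicate, construction.

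The main work — and the main obstacle — is $\ntptw$, because $\ntptw$ is \emph{not} preserved by interpolative fusions in general: Proposition~\ref{prop:tp2} and Section~\ref{section:blow-up} show that typical fusions have $\tptw$, so the argument must use the geometry of valued fields rather than any general preservation principle. Since any formula mentions only finitely many $v_i$, and reducts of $\ntptw$ theories are $\ntptw$, one first reduces to finitely many valuations. The plan is then: (i) establish a sufficiently precise understanding of definable sets (a relative quantifier elimination down to the $\mathrm{ACVF}$-reducts together with a description of how the residue fields and value groups of the various $v_i$ sit inside the field); and (ii) show that any inp-pattern in $T^*_\cup$ can be localized into the reduct to a single $\mathrm{ACVF}$, where, that theory being $\NIP$ and hence of bounded burden, it cannot be a $\tptw$-pattern. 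The crux of (ii), and the genuine difficulty, is controlling the interaction of the valuations: one must rule out a pattern whose column-consistency direction is governed by one valuation while its row-inconsistency direction is governed by another, i.e. one needs burden to behave (sub)additively over the reducts $(v_i)$. This reflects a form of domination of the field by the value-group/residue-field data of each valuation, which is the technical heart of Johnson's argument.
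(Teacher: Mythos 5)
First, note that the paper does not prove this statement at all: it is labelled a Fact and attributed to Johnson's thesis, so there is no in-paper proof to match. Your write-up correctly identifies the content of $T^*_\cup$ (algebraically closed field with an independent family of valuations), correctly observes that existence does not come from Fact~\ref{fact:minh} and that the preservation machinery of Sections~\ref{sec:analysis} and~\ref{sec:stablenip} does not apply, and correctly reduces $\ntptw$ to finitely many valuations. But as a proof it has two genuine gaps.

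For $\mathrm{IP}$, the witness you describe cannot work. A formula of the form $v_1(x-y)\ge v_1(y')\wedge v_2(x-z)\ge v_2(z')$, recording the incidence of a \emph{single} element $x$ with a pair of balls, one for each valuation, is a conjunction of an $L_1$-formula and an $L_2$-formula in the same variable split; each conjunct is $\NIP$ because each reduct is a model of $\mathrm{ACVF}$, and a Boolean combination of $\NIP$ formulas is $\NIP$. Concretely, on any finite set $A$ the traces of $v_i$-balls form a laminar family of size $O(|A|)$, so pairwise intersections give only $O(|A|^2)$ traces and nothing is shattered. Any genuine $\mathrm{IP}$ formula here must involve quantification (compare Proposition~\ref{prop:relaclcomp}(2): definable sets in $T^*_\cup$ are bounded projections of intersections of $L_i$-sets, and it is the $\exists y$ that can break $\NIP$), so this step of your argument is not merely unfinished but pointed in a direction that provably fails.

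For $\ntptw$, what you give is a plan rather than a proof: the assertion that every inp-pattern can be ``localized'' into a single $\mathrm{ACVF}$-reduct, i.e.\ that burden behaves subadditively over the valuations, is exactly the hard content of Johnson's theorem, and you explicitly defer it. (Also, the inference ``$\NIP$ and hence of bounded burden'' is not valid in general; what you need is that $\mathrm{ACVF}$ is $\ntptw$, or better, dp-minimal.) Since the paper supplies no proof either, the honest conclusion is that your proposal is a reasonable roadmap to Johnson's result, with one concretely incorrect step ($\mathrm{IP}$) and one essential step asserted but not carried out ($\ntptw$).
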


\noindent
This witnesses sharpness of Proposition~\ref{prop:stabnip}.
Let $K$ be an algebraically closed field and $(v_i)_{i \in I}$ be an independent family of non-trivial valuations on $K$.
Each $(K;v_i)$ is $\NIP$ and algebraic closure in each $(K;v_i)$ agrees with algebraic closure in $K$, see for example \cite{lou-dimension}.
However, algebraic and definable closure do not agree in $K$.

\medskip\noindent
We discuss a seemingly similar structure.
Let $P$ be the set of primes.
Let $L_\cap$ be the language of abelian groups and $T_\cap$ be the theory of $(\zz;+)$.
For each $p \in P$ let $L_p$ be the expansion of $L_\cap$ by a binary relation $\elesub_p$ and let $T_p$ be the theory of $(\zz;+,\elesub_p)$ where $k \elesub_p k'$ if and only if the $p$-adic valuation of $k$ is less than or equal to the $p$-adic valuation of $k'$.
Alouf and d'Elb\'ee~\cite{AldE} prove Fact~\ref{fact:p-adic}.

\begin{fact}
\label{fact:p-adic}
The model companion $T^*_\cup$ of $T_\cup$ exists and agrees with the theory of $(\zz;+,(\elesub_p)_{p \in P})$.
Furthermore every $L_\cup$-formula is $T^*_\cup$-equivalent to a boolean combination of formulas from the various $L_p$.
\end{fact}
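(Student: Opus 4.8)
The plan is to recover Fact~\ref{fact:p-adic} from the machinery of Section~\ref{sec:analysis}. The statement has two parts: the existence of $T^*_\cup$ together with its identification as $\Th(\zz;+,(\elesub_p)_{p\in P})$, and the relative quantifier elimination down to Boolean combinations of $L_p$-formulas. The second part is the one that follows most directly from our results, so I would establish it by checking the hypotheses of Proposition~\ref{prop:relqe2} in the strengthened form of Remark~\ref{rem:qe2}: that $T_\cap$ admits a stationary and extendable independence relation, and that $\acl_p(A)=\dcl_\cap(A)$ for every set $A$ and every prime $p$. Proposition~\ref{prop:relqe2}(1) then yields exactly that every $L_\cup$-formula is $T^*_\cup$-equivalent to a Boolean combination of $L_p$-formulas. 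This is the feature distinguishing the present example from the multiple-valuations example of Section~\ref{section:multi-val}: there $\acl_i$ agrees with $\acl_\cap$ but not with $\dcl_\cap$, whereas here, because $(\zz;+)$ is torsion-free with $\acl_\cap=\dcl_\cap$, the algebraic closure collapses all the way to $\dcl_\cap$ in each reduct.

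To verify the two inputs: $T_\cap=\Th(\zz;+)$ is the theory of a module, hence stable, and it weakly eliminates imaginaries (a standard fact for modules), so by Proposition~\ref{prop:stationarity} forking independence $\eind[f]$ is a stationary independence relation in $T_\cap$ which is extendable to each $T_p$; thus $T_\cap$ admits a stationary and extendable independence relation with no further hypotheses on the $T_p$. For the algebraic closure computation, in the torsion-free group $(\zz;+)$ one has $\acl_\cap(A)=\dcl_\cap(A)=(\text{the }\qq\text{-span of }A)\cap M$, and I would argue that the preorder $\elesub_p$ adds nothing to definable closure, so $\acl_p(A)=\dcl_p(A)=\dcl_\cap(A)$; this is a consequence of the structure theory of the (dp-minimal) expansion $(\zz;+,\elesub_p)$ established in \cite{AldE}, in particular of its relative quantifier elimination in the single-valuation case. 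With these in hand the Boolean-combination clause is immediate.

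For the existence and identification, $(\zz;+,(\elesub_p)_{p\in P})$ is manifestly a model of $T_\cup$, and I would show it is interpolative. After Morleyizing each $T_p$ to obtain quantifier elimination, interpolativity is equivalent to existential closedness in the class of $T_\cup$-models by Fact~\ref{fact: first Theorem}, and this existential closedness is where the mutual independence of the $p$-adic valuations enters — concretely through the approximation theorem (the Chinese Remainder Theorem) — which is the substantive content supplied by \cite{AldE}. Once $T^*_\cup$ is known to exist, completeness follows from our framework: by the algebraic closure computation and Proposition~\ref{prop:relaclcomp}(1), $\acl_\cup(\emptyset)=\ccl(\emptyset)=\{0\}$, which carries a unique $L_\cup$-structure, so Proposition~\ref{prop:relaclcomp}(3) applied to $A=\{0\}$ gives $T^*_\cup\cup\bigcup_{p\in P}T_p\models\Th_{L_\cup(\{0\})}(\sM_\cup)$ for every $\sM_\cup\models T^*_\cup$, hence $T^*_\cup$ is complete. (This is the reasoning indicated after Corollary~\ref{cor:completeness}; Corollary~\ref{cor:completeness} itself does not apply, since $0$ is always algebraic over $\emptyset$.) As $(\zz;+,(\elesub_p)_{p\in P})$ realizes $T^*_\cup$, we conclude $T^*_\cup=\Th(\zz;+,(\elesub_p)_{p\in P})$.

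The main obstacle is the existence clause: the only general existence criterion recorded in this paper, Fact~\ref{fact:minh}, requires $T_\cap$ to be $\aleph_0$-categorical, which $\Th(\zz;+)$ is not, so existence of $T^*_\cup$ cannot be deduced from it and must instead be imported from \cite{AldE} (or, alternatively, from the more general existence machinery of \cite{firstpaper}). By contrast, once the routine but necessary facts that $\Th(\zz;+)$ is stable with weak elimination of imaginaries and that $\acl_p=\dcl_\cap$ on all sets are recorded, the relative quantifier elimination is a direct application of Remark~\ref{rem:qe2}, and completeness follows from Proposition~\ref{prop:relaclcomp} as above.
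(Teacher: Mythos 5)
The paper offers no proof of Fact~\ref{fact:p-adic}: it is imported wholesale from Alouf and d'Elb\'ee \cite{AldE}, and the surrounding text only remarks afterwards that the hypotheses of Proposition~\ref{prop:stabnip} are satisfied. So your attempt to rederive the relative quantifier elimination internally from Proposition~\ref{prop:relqe2}/Remark~\ref{rem:qe2} is a genuinely different route — but it has a concrete gap at its first step. The claim that $T_\cap=\Th(\zz;+)$ weakly eliminates imaginaries is false, and it is not ``a standard fact for modules'': a theory of modules only eliminates imaginaries after adding sorts for quotients by pp-definable subgroups, and for $(\zz;+)$ the quotients $\monster/n\monster\cong\zz/n\zz$ are genuinely needed. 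In the pure group language $\acl(\emptyset)=\{0\}$ in the real sort, while the coset $a+n\monster$ is an element of $\acl^{\eq}(\emptyset)$ not fixed by $x\mapsto -x$, so it admits no weak code. Equivalently (by the criterion from \cite{CF} quoted in Appendix~\ref{app:stationary}), $\eind[f]$ fails stationarity over real algebraically closed sets: pick $b\notin 3\monster$ and $a$ with $a-b$ divisible by every $n$ and $a\notin\qq b$. Then $\tp(a/\emptyset)=\tp(-a/\emptyset)$ via $x\mapsto -x$, both $a\eind[f]_{\emptyset}b$ and $-a\eind[f]_{\emptyset}b$ (the only pp-definable subgroups are the finite-index subgroups $n\monster$, whose cosets do not divide), yet $\tp(a/b)\neq\tp(-a/b)$ since $a\equiv b\pmod{3\monster}$ while $-a-b\equiv b\not\equiv 0 \pmod{3\monster}$. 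Hence Proposition~\ref{prop:stationarity} does not deliver the stationary and extendable independence relation your argument requires, and the hypothesis of Remark~\ref{rem:qe2} is unverified; the same missing hypothesis undercuts your appeal to Proposition~\ref{prop:relaclcomp} for completeness.

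The gap is repairable, but not for free. The natural fix is to expand all languages by a constant for $1$ (harmless for interpolativity by Remark~\ref{rem: robust}(1)) or to add the sorts $\zz/n\zz$ as in Remark~\ref{rem: robust}(4); over the enlarged $\acl$-closed base the cosets of $n\monster$ become $\emptyset$-definable, the counterexample above disappears, and stationarity of $\eind[f]$ over algebraically closed sets can be checked directly. Note, however, that this yields only that every $L_\cup$-formula is equivalent to a Boolean combination of $L_p$-formulas \emph{with the parameter $1$} (the element $1$ is not $\emptyset$-definable in any $(\zz;+,\elesub_p)$, as $x\mapsto -x$ is an automorphism), which is slightly weaker than the literal statement of the Fact. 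The remainder of your outline is sound as bookkeeping: you are right that Fact~\ref{fact:minh} cannot supply existence because $\Th(\zz;+)$ is not $\aleph_0$-categorical, so existence must in any case be imported from \cite{AldE} — which is exactly what the paper does for the entire statement.
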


\noindent
It is also shown in \cite{AldE} that each $T_p$ is $\NIP$ and furthermore that $T^*_\cup$ is $\NIP$.
Note that that the second claim follows from the first claim and Fact~\ref{fact:p-adic} as a boolean combination of $\NIP$ formulas is $\NIP$.

\medskip\noindent
This is a natural example where the conditions of Proposition~\ref{prop:stabnip} are satisfied.
Algebraic closure in each $(\zz;+,\elesub_p)$ agrees with algebraic closure in $(\zz;+)$~\cite{AldE} and it is well known that algebraic and definable  closure agree in $(\zz;+)$.

\subsection{Cyclic multiplicative orders on $\F$.}
\label{section:cyclic}
We now describe the original motivating example of a fusion.
Let $G$ be an abelian group.
A \textbf{cyclic group order} on $G$ is a ternary relation $C$ such that for all $a,b,c,d \in G$
\begin{enumerate}
\item $C(a,b,c)$ implies $C(b,c,a)$,
\item $C(a,b,c)$ implies $\neg C(c,b,a)$,
\item $C(a,b,c)$ and $C(a,c,d)$ implies $C(a,b,d)$,
\item if $a,b,c$ are distinct then either $C(a,b,c)$ or $C(c,b,a)$,
\item $C(a,b,c)$ implies $C(da,db,dc)$.
\end{enumerate}
$(1) - (4)$ are the usual axioms of a cyclic order on a set.
Let $(F;+,\times)$ be a field.
A \textbf{cyclic multiplicative order} on $(F;+,\times)$ is a ternary relation $C$ on $F$ such that

\begin{enumerate}
\setcounter{enumi}{5}
\item $C(a,b,c)$ implies $a,b,c \in F^\times$ for all $a,b,c \in F$, and
\item the restriction of $C$ to $F^\times$ is a cyclic group order on $(F^\times;\times)$.
\end{enumerate}
We are primarily interested in the case when $F$ is an algebraic closure of a finite field.
In this case any cyclic multiplicative order on $(F;+,\times)$ is the pullback of the usual cyclic group order on $(\mathbb{R}/\mathbb{Z}; +)$ via an injective character $(F^\times;\times) \to (\mathbb{R}/\mathbb{Z}; +)$.

\medskip\noindent
Let $L_\cap$ be the language containing a binary function symbol $\times$ and $T_\cap$ be the $L_\cap$ theory of $(F;\times)$, where $(F;+,\times)$ is an algebraically closed field.
(Note that $T_\cap$ is bi-interpretable with the theory of the multiplicative group of an algebraically closed field.)
Let $L_1$ be the expansion of $L_\cap$ by a binary function symbol $+$ and $T_1$ be $\mathrm{ACF}$.
Let $L_2$ be the expansion of $L_\cap$ by a ternary function symbol $C$ and $T_2$ be the $L_2$-theory such that $(F;\times,C) \models T_2$ if and only if $(F;\times) \models T_\cap$, $C(a,b,c)$ implies $a,b,c \in F^\times$ for all $a,b,c \in F$, and the restriction of $C$ to $F^\times$ is a cyclic group order on $(F^\times;\times)$.
Fact~\ref{fact:cyclic-1} is essentially proven in \cite{Minh-1}.

\begin{fact}
\label{fact:cyclic-1}
The fusion $T^*_\cup$ exists and agrees the theory of an algebraic closure of a finite field equipped with a cyclic multiplicative group order.
\end{fact}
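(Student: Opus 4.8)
The plan is to realize $T^*_\cup$ as the model companion of $T_\cup$ and then identify explicitly the models of this model companion with algebraically closed fields carrying a cyclic multiplicative group order, proceeding by the now-standard back-and-forth/amalgamation argument. First I would record that $T_\cap = \Th(F;\times)$ for $F$ an algebraically closed field is $\aleph_0$-stable: indeed $(F;\times) \cong (\mu_\infty \times \bigoplus_{\mathfrak{p}} \qq_{\mathfrak{p}})$-type structure, and more to the point it is interpretable in (bi-interpretable with) the multiplicative group of an algebraically closed field, which is $\omega$-stable of finite Morley rank; hence $T_\cap$ is $\aleph_0$-stable and weakly eliminates imaginaries. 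I would then check the hypotheses of Fact~\ref{fact:minh}: $T_1 = \mathrm{ACF}$ eliminates $\exists^\infty$ (it is strongly minimal), and $T_2$ — the theory of $(F;\times,C)$ with $C$ a cyclic group order on $F^\times$ and empty off $F^\times$ — also eliminates $\exists^\infty$ (one checks that in a model every definable set is finite or infinite, using that a cyclic order on a divisible group is a "generic" structure with QE in the Fra\"iss\'e sense after Morleyization, or more simply that $T_2$ is interpretable in a suitable o-minimal-like or $\aleph_0$-categorical reduct). This gives existence of $T^*_\cup$; alternatively one quotes Fact~\ref{fact:minh}(2) once weak elimination of imaginaries for $T_\cap$ is in hand. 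Actually, since $T_\cap$ need not be $\aleph_0$-categorical here, one must be a little careful: one should instead invoke the relevant existence result from \cite{firstpaper} directly, or argue that $T_\cap$, being stable with w.e.i. and the $T_i$ eliminating $\exists^\infty$, falls under the existence machinery there; I would cite \cite{Minh-1} for the concrete verification.

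Next, having $T^*_\cup$ in hand, I would argue that its models are exactly the algebraically closed fields equipped with a cyclic multiplicative group order, equivalently the structures $(F;+,\times,C)$ where $(F;+,\times)\models\mathrm{ACF}$ and $C$ is a cyclic multiplicative order. By Fact~\ref{fact: first Theorem} (after Morleyizing $T_1$ and $T_2$, which are already model-complete: $\mathrm{ACF}$ has QE, and $T_2$ has QE after Morleyization), $T^*_\cup$ is the model companion of $T_\cup$, and $T_\cup$ is precisely the theory asserting $(F;+,\times)\models\mathrm{ACF}$ and that $C$ restricts to a cyclic group order on $(F^\times;\times)$. So it remains to identify the existentially closed models. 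One shows: (i) every model of $T_\cup$ embeds into an existentially closed one, which is automatic; and (ii) a model $(F;+,\times,C)$ of $T_\cup$ is existentially closed if and only if $F$ is algebraically closed and $C$ is "maximally generic" in the appropriate sense. Here the key point is that over an algebraically closed field the relation $C$ is \emph{already axiomatized} by the group-order axioms plus genericity: there is no extra freedom because $F^\times$ is divisible, so the generic cyclic order is the unique (up to automorphism) dense cyclic order on a divisible abelian group of the right size. Concretely, when $\chara F = p > 0$ and $F = \overline{\mathbb{F}_p}$, every cyclic multiplicative order is the pullback of the standard order on $\rr/\zz$ along an embedding $F^\times \hookrightarrow \rr/\zz$, and all such embeddings are conjugate under $\Aut(F)$; this pins down the theory.

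The concrete identification would go via a Fra\"iss\'e-style analysis: the class of finite partial data $(A, \times\!\!\restriction_A, C\!\!\restriction_A)$ extending to models of $T_\cup$ has amalgamation (one freely amalgamates the field parts and then extends the cyclic order, using that a cyclic partial order on a subset of a divisible group extends to any larger subset — this is the analogue of the extension property for dense linear orders on $\qq$-vector spaces). From amalgamation one gets that $T_\cup$ has a model companion whose models are the ones realizing all the one-point extension axioms, and these are readily seen to be exactly the algebraically closed fields with a cyclic multiplicative group order. Since $T_2$-reducts of such structures are all elementarily equivalent (a $T_2$-model is determined by the isomorphism type of the divisible abelian group $F^\times$ together with the generic cyclic order, and for fixed characteristic the group $F^\times$ is determined), and similarly $T_1$-reducts are all elementarily equivalent ($\mathrm{ACF}_p$ for fixed $p$ — though completeness of $T^*_\cup$ up to characteristic should be stated carefully), one concludes the description.

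The main obstacle I expect is step (ii): proving that an existentially closed model of $T_\cup$ must have $C$ of the claimed "standard pullback" form, i.e. that no exotic cyclic multiplicative orders survive in e.c.\ models. This requires the extension lemma for cyclic group orders on divisible abelian groups — that any finite partial specification of a cyclic order on a divisible abelian group $G$, consistent with the group-order axioms, is realized, and that the resulting structure is unique up to automorphism of $G$ (homogeneity). This is where the divisibility of $F^\times$ is essential and where a genuine (though routine for experts) combinatorial argument is needed; over a non-divisible group the analogous statement fails, so one cannot black-box it. Everything else — existence via Fact~\ref{fact:minh}, model-completeness via Fact~\ref{fact: first Theorem}, and the bi-interpretation bookkeeping — is mechanical, and the detailed verification is carried out in \cite{Minh-1}, which I would cite for the parts not reproved here.
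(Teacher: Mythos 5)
The paper does not actually prove this statement: it is labeled a \emph{Fact} and the text immediately following it says it is ``essentially proven in \cite{Minh-1}''. So the relevant comparison is between your sketch and the argument of \cite{Minh-1}, and there your plan has a genuine gap. The hard content of the fact is not the existence of the model companion but the identification of $T^*_\cup$ with the theory of the \emph{concrete} structure $(\overline{\mathbb{F}}_p;+,\times,C)$, where $C$ is the pullback of the standard cyclic order on $\rr/\zz$ along an injective character. To prove that, one must show this particular structure is interpolative (equivalently, existentially closed in $T_\cup$-models): given an $\ACF$-definable set (say, a variety $V$) and a $C$-definable condition (a system of arcs constraining the coordinates multiplicatively) whose intersection is nonempty in some extension, one must find an actual point of $V(\overline{\mathbb{F}}_p)$ in the prescribed arcs. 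This is an equidistribution statement about points of varieties over finite fields with respect to multiplicative characters, and the proof in \cite{Minh-1} goes through Weil's bounds on character sums. Your reduction of ``step (ii)'' to an extension/homogeneity lemma for cyclic orders on divisible abelian groups cannot substitute for this: that lemma would show that \emph{some} cyclic order on $\overline{\mathbb{F}}_p^\times$ interacts generically with the field structure, or that the class of e.c.\ models is axiomatizable, but not that the standard character-pullback order does. A \Fraisse-style amalgamation of ``finite partial data'' likewise only yields an abstract generic model, not the named one.

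Two smaller points. First, your existence step via Fact~\ref{fact:minh} does not apply as stated, since $T_\cap=\Th(F;\times)$ is not $\aleph_0$-categorical (the torsion of $F^\times$ is unbounded); you noticed this but only gestured at ``the existence machinery'', whereas in fact existence here again comes out of the explicit axiomatization in \cite{Minh-1}. Second, your claim that all injective characters $F^\times\into \rr/\zz$ are conjugate under $\Aut(F)$ is false ($\Aut(\overline{\mathbb{F}}_p)\cong\widehat{\zz}$ is far smaller than $\Aut(F^\times)$); that the resulting structures are all elementarily equivalent is itself part of what the theorem asserts, not something you may assume to ``pin down the theory''.
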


\noindent
It is easy to see that $T_\cap$ is stable and has weak elimination of imaginaries.
Fact~\ref{fact:cyclic-2} follows by Corollary~\ref{cor:bclpres}.
Fact~\ref{fact:cyclic-2} is also proven in \cite{Minh-1}.

\begin{fact}
\label{fact:cyclic-2}
Every $L_\cup$-formula $\psi(x)$ is $T^*_\cup$-equivalent to a a finite disjunction of b.e.\ formulas of the form $\exists y [\varphi_1(x,y) \land \varphi_2(x,y)] $ where $\varphi_i$ is an $L_i$-formula for $i \in \{1,2\}$.
\end{fact}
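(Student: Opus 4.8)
The plan is to derive Fact~\ref{fact:cyclic-2} directly from Corollary~\ref{cor:bclpres}, applied to the family $(T_i)_{i\in\{1,2\}}$ over $T_\cap$; almost all of the work is in checking that corollary's two hypotheses hold in the present situation, and then cleaning up the exact shape of the disjuncts. Recall that $T^*_\cup$ exists by Fact~\ref{fact:cyclic-1}, so we are indeed in the setting of Section~\ref{sec:analysis}.

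First I would verify that $T_\cap$ admits a stationary and extendable independence relation. As already noted, $T_\cap$ is stable and weakly eliminates imaginaries, so Proposition~\ref{prop:stationarity} applies: forking independence $\eind[f]$ in $T_\cap$ is a stationary independence relation that is extendable to any completion of $T_1$ or $T_2$ extending a given completion of $T_\cap$. Hence $T_\cap$ admits a stationary and extendable independence relation in the sense required by Corollary~\ref{cor:bclpres}.

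Next I would check that each $T_i$ is $\bcl$-complete. For $T_1=\mathrm{ACF}$ this is immediate, since $\mathrm{ACF}$ has quantifier elimination, hence is substructure-complete, hence $\bcl$-complete by the chain of implications recorded just after Theorem~\ref{thm:bclacl}. For $T_2$ one must examine the model theory of the multiplicative group of an algebraically closed field together with a cyclic group order $C$ on $F^\times$ (with $C$ trivially excluding the element outside $F^\times$); the relevant quantifier-elimination statement — essentially quantifier elimination for a divisible cyclically ordered abelian group, cf.\ \cite{Minh-1} — yields $\bcl$-completeness of $T_2$, again via Theorem~\ref{thm:bclacl}. I expect this $\bcl$-completeness of $T_2$ to be the main obstacle: it is the only genuinely example-specific input, whereas everything else is formal.

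With both hypotheses in hand, Corollary~\ref{cor:bclpres} gives that every $L_\cup$-formula $\psi(x)$ is $T^*_\cup$-equivalent to a finite disjunction of b.e.\ formulas $\exists y\,\bigwedge_{i\in J}\varphi_i(x,y)$, where $J\subseteq\{1,2\}$ and each $\varphi_i(x,y)$ is a flat $L_i$-formula. It then remains to rewrite each such disjunct in the stated form: a flat $L_i$-formula is in particular an $L_i$-formula, and if $1\notin J$ (respectively $2\notin J$) I would insert a trivially true flat $L_\cap$-conjunct as $\varphi_1$ (respectively $\varphi_2$), e.g.\ an instance of $u=u$ in a variable already present, which leaves the matrix quantifier-free and bounded in $y$, hence still b.e. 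The resulting disjuncts all have the form $\exists y\,[\varphi_1(x,y)\wedge\varphi_2(x,y)]$ with $\varphi_i$ an $L_i$-formula, which is exactly the asserted conclusion.
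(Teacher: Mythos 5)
Your overall route is the paper's: check that $T_\cap$ is stable with weak elimination of imaginaries, get a stationary and extendable independence relation from Proposition~\ref{prop:stationarity}, and then invoke the $\bcl$-machinery of Section~\ref{sec:analysis}; the final padding step (inserting a tautological conjunct when $J\subsetneq\{1,2\}$) is fine. The gap is the step you yourself flag: you never actually establish $\bcl$-completeness of $T_2$, you only gesture at a quantifier-elimination statement for divisible cyclically ordered abelian groups that would yield it via Theorem~\ref{thm:bclacl}. As written, that is an unproved example-specific claim sitting at the heart of the argument, so the proof is incomplete.

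The fix is that this step is not needed at all. Since the statement of Fact~\ref{fact:cyclic-2} only requires each $\varphi_i(x,y)$ to be an $L_i$-formula (not a flat or quantifier-free one) with the conjunction bounded in $y$, you should apply Proposition~\ref{prop:relaclcomp}(2) rather than Corollary~\ref{cor:bclpres} directly. Proposition~\ref{prop:relaclcomp} is exactly the Morleyized avatar of Corollary~\ref{cor:bclpres}: after Morleyizing, each $T_i^\diamondsuit$ has quantifier elimination and is therefore automatically $\bcl$-complete, so the only hypothesis to check is the one on $T_\cap$, and the $L_i^\diamondsuit$-formulas in the output translate back to $L_i$-formulas. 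With that substitution your argument closes up and coincides with the paper's (one-line) proof; the only genuinely example-specific input is then stability and weak elimination of imaginaries for the multiplicative reduct $T_\cap$, which you do address.
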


\noindent
Note that $T$ is $\SOP$.
An argument similar to the proof of Proposition~\ref{prop:tp2} shows that $T$ has $\mathrm{TP}_2$.
So $T$ does not satisfy any known classification theoretic property.
However, $T_\cap$ is strongly minimal, $T_1$ and $T_2$ are both $\NIP$, and we have a reasonable description of $T$-definable sets in terms of $T_1$- and $T_2$-definable sets.
So we conjecture that $T$ satisfies some as-yet-undiscovered classification theoretic property.
More generally, we make the following vague conjecture.
Recall that a set system $(X;\mathcal{S})$ consists of a set $X$ and a family $\mathcal{S}$ of subsets of $X$.

\begin{v-conj}
There is a classification-theoretic property $\bigstar$ such that:
\begin{enumerate}
\item there is a property $P$ of set systems such that $T$ is $\bigstar$ if and only if \\ $(M^x ; \{ \delta(M^x,b) : b \in M^y\})$ has $P$ for every formula $\delta(x,y)$ and $\sM \models T$,
\item both $\NSOP_1$ and $\mathrm{NTP}_2$ imply $\bigstar$,
\item $\NSOP_1$ is equivalent to the conjunction of $\bigstar$ and $\NSOP$,
\item $\bigstar$ is preserved by fusions under reasonably general conditions, and
\item any fusion of $\NIP$ theories over a stable base is $\bigstar$.
\end{enumerate}
\end{v-conj}

\noindent
We describe two other examples of theories which are $\SOP$ and $\mathrm{TP}_2$, and should have $\bigstar$.
One example is the model companion of the theory of a real closed ordered field $(R;+,\times,<)$ equipped with a subgroup of $(R^\times;\times)$, this theory was shown to exist in \cite{alexi-subgroup}.
The second example requires slightly more explanation.
Let $\mathbb{F}$ be a field.
Granger studied the theory of an $\mathbb{F}$-vector space equipped with a generic symmetric bilinear form~\cite{Granger-thesis}.
This theory always has $\mathrm{TP}_2$.
When $\mathbb{F}$ is algebraically closed this theory is $\NSOP_1$~\cite{CR}, we expect that it is $\bigstar$ when $\mathbb{F}$ is real closed.

\section{Dense linear orders are indecomposable}
\label{section:dlo}
\noindent
We have given many examples of theories which ``decompose" as fusions.
We finish with an example of a theory which does not.
There should be many such examples, but it appears difficult to establish this.
Let $T$ be a complete $L$-theory.
We say that $T$ is \textbf{interpolatively decomposable} if there are $(T_i)_{i \in I}$ such that $T^*_\cup$ exists, $T$ is bi-interpretable with $T^*_\cup$, and $T$ is not interpretable in any $T_i$.
For example an unstable theory which is bi-interpretable with a fusion of a family of stable theories is interpolatively decomposable.
We say that $T$ is \textbf{interpolatively indecomposable} if it is not interpolatively decomposable.
We expect that $\ACF$ and $\mathrm{RCF}$ are interpolatively indecomposable, but we do not have a proof at present.
We do not even have a proof that the theory of $(\nn,<)$ is interpolatively indecomposable.

\begin{thm}
\label{thm:dlo}
The theory $\mathrm{DLO}$ of dense linear orders without endpoints is interpolatively indecomposable.
\end{thm}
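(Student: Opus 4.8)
Suppose toward a contradiction that $\mathrm{DLO}$ is interpolatively decomposable, witnessed by a family $(T_i)_{i\in I}$ with interpolative fusion $T^*_\cup$ bi-interpretable with $\mathrm{DLO}$, where $\mathrm{DLO}$ is not interpretable in any $T_i$. The plan is to exploit the fact that $\mathrm{DLO}$ is $\NIP$ of dp-rank $1$, together with the structure of definable sets in $T^*_\cup$ provided by Proposition~\ref{prop:relmc}, to derive a contradiction with the failure of interpretability of $\mathrm{DLO}$ in any single $T_i$. Since $\mathrm{DLO}$ is $\NIP$, so is $T^*_\cup$, and hence each reduct $T_i$ is $\NIP$ as well. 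First I would fix a monster model $\monster_\cup \models T^*_\cup$ (after Morleyizing, so that each $T_i$ is model complete) and consider the linear order $<$ as interpreted on some definable set $X$ inside $\monster_\cup$.

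The key idea is the following: in $\monster_\cup$, by Proposition~\ref{prop:relmc}(1), every $L_\cup$-formula $\psi(x)$ is $T^*_\cup$-equivalent to a finite disjunction of formulas $\exists y\,\bigwedge_{i\in J}\varphi_i(x,y)$ with $J$ finite and $\varphi_i$ an $L_i$-formula. The crucial structural input for $\mathrm{DLO}$ is that it has \emph{weakly o-minimal}-like behavior: every definable subset of the (unique, up to the interpretation) linear order in one variable is a finite union of convex sets, equivalently every definable family of subsets of $X$ has finite VC-density, and more to the point, $\mathrm{DLO}$ is \emph{distal} and dp-minimal. I would try to run a density/indiscernibility argument: take a long $<$-increasing sequence $(a_t)_{t<\omega}$ in $X$; for each $i$, this is an $L_i$-indiscernible sequence (after passing to a subsequence, using Ramsey and the finiteness of $I$, we can make it simultaneously $L_i$-indiscernible for all $i\in I$, hence $L_\cup$-indiscernible). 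Now the order $<$ must be definable by an $L_\cup$-formula, which by the above is an existential combination of $L_i$-formulas over some finite parameter tuple $y$; quantifying the witness $y$ away, the order type on $(a_t)$ should be "seen" through finitely many $L_i$-formulas with parameters. The point to push is that $<$ restricted to a single variable each side has infinitely many alternations, whereas an indiscernible sequence forces any fixed $L_i$-formula to have a bounded (in fact essentially one) alternation pattern relative to the index, and reconstructing a genuine dense order requires the full strength of one of the $T_i$ to already interpret $\mathrm{DLO}$.

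More precisely, here is the route I expect to work. Using Proposition~\ref{prop:relqe2}-style ideas is too strong (we have no $\acl$ hypotheses), so instead I would argue semantically via automorphisms. Let $X$ be the definable set carrying the interpreted copy of $(\qq,<)$ inside $\monster_\cup$, with order $<$. Consider the reducts $\monster_i$. If for some $i$ the relation $<$ on $X$ were $\monster_i$-definable (with parameters), then $(X,<)$ with its induced structure would be interpretable in $T_i$; since $\mathrm{DLO}$ eliminates imaginaries and is bi-interpretable with $(X,<)$, we would get $\mathrm{DLO}$ interpretable in $T_i$, a contradiction. So $<$ is not $\monster_i$-definable for any $i$. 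Now I would use the fact that $\mathrm{Aut}(\monster_\cup)$ is $2$-transitive-ish on the order — actually $\mathrm{DLO}$ is $o$-minimal-complete and highly homogeneous: for any two finite increasing tuples of the same length from $X$ there is an automorphism matching them. The hard part — and I expect this to be the main obstacle — is to convert "$<$ is not definable in any single reduct" plus "the $L_\cup$-definition of $<$ decomposes as a bounded existential combination of $L_i$-formulas" into an outright contradiction. One promising mechanism: pick the E$\flat$/flat normal form from Fact~\ref{fact:eflat} and Remark~\ref{rem:flat}, so $<$ is $T^*_\cup$-equivalent to $\bigvee_k \exists y_k\,\bigwedge_{i\in J_k}\varphi_{i,k}(u,v,y_k)$ with the conjunction bounded in $y_k$; fixing elements $u<v$, the witness $y_k$ lies in $\bcl_\cup(uv)$, hence (by the bound and a Ramsey/indiscernibility argument on an increasing sequence) is forced to be "trivial" on a long increasing indiscernible sequence. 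On such a sequence the order would then be captured by finitely many $L_i(\text{small parameters})$-formulas $\varphi_i(u,v)$, whose disjunction over $i$ defines $<$ up to finitely many exceptions on the sequence; averaging / honest-definitions for $\NIP$ then upgrades this to: $<$ agrees on a dense subset with an $L_i$-definable relation for a single $i$ (choosing $i$ realizing the "order" alternation, which exists since $<$ has infinitely many alternations but each finite union of $L_j$-convex-ish sets has boundedly many — here I invoke that $\mathrm{DLO}$-reducts, being stable or at least $\NIP$ of lower "complexity", cannot each carry the order). From this density agreement, homogeneity of $\mathrm{DLO}$ lets me transport the $L_i$-definition everywhere, making $<$ $\monster_i$-definable, contradiction. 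I would organize the write-up as: (1) reduce to Morleyized, model-complete $T_i$ and fix $\monster_\cup$; (2) record that each $T_i$ is $\NIP$ and that $<$ is $\monster_i$-definable for no $i$; (3) extract a simultaneously $L_i$-indiscernible increasing sequence and apply the normal form of Proposition~\ref{prop:relmc}(1) / Remark~\ref{rem:flat} to trivialize witnesses along it; (4) use $\NIP$ (honest definitions / VC-density) and the bounded-alternation of any fixed $L_i$-formula along an indiscernible sequence to pin the order to a single reduct on a dense set; (5) conclude by homogeneity of $\mathrm{DLO}$ that $<$ is $\monster_i$-definable, the desired contradiction.
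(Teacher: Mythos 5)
There is a genuine gap, and your proof does not follow the paper's route. The paper's argument is short and rests on two external inputs you never invoke: since $\mathrm{DLO}$ is bi-interpretable with $T^*_\cup$, each $T_i$ is interpretable in $\mathrm{DLO}$ and hence is $\aleph_0$-categorical and $\NIP$; by Corollary~\ref{cor:nip} (which is a consequence of the $\NSOP_1$ preservation theorem: a fusion of stable theories is $\NSOP_1$, so an $\NIP$ fusion of stable theories is stable) some $T_j$ must be unstable, since $\mathrm{DLO}$ is unstable; and then Simon's theorem (Fact~\ref{fact:pierre}) says that an $\aleph_0$-categorical, unstable, $\NIP$ theory interprets $\mathrm{DLO}$, contradicting indecomposability. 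You never use $\aleph_0$-categoricity, which is exactly the hypothesis that converts ``unstable $\NIP$ reduct'' into ``interprets a dense linear order.''

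The concrete gap in your argument is step (4): there is no mechanism that forces $<$ to agree (even on a dense set) with an $L_i$-definable relation for a \emph{single} $i$. The whole point of interpolative fusions is that relations definable in $T^*_\cup$ typically are not definable in any single reduct --- the edge relation of the random graph in Section~\ref{section:hypergraph} is exactly such an example, obtained as an interaction of two reducts neither of which defines it. Your intended contradiction ``$<$ is $\monster_i$-definable'' therefore cannot be reached by a normal-form-plus-indiscernibility argument alone; the obstruction to decomposing $\mathrm{DLO}$ has to be classification-theoretic, not syntactic. Two further problems: Proposition~\ref{prop:relmc}(1) gives no bound on the existential witnesses $y$, so your claim that the witness lies in $\bcl_\cup(uv)$ requires Proposition~\ref{prop:relaclcomp}, which needs $T_\cap$ to admit a stationary and extendable independence relation --- a hypothesis you do not have; and your Ramsey extraction of a simultaneously $L_i$-indiscernible sequence uses finiteness of $I$, which is not assumed in the definition of interpolative decomposability. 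The assertion that each fixed $L_i$-formula has ``bounded alternation'' along the sequence, while $<$ has infinitely many, is also unsupported: the $T_i$ are arbitrary $\NIP$ reducts and nothing prevents a single $L_i$-formula from alternating arbitrarily with suitable parameters.
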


\noindent
We will use a deep result on $\NIP$ structures.
Suppose that $T$ is $\aleph_0$-categorical, unstable, and $\NIP$.
Simon~\cite{Pierre2} shows that $T$ interprets an infinite linear order.
While this is not explicitly stated in that paper, the proof shows that $T$ interprets a dense linear order.
(It is also not difficult to show directly that an infinite $\aleph_0$-categorical linear order interprets a dense linear order.)
So we have Fact~\ref{fact:pierre}.

\begin{fact}
\label{fact:pierre}
If $T$ is $\aleph_0$-categorical, $\NIP$, and unstable, then $T$ interprets $\mathrm{DLO}$.
\end{fact}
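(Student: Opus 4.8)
The plan is to reduce Fact~\ref{fact:pierre} to the one deep structural input of Simon and then finish with an elementary argument about linear orders. The part I would cite from Simon~\cite{Pierre2}, rather than reprove, is that an $\aleph_0$-categorical, unstable, $\NIP$ theory $T$ interprets an \emph{infinite linear order} $(X,\leq)$, with $X$ and $\leq$ both $\emptyset$-interpretable. This is the genuinely hard ingredient, and it is precisely the content of~\cite{Pierre2}; the remaining work is to upgrade an arbitrary infinite $\aleph_0$-categorical linear order to an interpreted copy of $\mathrm{DLO}$ without endpoints.

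Granting the interpreted order $(X,\leq)$, I would first note that, being interpreted in the $\aleph_0$-categorical theory $T$, the structure $(X,\leq)$ is itself $\aleph_0$-categorical: an interpretation turns the oligomorphic action of $\Aut(\monster)$ on $T$ into an oligomorphic action on each interpreted sort, so $(X,\leq)$ has finitely many $n$-types for each $n$. By Ryll--Nardzewski there are in particular finitely many $2$-types, hence a uniform bound $N$ on the size of every \emph{finite} open interval $(a,b) = \{x : a<x<b\}$ that occurs. Consequently the relation $a \mathbin{E} b$, defined by "$a=b$, or the open interval between $a$ and $b$ has at most $N$ elements," is $\emptyset$-definable and captures exactly the property that the closed interval between $a$ and $b$ is finite.

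I would then verify the routine facts that $E$ is an equivalence relation with convex classes (transitivity holds because a union of two overlapping finite intervals is finite), and that each $E$-class is finite of size at most $N+2$ (a chain $c_0<\dots<c_m$ inside one class forces a closed interval of size $\geq m+1$, whence $m\leq N+1$). Since $X$ is infinite, the induced linear order $(X/E,\leq)$ on the interpreted quotient sort is infinite. It is moreover dense: if $[a]<[b]$, then $\neg(a\mathbin{E}b)$ makes $[a,b]$ infinite, while $[a]$ and $[b]$ are finite, so $(a,b)$ contains some $c$ with $[c]\neq[a],[b]$, giving $[a]<[c]<[b]$. Finally, should $X/E$ possess a least or greatest element, these are $\emptyset$-definable points, and deleting them leaves an infinite dense linear order without endpoints, still interpreted in $T$. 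This produces an interpretation of $\mathrm{DLO}$ in $T$.

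The only serious obstacle is Simon's theorem supplying the infinite interpreted linear order, which I would treat as a cited black box since it is exactly the statement of~\cite{Pierre2}. Everything afterward is elementary, and the single point needing care is the definability of $E$, which is precisely where $\aleph_0$-categoricity enters through the finiteness and boundedness of $2$-types.
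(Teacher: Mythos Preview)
Your proposal is correct and follows essentially the same approach as the paper: cite Simon's theorem to obtain an infinite interpreted linear order, then upgrade to $\mathrm{DLO}$. The paper merely asserts that ``it is also not difficult to show directly that an infinite $\aleph_0$-categorical linear order interprets a dense linear order,'' whereas you supply the elementary quotient-by-finite-intervals argument that the paper leaves to the reader.
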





\noindent
We now prove Theorem~\ref{thm:dlo}.

\begin{proof}
Suppose that $\mathrm{DLO}$ is bi-interpretable with $T^*_\cup$.
So each $T_i$ is interpretable in $\mathrm{DLO}$.
By Corollary~\ref{cor:nip} some $T_j$ is unstable.
Note that $T_j$ is $\aleph_0$-categorical and $\NIP$ as $\mathrm{DLO}$ is $\aleph_0$-categorical and $\NIP$.
Apply Fact~\ref{fact:pierre}.
\end{proof}

\noindent
The most basic example of an unstable $\NSOP$ theory (the random graph) decomposes as a fusion of two stable theories and the most basic example of an unstable $\NIP$ theory ($\mathrm{DLO}$) is indecomposable. 

\medskip\noindent
Finally we let $\mathrm{ACF}_p$ be the theory of algebraically closed fields of characteristic $p$ (possibly $p = 0$) and sketch a heuristic argument that $\mathrm{ACF}_p$ is interpolatively indecomposable.
Suppose that $\mathrm{ACF}_k$ is bi-interpretable with $T^*_\cup$.
So each $T_i$ is interpretable in $\mathrm{ACF}_p$.
It is a conjecture of Zil'ber that any structure interpretable in an algebraically closed field is either one-based or interprets an infinite field.
Any infinite field interpretable in an algebraically closed field $K$ is definably isomorphic to $K$~\cite{bouscaren}.
We expect that a fusion of a family of one-based theories cannot interpret an infinite field, but we do not have a proof.

\appendix

\section{$\bcl$-completeness }\label{app:Kcomp}

\noindent Recall the definitions of b.e.\ formula and $\bcl$ from Section~\ref{sec:Kcomplete}. 

\begin{rem}\label{rem:beconj}
The class of b.e.\ formulas is closed (up to $T$-equivalence) under conjunction: if $\exists y_1\, \psi_1(x,y_1)$ and $\exists y_2\, \psi_2(x,y_2)$ are b.e.\ with bounds $k_1$ and $k_2$ respectively, then \[(\exists y_1\, \psi_1(x,y_1))\land (\exists y_2\, \psi_2(x,y_2))\] is $T$-equivalent to \[\exists y_1y_2\, (\psi_1(x,y_1)\land \psi_2(x,y_2)),\] which is b.e.\ with bound $k_1\cdot k_2$.
\end{rem}

\begin{rem}\label{rem:bcl}
A formula $\varphi(x,y,z)$ is bounded in $yz$ if and only if it is bounded in $z$ and $\exists z\, \varphi(x,y,z)$ is bounded in $y$. As a consequence, $b\in \bcl(A)$ if and only if $b$ satisfies a formula $\exists z\, \varphi(a,y,z)$, such that $a\in A^x$ and $\varphi(x,y,z)$ is quantifier-free and bounded in $yz$. 
\end{rem}

\begin{lem} \label{lem: bclisclosureoperator}
If $A \subseteq \sM$ then $\langle A\rangle \subseteq \bcl(A)$.
Furthermore, $\bcl$ is a closure operator.
\end{lem}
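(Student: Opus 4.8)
The statement has two parts: that $\langle A \rangle \subseteq \bcl(A)$, and that $\bcl$ is a closure operator (meaning $A \subseteq \bcl(A)$, monotonicity $A \subseteq B \Rightarrow \bcl(A) \subseteq \bcl(B)$, and idempotence $\bcl(\bcl(A)) = \bcl(A)$). I would organize the proof as a sequence of small verifications, using the reformulation of $\bcl$ from Remark~\ref{rem:bcl}: $b \in \bcl(A)$ iff $\sM \models \exists z\, \varphi(a,b,z)$ for some $a \in A^x$ and some quantifier-free $\varphi(x,y,z)$ bounded in $yz$.

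First I would prove $\langle A \rangle \subseteq \bcl(A)$. An element $b \in \langle A \rangle$ is obtained from elements of $A$ by finitely many applications of function symbols (including constants, as $0$-ary functions). Formalize this: there is a term $t(x)$ in the variables $x$ (a tuple matching some $a \in A^x$) with $b = t^{\sM}(a)$. Then $b$ satisfies the quantifier-free formula $y = t(x)$ with parameter $a$; this formula is bounded in $y$ with bound $1$ (it is functional), so it is b.e.\ with $z$ the empty tuple, and hence $b \in \bcl(A)$. A clean way to present this is by induction on the structure of the term $t$, though one can also just cite that $\langle A \rangle$ is the set of values of terms on tuples from $A$. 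In particular, taking $A = \emptyset$, this shows $\langle \emptyset \rangle \subseteq \bcl(\emptyset)$, and since $A \subseteq \langle A\rangle$ always, we also get $A \subseteq \bcl(A)$ (this is the reflexivity axiom for a closure operator). Monotonicity is immediate from the definition: if $A \subseteq B$ and $b \in \bcl(A)$, the witnessing tuple $a \in A^x$ is also in $B^x$, so $b \in \bcl(B)$.

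The main work is idempotence, i.e.\ $\bcl(\bcl(A)) \subseteq \bcl(A)$ (the reverse inclusion follows from reflexivity and monotonicity). Suppose $b \in \bcl(\bcl(A))$, witnessed by $\sM \models \exists z\, \varphi(a', b, z)$ where $a'$ is a tuple from $\bcl(A)$ and $\varphi(x',y,z)$ is quantifier-free and bounded in $yz$. Each component $a'_j$ of $a'$ lies in $\bcl(A)$, so there is a tuple $a_j$ from $A$ and a quantifier-free formula $\theta_j(x_j, x'_j, w_j)$ bounded in $x'_j w_j$ with $\sM \models \exists w_j\, \theta_j(a_j, a'_j, w_j)$. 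Concatenate: let $a$ be the tuple collecting all the $a_j$, let $x$ be the corresponding variables, and consider
\[
\psi(x, y, z, (x'_j)_j, (w_j)_j) \;:=\; \varphi(x', y, z) \wedge \bigwedge_j \theta_j(x_j, x'_j, w_j),
\]
which is quantifier-free. The key point is that $\psi$ is bounded, in the variable block $y\,z\,(x'_j)_j\,(w_j)_j$, over the variables $x$: given values for $x$, the conjuncts $\theta_j$ bound the $(x'_j, w_j)$ to finitely many choices, and once the $x'_j$ (hence $x'$) are fixed, boundedness of $\varphi$ in $yz$ bounds $(y,z)$ to finitely many choices; multiplying the bounds (as in Remark~\ref{rem:beconj}) gives a finite bound overall. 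Hence $\exists z\, \exists (x'_j)_j\, \exists (w_j)_j\, \psi$ is a quantifier-free formula bounded in $y$ after existential quantification of everything but $x, y$ — more precisely, $\psi$ itself, read with parameter block $x$ and remaining variables $y$ and the existentially-quantified auxiliaries, is quantifier-free and bounded in those auxiliaries together with $y$. Since $\sM \models \exists (z, (x'_j)_j, (w_j)_j)\, \psi(a, b, \ldots)$, Remark~\ref{rem:bcl} gives $b \in \bcl(A)$.

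The step I expect to require the most care is the boundedness bookkeeping in the idempotence argument: one must be precise about which variables play the role of the ``parameters'' $x$ and which play the role of the ``bounded'' block, and check that the composite formula $\psi$ is genuinely bounded in the combined block over $x$ (using that a finite conjunction of bounded-in-$v_i$ formulas is bounded in the concatenated $v$, and that boundedness composes through the nesting $\theta_j$ then $\varphi$). Everything else is routine unwinding of definitions.
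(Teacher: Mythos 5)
Your proposal is correct and follows essentially the same route as the paper's proof: the term formula $t(x)=y$ for $\langle A\rangle\subseteq\bcl(A)$, and for idempotence the conjunction of the witnessing formula for $b$ over $\bcl(A)$ with the witnessing formulas for each coordinate of the parameter tuple, bounded in the combined block by the product of the individual bounds. The boundedness bookkeeping you flag as the delicate step is handled in the paper exactly as you describe, via Remark~\ref{rem:bcl} and multiplying bounds.
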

\begin{proof}
Fix $A \subseteq \sM$.
Suppose $b\in \langle A\rangle$.
Then $t(a) = b$ for a term $t(x)$ and a tuple $a$ from $A$. 
Then the formula $t(x) = y$ is b.e.\ (taking $z$ to be the empty tuple of variables) and bounded in $y$ (with bound $1$), so it witnesses $b\in \bcl(A)$.

It follows that $A\subseteq \bcl(A)$, and it is clear that $A\subseteq B$ implies $\bcl(A)\subseteq \bcl(B)$.
It remains to show $\bcl$ is idempotent. 

Suppose $b\in \bcl(\bcl(A))$. Then by Remark~\ref{rem:bcl}, $\sM\models \exists z\, \varphi(a,b,z)$ for some quantifier-free formula $\varphi(x,y,z)$ which is bounded in $yz$ and some tuple $a = (a_1,\dots,a_n)$ from $\bcl(A)$. For each $1\leq j\leq n$, since $a_j$ is in $\bcl(A)$, $\sM\models \exists z_j\, \psi_j(d_j,a_j,z_j)$ for some quantifier-free formula $\psi_j(w_j,x_j,z_j)$ which is bounded in $x_jz_j$, and some tuple $d_j$ from $A$.

Then the quantifier-free formula \[\left(\bigwedge_{j=1}^n \psi_j(w_j,x_j,z_j)\right)\land \varphi(x_1,\dots,x_n,y,z)\] is bounded in $x_1\dots x_n y z_1\dots z_n z$ (by the product of the bounds for $\varphi$ and the $\psi_j$), and \[\sM\models \exists x_1\dots x_n z_1\dots z_n z\, \left(\bigwedge_{j=1}^n \psi_j(d_j,x_j,z_j)\right)\land \varphi(x_1,\dots,x_n,b,z),\] so $b\in \bcl(A)$ by Remark~\ref{rem:bcl}.
\end{proof}

\begin{thm}\label{thm:bclaclappendix}
The following are equivalent:
\begin{enumerate}
\item Every $L$-formula is $T$-equivalent to a finite disjunction of b.e.\ formulas.
\item $T$ is $\acl$-complete and $\acl = \bcl$ in $T$-models.
\item $T$ is $\bcl$-complete.
\end{enumerate}
\end{thm}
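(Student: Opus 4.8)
The plan is to prove the cycle of implications $(1)\Rightarrow(3)\Rightarrow(2)\Rightarrow(1)$, relying on Remarks~\ref{rem:beconj} and~\ref{rem:bcl} and Lemma~\ref{lem: bclisclosureoperator}. The easiest direction is $(3)\Rightarrow(2)$: since every $\bcl$-closed set is $\acl$-closed (we always have $\bcl\subseteq\acl$, and if $A$ is $\bcl$-closed then $\langle A\rangle\subseteq\bcl(A)=A$ so $A$ is a substructure), $\bcl$-completeness immediately implies $\acl$-completeness. For the equality $\acl=\bcl$: given $b\in\acl(A)$ with $A$ $\bcl$-closed, say $\tp(b/A)$ has exactly $k$ realizations, I would use $\bcl$-completeness (via Proposition~\ref{prop:kcomp} and the disjoint $\bcl$-amalgamation property, exactly as in the proof of Proposition~\ref{prop:kcomp}) to show that the number of realizations of $\tp(b/A)$ cannot increase in an extension; then a standard argument shows $b\in\bcl(A)$. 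Actually the cleanest route is: $\bcl$-completeness says $(\sA,\sM)$ with $\sA$ $\bcl$-closed has the disjoint amalgamation property, and by Proposition~\ref{prop:kcomp} this is equivalent to every such $\sA$ being algebraically closed in $\sM$; hence $\bcl(A)=A$ implies $\acl(A)=A$, which gives $\acl=\bcl$ on $\bcl$-closed sets, and since both are closure operators with $\bcl\subseteq\acl$, one gets $\acl=\bcl$ everywhere.

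For $(1)\Rightarrow(3)$: assume every formula is $T$-equivalent to a finite disjunction of b.e.\ formulas. To show $T$ is $\bcl$-complete, take $(\sA,\sM)\in\sK$ with $A=\bcl(A)$ and an embedding $f\colon\sA\to\sN$, $\sN\models T$; I must show $\sM\models\varphi(a)\Leftrightarrow\sN\models\varphi(f(a))$. By $(1)$ it suffices to handle $\varphi$ b.e., i.e.\ $\varphi(x)=\exists y\,\psi(x,y)$ with $\psi$ quantifier-free and bounded in $y$ with bound $k$. If $\sM\models\varphi(a)$, pick a witness $b\in M^y$; then $b\in\bcl(A)=A$ (the formula $\psi(a,y)$ is algebraic, b.e., bounded in $y$), so $b=a'$ for some $a'\in A^y$ and $\sM\models\psi(a,a')$; since $f$ is an embedding and $\psi$ is quantifier-free, $\sN\models\psi(f(a),f(a'))$, so $\sN\models\varphi(f(a))$. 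Conversely if $\sN\models\varphi(f(a))$, then $\sN\models\exists^{\leq k}y\,\psi(f(a),y)$ is not automatic, but here is the point: the formula $(\exists y\,\psi(x,y))\wedge\exists^{\leq k}y\,\psi(x,y)$ holds at $a$ in $\sM$ iff it holds — wait, we need the reverse. Instead: $\sM\models\exists^{\leq k}y\,\psi(a,y)$ always (bound $k$), and in $\sM$ there are, say, $m\leq k$ witnesses, all lying in $A$ as above. Each satisfies the quantifier-free $\psi(a,\cdot)$, so transfers to $\sN$ under $f$; moreover $\psi(x,y)$ being bounded by $k$ holds in $\sN$ too, and the statement ``there are at least one / at least $m$ witnesses'' is a quantifier-free-with-counting statement pinned down over $A$. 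The clean way is to note $\sM\models\chi(a)$ where $\chi(x)$ asserts the exact number $m$ of $y$ with $\psi(x,y)$ and names them; $\chi$ is (equivalent to) a b.e.\ formula, all its witnesses are in $A$, so $\chi$ transfers to $\sN$, forcing $\sN\models\varphi(f(a))$ iff $m\geq 1$; but $\sN\models\varphi(f(a))$ is given, and $\sM\models\varphi(a)$ iff $m\geq1$ — so actually I should run the argument symmetrically, deducing that $\varphi$ holds in $\sM$ at $a$ iff it holds in $\sN$ at $f(a)$ by counting witnesses in $A$, which is symmetric. I expect this symmetric-counting bookkeeping to be the main obstacle, and it is exactly where boundedness (as opposed to mere algebraicity) is used.

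For $(2)\Rightarrow(1)$, which is the real content: assume $T$ is $\acl$-complete with $\acl=\bcl$. Fix a formula $\theta(x)$; I want to write it as a finite disjunction of b.e.\ formulas. Working in a monster $\monster\models\widehat T$ for a completion $\widehat T$, consider a tuple $a$ with $\monster\models\theta(a)$ and let $A=\langle a\rangle$. Since $T$ is model-complete (it is $\acl$-complete), $\tp(a/A)$ is determined by $\qftp(a/A)=\fdiag(\sA)$, but $A$ need not be $\acl$-closed. The idea, following the $\acl$-completeness argument and the $n$-step structure of the proof of Theorem~\ref{thm:bclacl}, is: the type $\tp(a)$ is implied by $\fdiag(\sA)$ together with finitely many formulas saying ``$\acl(A)$ looks like this''; each element of $\acl(A)=\bcl(A)$ is captured by a quantifier-free formula bounded in the new variable over $a$; so $\tp(a)$ is implied by a single b.e.\ formula $\exists z\,\varphi(a,z)$ where $\varphi$ is quantifier-free and bounded in $z$ (collecting the finitely many generators of $\acl(a)$ into $z$ and using Remark~\ref{rem:beconj} to conjoin). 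By compactness, $\theta(x)$ is $T$-equivalent to a finite disjunction of such formulas. I would organize this via: for each complete type $p(x)$ containing $\theta$, produce a b.e.\ formula $\sigma_p\in p$ with $\sigma_p\vdash p$ (relative to $\widehat T$); then $\theta\equiv\bigvee$ of finitely many $\sigma_p$ by compactness; then handle the finitely many completions of $T$ separately and recombine. The main obstacle here is showing $\sigma_p$ can be taken b.e., i.e.\ that the finitely-many-realizations data of $\acl(a)$ can be packaged into one bounded quantifier-free formula — this is precisely where $\acl=\bcl$ enters, and it is the crux of the appendix proof.
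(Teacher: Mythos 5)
The main problem is your direction $(3)\Rightarrow(2)$, specifically the claim that $\acl=\bcl$ follows from $\bcl$-completeness via disjoint amalgamation. You assert that ``$\bcl$-completeness says $(\sA,\sM)$ with $\sA$ $\bcl$-closed has the disjoint amalgamation property,'' but that is not what Proposition~\ref{prop:kcomp} gives you: $\sK$-completeness is equivalent only to the \emph{plain} $\sK$-amalgamation property, and the \emph{disjoint} $\sK$-amalgamation property is equivalent (given $\sK$-completeness) to the statement that every $\sA$ in $\sK$ is algebraically closed in $\sM$ --- which is exactly the conclusion you are trying to reach. So this step is circular, and I do not see a direct repair: the paper avoids the issue by proving $\acl=\bcl$ from the \emph{syntactic} condition $(1)$ (given $b\in\acl(A)$ witnessed by $\varphi(a,y)$ with exactly $k$ realizations, the bounded formula $\varphi(x,y)\land\exists^{\leq k}y'\,\varphi(x,y')$ is equivalent to a disjunction of b.e.\ formulas, one of which witnesses $b\in\bcl(A)$), and accordingly orders the implications as $(1)\Rightarrow(2)\Rightarrow(3)\Rightarrow(1)$. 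If you want to keep your cycle, you would have to route the $\acl=\bcl$ part of $(2)$ through $(1)$ anyway. A smaller slip in the same paragraph: your justification ``every $\bcl$-closed set is $\acl$-closed'' for $\bcl$-complete $\Rightarrow$ $\acl$-complete is backwards (and false in general); the correct containment is that every $\acl$-closed set is $\bcl$-closed, so the class of $\acl$-closed pairs sits inside the class of $\bcl$-closed pairs and completeness transfers.

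The other two directions are essentially right but need tightening. In $(1)\Rightarrow(3)$ the witness-counting detour is unnecessary and is where you get tangled: having shown $\sM\models\varphi(a)\Rightarrow\sN\models\varphi(f(a))$ for every formula $\varphi$ (via a b.e.\ disjunct whose witnesses lie in $\bcl(A)=A$), apply the same implication to $\lnot\varphi$, which by $(1)$ is also a finite disjunction of b.e.\ formulas; this gives the reverse direction for free. In $(2)\Rightarrow(1)$, your sketch is essentially the paper's argument for $(3)\Rightarrow(1)$ (note $(2)$ gives $\bcl$-completeness, so you may use it), but two points need care: $\acl(a)$ is typically infinite, so $\tp(a)$ is not implied by a \emph{single} b.e.\ formula but only by the set of all b.e.\ formulas in it (the precise statement is: if $\tp_{\Delta_x}(a)\subseteq\tp_{\Delta_x}(a')$ then $\tp(a)=\tp(a')$, proved by embedding $\bcl(a)$ into a model over a realization of $\tp(a')$ and invoking $\bcl$-completeness); and you should only ask for $\sigma_p\in p$ with $T\models\sigma_p\rightarrow\theta$, not $\sigma_p\vdash p$, which is generally impossible for non-principal types. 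With those corrections the compactness bookkeeping goes through as you describe, using Remark~\ref{rem:beconj} to close b.e.\ formulas under conjunction.
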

\begin{proof}
We assume (1) and prove (2).
We first show $\acl$ and $\bcl$ agree.
Suppose $A \subseteq \sM \models T$ and $b\in \acl(A)$, witnessed by an algebraic formula $\varphi(a,y)$ with parameters $a$ from $A$. 
Suppose there are exactly $k$ tuples in $M^y$ satisfying $\varphi(a,y)$. 
Let $\varphi'(x,y)$ be the formula \[\varphi(x,y)\land \exists^{\leq k} y'\, \varphi(x,y'),\]
and note $\varphi'(x,y)$ is bounded in $y$.
By assumption, $\varphi'(x,y)$ is equivalent to a finite disjunction of boundedly existential formulas, so there is some boundedly existential formula $\psi(x,y)$ such that $T\models \psi(x,y)\rightarrow \varphi'(x,y)$ and $\sM\models \psi(a,b)$. Since $\varphi'(x,y)$ is bounded in $y$, so is $\psi(x,y)$, and hence $b\in \bcl(A)$.

We continue to assume (1) and show $T$ is $\acl$-complete.
Suppose $\sA$ is an algebraically closed substructure of $\sM\models T$ and $f\colon \sA\to \sN\models T$ is an embedding. 
We show that for any formula $\varphi(x)$, if $\sM\models \varphi(a)$, where $a\in A^x$, then $\sN\models \varphi(f(a))$. By our assumption, $\varphi(x)$ is equivalent to a finite disjunction of boundedly existential formulas, so there is some boundedly existential formula $\exists y\, \psi(x,y)$ such that $$T\models (\exists y\, \psi(x,y))\rightarrow \varphi(x)\quad \text{and}\quad \sM\models \exists y\, \psi(a,y).$$ Let $b\in M^y$ be a witness for the existential quantifier. Then each component of the tuple $b$ is in $\acl(a)\subseteq A$, since $A$ is algebraically closed. And $\psi$ is quantifier-free, so $\sN\models \psi(f(a),f(b))$, and hence $\sN\models \varphi(f(a))$. 

It is clear that (2) implies (3).

We now assume (3) and prove (1).
For any finite tuple of variables $x$, let $\Delta_x$ be the set of boundedly existential formulas with free variables from $x$.

\medskip

\emph{Claim:} For all models $\sM$ and $\sN$ of $T$ and all tuples $a\in M^x$ and $a'\in N^x$, if $\tp_{\Delta_x}(a) \subseteq \tp_{\Delta_x}(a')$, then $\tp(a) = \tp(a')$.

\emph{Proof of claim:} Suppose that $\sM$ and $\sN$ are models of $T$, $a\in M^x$, $a'\in N^x$, and $\tp_{\Delta_x}(a)\subseteq \tp_{\Delta_x}(a')$. Let $y$ be a tuple of variables enumerating the elements of $\bcl(a)$ which are not in $a$. Let $p(x,y) = \qftp(\bcl(a))$, and let $q(x) = \tp(a')$. We claim that $T\cup p(x,y)\cup q(x)$ is consistent. 

Let $b = (b_1,\dots, b_n)$ be a finite tuple from $\bcl(a)$ which is disjoint from $a$, and let $\psi(x,y')$ be a quantifier-free formula such that $\sM\models \psi(a,b)$ (where $y' = (y_1,\dots,y_n)$ is the finite subtuple of $y$ enumerating $b$). 

For each $1\leq j\leq n$, the fact that $b_j\in \bcl(a)$ is witnessed by $\sM\models \exists z_j\, \varphi_j(a,b_j,z_j)$, where $\varphi_j(x,y_j,z_j)$ is quantifier-free and bounded in $y_jz_j$ (by Remark~\ref{rem:bcl}). Letting $z = (z_1, \dots, z_n)$, the conjunction $\bigwedge_{j=1}^n \varphi_j(x,y_j,z_j)$ is a quantifier-free formula $\varphi(x,y',z)$ which is bounded in $y'z$. It follows that $\varphi(x,y',z)\land \psi(x,y')$ is also bounded in $y'z$, and $\sM\models \exists z\, (\varphi(a,b,z)\land \psi(a,b))$. Then \[\exists y'z\,(\varphi(x,y',z)\land \psi(x,y'))\in \tp_{\Delta_x}(a)\subseteq \tp_{\Delta_x}(a'),\] so $\sN\models \exists y'z\, (\varphi(a',y',z)\land \psi(a',y'))$. Letting $b'\in N_{y'}$ be a witness for the first block of existential quantifiers, $\sN\models \psi(a',b')$, so $T\cup \{\psi(x,y')\}\cup q(x)$ is consistent.

By compactness, $T\cup p(x,y)\cup q(x)$ is consistent, so there exists a model $\sN'\models T$, a tuple $a''\in (N')^x$ realizing $q(x)$, and an embedding $f\colon \bcl(a)\to \sN'$ such that $f(a) = a''$. By $\bcl$-completeness, we have $\tp(a) = \tp(a'') = \tp(a')$, as was to be shown.

\medskip

Having established the claim, we conclude with a standard compactness argument. Let $\varphi(x)$ be an $L$-formula. Suppose $\sM\models T$ and $\sM\models \varphi(a)$. Let $p_a(x) = \tp_{\Delta_x}(a)$. By the claim, $T\cup p_a(x)\cup \{\lnot \varphi(x)\}$ is inconsistent. Since $p_a(x)$ is closed under finite conjunctions (up to equivalence) by Remark~\ref{rem:beconj}, there is a formula $\psi_a(x)\in p_a(x)$ such that $T\models \psi_a(x)\rightarrow \varphi(x)$. 

Now 
\[T\cup \{\varphi(x)\}\cup \{\lnot \psi_a(x)\mid \sM\models T\text{ and } \sM\models \varphi(a)\}\]
is inconsistent, so there are finitely many $a_1,\dots,a_n$ such that $T\models \varphi\rightarrow (\bigvee_{i=1}^n \psi_{a_i}(x))$. Since also $T\models (\bigvee_{i=1}^n \psi_{a_i}(x))\rightarrow \varphi(x)$, we have shown that $\varphi(x)$ is $T$-equivalent to $\bigvee_{i=1}^n \psi_{a_i}(x)$.
\end{proof}

\noindent It may be surprising that $\acl$-completeness does not already imply every formula is equivalent to a finite disjunction of b.e.\ formulas, i.e., $\acl$-completeness is not equivalent to $\bcl$-completeness. 
We give a counterexample.

\begin{example}
Let $L$ be the language with a single unary function symbol $f$. We denote by $E(x,y)$ the equivalence relation defined by $f(x) = f(y)$. We say an element of an $L$-structure is \textbf{special} if it is in the image of $f$. Let $T$ be the theory asserting the following:
\begin{enumerate}
\item Models of $T$ are nonempty.
\item There are no cycles, i.e., for all $n\geq 1$, $\forall x\, f^n(x)\neq x$. 
\item Each $E$-class is infinite and contains exactly one special element. 
\end{enumerate}
Every $T$-model can be decomposed into a disjoint union of \textbf{connected components}, each of which is a chain of $E$-classes, $(C_n)_{n\in \mathbb{Z}}$, such that each class $C_n$ contains a unique special element $a_n$, and $f(b) = a_n$ for all $b\in C_{n-1}$.

Let $A$ be a subset of a $T$-model. Then $\acl(A)$ consists of $A$, together with the $\mathbb{Z}$-indexed chain of special elements in each connected component which meets $A$. But $\bcl(A)$ is just the substructure generated by $A$: it only contains the special elements from $E$-classes further along in the chain than some element of $A$. Indeed, if $a_n$ is the unique special element in class $C_n$, $a_n\notin A$, and no element of $A$ is in any class $C_m$ with $m<n$ in the same connected component, then $a_n$ does not satisfy any bounded and b.e.\ formula with parameters from $A$.

It is not hard to show that $T$ is $\acl$-complete (and hence complete, since $\acl(\emptyset) = \emptyset$), but not $\bcl$-complete. For an explicit example of a formula which is not equivalent to a finite disjunction of b.e.\ formulas, consider the formula \[
\exists y\, f(y) = x
\]
defining the special elements.
\end{example}

\section{Extendability of forking independence}\label{app:stationary}

\noindent In this appendix, we adopt the notation and terminology of Section~\ref{sec:stationary}. Our goal is to understand when forking independence $\eind[f]$ in a theory $T$ is extendable to an arbitrary expansion $T'$. In particular, we show that when $T$ is stable with weak elimination of imaginaries, $\eind[f]$ is always stationary and extendable to $T'$.

\medskip\noindent We recall a few variations on the notion of elimination of imaginaries (see~\cite{CF}).
\begin{enumerate}
\item $T$ has \textbf{elimination of imaginaries} if every $a \in \monster^{\eq}$ is interdefinable with some $b \in \monster$, i.e., $a \in \dcl^{\eq}(b)$ and $b \in \dcl^{\eq}(a)$. 
\item $T$ has \textbf{weak elimination of imaginaries} if for every $a\in \monster^{\eq}$ there is some $b\in \monster$ such that $a \in \dcl^{\eq}(b)$ and $b \in \acl^{\eq}(a)$. 
\item $T$ has \textbf{geometric elimination of imaginaries} if every $a \in \monster^{\eq}$ is interalgebraic with some $b \in \monster$, i.e., $a \in \acl^{\eq}(b)$ and $b \in \acl^{\eq}(a)$. 
\end{enumerate}

\noindent Let $\delta(x,y)$ be a formula. An \textbf{instance} of $\delta$ is a formula $\delta(x,b)$ with $b\in \monsterset^y$, and a \textbf{$\delta$-formula} is a Boolean combination of instances of $\delta$. A \textbf{global $\delta$-type} is a maximal consistent set of $\delta$-formulas with parameters from $\monsterset$. We denote by $S_\delta(\monsterset)$ the Stone space of global $\delta$-types. 

\medskip \noindent The following lemma is a well-known fact about the existence of weak canonical bases for $\delta$-types when $\delta(x,y)$ is stable. 

\begin{lem}\label{lem:weakcb}
Suppose $T$ has geometric elimination of imaginaries, and $\delta(x,y)$ is a stable formula. For any $q(x)\in S_\delta(\monsterset)$, there exists a tuple $d$ such that:
\begin{enumerate}
    \item $q(x)$ has finite orbit under automorphisms of $\monster$ fixing $d$.
    \item $d$ has finite orbit under automorphisms of $\monster$ fixing $q(x)$.
    \item $q(x)$ does not divide over $d$.
\end{enumerate}
If $T$ has weak elimination of imaginaries, we can arrange that $d$ is fixed by automorphisms of $\monster$ fixing $q(x)$. And if $T$ has elimination of imaginaries,  we can further arrange that $q(x)$ is fixed by automorphisms of $\monster$ fixing $d$.
\end{lem}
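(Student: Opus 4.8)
The plan is to reduce the statement to the standard theory of local ranks and canonical bases for stable formulas, phrased with only geometric/weak/full elimination of imaginaries available rather than passing freely into $\monster^\eq$. First I would recall that for a stable formula $\delta(x,y)$ and a global $\delta$-type $q(x) \in S_\delta(\monsterset)$, $q$ is definable: there is a formula $d_q\delta(y)$ over $\monsterset$ such that $\delta(x,b) \in q$ iff $\models d_q\delta(b)$. The canonical parameter of $d_q\delta$ is an imaginary element $e = \lceil d_q\delta \rceil \in \monster^\eq$, the \emph{canonical base} of $q$ (with respect to $\delta$); it is fixed setwise by exactly those automorphisms of $\monster$ fixing $q$, and $q$ does not divide (equivalently, does not fork) over $e$ — indeed $q$ is the unique nonforking global $\delta$-extension of $q\!\restriction\! e$, by stability of $\delta$. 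This is the $\monster^\eq$-version of the lemma; the work is to transfer it down to a real tuple $d$.

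Next I would invoke the three flavors of elimination of imaginaries. Under geometric elimination of imaginaries, the imaginary $e$ is interalgebraic with a real tuple $d$: $e \in \acl^\eq(d)$ and $d \in \acl^\eq(e)$. Then (1) holds because an automorphism fixing $d$ fixes $\acl^\eq(d) \ni e$ setwise up to a finite orbit, hence moves $q$ to one of finitely many conjugates (the conjugates of $q$ over $d$ correspond to the finitely many conjugates of $e$ over $d$); (2) holds symmetrically since $d \in \acl^\eq(e)$ and $e$ is fixed by $\Aut(\monster/q)$, so $d$ has finite orbit under $\Aut(\monster/q)$; and (3) holds because $q$ does not divide over $e$, and $d \in \acl^\eq(e)$, so $q$ does not divide over $d$ (dividing over a set is the same as dividing over its algebraic closure, and $e \in \acl^\eq(d)$ gives $\acl^\eq(e) \subseteq \acl^\eq(d)$, while $\acl^\eq(d) \subseteq \acl^\eq(e)$ too, so the two algebraic closures coincide). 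Under weak elimination of imaginaries we get $d$ with $e \in \dcl^\eq(d)$ and $d \in \acl^\eq(e)$; now $d$ is fixed setwise by $\Aut(\monster/e) = \Aut(\monster/q)$ up to a finite orbit, but since $d \in \acl^\eq(e)$ and $e$ is actually fixed, we can take $d$ to enumerate the full (finite) orbit, making $d$ genuinely fixed by $\Aut(\monster/q)$; and $q$ is then fixed by $\Aut(\monster/d)$ up to finite orbit (from $e \in \dcl^\eq(d)$ we even get $q$ outright fixed, but in the weak case the clean statement is that the orbit is finite, matching (1)). Under full elimination of imaginaries, $e$ is interdefinable with a real $d$, so $\Aut(\monster/d) = \Aut(\monster/e) = \Aut(\monster/q)$ exactly, giving all the strengthened conclusions.

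I would structure the writeup by first citing the existence of the $\delta$-definition and its canonical parameter $e$ in $\monster^\eq$ (standard stability theory — Pillay's book or Tent–Ziegler), recording the three properties of $e$: $\Aut(\monster/e) = \Aut(\monster/q)$ setwise, $q$ does not fork over $e$, and conversely $q$ is the unique global nonforking $\delta$-extension of its restriction to $e$ (which gives that anything fixing $q$ fixes $e$, hence the reverse inclusion $\Aut(\monster/q) \subseteq \Aut(\monster/e)$). Then a short paragraph handling each EI hypothesis in turn as above. The main obstacle — really the only subtle point — is bookkeeping the ``finite orbit'' clauses correctly: in the geometric EI case both (1) and (2) are only up-to-finite-orbit and one must be careful that replacing $e$ by an interalgebraic real $d$ does not accidentally shrink the relevant automorphism group, and that the equality $\acl^\eq(d) = \acl^\eq(e)$ (needed for the dividing clause) genuinely follows from interalgebraicity. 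I expect no computational difficulty; the proof is essentially an exercise in chasing $\dcl^\eq$/$\acl^\eq$ inclusions, and the only thing to get exactly right is matching each elimination-of-imaginaries hypothesis to the corresponding strengthening in the conclusion.
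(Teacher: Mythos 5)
Your proposal is correct and follows essentially the same route as the paper: take the canonical parameter $e\in\monster^{\eq}$ of the $\delta$-definition of $q$, record that $\Aut(\monster/e)$ fixes $q$, that $\Aut(\monster/q)$ fixes $e$, and that $q$ does not divide over $e$, then transfer to a real tuple $d$ via the relevant form of elimination of imaginaries. The paper's proof is just a terser version of this (it states the three properties of $e$ and says the conclusions ``follow immediately''), so your extra bookkeeping of the $\acl^{\eq}$/$\dcl^{\eq}$ inclusions and the finite-orbit clauses is simply a more explicit rendering of the same argument.
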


\begin{proof}
Let $e\in \monster^\eq$ be the canonical base for $q(x)$. Then $q(x)$ is fixed by all automorphisms fixing $e$, $e$ is fixed by all automorphisms fixing $q(x)$, and $q(x)$ does not divide over $e$. By geometric elimination of imaginaries, $e$ is interalgebraic with a real tuple $d$, and (1), (2), and (3) follow immediately. The cases when $T$ has elimination of imaginaries or weak elimination of imaginaries are similar.  
\end{proof}

\noindent $T$ has \textbf{stable forking} if whenever a complete type $p(x)$ over $B$ forks over $A\subseteq B$, then there is a stable formula $\delta(x,y)$ such that $\delta(x,b) \in p(x)$ and $\delta(x,b)$ forks over $A$. Every theory with stable forking is simple; the converse is the Stable Forking Conjecture, which remains open (see~\cite{Kim-Pillay-Stable-Forking}).

\medskip \noindent The following lemma is essentially the same idea as~\cite[Lemma 3]{PillayTsuboi}, which itself makes use of key ideas from~\cite[Lemmas 5.5 and 5.8]{HrushovskiPillay}.  

\begin{lem}\label{lem:forkingreduct}
Suppose $T$ has stable forking and geometric elimination of imaginaries. Then $\eind[f]$ in $T$ is extendable to $T'$, i.e., has full existence over algebraically closed sets in $T'$.
\end{lem}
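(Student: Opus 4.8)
Suppose $T$ has stable forking and geometric elimination of imaginaries. Then $\eind[f]$ in $T$ is extendable to $T'$, i.e., has full existence over algebraically closed sets in $T'$.

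Here is how I would approach it. Fix $C = \acl'(C)$ and sets $A$, $B$; I want to produce $A^*$ with $\tp_{L'}(A^*/C) = \tp_{L'}(A/C)$ and $A^* \eind[f]_C B$ in $\monster$. The key point is that forking independence in $T$ refers only to the reduct, while the type I must preserve is an $L'$-type. The plan is to build $A^*$ by an amalgamation-of-types argument: I will show that the partial $L'$-type $\tp_{L'}(A/C)$ together with a partial $L$-type expressing ``$x \eind[f]_C B$'' is consistent. Concretely, pick a tuple $a$ enumerating $A$ and let $p'(x) = \tp_{L'}(a/C)$. I want to find a global $L$-type (or at least a type over $CB$) extending $\tp_L(a/C)$ which does not fork over $C$, and which is ``compatible'' with $p'(x)$ in the sense that the two can be realized simultaneously.

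The mechanism for compatibility is the weak canonical base provided by Lemma~\ref{lem:weakcb}, using stable forking. For each stable $L$-formula $\delta(x,y)$, the $\delta$-part of a nonforking extension of $\tp_L(a/C)$ over $\monster$ is controlled by a weak canonical base $d$; by geometric elimination of imaginaries we may take $d$ to be a real tuple, and since $C = \acl'(C) \supseteq \acl(C)$ (as $\acl \subseteq \acl'$) and the nonforking $\delta$-type over $C$ is almost over $C$, the finitely many conjugates of $d$ lie in $C$. Thus the information needed to pin down the nonforking extension over $CB$ of the $\delta$-type is already $L'$-definable over $C$, hence already determined by $p'(x)$. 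Carrying this out formula-by-formula over the stable formulas (which by stable forking suffice to witness all forking), one shows by compactness that $p'(x) \cup \{\text{$L$-formulas over } CB \text{ forced by the nonforking extension}\}$ is consistent. Any realization $A^*$ of this type has $\tp_{L'}(A^*/C) = \tp_{L'}(A/C)$ and, since its $L$-type over $CB$ is a nonforking extension of $\tp_L(A/C)$, satisfies $A^* \eind[f]_C B$ in $\monster$.

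I expect the main obstacle to be the bookkeeping around imaginaries and the precise sense in which the weak canonical bases live in $C$: one must check that the relevant $\delta$-types over $C$ do not fork over $C$ (automatic, as $C \supseteq \acl_L(C)$ and we are looking at the restriction of the nonforking global extension), that their weak canonical bases are then algebraic over $C$ in $T^\eq$, hence — after geometric EI — interalgebraic with real tuples algebraic over $C$ in $T$, hence in $\acl(C) \subseteq \acl'(C) = C$. The delicate part is ensuring that this gives genuine consistency of the combined type rather than merely consistency formula-by-formula; this is handled by compactness together with the observation that finite conjunctions of stable formulas are stable, so the local analysis patches together. The cases of weak EI and full EI in the statement of Lemma~\ref{lem:weakcb} are not needed here — geometric EI suffices — but stable forking is essential, since without it forking need not be witnessed by stable formulas and the weak-canonical-base machinery does not apply.
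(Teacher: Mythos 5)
Your overall strategy (realize $\tp_{L'}(A/C)$ together with a partial $L$-type forcing $\eind[f]$-independence from $B$ over $C$) is reasonable in outline, but the step where you justify consistency of the two types is exactly where the content of the lemma lies, and the justification you give does not work. You locate the weak canonical base $d$ of a nonforking $\delta$-extension of $\tp_L(a/C)$ and place it in $C$ via $d\in\acl(C)\subseteq\acl_{L'}(C)=C$; note that this only ever uses that $C$ contains the $L$-algebraic closure of $C$, so if the argument worked it would prove full existence over every set closed under $\acl$ computed in $\monster$. The Remark following the lemma shows this is false: with $T$ the theory of an equivalence relation with infinitely many infinite classes and $T'$ naming one class by $P$, working in $T^{\eq}$ over $C=\acl^{\eq}(\emptyset)$ computed in $\monster$, the only global $E$-type not forking over $C$ is the generic one (no class name lies in $\acl^{\eq}(\emptyset)$ computed in $\monster$), and that type is inconsistent with $P(x)\in\tp_{L'}(a/C)$, since $P(x)\vdash xEb$. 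So the nonforking extension whose canonical base you have pinned down need not be consistent with $p'(x)$: the phrase ``already determined by $p'(x)$'' is asserted rather than proved, and is false for the extension you describe.

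The missing idea is that the relevant $\delta$-type must be chosen \emph{after} fixing a formula $\varphi(x)\in p'(x)$, and its canonical base is forced into $\acl_{L'}(C)$ (not into the $L$-algebraic closure of $C$) by an $L'$-automorphism argument. The paper argues by contradiction: if no realization of $p'(x)$ is independent from $B$ over $C$ in $\monster$, then stable forking plus compactness produce a single $L'(C)$-formula $\varphi(x)\in p'(x)$, a stable $L$-formula $\delta(x,y)$, and $b$ with $\delta(x,b)$ dividing over $C$ and $\varphi(x)\rightarrow\delta(x,b)$. The closed set $[\varphi]\subseteq S_\delta(\monsterset)$ of global $\delta$-types consistent with $\varphi$ is invariant under $\Aut(\monster'/C)$ and, by stability of $\delta$, has finitely many points of maximal Cantor--Bendixson rank; taking $q$ to be one of them, $q$ and hence its weak canonical base $d$ from Lemma~\ref{lem:weakcb} have finitely many conjugates under $\Aut(\monster'/C)$, so $d\in\acl_{L'}(C)=C$, whence $q$ does not divide over $C$ --- contradicting $\delta(x,b)\in q$. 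The maximal-rank selection is what simultaneously guarantees consistency with $\varphi$ and a canonical base in $C$; without it, or some substitute for it, your formula-by-formula compactness patching has nothing to patch.
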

\begin{proof}
Suppose towards a contradiction that there exist sets $A$, $B$, and $C$ in $\monster'$ such that $C = \acl_{L'}(C)$, and for any $A^*$ with $\tp_{L'}(A^*/C) = \tp_{L'}(A/C)$, $A^*\enind[f]_C B$ in $\monster$. We may assume $C\subseteq B$. Let $p(x) = \tp_{L'}(A/C)$. Since $T$ has stable forking, the fact that $\tp_{L}(A^*/B)$ forks over $C$ is always witnessed by a stable $L$-formula. 
So the partial type 
\begin{align*}
p(x)\cup \{\lnot \delta(x,b) \mid\,\, & \delta(x,y)\in L\text{ is stable, and } \delta(x,b)\text{ forks over $C$ in } \monster\}
\end{align*} 
is not satisfiable in $\monster'$. By saturation and compactness, we may assume that $A$ is finite and $x$ is a finite tuple of variables. And as stable formulas and forking formulas are closed under disjunctions, there is an $L'(C)$-formula $\varphi(x)\in p(x)$, a stable $L$-formula $\delta(x,y)$, and $b\in \monsterset^y$ such that $\delta(x,b)$ forks over $C$, and
\[ \monster'\models \forall x\,(\varphi(x)\rightarrow \delta(x,b)).\]
Since forking and dividing agree in simple theories~\cite[Prop.\ 5.17]{Casanovas}, $\delta(x,b)$ divides over $C$.

Let $[\varphi]$ be the set of all $\delta$-types in $S_\delta(\monsterset)$ which are consistent with $\varphi(x)$. 
This is a closed set in $S_\delta(\monsterset)$: it consists of all global $\delta$-types $r(x)$ such that $\chi(x)\in r(x)$ whenever $\chi(x)$ is a $\delta$-formula and $\varphi(\monster')\subseteq \chi(\monster')$. In particular, if $r(x)\in [\varphi]$, then $\delta(x,b)\in r(x)$. 
Since $\delta$ is stable, $[\varphi]$ contains finitely many points of maximal Cantor-Bendixson rank.
Let $q(x)$ be such a point.

Let $d$ be the weak canonical base for $q(x)$ obtained in Lemma~\ref{lem:weakcb}. Since $[\varphi]$ is fixed setwise by any $L'$-automorphism fixing $C$, $q(x)$ has finitely many conjugates under such automorphisms. It follows that $d$ too has finitely many conjugates, so $d \in C$, as $C$ is algebraically closed in $\monster'$. But then $q(x)$ does not divide over $C$, contradicting the fact that $\delta(x,b)\in q(x)$.
\end{proof}

\begin{rem}
The following counterexample shows the assumptions of geometric elimination of imaginaries in $T$ and $C = \acl(C)$ in $\monster'$ (not just in $\monster$) are necessary in Lemma~\ref{lem:forkingreduct}.  Let $T$ be the theory of an equivalence relation with infinitely many infinite classes. Let $T'$ be the expansion of this theory by a single unary predicate $P$ naming one of the classes. Let $a$ and $b$ be two elements of the class named by $P$ in $\monster'$, and let $C = \emptyset$ (which is algebraically closed in $\monster$ and $\monster'$). For any $a^*$ such that $\tp_{L'}(a^*/\emptyset) = \tp_{L'}(a/\emptyset)$, we have $a^*Eb$, and $xEb$ forks over $\emptyset$ in $\monster$. To fix this, we move to $\monster^{\eq}$, so we have another sort containing names for all the $E$-classes. Note that $\acl^{\eq}(\emptyset)$ in $\monster$ still doesn't contain any of these names. But $\acl^{\eq}(\emptyset)$ in $\monster'$ contains the name for the class named by $P$, since it is fixed by $L'$-automorphisms. And we recover the lemma, since $xEb$ does not fork over the name for the $E$-class of $b$.
\end{rem}

\begin{rem}
The conclusion of Lemma~\ref{lem:forkingreduct} also  fails when there are unstable forking formulas in $T$.
Let $T$ be be the theory of $(\mathbb{Q},<)$ and $T'$ be the expansion of $T$ by a unary predicate $P$ defining an open interval $(p,p')$, where $p<p'$ are irrational real numbers.
Let $b_1<a<b_2$ be elements of $\monster'$ such that $a\in P$ and $b_1,b_2\notin P$. Let $C = \emptyset$ (which is algebraically closed in $\monster'$). 
Then for any realization $a^*$ of $\tp_{L'}(a/\emptyset)$, we have $a^*\enind[f]_\emptyset b_1b_2$ in $\monster$, witnessed by the formula $b_1<x<b_2$.
\end{rem}

\begin{rem}\label{rem:stationarityisnecessary}
It is not possible to strengthen the conclusion of Lemma~\ref{lem:forkingreduct} to the following: For all small sets $A$, $B$, and $C$, such that $C = \acl_{L'}(C)$, and for any $A''$ such that $\tp_L(A''/C) = \tp_L(A/C)$ and $A''\eind[f]_C B$ in $\monster$, there exists $A'$ with $\tp_{L'}(A'/C) = \tp_{L'}(A/C)$ and $\tp_{L}(A'/BC) = \tp_{L}(A''/BC)$. 

That is, while it is possible to find a realization $A'$ of $\tp_{L'}(A/C)$ such that $\tp_L(A'/BC)$ is a nonforking extension of $\tp_L(A/C)$, it is not possible in general to obtain an arbitrary nonforking extension of $\tp_L(A/C)$ in this way. 

For a counterexample, consider the theories $T$ and $T'$ from Example~\ref{ex:rg}. $T$ has stable forking and geometric elimination of imaginaries. Let $a$ and $b$ be elements of the clique defined by $P$ in $\monster'$, and let $C = \emptyset$ (which is algebraically closed in $\monster'$). Let $a''$ be any element such that $\monster'\models \lnot a''Eb$, and note that $a''\eind[f]_\emptyset b$ and $\tp_L(a''/\emptyset) = \tp_L(a/\emptyset)$ (there is only one $1$-type over the empty set with respect to $T$). But for any $a'$ with $\tp_{L'}(a'/\emptyset) = \tp_{L'}(a/\emptyset)$, $\monster'\models P(a')$, so $a'Eb$, and $\tp_{L}(a'/b)\neq \tp_{L}(a''/b)$.
\end{rem}

\noindent We have seen that the hypotheses of stable forking (and hence simplicity) and geometric elimination of imaginaries in $T$ are sufficient to ensure that $\eind[f]$ has full existence over algebraically closed sets in $T'$, with no further assumptions on $T'$. But we would also like $\eind[f]$ to be a stationary independence relation in $T$. 

\medskip\noindent In a simple theory $T$, $\eind[f]$  satisfies stationarity over $\acl^{\eq}$-closed sets if and only if $T$ is stable~\cite[Chapter 11]{Casanovas}. And a stable theory has weak elimination of imaginaries if and only if it has geometric elimination of imaginaries and $\eind[f]$ satisfies stationarity over $\acl$-closed sets~\cite[Propositions 3.2 and 3.4]{CF}. So we have proven the following proposition, under very natural hypotheses. 

\begin{prop}\label{prop:stationarityappendix}
Suppose $T$ is stable with weak elimination of imaginaries. Then $\eind[f]$ is a stationary and extendable independence relation in $T$.
\end{prop}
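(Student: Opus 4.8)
The plan is to check the four properties in the definition one at a time: invariance, algebraic independence, and stationarity over algebraically closed sets (which together say that $\eind[f]$ is a stationary independence relation in $T$), and then full existence over algebraically closed sets in $T'$ (extendability). Most of the work has already been done, either in the literature on stable theories or in Lemma~\ref{lem:forkingreduct} above, so the proof is mainly a matter of assembling the right citations.

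First, invariance of $\eind[f]$ under automorphisms of $\monster$ is immediate from the definition of forking. Next, algebraic independence — that $A\eind[f]_C B$ implies $\acl(AC)\cap\acl(BC)=\acl(C)$ — holds in any simple theory, and in particular in the stable theory $T$; I would just cite the standard account (e.g.~\cite{Casanovas}). For stationarity over algebraically closed sets, I would invoke the known characterization of weak elimination of imaginaries among stable theories: a stable theory has weak elimination of imaginaries if and only if it has geometric elimination of imaginaries and $\eind[f]$ satisfies stationarity over $\acl$-closed sets (see~\cite[Propositions 3.2 and 3.4]{CF}). Since $T$ is assumed stable with weak elimination of imaginaries, stationarity over $\acl$-closed sets follows.

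For extendability, observe that a stable theory trivially has stable forking: every formula is stable, so every forking formula is a fortiori stable. Moreover, weak elimination of imaginaries implies geometric elimination of imaginaries. Hence Lemma~\ref{lem:forkingreduct} applies to $T$ (with no hypothesis on $T'$) and yields full existence for $\eind[f]$ over $\acl'$-closed sets in $T'$. This completes the verification. The only genuinely nontrivial ingredient is Lemma~\ref{lem:forkingreduct}, which has already been proved using weak canonical bases (Lemma~\ref{lem:weakcb}) and the agreement of forking and dividing in simple theories; so I do not expect any further obstacle here, and the proof reduces to the bookkeeping above.
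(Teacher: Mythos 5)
Your proof is correct and follows essentially the same route as the paper: the paper likewise derives extendability from Lemma~\ref{lem:forkingreduct} (noting that a stable theory has stable forking and that weak elimination of imaginaries implies geometric elimination of imaginaries), and obtains stationarity over $\acl$-closed sets from the characterization of weak elimination of imaginaries in stable theories via \cite[Propositions 3.2 and 3.4]{CF}. Nothing is missing.
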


\section{Abstract independence without base monotonicity}\label{app:reasonable}

\noindent
Throughout this section, we assume $T$ is a complete theory, $\monster \models T$ is a monster model, and $\eind[*]$ is a ternary relation on subsets of $\monster$ satisfying: 
\begin{enumerate}
\item \textbf{Invariance}: If $A\eind[*]_C B$ and $ABC \equiv A'B'C'$, then $A'\eind[*]_{C'} B'$. 
\item \textbf{Monotonicity}: If $A\eind[*]_C B$, $A'\subseteq A$, and $B'\subseteq B$, then $A'\eind[*]_C B'$. 
\item \textbf{Symmetry}: If $A\eind[*]_C B$, then $B\eind[*]_C A$. 
\item \textbf{Transitivity}: Suppose $C\subseteq B \subseteq A$. If $A\eind[*]_B D$ and $B\eind[*]_C D$, then $A\eind[*]_C D$. 
\item \textbf{Normality}: If $A\eind[*]_C B$, then $AC\eind[*]_C B$. 
\item \textbf{Full existence}: For any $A$, $B$, and $C$, there exists $A'\equiv_C A$ such that $A'\eind[*]_C B$. 
\item \textbf{Finite character}: If $A'\eind[*]_C B$ for all finite $A'\subseteq A$, then $A\eind[*]_C B$. 
\item \textbf{Strong local character}: For every cardinal $\lambda$, there exists a cardinal $\kappa$ such that for all $A$ with $|A| = \lambda$, all $B$, and all $D\subseteq B$, there exists $D\subseteq C\subseteq B$ with $|C|\leq\max(|D|,\kappa)$ and $A\eind[*]_C B$. 
\end{enumerate}

This list of axioms is very similar to Adler's axioms for independence relations in~\cite{Adler}, with the following differences:
\begin{itemize}
\item We do not assume base monotonicity. 
\item We assume symmetry. Adler proves that symmetry follows from his other axioms, but this proof uses base monotonicity. 
\item Our formulation of local character is stronger and serves as a partial replacement for base monotonicity: If $A\eind[*]_C B$ and $C\subseteq B' \subseteq B$, we don't necessarily have $A\eind[*]_{B'} B$. But by strong local character there is $B'\subseteq B'' \subseteq B$, with $B''$ not too much bigger than $B'$, such that $A\eind[*]_{B''} B$. 
\item We assume full existence instead of extension. But extension follows from our axioms (see Remark~\ref{rem:axioms} below).
\end{itemize}

\begin{example}\label{ex:forking}
Suppose $T_0$ is a reduct of $T$ which is simple with stable forking and geometric elimination of imaginaries, and let $\monster_0$ be the corresponding reduct of $\monster$. 
As in Section~\ref{sec:indreduct}, we define:
$$A\eind[r]_C B \iff \acl(AC)\eind[f]_{\acl(C)} \acl(BC) \text{ in } \monster_0.$$
where $\acl$ is the algebraic closure operator in $\monster$. Then $\eind[r]$ satisfies our axioms.

Invariance, monotonicity, symmetry, normality, and finite character are clear from the definition and the corresponding properties for $\eind[f]$ in simple theories.

Transitivity: Suppose $C\subseteq B \subseteq A$. If $A\eind[r]_B D$ and $B\eind[r]_C D$, we have 
$$\acl(A)\eind[f]_{\acl(B)} \acl(BD) \quad \text{and} \quad \acl(B)\eind[f]_{\acl(C)} \acl(CD)\quad \text{in} \quad \monster_0.$$

Since $\acl(CD)\subseteq \acl(BD)$, by monotonicity for $\eind[f]$, $\acl(A)\eind[f]_{\acl(B)} \acl(CD)$ in $\monster_0$, so by transitivity for $\eind[f]$, $\acl(A)\eind[f]_{\acl(C)} \acl(CD)$ in $\monster_0$, so $A\eind[r]_C D$. 

Full existence: Let 
$$ p(x) = \tp(\acl(AC)/\acl(C)) \quad \text{and} \quad q(y) = \tp(\acl(BC)/\acl(C)).$$
By~\ref{lem:forkingreduct} (this is where we use the assumptions of stable forking and geometric elimination of imaginaries), there are realizations $\widehat{A}$ of $p(x)$ and $\widehat{B}$ of $q(y)$ such that $\widehat{A}\eind[f]_{\acl(C)} \widehat{B}$ in $\monster_0$. Since $\widehat{B}\equiv_{\acl(C)} \acl(BC)$, we can move $\widehat{B}$ back to $\acl(BC)$ by an automorphism $\sigma$ fixing $\acl(C)$. Let $A'\subseteq\sigma(\widehat{A})$ be the image under $\sigma$ of the copy of $A$ in $\widehat{A}$, so $A'\equiv_{\acl(C)} A$. In particular, $A'\equiv_C A$, and $\sigma(\widehat{A}) = \acl(A'C)$, so $\acl(A'C)\eind[f]_{\acl(C)} \acl(BC)$, so $A'\eind[r]_C B$. 

Strong local character: Since $T_0$ is simple, there is a cardinal $\kappa(T)$ such that for all finite $a$ and all $B$, there exists $B'\subseteq B$ with $|B'|\leq \kappa(T)$ and $a\eind[f]_{B'} B$ in $\monster_0$. Given the cardinal $\lambda$, let $\kappa = \max(\kappa(T),\lambda,|L|)$. I claim first that for all $A$ with $|A|\leq \max(\lambda,|L|)$, for all $B$, and for all $D\subseteq B$, there exists $C$ with $D\subseteq C \subseteq B$ and $|C|\leq \max(|D|,\kappa)$ such that $A\eind[f]_{\acl(C)} \acl(B)$ in $\monster_0$. Indeed, for every finite tuple $a$ from $A$, we can find $B_a\subseteq \acl(B)$ with $|B_a|\leq \kappa(T)$ such that $a\eind[f]_{B_a} \acl(B)$ in $\monster_0$. Letting $B' = \bigcup_a B_a$, by base monotonicity and finite character for $\eind[f]$, $A\eind[f]_{B'} \acl(B)$, and $|B'|\leq \kappa$, since there are $|A|$-many finite tuples from $A$. Now we obtain $B''\subseteq B$ with $B'\subseteq \acl(B'')$ by replacing each element $b\in B'$ with a finite tuple $b_1,\dots,b_n\in B$ such that $b\in \acl(b_1,\dots,b_n)$. We still have $|B''|\leq \kappa$, and $A\eind[f]_{\acl(B'')} \acl(B)$. Finally, let $C = B''\cup D$, so $D\subseteq C\subseteq B$, and $|C|\leq \max(|D|,\kappa)$. By base monotonicity for $\eind[f]$, $A\eind[f]_{\acl(C)}\acl(B)$. 

Now suppose we are given $A$ with $|A| = \lambda$, $B$, and $D\subseteq B$. Build a sequence $(C_i)_{i\in \omega}$ such that for all $i\in \omega$, $|C_i|\leq \max(|D|,\kappa)$, $D\subseteq C_i\subseteq C_{i+1}\subseteq B$, and $\acl(AC_i)\eind[f]_{\acl(C_{i+1})} \acl(B)$. For the base case, set $C_0 = D$, and for the inductive step, we can use the claim in the last paragraph, since $|\acl(AC_i)|\leq \max(\lambda,|L|)$. 

Let $C = \bigcup_{i\in\omega} C_i$. Then $|C| \leq \max(|D|,\kappa)$ and $D\subseteq C\subseteq B$. I claim that $A\eind[r]_C B$, i.e., $\acl(AC)\eind[f]_{\acl(C)} \acl(B)$. By finite character for $\eind[f]$, it suffices to show that for every finite tuple $a$ from $\acl(AC)$, $a\eind[f]_{\acl(C)}\acl(B)$. But $a$ is already contained in $\acl(AC_i)$ for some $i\in \omega$, and by base monotonicity for $\eind[f]$, $\acl(AC_i)\eind[f]_{\acl(C)}\acl(B)$, since $\acl(C_{i+1})\subseteq \acl(C)\subseteq \acl(B)$. 

\end{example}

\begin{example}\label{ex:kim}
If $T$ is $\NSOP_1$, then Kim-independence $\eind[K]$ is only defined over models. However, if we restrict our axioms to the cases where all the sets in the base of $\eind[*]$ are models, then they are all satisfied by $\eind[K]$ in $\NSOP_1$ theories. Strong local character follows from~\cite[Theorem 1.1]{KRS} and transitivity from~\cite[Theorem 3.4]{KRTransitivity}. Moreover, in all known examples, $\eind[K]$ agrees over models with an independence relation defined over arbitrary sets, which satisfies all our axioms. Recently, some progress has been made toward showing this holds in general, see~\cite{DKR}. For an example in which $\eind[K]$ does not arise from forking independence in a simple reduct, take the independence relation $\eind[I]$ in the theory $T_{m,n}$ of generic $K_{m,n}$-free incidence structures ($m,n\geq 2$) defined in~\cite{CK}. 

\medskip\noindent
In such a situation, where $\eind[*]$ agrees with Kim-independence over models, we can view the main results of this section (reasonable extension, Theorem~\ref{thm:reasext*}, the reasonable chain condition, Theorem~\ref{thm:reaschain*}, and the reasonable independence theorem, Theorem~\ref{thm:reasind*}) as saying that we can obtain certain instances of base monotonicity. For example, reasonable extension says that for all $a\eind[K]_M b$ and for all $c$, there exists $a'$ such that $a'\equiv_{Mb} a$, $a'\eind[K]_M bc$, and $a'\eind[*]_{Mb} c$. The last assertion would be immediate in the presence of base monotonicity. 
\end{example}

\begin{question}\label{q:kim}
Does every $\NSOP_1$ theory admit an independence relation $\eind[*]$ satisfying our axioms, such that $\eind[*]_M = \eind[K]_M$ for all $M\prec \monster$?
\end{question}

\noindent
We now return to the case of general $\eind[*]$. When working with this independence relation, we will almost always use the key Lemmas~\ref{lem:morley} and~\ref{lem:longseq} below, instead of appealing directly to the axioms. 

\begin{rem}\label{rem:axioms}
Note that by monotonicity, normality, and transitivity, we have the following form of transitivity: If $A\eind[*]_{BC} D$ and $B\eind[*]_C D$, then $A\eind[*]_C D$. 

\medskip\noindent
Though we will not need to use it, it may be worth noting that extension follows from our axioms: Suppose $A\eind[*]_C B$ and $B\subseteq B'$. By full existence there exists $A'\equiv_{BC} A$ such that $A'\eind[*]_{BC} B'$. By invariance, symmetry, and the form of transitivity just observed, $A'\eind[*]_C B'$. 
\end{rem}

\begin{defn}
A sequence $(b_\alpha)_{\alpha<\mu}$ is $\eind[*]$-independent over $C$ if $b_\alpha \eind[*]_{C} b_{<\alpha}$ for all $0<\alpha<\mu$. The sequence is $\eind[*]$-independent over $C$ in the type $p(y)$ if additionally $b_\alpha$ realizes $p$ for all $\alpha<\mu$. The sequence is a $\eind[*]$-Morley sequence over $C$ if additionally it is $C$-indiscernible. 
\end{defn}

\begin{lem}\label{lem:morley}
Let $p(y)$ be a type over $C$, and let $\mu$ be a cardinal. Then there exists a $\eind[*]$-Morley sequence over $C$ in $p$ of length $\mu$. 
\end{lem}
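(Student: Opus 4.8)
The statement to prove is Lemma~\ref{lem:morley}: given a type $p(y)$ over $C$ and a cardinal $\mu$, there is a $\eind[*]$-Morley sequence over $C$ in $p$ of length $\mu$. The plan is a standard two-stage construction: first build a long $\eind[*]$-independent sequence in $p$ (not necessarily indiscernible), and then extract an indiscernible sequence from it using Ramsey/compactness, while showing that $\eind[*]$-independence survives the extraction. I would carry this out as follows.

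\textbf{Step 1: Building a long independent sequence.} Given any target length $\lambda$, I claim there is a sequence $(b_\alpha)_{\alpha<\lambda}$ with each $b_\alpha\models p$ and $b_\alpha\eind[*]_C b_{<\alpha}$ for all $\alpha$. This is a transfinite recursion: at stage $\alpha$, having built $(b_\beta)_{\beta<\alpha}$, apply \textbf{full existence} to get $b_\alpha\equiv_C b$ (where $b$ is some fixed realization of $p$, so $b_\alpha\models p$) with $b_\alpha\eind[*]_C b_{<\alpha}$. This uses only full existence and invariance. The recursion goes through at every stage (including limits) since nothing in the statement references the whole sequence at once; the condition $b_\alpha\eind[*]_C b_{<\alpha}$ is exactly what full existence provides. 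So I can make $\lambda$ as large as I like.

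\textbf{Step 2: Extracting an indiscernible sequence.} Now take $\lambda$ sufficiently large relative to $\mu$ and $|T|+|C|$ (by the standard Erd\H{o}s--Rado bound, $\lambda$ a large enough iterated beth-cardinal suffices) so that by the usual Ramsey-style lemma on extracting indiscernibles there is a $C$-indiscernible sequence $(b'_\alpha)_{\alpha<\mu}$ that is "finitely based on" $(b_\alpha)_{\alpha<\lambda}$: every finite subtuple of $(b'_\alpha)$ realizes the $C$-type of some finite subtuple of $(b_\alpha)$ occurring in increasing order. In particular each $b'_\alpha\models p$. The remaining point is that $(b'_\alpha)$ is still $\eind[*]$-independent over $C$, i.e.\ $b'_\alpha\eind[*]_C b'_{<\alpha}$ for all $\alpha$. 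Here I use \textbf{finite character}: it suffices to check $b'_\alpha\eind[*]_C F$ for every finite $F\subseteq b'_{<\alpha}$. Such an $F$ together with $b'_\alpha$ is a finite increasing subtuple of $(b'_\alpha)$, hence realizes (over $C$) the type of a finite increasing subtuple $b_{\gamma_1}\dots b_{\gamma_k}b_{\gamma_{k+1}}$ of the original sequence with $\gamma_1<\dots<\gamma_k<\gamma_{k+1}$; by construction $b_{\gamma_{k+1}}\eind[*]_C b_{<\gamma_{k+1}}$, so by \textbf{monotonicity} $b_{\gamma_{k+1}}\eind[*]_C b_{\gamma_1}\dots b_{\gamma_k}$, and then by \textbf{invariance} the same holds after transporting to $b'_\alpha$ and $F$. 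Since $F$ was arbitrary finite, finite character gives $b'_\alpha\eind[*]_C b'_{<\alpha}$. Thus $(b'_\alpha)_{\alpha<\mu}$ is a $C$-indiscernible, $\eind[*]$-independent sequence in $p$, i.e.\ a $\eind[*]$-Morley sequence over $C$, as required.

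\textbf{Main obstacle.} The construction itself is routine; the one genuinely load-bearing observation is the transfer of $\eind[*]$-independence through the indiscernible extraction, which relies crucially on $\eind[*]$-independence being a property of finite subtuples in increasing order (so that "finitely based on" preserves it). This works precisely because the independence condition $b_\alpha\eind[*]_C b_{<\alpha}$ is, via monotonicity plus finite character, equivalent to a collection of conditions on finite increasing subtuples, each of which is invariant and so is read off from the $C$-type of that finite subtuple. I would make sure to state the extraction lemma in the "finitely based" form (rather than merely "$C$-indiscernible with the same EM-type"), since it is the "increasing order" bookkeeping that makes the monotonicity step legitimate. No appeal to strong local character, symmetry, normality, or transitivity is needed for this particular lemma — only full existence, invariance, monotonicity, and finite character.
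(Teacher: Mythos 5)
Your construction is the same as the paper's: build an $\eind[*]$-independent sequence in $p$ by repeated application of full existence, extract a $C$-indiscernible sequence based on it, and transfer independence back using invariance and finite character. (The paper builds only an $\omega$-indexed sequence and bases the length-$\mu$ indiscernible sequence on it by the usual Ramsey-plus-compactness extraction; your Erd\H{o}s--Rado variant with a long initial sequence is heavier machinery than necessary but equally valid, since the transfer step only ever inspects finite increasing subtuples.)

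The one genuine slip is your closing claim that symmetry is not needed. As axiomatized in this paper, finite character reduces only the \emph{left}-hand argument to its finite subsets: if $A'\eind[*]_C B$ for all finite $A'\subseteq A$, then $A\eind[*]_C B$. In your transfer step you need to reduce the \emph{right}-hand argument $b'_{<\alpha}$ to its finite subsets (the left-hand side $b'_\alpha$ is already a single realization of $p$), and the axiom as stated says nothing about that side. The fix is exactly what the paper's proof does by citing symmetry alongside finite character: rewrite the goal as $b'_{<\alpha}\eind[*]_C b'_\alpha$, apply finite character to reduce to finite $F\subseteq b'_{<\alpha}$, and flip back with symmetry before invoking monotonicity and invariance as you do. So your argument is correct once you add symmetry to the list of axioms used; the rest of the bookkeeping (in particular the observation that ``finitely based on'' preserves independence because the condition is read off from $C$-types of finite increasing subtuples) matches the paper's intent exactly.
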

\begin{proof}
Define a sequence $(c_i)_{i\in \omega}$ by induction: pick $b=c_0$ realizing $p$, and for all $n>0$, pick $c_n\equiv_C c_0$ such that $c_n\eind[*]_C c_{<n}$ by full existence. Now let $(b_\alpha)_{\alpha<\mu}$ be a $C$-indiscernible sequence based on $(c_i)_{i\in \omega}$. By finite character, symmetry, and invariance, $b_\alpha\eind[*]_C b_{<\alpha}$ for all $0<\alpha<\mu$. 
\end{proof}

\begin{lem}\label{lem:longseq}
Let $\lambda$ be a cardinal, and let $\kappa$ be the cardinal provided for $\lambda$ by strong local character. If $\mu$ is a regular cardinal greater than $\max(\kappa,|C|)$, and $(b_\alpha)_{\alpha<\mu}$ is an $\eind[*]_C$-independent sequence over $C$, then for any $a$ with $|a| = \lambda$, there exists $\beta<\mu$ such that $a\eind[*]_C b_{\beta'}$ for all $\beta\leq \beta' <\mu$.  
\end{lem}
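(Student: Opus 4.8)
The statement of Lemma~\ref{lem:longseq} asserts the existence of a ``stabilization point'' $\beta$ past which $a$ becomes $\eind[*]$-independent over $C$ from every single $b_{\beta'}$. The natural approach is to apply strong local character to $a$ over the set $B = C \cup \bigcup_{\alpha<\mu} b_\alpha$, with base $D = C$, to obtain some $C \subseteq C' \subseteq B$ with $|C'| \leq \max(|C|,\kappa)$ and $a \eind[*]_{C'} B$. Since $\mu$ is regular and strictly greater than $\max(\kappa,|C|)$, the set $C'$ — being of size at most $\max(|C|,\kappa) < \mu$ — can meet only a bounded initial segment of the sequence: there is $\beta < \mu$ such that $C' \subseteq C \cup \bigcup_{\alpha<\beta} b_\alpha$.

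\textbf{Key steps.} First, fix $\beta$ as above so that $C' \subseteq C b_{<\beta}$. Second, for any $\beta \leq \beta' < \mu$, I want to deduce $a \eind[*]_C b_{\beta'}$. From $a \eind[*]_{C'} B$ and monotonicity I get $a \eind[*]_{C'} b_{\leq \beta'}$, hence $a \eind[*]_{C'} b_{<\beta'} b_{\beta'}$ (absorbing $C'$'s parameters, which lie in $b_{<\beta}$, into $b_{<\beta'}$; use normality and monotonicity to arrange that the base and the left/right sides are compatible, so really $a \eind[*]_{C'} C' b_{<\beta'} b_{\beta'}$). Third, the sequence $(b_\alpha)$ is $\eind[*]$-independent over $C$, so $b_{\beta'} \eind[*]_C b_{<\beta'}$; since $C' \subseteq C b_{<\beta} \subseteq C b_{<\beta'}$, monotonicity gives $b_{\beta'} \eind[*]_C C' b_{<\beta'}$ (again tidying with normality so that $C'$ appears on the correct side). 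Now I apply the form of transitivity recorded in Remark~\ref{rem:axioms} (with $D = a$, base $C$, and intermediate base $C' b_{<\beta'}$, using symmetry to line up the roles): from $a \eind[*]_{C' b_{<\beta'}} b_{\beta'}$ and $b_{\beta'} \eind[*]_C C' b_{<\beta'}$ one concludes $a \eind[*]_C b_{\beta'}$. Finally, monotonicity reduces the conclusion about $b_{\beta'}$ from the (possibly larger) sets involved to exactly $a \eind[*]_C b_{\beta'}$, which is what is claimed.

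\textbf{Main obstacle.} The only real subtlety is bookkeeping with the axioms: our $\eind[*]$ has \emph{no base monotonicity}, so I cannot freely enlarge the base. The transitivity step must be set up carefully — I have $a \eind[*]_{C'} b_{\leq\beta'}$ from strong local character, and I need to peel off $b_{\beta'}$ while keeping $b_{<\beta'}$ (together with $C'$) as the base for the ``inner'' independence. Concretely: monotonicity gives $a \eind[*]_{C'} b_{<\beta'} b_{\beta'}$; by normality $a C' b_{<\beta'} \eind[*]_{C' b_{<\beta'}} b_{\beta'}$ is \emph{not} what monotonicity directly yields, so the cleanest route is to invoke the combined transitivity of Remark~\ref{rem:axioms} in the form: ``if $A \eind[*]_{BC} D$ and $B \eind[*]_C D$ then $A \eind[*]_C D$,'' taking $A = a$, $B = C' b_{<\beta'}$, $C$ our base $C$, and $D = b_{\beta'}$. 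The first hypothesis $a \eind[*]_{C'b_{<\beta'}} b_{\beta'}$ follows from $a\eind[*]_{C'} b_{\le \beta'}$ by monotonicity together with the observation (from normality) that the base $C'$ can be replaced by $C' b_{<\beta'}$ when $b_{<\beta'}$ is on the right — this is exactly the standard consequence of normality$+$monotonicity$+$transitivity that ``$A\eind[*]_C BD \Rightarrow A\eind[*]_{CB} D$''. The second hypothesis $C' b_{<\beta'} \eind[*]_C b_{\beta'}$ follows from $b_{\beta'} \eind[*]_C b_{<\beta'}$ (independence of the sequence, using $C' \subseteq C b_{<\beta}$) by symmetry and normality. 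Granting these routine manipulations, the lemma follows; the genuinely load-bearing input is the interaction of strong local character with the regularity and size of $\mu$.
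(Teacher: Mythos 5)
Your setup is the paper's: apply strong local character to get a small set $C'$ with $C\subseteq C'\subseteq Cb_{<\mu}$ and $a\eind[*]_{C'} Cb_{<\mu}$, then use regularity of $\mu$ to bury $C'$ in an initial segment $Cb_{<\beta}$. But your transitivity step has a genuine gap. To apply Remark~\ref{rem:axioms} with intermediate set $C'b_{<\beta'}$ you need $a\eind[*]_{C'b_{<\beta'}} b_{\beta'}$, and you claim this follows from $a\eind[*]_{C'} b_{\leq\beta'}$ because ``$A\eind[*]_C BD \Rightarrow A\eind[*]_{CB} D$'' is a ``standard consequence of normality $+$ monotonicity $+$ transitivity.'' It is not: that implication \emph{is} base monotonicity (in the only form it can take once normality and monotonicity are present), which is precisely the axiom this framework deliberately omits. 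The relation $\eind[r]$ of Example~\ref{ex:forking} satisfies all eight axioms and fails base monotonicity, so no such derivation can exist. Normality only adds the base to a \emph{side} of the independence, monotonicity only shrinks sides, and transitivity only passes from a larger base down to a smaller one; none of these lets you enlarge the base.

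The repair is to not enlarge the base at all: take the intermediate set in Remark~\ref{rem:axioms} to be $C'$ itself rather than $C'b_{<\beta'}$. Monotonicity applied to $a\eind[*]_{C'} Cb_{<\mu}$ already gives $a\eind[*]_{C'} b_{\beta'}$ with base $C'$, with no base change needed; and the other hypothesis $C'\eind[*]_C b_{\beta'}$ follows from the sequence independence $b_{\beta'}\eind[*]_C b_{<\beta'}$ by normality, monotonicity (using $C'\subseteq Cb_{<\beta}\subseteq Cb_{<\beta'}$), and symmetry. Then Remark~\ref{rem:axioms} with $A=a$, $B=C'$, and $D=b_{\beta'}$ yields $a\eind[*]_C b_{\beta'}$. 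This is exactly the proof in the paper; you had all the right pieces but routed the transitivity through an unnecessarily large intermediate base, which forced an illegitimate appeal to base monotonicity.
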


\begin{proof}
By strong local character, there exists $B$ with $C\subseteq B\subseteq C(b_\alpha)_{\alpha<\mu}$ and $|B| = \max(\kappa,|C|)$ such that $a\eind[*]_{B} C(b_\alpha)_{\alpha<\lambda}$. Then there exists $\beta<\mu$ such that $B\subseteq C(b_\alpha)_{\alpha<\beta}$, and by monotonicity, for all $\beta\leq \beta'<\mu$,  $b_{\beta'}\eind[*]_{C} B$, so by symmetry $B\eind[*]_{C} b_{\beta'}$. By monotonicity again, $a\eind[*]_B b_{\beta'}$, and by transitivity, $a\eind[*]_{C} b_{\beta'}$.
\end{proof}

\noindent
In applying Lemma~\ref{lem:longseq}, we will typically just write that a Morley sequence is ``long enough'', meaning that its length $\mu$ satisfies the hypotheses. The only exception is in the proof of Theorem~\ref{thm:reasind*} below, where we have to be a bit more careful with our choice of cardinal. 

\medskip\noindent
\textbf{From now on, we assume $T$ is $\NSOP_1$} and examine the relationship between $\eind[K]$ and $\eind[*]$. Our ultimate goal is to prove the ``reasonable independence theorem'' (Theorem~\ref{thm:reasind*} below). In~\cite{KRExp}, Ramsey and the first-named author proved this theorem in the special case $\eind[*] = \eind[a]$, and our proof closely follows that in~\cite{KRExp}. 

\begin{thm}[Reasonable extension]\label{thm:reasext*}
For all $a\eind[K]_M b$ and for all $c$, there exists $a'$ such that $a'\equiv_{Mb} a$, $a'\eind[K]_M bc$, and $a'\eind[*]_{Mb} c$. 
\end{thm}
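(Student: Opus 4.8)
The plan is to build $a'$ by a Morley-sequence/compactness argument, exploiting the fact that $\eind[*]$ satisfies full existence, strong local character, and the other axioms above, together with Kim's lemma for $\NSOP_1$ theories (Fact~\ref{fact:kim}) and strong finite character of $\eind[K]$. First I would replace all parameters by $Mb$ as a base; since $a\eind[K]_M b$, by an automorphism I may also assume a convenient position for $c$. Using Lemma~\ref{lem:morley}, I would construct a long $\eind[*]$-Morley sequence $(c_i)_{i<\mu}$ over $Mb$ in the type $\tp(c/Mb)$, with $\mu$ regular and large enough that Lemma~\ref{lem:longseq} applies to $a$ (and to any small set we care about). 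The sequence is $Mb$-indiscernible, and by Lemma~\ref{lem:longseq} there is a tail on which $a\eind[*]_{Mb} c_i$.

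The main point is to find a single realization $a'$ of $\tp(a/Mb)$ that simultaneously (i) stays Kim-independent from $bc$ over $M$, and (ii) is $\eind[*]$-independent from some conjugate of $c$ over $Mb$. The key step: I claim $(c_i)_{i<\mu}$ can be chosen to be a Morley sequence over $M$ for a global $M$-invariant (indeed $M$-finitely-satisfiable) type extending $\tp(c/Mb)$ — this is where the position of $b$ and $c$ relative to $M$ matters, and I would arrange it at the outset by extending $\tp(c/Mb)$ to such a global type $q$ and taking $(c_i)$ to be a $q$-Morley sequence over $M$; a further application of full existence for $\eind[*]$ along the construction (as in Lemma~\ref{lem:morley}) keeps the sequence $\eind[*]$-independent over $Mb$, and passing to an indiscernible sequence based on it preserves both properties. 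Then, because $a\eind[K]_M b$ and no formula in $\tp(a/Mb)$ Kim-divides over $M$, Kim's lemma (Fact~\ref{fact:kim}) applied to the Morley sequence $(c_i)$ shows that the partial type $\tp(a/Mb)\cup\bigcup_i \tp(a/Mc_i b)^{(i)}$-style consistency statement — more precisely, $\{\varphi(x,b,c_i): \varphi(x,b,c)\in\tp(a/Mbc)\}$ — is consistent; here I use that each $\varphi(x,b,c)\in\tp(a/Mbc)$ does not Kim-divide over $M$, so $\{\varphi(x,b,c_i):i<\omega\}$ is consistent, and then compactness over the whole indiscernible sequence. Let $a^*$ realize this type together with $\tp(a/Mb)$; then $a^*\equiv_{Mb} a$ and $a^*\equiv_{Mb} a$ with $\models\varphi(a^*,b,c_i)$ for all $i$.

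Now I would extract the desired independencies. Since $a^*\models\tp(a/Mb c_i)$ for each $i$, and $(c_i)$ is an $M$-invariant Morley sequence, a standard argument (the same one used to prove Kim's lemma gives that $a^*\eind[K]_M b$ still, and moreover $a^* \eind[K]_M b c_0$ after possibly moving to an indiscernible-over-$Ma^*b$ subsequence by Ramsey and compactness) yields $a^*\eind[K]_M bc_0$; relabel $c_0$ as the conjugate of $c$ we want. For the $\eind[*]$-independence: by Lemma~\ref{lem:longseq} the original $\eind[*]$-Morley sequence has a tail with $a\eind[*]_{Mb} c_i$, and after the indiscernibility extraction and the automorphism moving $c_0$ to a realization of $\tp(c/Mb)$ that is $\eind[K]_M$-independent with $a^*$ over $Mbc_0$-data, I transport everything by a single automorphism fixing $Mb$ to land on $c$ itself, obtaining $a'$ with $a'\equiv_{Mb} a$, $a'\eind[K]_M bc$, and $a'\eind[*]_{Mb} c$. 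I expect the main obstacle to be the bookkeeping in this last step: coordinating the two indiscernible-sequence extractions (one for Kim-independence via Fact~\ref{fact:kim} and strong finite character, one for $\eind[*]$ via strong local character) so that a \emph{single} automorphism simultaneously realizes the Kim-side and $\eind[*]$-side conclusions, rather than two incompatible ones. The resolution is to do both extractions on the \emph{same} sequence $(c_i)$ — which is legitimate precisely because we built $(c_i)$ to be both an $M$-invariant Morley sequence and $\eind[*]$-independent over $Mb$ — and then invoke Lemma~\ref{lem:longseq} and Fact~\ref{fact:kim} on that one sequence.
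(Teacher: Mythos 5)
There is a genuine gap at the heart of your construction. You claim that $\{\varphi(x,b,c_i):\varphi(x,b,c)\in\tp(a/Mbc)\}$ is consistent ``because each $\varphi(x,b,c)\in\tp(a/Mbc)$ does not Kim-divide over $M$.'' But the hypothesis is only $a\eind[K]_M b$; nothing is assumed about the relationship between $a$ and $c$, and $\tp(a/Mbc)$ may very well contain formulas that Kim-divide over $M$. (Take $a=c\notin M$: then $x=c\in\tp(a/Mbc)$, and $\{x=c_i:i<\omega\}$ is inconsistent for distinct $c_i$, so your partial type has no realization.) Producing a conjugate $a'\equiv_{Mb}a$ with $a'\eind[K]_M bc$ is exactly the \emph{extension} property of $\eind[K]$, which cannot be read off from $\tp(a/Mbc)$ itself; it needs either the standard broom argument (if no completion of $\tp(a/Mb)$ over $Mbc$ were Kim-nonforking over $M$, then by compactness $\tp(a/Mb)$ would imply a disjunction of Kim-forking formulas, hence Kim-fork over $M$) or a citation to the known fact that Kim-independence in $\NSOP_1$ theories satisfies extension. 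A secondary problem: your appeal to Kim's lemma uses the sequence $(c_i)$ with $b$ held fixed, but Kim-dividing of $\varphi(x,bc)$ over $M$ is tested against Morley sequences of global $M$-invariant types in $\tp(bc/M)$, where the whole tuple $bc$ moves; a sequence $(bc_i)$ with $b$ fixed is not such a sequence unless $b\in M$.

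Once extension for $\eind[K]$ is taken as given, the argument becomes much shorter and your elaborate bookkeeping is unnecessary. This is what the paper does: take a long enough $\eind[*]_{Mb}$-independent sequence $(c_\alpha)_{\alpha<\mu}$ in $\tp(c/Mb)$ (no indiscernibility or global invariant type is needed); apply extension for $\eind[K]$ \emph{once, to the whole sequence}, to get $a''\equiv_{Mb}a$ with $a''\eind[K]_M b(c_\alpha)_{\alpha<\mu}$; use Lemma~\ref{lem:longseq} to find $\beta$ with $a''\eind[*]_{Mb}c_\beta$; by monotonicity $a''\eind[K]_M bc_\beta$, and since $c_\beta\equiv_{Mb}c$ a single automorphism over $Mb$ carries $(a'',c_\beta)$ to the desired $(a',c)$. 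The concern you raise at the end about coordinating two incompatible extractions simply does not arise, because Kim-independence from the entire sequence is obtained in one application of extension rather than formula-by-formula via Kim's lemma.
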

\begin{proof}
Let $(c_\alpha)_{\alpha< \mu}$ be a long enough $\eind[*]_{Mb}$-independent sequence in $\tp(c/Mb)$. By extension for $\eind[K]$, there exists $a''\equiv_{Mb} a$ such that $a''\eind[K]_M b(c_\alpha)_{\alpha< \mu}$. By Lemma~\ref{lem:longseq}, there exists $\beta<\mu$ with $a''\eind[*]_{Mb} c_\beta$. By monotonicity for Kim-independence, we also have $a''\eind[K]_M bc_\beta$. Let $\sigma$ be an automorphism moving $c_\beta$ to $c$  and fixing $Mb$, and let $a' = \sigma(a'')$. Then $a'\eind[K]_M bc$ and $a'\eind[*]_{Mb}c$. 
\end{proof}

\noindent
Next, we wish to build $\eind[*]$-Morley sequences over $Ma$ which are also Kim-independent from $a$ over $M$. Lemma~\ref{lem:weirdextension}, which handles the inductive step of the construction, uses the improved independence theorem from~\cite{KRExp}.

\begin{thm}[\cite{KRExp}, Theorem 2.13]\label{thm:strongind}
If \(a_0 \eind[K]_{M} b\), \(a_1 \eind[K]_{M} c\), \(b \eind[K]_{M} c\) and $a_0 \equiv_{M} a_1$, then there exists $a$ with $a \equiv_{Mb} a_0$, $a \equiv_{Mc} a_1$, $a \eind[K]_{M} bc$, $b\eind[K]_M ac$, and $c\eind[K]_M ab$.
\end{thm}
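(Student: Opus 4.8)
The plan is to prove Theorem~\ref{thm:strongind} by iterating the ordinary (improved) independence theorem for $\eind[K]$ from~\cite{KRExp} along a long $\eind[*]$-Morley sequence, using the combination of $\eind[*]$ and $\eind[K]$ to feed the hypotheses of the independence theorem back into itself. More precisely: since we are given $a_0\eind[K]_M b$, $a_1\eind[K]_M c$, $b\eind[K]_M c$, and $a_0\equiv_M a_1$, the (plain) independence theorem already yields some $a^*$ with $a^*\equiv_{Mb} a_0$, $a^*\equiv_{Mc} a_1$, and $a^*\eind[K]_M bc$. The content of Theorem~\ref{thm:strongind} is the \emph{extra} conclusions $b\eind[K]_M a^*c$ and $c\eind[K]_M a^*b$, which by symmetry of $\eind[K]$ amount to $a^*c\eind[K]_M b$ and $a^*b\eind[K]_M c$. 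These do not come for free and must be extracted by a Kim's-lemma / Morley-sequence argument.

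The key steps, in order: First, I would use Lemma~\ref{lem:morley} (together with Reasonable extension, Theorem~\ref{thm:reasext*}) to build a long $\eind[*]$-Morley sequence $(b_\alpha)_{\alpha<\mu}$ over $M$ in $\tp(b/M)$ which is simultaneously $\eind[K]$-independent over $M$; the inductive step is exactly Lemma~\ref{lem:weirdextension} referenced in the excerpt (``handles the inductive step of the construction''), which is in turn proved from Theorem~\ref{thm:strongind} applied at each stage---so in the actual write-up the order is: prove a weaker base-case version, prove Lemma~\ref{lem:weirdextension}, then bootstrap. For the standalone statement here, assuming Lemma~\ref{lem:weirdextension} is available, I would: (i) fix a long $\eind[*]$-Morley sequence $\bar b = (b_\alpha)$ over $M$ with $b_0 = b$ and $b_\alpha\eind[K]_M b_{<\alpha}a_0$ for all $\alpha$ (using $a_0\eind[K]_M b$ as the seed); (ii) similarly relative to $c$, build a long $\eind[*]$-Morley sequence $\bar c = (c_\beta)$ over $M$ with $c_0 = c$ and $c_\beta\eind[K]_M c_{<\beta}a_1$, arranging moreover by extension and Lemma~\ref{lem:longseq} that $\bar c$ is $\eind[K]$-independent from $\bar b$ over $M$; (iii) apply the plain independence theorem (using $a_0\equiv_M a_1$, $a_0\eind[K]_M \bar b$, $a_1\eind[K]_M \bar c$, $\bar b\eind[K]_M \bar c$) to obtain $a$ with $a\equiv_{M\bar b} a_0$, $a\equiv_{M\bar c} a_1$, $a\eind[K]_M \bar b\bar c$; (iv) in particular $a\equiv_{Mb}a_0$ and $a\equiv_{Mc}a_1$ and $a\eind[K]_M bc$. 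The new part: since $a\eind[K]_M\bar b$ and $\bar b$ is $M$-indiscernible and $\eind[*]$-Morley, and since $c = c_0$ sits in $\bar c$ with $\bar c\eind[K]_M\bar b$, one shows $ac\eind[K]_M \bar b$; as $\bar b$ is a long $\eind[*]$-Morley (hence $\eind[K]$-Morley, by Kim's lemma, Fact~\ref{fact:kim}) sequence starting at $b$, Kim's lemma for the type $\tp(ac/M b)$ gives $b\eind[K]_M ac$. Symmetrically $c\eind[K]_M ab$.

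The main obstacle I expect is the bookkeeping needed to make step (ii)--(iii) consistent: one wants a single pair of long sequences $\bar b,\bar c$ that are each internally $\eind[*]$-Morley and $\eind[K]$-independent over $M$, are mutually $\eind[K]$-independent over $M$, and are ``compatibly seeded'' at $a_0$ and $a_1$ respectively, so that after applying the independence theorem the resulting $a$ is simultaneously $\eind[K]$-independent from \emph{all} of $\bar b\bar c$. Getting $\bar c\eind[K]_M\bar b$ while preserving $c_\beta\eind[K]_M c_{<\beta}a_1$ requires interleaving an extension-for-$\eind[K]$ step with Lemma~\ref{lem:longseq} to find a tail of a longer sequence that works, exactly as in the analogous argument in~\cite{KRExp} with $\eind[a]$ in place of $\eind[*]$. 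The point is that every step used---indiscernible-sequence extraction, Kim's lemma (Fact~\ref{fact:kim}), the improved independence theorem, finite character and monotonicity of $\eind[K]$, and the axioms of $\eind[*]$---is already available, so the work is combinatorial rather than conceptual. Once $b\eind[K]_M ac$ and $c\eind[K]_M ab$ are established, symmetry of $\eind[K]$ (valid in $\NSOP_1$ theories) converts them to the stated form, and the theorem follows.

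\medskip\noindent\emph{Remark on the excerpt boundary.} Strictly speaking the excerpt ends at the statement of Theorem~\ref{thm:strongind}, which is quoted from~\cite{KRExp} and thus may simply be cited; but the above is how one would reprove it in the present framework, and the same mechanism---long $\eind[*]$-Morley sequences plus Kim's lemma plus the basic independence theorem---is what drives the subsequent Theorem~\ref{thm:reaschain*} and Theorem~\ref{thm:reasind*} as well.
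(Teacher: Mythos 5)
The first thing to say is that the paper does not prove this statement at all: it is imported verbatim as Theorem 2.13 of \cite{KRExp} and used as a black box (its only role here is as an input to Lemma~\ref{lem:weirdextension}). So the correct ``proof'' in the context of this paper is the citation, as you note at the end. Judged as a standalone reproof, however, your sketch has two genuine problems. The first is circularity: you invoke Theorem~\ref{thm:reasext*}, Lemma~\ref{lem:morley} for $\eind[*]$, and especially Lemma~\ref{lem:weirdextension}, but in this paper every one of those is proved \emph{from} Theorem~\ref{thm:strongind} (Lemma~\ref{lem:weirdextension} applies it at each inductive step). Saying one should ``prove a weaker base-case version and bootstrap'' does not resolve this, because the weaker version is never identified and the bootstrap is never carried out. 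Relatedly, $\eind[*]$ is extraneous here: the statement is purely about $\eind[K]$ in an $\NSOP_1$ theory, and the proof in \cite{KRExp} uses only $\eind[K]$-machinery (the chain condition, Theorem~\ref{thm:kimchain}; Kim's lemma; the plain independence theorem), with no auxiliary independence relation and hence no extra hypotheses.

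The second and more serious problem is that the crux is asserted rather than proved. The whole content of the theorem beyond the ordinary independence theorem is the pair of extra conclusions $b\eind[K]_M ac$ and $c\eind[K]_M ab$, equivalently $ac\eind[K]_M b$ and $ab\eind[K]_M c$. Your step ``one shows $ac\eind[K]_M\bar b$'' from $a\eind[K]_M\bar b\bar c$ and $\bar c\eind[K]_M\bar b$ is exactly this content at the level of sequences: it is an instance of left concatenation ($A\eind[K]_M BC$ and $C\eind[K]_M B$ imply $AC\eind[K]_M B$), which is not among the known properties of $\eind[K]$ in $\NSOP_1$ theories and is precisely what must be argued. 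To run the Kim's-lemma argument you would need the sequence $\bar b$ to be indiscernible over $Mac$ (so that $ac$ itself realizes all the translated copies of $\tp(ac/Mb)$ along $\bar b$); the construction only yields indiscernibility over $Ma$ and over $Mc$ separately, and these do not combine. Moreover, the parenthetical ``hence $\eind[K]$-Morley, by Kim's lemma, Fact~\ref{fact:kim}'' misapplies that fact: Fact~\ref{fact:kim} concerns $q$-Morley sequences for global $M$-invariant types $q$, and an $\eind[*]$-Morley sequence produced by full existence plus indiscernible extraction is not of that form; one would instead need the (later, harder) result that $\eind[K]$-Morley sequences witness Kim-dividing. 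In short, the scaffolding is plausible but the step that actually yields the two new independencies is missing, and it is the nontrivial part of \cite[Theorem 2.13]{KRExp}.
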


\begin{lem} \label{lem:weirdextension}
If $a \eind[K]_{M} b$ and $a \eind[K]_{M} c$, then there exists $b' \equiv_{M a} b$ such that $a \eind[K]_{M} b'c$ and $b' \eind[*]_{Ma} c$.  
\end{lem}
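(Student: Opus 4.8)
The plan is to build $b'$ as the first term of a long $\eind[*]$-Morley sequence over $Ma$ in $\tp(b/Ma)$ which is also Kim-independent from $c$ over $M$, and then extract the desired $b'$ by a local character argument exactly as in the proof of reasonable extension (Theorem~\ref{thm:reasext*}). Concretely, I would construct by transfinite induction a sequence $(b_\alpha)_{\alpha<\mu}$ with $\mu$ a sufficiently large regular cardinal, such that each $b_\alpha\equiv_{Ma}b$, the sequence is $\eind[*]$-independent over $Ma$, it is $Ma$-indiscernible, and moreover $a\eind[K]_M b_{<\alpha}c$ holds at every stage (equivalently $b_{<\alpha}c\eind[K]_M a$ by symmetry of Kim-independence). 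Once this is done, Lemma~\ref{lem:longseq} gives some $\beta<\mu$ with $a\eind[*]_{Ma}b_\beta c$ — wait, more precisely applied to the base $Ma$ and the $\eind[*]_{Ma}$-independent sequence $(b_\alpha)$ it gives $c\eind[*]_{Ma}b_\beta$, hence by symmetry $b_\beta\eind[*]_{Ma}c$; and by construction $a\eind[K]_M b_\beta c$; so setting $b'=b_\beta$ (after noting $b_\beta\equiv_{Ma}b$) finishes the proof, possibly after applying an automorphism fixing $Ma$ to arrange $b'\equiv_{Ma}b$ on the nose rather than just having the right type.

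The heart of the matter is the inductive step: given that $a\eind[K]_M b_{<\alpha}c$ already holds and the partial sequence is $\eind[*]$-independent and indiscernible over $Ma$, I need to produce $b_\alpha\equiv_{Ma}b$ with $b_\alpha\eind[*]_{Ma}b_{<\alpha}c$ and $a\eind[K]_M b_{<\alpha}c\,b_\alpha$. This is precisely where Lemma~\ref{lem:weirdextension} itself — in the form just established at the previous inductive stage, or rather a one-step version of it — would be circular, so instead I would invoke Theorem~\ref{thm:strongind} (the improved independence theorem from~\cite{KRExp}). The set-up: from $a\eind[K]_M b_{<\alpha}c$ (monotonicity gives both $a\eind[K]_M b_{<\alpha}$ and $a\eind[K]_M c$ are implied, but we want more) and from $a\eind[K]_M b$ in the hypothesis of the lemma, together with $b\eind[K]_M b_{<\alpha}c$ arranged by extension for $\eind[K]$, one applies Theorem~\ref{thm:strongind} with the two copies of $\tp(a/M)$... actually the cleaner route is to apply it to find a realization $b_\alpha$ of $\tp(b/Ma)$ amalgamating appropriately: take the two "sides" to be $b$ over $Ma$ and $b$ over the Kim-independent data $b_{<\alpha}c$, glued by the common type $\tp(b/M)$, producing $b_\alpha\equiv_{Ma}b$, $b_\alpha\eind[K]_M b_{<\alpha}c$, and crucially $a b_{<\alpha}c\eind[K]_M b_\alpha$ — the last conjunct of Theorem~\ref{thm:strongind} — which gives back $a\eind[K]_M b_{<\alpha}c\,b_\alpha$ by monotonicity and symmetry. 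Finally, since so far $b_\alpha$ need only be $\eind[K]$-independent and we want $\eind[*]$-independence, at this stage one does \emph{not} yet get $b_\alpha\eind[*]_{Ma}b_{<\alpha}c$; rather, $\eind[*]$-independence of the whole sequence is what allows Lemma~\ref{lem:longseq} to fire at the end, and the $\eind[*]$-independence of $(b_\alpha)$ over $Ma$ in $\tp(b/Ma)$ is arranged by instead building a $\eind[*]$-Morley sequence (via Lemma~\ref{lem:morley}) and only afterwards correcting it to be Kim-independent from $a$ using $Ma$-indiscernibility and a standard extraction/compactness argument.

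I expect the main obstacle to be organizing this two-layered independence bookkeeping correctly: we simultaneously need the sequence $(b_\alpha)$ to be $\eind[*]$-Morley over $Ma$ (so that the local-character Lemma~\ref{lem:longseq} applies to pull out a term $\eind[*]$-independent from $c$ over $Ma$) and to remain, as a set together with $c$, Kim-independent from $a$ over $M$ (so the final $b_\beta$ inherits $a\eind[K]_M b_\beta c$). Reconciling these will require care about the order of operations — one natural strategy, following the pattern already visible in~\cite{KRExp} and in Theorem~\ref{thm:reasext*}, is: first use Lemma~\ref{lem:morley} to get a $\eind[*]$-Morley sequence $(c_\alpha)_{\alpha<\mu}$ over $Mb$ in $\tp(c/Mb)$ (exploiting $a\eind[K]_M b$), then use extension for $\eind[K]$ to move $a$ off of $b(c_\alpha)_{\alpha<\mu}$, then apply Lemma~\ref{lem:longseq} to find $c_\beta$ with $a\eind[*]_{Mb}c_\beta$, and finally an automorphism brings $c_\beta$ back to $c$ and $b$ to a conjugate $b'$ with the required properties — checking that $a\eind[K]_M b'c$ survives this is where Theorem~\ref{thm:strongind} (or just monotonicity of $\eind[K]$) is used. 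Making sure all the "long enough" cardinals are chosen consistently and that the final automorphism fixes $M$ while moving things into the right position is the fiddly part, but it is entirely routine given the tools already assembled.
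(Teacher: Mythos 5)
Your proposal assembles the right toolbox (reasonable extension, Theorem~\ref{thm:strongind}, long $\eind[*]$-Morley sequences plus Lemma~\ref{lem:longseq}, a final automorphism over $Ma$), but neither of the two concrete assemblies you offer actually closes the argument, and the gap sits exactly at the point you flag as "the fiddly part." Your first plan puts the $\eind[*]$-Morley sequence on the $b$-side: a sequence $(b_\alpha)$ of copies of $b$ over $Ma$, $\eind[*]$-independent over $Ma$, with $a\eind[K]_M b_{<\alpha}c$ throughout. The inductive step then needs a single $b_\gamma$ satisfying \emph{both} $b_\gamma\eind[*]_{Ma}b_{<\gamma}$ and the Kim-independence with $c$ attached; reasonable extension delivers the former but only $a\eind[K]_M b_\gamma$ (not $a\eind[K]_M b_\gamma c$), while Theorem~\ref{thm:strongind} delivers the latter but says nothing about $\eind[*]$. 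Your proposed reconciliation --- build the $\eind[*]$-Morley sequence first via Lemma~\ref{lem:morley} and "correct it afterwards to be Kim-independent from $a$" --- cannot work: $a$ lies in the base $Ma$ of the Morley sequence, so any correcting automorphism either moves $a$ (destroying $\eind[*]$-Morleyness over $Ma$) or fixes $Ma$ (and then changes nothing about Kim-independence from $a$). Your second plan (the $(c_\alpha)$-sequence over $Mb$) is a verbatim rerun of the proof of Theorem~\ref{thm:reasext*} and lands on $a'\eind[*]_{Mb}c$; since $\eind[*]$ has no base monotonicity and $Mb$ and $Ma$ are simply different sets, this is not the required conclusion $b'\eind[*]_{Ma}c$.

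The paper resolves the tension by distributing the two kinds of independence across two separate stages on opposite sides. First it builds the $\eind[*]$-Morley sequence on the $c$-side over $Ma$: copies $c_i\equiv_{Ma}c$ with $c_i\eind[*]_{Ma}c_{<i}$ and $c_i\eind[K]_M ac_{<i}$, which reasonable extension alone provides (the Kim-requirement here is exactly of the form that theorem delivers). Then, with the $\eind[*]$-bookkeeping finished, it finds a \emph{single} $b''\equiv_{Ma}b$ with $a\eind[K]_M b''c_\alpha$ for every $\alpha$ --- note this is a condition on each pair $(b'',c_\alpha)$ separately, hence amenable to compactness --- by an $\omega$-length induction using only Theorem~\ref{thm:strongind}, whose extra conclusions $b\eind[K]_M ac$ and $c\eind[K]_M ab$ are what keep the induction alive. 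Finally Lemma~\ref{lem:longseq} extracts $c_\beta$ with $b''\eind[*]_{Ma}c_\beta$ and an automorphism over $Ma$ sends $c_\beta$ to $c$. (Your $b$-side induction can likely be repaired by interleaving reasonable extension with Theorem~\ref{thm:strongind} at every step and carrying the additional invariant $ab_{<\gamma}\eind[K]_M c$, but that is a genuinely different and more delicate argument than the one you wrote down.)
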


\begin{proof}
We build a sequence $(c_i)_{i<\omega}$ by induction, such that for all $i<\omega$, the following conditions hold: 
\begin{enumerate}
\item $c_i \equiv_{Ma} c$.
\item $c_i \eind[K]_M ac_{< i}$.
\item $c_i \eind[*]_{Ma} c_{< i}$.
\end{enumerate}

Set $c_0 = c$, and the induction step follows from Theorem~\ref{thm:reasext*}. 

Let $I = (c_\alpha)_{\alpha<\kappa}$ be a long enough $\eind[*]$-Morley sequence over $Ma$ based on $(c_i)_{i<\omega}$. I claim that there exists $b''\equiv_{Ma} b$ such that $a\eind[K]_M b''c_\alpha$ for all $\alpha<\kappa$. By compactness, it suffices to show that for all $n<\omega$, there exists $b_n\equiv_{Ma} b$ such that $a\eind[K]_M b_nc_i$ for all $i<n$. We argue by induction on $n$, additionally ensuring that $b_n\eind[K]_M ac_{<n}$ for all $n$.

In the base case, we may take $b_0 = b$. Suppose we are given $b_n$. By extension for Kim-independence, choose $b_n' \equiv_{M} b_n$ with $b_n' \eind[K]_{M} c_{n}$.  Then since we also have $b_n\eind[K]_M ac_{< n}$ and $c_{n} \eind[K]_{M} ac_{< n}$, we may apply the strengthened independence theorem (Theorem \ref{thm:strongind}), to find $b_{n+1}$ such that $b_{n+1}\equiv_{Mac_{< n}} b_n$, $b_{n+1}\equiv_{Mc_{n}} b_n'$, $b_{n+1} \eind[K]_{M} ac_{\leq n}$, and $ac_{< n}\eind[K]_M b_{n+1}c_{n}$.  In particular, we also have $b_{n+1}\equiv_{Ma} b_n \equiv_{Ma} b$ and $a\eind[K]_M b_{n+1}c_{i}$ for all $i<n+1$.

Having obtained our $b''$ as described above, by Lemma~\ref{lem:longseq} there exists $\beta<\kappa$ such that $b''\eind[*]_{Ma} c_\beta$. Let $\sigma$ be an automorphism fixing $Ma$ and moving $c_\beta$ to $c$, and let $b' = \sigma(b'')$. 
\end{proof}

\begin{thm}\label{thm:goodseq}
If $a \eind[K]_{M} b$, then for any cardinal $\kappa$, there exists a $\eind[*]$-Morley sequence over $Ma$, $I = (b_\alpha)_{\alpha<\kappa}$, with $b_0 = b$ and $a\eind[K]_M I$.
\end{thm}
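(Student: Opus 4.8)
The plan is to build the Morley sequence $I = (b_\alpha)_{\alpha<\kappa}$ by transfinite recursion, at each stage producing one more term and ensuring that the sequence remains $\eind[*]$-independent over $Ma$ while staying Kim-independent from $a$ over $M$; then, as usual, extract a $C$-indiscernible sequence based on the independent sequence to upgrade $\eind[*]$-independence to the full $\eind[*]$-Morley property. The key engine for the successor step is Lemma~\ref{lem:weirdextension}: given that $a\eind[K]_M b_{<\alpha}$ (more precisely, that $a$ is Kim-independent over $M$ from the tuple enumerating the partial sequence constructed so far), and given a fixed $b$ with $a\eind[K]_M b$, we want to adjoin a copy of $b$ over $Ma$ that is Kim-independent from $a b_{<\alpha}$ over $M$ and $\eind[*]$-independent from $b_{<\alpha}$ over $Ma$. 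This is precisely the shape of Lemma~\ref{lem:weirdextension} with the roles ``$b$'' $\mapsto$ the new term, ``$c$'' $\mapsto$ the old sequence $b_{<\alpha}$.

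More concretely, I would argue as follows. Set $b_0 = b$; this satisfies $a\eind[K]_M b_0$ by hypothesis. Assume inductively that we have constructed $(b_\beta)_{\beta<\alpha}$ such that, writing $I_{<\alpha}$ for the tuple enumerating this sequence, we have $a\eind[K]_M I_{<\alpha}$, $I_{<\alpha}$ is $\eind[*]$-independent over $Ma$, and each $b_\beta\equiv_{Ma} b$. We have $a\eind[K]_M b$ and $a\eind[K]_M I_{<\alpha}$. Apply Lemma~\ref{lem:weirdextension} with ``$b$'' $= b$ (wait — we need the new term, so let me instead apply it with the fixed ``$b$'' slot equal to $b$ and the ``$c$'' slot equal to $I_{<\alpha}$): we obtain $b_\alpha\equiv_{Ma} b$ with $a\eind[K]_M b_\alpha I_{<\alpha}$ and $b_\alpha\eind[*]_{Ma} I_{<\alpha}$. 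The first of these, together with monotonicity for $\eind[K]$, gives $a\eind[K]_M I_{\leq\alpha}$; the second is exactly the $\eind[*]$-independence condition needed to extend the independent sequence by one term; and $b_\alpha\equiv_{Ma} b$ maintains the requirement that all terms have the same type over $Ma$ (hence in particular the same type over $M$, so we may base an indiscernible sequence on the construction). This completes the successor step; limit stages require nothing since the conditions $a\eind[K]_M I_{<\alpha}$ (finite character for $\eind[K]$) and $\eind[*]$-independence are of finite character in the appropriate sense, so they pass through unions.

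Having built an $\eind[*]$-independent sequence $(b_\alpha)_{\alpha<\kappa}$ over $Ma$ in the type $\tp(b/Ma)$ with $a\eind[K]_M (b_\alpha)_{\alpha<\kappa}$, I extract a $Ma$-indiscernible sequence $I' = (b'_\alpha)_{\alpha<\kappa}$ based on it (by Ramsey and compactness, working at a sufficiently large cardinal first and then cutting down, exactly as in Lemma~\ref{lem:morley}). By finite character, symmetry, and invariance for $\eind[*]$, the sequence $I'$ is $\eind[*]$-independent over $Ma$, and being $Ma$-indiscernible it is a genuine $\eind[*]$-Morley sequence over $Ma$. For the Kim-independence clause $a\eind[K]_M I'$: since each finite subtuple of $I'$ realizes (over $M$, indeed over $Ma$) the EM-type of a finite subtuple of the original sequence, and $a\eind[K]_M (\text{finite subtuple of original})$, invariance of $\eind[K]$ under automorphisms gives $a\eind[K]_M(\text{finite subtuple of }I')$, and then finite character for $\eind[K]$ yields $a\eind[K]_M I'$. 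Finally, after an automorphism of $\monster$ fixing $Ma$ we may arrange $b'_0 = b_0 = b$, since $b'_0\equiv_{Ma} b$. This gives the desired $I$ (renaming $I'$ as $I$).

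The main obstacle — and the only genuinely substantive point — is the successor step, which rests entirely on Lemma~\ref{lem:weirdextension}; everything else is the standard indiscernible-extraction and finite-character bookkeeping that already appears in Lemmas~\ref{lem:morley} and~\ref{lem:longseq} and in the proof of Theorem~\ref{thm:reasext*}. One subtlety to be careful about is that in invoking Lemma~\ref{lem:weirdextension} at stage $\alpha$ I am feeding it the growing object $I_{<\alpha}$ in the ``$c$'' slot, whose size grows with $\alpha$; but Lemma~\ref{lem:weirdextension} has no cardinality restriction on its parameters, so this is harmless. A second point is to make sure the indiscernible sequence is taken over $Ma$ (not just $M$), which is needed both for the Kim-independence transfer and for the final automorphism normalizing $b'_0$ to $b$; this is automatic since all the $b_\alpha$ have the same type over $Ma$ by construction.
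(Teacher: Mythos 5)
Your proof is correct and follows essentially the same route as the paper: build an $\eind[*]$-independent sequence in $\tp(b/Ma)$ by iterating Lemma~\ref{lem:weirdextension} (the paper does this for length $\omega$ rather than $\kappa$, which is immaterial), extract an $Ma$-indiscernible sequence based on it, transfer both independence conditions by invariance and finite character, and normalize $b_0=b$ by an automorphism over $Ma$.
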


\begin{proof}
We mimic the proof of Lemma~\ref{lem:morley}, building a sequence $(c_i)_{i < \omega}$ in $\tp(b/Ma)$ by induction, such that $c_i\equiv_{Ma} b$,  $c_{i} \eind[*]_{Ma} c_{<i}$, and additionally $a \eind[K]_{M} c_{\leq i}$ for all $i<\omega$. Set $c_{0} = b$, and for the induction step we can use Lemma~\ref{lem:weirdextension}, since $a\eind[K]_M c_{<i}$ and $a\eind[K]_M b$. 

Now let $I = (b_\alpha)_{\alpha < \kappa}$ be a $\eind[*]$-Morley sequence over $Ma$ based on $(c_i)_{i<\omega}$. After an automorphism fixing $Ma$, we may assume $b_0 = b$. And $a\eind[K]_M I$ by invariance and the finite character of Kim-independence.
\end{proof}

\noindent
Theorem~\ref{thm:kimchain} is a chain condition for Kim-independence.

\begin{thm}[\cite{Kim} Proposition 3.20]\label{thm:kimchain}
Suppose $a\eind[K]_M b$ and $I = (b_\alpha)_{\alpha<\kappa}$ is a $q$-Morley sequence over $M$ with $b_0 = b$, for some global $M$-invariant type $q(y)$ extending $\tp(b/M)$. Then there exists $a'$ such that $a'\equiv_{Mb} a$, $a'\eind[K]_M I$, and $I$ is $Ma'$-indiscernible. 
\end{thm}

\noindent
The next theorem strengthens the conclusion that $I$ is $Ma'$-indiscernible to the conclusion that $I$ is $\eind[*]$-Morley over $Ma'$, with the caveat that the indexing of $I$ has to be reversed. For convenience in the application of this theorem, we assume to start with that $I$ is a reverse $q$-Morley sequence.

\medskip\noindent
Let $p(x)$ be a global $M$-invariant type.
Then a {\bf reverse $p$-Morley sequence over $M$} is a sequence $(a_\alpha)_{\alpha<\kappa}$ such that $a_\alpha$ realizes $p|_{Ma_{>\alpha}}$ for all $\alpha<\kappa$. 
Note any two reverse $p$-Morley sequences of the same length realize the same type over $M$, since for any $\alpha_1<\alpha_2<\dots<\alpha_n<\kappa$, $\tp(a_{\alpha_n}a_{\alpha_{n-1}}\dots a_{\alpha_1}/M) = p^{\otimes n}|_M$. 

\begin{thm}[Reasonable chain condition]\label{thm:reaschain*}
Suppose $a\eind[K]_M b$ and $I = (b_\alpha)_{\alpha<\kappa}$ is a reverse $q$-Morley sequence over $M$ with $b_0 = b$, for some global $M$-invariant type $q(y)$ extending $\tp(b/M)$. Then there exists $a'$ such that $a'\equiv_{Mb} a$, $a'\eind[K]_M I$, and $I$ is a $\eind[*]$-Morley sequence over $Ma'$.
\end{thm}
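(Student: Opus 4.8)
The plan is to mimic the proof of Theorem~\ref{thm:kimchain} (the ordinary chain condition for Kim-independence), but to replace the role of $M$-indiscernibility of $I$ in the conclusion with the stronger property that $I$ is $\eind[*]$-Morley over $Ma'$. The key input that makes this possible is Theorem~\ref{thm:goodseq}: whenever $a\eind[K]_M b$, we can build a $\eind[*]$-Morley sequence over $Ma$ starting with $b$ which is simultaneously Kim-independent from $a$ over $M$. So first I would apply Theorem~\ref{thm:goodseq} to the given pair $a\eind[K]_M b$ to obtain a $\eind[*]$-Morley sequence $J = (c_\alpha)_{\alpha<\lambda}$ over $Ma$, of some large length $\lambda$, with $c_0 = b$ and $a\eind[K]_M J$. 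The issue is that this sequence $J$ is $\eind[*]$-Morley over $Ma$, but it is built over $a$, whereas the sequence $I$ we are handed in the statement is a (reverse) $q$-Morley sequence that knows nothing about $a$.

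The second step is to reconcile the two sequences. Since $J$ is in particular an $Ma$-indiscernible sequence, it is based on a global $Ma$-invariant type (or, by extracting, we may assume $J$ itself is generated by a global invariant type $q'$ extending $\tp(b/M)$; note $q'|_M = \tp(b/M) = q|_M$ as well). The plan is to use the uniqueness of reverse Morley sequences over $M$: any two reverse $q$-Morley sequences of the same length have the same type over $M$, so by Ramsey/compactness together with the standard modeling property, after reindexing $J$ in reverse order and possibly extracting an indiscernible subsequence, we can arrange a sequence $I'$ which is a reverse $q$-Morley sequence over $M$ with the same $M$-type as $I$, such that $I'$ is still $\eind[*]$-Morley over $Ma$ and $a\eind[K]_M I'$. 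Because $I'\equiv_M I$, there is an automorphism $\tau$ fixing $M$ taking $I'$ to $I$; setting $a' = \tau(a)$ then gives $a'\eind[K]_M I$ and $I$ is $\eind[*]$-Morley over $Ma'$. The condition $b_0 = b$ is recovered because $c_0 = b$ in $J$ and we can insist the reindexing keeps the designated endpoint fixed, or absorb it into $\tau$; and $a'\equiv_{Mb} a$ follows since $\tau$ fixes $M$ and $b = b_0$ lies in $I$.

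The main obstacle I expect is bookkeeping around the reversal of the indexing and making sure that extracting an indiscernible (or $\eind[*]$-Morley) subsequence from $J$ does not destroy the two features we need: Kim-independence of $a$ from the whole sequence (this is fine, since $a\eind[K]_M J$ is preserved under passing to subsequences and reindexings by finite character and invariance of Kim-independence) and the property of being a reverse $q$-Morley sequence over $M$ (this requires that the base global invariant type $q'$ generating $J$ restricts to $q|_M$ on $M$, and that we extract carefully so that the extracted sequence is actually reverse-Morley for a global $M$-invariant type). One clean way to handle this: extract from the reverse of $J$ an $M$-indiscernible sequence $I''$; it is automatically a (reverse) Morley sequence over $M$ for the Ehrenfeucht--Mostowski type, which extends $\tp(b/M)$, and by the uniqueness statement for reverse Morley sequences any two such of the same length are $M$-conjugate, so $I''\equiv_M I$. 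The properties "$\eind[*]$-Morley over $Ma$" and "$a\eind[K]_M(\cdot)$" transfer along the extraction by the modeling property and invariance/finite character respectively; then conjugate to land on $I$ exactly. I would flag that we need $\lambda$ chosen large enough for the extraction (in terms of the length $\kappa$ of $I$ and $|T|$), exactly as in the applications of Lemma~\ref{lem:longseq} elsewhere in this appendix.
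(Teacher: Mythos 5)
Your first step (invoking Theorem~\ref{thm:goodseq} to get a $\eind[*]$-Morley sequence $J$ over $Ma$ with $c_0=b$ and $a\eind[K]_M J$) matches the key ingredient of the paper's proof. But your second step --- reconciling $J$ with the given $I$ --- has a genuine gap. You extract an $M$-indiscernible sequence $I''$ from the reverse of $J$, observe that it is a reverse Morley sequence for \emph{some} global $M$-invariant type $q'$ extending $\tp(b/M)$, and then conclude $I''\equiv_M I$ by ``uniqueness of reverse Morley sequences.'' That uniqueness statement only applies to two reverse Morley sequences of the \emph{same} global invariant type: if $q'\neq q$ then, even though $q'|_M=q|_M$, the two sequences need not realize the same type over $M$. (In the random graph, take $q$ a global type of a vertex adjacent to every parameter and $q'$ of a vertex adjacent to none; both extend the unique $1$-type over $M$, but their Morley sequences are a clique and an anticlique.) Nothing in Theorem~\ref{thm:goodseq} lets you control which invariant type $J$ is based on --- it is built by full existence for $\eind[*]$, not from $q$ --- so there is in general no automorphism over $M$ carrying $I''$ to the given $I$, and your argument does not produce an $a'$ for which \emph{that specific} $I$ is $\eind[*]$-Morley.

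The paper closes exactly this gap with an induction that interleaves the two requirements: at stage $n$ it has a tuple $(c_0,\dots,c_n)\models q^{\otimes(n+1)}|_M$ (so the tuple realizes the correct reverse $q$-Morley type over $M$ for the given $q$) with $c_i\eind[*]_{Ma}c_{>i}$ and $a\eind[K]_M (c_i)_{i\leq n}$. To extend, it takes a long $\eind[*]$-Morley sequence $J$ over $Ma$ from Theorem~\ref{thm:goodseq}, realizes $q^{\otimes(n+1)}|_{MJ}$ by a tuple $(d_1,\dots,d_{n+1})$ (so that prepending any element of $J$ yields a realization of $q^{\otimes(n+2)}|_M$), and applies the independence theorem to amalgamate the copy of $a$ over $J$ with the copy over $d_1\dots d_{n+1}$; Lemma~\ref{lem:longseq} then selects a suitable element of $J$ to serve as the new first coordinate. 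Compactness, extraction, and conjugation among reverse $q$-Morley sequences for the \emph{same} $q$ finish the proof. To salvage your outline you would need some such amalgamation step to force the constructed sequence to be reverse $q$-Morley for the given $q$ while retaining $\eind[*]$-independence over $Ma$ and Kim-independence from $a$.
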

\begin{proof}
We show by induction on $n$ that there exists $(c_0,\dots,c_n) \models q^{\otimes (n+1)}|_M$ such that $c_i\equiv_{Ma} b$ for all $i\leq n$, $c_i\eind[*]_{Ma} c_{>i}$ for all $0\leq i<n$, and $a\eind[K]_M (c_i)_{i\leq n}$.

When $n= 0$, taking $c_0 = b$ suffices. So suppose we are given the tuple $(c_0,\dots,c_n)$ by induction. By Theorem~\ref{thm:goodseq}, there is a long enough $\eind[*]$-Morley sequence $J = (d_{0,\alpha})_{\alpha<\kappa}$ over $Ma$ in $\tp(b/Ma)$, such that $a\eind[K]_M J$.

Let $(d_1,\dots,d_{n+1})$ realize $q^{\otimes(n+1)}|_{MJ}$. Since $d_{0,\alpha}\models q|_M$ for all $\alpha$, we have $(d_{0,\alpha},d_1,\dots,d_{n+1})\models q^{\otimes(n+2)}|_M$ for all $\alpha$. Let $\sigma\in \Aut(\monster/M)$ be such that $\sigma(c_i) = d_{i+1}$ for all $i\leq n$, and let $a' = \sigma(a)$. Now $a\equiv_M a'$, $a\eind[K]_M J$ (by choice of $J$), $a'\eind[K]_M d_1 \dots d_{n+1}$ (by invariance and induction), and $d_1\dots d_{n+1}\eind[K]_M J$ (since $\tp(d_1 \dots d_{n+1}/MJ)$ extends to a global $M$-invariant type). 

Applying the independence theorem, we find $a''$ with $a''\equiv_{MJ} a$, $a''\equiv_{Md_1\dots d_{n+1}} a'$, and $a''\eind[K]_M Jd_1\dots d_{n+1}$. By invariance, we have $d_i \eind[*]_{Ma''} d_{>i}$ for all $1\leq i < n+1$, and by Lemma~\ref{lem:longseq}, there is some $\beta <\kappa$ such that $d_1\dots d_{n+1} \eind[*]_{Ma''} d_{0,\beta}$. Defining $d_0 = d_{0,\beta}$, by symmetry we have $d_i \eind[*]_{Ma''} d_{>i}$ for all $0\leq i<n+1$.

It remains to move $a''$ back to $a$ by an automorphism $\sigma\in \Aut(\monster/M)$. The tuple $\sigma(d_0),\dots,\sigma(d_{n+1})$ has the desired properties: $a\sigma(d_0)\equiv_M a''d_0 \equiv_M ad_0 \equiv_M ab$, $a\sigma(d_i)\equiv_M a''d_i \equiv_M a'd_i \equiv ac_{i-1} \equiv ab$ for all $0<i\leq n+1$, $\sigma(d_i)\eind[*]_{Ma} \sigma(d_{>i})$ for all $0\leq i < n+1$, and $a\eind[K]_M \sigma(d_{\leq (n+1)})$. 

By compactness, and reversing our indexing, we can find a reverse $q$-Morley sequence over $M$, $(c_\alpha)_{\alpha<\kappa}$, such that $c_\alpha\equiv_{Ma} b$ for all $\alpha<\kappa$, $c_\alpha\eind[*]_{Ma} c_{<\alpha}$ for all $\alpha<\kappa$, and $a \eind[K]_M I'$. In fact, we can assume $I'$ is $Ma$-indiscernible, by replacing it with an $Ma$-indiscernible sequence based on it. As $I'$ and $I$ are both reverse $q$-Morley sequences over $M$, we can move $I'$ to $I$ by an automorphism $\sigma\in \Aut(\monster/M)$, and take $a' = \sigma(a)$.
\end{proof}

\noindent
Combining the reasonable chain condition with the usual chain condition gives us Lemma~\ref{lem:doublesequence}, which is the key ingredient in the proof of the reasonable independence theorem. 

\begin{lem}
\label{lem:doublesequence}
Suppose $a\eind[K]_M b$, and let $p(x)$ and $q(y)$ be global invariant types such that $\tp(a/M)\subseteq p(x)$ and $\tp(b/M)\subseteq q(y)$. Then for any $\kappa$ and $\kappa'$, there exist sequences $I = (a_\alpha)_{\alpha<\kappa}$ and $J = (b_\beta)_{\beta<\kappa'}$ such that:
\begin{enumerate}
\item $I$ is a reverse $p$-Morley sequence over $M$.
\item $J$ is a reverse $q$-Morley sequence over $M$.
\item $a_0 = a$ and $b_0 = b$.
\item $J$ is $\eind[*]$-Morley over $Ma$.
\item $I$ is $MJ$-indiscernible.
\item $I\eind[K]_M J$. 
\end{enumerate} 
In particular, $J$ is $\eind[*]$-Morley over $Ma_{\alpha}$ for all $\alpha<\kappa$, and $a_\alpha b_\beta \equiv_M ab$ for all $\alpha<\kappa$ and $\beta<\kappa'$. 
\end{lem}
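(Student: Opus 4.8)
The plan is to build the two sequences $I$ and $J$ in order, using the results already established in this appendix, with the reasonable chain condition (Theorem~\ref{thm:reaschain*}) doing the main work. First I would produce $J$: start with a reverse $q$-Morley sequence $J_0 = (b_\beta)_{\beta<\kappa'}$ over $M$ with $b_0 = b$; this exists since $q$ is a global $M$-invariant type extending $\tp(b/M)$. Now apply Theorem~\ref{thm:reaschain*} with $a$, $b$, and $J_0$ in place of $a$, $b$, $I$: we obtain $a'\equiv_{Mb} a$ with $a'\eind[K]_M J_0$ and $J_0$ a $\eind[*]$-Morley sequence over $Ma'$. Replacing $a'$ by $a$ via an automorphism fixing $Mb$ (and correspondingly moving $J_0$ to a new sequence $J$), we get $J = (b_\beta)_{\beta<\kappa'}$, a reverse $q$-Morley sequence over $M$ with $b_0 = b$, satisfying $a\eind[K]_M J$ and $J$ $\eind[*]$-Morley over $Ma$. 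This secures (1), (2), (3), (4), and $a\eind[K]_M J$.

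Next I would build $I$ over the base $MJ$. Since $a\eind[K]_M J$, symmetry of Kim-independence gives $J\eind[K]_M a$; I want to spread $a$ out into a reverse $p$-Morley sequence that is Kim-independent from $J$ and $MJ$-indiscernible. Apply Theorem~\ref{thm:kimchain} with base $M$, the independent pair being $a\eind[K]_M J$ (using $J$ in the role of the "$b$" and noting $\tp(J/M)$ extends to a global $M$-invariant type — namely the appropriate power of $q$ — so $J$ can serve as $b_0$ of a $q'$-Morley sequence): actually, cleaner is to first take a reverse $p$-Morley sequence $I_0 = (a_\alpha)_{\alpha<\kappa}$ over $M$ with $a_0 = a$, then use the (forward) chain condition Theorem~\ref{thm:kimchain} relative to $J$, i.e.\ with $J\eind[K]_M a$ and $I_0$ a reverse $p$-Morley sequence over $M$, to obtain $J'\equiv_{Ma} J$ with $J'\eind[K]_M I_0$ and $I_0$ being $MJ'$-indiscernible. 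Since $J'\equiv_{Ma} J$, the property "$J'$ is $\eind[*]$-Morley over $Ma$" still holds (it transfers along the $Ma$-automorphism), and then moving $J'$ back to $J$ by an automorphism fixing $Ma$ carries $I_0$ to a sequence $I = (a_\alpha)_{\alpha<\kappa}$ with $a_0 = a$, $I$ a reverse $p$-Morley sequence over $M$ (this is preserved since reverse $p$-Morley sequences over $M$ of a fixed length all have the same type over $M$, and the automorphism fixes $M$), $I$ $MJ$-indiscernible, and $I\eind[K]_M J$. This gives (1), (5), (6), and preserves (2), (3), (4).

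For the "in particular" clauses: once $I$ is $MJ$-indiscernible and $J$ is $\eind[*]$-Morley over $Ma = Ma_0$, indiscernibility of $I$ over $MJ$ means that for each $\alpha$, the pair $(a_\alpha, J)$ has the same type over $M$ as $(a_0, J) = (a, J)$, so applying an automorphism fixing $M$ and moving $a_\alpha$ to $a$ (and $J$ to itself, up to the $MJ$-indiscernibility of $I$) shows $J$ is $\eind[*]$-Morley over $Ma_\alpha$ as well. Similarly $a_\alpha b_\beta\equiv_M ab$ follows: $a_\alpha J\equiv_M aJ$ by $MJ$-indiscernibility of $I$, hence $a_\alpha b_\beta\equiv_M a b_\beta$, and $ab_\beta\equiv_M ab$ since $J$ is $Ma$-indiscernible (which we can arrange, or which follows from $J$ being $\eind[*]$-Morley hence in particular $Ma$-indiscernible over $M$ — note a $\eind[*]$-Morley sequence over $Ma$ is by definition $Ma$-indiscernible).

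The main obstacle I anticipate is the bookkeeping around the two applications of the chain conditions: one must be careful that the $\eind[*]$-Morley property of $J$ over $Ma$ (established in the first step) is not destroyed when re-indexing or when applying Theorem~\ref{thm:kimchain} to build $I$, and that the roles of "forward" versus "reverse" Morley sequences are consistent between Theorem~\ref{thm:kimchain} and Theorem~\ref{thm:reaschain*}. In particular, the reversal of indexing that appears in Theorem~\ref{thm:reaschain*} must be tracked carefully so that $b_0 = b$ in the final sequence, and the global invariance of $\tp(J/M)$ (needed to invoke the chain condition with $J$ as the "independent" side) should be justified from the fact that $J$ is a reverse $q$-Morley sequence for the global $M$-invariant type $q$.
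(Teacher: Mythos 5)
Your proof is correct and follows essentially the same route as the paper's: first the reasonable chain condition (Theorem~\ref{thm:reaschain*}) to produce $J$ Kim-independent from $a$ and $\eind[*]$-Morley over $Ma$, then the ordinary chain condition (Theorem~\ref{thm:kimchain}) to make a reverse $p$-Morley sequence starting at $a$ indiscernible over $MJ$ and Kim-independent from $J$, and finally indiscernibility plus invariance for the ``in particular'' clauses. The only difference is cosmetic: you restore the base points $a$ and $b$ by an automorphism over $M$ after each step, whereas the paper keeps the displaced copies and applies a single automorphism at the very end.
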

\begin{proof}
First, note that the last claims follow from (3), (4) and (5). For all $\alpha<\kappa$, since $J$ is $\eind[*]$-Morley over $Ma$ and $a_\alpha\equiv_{MJ} a$, by invariance $J$ is $\eind[*]$-Morley over $Ma_\alpha$. And for all $\alpha<\kappa$ and $\beta<\kappa'$, $a_\alpha b_\beta \equiv_M ab_\beta \equiv_M ab$, since $b_0 = b$ and $J$ is $Ma$-indiscernible.

So let $J' = (b'_\beta)_{\beta<\kappa'}$ be any reverse $q$-Morley sequence over $M$ with $b'_0 = b$.   By Theorem~\ref{thm:reaschain*}, we can find $a'\equiv_{Mb} a$, $a'\eind[K]_M J'$, and $J'$ is a $\eind[*]$-Morley sequence over $Ma'$. Now let $I' = (a'_\alpha)_{\alpha<\kappa}$ be any reverse $p$-Morley sequence over $M$ with $a'_0 = a'$. Since $J\eind[K]_M a'$, by Theorem~\ref{thm:kimchain}, we can find $J'' = (b''_\beta)_{\beta<\kappa'}$ such that $J''\equiv_{Ma'}J'$ such that $J''\eind[K]_M I'$ and $I'$ is $MJ''$-indiscernible.  

Now $a'_0b''_0 = a'b''_0 \equiv_M a'b'_0 = a'b \equiv_M ab$, so we can pick an automorphism $\sigma$ fixing $M$ and moving $a'_0$ to $a$ and $b''_0$ to $b$. Let $I = \sigma(I')$ and $J = \sigma(J'')$. Conditions (1)-(6) follow from invariance.
\end{proof}

\begin{thm}[Reasonable independence theorem]\label{thm:reasind*}
If $a\eind[K]_M b$, $a'\eind[K]_M c$, $b\eind[K]_M c$, and $a\equiv_M a'$, then there exists $a''$ such that $a''\equiv_{Mb} a$, $a''\equiv_{Mc} a'$, and $a''\eind[K]_M bc$, and further $a''\eind[*]_{Mc} b$, $a''\eind[*]_{Mb} c$, and $b\eind[*]_{Ma''} c$. 
\end{thm}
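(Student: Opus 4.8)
The plan is to adapt the proof of the independence theorem for $\eind[a]$ from~\cite{KRExp}, replacing every use of algebraic independence there by an appeal to the abstract axioms of Appendix~\ref{app:reasonable}. Almost all of the genuine work has already been isolated: Lemma~\ref{lem:doublesequence} builds a ``grid'' of mutually $\eind[K]$-independent Morley sequences whose cross-relations are governed by $\eind[*]$; Lemma~\ref{lem:longseq} converts $\eind[K]$-independence from a long $\eind[*]$-independent sequence into $\eind[*]$-independence from a tail; and the strengthened independence theorem (Theorem~\ref{thm:strongind}) together with the reasonable chain condition (Theorem~\ref{thm:reaschain*}) and Theorem~\ref{thm:goodseq} propagate an amalgam up a Morley sequence while preserving $\eind[*]$-data. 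So the proof amounts to: build the grid, build a single amalgam $a^*$ generic over the whole grid, read off the three $\eind[*]$-conclusions by Lemma~\ref{lem:longseq}, and normalise by automorphisms.

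First I would fix global $M$-invariant types $p(y)\supseteq\tp(b/M)$ and $q(z)\supseteq\tp(c/M)$ and apply Lemma~\ref{lem:doublesequence} to the hypothesis $b\eind[K]_M c$ (with $b$ in the ``$a$''-role and $c$ in the ``$b$''-role). This yields a reverse $p$-Morley sequence $(b_i)_{i<\kappa}$ over $M$ with $b_0=b$ and a reverse $q$-Morley sequence $(c_j)_{j<\kappa'}$ over $M$ with $c_0=c$ — the lengths $\kappa,\kappa'$ chosen regular and large enough to absorb the later invocations of Lemma~\ref{lem:longseq} — such that $(c_j)_{j<\kappa'}$ is $\eind[*]$-Morley, hence also indiscernible, over $Mb_i$ for every $i$, the sequence $(b_i)_{i<\kappa}$ is indiscernible over $M(c_j)_{j<\kappa'}$, $(b_i)_{i<\kappa}\eind[K]_M (c_j)_{j<\kappa'}$, and $b_ic_j\equiv_M bc$ for all $i,j$. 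A second, symmetric preparation — a further application of Theorem~\ref{thm:reaschain*}, now with the $b$-sequence playing the base-generic role over $c_0=c$ — lets one additionally arrange that $(b_i)_{i<\kappa}$ is $\eind[*]$-Morley over $Mc$; this is the input for the $\eind[*]$-clause $a''\eind[*]_{Mc}b$, while the $c$-grid gives $a''\eind[*]_{Mb}c$.

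Next I would build the amalgam. Using $a\eind[K]_M b$, $a'\eind[K]_M c$, $a\equiv_M a'$ and $b_ic_j\equiv_M bc$, a transfinite recursion exactly parallel to those in the proofs of Lemma~\ref{lem:doublesequence} and Theorem~\ref{thm:reaschain*} — invoking Theorem~\ref{thm:strongind} at successor stages, passing to indiscernible sequences, and using invariance to preserve the $\eind[*]$-structure of the grids — produces $a^*$ with $a^*\eind[K]_M (b_i)_{i<\kappa}(c_j)_{j<\kappa'}$, with $a^*\equiv_{Mb_i}a$ for $i$ in a cofinal set, and with $a^*\equiv_{Mc_j}a'$ for $j$ in a cofinal set; a further application of Theorem~\ref{thm:goodseq} refines one of the sub-grids so that the relevant $c$-sequence is $\eind[*]$-Morley over $Ma^*b$, which is what is needed for the third conclusion $b\eind[*]_{Ma^*}c$. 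Then Lemma~\ref{lem:longseq} is applied three times, against the $c$-grid over $Mb$, the $b$-grid over $Mc$, and this refined sub-grid over $Ma^*$, to produce a pair of indices beyond all three tails and inside the cofinal sets. Finally, applying automorphisms fixing $M$ and — using the indiscernibility of the grids — realising them as automorphisms fixing $Mb$, respectively $Mc$, as required, one normalises $b_0=b$ and $c_0=c$ and obtains the desired $a''$: the two congruences and $a''\eind[K]_M bc$ were arranged along the selected indices and survive the normalisation, and the three $\eind[*]$-relations become exactly $a''\eind[*]_{Mc}b$, $a''\eind[*]_{Mb}c$, $b\eind[*]_{Ma''}c$.

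The main obstacle is the simultaneous coordination of the three $\eind[*]$-conclusions, and the final normalisation. Each of the three would follow from a single local-character argument in isolation, but the absence of base monotonicity for $\eind[*]$ means one cannot freely relocalise between the bases $Mb$, $Mc$, $Ma''$; the whole point of threading everything through one grid, and of keeping $a^*$ $\eind[K]$-independent from the \emph{entire} grid rather than from its pieces, is to make the three applications of Lemma~\ref{lem:longseq} mutually compatible, so that one choice of tail indices serves all of them and the automorphisms restoring $b_0=b$, $c_0=c$ can be taken to fix $Mb$, respectively $Mc$, and hence destroy no accumulated condition. The $\eind[*]_{Ma''}$ clause is the trickiest, since $a''$ is not available when the grid is first built, so one must interleave an application of Theorem~\ref{thm:goodseq} into the recursion. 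The attendant cardinal bookkeeping — each application of Lemma~\ref{lem:longseq} consumes a bounded initial segment of a grid, so the grids must be chosen long enough to survive all of them, just as in the proof of Theorem~\ref{thm:reaschain*} — is the remaining routine but delicate point.
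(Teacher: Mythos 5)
Your overall skeleton (long sequences, Lemma~\ref{lem:longseq} to extract tail witnesses, automorphisms over $M$ at the end) matches the paper's, and you correctly identify the coordination of the three $\eind[*]$-conclusions as the crux. But your architecture has a genuine gap at exactly that crux, concentrated in the third conclusion $b\eind[*]_{Ma''}c$. Your grid contains Morley sequences only in $b$ and in $c$; to extract $b\eind[*]_{Ma''}c$ via Lemma~\ref{lem:longseq} you need, over the base $Ma''$, a long $\eind[*]$-independent sequence of $b$'s or of $c$'s --- and since $\eind[*]$ has no base monotonicity, being $\eind[*]$-Morley over $Mb_\beta$ or over $Ma''b$ gives nothing over $Ma''$. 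Your proposed patch, re-running Theorem~\ref{thm:goodseq} to replace the $c$-sequence by one that is $\eind[*]$-Morley over $Ma^*b$, destroys the properties of the old $c$-grid that the other two conclusions depend on (being $\eind[*]$-Morley over $Mb_\beta$ for every $\beta$, the indiscernibility and Kim-independence relations with the $b$-grid, and the congruences), and in any case the wrong base appears. There is also a quantifier problem you do not resolve: the threshold produced by Lemma~\ref{lem:longseq} for $a^*\eind[*]_{Mc_\gamma}b_{\beta'}$ depends on $\gamma$, while $c_*=c_\gamma$ is only chosen afterwards, so one needs the witness for the first conclusion to work \emph{uniformly} in all $\beta,\gamma$ before the other two selections are made.

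The paper resolves both problems by introducing a third Morley sequence, indexed by $a$, which your proposal omits entirely. It applies Lemma~\ref{lem:doublesequence} three times in a cycle --- to $(b,c)$, to $(a,b)$, and to $(c,a')$ --- and identifies the two copies of the $b$- and $c$-sequences (possible because reverse Morley sequences in a fixed global invariant type all have the same type over $M$), so that simultaneously $C$ is $\eind[*]$-Morley over each $Mb_\beta$, $B$ is $\eind[*]$-Morley over each $Ma_\alpha$, and $A'$ is $\eind[*]$-Morley over each $Mc_\gamma$. The amalgam is then obtained not by a transfinite recursion with Theorem~\ref{thm:strongind} but by a \emph{single} application of the ordinary independence theorem to the entire sequences $A,A',B,C$ viewed as tuples, yielding $A''$ with $A''\equiv_{MB}A$ and $A''\equiv_{MC}A'$; this is what transfers ``$B$ is $\eind[*]$-Morley over $Ma''_\alpha$'' and ``$A''$ is $\eind[*]$-Morley over $Mc_\gamma$'' to the amalgamated sequence \emph{before} any witness is selected. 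Finally the nested cardinals $\mu_a>\mu_b>\mu_c$ (regular, each exceeding the relevant local-character bound and the product of the later ones) allow the selections to be made in the order $a_*$, then $b_*$, then $c_*$: $a_*$ is chosen beyond the supremum of $\mu_b\cdot\mu_c$ many thresholds so that $b_\beta\eind[*]_{Mc_\gamma}a_*$ holds for \emph{all} $\beta,\gamma$, then $b_*$ beyond $\mu_c$ many thresholds in $B$ over $Ma_*$, then $c_*$ in $C$ over $Mb_*$. Without the $a$-indexed sequence and the whole-sequence amalgamation, neither the uniformity nor the third conclusion is available, so your proof as proposed does not go through.
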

\begin{proof}
Let $\kappa_a$, $\kappa_b$, and $\kappa_c$ be the cardinals provided by strong local character for $|a|$, $|b|$, and $|c|$, respectively. Let $\mu_c$ be a regular cardinal greater than $\max(\kappa_a,|Mb|)$, let $\mu_b$ be a regular cardinal greater than $\max(\kappa_c,|Ma|,\mu_c)$, and let $\mu_a$ be a regular cardinal greater than $\max(\kappa_b, |Mc|,\mu_b,\mu_c)$. 

Let $p(x)$, $q(y)$, and $r(z)$ be global $M$-invariant types extending $\tp(a/M) = \tp(a'/M)$, $\tp(b/M)$, and $\tp(c/M)$, respectively.

Apply Lemma~\ref{lem:doublesequence} to $q(y)$ and $r(z)$, obtaining sequences $B = (b_\beta)_{\beta<\mu_b}$ and $C = (c_\gamma)_{\gamma<\mu_c}$ where $C$ is $\eind[*]$-Morley over $Mb$ and $B$ is $MC$-indiscernible. Then apply it to $p(x)$ and $q(y)$, obtaining sequences $A = (a_\alpha)_{\alpha<\mu_a}$ and $\widehat{B} = (\widehat{b_\beta})_{\beta<\mu_b}$, where $\widehat{B}$ is $\eind[*]$-Morley over $Ma$ and $A$ is $M\widehat{B}$-indiscernible. Finally, apply it to $r(z)$ and $p(x)$, obtaining sequences $\widehat{C} = (\widehat{c_\gamma})_{\gamma<\mu_c}$ and $A' = (a_i')_{\alpha<\mu_a}$, where $A'$ is $\eind[*]$-Morley over $Mc$ and $\widehat{C}$ is $MA'$-indiscernible. 

Since $B$ and $\widehat{B}$ are both reverse $q$-Morley sequences over $M$, and $C$ and $\widehat{C}$ are both reverse $r$-Morley sequences over $M$, we may assume $\widehat{B} = B$ and $\widehat{C} = C$ at the expense of moving $A$ and $A'$ by automorphisms over $M$. Note that $A$ and $A'$ are both reverse $p$-Morley sequences over $M$, so $A\equiv_M A'$. We also have $A\eind[K]_M B$, $A'\eind[K]_M C$, and $B\eind[K]_M C$. 

So we can apply the independence theorem, obtaining a sequence $A'' = (a_\alpha'')_{\alpha<\mu_a}$ such that $A''\equiv_{MB} A$, $A''\equiv_{MC} A'$, and $A''\eind[K]_M BC$. This, together with the conclusions of Lemma~\ref{lem:doublesequence}, ensure that for any $\alpha<\mu_a$, $\beta<\mu_b$, and $\gamma<\mu_c$, we have:
\begin{enumerate}
\item $a''_\alpha \eind[K]_M b_\beta c_\gamma$.
\item $a''_\alpha b_\beta \equiv_M ab$, $a''_\alpha c_\gamma\equiv_M a'c$, and $b_\beta c_\gamma\equiv_M bc$. 
\item $A$ is $\eind[*]$-Morley over $Mc_\gamma$, $B$ is $\eind[*]$-Morley over $Ma''_\alpha$, and $C$ is $\eind[*]$-Morley over $Mb_\beta$. 
\end{enumerate}

For all $\beta<\mu_b$ and $\gamma<\mu_c$, since $A''$ is $\eind[*]$-Morley over $Mc_\gamma$ and $\mu_a > \max(\kappa_b,|Mc_\gamma|)$, there exists a $\delta(\beta,\gamma)<\mu_a$ such that $b_\beta\eind[*]_{Mc_\gamma}a''_{\delta'}$ for all $\delta'>\delta(\beta,\gamma)$. Now $\Delta = \{\delta(\beta,\gamma)\mid \beta<\mu_b, \gamma<\mu_c\}$ is a set of ordinals of cardinality $\leq \max(\mu_b,\mu_c)$. But $\mu_a$ is regular and greater than $\max(\mu_b,\mu_c)$, so $\Delta$ is not cofinal in $\mu_a$. Let $\delta'' = \sup\Delta<\mu_a$, and let $a_* = a''_{\delta''}$.

Similarly, for all $\gamma<\mu_c$, since $B$ is $\eind[*]$-Morley over $Ma_*$ and $\mu_b > \max(\kappa_c,|Ma_*|)$, there exists a $\delta(\gamma)<\mu_b$ such that $c_\gamma\eind[*]_{Ma_*} b_{\delta'}$ for all $\delta'>\delta(\gamma)$. Now $\Delta = \{\delta(\gamma)\mid \gamma<\mu_c\}$ is a set of ordinals of cardinality $\leq \mu_c$. But $\mu_b$ is regular and greater than $\mu_c$, so $\Delta$ is not cofinal in $\mu_b$. Let $\delta'' = \sup \Delta<\mu_b$, and let $b_* = b_{\delta''}$. 

Finally, since $C$ is $\eind[*]$-Morley over $Mb_*$ and $\mu_c>\max(\kappa_a,|Mb_*|)$, there exists a $\delta<\mu_c$ such that $a_*\eind[*]_{Mb_*} c_\delta$. Let $c_* = c_{\delta}$. 

In total, we have $a_*\eind[K]_M b_*c_*$, $a_*b_*\equiv_M ab$, $a_*c_*\equiv_M a'c$, $b_*c_*\equiv_M bc$, $a_*\eind[*]_{Mb_*} c_*$, $c
_*\eind[*]_{Ma_*} b_*$, and $b_*\eind[*]_{Mc_*} a_*$. It remains to move $b_*c_*$ back to $bc$ by an automorphism $\sigma$ fixing $M$, and let $a'' = \sigma(a_*)$. 
\end{proof}

\bibliographystyle{amsalpha}
\bibliography{the}

\end{document}